\def\inputfile{moe-master.tex}
\renewenvironment{abstract}{%
  \ifx\maketitle\relax
    \ClassWarning{\@classname}{Abstract should precede
      \protect\maketitle\space in AMS document classes; reported}%
  \fi
  \global\setbox\abstractbox=\vtop \bgroup
    \normalfont\Small
    \list{}{\labelwidth\z@
      \leftmargin2pc \rightmargin\leftmargin
      \listparindent\normalparindent \itemindent\z@
      \parsep\z@ \@plus\p@
      
    }%
    \item[\hskip\labelsep\scshape\abstractname.]%
}{%
  \endlist\egroup
  \ifx\@setabstract\relax \@setabstracta \fi
}
\def\@setabstract{\@setabstracta \global\let\@setabstract\relax}
\def\@setabstracta{%
  \ifvoid\abstractbox
  \else
    \skip@20\p@ \advance\skip@-\lastskip
    \advance\skip@-\baselineskip \vskip\skip@
    \box\abstractbox
    \prevdepth\z@ 
  \fi
}
\newcommand{\onedot}{
  \bsegment
    \move (0 0) \fcir f:0 r:2
  \esegment
}
\newcommand{\Onedot}{
  \bsegment
    \move (0 0) \fcir f:0 r:2.5
  \esegment
}
\newcommand{\ulpullback}[1][ul]{\save*!/#1-4ex/#1:(-1,1)@^{|-}\restore}
\newcommand{\dlpullback}[1][dl]{\save*!/#1-4ex/#1:(-1,1)@^{|-}\restore}
\newcommand{\urpullback}[1][ur]{\save*!/#1-4ex/#1:(-1,1)@^{|-}\restore}
\newcommand{\drpullback}[1][dr]{\save*!/#1-4ex/#1:(-1,1)@^{|-}\restore}
\newcommand{\genmap}{\rightarrow\Mapsfromchar}
\def\eq{{\mathrm{eq}}}
\def\iso{{\mathrm{iso}}}
\def\Map{{\mathrm{Map}}}
\newcommand{\Eq}{\operatorname{Eq}}
\def\M{M\"obius\xspace}
\newcommand{\FILT}{tight\xspace}
\def\overarrow#1{{\vec{#1}}}
\def\nondeg{\overarrow}
\newcommand{\Phieven}{\Phi_{\text{\rm even}}}
\newcommand{\Phiodd}{\Phi_{\text{\rm odd}}}
\newcommand{\Beven}{\B_{\text{\rm even}}}
\newcommand{\Bodd}{\B_{\text{\rm odd}}}
\newcommand{\wtil}{\widetilde}
\newcommand{\ov}{\overline}
\newcommand{\un}{\underline}
\newcommand{\Deltagen}{\Delta_{\text{\rm gen}}}
\newcommand{\Deltainj}{\Delta_{\text{\rm inj}}}
\newcommand{\culf}{\mathrm{cULF}}
\newcommand{\bigcat}{\widehat{\kat{Cat}}}
\renewcommand{\Im}{\operatorname{Im}}
\newcommand{\Bij}{\operatorname{Bij}}
\newcommand{\aInt}{\kat{aInt}}
\newcommand{\Int}{\kat{Int}}
\newcommand{\Decomp}{\kat{Dcmp}}
\newcommand{\cDecomp}{\kat{cDcmp}}
\newcommand{\Vect}{\kat{Vect}}
\newcommand{\vect}{\kat{vect}}
\newcommand{\FD}{\kat{FD}}
\newcommand{\cFD}{\kat{cFD}}
\newcommand{\Dec}{\operatorname{Dec}}
\newcommand{\Nat}{\operatorname{Nat}}
\newcommand{\Cons}{\operatorname{Cons}}
\def\Id{\text{Id}}
\newcommand{\fdb}{Fa{\`a} di Bruno\xspace}
\providecommand{\norm}[1]{\left| {#1}\right|}
\providecommand{\normnorm}[1]{\left|\!\left| {#1}\right|\!\right|}
\def\onto{\twoheadrightarrow}
\def\into{\hookrightarrow}
\newcommand{\shortsetminus}{\,\raisebox{1pt}{\ensuremath{\mathbb r}\,}}
\newcommand{\pro}[1]{\underleftarrow{#1}}
\newcommand{\ind}[1]{\underrightarrow{#1}}
\providecommand{\kat}[1]{\text{\textbf{\textsl{#1}}}}
\newcommand{\LIN}{\kat{LIN}}
\newcommand{\lin}{\kat{lin}}
\newcommand{\Lin}{\kat{Lin}}
\newcommand{\Inc}{\operatorname{Inc}}
\newcommand{\MI}{\mathbf{MI}}
\newcommand{\rat}{\rightarrowtail}
\newcommand{\lat}{\leftarrowtail}
\newcommand{\upperstar}{^{\raisebox{-0.25ex}[0ex][0ex]{\(\ast\)}}}
\newcommand{\lowershriek}{_!}
\newcommand{\uppershriek}{^!}
\newcommand{\isopil}{\stackrel{\raisebox{0.1ex}[0ex][0ex]{\(\sim\)}}%
			{\raisebox{-0.15ex}[0.28ex]{\(\rightarrow\)}}}
\newcommand{\isleftadjointto}{\dashv}
\newcommand{\tensor}{\otimes}
\newcommand{\op}{^{\text{{\rm{op}}}}}
\renewcommand{\epsilon}{\varepsilon}
\newcommand{\Set}{\kat{Set}}
\newcommand{\Grpd}{\kat{Grpd}}
\newcommand{\grpd}{\kat{grpd}}
\newcommand{\N}{\mathbb{N}}
\newcommand{\Z}{\mathbb{Z}}
\newcommand{\F}{\mathbb{F}}
\newcommand{\A}{\mathbb{A}}
\newcommand{\B}{\mathbb{B}}
\newcommand{\Q}{\mathbb{Q}}
\newcommand{\C}{\mathbb{C}}
\newcommand{\D}{\mathbb{D}}
\newcommand{\I}{\mathbb{I}}
\newcommand{\R}{\mathbb{R}}
\newcommand{\ground}{\Bbbk}
\newcommand{\MM}{\mathscr{M}}
\newcommand{\CC}{\mathscr{C}}
\newcommand{\DD}{\mathscr{D}}
\newcommand{\EE}{\mathscr{E}}
\def\lTo{\longleftarrow}
\def\rTo{\longrightarrow}
\newcommand{\comma}{\raisebox{1pt}{$\downarrow$}}
\newcommand{\name}[1]{\ulcorner #1\urcorner}
\newcommand{\Hom}{\operatorname{Hom}}
\newcommand{\Ext}{\operatorname{Ext}}
\newcommand{\Fun}{\operatorname{Fun}}
\newcommand{\PsFun}{\kat{PsFun}}
\newcommand{\Aut}{\operatorname{Aut}}
\newcommand{\id}{\operatorname{id}}
\newcommand{\colim}{\operatornamewithlimits{colim}}
\newcommand{\Ar}{\operatorname{Ar}}
\def\subsection{\@startsection{subsection}{2}%
  \z@{.5\linespacing\@plus.7\linespacing}{.5\linespacing}%
  {\normalfont\bfseries}}
\newtheorem{lemma}{Lemma.}[subsection]
\newtheorem{prop}[lemma]{Proposition}
\newtheorem{thm}[lemma]{Theorem}
\newtheorem{theorem}[lemma]{Theorem}
\newtheorem{cor}[lemma]{Corollary}
\theoremstyle{definition}
\newtheorem{eks}[lemma]{Example}
\newtheorem{BM}[lemma]{Remark}
\newtheorem{taller}[lemma]{$\!\!$}
\newenvironment{blanko}[1]%
{\begin{taller}{\normalfont\bfseries  #1}\normalfont}%
{\end{taller}}
\newenvironment{blanko*}[1]{\begin{list}{\bf {#1} }%
{\setlength{\labelsep}{0mm}\setlength{\leftmargin}{0mm}%
\setlength{\labelwidth}{0mm}\setlength{\listparindent}{\parindent}%
\setlength{\parsep}{\parskip}\setlength{\partopsep}{0mm}}%
\item%
}{\end{list}}
\newenvironment{deff}%
{\begin{list}{\em Definition. }%
{\setlength{\labelsep}{0mm}\setlength{\leftmargin}{0mm}%
\setlength{\labelwidth}{0mm}\setlength{\listparindent}{\parindent}%
\setlength{\parsep}{\parskip}\setlength{\partopsep}{0mm}}%
\item}{\end{list}}
\newenvironment{proof*}[1]{\begin{list}{\em #1 }%
{\setlength{\labelsep}{0mm}\setlength{\leftmargin}{0mm}%
\setlength{\labelwidth}{0mm}\setlength{\listparindent}{\parindent}%
\setlength{\parsep}{\parskip}\setlength{\partopsep}{0mm}}%
\item}{\qed\end{list}}
\thanks{The first author 
  was partially supported by grants 
  MTM2010-15831, 
  MTM2012-38122-C03-01, 
  2014-SGR-634          
  and
  MTM2013-42178-P, 
  the second author 
  by 
MTM2009-10359, 
MTM2010-20692, 
and SGR1092-2009, 
and the third author 
by
MTM2010-15831 and MTM2013-42178-P}
\author{Imma G\'alvez-Carrillo}
\address{Departament de Matem\`atica Aplicada III
      \\Universitat Polit\`ecnica de Catalunya
      \\Escola d'Enginyeria de Terrassa 
      \\Carrer Colom 1\\08222 Terrassa (Barcelona)\\Spain}
\email{m.immaculada.galvez@upc.edu}
\author{Joachim Kock}
\address{Departament de Matem\`atiques
       \\Universitat Aut\`onoma de Barcelona
       \\08193 Bellaterra (Barcelona), Spain}
\email{kock@mat.uab.cat}
\author{Andrew Tonks}
\address{Department of Mathematics\\ 
University of Leicester\\ 
University Road\\ 
Leicester LE1 7RH, UK}
\email{apt12@le.ac.uk}
\title[Decomposition spaces]{Decomposition spaces, incidence algebras and M\"obius 
inversion}
\date{6 July, 2015}                                
\date{}                                
\begin{document}
\begin{abstract}
  We introduce the notion of decomposition space as a general framework for
  incidence algebras and \M inversion.  A decomposition space is a simplicial
  $\infty$-groupoid satisfying an exactness condition weaker than the Segal
  condition, expressed in terms of generic and free maps in Delta.  Just as the
  Segal condition expresses up-to-homotopy composition, the new condition
  expresses decomposition.  We work as much as possible on the objective level
  of linear algebra with coefficients in $\infty$-groupoids, and develop the
  necessary homotopy linear algebra along the way.  Independently of finiteness
  conditions, to any decomposition space there is associated an incidence
  (co)algebra (with coefficients in $\infty$-groupoids), and under a 
  completeness
  condition (weaker than the Rezk condition) this incidence algebra is shown to
  satisfy an objective \M inversion principle \`a la Lawvere--Menni.  Examples of
  decomposition spaces beyond Segal spaces are given by the Waldhausen
  $S$-construction of an abelian (or stable infinity) category.  Their incidence
  algebras are various kinds of Hall algebras.  Another class of examples are
  Schmitt restriction species.  Imposing certain homotopy finiteness conditions
  yields the notion of \M decomposition space, covering the notion of \M
  category of Leroux (itself a common generalisation of locally finite posets
  (Rota et al.)  and finite decomposition monoids (Cartier--Foata)), as well as
  many constructions of D\"ur, including the Fa\`a di Bruno and Connes-Kreimer
  bialgebras.  We take a functorial viewpoint throughout, emphasising
  conservative ULF functors, and show that most reduction procedures in the
  classical theory of incidence coalgebras are examples of this notion, and in
  particular that many are examples of decalage of decomposition spaces.  Our
  main theorem concerns the Lawvere-Menni Hopf algebra of \M intervals, which
  contains the universal \M function (but does not come from a \M category): we
  establish that \M intervals (in the $\infty$-setting) form a \M decomposition
  space that is in a sense universal for \M decomposition spaces and conservative ULF functors.

  NOTE: The notion of decomposition space was arrived at
  independently by Dyckerhoff and Kapranov (arXiv:1212.3563) who
  call them unital $2$-Segal spaces.  Our theory is quite
  orthogonal to theirs: the definitions are different in spirit
  and appearance, and the theories differ in terms of motivation,
  examples and directions.  For the few overlapping results
  (`decalage of decomposition is Segal' and `Waldhausen's $S$ is
  decomposition'), our approach seems generally simpler.
\end{abstract}

\subjclass[2010]{18G30, 16T10; 18-XX, 55Pxx}


\setlength{\textheight}{645pt}

\vspace*{-24pt}

\maketitle


\vspace*{-12pt}

\noindent
NOTE: This manuscript is no longer intended for publication.
Instead it has been split into six papers: \cite{GKT:HLA}, \cite{GKT:DSIAMI-1},
\cite{GKT:DSIAMI-2}, \cite{GKT:MI}, \cite{GKT:ex} and \cite{GKT:restriction}.


\small

\tableofcontents

\normalsize

\setlength{\textheight}{632pt}

\setcounter{section}{-2}

\addtocontents{toc}{\protect\setcounter{tocdepth}{1}}

\def\inputfile{motiv.tex}

\section{Introduction}

\subsection*{Background and motivation}

Leroux's notion of \M category \cite{Leroux:1975} generalises at the same 
time locally finite posets (Rota~\cite{Rota:Moebius}) and Cartier--Foata
finite-decomposition monoids~\cite{Cartier-Foata}, the two classical settings 
for incidence algebras and \M inversion.
An important
advantage of having these classical theories on the same footing is that the
appropriate class of functors, the conservative ULF functors (unique lifting of 
factorisations) (\ref{sec:cULF}),
connect different
examples, and in particular give nice explanations of the process of reduction
which is important in getting the most interesting algebras out of posets, a
process that was sometimes rather ad hoc.  As the most classical example of 
this process,
the divisibility poset $(\N^\times,\mid)$ (considered as a category) admits a conservative ULF
functor to the multiplicative monoid $(\N^\times,\times)$ (considered as a category with
only one object).  This functor induces a homomorphism of incidence coalgebras
which is precisely the reduction map from the `raw' incidence coalgebra of the
divisibility poset to its reduced incidence coalgebra, which is isomorphic to the
Cartier--Foata incidence coalgebra of the multiplicative monoid.

Shortly after Leroux's work, D\"ur~\cite{Dur:1986} studied more involved categorical
structures to extract further examples of incidence 
algebras and study their \M functions.
In particular he realised what was later called the Connes--Kreimer 
Hopf algebra as the reduced incidence coalgebra of a certain category of
root-preserving forest embeddings, modulo the equivalence 
relation that identifies two root-preserving forest embeddings if their 
complement crowns are isomorphic forests.
Another prominent example fitting D\"ur's formalism 
is the \fdb bialgebra, previously obtained in \cite{JoyalMR633783}
from the category of surjections, which is however not a \M category.


Our work on Fa\`a di Bruno formulae in bialgebras of
trees~\cite{GalvezCarrillo-Kock-Tonks:1207.6404} prompted us to look for a more
general version of Leroux's theory, which would naturally realise the \fdb and
Connes--Kreimer bialgebras as incidence coalgebras.  A sequence of
generalisations and simplifications of the theory led to the notion of
decomposition space which is a main notion in the present work.

The first abstraction step is to follow the objective method, pioneered in this context by
Lawvere and Menni~\cite{LawvereMenniMR2720184}, working directly with the
combinatorial objects, using linear algebra with coefficients
in $\Set$ rather than working with numbers and functions on the vector
spaces spanned by the objects.

To illustrate this, observe that a vector in the free vector 
space on a set $S$ is just a collection of scalars indexed
by (a finite subset of) $S$.   The objective counterpart is 
a family of sets indexed by $S$, i.e.~an object in the slice
category $\Set_{/S}$, and linear maps at this level are given
by spans $S \leftarrow M \to T$. The \M inversion principle
states an equality between certain linear maps (elements in
the incidence algebra).  At the objective level, such an
equality can be expressed as a bijection between sets
in the spans representing those linear functors.
In this way, algebraic identities are revealed to be just the 
cardinality of bijections of sets, which carry much more information.

In the present work, the coefficients are $\infty$-groupoids,
meaning that the role of vector spaces is played by slices of the
$\infty$-category of $\infty$-groupoids.
In~\cite{GKT:HLA} we have developed the necessary `homotopy linear algebra'
and homotopy cardinality, extending many results of 
Baez-Hoffnung-Walker~\cite{Baez-Hoffnung-Walker:0908.4305} who worked with
$1$-groupoids.
At the objective level, where all results and proofs are
naturally bijective, finiteness conditions do not play an essential role, since
it is just as easy to handle infinite sets as finite ones.  The price to pay
for working at the objective level is
the absence of additive inverses: in particular, \M functions cannot exist in
the usual form of an alternating sum.  However, an explicit equivalence expressing the \M
inversion principle can be obtained by splitting into even- and odd-length chains, and 
under the appropriate finiteness assumptions one can pass from the
objective level to the numerical level by taking cardinality; the
even-odd split version of \M inversion then yields the usual form of an
alternating sum. 

There are two levels of finiteness conditions needed in order
to take cardinality and arrive at algebraic (numerical) results: namely, just in
order to obtain a numerical coalgebra, for each arrow $f$ and for each $n\in\N$,
there should be only finitely many decompositions of $f$ into a chain of $n$
arrows.  Second, in order to obtain also \M inversion, the following additional
finiteness condition is needed: for each arrow $f$, there is an upper bound on the 
number of non-identity arrows in a chain of arrows composing to $f$.  The latter condition 
is important in its own right, as it is the condition for the existence of a 
length filtration, useful in many applications.

The importance of chains of arrows naturally suggests a simplicial viewpoint, 
regarding a category as a simplicial set via its nerve. 
Leroux's
theory can be formulated in terms of simplicial sets, and many of the arguments 
then rely on certain simple pullback conditions, the first being
the Segal condition which characterises categories among simplicial sets.

The fact that combinatorial objects typically have symmetries prompted the
upgrade from sets to groupoids, in fact a substantial conceptual 
simplification~\cite{GalvezCarrillo-Kock-Tonks:1207.6404}.
This upgrade is essentially straightforward, as long as the notions involved
are taken in a correct homotopy sense: bijections of sets are replaced
by equivalences of groupoids; the slices playing the role of vector spaces are
homotopy slices, the pullbacks and fibres involved in the functors are homotopy
pullbacks and homotopy fibres, and the sums are homotopy sums (i.e.~colimits
indexed by groupoids, just as classical sums are colimits indexed by sets).  The
passage to numbers and incidence algebras in the classical sense now goes via
groupoid cardinality.  In this setting
  one may as well abandon also the 
strict notion of simplicial object in favour of a pseudo-functorial analogue.
%
%
For example,
%
%
%
%
%
%
the classifying space of $(\B,+,0)$, the monoidal groupoid of finite sets and bijections
under disjoint union,
%
is
actually only a pseudofunctor $\mathbf B:\Delta\op\to\Grpd$.
%
This level of abstraction allows us to state for example that the incidence
algebra of $\mathbf B$ is the category of species with the Cauchy product
(suggested as an exercise by Lawvere and Menni~\cite{LawvereMenniMR2720184}).


While it is doable to handle the $2$-category theory involved to
deal with groupoids, pseudo-functors, pseudo-natural isomorphisms, and so on,
much conceptual clarity is obtained by passing immediately to
$\infty$-groupoids: thanks to the monumental effort of 
Joyal~\cite{Joyal:qCat+Kan}, \cite{Joyal:CRM}, Lurie~\cite{Lurie:HTT} and others,
$\infty$-groupoids can now be handled efficiently. At least at the
elementary level we work on, where all that is needed is some basic knowledge
about (homotopy) pullbacks and (homotopy) sums, everything looks very much like
the category of sets.  So we work throughout with certain simplicial
$\infty$-groupoids.  Weak categories in $\infty$-groupoids are precisely Rezk
complete Segal spaces~\cite{Rezk:MR1804411}.  Our theory at this level says that
for any Rezk complete Segal space there is a natural incidence coalgebra
defined with coefficients in $\infty$-groupoids, and that the objective
sign-free \M inversion principle holds.  To extract numerical coalgebras from
this, some homotopy finiteness conditions must be imposed, and
the passage to numbers is then via homotopy cardinality.

The final abstraction step, which becomes the starting point for the paper,
is to notice that in fact neither the Segal condition
nor the Rezk condition is needed in full in order to get a (co)associative
(co)algebra and a \M inversion principle.  Coassociativity follows from (in fact
is essentially equivalent to) the {\em decomposition space axiom} (see
\ref{sec:decomp} for the axiom, and the discussion at the beginning of 
Section~\ref{sec:COALG} for its derivation from
coassociativity): a decomposition space
is a simplicial $\infty$-groupoid sending generic-free pushout
squares in $\Delta$ to pullbacks. 
Whereas the Segal
condition is the expression of the ability to compose morphisms, the new
condition is about the ability to decompose, which of course in general is
easier to achieve than composability.
In order to get the \M inversion principle
(with coefficients in $\infty$-groupoids), a completeness condition is needed,
but it is weaker than the Rezk axiom: it is enough that
$s_0:X_0 \to X_1$ is a monomorphism.  Such simplicial
$\infty$-groupoids we call {\em complete decomposition spaces}.  Every
Rezk complete Segal space is a complete decomposition space. 

It is likely that all incidence (co)algebras can be realised
directly (without imposing a reduction)
as incidence (co)algebras of decomposition spaces.
The decomposition space is found by analysing the reduction step.
For example, D\"ur realises the $q$-binomial
coalgebra as the reduced incidence coalgebra of the category of
finite-dimensional vector spaces over a finite field and linear injections, by
imposing the equivalence relation identifying two linear injections if their
quotients are isomorphic.  Trying to realise the reduced incidence coalgebra
directly as a decomposition space immediately leads to
the Waldhausen $S$-construction, which is a general class of examples:
we show that for any abelian category or stable $\infty$-category, 
the Waldhausen $S$-construction is a decomposition space (which is not Segal).
Under the appropriate finiteness conditions, the resulting incidence algebras
include the (derived) Hall algebras.

As another example
we show that 
the Butcher--Connes--Kreimer bialgebra is directly the incidence coalgebra of a
decomposition space of combinatorial forests, without the need of reductions.
This decomposition space is not 
a Segal space.
In fact we fit this example into a general class of examples of decomposition 
spaces, which includes
also all Schmitt coalgebras of restriction species~\cite{Schmitt:hacs}.
We introduce the notion of \emph{directed restriction 
species}, a class of decomposition spaces that includes the Butcher--Connes-Kreimer
bialgebra of trees
as well as related constructions with directed graphs.

\smallskip

The appropriate notion of 
morphism between decomposition spaces 
is that of conservative ULF functor.  These induce coalgebra homomorphisms.
Many relationships between
incidence coalgebras, and in particular most of the reductions that play a 
central role in the classical theory (from Rota~\cite{Rota:Moebius} and D\"ur~\cite{Dur:1986}
to Schmitt~\cite{Schmitt:1994}),
are induced from conservative ULF functors. 
The simplicial viewpoint taken in this work reveals furthermore that many of
these conservative ULF functors are actually instances of the
notion of decalage, which goes back to Lawvere~\cite{Lawvere:ordinal} and 
Illusie~\cite{Illusie1}. Decalage is in fact 
an important ingredient in the theory to relate decomposition spaces 
to Segal spaces: we observe that the decalage of a decomposition 
space is a Segal space.

\smallskip

Our final example of a decomposition space constitutes our main theorem.
Lawvere showed in the 1980s that there is a Hopf algebra of \M intervals which contains
the universal \M function.  The first published account is by
Lawvere--Menni~\cite{LawvereMenniMR2720184},
where also the objective method is explored.  More precisely, this Hopf algebra 
is obtained from the collection of all iso-classes of \M intervals, 
and features a canonical coalgebra homomorphism from any incidence
coalgebra of a \M category $X$, defined by sending an arrow in $X$ to its 
factorisation interval.  Although this Hopf algebra is universal for incidence
coalgebras of \M categories, it is not itself the incidence coalgebra of a \M 
category.

We show that it {\em is} a decomposition space.  
In fact, in order for this to work smoothly (and obtain the correct universal 
properties), we are forced now to work in $\infty$-groupoids --- this is an 
important motivation for this abstraction step.
We construct the decomposition space
of all intervals, and
establish that it is universal for decomposition spaces.  This involves
constructing homotopy-meaningful intervals from any given simplex in any
given decomposition space.
The main tools here are the universal property of pullbacks
and certain factorisation systems on various $\infty$-categories related
to decomposition spaces.  The main factorisation system, the wide-cULF
factorisation system on the $\infty$-category of intervals,
generalises the generic-free factorisation system on
$\Delta$ which was the cornerstone for our theory of decomposition spaces.

\smallskip

Throughout we have strived for deriving all results from
elementary principles, such as pullbacks, factorisation systems
and other universal constructions.  It is also characteristic
for our approach that we are able to reduce many technical
arguments to simplicial combinatorics.  The main notions are
formulated in terms of the generic-free factorisation system in
$\Delta$.  To establish coassociativity we explore also
$\un\Delta$ (the
algebraist's Delta, including the empty ordinal) and
establish and exploit a universal property of its twisted arrow
category.  As a general method for establishing functoriality in
free maps, we study a certain category $\nabla$ of convex
correspondences in $\un\Delta$.  Finally, in order to construct
the universal decomposition space of intervals, we study the
category $\Xi$ of finite strict intervals, yet another variation of
the simplex category, related to it by an adjunction.  These 
`simplicial preliminaries' are likely to have applications also
outside the theory of decomposition spaces.

\subsection*{Related work: $2$-Segal spaces of Dyckerhoff and Kapranov}

The notion of decomposition space was arrived at independently
by Dyckerhoff and Kapranov~\cite{Dyckerhoff-Kapranov:1212.3563}:
a decomposition space is essentially the same thing as what they call
a unital $2$-Segal space.  We hasten to give them full credit
for having arrived at the notion first.  Unaware of their work,
we arrived at the same notion from a very different path, and
the theory we have developed for it is mostly orthogonal to
theirs.  

The definitions are different in appearance: the definition of
decomposition space refers to preservation of certain pullbacks,
whereas the definition of $2$-Segal space (reproduced in
\ref{DK} below) refers to triangulations of convex polygons.
The coincidence of the notions was noticed by Mathieu Anel
because two of the basic results are the same: specifically, the
characterisation in terms of decalage and Segal spaces (our
Theorem~\ref{thm:decomp-dec-segal}) and the result that the
Waldhausen $S$-construction of a stable $\infty$-category is a
decomposition space (our Theorem~\ref{thm:WaldhausenS}) were
obtained independently (and first)
in~\cite{Dyckerhoff-Kapranov:1212.3563}.

We were motivated by rather elementary aspects of combinatorics
and quantum field theory, and our examples are all drawn from
incidence algebras and \M inversion, whereas Dyckerhoff and Kapranov
were motivated
by representation theory, geometry, and homological algebra, and
develop a theory with a much vaster range of examples in mind:
in addition to Hall algebras and Hecke algebras they find cyclic
bar construction, mapping class groups and surface geometry (see
also \cite{Dyckerhoff-Kapranov:1306.2545} and
\cite{Dyckerhoff-Kapranov:1403.5799}), construct a Quillen model
structure and relate to topics of interest in higher category
theory such as $\infty$-$2$-categories and operads.

In the end we think our contribution is just a little corner of
a vast theory, but an important little corner, and we hope that
our viewpoints and insights will prove useful also for
the rest of the theory.

\subsection*{Related work on \M categories}

  Where incidence algebras and \M inversion are concerned, our
  work descends from Leroux et al.~\cite{Leroux:1975}, \cite{Content-Lemay-Leroux}, 
  \cite{Leroux:IJM}, D\"ur~\cite{Dur:1986} and
  Lawvere-Menni~\cite{LawvereMenniMR2720184}.  There is a different
  notion of \M category, due to Haigh~\cite{Haigh}.  The two
  notions have been compared, and to some extent unified, by
  Leinster~\cite{Leinster:1201.0413}, who calls Leroux's \M inversion {\em
  fine} and Haigh's {\em coarse} (as it only depends on the
  underlying graph of the category).  We should mention also
  the $K$-theoretic \M inversion for quasi-finite EI categories 
  of L\"uck and collaborators \cite{Luck:1989}, \cite{Fiore-Luck-Sauer:0908.3417}.

\def\inputfile{everything.tex}

\subsection*{Summary by section}

We proceed to summarise our results, section by section.

\medskip

We begin in Section 0 with a review of some elementary
notions from the theory of $\infty$-categories.
It is our contention that this should
be enough to render the paper accessible also to readers without
prior experience with $\infty$-categories.

\medskip

In Section 1, after a few preliminaries on simplicial objects
and Segal spaces, we introduce the main notion of this work,
decomposition spaces:

\begin{deff}
  A simplicial space $X:\Delta\op\to\Grpd$ is called a {\em decomposition space}
  when it takes generic-free pushouts in $\Delta$ to pullbacks.
\end{deff}
We give a few equivalent pullback characterisations, and observe that
every Segal space is a decomposition space.  The relevant notion of 
morphism is that of conservative ULF functor (unique lifting of 
factorisations):
\begin{deff}
  A simplicial map is called {\em ULF} if it is cartesian on generic face maps,
  and it is called {\em conservative} if cartesian on degeneracy maps.
  We write {\em cULF} for conservative and ULF.
\end{deff}
After some variations, we come to decalage, and establish the following
important relationship between Segal spaces and decalage:

\medskip

\noindent
{\bf Theorem~\ref{thm:decomp-dec-segal}.} {\em A simplicial space
$X$ is a decomposition space if and only if both
$\Dec_\top(X)$ and $\Dec_\bot(X)$ are Segal spaces, and the two 
comparison maps back to $X$ are cULF.}
\medskip

\noindent We also introduce the notion of {\em monoidal decomposition space}, as a monoid object in the monoidal
$\infty$-category of decomposition spaces and cULF maps.

\medskip

In Section \ref{sec:COALG} we establish that decomposition spaces induce 
coalgebras (with coefficients in $\infty$-groupoids), and that cULF maps induce
coalgebra homomorphisms. The coalgebra associated to a decomposition
space $X$ is the slice $\infty$-category $\Grpd_{/X_1}$, and its comultiplication map
is given by the span
$$
X_1 \stackrel{d_1}\leftarrow X_2 \stackrel{(d_2,d_0)}\to X_1\times X_1.
$$
We first explain how the decomposition space axioms
are distilled from a naive notion of coassociativity.
To establish coassociativity formally, vindicating the idea outlined, we first need some
more simplicial preliminaries.  In particular we introduce the twisted arrow
category $\DD$ of the category of finite ordinals, which is monoidal under 
{\em external sum}.  We show that simplicial objects in a cartesian monoidal
category can be characterised as monoidal functors on $\DD$, and characterise
decomposition spaces as those simplicial spaces whose extension to $\DD$
preserves certain pullback squares. 

The homotopy coassociativity of the incidence coalgebra is established in terms of
the monoidal structure on $\DD$.
The incidence
algebra of a monoidal decomposition space 
is naturally a {\em bialgebra}.

\bigskip

In Section~\ref{sec:complete} we come to the important notion of complete 
decomposition space:

\begin{deff}
  A decomposition space $X$ is  {\em complete} when $s_0: X_0 \to X_1$
  is a monomorphism (or equivalently, all degeneracy maps are monomorphisms 
  (\ref{all-s-mono})).
\end{deff}
This condition ensures that
the notion of nondegenerate simplex is well-behaved and
can be measured on the principal edges.
Let $\nondeg X_r \subset X_r$ denote the subspace of {\em effective}
$r$-simplices, i.e.~simplices all
of whose principal edges are nondegenerate.  For a complete decomposition
space, $\nondeg X_r$ can also be characterised as the subspace of
nondegenerate simplices (\ref{effective=nondegen}).  

After establishing the basic results about these notions, we come to \M
inversion in Section~\ref{sec:Minv}.  For any decomposition space, the linear
dual of the comultiplication yields a convolution product, defining the {\em
incidence algebra}.  This contains, in particular, the {\em zeta functor}
$\zeta$, given by the span $X_1 \stackrel=\leftarrow X_1 \to 1$, and the counit
$\epsilon$ (a neutral element for convolution) given by $X_1 \leftarrow X_0 \to
1$.  Consider the linear functor
$\Phi_r$ given by the span $X_1 \leftarrow \nondeg X_r \to 1$.  We can now establish the
\M inversion 
principle (with hopefully self-explanatory notation):

\medskip

\noindent
{\bf Theorem~\ref{thm:zetaPhi}.} {\em
For a complete decomposition space,
\begin{align*}
\zeta * \Phieven
 &\;\;=\;\; \epsilon\;\; +\;\; \zeta * \Phiodd,\\
=\;\;\Phieven *\zeta &\;\;=\;\; \epsilon \;\;+ \;\; \Phiodd*\zeta.
\end{align*}}

\medskip


Having established the general \M inversion principle on the objective
level, we want to analyse the finiteness conditions needed for this
principle to descend to the numerical level of $\Q$-algebras.
There are two conditions: $X$ should be of locally finite length (which we 
also call {\em tight}) (Section~\ref{sec:length}), and $X$ should be locally finite
(Section~\ref{sec:findec}).  Complete 
decomposition spaces satisfying both condition are called {\em \M decomposition 
spaces} (Section~\ref{sec:M}).  We analyse the two conditions separately.

The first condition is equivalent to the existence of a certain
{\em length filtration}, which is useful in applications.  Although many
examples coming from combinatorics do satisfy this condition, it is actually a 
rather strong condition, as witnessed by the following result:

\medskip
\noindent {\em Every decomposition space with length filtration
is the left Kan extension of a semi-simplicial space.}

\medskip

We can actually prove this theorem for more general simplicial spaces, and
digress to establish this.  It is useful first to analyse further the property
that nondegeneracy can be measured on the principal edges.  We show
(Proposition~\ref{halfdecomp}) that this happens precisely for complete
simplicial spaces for which degeneracy maps form pullbacks with free maps,
i.e.~`half' of the decomposition space axioms; we call such simplicial spaces
{\em stiff}, and establish their basic properties in Section~\ref{sec:stiff},
giving in particular a characterisation of conservative maps in terms of
nondegeneracy preservation~(\ref{stiff-cons}).

A complete simplicial space is called {\em split} if all face maps preserve
nondegenerate simplices.  This condition, which is the subject of 
Section~\ref{sec:split}, is the 
analogue of the condition for categories that identities are indecomposable,
enjoyed in particular by \M categories in the sense of Leroux.
Split implies stiff.
We prove that a simplicial space is split if and only if it is the left Kan
extension along $\Deltainj \subset \Delta$ of a semi-simplicial space
$\Deltainj\op\to\Grpd$, and in fact we establish more precisely:

\medskip

\noindent
{\bf Theorem~\ref{thm:semisimpl=splitcons}.} {\em
Left Kan extension induces an equivalence of $\infty$-categories
$$
\Fun(\Deltainj\op,\Grpd) \simeq \kat{Split}^{\mathrm cons},
$$
the $\infty$-category of split simplicial spaces and conservative maps.}


\begin{deff}
  The {\em length} of an arrow $f$ is the greatest dimension of an effective
  simplex with long edge $f$.  We say that a complete decomposition space is
  {\em of locally finite length} --- we also say {\em tight} --- when every
  arrow has finite length.
\end{deff}

Tight decomposition spaces are split (Corollary~\ref{prop:tight=>split}).
If a tight decomposition space is a Segal space, then it is Rezk complete.
(Proposition~\ref{prop:FILTSegal=Rezk}).

\medskip

We show that a tight decomposition space $X$ has a filtration
$$
X_\bullet^{(0)} \into X_\bullet^{(1)} \into \cdots \into X
$$
of cULF monomorphisms, the so-called {\em length filtration}.
This is precisely the structure needed to get a filtration of the associated
coalgebra (\ref{coalgebrafiltGrpd}).

\bigskip

In Section~\ref{sec:findec} we impose the finiteness condition needed to be able
to take homotopy cardinality and obtain coalgebras and algebras at the numerical level 
of $\Q$-vector spaces (and pro-vector spaces).

\begin{deff}
  A decomposition space $X$ is called {\em locally finite} (\ref{finitary}) when
  $X_1$ is locally finite and $s_0:X_0\to X_1$ and $d_1: X_2 \to X_1$ are finite
  maps. 
\end{deff}

The condition `locally finite'
extends the notion of locally finite for posets.
The condition
ensures that the coalgebra structure descends to 
finite-groupoid coefficients, and hence, via homotopy cardinality, to
$\Q$-algebras.  In Section~\ref{sec:tioncoeff} we calculate the section 
coefficients (structure constants for the (co)multiplication) in some
easy cases. 

Finally we introduce the \M condition: 
\begin{deff}
  A complete decomposition space is called {\em \M} when it is locally finite
  and of locally finite length (i.e.~is tight).
\end{deff}
This is the condition needed for the general \M inversion formula
to descend to finite-groupoid coefficients and $\Q$-coefficients, giving the
following formula
for the \M function (convolution inverse to the zeta function):
$$
\norm{\mu} = \norm{\Phieven}-\norm{\Phiodd} .
$$

\bigskip

In Section~\ref{sec:ex} we give examples.  The first batch of examples, similar 
to the binomial posets of Doubilet-Rota-Stanley~\cite{Doubilet-Rota-Stanley},
are straightforward but serve to illustrate two key points: (1) the incidence algebra
in question is realised directly from a decomposition space, without a
reduction step, and reductions are typically given by cULF functors;
(2) at the objective level, the convolution algebra is a monoidal
structure of species (specifically: the usual Cauchy product of species, the 
shuffle product of $\mathbb L$-species, the Dirichlet product of arithmetic species,
the Joyal-Street external product of $q$-species, and the Morrison `Cauchy' 
product of $q$-species).  In each of these cases, a power series representation
results from taking cardinality.

The next class of examples includes the \fdb bialgebra, the
Butcher-Connes-Kreimer bialgebra of trees, with several variations, and similar
structures on directed graphs (cf.~Manchon~\cite{Manchon:MR2921530} and
Manin~\cite{Manin:MR2562767}).  In Subsection~\ref{sec:Wald} we come to an
important class of examples, showing that the Waldhausen $S$-construction on an
abelian category, or a stable $\infty$-category, is a decomposition space.
While this example plays only a minor role in our work, it is a cornerstone
in the work of Dyckerhoff and Kapranov~\cite{Dyckerhoff-Kapranov:1212.3563}, 
\cite{Dyckerhoff-Kapranov:1306.2545}, \cite{Dyckerhoff-Kapranov:1403.5799},
\cite{Dyckerhoff:CRM}; we refer to their work for the remarkable richness of this 
class of examples.
We finish the section by computing the \M function in a few cases, and commenting
on certain cancellations that occur in the process of taking cardinality,
substantiating that these cancellations are not possible at the objective level
(this is related to the distinction between bijections and natural bijections).

\medskip

In Section~\ref{sec:restriction} we show that Schmitt coalgebras of
restriction species \cite{Schmitt:hacs} (such as graphs, matroids, posets, etc.) come from 
decomposition spaces.  We also introduce a new notion of {\em directed restriction species}.  Whereas ordinary restriction species are presheaves of the category of
finite sets and injections, directed restriction species are presheaves on the 
category of finite posets and convex inclusions.  Examples covered by this 
notion are the Butcher-Connes-Kreimer bialgebra and the Manchon-Manin bialgebra
of directed graphs.  Both ordinary and directed restriction species are shown
to be examples of a construction of decomposition spaces from what we call
sesquicartesian fibrations, certain cocartesian fibrations over the category of 
finite ordinals that are also cartesian over convex maps.

\medskip

In Section~\ref{sec:master} we come to our main theorem, constructing a
`universal decomposition space', the (large) decomposition space of intervals.  The idea
(due to Lawvere) is that to an arrow there is associated its category of
factorisations, which is an interval.  To set this up, we exploit factorisation 
systems and adjunctions derived from them, and start out in 
Subsection~\ref{sec:fact} with some general results about factorisation systems.
Specifically we describe a situation in which a factorisation system lifts
across an adjunction to produce a new factorisation system, and hence a new
adjunction.  Before coming to intervals in \ref{sec:fact-int}, we 
need flanked decomposition spaces (\ref{sec:flanked}):
these are certain presheaves on the category $\Xi$ of nonempty finite linear orders 
with a top and a bottom element.  The $\infty$-category of flanked decomposition spaces
features the important {\em wide-cartesian} factorisation system, where `wide'
is to be thought of as endpoint-preserving, and cartesian is like 
`distance-preserving'.  There is also the basic adjunction between decomposition
spaces and flanked decomposition spaces, which in fact is the double dec 
construction.  Intervals are first defined as certain flanked decomposition spaces
which are contractible in degree $-1$ (this condition encodes an initial and a 
terminal object) (\ref{aInt}), and via the basic adjunction we obtain the definitive $\infty$-category
of intervals as a full subcategory of the $\infty$-category of complete decomposition 
spaces (\ref{towards-Int}); it features the wide-cULF  factorisation system 
(\ref{fact-Int}), which extends the 
generic-free factorisation system on $\Delta$ (\ref{IntDelta}).  The factorisation-interval
construction can now finally be described (Theorem~\ref{Thm:I})
as a coreflection from complete decomposition 
spaces to intervals (or more precisely, on certain coslice categories).
We show that every interval is a Segal space (\ref{prop:i*flanked=Segal}).
The universal decomposition 
space $U$ of intervals (which lives in a bigger universe) can finally (\ref{U}) be defined very formally as a natural right 
fibration over $\Delta$ whose total space has objects wide interval maps from 
an ordinal.  In plain words, $U$ consists of subdivided 
intervals.

 \medskip
 
 \noindent
{\bf Theorem~\ref{UcompleteDecomp}.} {\em $U$ is a complete decomposition space.}

\medskip

\noindent
The factorisation-interval construction yields a canonical functor $X \to U$,
called the {\em classifying map}.

 \medskip
 
 \noindent
{\bf Theorem~\ref{thm:IU=cULF}.} {\em The classifying map is cULF.}

\medskip

We 
conjecture
that $U$ is universal for complete 
decomposition spaces and cULF maps, and prove the following partial result:

 \medskip
 
 \noindent
{\bf Theorem~\ref{thm:connected}.} {\em 
For each complete decomposition space $X$, the space 
  $\Map_{\cDecomp^{\culf}}(X,U)$ is connected.}
  
  \medskip

We finish in Subsection~\ref{sec:MI} by imposing the \M condition, obtaining
the corresponding finite results.
A {\em \M interval} is an interval which is \M as a decomposition space.
We show that every \M interval is a Rezk complete Segal space 
(\ref{prop:Mint=Rezk}).
There is a decomposition space of {\em all}\/ \M intervals, and it is shown to be small.

Our main theorem in this section is now:

 \medskip
 
 \noindent
{\bf Theorem~\ref{thm:MI=M}.} {\em The decomposition space of all \M intervals 
is \M.}

\medskip

\noindent
It follows that it admits a \M inversion formula with coefficients in finite 
$\infty$-groupoids
or in $\Q$, and since every \M decomposition space admits a 
canonical cULF functor to it, we find that \M inversion in every incidence 
algebra (of a \M decomposition space) is induced from this master formula.

\medskip

In the Appendix we develop what we need about homotopy linear algebra and 
homotopy cardinality.  For the sake of flexibility (regarding what notions
of finiteness the future will bring) we first work out the notions without
finiteness conditions.  The role of vector spaces is played by groupoid slices
$$
\Grpd_{/S} ,
$$
shown to be the homotopy-sum completion of $S$,
and the role of linear maps is played by
linear functors, i.e.~given by pullback and lowershriek
along spans.
We explain how to interpret scalar multiplication and sums (together: linear 
combinations), and how to expand these operations in coordinates.  The canonical
basis
is given by the `names', functors $\name x: 1 \to S$.
Groupoid slices and linear functors assemble into an $\infty$-category,
which is monoidal closed.  The tensor product is given by
$$
\Grpd_{/S} \tensor \Grpd_{/T} = \Grpd_{/S\times T} .
$$

In Subsection~\ref{sec:finite} we get into the subtleties of finiteness conditions.
An $\infty$-groupoid $B$ is {\em locally finite} if at each base point $b$ the
homotopy groups $\pi_i (B,b)$ are finite for $i\geq1$ and are trivial for $i$
sufficiently large.  It is called {\em finite} if furthermore it has only
finitely many components.  The cardinality of a finite $\infty$-groupoid is the sum (over
the connected components) of the alternating product of the homotopy groups.  We
work out the basic properties of this notion.

For the $\infty$-groupoid version of linear algebra, we are strict about duality
issues, developed in the setting of vector spaces and 
profinite-dimensional vector spaces (a brief review is in \ref{vect-rappels}).
The role of vector spaces is played by finite-groupoid slices $\grpd_{/S}$
(where $S$ is a locally finite $\infty$-groupoid), while the role of 
profinite-dimensional vector spaces is played by finite-presheaf categories
$\grpd^S$.  Linear maps are given by spans of {\em finite type}, meaning $S \stackrel 
p\leftarrow M \stackrel q\to T$ in which $p$ is a finite map.  Prolinear maps are given
by spans of {\em profinite type}, where $q$ is a finite map.  
In the end we have two $\infty$-categories: $\ind\lin$ whose objects are the 
finite-groupoid slices $\grpd_{/S}$ and whose mapping spaces are $\infty$-groupoids of 
finite-type spans, and the $\infty$-category $\pro\lin$ whose objects are 
finite-presheaf categories $\grpd^S$, and whose mapping spaces are $\infty$-groupoids of 
profinite-type spans.

Finally we follow Baez-Hoffnung-Walker~\cite{Baez-Hoffnung-Walker:0908.4305}
in defining cardinality in terms of a 
`meta cardinality' functor, which induces cardinality notions in all slices.
In our setting, this amounts to a functor
\begin{eqnarray*}
  \normnorm{ \ } : \ind\lin & \longrightarrow & \Vect  \\
  \grpd_{/S} & \longmapsto & \Q_{\pi_0 S}
\end{eqnarray*}
and a dual functor
\begin{eqnarray*}
  \normnorm{ \ } : \pro\lin  & \longrightarrow & \pro\vect  \\
  \grpd^S & \longmapsto & \Q^{\pi_0 S} .
\end{eqnarray*}
For each fixed $\infty$-groupoid $S$, this gives an individual notion of
cardinality $\norm{ \ } : \grpd_{/S} \to \Q_{\pi_0 S}$ (and dually
$\norm { \ } : \grpd^S \to \Q^{\pi_0 S}$), since vectors are just
linear maps from the ground field.

The vector space $\Q_{\pi_0 S}$ is spanned by the elements $\delta_s :=
\norm{\name s}$.  Dually, the profinite-dimensional vector space $\Q^{\pi_0 S}$
is spanned by the characteristic functions $\delta^t
=
\frac{\norm{h^t}}{\norm{\Omega(S,t)}}$
(the cardinality of the representable functors divided by the
cardinality of the loop space).

\subsection*{Acknowledgments}

We are indebted first of all to Andr\'e Joyal, not only for his influence
through the theory of species and the theory of quasi-categories, essential
frameworks for our contribution, but also for his interest in our work, for many
enlightening discussions and advice that helped shape it.  We have also learned
a lot from collaboration and discussions with David Gepner.  Finally we thank
Mathieu Anel, Kurusch Ebrahimi-Fard, Tom Leinster, and Fei Xu, for
their interest and feedback.

\addtocontents{toc}{\protect\setcounter{tocdepth}{2}}

\section{Preliminaries on $\infty$-groupoids and $\infty$-categories}

\def\inputfile{groupoids.tex}

\label{sec:groupoids}

\begin{blanko}{Groupoids and $\infty$-groupoids.}
  Although most of our motivating examples can be naturally cast in the setting
  of $1$-groupoids, we have chosen to work in the setting of $\infty$-groupoids.
  This is on one hand the natural generality of the theory, and on the other
  hand a considerable conceptual simplification: thanks to the monumental effort
  of Joyal~\cite{Joyal:qCat+Kan}, \cite{Joyal:CRM} and Lurie~\cite{Lurie:HTT},
  the theory of $\infty$-categories has now reached a stage where it is just as 
  workable as the theory of $1$-groupoids --- if not more!  The philosophy is that, modulo a
  few homotopy {\em caveats}, one is allowed to think as if working in the
  category of sets.  A recent forceful vindication of this philosophy is
  Homotopy Type Theory~\cite{HoTT}, in which a syntax that resembles set theory
  is shown to be a powerful language for general homotopy types.
  
  A recurrent theme in the present work is to upgrade combinatorial
  constructions from sets to $\infty$-groupoids.  To this end the first step
  consists in understanding the construction in abstract terms, often in terms
  of pullbacks and sums, and then the second step consists in copying over the
  construction to the $\infty$-setting.  The $\infty$-category theory needed
  will be accordingly elementary, and it is our contention that it should be
  feasible to read this work without prior experience with $\infty$-groupoids or
  $\infty$-categories, simply by substituting the word `set' for the word
  `$\infty$-groupoid'.  Even at the $0$-level, our theory contributes
  interesting insight, revealing many constructions in the classical theory to
  be governed by very general principles proven useful also in
  other areas of mathematics.
  
  The following short review of some basic aspects of $\infty$-categories should
  suffice for reading this paper, except the final Section~\ref{sec:master},
  where some slightly more advanced machinery is used.

\end{blanko}

\begin{blanko}{From posets to Rezk categories.}
  A few remarks may be in order to relate these viewpoints with classical
  combinatorics.  A $1$-groupoid is the same thing as an ordinary groupoid, and
  a $0$-groupoid is the same thing as a set.  A $(-1)$-groupoid is the same
  thing as a truth value: up to equivalence there exist only two
  $(-1)$-groupoids, namely the contractible groupoid (a point) and the empty
  groupoid.  A poset is essentially the same thing as a category in which all
  the mapping spaces are $(-1)$-groupoids.  An ordinary category is a category
  in which all the mapping spaces are $0$-groupoids.  Hence the theory of
  incidence algebras of posets of Rota and collaborators can be seen as the
  $(-1)$-level of the theory.  Cartier--Foata theory and Leroux theory take
  place at the $0$-level.  We shall see that in a sense the natural setting for
  combinatorics is the $1$-level, since this level naturally takes into account
  that combinatorial structures can have symmetries.  (From this viewpoint, it
  looks as if the classical theory compensates for working one level below the
  natural one by introducing reductions.)  
  It is convenient to follow this ladder to infinity: the good
  notion of category with $\infty$-groupoids as mapping spaces is
  that of Rezk complete Segal space, also called Rezk category;
  this is the level of generality of the present work
\end{blanko}
  
%
%

\begin{blanko}{$\infty$-categories and $\infty$-groupoids.}
  By $\infty$-category we mean quasi-category~\cite{Joyal:qCat+Kan}.  These are
  simplicial sets satisfying the weak Kan condition: inner horns admit a filler.
  (An ordinary category is a simplicial set in which every inner horn admits a {\em 
  unique} filler.)
  We refer to Joyal~\cite{Joyal:qCat+Kan}, \cite{Joyal:CRM} and
  Lurie~\cite{Lurie:HTT}.  The definition does not actually matter much in this
  work.  The main point, Joyal's great insight,
  is that category theory can be generalised to
  quasi-categories, and that the results look the same, although to bootstrap
  the theory very different techniques are required.  There are other
  implementations of $\infty$-categories, such as complete Segal spaces, see 
  Bergner~\cite{Bergner:0610239} for a survey.  We
  will only use results that hold in all implementations, and for this reason we
  say $\infty$-category instead of referring explicitly to quasi-categories.
  Put another way, we shall only ever distinguish quasi-categories up to
  (categorial) equivalence, and most of the constructions rely on universal 
  properties such as pullback, which in any case only determine the objects
  up to equivalence.
  
  An $\infty$-groupoid is an $\infty$-category in
  which all morphisms are invertible.  We often say {\em space} instead of
  $\infty$-groupoid, as they are a combinatorial substitute for topological 
  spaces up to homotopy; for example, to each object $x$ in an $\infty$-groupoid 
  $X$,
  there are associated homotopy groups $\pi_n(X,x)$ for $n>0$.
  In terms of quasi-categories,
  $\infty$-groupoids are precisely Kan complexes, i.e.~simplicial sets in which
  every horn, not just the inner ones, admits a filler.
  
  $\infty$-groupoids
  play the role analogous to sets in classical category theory.  In particular,
  for any two objects $x,y$ in an $\infty$-category $\CC$ there is (instead of a hom set)
  a mapping space $\Map_\CC(x,y)$ which is an $\infty$-groupoid.  
  $\infty$-categories form a (large) 
  $\infty$-category denoted $\kat{Cat}$. $\infty$-groupoids form a (large)
  $\infty$-category denoted $\Grpd$; it can be described explicitly as the 
  coherent nerve of the (simplicially enriched) category of Kan complexes. 
  Given two $\infty$-categories $\DD$, 
  $\CC$, there is
  a functor $\infty$-category $\Fun(\DD,\CC)$.  Since $\DD$ and $\CC$ are 
  objects in the $\infty$-category  $\kat{Cat}$ we also have the 
  $\infty$-groupoid $\Map_{\kat{Cat}}(\DD,\CC)$, which can also be described as 
  the
  maximal sub-$\infty$-groupoid inside $\Fun(\DD,\CC)$.
\end{blanko}

\begin{blanko}{Defining $\infty$-categories and sub-$\infty$-categories.}
  While in ordinary category theory one can define a category by saying what the
  objects and the arrows are (and how they compose), this from-scratch approach
  is more difficult for $\infty$-categories, as one would have to specify the
  simplices in all dimensions and verify the filler condition (that is,
  describe the $\infty$-category as a quasi-category).  In practice,
  $\infty$-categories are constructed from existing ones by general
  constructions that automatically guarantee that the result is again an
  $\infty$-category, although the construction typically uses universal
  properties in such a way that the resulting $\infty$-category is only defined
  up to equivalence.  To specify a sub-$\infty$-category of an $\infty$-category
  $\CC$, it suffices to specify a subcategory of the homotopy category of $\CC$
  (i.e.~the category whose hom sets are $\pi_0$ of the mapping spaces of $\CC$),
  and then pull back along the components functor.  What this amounts to in
  practice is to specify the objects (closed under equivalences) and specifying
  for each pair of objects $x,y$ a full sub-$\infty$-groupoid of the
  mapping space $\Map_\CC(x,y)$, also closed under equivalences, and closed 
  under
  composition.
%
%
\end{blanko}

\begin{blanko}{Monomorphisms.}\label{def:mono}
  A map of $\infty$-groupoids $f:X\to Y$ is a {\em monomorphism} when its
  fibres are $(-1)$-groupoids (i.e.~are either empty or contractible).
 In other words, it is fully faithful as a functor: 
$\Map_X(a,b) \to \Map_Y(fa,fb)$ is an equivalence.
  In some respects, this notion behaves like for sets: for example, if $f$
  is a monomorphism, then there is a complement $Z:=Y\shortsetminus X$ such that
  $Y \simeq X + Z$.  Hence a monomorphism is essentially an equivalence
  from $X$ onto some connected components of $Y$.  On the other hand, 
  a crucial difference from sets to $\infty$-groupoids is that diagonal
  maps of $\infty$-groupoids are not in general monomorphisms.  In fact $X \to X \times 
  X$ is a monomorphism if and only if $X$ is discrete (i.e.~equivalent to a set).
\end{blanko}

\begin{blanko}{Diagram categories and presheaves.}
  Every $1$-category is also a quasi-category via
  its nerve.  In particular we have the $\infty$-category $\Delta$ of non-empty
  finite ordinals, and for each $n\geq 0$ the $\infty$-category
  $\Delta[n]$ which is the nerve of
  the linearly ordered set $\{0 \leq 1 \leq \cdots \leq n\}$. As an important
  example of a functor $\infty$-category, for a given $\infty$-category $I$, we
  have the $\infty$-category of presheaves $\Fun(I\op,\Grpd)$, and there is a 
  Yoneda lemma that works as in the case of ordinary categories.
  In particular
  we have the $\infty$-category $\Fun(\Delta\op,\Grpd)$ of simplicial
  $\infty$-groupoids, which will be one of our main objects of study.
  
  Since arrows in an $\infty$-category do not compose on the nose (one can talk 
  about `a' composite, not `the' composite), the $1$-categorical notion of
  commutative diagram does not make sense.  Commutative
  triangle in an $\infty$-category $\CC$
  means instead `object in the functor $\infty$-category 
  $\Fun(\Delta[2],\CC)$': the $2$-dimensional face of $\Delta[2]$ is mapped to
  a $2$-cell in $\CC$ mediating between the composite of the $01$ and $12$
  edges and the long edge $02$.  Similarly, `commutative square' means
  object in the functor $\infty$-category $\Fun(\Delta[1]\times \Delta[1], 
  \CC)$.  In general, `commutative diagram of shape $I$' means object in
  $\Fun(I,\CC)$, so when we say for example `simplicial $\infty$-groupoid' it is
  not implied that the usual simplicial identities hold on the nose. 
\end{blanko}

\begin{blanko}{Adjoints, limits and colimits.}
  There are notions of adjoint functors, limits and colimits, which behave in the
  same way as these notions in ordinary category theory, and are characterised 
  by
  universal properties up to equivalence.  For example, the 
  singleton set $*$ (also denoted $1$), or any contractible $\infty$-groupoid,
  is a terminal object in $\Grpd$.
\end{blanko}

\begin{blanko}{Pullbacks and fibres.}
  Central to this work is the notion of pullback: given
  two morphisms of $\infty$-groupoids $X \to S \leftarrow Y$, there is a square
  $$\xymatrix{
  X\times_S Y \drpullback \ar[r] \ar[d] & Y \ar[d] \\
  X \ar[r] & S
  }$$
  called the pullback, an example of a limit.
  It is defined via a universal property, as
  a terminal object in a certain auxiliary $\infty$-category consisting
  of squares with sides $X \to S \leftarrow Y$.
  All formal properties of pullbacks of sets carry over to $\infty$-groupoids.
  
  Given a morphism of $\infty$-groupoids, $p:X \to S$, and an object $s\in S$
  (which in terms of quasi-categories can be thought of as a zero-simplex of $S$,
  but which more abstractly is encoded as a map $* \stackrel s \to S$ from
  the terminal $\infty$-groupoid $* = \Delta[0]$), the fibre of $p$ over $s$
  is simply the pullback
  $$\xymatrix{
  X_s \drpullback \ar[r] \ar[d] & X \ar[d]^p \\
  {*} \ar[r]_s & S
  }$$
\end{blanko}

\begin{blanko}{Working in the $\infty$-category of $\infty$-groupoids, versus 
    working in the model category of simplicial sets.}
  When working with $\infty$-categories in terms of quasi-categories, one often
  works in the Joyal model structure on simplicial sets (whose fibrant objects
  are precisely the quasi-categories).  This is a very powerful technique,
  exploited masterfully by Joyal~\cite{Joyal:CRM} and Lurie~\cite{Lurie:HTT}, and
  essential to bootstrap the whole theory.  In the present work, we can
  benefit from their work, and since our constructions are generally elementary,
  we do not need to invoke model structure arguments, but can get away with
  synthetic arguments.  To illustrate the difference, consider the following 
  version of the Segal 
  condition (see~\ref{segalpqr} for details): we shall formulate it and use it by 
  simply saying {\em the natural square
  $$\xymatrix{
     X_2 \ar[r]\ar[d] & X_1 \ar[d] \\
     X_1 \ar[r] & X_0
  }$$
  is a pullback}.  This is a statement taking place in the $\infty$-category of
  $\infty$-groupoids.  A Joyal--Lurie style formulation would rather take place
  in the category of simplicial sets with the Joyal model structure and say
  something like {\em the natural map $X_2 \to X_1 \times_{X_0} X_1$ is an 
  equivalence}.  Here $X_1\times_{X_0} X_1$ refers to the actual $1$-categorical
  pullback in the category of simplicial sets, which does not coincide with
  $X_2$ on the nose, but is only naturally equivalent to it.
  \end{blanko}

The following Lemma is used many times in our work. It is a straightforward
  extension of a familiar result in $1$-category theory:

\begin{lemma}\label{pbk}
  If in a prism diagram of $\infty$-groupoids
  $$
  \vcenter{\xymatrix{
   \cdot\dto \rto &  \cdot\dto \rto &  \cdot\dto \\
  \cdot\rto & \cdot \rto & \cdot 
  }}
  $$
  the outer rectangle and the right-hand square are pullbacks,
  then the left-hand square is a pullback.
\end{lemma}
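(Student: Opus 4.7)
The plan is to follow the classical $1$-categorical pasting argument, which rests entirely on the universal property of pullbacks and hence transfers verbatim to the $\infty$-categorical setting. Label the prism as
$$
\xymatrix{
A \dto \rto & B \dto \rto & C \dto \\
A' \rto & B' \rto & C'
}
$$
and write $P := A' \times_{B'} B$ for the pullback of the left square; the goal is to produce a natural equivalence $A \simeq P$ over the cospan $A' \to B' \leftarrow B$.

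First I would substitute the right-square hypothesis $B \simeq B' \times_{C'} C$ into $P$, giving $P \simeq A' \times_{B'} (B' \times_{C'} C)$. Associativity of limits (a purely formal consequence of the universal property, valid in any $\infty$-category) simplifies this to $A' \times_{C'} C$. The outer-rectangle hypothesis then supplies an equivalence $A \simeq A' \times_{C'} C$, and composing yields the desired $A \simeq P$. Equivalently, one may verify the universal property directly: a cone on the left cospan is a compatible pair $T \to A'$, $T \to B$; postcomposing the latter with $B \to C$ produces a cone on the outer cospan, hence by hypothesis a map $T \to A$, and one checks that these passages are mutually inverse up to contractible choice.

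The argument involves no obstacle of substance; the only care needed is the $\infty$-categorical reading of \emph{unique}, which means that the comparison is an equivalence of mapping spaces rather than a bijection of hom sets. This is guaranteed by the natural equivalence $\Map(T, X \times_S Y) \simeq \Map(T, X) \times_{\Map(T,S)} \Map(T, Y)$; applying this characterisation to each square and rectangle in the prism encodes the chain of equivalences above without recourse to simplicial models. No Segal or completeness hypothesis is required, and the lemma is really a general fact about pullbacks in $\Grpd$ (and indeed in any $\infty$-category admitting the relevant pullbacks).
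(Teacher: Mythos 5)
Your proof is correct, and it is essentially the argument the paper itself gestures at: the paper simply says the classical pasting proof goes through once ``unique arrow'' is replaced by an equivalence of mapping spaces, citing Lurie's Lemma~4.4.2.1 for the dual case. Your explicit chain $A' \times_{B'} (B' \times_{C'} C) \simeq A' \times_{C'} C \simeq A$, justified via the mapping-space characterisation of pullbacks, is exactly that argument spelled out.
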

A few remarks are in order.  Note that we talk about a prism, i.e.~a 
$\Delta[1]\times\Delta[2]$-diagram:  although we have only drawn two of the 
squares of the prism, there is a third, whose horizontal sides are composites
of the two indicated arrows.  The triangles of the prism are not drawn either,
because they are the fillers that exist by the axioms of quasi-categories.
The proof follows the proof in the classical case, except that instead of saying
`given two arrows such and such, there exists a unique arrow making the diagram
commute, etc.', one has to argue with equivalences of mapping spaces (or slice
$\infty$-categories).  See for example Lurie~\cite{Lurie:HTT}, Lemma~4.4.2.1
(for the dual case of pushouts).

\begin{blanko}{Homotopy sums.}
  In ordinary category theory, a colimit indexed by a discrete category (that
  is, a set) is the same thing as a sum (coproduct).  For $\infty$-categories,
  the role of sets is played by $\infty$-groupoids.  A colimit indexed by an
  $\infty$-groupoid is called a {\em homotopy sum}.  In the case of
  $1$-groupoids, these sums are ordinary sums weighted by inverses of symmetry
  factors.  Their importance was stressed in
  \cite{GalvezCarrillo-Kock-Tonks:1207.6404}: by dealing with homotopy sums
  instead of ordinary sums, the formulae start to look very much like in the case
  of sets.  For example, given a map of $\infty$-groupoids $X \to S$, we have
  that $X$ is the homotopy sum of its fibres.
\end{blanko}

\begin{blanko}{Slice categories and polynomial functors.}
  Maps of $\infty$-groupoids with codomain $S$ form the objects of
  a slice $\infty$-category
  $\Grpd_{/S}$, which behaves very much like a slice category in ordinary
  category theory.  For example, for the terminal object $*$ we have
  $\Grpd_{/*} \simeq \Grpd$. 
  Again a word of warning is due: when we refer to the 
  $\infty$-category $\Grpd_{/S}$ we only refer to an object determined up to 
  equivalence of $\infty$-categories by a certain universal property (Joyal's 
  insight of defining slice categories as adjoint to a join 
  operation~\cite{Joyal:qCat+Kan}).
  In the Joyal model structure for quasi-categories, this category is 
  represented by an explicit simplicial set.  However, there is more than
  one possibility, depending on which explicit version of the join operator
  is employed (and of course these are canonically equivalent). In the works
  of Joyal and Lurie, these different versions are distinguished, and each has
  some technical advantages.  In the present work we shall only need properties
  that hold for both, and we shall not distinguish them.
  
  Pullback along a morphism $f: T \to S$
  defines an $\infty$-functor $f\upperstar :\Grpd_{/S} \to \Grpd_{/T}$.  This functor
  is right adjoint to the functor $f\lowershriek:\Grpd_{/T} \to \Grpd_{/S}$ given by
  post-composing with $f$.  (This construction requires some care: as
  composition is not canonically defined, one has to choose composites.
  One can check that different choices yield equivalent
  functors.)  The following Beck-Chevalley rule (push-pull formula)
  \cite{Gepner-Kock}
  holds for $\infty$-groupoids: given a pullback square
  $$\xymatrix{
  \cdot \drpullback \ar[r]^f \ar[d]_p & \cdot \ar[d]^q \\
  \cdot \ar[r]_g & \cdot}$$
  there is a canonical equivalence of functors 
  \begin{equation}\label{BC}
  p\lowershriek \circ f\upperstar \simeq g\upperstar \circ q\lowershriek .
 \end{equation}
\end{blanko}

\begin{blanko}{Families.}
  A map of $\infty$-groupoids $X \to S$ can be interpreted as a family of 
  $\infty$-groupoids parametrised by $S$, namely the fibres $X_s$.  Just as 
  for sets, the same family
  can also be interpreted as a presheaf $S \to \Grpd$.  Precisely,
  for each $\infty$-groupoid $S$, we have the fundamental equivalence
  $$
  \Grpd_{/S} \isopil \Fun(S,\Grpd) ,
  $$
  which takes a family $X \to S$ to the functor sending $s \mapsto X_s$. 
  In the other direction, given a functor $F:S\to\Grpd$,
  its colimit is the total space of a family $X \to S$.
\end{blanko}

\begin{blanko}{Symmetric monoidal $\infty$-categories.}
  There is a notion of symmetric monoidal $\infty$-category, but it is
  technically more involved than the $1$-category case, since in general higher
  coherence data has to be specified beyond the $1$-categorical associator and
  Mac Lane pentagon condition.  This theory has been developed in detail by
  Lurie \cite[Ch.2]{Lurie:HA}, subsumed in the
  general theory of $\infty$-operads.  In the present work, a few monoidal
  structures play an important role, but since they are directly induced by
  cartesian product, we have preferred to deal with
  them in an informal (and possibly not completely rigorous) way, 
  with the same freedom as one deals with cartesian products in
  ordinary category theory.  In these specific cases the formal treatment should
  not present any technical difficulties.

\end{blanko}


\section{Decomposition spaces}
\def\inputfile{simplicial.tex}

\subsection{Simplicial preliminaries}

Our work relies heavily on simplicial machinery.  We briefly review the
notions needed, to establish conventions and notation. 




\begin{blanko}{The simplex category (the topologist's Delta).}
  Recall that the `simplex category' $\Delta$ is the category whose objects are the nonempty
  finite ordinals
  $$
  [k] := \{ 0,1,2,\ldots, k\} ,
  $$
  and whose morphisms are the monotone maps.  These are generated by
  the coface maps $d^i : [n-1]\to [n]$, which are the monotone
  injective functions for which $i \in [n]$ is not in
  the image, and codegeneracy maps $s^i:[n+1] \to [n]$, which are
  monotone surjective functions for which $i \in [n]$ has a double preimage.
  We write $d^\bot:=d^0$ and
  $d^\top:=d^n$ for the outer coface maps.
\end{blanko}

\begin{blanko}{Generic and free maps.}\label{generic-and-free}
  The category $\Delta$ has a generic-free factorisation system.
  A morphism of $\Delta$ is termed \emph{generic}, and written
  $g:[m]\genmap [n]$, if it preserves end-points, $g(0)=0$ and
  $g(m)=n$.  A morphism is termed \emph{free}, and written $f: [m]\rat
  [n]$, if it is distance preserving, $f(i+1)=f(i)+1$ for $0\leq i\leq
  m-1$.  The generic maps are generated by the codegeneracy maps and
  the inner coface maps, and the free maps are generated by the outer
  coface maps.  Every morphism in $\Delta$ factors uniquely as a generic
  map followed by a free map, as detailed below.
\end{blanko}

\begin{blanko}{Background remarks.}
  The notions of generic and free maps are general notions in category theory,
  introduced by Weber \cite{Weber:TAC13,Weber:TAC18}, who extracted the
  notion from earlier work of Joyal~\cite{Joyal:foncteurs-analytiques}; a
  recommended entry point to the theory is
  Berger--Melli\`es--Weber~\cite{Berger-Mellies-Weber:1101.3064}.
  The notion makes sense for example whenever there is a cartesian monad on a
  presheaf category $\CC$: in the Kleisli category, the free maps are those from
  $\CC$, and the generic maps are those generated by the monad. In practice,
  this is restricted to a suitable subcategory of combinatorial nature.
  In the case at
  hand the monad is the free-category monad on the category of directed
  graphs, and $\Delta$ arises as the restriction of the Kleisli category
  to the subcategory of non-empty linear graphs.
  Other important instances of generic-free factorisation systems are found in
  the category of rooted trees \cite{Kock:0807} (where the monad is the
  free-operad monad), the category of Feynman graphs~\cite{Joyal-Kock:0908.2675}
  (where the monad is the free-modular-operad monad), the category of directed
  graphs~\cite{Kock:1407.3744} (where the monad is the free-properad monad), and
  Joyal's cellular category $\Theta$ \cite{Berger:Adv} (where the monad is the
  free-omega-category monad).
\end{blanko}

\begin{blanko}{Amalgamated ordinal sum.}
   The {\em amalgamated ordinal sum over $[0]$} of two
   objects $[m]$ and $[n]$, denoted $[m]\pm [n]$,
   is given by the pushout of free maps
   \begin{equation}\label{pm}\vcenter{\xymatrix@C+1.2pt{
   [0] \ar@{ >->}[r]^{(d^\top)^n} \ar@{ >->}[d]_{(d^\bot)^m} & [n]
   \ar@{ >->}[d]^{(d^\bot)^m}\\
   [m] \ar@{ >->}[r]_(0.4){(d^\top)^n} & [m]\pm[n] \makebox[0em][l]{${}=[m+n]$}\ulpullback
   }}\end{equation}
   This operation is not functorial on all maps in $\Delta$, 
   but on the subcategory $\Deltagen$ of generic maps it is functorial
   and defines a monoidal structure on $\Deltagen$
   (dual to ordinal sum (cf.~Lemma~\ref{lem:Delta-duality})).

   The free maps $f:[n]\rat [m]$ are precisely the maps
   that can be written
    $$
    f:[n]\rat [a]\pm[n]\pm[b].
    $$
    Every generic map with source $[a]\pm[n]\pm[b]$ splits as
    $$
    (\xymatrix@!C{{}[a]\ar@{ ->|}[r]^{g_1}&[a']})
    \;\pm\;
    (\xymatrix@!C{{}[n]\ar@{ ->|}[r]^{g}&[k]})
    \;\pm\;
    (\xymatrix@!C{{}[b]\ar@{ ->|}[r]^{g_2}&[b']})
    $$

    With these observations we can be explicit about the
    generic-free factorisation:
\end{blanko}

\begin{lemma}\label{lem:genfactsplit}
  With notation as above, the generic-free factorisation of a free map $f$
  followed by a generic map $g_1\pm g\pm g_2$ is given by
    \begin{equation}\label{gffg}\vcenter{\xymatrix{
     [n] \ar@{ >->}[r]^-f\ar@{->|}[d]_g & [a]\pm[n]\pm[b] \ar@{->|}[d]^{g_1\pm 
     g\pm g_2} \\
     [k]\ar@{ >->}[r] & [a']\pm[k]\pm[b']
  }}\end{equation}
\end{lemma}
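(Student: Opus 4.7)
\begin{proof*}{Proof sketch.}
The plan is to exhibit the square \eqref{gffg} explicitly, verify it commutes, and then invoke uniqueness of the generic-free factorisation (stated in \ref{generic-and-free}) to conclude it is \emph{the} generic-free factorisation of $(g_1\pm g\pm g_2)\circ f$.

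First I would unpack the two maps on elements. The free map $f:[n]\rat [a]\pm[n]\pm[b]=[a+n+b]$ is by construction the middle inclusion $i\mapsto a+i$. The generic map $g_1\pm g\pm g_2:[a]\pm[n]\pm[b]\to [a']\pm[k]\pm[b']$ is defined, piecewise on the three summands of the amalgamated sum, by $g_1$, $g$, $g_2$; in particular on the middle summand an element $a+i$ is sent to $a'+g(i)$ (here one uses that $g$ is generic, so $g(0)=0$ and $g(n)=k$, which is exactly what is needed to glue the pieces together along the shared endpoints). Hence the composite $(g_1\pm g\pm g_2)\circ f$ is the map $[n]\to[a'+k+b']$, $i\mapsto a'+g(i)$.

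Next I would verify that the proposed factorisation through $[k]$ recovers this same map. The bottom-left route takes $i\mapsto g(i)\mapsto a'+g(i)$, since the bottom horizontal arrow is, by the very definition of amalgamated sum, the middle inclusion $[k]\rat[a']\pm[k]\pm[b']$, $j\mapsto a'+j$. So the square commutes.

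Finally I would observe that $g:[n]\to [k]$ is generic by hypothesis, and that the middle inclusion $[k]\rat [a']\pm[k]\pm[b']$ is free (being of the form $(d^\bot)^{a'}(d^\top)^{b'}$, manifestly distance preserving). Thus the factorisation displayed in \eqref{gffg} is a generic-then-free factorisation of $(g_1\pm g\pm g_2)\circ f$; by the uniqueness part of the generic-free factorisation system on $\Delta$ recalled in \ref{generic-and-free}, it is the generic-free factorisation. The only point requiring a moment's care is the bookkeeping of the amalgamation on the middle summand, i.e.\ checking that the piecewise description of $g_1\pm g\pm g_2$ is compatible at the two amalgamation points $\{0,n\}\subset[n]$; this is precisely the endpoint-preserving condition defining genericness of $g$, $g_1$, $g_2$, so no real obstacle arises.
\end{proof*}
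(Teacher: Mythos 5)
Your proof is correct and is essentially the argument the paper intends: the paper states this lemma without proof, as an immediate consequence of the preceding explicit descriptions of free maps as middle inclusions and of generic maps out of $[a]\pm[n]\pm[b]$ as external sums. Your element-level verification of commutativity, together with the observation that $g$ is generic and the bottom map is free so that uniqueness of the factorisation system clinches it, is exactly the intended (omitted) argument.
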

\begin{blanko}{Identity-extension squares.}
   A square \eqref{gffg} in which $g_1$ and $g_2$ are identity maps is called an  {\em identity-extension square}.
\end{blanko}
\begin{lemma}\label{genfreepushout}
  Generic and free maps in $\Delta$ admit pushouts along each other, and the 
  resulting maps are again generic and free. In fact, generic-free pushouts are precisely the identity extension squares.
\begin{equation*}
\vcenter{\xymatrix{
     [n] \ar@{ >->}[r]\ar@{->|}[d] & [a]\pm[n]\pm[b] \ar@{->|}[d] \\
     [k]\ar@{ >->}[r] & [a]\pm[k]\pm[b]
}}\end{equation*}
%
\end{lemma}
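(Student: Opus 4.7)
The plan is to verify the universal property of the pushout directly, building the mediating map block-by-block via the canonical decomposition $[a]\pm[n]\pm[b]$ of the codomain of a free map out of $[n]$. Commutativity of the proposed square is already an instance of Lemma \ref{lem:genfactsplit} applied with $g_1=\id_{[a]}$ and $g_2=\id_{[b]}$, so the only substantive content is the universal property and the uniqueness of the shape.

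For the pushout property, suppose given a cocone consisting of $u\colon [a]\pm[n]\pm[b]\to[m]$ and $v\colon [k]\to[m]$ with $u\circ f = v\circ g$. I define a mediator $\gamma\colon [a]\pm[k]\pm[b]\to[m]$ block-wise: on the outer $[a]$ and $[b]$ blocks $\gamma$ is forced to agree with $u$, since $\id_{[a]}\pm g\pm\id_{[b]}$ is the identity on those blocks; on the middle $[k]$-block $\gamma$ is forced to agree with $v$, since the lower free inclusion picks out exactly that block. Well-definedness at the two gluing points $a$ and $a+k$ reduces to the cocone condition: $\gamma(a)=v(0)=v(g(0))=u(f(0))=u(a)$, and symmetrically $\gamma(a+k)=v(k)=v(g(n))=u(f(n))=u(a+n)$. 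Monotonicity within each block is inherited from $u$ and $v$, and monotonicity across boundaries follows from these same identities. Both triangle conditions then hold by construction, and the block-wise forcing just described gives uniqueness.

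For the second assertion, that generic-free pushouts are \emph{precisely} the identity extension squares: one direction is the construction above. Conversely, given any pushout square of a generic along a free, the opposite leg is again free (it is the free inclusion $[k]\rat [a]\pm[k]\pm[b]$ obtained by pushing the free data $a,b$ across $g$), and the remaining generic map, by Lemma \ref{lem:genfactsplit}, must split as $g'_1 \pm g' \pm g'_2$. Matching this against the mediator constructed above forces $g'_1 = \id_{[a]}$, $g'_2 = \id_{[b]}$ and $g'=g$, so the square is of identity-extension form. The only real obstacle throughout is the notational bookkeeping around the block decomposition; there is no deeper technical difficulty, since all substance is carried by the generic-free factorisation system of Lemma \ref{lem:genfactsplit}.
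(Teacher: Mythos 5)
Your proof is correct, and it is the argument the paper has in mind: the paper states this lemma without proof, treating it as immediate from the preceding description of free maps as inclusions $[n]\rat[a]\pm[n]\pm[b]$ and the splitting of generic maps out of amalgamated sums, and your block-wise verification of the universal property (with well-definedness at the gluing points coming from endpoint-preservation of $g$) is exactly that routine check made explicit. The converse direction could be stated slightly more directly as an instance of uniqueness of pushouts, but your version is fine.
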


These pushouts are fundamental to this work.  We will define
decomposition spaces to be simplicial spaces $X:\Delta\op\to\Grpd$
that send these pushouts to pullbacks.

The previous lemma has the following easy corollary.
\begin{cor}\label{d1s0Delta}
  Every codegeneracy map is a pushout (along a free map) of $s^0:[0]\to[1]$,
  and every generic coface maps is a pushout (along a free map) of 
  $d^1:[2]\to[1]$.
%
\end{cor}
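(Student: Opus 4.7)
The plan is to realize every codegeneracy (resp.\ every generic coface) as the right-hand generic arrow of an identity-extension square whose left-hand generic arrow is $s^0$ (resp.\ $d^1$); once this is in place, Lemma~\ref{genfreepushout} immediately identifies such a square as a generic-free pushout.

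Concretely, using the amalgamated ordinal sum, the codegeneracy $s^i:[n+1]\to[n]$ admits the decomposition $s^i = \id_{[i]}\pm s^0 \pm\id_{[n-i]}$, where $s^0$ is the unique codegeneracy out of $[1]$. This exhibits $s^i$ as the right-hand vertical of the identity-extension square
$$\xymatrix{
[1] \ar@{ >->}[r] \ar@{->|}[d]_{s^0} & [i]\pm[1]\pm[n-i] \ar@{->|}[d]^{s^i} \\
[0] \ar@{ >->}[r] & [i]\pm[0]\pm[n-i]
}$$
so by Lemma~\ref{genfreepushout} $s^i$ is a pushout of $s^0$ along the free inclusion $[1]\rightarrowtail[n+1]$. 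For the second claim, recall that the generic coface maps are precisely the inner cofaces $d^j:[n-1]\to[n]$ with $0<j<n$; each admits the analogous decomposition $d^j = \id_{[j-1]}\pm d^1 \pm\id_{[n-j-1]}$ with $d^1$ the unique inner coface of $[2]$, and the corresponding identity-extension square realizes $d^j$ as the pushout of $d^1$ along a free map.

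There is no genuine obstacle here: the corollary is essentially a verification that the $\pm$-splitting principle underlying Lemma~\ref{lem:genfactsplit} reduces the two classes of elementary generic maps (codegeneracies and inner cofaces) to the single primitive cases $s^0$ and $d^1$. The one point requiring any care is to check that the displayed $\pm$-decompositions really do compute to $s^i$ and $d^j$ on the nose, which is immediate from unwinding the pushout defining $\pm$ and the fact that $s^0$ collapses two adjacent elements while $d^1$ inserts one.
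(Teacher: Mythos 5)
Your proof is correct and is exactly the argument the paper intends: the paper states this as an ``easy corollary'' of Lemma~\ref{genfreepushout} without writing out a proof, and the intended verification is precisely your observation that $s^i=\id_{[i]}\pm s^0\pm\id_{[n-i]}$ and $d^j=\id_{[j-1]}\pm d^1\pm\id_{[n-j-1]}$ exhibit each elementary generic map as the right-hand leg of an identity-extension square, which by that lemma is a generic-free pushout. (The only cosmetic discrepancy is that the corollary's statement writes the sources and targets of $s^0$ and $d^1$ in the opposite order; your conventions $s^0:[1]\to[0]$ and $d^1:[1]\to[2]$ are the ones consistent with the paper's definition of cofaces and codegeneracies.)
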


\def\inputfile{segal.tex}

\subsection{Segal spaces}

\begin{blanko}{Simplicial $\infty$-groupoids.}\label{simpl-Grpd}
  Our main object of study will be simplicial $\infty$-groupoids subject to
  various exactness conditions, all formulated in terms of pullbacks.
  More precisely we work in the functor $\infty$-category
  $$
  \Fun(\Delta\op,\Grpd) ,
  $$
  whose vertices are functors from the $\infty$-category $\Delta\op$ to the
  $\infty$-category $\Grpd$. 
  In particular, the simplicial
  identities for $X:\Delta\op\to\Grpd$ are not strictly commutative squares;
  rather they are $\Delta[1]\times \Delta[1]$-diagrams in $\Grpd$, hence come
  equipped with a homotopy between the two ways around in the square.  But this
  is precisely the setting for pullbacks.
\end{blanko}

Consider a simplicial $\infty$-groupoid $X: \Delta\op \to \Grpd$.
We recall the {\em Segal maps}
$$
(\partial_{0,1},\dots,\partial_{r-1,r}):X_r \longrightarrow X_1 \times_{X_0} 
\cdots \times_{X_0} X_1 \qquad r\geq 0.
$$
where $\partial_{k-1,k}:X_r\to X_1$ is induced by the map 
$[1]\rat[r]$ 
sending 0,1  to $k-1,k$.

A {\em Segal space} is a simplicial $\infty$-groupoid satisfying the Segal 
condition,
namely that the Segal maps are equivalences.

%
%

\begin{lemma}\label{segalpqr}
  The following conditions are equivalent, for any simplicial $\infty$-groupoid $X$:
\begin{enumerate}
\item $X$ satisfies the Segal condition,
$$X_r \stackrel\simeq\longrightarrow X_1 \times_{X_0} \cdots \times_{X_0} X_1 \qquad r\geq 0.$$
\item The following square is a pullback for all $p,q\geq r$
$$
\vcenter{\xymatrix{
   X_{p-r+q}\dto_{{{d_{p+1}}^{q-r}}}
\drpullback 
\rto^-{{d_{0}}^{p-r}}
 &  X_q\dto^{{{d_{r+1}}^{q-r}}} \\
   X_{p}\rto_-{{d_{0}}^{p-r}}  &  X_{r}
  }}
$$
\item The following square is a pullback for all $n>0$
$$
\vcenter{\xymatrix{
   X_{n+1}\dto_{d_\top}\drpullback 
\rto^-{d_\bot}
 &  X_n\dto^{d_\top} \\
   X_{n}\rto_-{d_\bot}  &  X_{n-1}
  }}
$$
\item The following square is a pullback for all $p,q\geq 0$
$$
\vcenter{\xymatrix{
   X_{p+q}\dto_{{{d_{p+1}}^{q}}}
\drpullback 
\rto^-{{d_{0}}^{p}}
 &  X_q\dto^{{{d_{1}}^{q}}} \\
   X_{p}\rto_-{{d_{0}}^{p}}  &  X_{0}
  }}
$$
\end{enumerate}
\end{lemma}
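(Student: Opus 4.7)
The plan is to close the cycle $(1) \Rightarrow (2) \Rightarrow (3) \Rightarrow (1)$ and additionally note that $(2) \Leftrightarrow (4)$, with the pullback pasting lemma (Lemma~\ref{pbk}) doing all the real work.

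For $(1) \Rightarrow (2)$, I would unfold the Segal condition on every term: each $X_n$ becomes the iterated pullback $X_1 \times_{X_0} \cdots \times_{X_0} X_1$ with $n$ factors. Under this identification, the iterated face map $d_0^{p-r}\colon X_p \to X_r$ becomes the projection onto the last $r$ factors, while $d_{r+1}^{q-r}\colon X_q \to X_r$ becomes the projection onto the first $r$ factors. The fibre product in (2) therefore glues the last $r$ factors of $X_p$ to the first $r$ factors of $X_q$, producing a pullback of $p-r+q$ copies of $X_1$ over $X_0$, which by (1) is $X_{p-r+q}$. The implications $(2) \Rightarrow (3)$ and $(2) \Rightarrow (4)$ are then just the specializations $p = q = n$, $r = n-1$ and $r = 0$, respectively; one checks that $d_0^{p-r} = d_\bot$ and $d_{r+1}^{q-r} = d_\top$ in the first case.

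For $(3) \Rightarrow (1)$, I would induct on $r$. The cases $r \leq 1$ are trivial, and $r = 2$ is exactly (3) with $n = 1$ once one verifies that $d_\top$ and $d_\bot$ on $X_1$ encode the correct common vertex (a simplicial identity check). For the inductive step, assume the Segal condition for all $k \leq r$. By (3) with $n = r$, $X_{r+1} \simeq X_r \times_{X_{r-1}} X_r$ along $(d_\top, d_\bot)$. The induction hypothesis identifies the two face maps with projections onto the first and last $r-1$ factors of $X_1^{\times_{X_0} r}$ respectively, so the fibre product becomes $X_1^{\times_{X_0}(r+1)}$. Similarly, $(4) \Rightarrow (1)$ follows by induction from $X_r \simeq X_{r-1} \times_{X_0} X_1$ (case $p = r-1, q = 1$), and then $(1) \Rightarrow (2)$ closes the loop to complete $(4) \Rightarrow (2)$.

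The main obstacle is purely combinatorial rather than conceptual: the delicate point is keeping track of which iterated face map corresponds to which projection in the Segal decomposition, and applying the pasting lemma in the right order so that the unfolded pullbacks line up. No ingredient beyond Lemma~\ref{pbk} and standard simplicial identities is required, and every step is an elementary manipulation of pullbacks of $\infty$-groupoids over $X_0$.
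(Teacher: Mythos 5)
Your proposal is correct and follows essentially the same route as the paper's (very terse) proof: unfold the Segal condition to see the iterated face maps as projections for $(1)\Rightarrow(2)$, observe that (3) and (4) are specialisations of (2), and close the cycle by induction using the pasting lemma. The only cosmetic difference is that you bring (3) back into the cycle via a direct induction $(3)\Rightarrow(1)$, whereas the paper instead pastes squares of type (3) horizontally and vertically to recover (2); both are immediate applications of Lemma~\ref{pbk}.
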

\begin{proof}
It is straightforward to show that the Segal condition implies (2). 
Now (3) and (4) are special cases of (2). Also (3) implies (2):
the pullback in (2) is a composite of pullbacks of the type given in (3). 
Finally one shows inductively that (4) implies the Segal condition (1).
\end{proof}  

A map $f:Y\to X$ of simplicial spaces is {\em cartesian} on an arrow $[n]\to[k]$ in $\Delta$ if the naturality square for $f$ with respect to this arrow is a pullback.
\begin{lemma}\label{cart/Segal=Segal}
    If a simplicial map $f: Y \to X$ is 
    cartesian on outer face maps, and if $X$ 
    is a Segal space, then $Y$ is a Segal space too.
\end{lemma}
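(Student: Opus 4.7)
The plan is to verify the Segal condition for $Y$ via characterization~(3) of Lemma~\ref{segalpqr}: it suffices to show that for each $n>0$ the square
$$\xymatrix{
   Y_{n+1}\dto_{d_\top}\rto^-{d_\bot} &  Y_n\dto^{d_\top} \\
   Y_{n}\rto_-{d_\bot}  &  Y_{n-1}
}$$
is a pullback. I would embed this square as the left-hand face of the prism
$$\xymatrix{
Y_{n+1} \rto^{d_\bot} \dto_{d_\top} & Y_n \rto^{f} \dto^{d_\top} & X_n \dto^{d_\top} \\
Y_n \rto_{d_\bot} & Y_{n-1} \rto_{f} & X_{n-1}
}$$
and apply the prism lemma (Lemma~\ref{pbk}).

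The right-hand square is a naturality square for $f$ at the outer face map $d_\top$, hence a pullback by hypothesis. To handle the outer rectangle, I would use naturality of $f$ at $d_\bot$ to rewrite $f\circ d_\bot = d_\bot\circ f$, thereby presenting the outer rectangle as the horizontal composite
$$\xymatrix{
Y_{n+1} \rto^{f} \dto_{d_\top} & X_{n+1} \rto^{d_\bot} \dto^{d_\top} & X_n \dto^{d_\top} \\
Y_n \rto_{f} & X_n \rto_{d_\bot} & X_{n-1}
}$$
Its left square is again a naturality square of $f$, this time at the outer face map $d_\top$, so a pullback by hypothesis; and its right square is a pullback because $X$ is Segal (Lemma~\ref{segalpqr}(3)). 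Consequently the outer rectangle of the original prism is a pullback.

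With the right-hand square and the outer rectangle of the original prism both established as pullbacks, Lemma~\ref{pbk} yields that the left-hand square is a pullback, which is precisely the required Segal square for $Y$. The main obstacle is really just the choice of configuration: the desired pullback lives entirely inside $Y$, so the argument must route through $X$ via $f$ and exploit cartesianness of $f$ at \emph{both} outer face maps $d_\bot$ and $d_\top$ in order to transport the Segal pullback from $X$ back to $Y$.
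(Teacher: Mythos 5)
The paper states this lemma without proof, so there is nothing to compare against directly; your argument is correct and is exactly the routine prism-lemma computation the authors use for all the neighbouring results (e.g.\ Proposition~\ref{prop:segal=>decomp1} and Proposition~\ref{Dec=Segal+cULF}). One tiny remark: your closing sentence slightly oversells the hypotheses --- cartesianness is only needed at the $d_\top$ maps (naturality at $d_\bot$ is used merely for commutativity, to identify the outer rectangle with the composite through $X$), though by symmetry one could equally run the whole argument using only the $d_\bot$ maps.
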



\begin{blanko}{Rezk completeness.}
  Let $J$ denote the (ordinary) nerve of the 
  groupoid generated by one isomorphism $0 \to 1$.  
  A Segal space $X$ is {\em Rezk complete} when 
  the natural map
  $$
  \Map(*, X) \to \Map(J,X)
  $$
  (obtained by precomposing with $J \to *$)
  is an equivalence of $\infty$-groupoids.
  It means that the space of identity arrows is
  equivalent to the space of equivalences.  
  (See \cite[Thm.6.2]{Rezk:MR1804411}, \cite{Bergner:0610239} 
  and \cite{Joyal-Tierney:0607820}.) 
  A Rezk complete Segal space is also called a {\em Rezk category}.
\end{blanko}

\begin{blanko}{Ordinary nerve.}
    Let $\CC$ be a small $1$-category.  The {\em nerve} of $\CC$ is the simplicial set
  \begin{eqnarray*}
 N\CC:   \Delta\op & \longrightarrow & \Set  \\
    {}[n]  & \longmapsto & \Fun([n],\CC) ,
  \end{eqnarray*}
  where $\Fun([n],\CC)$ is the {\em set} of strings of $n$ composable arrows.
  Sub\-examples of this are given by any poset or any monoid.  The simplicial
  sets that arise like this are precisely those satisfying the Segal condition
  (which is strict in this context).  If each set is regarded as a discrete
  $\infty$-groupoid, $N\CC$ is thus a Segal space.  In general it is not
  Rezk complete, since some object may have a nontrivial automorphism.
As an example, if $\CC$ is a
  one-object groupoid (i.e.~a group), then inside $(N\CC)_1$ the space of
  equivalences is the whole set $(N\CC)_1$, but the degeneracy map $s_0 :
  (N\CC)_0 \to (N\CC)_1$ is not an equivalence (unless the group is trivial).
\end{blanko}

\begin{blanko}{The fat nerve of an essentially small $1$-category.}\label{fatnerve}
  In most cases it is more interesting to consider the
  {\em fat nerve}, defined as the simplicial {\em groupoid} 
  \begin{eqnarray*}
    X :\Delta\op & \longrightarrow & \Grpd  \\
    {}[k] & \longmapsto & \Map(\Delta[k],\CC),
  \end{eqnarray*}
  where $\Map(\Delta[k],\CC)$ is the mapping space, defined as the
  maximal subgroupoid of the functor category $\Fun(\Delta[k],\CC)$.
  In other words, $(\mathbf N\CC)_n$ is the groupoid
  whose objects are strings of $n$ composable arrows in $\CC$ and
  whose morphisms are connecting isos between such strings:
   $$
   \xymatrix{
   \cdot \ar[r] \ar[d]^*-[@]=0+!L{\scriptstyle \sim} & \cdot \ar[d]^*-[@]=0+!L{\scriptstyle \sim}
   \ar[r]  & \cdot \ar[r] \ar[d]^*-[@]=0+!L{\scriptstyle \sim} &\cdots\ar[r]&
    \cdot \ar[d]^*-[@]=0+!L{\scriptstyle \sim} \\
   \cdot \ar[r] & \cdot\ar[r] & \cdot\ar[r] &\cdots\ar[r]& \cdot
   }$$
   It is straightforward to check the Segal condition, remembering that the
   pullbacks involved are homotopy pullbacks.  For instance, the pullback
   $X_1\times_{X_0} X_1$ has as objects strings of `weakly composable'
   arrows, in the sense that the target of the first arrow is isomorphic
   to the source of the second, and a comparison isomorphism is specified.  The
   Segal map $X_2 \to X_1 \times_{X_0} X_1$ is the inclusion of the subgroupoid
   consisting of strictly composable pairs.  But any
   weakly composable pair is isomorphic to a strictly composable pair, and
   the comparison isomorphism is unique, hence the inclusion $X_2 \into
   X_1\times_{X_0} X_1$ is an equivalence.  Furthermore, the fat nerve is Rezk 
   complete.  Indeed, it is easy to see that inside $X_1$, the equivalences
   are the invertible arrows of $\CC$. But any invertible arrow is
   equivalent to an identity arrow.


  Note that if $\CC$ is a category with no non-trivial
  isomorphisms (e.g.~any \M category in the sense of Leroux)
  then the fat nerve coincides with the ordinary nerve, and if
  $\CC$ is just equivalent to such a category
  then the fat nerve is level-wise equivalent to the ordinary nerve of any
  skeleton of $\CC$.
\end{blanko}

\begin{blanko}{Joyal--Tierney $t\uppershriek$ --- the fat nerve of an 
  $\infty$-category.}
  The fat nerve construction is just a special case of the general
  construction $t\uppershriek$ of Joyal and Tierney~\cite{Joyal-Tierney:0607820},
  which is a functor from quasi-categories to complete Segal spaces, meaning 
  specifically certain simplicial objects in the category of Kan complexes:
  given a quasi-category $\CC$, the complete Segal space $t\uppershriek \CC$
  is given by
  \begin{eqnarray*}
    \Delta\op & \longrightarrow & \kat{Kan}  \\
    {}[n] & \longmapsto & \big[ [k] \mapsto \kat{sSet}(\Delta[n] \times \Delta'[k], 
    \CC) \big]
  \end{eqnarray*}
  where $\Delta'[k]$ denotes the groupoid freely generated by a string of $k$
  invertible arrows.  They show that $t\uppershriek$ constitutes in fact a (right)
  Quillen equivalence between the simplicial sets with the Joyal model
  structure, and bisimplicial sets with the Rezk model structure. 
  
  Taking a more invariant viewpoint, talking about $\infty$-groupoids
  abstractly, the Joyal--Tierney $t\uppershriek$ functor associates to an
  $\infty$-category $\CC$ the Rezk complete Segal space
  \begin{eqnarray*}
    \Delta\op & \longrightarrow & \Grpd  \\
    {}[n] & \longmapsto & \Map(\Delta[n], \CC) .
  \end{eqnarray*}
\end{blanko}

\begin{blanko}{Fat nerve of bicategories with only invertible $2$-cells.}
  From a bicategory $\CC$ with only invertible $2$-cells one can get a complete
  Segal bigroupoid by a construction analogous to the fat nerve.  (In fact, this
  can be viewed as the $t\uppershriek$ construction applied to the so-called
  Duskin nerve of $\CC$.)
%
%
%
%
The {\em fat nerve} of a bicategory $\CC$ is the simplicial bigroupoid
\begin{eqnarray*}
\Delta\op & \longrightarrow & \kat{2}\Grpd \\
{}[n] & \longmapsto & \PsFun(\Delta[n],\CC) ,
\end{eqnarray*}
the $2$-groupoid of normalised pseudofunctors.
\end{blanko}

\begin{blanko}{Monoidal groupoids.}
  Important examples of the previous situation come from monoidal groupoids
  $(\MM,\tensor,I)$. 
  The fat nerve construction applied to the classifying space $B\MM$ yields in
  this case a complete Segal bigroupoid, with zeroth space $B\MM^\eq$, the
  classifying space of the full subcategory $\MM^{\eq}$ spanned by the
  tensor-invertible objects.
  
  The fat nerve construction can be simplified considerably in the case that $\MM^{\eq}$
  is contractible.  This happens precisely when every tensor-invertible object is
  isomorphic to the unit object $I$ and $I$ admits no
  non-trivial automorphisms.
\end{blanko}

\begin{prop}\label{prop:BM}
  If $(\MM,\tensor,I)$ is a monoidal groupoid such that $\MM^\eq$ is 
  contractible, then the simplicial bigroupoid given by the classifying space is
  equivalent to the simplicial $1$-groupoid
  \begin{eqnarray*}
    \Delta\op & \longrightarrow & \kat{1-\Grpd}  \\
    {}[n] & \longmapsto & \MM\times\MM\times\dots\times\MM =: \MM^n.
  \end{eqnarray*}
  where the outer face maps project away an outer factor, the inner face maps
  tensor together two adjacent factors, and the degeneracy maps insert a neutral 
  object. 
\end{prop}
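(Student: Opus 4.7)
For each $n \geq 0$, the plan is to construct inverse levelwise equivalences between the bigroupoid $X_n := \PsFun(\Delta[n], B\MM)$ and $\MM^n$ (viewed as a bigroupoid with only identity 2-cells), and then to verify that these are compatible with face and degeneracy maps. The key functor is $R_n: X_n \to \MM^n$ restricting a normalised pseudofunctor $F$ to its principal edges $(F_{01}, F_{12}, \ldots, F_{(n-1)n})$; a strict section $S_n$ sends $(a_1, \ldots, a_n)$ to the functor with $S_n(a_1,\ldots,a_n)_{ij} := a_j \tensor \cdots \tensor a_{i+1}$ (under a fixed bracketing convention) and composition constraints built from the associator of $\MM$.

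The level-$0$ case is direct: $\PsFun(\Delta[0], B\MM)$ is the core sub-bigroupoid of $B\MM$, with one object whose endomorphism monoidal groupoid is $\MM^\eq$. Contractibility of $\MM^\eq$ makes this bigroupoid contractible, matching $\MM^0 = 1$. For $n \geq 1$, we have $R_n S_n = \id_{\MM^n}$ on the nose, and for the converse, given $F$ with principal edges $a_i = F_{(i-1)i}$, iterating the composition $2$-cells of $F$ produces a coherent family of isos $\phi_{ij}: a_j \tensor \cdots \tensor a_{i+1} \isopil F_{ij}$, which serve as the naturality $2$-cells of a pseudonatural equivalence $\alpha: S_n R_n F \Rightarrow F$ whose vertex components are all the unit $1$-cell $I$. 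A modification between two pseudonatural equivalences $F \Rightarrow G$ has, at each vertex, a $2$-cell in $B\MM$ between equivalence $1$-cells, i.e.\ a morphism in $\MM^\eq$; contractibility of $\MM^\eq$ forces this to be uniquely determined, so $X_n$ has no essential $2$-morphisms and is equivalent to the $1$-groupoid $\MM^n$.

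Simplicial compatibility is then checked by inspection of precomposition on $\Delta$: the injection $d^i: [n-1] \to [n]$ skipping vertex $i$ sends the principal edges of $F \circ d^i$ to $(a_1, \ldots, a_{i-1}, a_{i+1}\tensor a_i, a_{i+2}, \ldots, a_n)$ for inner $i$, and drops $a_1$ or $a_n$ for $i = 0, n$; likewise the surjection $s^i$ doubling vertex $i$ inserts the value $F_{ii} = I$ (by the normalisation condition) as an extra principal edge. These match the prescribed face and degeneracy maps on $\MM^n$.

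The main obstacle is the coherence required to make $\alpha$ canonical: one must check that the iterated isos $\phi_{ij}$ are independent of bracketing choices and satisfy the naturality $2$-cell condition on each composable triple $(i \leq j \leq k)$. This reduces, via the pseudofunctor associativity axiom for $F$, to Mac Lane's pentagon coherence in $\MM$; alternatively, one may invoke the general strictification theorem that a pseudofunctor out of a free $1$-category is canonically pseudonaturally equivalent to a strict one, which immediately supplies the required $\alpha$.
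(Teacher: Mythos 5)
The paper deliberately omits the proof of this proposition (``We have omitted the proof, to avoid going into $2$-category theory''), so there is no argument of the authors to compare yours against; I can only assess your plan on its own terms. It is correct and identifies exactly the ingredients such a proof needs. The restriction functor $R_n$ to principal edges with strict section $S_n$ is the right comparison: $R_nS_n=\id$ is immediate, and $S_nR_nF\simeq F$ is the statement that a normalised pseudofunctor out of $\Delta[n]$ --- the free category on the linear graph --- is canonically pseudonaturally equivalent to the strict functor determined by its generating edges, which as you say follows either from the pseudofunctor associativity axiom together with Mac Lane coherence in $\MM$, or wholesale from the strictification theorem. The hypothesis on $\MM^{\eq}$ enters in precisely the two places you use it: to make $X_0\simeq B\MM^{\eq}$ contractible, and to collapse the $2$-cells, since the vertex components of a modification between pseudonatural equivalences land in the full subcategory $\MM^{\eq}$ and are therefore unique.

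Two points deserve more care in a full write-up, though neither threatens the argument. First, to conclude that $X_n$ is equivalent to the $1$-groupoid $\MM^n$ you must also match the $1$-cells, which your plan treats only implicitly: a pseudonatural equivalence $F\Rightarrow G$ has vertex components in $\MM^{\eq}\simeq 1$, so up to a unique modification they may be taken to be $I$, after which the edge components over principal edges yield an $n$-tuple of morphisms between the principal edges of $F$ and $G$, and the components over composite edges are forced by the pseudonaturality axiom; this is what makes $R_n$ fully faithful after collapsing modifications. Second, checking compatibility on the generating cofaces and codegeneracies is the right computation, but the resulting naturality squares commute only up to the associator and unitor isomorphisms of $\MM$ --- consistently with the paper's remark that $[n]\mapsto\MM^n$ is not strictly simplicial unless $\MM$ is strict --- so these isomorphisms and their coherences must be recorded to upgrade the levelwise biequivalences to an equivalence of simplicial objects.
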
 
\noindent We have omitted the proof, to avoid going into $2$-category theory.
(Note that the simplicial $1$-groupoid that we obtain is not {\em strictly}
simplicial, unless the monoidal structure is strict.)

  Examples of monoidal groupoids satisfying the conditions of the Proposition
  are the monoidal groupoid $(\kat{FinSet}, +, 0)$ of finite sets and bijections
  or the monoidal groupoid $(\kat{Vect}, \oplus, \mathbf 0)$ of vector spaces
  and linear isomorphisms under direct sum.  In contrast, the monoidal groupoid
  $(\Vect, \tensor, \ground)$ of vector spaces and linear isomorphisms under
  tensor product is not of this kind, as the unit object has many automorphisms.
  The assignment $[n] \mapsto \Vect^{\otimes n}$ does constitute a Segal $1$-groupoid,
  but it is not Rezk complete.
\subsection{Decomposition spaces}
\label{sec:decomp}

\def\inputfile{new-decomp-section.tex}

Recall from Lemma~\ref{genfreepushout} that generic and free maps in $\Delta$ admit pushouts along each other.
\begin{deff}
  A \emph{decomposition space} is a simplicial $\infty$-groupoid 
  $$
  X:\Delta\op\to\Grpd
  $$ 
such that the image of any pushout diagram in $\Delta$ of a generic map $g$ along 
a free map $f$ is a pullback of $\infty$-groupoids,
$$ X\!\left(\!\! 
\vcenter{\xymatrix{{}
   [p] \drpullback{} &  [m]\ar[l]_{g'}  \\
   [q]\ar[u]^-{f'}   & \ar[l]^{g} [n] \ar[u]_-{f}
  }}
\right)
\qquad=\qquad  \vcenter{\xymatrix{
   X_{p}\ar[d]_{{f'}^*} \ar[r]^-{{g'}^*} \drpullback&  X_m\ar[d]^{{f}^*}  \\
   X_q\ar[r]_-{{g}^*}   &  X_n .
  }}
$$
\end{deff}

\begin{BM}\label{DK}
   The notion of decomposition space can be seen as an 
   abstraction of coalgebra, cf.~Section~\ref{sec:COALG} below:
   it is precisely the condition required to obtain a counital
   coassociative comultiplication on $\Grpd_{/X_1}$.

  The notion 
  is equivalent to the notion of
  unital (combinatorial) $2$-Segal space introduced by Dyckerhoff and 
  Kapranov~\cite{Dyckerhoff-Kapranov:1212.3563} (their Definition~2.3.1, 
  Definition~2.5.2, Definition 5.2.2, Remark~5.2.4).  Briefly, their
  definition goes as follows.  For any triangulation $T$ of a convex polygon
  with $n$ vertices, there is induced a simplicial subset $\Delta^T \subset 
  \Delta[n]$.  A simplicial space $X$ is called $2$-Segal if, for every
  triangulation $T$ of every convex $n$-gon, the induced map $\Map(\Delta[n],X) \to 
  \Map(\Delta^T,X)$ is a weak homotopy equivalence.
  Unitality is defined in terms of pullback conditions involving 
  degeneracy maps, similar to our \eqref{unital-cond} below.  The equivalence between
  decomposition spaces and unital $2$-Segal spaces follows from 
  Proposition~2.3.2 of \cite{Dyckerhoff-Kapranov:1212.3563}
  which gives a pullback criterion for the $2$-Segal condition.
\end{BM}

\begin{blanko}{Alternative formulations of the pullback condition.}
  To verify the conditions of the definition, it will in fact be 
sufficient to check a smaller collection of squares.  On the other hand, the definition will imply that many other squares of interest are pullbacks too.  
The formulation in terms of generic and free maps is preferred
  both for practical reasons and for its conceptual simplicity
  compared to the smaller or larger collections of squares.


Recall from Lemma \ref{genfreepushout} that the generic-free pushouts used in the definition are just the identity extension squares,
$$\xymatrix{
     [n] \ar@{ >->}[d]\ar@{->|}[rr]^-g &&[k] \ar@{ >->}[d] \\
[a]\pm[n]\pm[b]     \ar@{->|}[rr]_-{\id\pm g\pm\id} && [a]\pm[k]\pm[b]}$$
Such a square can be written as a vertical composite of squares in which either $a=1$ and $b=0$, or vice-versa. In turn, since
the generic map $g$ 
is a composite of inner face maps $d^i:[m-1]\to[m]$ ($0<i<m$) and degeneracy maps $s^j:[m+1]\to[m]$, these squares are horizontal composites of pushouts of a single generic $d^i$ or $s^j$ along $d^\bot$ or $d^\top$. 
Thus, to check that $X$ is a decomposition space, it is sufficient to check the following special cases
are pullbacks, for $0<i<n$ and  $0\leq j\leq n$:
$$  
\xymatrix{
   X_{1+n}\ar[d]_{d_\bot}\drpullback 
\ar[r]^-{d_{1+i}}
 &  X_n\ar[d]^{d_\bot} \\
   X_{n}\ar[r]_-{d_i}  &  X_{n-1},
  }
\qquad
\xymatrix{
   X_{n+1}\ar[d]_{d_\top}\drpullback 
\ar[r]^-{d_{i}}
 &  X_n\ar[d]^{d_\top} \\
   X_{n}\ar[r]_-{d_i}  &  X_{n-1},
  }
$$
\begin{equation}\label{unital-cond}
\vcenter{\xymatrix{
   X_{1+n}\ar[r]^{s_{1+j}}\drpullback 
\ar[d]_-{d_\bot}
 & X_{1+n+1}\ar[d]^{d_\bot} \\
  X_n  \ar[r]_-{s_j}  &  X_{n+1},
}}\qquad
\vcenter{\xymatrix{
   X_{n+1}\ar[d]_{d_\top}\drpullback 
\ar[r]^-{s_{j}}
 &  X_{n+1+1} \ar[d]^{d_\top} \\
X_n  \ar[r]_-{s_j}  &  X_{n+1}.
  }}
\end{equation}


The following proposition shows we can be more economic: instead of checking all $0<i<n$ it is enough 
to
check all $n\geq 2$ and {\em some} $0<i<n$, and instead of checking all
$0\leq j \leq n$ it is enough to check the case $j=n=0$.
\end{blanko}

\begin{prop}\label{onlyfourdiags}
A simplicial $\infty$-groupoid $X$ is a decomposition space if and only if 
the following diagrams are pullbacks
$$  
\xymatrix{
   X_1\ar[r]^{s_1}\drpullback 
\ar[d]_-{d_\bot}
 & X_2\ar[d]^{d_\bot} \\
  X_0  \ar[r]_-{s_0}  &  X_1,
}\qquad
\xymatrix{
   X_1\ar[d]_{d_\top}\drpullback 
\ar[r]^-{s_0}
 &  X_2 \ar[d]^{d_\top} \\
X_0  \ar[r]_-{s_0}  &  X_1,
  }
$$
and the following diagrams are pullbacks for some choice of $i=i_n$, 
$0<i<n$, for each $n\geq 2$:
$$  
\xymatrix{
   X_{1+n}\ar[d]_{d_\bot}\drpullback 
\ar[r]^-{d_{1+i}}
 &  X_n\ar[d]^{d_\bot} \\
   X_{n}\ar[r]_-{d_i}  &  X_{n-1},
  }
\qquad
\xymatrix{
   X_{n+1}\ar[d]_{d_\top}\drpullback 
\ar[r]^-{d_{i}}
 &  X_n\ar[d]^{d_\top} \\
   X_{n}\ar[r]_-{d_i}  &  X_{n-1}.
  }
$$
\end{prop}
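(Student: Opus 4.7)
The forward implication is immediate, as the displayed diagrams are particular instances of the generic-free pushouts whose images $X$ must send to pullbacks.

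For the converse, the strategy is to bootstrap from the restricted collection to the full family of identity-extension squares, by combining pullback pasting with the prism lemma (Lemma~\ref{pbk}). Recall that an identity-extension square involves a generic map $g:[n]\genmap[k]$ and a free map $f:[n]\rat[a]\pm[n]\pm[b]$. Factoring $f$ as a composite of single free generators ($d^\bot$ on the left and $d^\top$ on the right) and $g$ as a composite of generic generators (inner face maps and codegeneracies), stacking the corresponding squares horizontally and vertically, and pasting pullbacks, reduces the claim to the case where $g$ is a single generic generator and the extension is by a single $d^\bot$ or $d^\top$.

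It then remains to derive this reduced collection from the hypothesis. Using the decomposition $s^j=\id_{[j]}\pm s^0\pm\id_{[m-j]}$ together with three-step prisms whose outer rectangles collapse to identity rectangles via the simplicial identities $d_j s_j = d_{j+1}s_j = \id$, one propagates the two basic $s^0$-pullbacks upward through all levels, extracting via Lemma~\ref{pbk} at each step a pullback square for the next codegeneracy $s^j$; a straightforward induction on the pair $(j,m)$, in which the side squares of the prism involve already-handled cases (or the face-degeneracy commutation identities $d_i s_j = s_{j-1} d_i$ for $i<j$ and $d_i s_j = s_j d_{i-1}$ for $i>j+1$), yields the codegeneracy pullbacks in full generality. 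With these in hand, an analogous prism construction --- now using an identity of the form $d_i s_{i-1}=\id$ to force a trivial outer rectangle, and incorporating one codegeneracy square together with the single given $d^{i_n}$-pullback at level $n$ --- propagates the latter to $d^i$-pullbacks for all remaining $0<i<n$.

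The principal obstacle is the combinatorial bookkeeping required in this final reduction: for each target square one must select a prism in which the outer rectangle and one constituent square are among the already-established pullbacks, verify commutativity from the standard simplicial identities, and then invoke Lemma~\ref{pbk} to extract the third. No new conceptual ingredient is needed, but the case analysis is somewhat intricate and must be ordered so that the codegeneracy family is completed before being used to access the face family.
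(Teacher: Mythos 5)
Your first reduction --- pasting general identity-extension squares down to the atomic squares of a single generic generator against a single outer face map --- is exactly the paper's, and the forward implication is indeed immediate. The trouble is in the second reduction, and specifically in the order you impose on it. You insist on completing the codegeneracy family before touching the face family, but the only prisms with trivial outer rectangle that contain a codegeneracy square have the form $[s_j\text{-square}]\,[d_i\text{-square}]$ with $i=j$ or $i=j+1$, since $d_is_j=\id$ only in that order (degeneracy-after-face is never the identity). Lemma~\ref{pbk} then lets you conclude that the \emph{left-hand} (codegeneracy) square is a pullback from the outer rectangle and the \emph{right-hand} (inner face) square --- so the inner-face squares for the specific indices $i=j,j+1$, and not merely the single hypothesised $i_n$, must already be available. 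Your ordering is therefore circular, or at best leaves the codegeneracy induction unsupported: the two basic $s_0$-squares live entirely in degrees $\leq 2$ and cannot reach the higher degeneracy squares except through face maps.

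The face step is where the argument actually breaks. To propagate the single $d_{i_n}$-pullback to all $0<i<n$ you propose a prism with trivial outer rectangle built from $d_is_{i-1}=\id$, with a known codegeneracy square on the left and the target $d_i$-square on the right. But Lemma~\ref{pbk} only yields the left-hand square from the outer rectangle and the right-hand square; the direction you need (outer rectangle and left square pullbacks imply right square pullback) is false in general, and it cannot be rescued here since $s_{i-1}$ is far from essentially surjective. The paper's route (Lemma~\ref{lem:fewerdiagrams}) avoids degeneracies entirely at this stage: because there is a unique generic map $[1]\to[n]$, the horizontal composite of one chosen inner-face square per degree, from $X_{1+n}$ down to $X_2$, is independent of the choices; hence in the prism $[d_{1+i}\text{-square}]\,[{d_2}^{n-1}\text{-square}]$ both the outer rectangle and the right-hand square are pullbacks, and Lemma~\ref{pbk}, used in its valid direction, delivers the arbitrary $d_{1+i}$-square on the left. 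Only after that does one treat the degeneracies --- by precisely the prism you describe, read the right way round.
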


\begin{proof}
  To see the non-necessity of the other degeneracy cases, observe
  that for $n>0$, every degeneracy map $s_j: X_n \to X_{n+1}$
  is the section of an {\em inner} face map
  $d_i$ (where $i=j$ or $i=j+1$).  Now in the diagram
  $$
  \xymatrix{
   X_{1+n}\ar[r]^{s_{1+j}} \ar[d]_-{d_\bot} & X_{1+n+1}\ar[d]^{d_\bot} \ar[r]^{d_{1+i}} & 
   X_{1+n} \ar[d]^{d_\bot}\\
  X_n  \ar[r]_-{s_j}  &  X_{n+1} \ar[r]_{d_i} & X_n,
  }$$
  the horizontal composites are identities, so the outer rectangle is a
  pullback, and the right-hand square is a pullback since it is one of cases
  outer face with inner face.  Hence the left-hand square, by Lemma~\ref{pbk}, is a pullback too.
  The case $s_0: X_0 \to X_1$ is the only degeneracy map that is not the section
  of an inner face map, so we cannot eliminate the two cases involving this map.
  The non-necessity of the other inner-face-map cases is the content of the
  following lemma.
\end{proof}

\begin{lemma}\label{lem:fewerdiagrams}
The following are equivalent for a simplicial $\infty$-groupoid $X$. 
\begin{enumerate}
  \item  For each $n\geq2$, the following diagram is a pullback for all $0<i<n$:
$$  
\vcenter{\xymatrix{
   X_{1+n}\ar[d]_{d_\bot}\drpullback 
\ar[r]^-{d_{1+i}}
 &  X_n\ar[d]^{d_\bot} \\
   X_{n}\ar[r]_-{d_i}  &  X_{n-1},
  }}
\qquad\left(\text{resp. }\vcenter{
\xymatrix{
   X_{n+1}\ar[d]_{d_\top}\drpullback 
\ar[r]^-{d_{i}}
 &  X_n\ar[d]^{d_\top} \\
   X_{n}\ar[r]_-{d_i}  &  X_{n-1},
  }}\right)
$$

 \item 
For each $n\geq2$, the above diagram is a pullback for some $0<i<n$.
 \item 
For each $n\geq2$, the following diagram is a pullback:
$$
\vcenter{\xymatrix{
   X_{1+n}\ar[d]_{d_\bot}\drpullback 
\ar[r]^-{{d_2}^{n-1}}
 &  X_2\ar[d]^{d_\bot} \\
   X_n\ar[r]_-{{d_1}^{n-1}}  &  X_{1}
  }}
\qquad\left(\text{resp. }\vcenter{\xymatrix{
   X_{n+1}\ar[d]_{d_\top}\drpullback 
\ar[r]^-{{d_1}^{n-1}}
 &  X_2\ar[d]^{d_\top} \\
   X_n\ar[r]_-{{d_1}^{n-1}}  &  X_{1}
  }}\right)
$$
\end{enumerate}
\end{lemma}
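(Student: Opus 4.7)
My plan is to establish the cyclic chain $(1) \Rightarrow (2) \Rightarrow (3) \Rightarrow (1)$, focusing on the left-hand ($d_\bot$) version; the $d_\top$ case follows by a dual argument. The implication $(1) \Rightarrow (2)$ is immediate by restricting to any single $i$ per dimension. The common engine for the other two implications is the following horizontal pasting diagram, valid for any $j$ with $0 < j < n$:
\[
\xymatrix{
   X_{1+n}\ar[d]_{d_\bot}\ar[r]^-{d_{1+j}}
 &  X_n\ar[d]^{d_\bot}\ar[r]^-{d_2^{n-2}}
 &  X_{2}\ar[d]^{d_\bot}
  \\
   X_{n}\ar[r]_-{d_j}  &  X_{n-1}\ar[r]_-{d_1^{n-2}}  &  X_{1}.
  }
\]
The left square is $P_n^j$; the right square is exactly diagram $(3)$ in dimension $n-1$. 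A short computation with the simplicial identity $d_a d_b = d_{b-1} d_a$ for $a < b$ shows that both composites along the top and bottom are \emph{independent of $j$}: one repeatedly pushes a $d_2$ past the higher-index face maps that arise, decrementing indices until all collapse to $d_2$'s, yielding $d_2^{n-2} \circ d_{1+j} = d_2^{n-1}$, and symmetrically $d_1^{n-2} \circ d_j = d_1^{n-1}$. Hence the outer rectangle coincides with diagram $(3)$ in dimension $n$, for every choice of $j$.

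With this in place, I would argue $(2) \Rightarrow (3)$ by induction on $n$. The base case $n = 2$ is immediate since $(3)$ for $n = 2$ is literally $P_2^1$, the only instance of $(2)$ available there. For the inductive step, take $j = i_n$, the index supplied by $(2)$: the left square is then a pullback by assumption, the right by induction, so the pasting lemma for pullbacks identifies the outer rectangle, which is $(3)$ in dimension $n$, as a pullback.

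For $(3) \Rightarrow (1)$ no secondary induction is needed. Fix $n$ and any $j$ with $0 < j < n$. By hypothesis, both the outer rectangle ($(3)$ in dimension $n$) and the right square ($(3)$ in dimension $n-1$, degenerating to a trivial identity square when $n = 2$) are pullbacks; Lemma~\ref{pbk} then forces the left square $P_n^j$ to be a pullback.

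The main technical obstacle is the simplicial-identity calculation that makes the outer rectangle independent of $j$. Though routine, this is the essential mechanism: it is precisely what allows a single index $i_n$ from $(2)$ to yield the composite condition $(3)$, and dually allows $(3)$ to unlock $P_n^j$ for every $j$ simultaneously.
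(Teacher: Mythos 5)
Your proposal is correct and follows essentially the same route as the paper: $(1)\Rightarrow(2)$ by specialisation, $(2)\Rightarrow(3)$ by horizontally pasting the hypothesised squares (your induction simply unrolls to the paper's composite of the squares across dimensions $2,\dots,n$, with the independence of the composite from the choice of indices justified in the paper by uniqueness of the generic map $[1]\to[n]$ rather than by your explicit simplicial-identity computation), and $(3)\Rightarrow(1)$ via the identical two-square pasting and Lemma~\ref{pbk}. No gaps.
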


\begin{proof} The hypothesised pullback in (2) is a special case of that in (1),
  and that in (3) is a horizontal composite of those in (2), since there is a unique generic map $[1]\to[n]$ in $\Delta$ for each $n$. 
The implication (3) $\Rightarrow$ (1) follows by Lemma~\ref{pbk} and the commutativity for 
$0<i<n$ of the diagram
$$
\vcenter{\xymatrix{
   X_{1+n} \drpullback \ar[r]^{d_{1+i}} \ar[d]_{d_\bot} & 
   X_n \drpullback \ar[r]^{{d_2}^{n-1}} \ar[d]_{d_\bot} &
   X_2 \ar[d]^{d_\bot} \\
   X_n \ar[r]_{d_i} &
   X_{n-1} \ar[r]_{{d_1}^{n-1}} &
   X_1
   }}
   $$
Similarly for the `resp.' case.
\end{proof}

\begin{prop}\label{prop:segal=>decomp1}
  Any Segal space is a decomposition space.
\end{prop}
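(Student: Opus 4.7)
The plan is to reduce the verification of the decomposition space axioms to a finite list of pullback squares via Proposition~\ref{onlyfourdiags} and Lemma~\ref{lem:fewerdiagrams}, and then establish each one from the Segal condition, in its iterated form $X_n \simeq X_1 \times_{X_0} \cdots \times_{X_0} X_1$ from Lemma~\ref{segalpqr}. The basic idea is that once every $X_n$ is replaced by an iterated fibre product of copies of $X_1$ over $X_0$, the relevant face and degeneracy maps become explicit projections, composition maps, or insertions of identities, and the required pullback squares become either manifestly cartesian or base changes of squares that are.

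First I would handle the inner face map squares. Using the reformulation Lemma~\ref{lem:fewerdiagrams}(3), it is enough to check for each $n \geq 2$ that the squares with $d_\bot$ (and dually $d_\top$) relating $X_{1+n}$ and $X_n$ to $X_2$ and $X_1$ are pullbacks. Under the Segal equivalences $X_{1+n} \simeq X_1 \times_{X_0} X_n$ and $X_2 \simeq X_1 \times_{X_0} X_1$ (splitting off the first arrow), the vertical map $d_\bot$ becomes the second projection on both sides, while the horizontal map $d_2^{n-1}$ corresponds to $\id_{X_1} \times d_1^{n-1}$. The square then becomes the base change of the identity on $X_1$ along $d_1^{n-1} \colon X_n \to X_1$, which is a pullback. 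The $d_\top$ case is symmetric, splitting off the last arrow.

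Next I would handle the two degeneracy squares in Proposition~\ref{onlyfourdiags}. For the square involving $s_1 \colon X_1 \to X_2$ and $s_0 \colon X_0 \to X_1$, the Segal equivalence $X_2 \simeq X_1 \times_{X_0} X_1$ turns $s_1$ into $f \mapsto (f, s_0 d_\top f)$, and $d_\bot \colon X_2 \to X_1$ into the second projection. The pullback $X_0 \times_{X_1} X_2$ then picks out triples $(x_0, f, g)$ with $g \simeq s_0(x_0)$; since $g$ is forced to be a degeneracy of $x_0$, which is in turn forced to be the target of $f$, projection onto $f$ yields an equivalence with $X_1$, so the square is a pullback. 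The square involving $s_0 \colon X_1 \to X_2$ and $s_0 \colon X_0 \to X_1$ (with $d_\top$) is dual.

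The main obstacle is purely notational: keeping straight which face and degeneracy maps correspond to which projections, identity insertions, or composition maps once the Segal decomposition has been applied, and ensuring that the correct sides of the fibre products are being pulled back. Conceptually, however, there is nothing subtle: the proof rests only on the iterated Segal condition, the pasting lemma (Lemma~\ref{pbk}), and the fact that in a Segal space every structural map is built out of projections, compositions, and identity insertions, each of which is compatible with the pullback calculus used to define decomposition spaces.
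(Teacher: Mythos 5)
Your proposal is correct, and it follows the paper's overall strategy of reducing to the finitely many squares of Proposition~\ref{onlyfourdiags} (via Lemma~\ref{lem:fewerdiagrams}); where it genuinely differs is in how each square is verified. The paper never unwinds the Segal equivalences: for each square it pastes a second square on the right --- an outer-face square as in Lemma~\ref{segalpqr}(3), or one whose horizontal composites are identities --- so that the outer rectangle and the right-hand square are pullbacks, and Lemma~\ref{pbk} finishes the job. You instead replace $X_{1+n}$ and $X_2$ by the iterated fibre products $X_1\times_{X_0}X_n$ and $X_1\times_{X_0}X_1$ and recognise the squares as base changes along $d_1^{n-1}$, respectively as explicit computations of $X_0\times_{X_1}X_2$. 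Both routes work. The paper's pasting argument is shorter precisely because it sidesteps the bookkeeping you flag as the main obstacle: each identification you assert (that $d_\bot$ becomes the second projection, that $d_2^{n-1}$ becomes $\id\times d_1^{n-1}$, etc.) is a small but genuine computation with simplicial identities that a complete write-up would have to supply. In exchange, your version makes visible \emph{why} the squares are pullbacks, namely that they are base changes of $X_1\times_{X_0}(-)$. One index slip, harmless but symptomatic of exactly the hazard you mention: under $(d_2,d_0)$ one has $d_0s_1=s_0d_0$, so $s_1f$ corresponds to $(f,\,s_0d_\bot f)$, the degeneracy at the \emph{target} of $f$, not $(f,\,s_0d_\top f)$; your subsequent reasoning (``$x_0$ is forced to be the target of $f$'') already uses the correct identification, so the argument is unaffected.
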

\begin{proof}
  Let $X$ be Segal space.
%
In the diagram ($n\geq 2$)
  $$
\vcenter{\xymatrix{
   X_{n+1}\ar[d]_{d_\bot} 
\ar[r]^-{d_n}
 &  X_n\ar[d]_{d_\bot}\drpullback 
\ar[r]^-{d_\top}
 &  X_{n-1}\ar[d]^{d_\bot} \\
   X_{n}\ar[r]_-{d_{n-1}}  &  X_{n-1} \ar[r]_-{d_\top}  &  X_{n-2} ,
  }}
$$
since the horizontal composites are equal to $d_\top\circ d_\top$, both the outer
rectangle and the right-hand square are pullbacks by the Segal condition 
(\ref{segalpqr}~(3)).
Hence the left-hand square is a pullback.  This establishes the third
pullback condition in Proposition~\ref{onlyfourdiags}. 
In the diagram
  $$
\vcenter{\xymatrix{
   X_{1}\ar[d]_{d_\bot} 
\ar[r]^-{s_1}
 &  X_2\ar[d]_{d_\bot}\drpullback 
\ar[r]^-{d_\top}
 &  X_1\ar[d]^{d_\bot} \\
   X_0\ar[r]_-{s_0}  &  X_1 \ar[r]_-{d_\top}  &  X_0 ,
  }}
$$
since the horizontal composites are identities, the outer rectangle is a 
pullback, and the right-hand square is a pullback by the Segal condition.
Hence the left-hand square is a pullback, establishing the
first of the
pullback conditions in Proposition~\ref{onlyfourdiags}.
The remaining two conditions of Proposition~\ref{onlyfourdiags}, those
involving $d_\top$ instead of $d_\bot$, are obtained
similarly by interchanging the roles of $\bot$ and $\top$.
\end{proof}




\begin{BM}
  This result was also obtained by 
  Dyckerhoff and Kapranov~\cite{Dyckerhoff-Kapranov:1212.3563}
  (Propositions~2.3.3, 2.5.3, and 5.2.6).
\end{BM}

Corollary \ref{d1s0Delta} implies the following important property of decomposition spaces.

\begin{lemma}\label{lem:s0d1}
    In a decomposition space $X$, every generic face map is a pullback of
    $d_1: X_2 \to X_1$, and every degeneracy map is a pullback of $s_0 :X_0 \to X_1$.
\end{lemma}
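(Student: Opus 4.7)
The plan is to apply the decomposition space axiom directly to the pushouts identified in Corollary~\ref{d1s0Delta}. For the first claim, let $d^i: [n-1] \to [n]$ be an inner coface, with $0 < i < n$. By that corollary, $d^i$ arises as the pushout of the generic map $d^1: [1] \to [2]$ along the free map $[1] \rat [n-1]$ sending $0, 1$ to $i-1, i$:
\[
\vcenter{\xymatrix{
[1] \ar@{->|}[d]_{d^1} \ar@{>->}[r] & [n-1] \ar@{->|}[d]^{d^i} \\
[2] \ar@{>->}[r] & [n]
}}
\]
Since $X$ is a decomposition space, it sends this generic-free pushout in $\Delta$ to a pullback in $\Grpd$, and the two vertical sides of the resulting pullback square are precisely $d_1: X_2 \to X_1$ and $d_i: X_n \to X_{n-1}$. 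This exhibits $d_i$ as a pullback of $d_1$, establishing the claim for each generic face map.

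The argument for degeneracies is entirely parallel. By the other half of Corollary~\ref{d1s0Delta}, each codegeneracy $s^j: [n+1] \to [n]$ fits into a pushout square exhibiting it as the pushout of $s^0: [1] \to [0]$ along some free map. Applying the decomposition space axiom to this pushout yields a pullback square in $\Grpd$ whose two vertical sides are $s_0: X_0 \to X_1$ and $s_j: X_n \to X_{n+1}$, showing that $s_j$ is a pullback of $s_0$.

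There is essentially no obstacle: the lemma is a direct unpacking of the definition of decomposition space, once one has Corollary~\ref{d1s0Delta} in hand. All the combinatorial work has already been done in identifying $d^1$ and $s^0$ as the two fundamental generators from which every generic coface, respectively every codegeneracy, arises by a single generic-free pushout; the decomposition space axiom then converts these into the desired pullback statements in one stroke.
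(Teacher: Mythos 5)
Your proof is correct and is exactly the argument the paper intends: the paper gives no written proof, simply noting that the lemma follows from Corollary~\ref{d1s0Delta} together with the decomposition space axiom, which is precisely what you have spelled out.
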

Thus, even though the spaces in degree $\geq 2$
are not fibre products of $X_1$ as in a Segal space,
the higher generic face maps and degeneracies 
are determined by `unit' and `composition',
$$
\xymatrix{
X_0 \ar[r]^{s_0} & X_1 & \ar[l]_{d_1} X_2 .
}$$

In $\Delta\op$ there are more pullbacks than those between generic and free.
Diagram \eqref{pm} in \ref{generic-and-free} is a pullback in
$\Delta\op$ that is not preserved by all decomposition spaces, though it is
preserved by all Segal spaces.  On the other hand, certain other pullbacks in
$\Delta\op$ are preserved by general decomposition spaces.  We call them
colloquially `bonus pullbacks':

\begin{lemma}\label{bonus-pullbacks}
  For a decomposition space $X$, the following squares are pullbacks: 
    $$\vcenter{
  \xymatrix{
  X_{n+1}  \drpullback\ar[r]^{d_{j}} \ar[d]_{s_i} & X_n \ar[d]^{s_i} \\
  X_{n+2} \ar[r]_{d_{j+1}} & X_{n+1}
  }} \; \text{for all $i<j$,\; and }\;
  \vcenter{\xymatrix{
  X_{n+1}  \drpullback\ar[r]^{d_{j}} \ar[d]_{s_{i+1}} & X_n \ar[d]^{s_i} \\
  X_{n+2} \ar[r]_{d_{j}} & X_{n+1}
  }}\; \text{for all $j\leq i$}.
  $$
\end{lemma}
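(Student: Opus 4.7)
The plan is to reduce both cases to Lemma \ref{lem:s0d1} (every degeneracy is a pullback of $s_0\colon X_0\to X_1$) using the pasting lemma \ref{pbk}. Concretely, the generic-free pushout in $\Delta$ expressing the codegeneracy $s^i\colon [n+1]\to [n]$ as a pushout of $s^0\colon [1]\to [0]$ along free maps yields, by the decomposition space axiom, a pullback
$$
\xymatrix{
X_n \drpullback \ar[r]^{v_i} \ar[d]_{s_i} & X_0 \ar[d]^{s_0} \\
X_{n+1} \ar[r]_{e_i} & X_1,
}
$$
where $v_i\colon X_n\to X_0$ comes from $[0]\to [n],\ 0\mapsto i$, and $e_i\colon X_{n+1}\to X_1$ comes from $[1]\to [n+1],\ 0\mapsto i,\ 1\mapsto i+1$. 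For either bonus square, I would paste this pullback onto its right-hand side and check that the resulting outer rectangle coincides with the pullback of Lemma \ref{lem:s0d1} expressing the left-hand degeneracy; then the conclusion is immediate from Lemma \ref{pbk}.

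For the first square (the case $i<j$), pasting produces
$$
\xymatrix{
X_{n+1} \ar[r]^{d_j} \ar[d]_{s_i} & X_n \drpullback \ar[r]^{v_i} \ar[d]_{s_i} & X_0 \ar[d]^{s_0} \\
X_{n+2} \ar[r]_{d_{j+1}} & X_{n+1} \ar[r]_{e_i} & X_1.
}
$$
A routine computation in $\Delta$ exploits the hypothesis $i<j$ (so $i+1\leq j$): $d^j$ fixes the element $i$, and $d^{j+1}$ fixes both $i$ and $i+1$. Hence $v_i\circ d_j = v_i^{(n+1)}$ and $e_i\circ d_{j+1} = e_i^{(n+2)}$, so the outer rectangle is precisely the pullback of Lemma~\ref{lem:s0d1} for $s_i\colon X_{n+1}\to X_{n+2}$, and the left square is a pullback by Lemma~\ref{pbk}.

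For the second square (the case $j\leq i$), paste analogously:
$$
\xymatrix{
X_{n+1} \ar[r]^{d_j} \ar[d]_{s_{i+1}} & X_n \drpullback \ar[r]^{v_i} \ar[d]_{s_i} & X_0 \ar[d]^{s_0} \\
X_{n+2} \ar[r]_{d_j} & X_{n+1} \ar[r]_{e_i} & X_1.
}
$$
Now $j\leq i$ ensures that $d^j$ shifts both $i$ and $i+1$ upward by one, so $v_i\circ d_j = v_{i+1}^{(n+1)}$ and $e_i\circ d_j = e_{i+1}^{(n+2)}$. The outer rectangle is therefore the pullback of Lemma~\ref{lem:s0d1} expressing $s_{i+1}\colon X_{n+1}\to X_{n+2}$ as a pullback of $s_0$, and pasting concludes.

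The argument is essentially combinatorial bookkeeping: all the real work is already packaged into Lemma~\ref{lem:s0d1}, and the only thing to check is that the free-map indices $v_\bullet, e_\bullet$ composed with $d_j$ (or $d_{j+1}$) come out, under each of the two index regimes, to the free maps that classify the left-hand degeneracy. The potential nuisance is keeping track of index shifts and the parallel cases $j=0$ or $j=n+1$ where one of the face maps becomes outer (hence free rather than generic), but the same direct computation handles these uniformly.
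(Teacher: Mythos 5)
Your proof is correct and is essentially the paper's argument: both postcompose the square horizontally with a free map so that the right-hand square and the outer rectangle become generic-free pullbacks (instances of the decomposition-space axiom), and then invoke Lemma~\ref{pbk}. The only difference is cosmetic --- the paper composes with ${d_\top}^{n+1-j}$ (interchanging $\top$ and $\bot$ for the case $j\leq i$), while you compose all the way down to the universal square over $s_0\colon X_0\to X_1$ of Lemma~\ref{lem:s0d1}; the index bookkeeping you carry out is exactly what makes either choice work.
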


%
\begin{proof}
  We treat the case $i<j$; for the other case, interchange the roles of $\top$ and 
  $\bot$.  Postcompose horizontally with sufficiently many ${d_\top}$ to make the 
  total composite  free:
  $$\xymatrix{
  X_{n+1}  \ar[r]^{d_{j}} \ar[d]_{s_i} & X_n \ar[d]_{s_i} 
  \ar[r]^{{d_\top}^{n+1-j}} 
  \drpullback & X_{j-1} \ar[d]^{s_i} \\
  X_{n+2} \ar[r]_{d_{j+1}} & X_{n+1} \ar[r]_{{d_\top}^{n+1-j}} & X_j .
  }$$
  The horizontal composite maps are now ${d_\top}^{n+2-j}$, so the outer
  rectangle is a pullback, and the second square is a pullback.  Hence by the
  basic lemma~\ref{pbk}, also the first square is a pullback, as claimed.
%
%
%
\end{proof}

\begin{lemma}\label{newbonuspullbacks}
  For a decomposition space $X$, the following squares are pullbacks 
  for all $i<j$:
  $$\xymatrix{
  X_n  \drpullback\ar[r]^{s_{j-1}} \ar[d]_{s_i} & X_{n+1} \ar[d]^{s_i} \\
  X_{n+1} \ar[r]_{s_{j}} & X_{n+2}
  }
  $$
\end{lemma}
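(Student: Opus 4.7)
The plan is to mimic the technique used in the proof of Lemma \ref{bonus-pullbacks}: extend the square horizontally by well-chosen face maps so that the outer rectangle becomes trivial (identity in the horizontal direction), and the right-hand square becomes a previously established pullback. Then invoke the prism lemma~\ref{pbk}.

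Concretely, I would form the diagram
$$
\xymatrix{
X_n  \ar[r]^{s_{j-1}} \ar[d]_{s_i} & X_{n+1} \ar[d]^{s_i} \ar[r]^{d_j} & X_n \ar[d]^{s_i}\\
X_{n+1} \ar[r]_{s_{j}} & X_{n+2} \ar[r]_{d_{j+1}} & X_{n+1} \;,
}
$$
which is well-defined because the simplicial identity $s_i s_{j-1} = s_j s_i$ (valid for $i < j$) guarantees that the left-hand square commutes, and the identity $d_{j+1} s_i = s_i d_j$ (valid for $i+1 < j+1$, i.e. $i<j$) guarantees that the right-hand square commutes.

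The horizontal composites in the outer rectangle are identities, by the simplicial identities $d_j s_{j-1} = \id$ (top row) and $d_{j+1} s_j = \id$ (bottom row). Hence the outer rectangle is trivially a pullback, being a commutative square of the form $s_i = s_i$. The right-hand square is precisely an instance of the first case of Lemma~\ref{bonus-pullbacks} (where the pullback condition for $s_i$ against $d_{j}, d_{j+1}$ with $i < j$ has already been established for a decomposition space). By Lemma~\ref{pbk}, since the outer rectangle and the right-hand square are pullbacks, the left-hand square is a pullback as well, which is the desired statement.

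No step here looks like a real obstacle; the only thing to be careful with is bookkeeping of indices in the simplicial identities, in particular verifying that the auxiliary squares commute and that the hypothesis $i < j$ lines up correctly with the side conditions of both the simplicial identities $d_a s_b = \id$ ($a \in \{b, b+1\}$) and of Lemma~\ref{bonus-pullbacks}.
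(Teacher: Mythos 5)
Your proof is correct and is essentially the paper's own argument: the paper likewise observes that $s_{j-1}$ and $s_j$ are sections of $d_j$ and $d_{j+1}$, extends the square horizontally so the composites become identities, and concludes by Lemma~\ref{pbk}. The only cosmetic difference is that the paper splits into cases according to whether $d_{j+1}$ is inner (invoke Lemma~\ref{bonus-pullbacks}) or outer (the right-hand square is then a basic generic-free axiom square, namely the case $j=n+1$), whereas you cite Lemma~\ref{bonus-pullbacks} uniformly --- which is harmless, since its statement as written covers all $i<j$.
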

\begin{proof}
  Just observe that $s_j$ is a section to $d_{j+1}$, and apply the standard 
  argument: if $d_{j+1}$ is an outer face map then the square is a basic
  generic-free pullback; if $d_{j+1}$ is inner, we can use instead the previous lemma.
\end{proof}

\begin{BM}\label{cheaperbonus}
  Note that in the previous two lemmas, the full decomposition-space axiom is
  not needed: it is enough to have pullback squares between free maps and
  degeneracy maps.  This will be useful briefly in Section~\ref{sec:stiff}.
\end{BM}

\def\inputfile{ulf.tex}

\subsection{Conservative ULF functors}

\label{sec:cULF}

\begin{deff}
A simplicial map $F: Y \to X$
is called {\em ULF (unique lifting of factorisations)}
if it is a cartesian natural transformation on generic face
maps of $\Delta$.  It is called {\em conservative} if it is cartesian
on degeneracy maps. It is called {\em cULF} if it is both conservative and ULF. 
\end{deff}

\begin{lemma}
  For a simplicial map $F: Y \to X$,
  the following are equivalent.
  \begin{enumerate}
  \item $F$ is cartesian on all generic maps (i.e.~cULF).
  \item $F$ is cartesian on every inner face map and on every degeneracy map.
  \item $F$ is cartesian on every generic map of the form $[1]\to [n]$.
  \end{enumerate}
\end{lemma}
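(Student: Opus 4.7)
The plan is to establish the implications $(1) \Leftrightarrow (2)$ by the pasting lemma together with the generic factorisation structure of $\Delta$, and to deduce $(2)$ from $(3)$ by factoring the long-edge maps $[1] \to [n]$ through arbitrary inner cofaces and codegeneracies.

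First I would dispense with $(1) \Leftrightarrow (2)$. As noted in \ref{generic-and-free}, every generic map in $\Delta$ is a composite of inner cofaces and codegeneracies. Since the collection of morphisms on which a simplicial map $F$ is cartesian is closed under composition (by the standard pasting lemma for pullbacks), $(2)$ implies $(1)$. The reverse direction is immediate because inner face maps and degeneracies are themselves generic. Similarly, $(1) \Rightarrow (3)$ is immediate since maps $[1] \to [n]$ that preserve endpoints are a special case of generic maps.

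The content lies in $(3) \Rightarrow (2)$. For each $n$, write $g_n : [1] \to [n]$ for the unique generic map (the long edge when $n \geq 1$, and $s^0$ when $n=0$). The key combinatorial observation is that $g_n$ factors through every inner coface and every codegeneracy with codomain $[n]$: explicitly, for $0 < i < n$ one has $g_n = d^i \circ g_{n-1}$, and for $0 \leq j \leq n$ one has $g_n = s^j \circ g_{n+1}$, as is checked by evaluating at $0$ and $1$ (both endpoints are preserved on the nose). Dualising to the simplicial object, this yields factorisations $g_n^* = d_i \circ g_{n-1}^*$ (read off $X_n \xrightarrow{d_i} X_{n-1} \xrightarrow{g_{n-1}^*} X_1$) and $g_n^* = g_{n+1}^* \circ s_j$ for $s_j : X_n \to X_{n+1}$.

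Now apply Lemma~\ref{pbk} in each case. For the inner face map $d_i$, consider the prism
$$
\xymatrix{
Y_n \ar[r]^{d_i} \ar[d] & Y_{n-1} \ar[r]^{g_{n-1}^*} \ar[d] & Y_1 \ar[d] \\
X_n \ar[r]_{d_i} & X_{n-1} \ar[r]_{g_{n-1}^*} & X_1\rlap{.}
}
$$
Under assumption $(3)$, both the right-hand square (naturality of $g_{n-1}^*$) and the outer rectangle (naturality of $g_n^*$) are pullbacks, so Lemma~\ref{pbk} forces the left-hand square to be a pullback, i.e.\ $F$ is cartesian on $d_i$. The argument for $s_j$ is identical after re-bracketing the analogous prism so that the cartesianness of $g_n^*$ and $g_{n+1}^*$ forces cartesianness of $s_j$; the case $s_0 : X_0 \to X_1$ is handled directly since $s_0 = g_0^*$ is itself one of the maps covered by $(3)$. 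Taken together these give $(2)$, completing the cycle of implications.

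The main subtlety to watch is the direction of the factorisations: one must ensure that the long-edge $g_n^*$ sits as the outer composite of the prism with the target map on the outside (so that Lemma~\ref{pbk}, rather than its unavailable converse, applies); the contravariance means this is different for face maps and for degeneracies, but in both cases the required factorisation exists for purely combinatorial reasons.
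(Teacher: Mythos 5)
Your proof is correct and uses essentially the same idea as the paper: apply Lemma~\ref{pbk} to the prism obtained by factoring the unique generic map $[1]\to[n]$ through the map in question. The only difference is organisational — the paper proves $(3)\Rightarrow(1)$ directly for an arbitrary generic $[n]\to[m]$ in one stroke, whereas you run the same prism argument separately for each generating inner face and degeneracy to get $(3)\Rightarrow(2)$ and then recover $(1)$ by composing pullback squares.
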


\begin{proof}
  That (1) implies (2) is trivial.  The implication $(2)\Rightarrow (3)$ is easy
  since the generic map $[1]\to [n]$ factors as a sequence of inner face maps
  (or is a degeneracy map if $n=0$).  For the implication $(3) \Rightarrow (1)$, consider 
  a general generic map $[n]\to [m]$, and observe that if $F$ is cartesian on the composite of generic maps $[1]\to[n]\to[m]$ and also on the generic map $[1]\to[n]$, then it is cartesian on $[n]\to[m]$ also.
\end{proof}

\begin{prop}\label{prop:ULF=>cons}
  If $X$ and $Y$ are decomposition spaces (or just stiff simplicial spaces, in 
  the sense of Section~\ref{sec:stiff}), then every ULF map $F:Y \to X$ is 
  automatically conservative.
\end{prop}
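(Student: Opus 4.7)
The plan is to show that $F$ is cartesian on every degeneracy map $s_j: Y_n \to Y_{n+1}$, splitting into two cases according to whether $s_j$ admits an \emph{inner} face-map retraction.

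\textbf{Case 1: $n \geq 1$.} For any such $n$ and any $j \in \{0,\dots,n\}$, at least one of the two retractions $d_j, d_{j+1}$ of $s_j$ is an inner face map of $Y_{n+1}$ (since the range of inner indices $\{1,\dots,n\}$ is non-empty). Pasting the naturality square for $s_j$ horizontally with that for this inner retraction yields a rectangle whose horizontal composites are identities, hence trivially a pullback; the right-hand square is a pullback by the ULF hypothesis. By Lemma~\ref{pbk}, the left-hand square---the naturality square for $s_j$---is a pullback.

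\textbf{Case 2: $s_0: Y_0 \to Y_1$.} Here neither $d_0$ nor $d_1$ is inner, so the Case~1 trick fails. Instead, consider the vertically stacked diagram
$$\xymatrix{
Y_0 \ar[r]^{s_0} \ar[d]_F & Y_1 \ar[d]^F \\
X_0 \ar[r]^{s_0} \ar[d]_{s_0} & X_1 \ar[d]^{s_0} \\
X_1 \ar[r]_{s_1} & X_2
}$$
whose bottom square is a pullback by Lemma~\ref{newbonuspullbacks} applied to $X$ (which, by Remark~\ref{cheaperbonus}, requires only the stiff hypothesis). By Lemma~\ref{pbk}, it suffices to show that the outer rectangle is a pullback. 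By naturality of $F$, the outer rectangle equals the vertical composite
$$\xymatrix{
Y_0 \ar[r]^{s_0} \ar[d]_{s_0} & Y_1 \ar[d]^{s_0} \\
Y_1 \ar[r]^{s_1} \ar[d]_F & Y_2 \ar[d]^F \\
X_1 \ar[r]_{s_1} & X_2
}$$
in which the top square is a pullback by Lemma~\ref{newbonuspullbacks} applied to $Y$, and the bottom square is a pullback by Case~1 applied to $s_1: Y_1 \to Y_2$ (retraction $d_1$ is inner). Pasting these two pullbacks gives the required outer pullback.

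The main obstacle is Case~2, where the ULF hypothesis has no direct grip on the outer degeneracy $s_0: Y_0 \to Y_1$; the argument relies essentially on the bonus pullbacks between pairs of degeneracies, which are available in every stiff (and a fortiori every decomposition) space.
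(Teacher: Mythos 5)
Your proof is correct and follows essentially the same route as the paper's: first the section--retraction trick against an inner face map (available for every degeneracy except $s_0:Y_0\to Y_1$), then the bonus pullbacks of Lemma~\ref{newbonuspullbacks} in $X$ and $Y$ together with Lemma~\ref{pbk} to transfer the problem for $s_0:Y_0\to Y_1$ to the already-settled square one degree up. The paper packages your two vertical stacks as a single cube (and routes through $s_0:Y_1\to Y_2$ rather than $s_1$), but the ingredients and logic are identical.
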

\begin{proof}
  In the diagram 
  $$
  \xymatrix  {
  Y_0 \ar[rrd]^(0.7){s_0}
  \ar[r]^{s_0} \ar[d] & Y_1 \ar[rrd]^(0.7){s_1}\ar[d]|\hole&& \\
  X_0 \ar[rrd]_(0.4){s_0} \ar[r]^{s_0} & X_1 \ar[rrd]|\hole^(0.7){s_1} & Y_1  \ar[r]_{s_0}  
  \ar[d] & Y_2 \ar[d] \drpullback \ar@{..>}[r]^{d_1} & Y_1 \ar@{..>}[d]\\
  && X_1 \ar[r]_{s_0} & X_2 \ar@{..>}[r]_{d_1} & X_1
  }
$$
the front square is a pullback since it is a section to the dotted square,
which is a pullback since $F$ is ULF.  The same argument shows that $F$ is 
cartesian on all degeneracy maps that are sections to generic face maps. This 
includes all degeneracy maps except the one appearing in the back square of the 
diagram.  But since the top and bottom slanted squares are bonus pullbacks 
(\ref{newbonuspullbacks}),
also the back square is a pullback.
\end{proof}
The following result is a consequence of Lemma~\ref{lem:s0d1} and 
Proposition~\ref{prop:ULF=>cons}.
\begin{lemma}\label{lem:cULFs0d1}
  A simplicial map $F:Y \to X$ between decomposition spaces 
  (or just stiff simplicial spaces, in 
  the sense of Section~\ref{sec:stiff})
  is cULF if and only if
  it is cartesian on the generic map $[1]\to[2]$
$$\xymatrix{
Y_1 \ar[d] & \ar[l] \dlpullback Y_2 \ar[d] \\
X_1 & \ar[l] X_2}
$$
\end{lemma}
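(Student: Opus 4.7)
The forward implication is immediate: $d_1\colon [1]\to [2]$ is a generic (inner) coface map, so any cULF map is in particular cartesian on it.

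For the converse, assume $F$ is cartesian on $d_1$. The plan is to show first that $F$ is ULF (cartesian on every inner face map), and then invoke Proposition~\ref{prop:ULF=>cons} to obtain conservativity for free, so that $F$ is cULF. To prove ULF, fix an inner face map $d_i\colon Y_n\to Y_{n-1}$. By Lemma~\ref{lem:s0d1}, applied in both $X$ and $Y$, there is a free map $g^*\colon Y_{n-1}\to Y_1$ (and a companion in $X$) exhibiting $d_i$ as a pullback of $d_1$. Combined with the hypothesis that $F$ is cartesian on $d_1$, this yields three pullback identifications
$$
Y_n \simeq Y_{n-1}\times_{Y_1}Y_2,\qquad X_n\simeq X_{n-1}\times_{X_1}X_2,\qquad Y_2\simeq Y_1\times_{X_1}X_2.
$$
Substituting the third into the first and pasting (Lemma~\ref{pbk}) gives $Y_n \simeq Y_{n-1}\times_{X_1}X_2$, where the intermediate map $Y_{n-1}\to X_1$ is $F\circ g^* = g^*\circ F$ by naturality. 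Substituting the second identity into the candidate pullback $Y_{n-1}\times_{X_{n-1}}X_n$ produces the same expression $Y_{n-1}\times_{X_1}X_2$ with the same structural map, so the $F$-naturality square on $d_i$ is a pullback, as required.

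The only delicate point is to check that the canonical comparison map $Y_n \to Y_{n-1}\times_{X_{n-1}}X_n$ really does correspond to the displayed equivalence, rather than merely observing that both sides are abstractly equivalent to $Y_{n-1}\times_{X_1}X_2$. This is a routine unwinding, tracking naturality of $F$ and the standard pasting of iterated pullbacks, and is really the only care needed since one cannot invoke a naive ``five faces of the cube are pullbacks'' argument: the side faces of the cube (the $F$-naturality squares on the free maps $g^*$) are not known to be pullbacks.
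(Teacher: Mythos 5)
Your proof is correct and follows essentially the same route as the paper, which simply cites Lemma~\ref{lem:s0d1} and Proposition~\ref{prop:ULF=>cons}: reduce every generic face map to a pullback of $d_1$ via the decomposition-space axiom, deduce ULF, and then get conservativity for free. The ``delicate point'' you flag is exactly what Lemma~\ref{pbk} disposes of: the composite rectangle $Y_n\to Y_2\to X_2$ over $Y_{n-1}\to Y_1\to X_1$ coincides by naturality with the composite rectangle $Y_n\to X_n\to X_2$ over $Y_{n-1}\to X_{n-1}\to X_1$, so two applications of the prism lemma (first to see the common outer rectangle is a pullback, then to peel off the right-hand square of the second rectangle) yield the conclusion without ever having to track comparison maps between abstractly equivalent fibre products.
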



\begin{BM}
  The notion of cULF
  can be seen as an abstraction of coalgebra homomorphism, 
  cf.~\ref{lem:coalg-homo} below: `conservative' corresponds to counit preservation,
  `ULF' corresponds to comultiplicativity.
  
  In the special case where $X$ and $Y$ are fat nerves of $1$-categories, 
  then the condition that the square
  $$
  \xymatrix{
  Y_0 \ar[d] \ar[r] \drpullback & Y_1 \ar[d] \\
  X_0\ar[r] & X_1}$$
  be a pullback is precisely the classical notion of conservative functor
  (i.e.~if $f(a)$ is invertible then already $a$ is invertible).
 
  Similarly, the condition that the square
  $$\xymatrix{
  Y_1 \ar[d] & \ar[l] \dlpullback Y_2 \ar[d] \\
  X_1 & \ar[l] X_2}
  $$
  be a pullback is an up-to-isomorphism version of the classical notion of ULF
  functor, implicit already in Content--Lemay--Leroux~\cite{Content-Lemay-Leroux},
  and perhaps made explicit first by Lawvere~\cite{Lawvere:statecats};
  it is equivalent to the notion of discrete Conduch\'e 
  fibration~\cite{Johnstone:Conduche'}. See
  Street~\cite{Street:categorical-structures} for the $2$-categorical notion.
  In the case of the \M categories of Leroux, where there are no
  invertible arrows around, the two notions of ULF coincide.
\end{BM}

\begin{eks}\label{ex:SIOI}
  Here is an example of a functor which is not cULF in Lawvere's sense
  (is not cULF on classical nerves), but which is cULF
  in the homotopical sense.  Namely, let $\kat{OI}$ denote the category of 
  finite ordered sets and monotone injections.  Let $\kat{I}$ denote the category of 
  finite sets and injections.  The forgetful functor $\kat{OI} \to \kat{I}$
  is not cULF in the classical sense, because the identity monotone map
  $\un 2 \to \un 2$ admits a factorisation in $\kat{I}$ that does not lift
  to $\kat{OI}$, namely the factorisation into two nontrivial transpositions.
  However, it is cULF in our sense, as can easily be verified by checking
  that the square
  $$\xymatrix{
     \kat{OI}_1 \ar[d] & \ar[l]\dlpullback\kat{OI}_2 \ar[d] \\
     \kat{I}_1  & \ar[l]\kat{I}_2
  }$$
  is a pullback by computing the fibres of the horizontal maps
  over a given monotone injection.
%
\end{eks}

\begin{lemma}\label{cULF/decomp}
  If $X$ is a decomposition space and $f: Y \to X$ is cULF
  then also $Y$ is a decomposition space.
\end{lemma}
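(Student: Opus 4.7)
The strategy is to check the generic-free pullback condition for $Y$ by comparing with the corresponding condition for $X$ via the natural transformation $\phi := f : Y \to X$, using that $\phi$ is cartesian on the two generic maps involved in each generic-free pushout.

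Take a generic-free pushout in $\Delta$,
$$
\xymatrix{
[n] \ar@{ >->}[r]^-{f} \ar@{->|}[d]_{g} & [m] \ar@{->|}[d]^{g'} \\
[q] \ar@{ >->}[r]_-{f'} & [p]\ulpullback
}
$$
and form the cube whose top face is the image under $X$ and whose bottom face is the image under $Y$, with vertical edges given by the components of $\phi$. The top face is a pullback since $X$ is a decomposition space, and the cULF hypothesis on $\phi$ means that the two vertical faces coming from the generic maps ${g'}^*$ and $g^*$ are pullbacks (every generic map is a composite of inner face maps and degeneracies, and cartesianness is preserved under composition). We must show that the bottom face
$$
\xymatrix{
Y_p \ar[r]^-{{g'}^*} \ar[d]_{{f'}^*} & Y_m \ar[d]^{f^*} \\
Y_q \ar[r]_-{g^*} & Y_n
}
$$
is a pullback.

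Stack the bottom face of the cube on top of the cULF naturality square for $g^*$ to get the prism
$$
\xymatrix{
Y_p \ar[r]^-{{g'}^*} \ar[d]_{{f'}^*} & Y_m \ar[d]^{f^*} \\
Y_q \ar[r]^-{g^*} \ar[d]_{\phi_q} & Y_n \ar[d]^{\phi_n} \\
X_q \ar[r]_-{g^*} & X_n
}
$$
Its lower square is a pullback by cULF. By naturality of $\phi$, the outer rectangle agrees with the prism obtained by stacking the cULF naturality square for ${g'}^*$ on top of the $X$-face:
$$
\xymatrix{
Y_p \ar[r]^-{{g'}^*} \ar[d]_{\phi_p} & Y_m \ar[d]^{\phi_m} \\
X_p \ar[r]^-{{g'}^*} \ar[d]_{{f'}^*} & X_m \ar[d]^{f^*} \\
X_q \ar[r]_-{g^*} & X_n
}
$$
Here the upper square is a pullback by cULF and the lower square is a pullback by the decomposition-space axiom for $X$, so the outer rectangle is a pullback. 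Returning to the first prism and applying Lemma~\ref{pbk}, we conclude that the upper square, which is the bottom face of our cube, is a pullback. This is the required condition.

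The argument is purely formal: the only thing to keep straight is that the relevant side faces of the cube are precisely the naturality squares of $\phi$ on the two generic maps of the pushout, so no hypothesis on free maps is needed. No serious obstacle is anticipated; the only mild care is that everything takes place in the $\infty$-category $\Grpd$, so ``commutes'' means the prism is filled with coherence data, but Lemma~\ref{pbk} is stated in exactly this setting.
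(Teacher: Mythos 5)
Your proof is correct. The paper in fact states this lemma without giving a proof, and your cube/pasting argument is exactly the standard one it leaves implicit: the bottom face is a pullback by the decomposition-space axiom for $X$, the two side faces over the generic maps $g$ and $g'$ are pullbacks because a cULF map is cartesian on all generic maps, the side faces over the free maps are only required to commute (which they do automatically, as part of the coherence data of a morphism of simplicial spaces), and two applications of Lemma~\ref{pbk} then force the top face to be a pullback.
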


\def\inputfile{dec.tex}
\subsection{Decalage}

\begin{blanko}{Decalage.}
  (See Illusie~\cite{Illusie1}).
Given a simplicial space $X$ as the top row in 
the following diagram,
the {\em lower dec} $\Dec_\bot(X)$ is a new simplicial space (bottom row in the 
diagram) obtained by
deleting $X_0$ and shifting everything one place down, 
deleting also all $d_0$ face maps and all $s_0$ degeneracy maps.
It comes equipped with a simplicial map $d_\bot:\Dec_\bot(X)\to X$ given by the original $d_0$:

$$
\xymatrix@C+1em{
X_0  
\ar[r]|(0.55){s_0} 
&
\ar[l]<+2mm>^{d_0}\ar[l]<-2mm>_{d_1} 
X_1  
\ar[r]<-2mm>|(0.6){s_0}\ar[r]<+2mm>|(0.6){s_1}  
&
\ar[l]<+4mm>^(0.6){d_0}\ar[l]|(0.6){d_1}\ar[l]<-4mm>_(0.6){d_2}
X_2 
\ar[r]<-4mm>|(0.6){s_0}\ar[r]|(0.6){s_1}\ar[r]<+4mm>|(0.6){s_2}  
&
\ar[l]<+6mm>^(0.6){d_0}\ar[l]<+2mm>|(0.6){d_1}\ar[l]<-2mm>|(0.6){d_2}\ar[l]<-6mm>_(0.6){d_3}
X_3 
\ar@{}|\cdots[r]
&
\\
\\
X_1  \ar[uu]_{d_0}
\ar[r]|(0.55){s_1} 
&
\ar[l]<+2mm>^{d_1}\ar[l]<-2mm>_{d_2} 
X_2  \ar[uu]_{d_0}
\ar[r]<-2mm>|(0.6){s_1}\ar[r]<+2mm>|(0.6){s_2}  
&
\ar[l]<+4mm>^(0.6){d_1}\ar[l]|(0.6){d_2}\ar[l]<-4mm>_(0.6){d_3}
X_3 \ar[uu]_{d_0}
\ar[r]<-4mm>|(0.6){s_1}\ar[r]|(0.6){s_2}\ar[r]<+4mm>|(0.6){s_3}  
&
\ar[l]<+6mm>^(0.6){d_1}\ar[l]<+2mm>|(0.6){d_2}\ar[l]<-2mm>|(0.6){d_3}\ar[l]<-6mm>_(0.6){d_4}
X_4 \ar[uu]_{d_0}
\ar@{}|\cdots[r]
&
}
$$

Similarly, the upper dec, denoted $\Dec_\top(X)$ is obtained by instead 
deleting, in each degree, the last face map $d_\top$ and the last degeneracy map 
$s_\top$.
\end{blanko}

\begin{blanko}{Decalage in terms of an adjunction.} (See Lawvere~\cite{Lawvere:ordinal}.)
  The functor $\Dec_\bot$ can be described more conceptually as follows.
There is an `add-bottom' endofunctor $b:\Delta\to \Delta$, which sends
$[k]$ to $[k+1]$ by adding a new bottom element.  This is in fact a monad;
the unit $\epsilon: \Id \Rightarrow b$ is given by the bottom coface map 
$d^\bot$.  The lower dec is given by precomposition with $b$:
$$
\Dec_\bot(X) = b\upperstar X
$$
Hence $\Dec_\bot$ is a comonad, and its counit is the bottom face map $d_\bot$.

Similarly, the upper dec is obtained from the `add-top' monad on $\Delta$.  In
Section~\ref{sec:master} we shall exploit crucially the combination of the two
comonads.
\end{blanko}

\begin{blanko}{Slice interpretation.}
  If $X$ is the strict nerve of a category $\C$
  then there is a close relationship between the upper dec and the slice 
  construction.  For the strict nerve, $X=N\C$, $\Dec_\top X$ is the disjoint
  union of all (the nerves of) the slice categories of $\C$:
  $$
  \Dec_\top X = \sum_{x\in X_0} N(\C_{/x}).
  $$
  (In general it is a homotopy sum.)
  
  Any individual slice category
  can be extracted from the upper dec, by exploiting that the upper dec
  comes with a canonical augmentation given by (iterating) the bottom face map.
  The slice over an object $x$ is obtained by pulling back the upper dec
  along the name of $x$:
  $$
  \xymatrix{1 \ar[d]_{\name x} & \ar[l] \dlpullback N\C_{/x} \ar[d] \\
  X_0  & \ar[l]^{d_\bot} \Dec_\top X }
  $$
  
  There is a similar relationship between the lower dec and the coslices.
\end{blanko}

\begin{prop}\label{Dec=Segal+cULF}
If $X$ is a decomposition space then 
$\Dec_\top(X)$ and $\Dec_\bot(X)$ 
are Segal spaces, and the maps 
$d_\top : \Dec_\top(X) \to X$ and  $d_\bot : \Dec_\bot(X) \to X$ 
are cULF.
\end{prop}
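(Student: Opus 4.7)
The plan is to translate each required pullback into an instance of the decomposition space axiom for $X$, in the form of Proposition~\ref{onlyfourdiags}. Recall the identification $(\Dec_\bot X)_n = X_{n+1}$ with face maps $d_i^{\Dec_\bot} = d_{i+1}^X$, degeneracies $s_i^{\Dec_\bot} = s_{i+1}^X$, and augmentation $d_\bot$ given in each degree by $d_0^X$. To show $\Dec_\bot(X)$ is a Segal space, by Lemma~\ref{segalpqr}(3) it suffices to check, for each $n \geq 1$, that
\[
\vcenter{\xymatrix{
X_{n+2} \ar[d]_{d_{n+2}} \ar[r]^{d_1} & X_{n+1} \ar[d]^{d_{n+1}} \\
X_{n+1} \ar[r]_{d_1} & X_n
}}
\]
is a pullback; this is the case $i=1$ of the second family of pullbacks in Proposition~\ref{onlyfourdiags}.

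Next, to show $d_\bot : \Dec_\bot(X) \to X$ is cULF, I check that its naturality square is a pullback on each inner face map and each degeneracy of $X$. For an inner face $d_i : X_m \to X_{m-1}$ with $0 < i < m$, the naturality square is
\[
\vcenter{\xymatrix{
X_{m+1} \ar[r]^{d_{i+1}} \ar[d]_{d_0} & X_m \ar[d]^{d_0} \\
X_m \ar[r]_{d_i} & X_{m-1}
}}
\]
which is precisely the first family in Proposition~\ref{onlyfourdiags}. For a degeneracy $s_j : X_n \to X_{n+1}$, the naturality square becomes
\[
\vcenter{\xymatrix{
X_{n+1} \ar[r]^{s_{j+1}} \ar[d]_{d_0} & X_{n+2} \ar[d]^{d_0} \\
X_n \ar[r]_{s_j} & X_{n+1}
}}
\]
which is the first of the degeneracy-versus-$d_\bot$ pullbacks in \eqref{unital-cond}.

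The corresponding assertions for $\Dec_\top(X)$ and $d_\top$ follow by the symmetric argument, swapping $\bot \leftrightarrow \top$ throughout and invoking the other ($d_\top$-type) families of pullbacks in Proposition~\ref{onlyfourdiags} and \eqref{unital-cond}. There is no substantial obstacle: the whole proof consists in identifying the required pullback squares as instances of the decomposition space axiom. The only point that needs care is the indexing convention for the shifted face and degeneracy maps of the decs; once this bookkeeping is pinned down, the pullbacks fall out immediately.
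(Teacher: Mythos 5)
Your proof is correct and follows essentially the same route as the paper's: identify the Segal squares of the dec, after unwinding the reindexing, as generic-free (inner-face against outer-face) pullbacks of $X$, and observe that the naturality squares of the comparison map against generic maps are likewise instances of the decomposition space axiom. The only cosmetic difference is that you work with $\Dec_\bot$ where the paper works with $\Dec_\top$, and you spell out the cULF check on inner faces and degeneracies separately where the paper compresses it into one sentence; note also that the squares you need are pullbacks for \emph{all} inner indices (by the definition of decomposition space, or Lemma~\ref{lem:fewerdiagrams}), not merely the "some $i$" asserted in Proposition~\ref{onlyfourdiags}, so the cleanest citation is to the axiom itself.
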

\begin{proof}
    We put $Y=\Dec_\top(X)$ and check the pullback 
    condition \ref{segalpqr} (3),
        $$
\vcenter{\xymatrix{
   Y_{n+1}\dto_{{d_\top}}\drpullback 
\rto^-{d_\bot}
 &  Y_n\dto^{d_\top} \\
   Y_{n}\rto_-{d_\bot}  &  Y_{n-1}
  .}}
$$
This is the same as
    $$
\vcenter{\xymatrix{
   X_{n+2}\dto_{{d_{\top-1}}}\drpullback 
\rto^-{d_\bot}
 &  X_{n+1}\dto^{d_{\top-1}} \\
   X_{n+1}\rto_-{d_\bot}  &  Y_{n}
  }}
$$
and since now the horizontal face maps that with respect to $Y$ were 
outer face maps, now become inner face maps in $X$, this square is one
of the decomposition square axiom pullbacks.
The cULF conditions say that the various $d_\top$ form pullbacks with all
generic maps in $X$.  But this follows from the decomposition space axiom for 
$X$.
\end{proof}

\begin{thm}\label{thm:decomp-dec-segal}
For a simplicial $\infty$-groupoid $X: \Delta\op\to\Grpd$, the following are equivalent
\begin{enumerate}
\item $X$ is a decomposition space
\item  both $\Dec_\top(X)$ and $\Dec_\bot(X)$ are 
    Segal spaces, and the two comparison maps back to $X$ are ULF and conservative.
\item  both $\Dec_\top(X)$ and $\Dec_\bot(X)$ are 
    Segal spaces, and the two comparison maps back to $X$ are conservative.
\item  both $\Dec_\top(X)$ and $\Dec_\bot(X)$ are Segal spaces, and 
the following squares are pullbacks:
$$  
\xymatrix{
   X_1\rto^{s_1}\drpullback 
\dto_-{d_\bot}
 & X_2\dto^{d_\bot} \\
  X_0  \rto_-{s_0}  &  X_1,
}\qquad
\xymatrix{
   X_1\dto_{d_\top}\drpullback 
\rto^-{s_0}
 &  X_2 \dto^{d_\top} \\
X_0  \rto_-{s_0}  &  X_1.
  }
$$
\end{enumerate}
\end{thm}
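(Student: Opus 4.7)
The plan is to prove the cycle of implications $(1) \Rightarrow (2) \Rightarrow (3) \Rightarrow (4) \Rightarrow (1)$. The implication $(1) \Rightarrow (2)$ is precisely Proposition~\ref{Dec=Segal+cULF}, and $(2) \Rightarrow (3)$ is a trivial weakening, so the real work lies in $(3) \Rightarrow (4)$ and $(4) \Rightarrow (1)$. The key idea throughout is that the dec constructions shift indices by one, so that inner face maps of $X$ become outer face maps of $\Dec_\top(X)$ or $\Dec_\bot(X)$ (and vice versa), while the comparison maps $d_\bot$ and $d_\top$ translate the cULF-type conditions into pullback squares in $X$.

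For $(3) \Rightarrow (4)$, I would carefully unpack the conservativity of the comparison map $d_\bot : \Dec_\bot(X) \to X$, which is given in each degree by $d_0$ of $X$. A degeneracy $s_j$ at level $n$ in $\Dec_\bot(X)$ corresponds to $s_{j+1}$ in $X$ at level $n+1$; conservativity of $d_\bot$ thus gives, for all $n, j \geq 0$, a pullback square whose special case $n = j = 0$ is exactly the first pullback square required in (4). The second square follows symmetrically from conservativity of $d_\top : \Dec_\top(X) \to X$.

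For $(4) \Rightarrow (1)$, I would invoke Proposition~\ref{onlyfourdiags}. The two degeneracy pullbacks it requires are given as hypotheses of~(4). For the two inner-face pullbacks it remains to ask for, I would translate the Segal condition (in the pullback formulation of Lemma~\ref{segalpqr}(3)) applied to each dec. In $\Dec_\top(X)$ at level $n$, the map $d_\top$ corresponds to the inner face map $d_n$ of $X$ at level $n+1$, while $d_\bot$ still corresponds to $d_0 = d_\bot$ of $X$; so the Segal pullback for $\Dec_\top(X)$ reads exactly as the third pullback square of Proposition~\ref{onlyfourdiags} with the choice $i = n$. Symmetrically, the Segal condition for $\Dec_\bot(X)$ yields the fourth square of Proposition~\ref{onlyfourdiags} with $i = 1$. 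Thus all four hypotheses of that proposition are met and $X$ is a decomposition space.

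The main obstacle is purely bookkeeping: tracking how face and degeneracy maps of the decs re-index to face and degeneracy maps of $X$, so that one correctly identifies the Segal pullbacks of the decs with the requisite inner-face pullbacks in $X$, and the conservativity squares with the requisite degeneracy pullbacks. Once this translation is carefully set up, both non-trivial implications reduce to a direct comparison with Proposition~\ref{onlyfourdiags}, and no further machinery is required.
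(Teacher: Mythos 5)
Your proof is correct and follows essentially the same route as the paper's: the cycle $(1)\Rightarrow(2)\Rightarrow(3)\Rightarrow(4)\Rightarrow(1)$, with $(1)\Rightarrow(2)$ being Proposition~\ref{Dec=Segal+cULF}, the middle implications being specialisations, and $(4)\Rightarrow(1)$ reducing to Proposition~\ref{onlyfourdiags} by re-indexing the Segal squares of the two decs. The only quibble is a harmless off-by-one: the Segal square of $\Dec_\top(X)$ matches the third square of Proposition~\ref{onlyfourdiags} with $i=n-1$ (the top \emph{inner} face map), not $i=n$, which would violate the constraint $0<i<n$.
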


\begin{proof} 
The implication (1) $\Rightarrow$ (2) is just the preceding Proposition, and the implications (2) $\Rightarrow$ (3) $\Rightarrow$ (4) are specialisations. The implication (4) $\Rightarrow$ (1) follows from Proposition \ref{onlyfourdiags}.
\end{proof}

\begin{BM}
  Dyckerhoff and Kapranov~\cite{Dyckerhoff-Kapranov:1212.3563} (Theorem~6.3.2)
  obtain the result that a simplicial space is $2$-Segal (i.e.~a decomposition
  space except that there are no conditions imposed on degeneracy maps)
  if and only if both $\Dec$s are Segal spaces.
\end{BM}

\begin{blanko}{Right and left fibrations.}\label{rightfibSegal}
  A functor of Segal spaces $f:Y \to X$ is called a {\em right fibration} if it
  is cartesian on $d_\bot$ and on all generic maps, or a {\em left fibration} if it
  is cartesian on $d_\top$ and on generic maps.  Here the condition on generic
  degeneracy maps is in fact a consequence of that on the face maps.
These notions are most meaningful when
the Segal spaces involved are Rezk complete.
\end{blanko}

\begin{prop}\label{DecULF-is-right}
  If $f:Y \to X$ is a conservative ULF functor between decomposition spaces, then
  $\Dec_\bot(f) : \Dec_\bot(Y) \to \Dec_\bot(X)$ is a right fibration of Segal
  spaces, cf.~\ref{rightfibSegal}.  Similarly, $\Dec_\top(f) : \Dec_\top(Y) \to
  \Dec_\top(X)$ is a left fibration.
\end{prop}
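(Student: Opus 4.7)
The plan is to unravel what it means for $\Dec_\bot(f)$ to be cartesian on the relevant maps, translate back to maps in $Y$ and $X$, and then apply the cULF hypothesis on $f$. The whole argument is essentially careful bookkeeping of simplicial indices, made possible by the following key observation: under dec, the outer face map that classifies the fibration becomes an \emph{inner} face map of the original simplicial object, and hence is covered by the cULF condition.

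First, by Proposition~\ref{Dec=Segal+cULF} applied to the decomposition spaces $Y$ and $X$, both $\Dec_\bot(Y)$ and $\Dec_\bot(X)$ are Segal spaces, so it makes sense to ask whether $\Dec_\bot(f)$ is a right fibration of Segal spaces. By definition (\ref{rightfibSegal}) this requires $\Dec_\bot(f)$ to be cartesian on the bottom face map $d_\bot$ of $\Dec_\bot$ and on all generic (inner) face maps of $\Dec_\bot$, the condition on generic degeneracies being automatic.

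Next I would identify the maps in $\Dec_\bot$ with maps in $X$ (and similarly $Y$). Under the identification $(\Dec_\bot X)_n = X_{n+1}$, the face map $d_i$ of $\Dec_\bot(X)$ at level $n$ corresponds to the face map $d_{i+1}$ of $X$ at level $n+1$. In particular, the bottom face map $d_\bot$ of $\Dec_\bot(X)$ corresponds to $d_1$ of $X$, which is an \emph{inner} face map of $X$, while the inner face maps of $\Dec_\bot(X)$ correspond to inner face maps $d_j$ of $X$ with index $j\geq 2$. Thus every face map of $\Dec_\bot$ that must be checked is an inner (hence generic) face map of $X$. Since $f: Y \to X$ is cULF, it is cartesian on every generic map of $X$, so $\Dec_\bot(f)$ is cartesian on $d_\bot$ and on all generic face maps of $\Dec_\bot$, which proves it is a right fibration.

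The case of $\Dec_\top(f)$ being a left fibration is entirely symmetric: under the identification $(\Dec_\top X)_n = X_{n+1}$, the face map $d_i$ of $\Dec_\top(X)$ at level $n$ is simply $d_i$ of $X$ at level $n+1$, so the top face $d_\top$ of $\Dec_\top(X)$ at level $n$ is $d_n$ on $X_{n+1}$, again an inner face of $X$, and inner faces of $\Dec_\top$ come from inner faces of $X$. The same appeal to cULFness of $f$ finishes the argument. There is no real obstacle beyond keeping the indices straight; the content of the proposition is the compatibility of generic/inner faces of $\Dec$ with generic faces of the original simplicial object.
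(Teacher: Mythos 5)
Your proof is correct and follows essentially the same route as the paper: both arguments reduce to the observation that the classifying face map $d_\bot$ (resp.\ $d_\top$) of the decalage is the inner face map $d_1$ (resp.\ $d_n$) of the original simplicial space, hence generic, so the cULF hypothesis on $f$ applies; the paper states this more tersely ("$d_0$ was originally a $d_1$, and in particular was generic") while you spell out the index bookkeeping. No gap.
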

\begin{proof}
  It is clear that if $f$ is cULF then so is $\Dec_\bot(f)$.  The further claim
  is that $\Dec_\bot(f)$ is also cartesian on $d_0$.  But $d_0$ was originally
  a $d_1$, and in particular was generic, hence has cartesian component.
\end{proof}

\def\inputfile{monoids-a.tex}

\subsection{Monoidal decomposition spaces}
\label{sec:monoids-a}

  The $\infty$-category of decomposition spaces (as a full subcategory of simplicial
  $\infty$-groupoids), has finite products.  Hence there is a symmetric monoidal structure
  on the $\infty$-category $\Decomp^{\culf}$ of decomposition spaces and cULF maps.
  We still denote this product as $\times$, although of course it is not the
  cartesian product in $\Decomp^{\culf}$.

\begin{deff}
  A {\em monoidal decomposition space} is a monoid object $(X,m,e)$ in
  $(\Decomp^{\culf}, \times, 1)$. 
  A {\em monoidal functor} between monoidal decomposition spaces is a monoid
  homomorphism in $(\Decomp^{\culf}, \times, 1)$.
\end{deff}

\begin{blanko}{Remark.}
  By this we mean a monoid in the homotopy sense, that is, 
an algebra in the sense of
  Lurie~\cite{Lurie:HA}.  We do not wish at this point
  to go into the technicalities of this notion, since
  in our examples, the algebra structure will be given
  simply by sums (or products).
\end{blanko}

\begin{eks}\label{extensive}
  Recall that a category $\EE$ with finite sums is {\em extensive}
  \cite{Carboni-Lack-Walters} when the 
  natural functor $\EE_{/A} \times \EE_{/B} \to \EE_{/A+B}$ is an equivalence.
  The fat nerve of an extensive $1$-category is a monoidal decomposition space.
  The multiplication is given by taking sum, the neutral object by the initial 
  object, and the extensive property ensures precisely that, given a factorisation of
  a sum of maps, each of the maps splits into a sum of maps in a unique way.

  A key example is the category of sets, or of finite sets.  Certain subcategories,
  such as the category of finite sets and surjections, or the category of finite 
  sets and injections, inherit the crucial property $\EE_{/A} \times \EE_{/B} 
  \simeq \EE_{/A+B}$. They fail, however, to be extensive in the strict sense, since the 
  monoidal structure $+$ in these cases is not the categorical sum.  Instead
  they are examples of {\em monoidal extensive} categories, meaning a monoidal
  category $(\EE, \boxplus, 0)$ for which $\EE_{/A} \times \EE_{/B} \to 
  \EE_{/A \boxplus B}$ is an equivalence 
  (and it should then be required separately that also
  $\EE_{/0}\simeq 1$).  The fat nerve of a monoidal extensive $1$-category is 
  a monoidal decomposition space.
\end{eks}

\begin{lemma}\label{dec-mon-is-mon}
  The dec of a monoidal decomposition space has again a natural monoidal 
  structure, and the counit is a monoidal functor.
\end{lemma}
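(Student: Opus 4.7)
The plan is to show that $\Dec_\bot$ restricts to a product-preserving endofunctor of $\Decomp^{\culf}$, transport the monoid structure along this endofunctor, and then use naturality of the counit $d_\bot$ to recognize it as a monoid homomorphism; the argument for $\Dec_\top$ with counit $d_\top$ is identical.

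First I would check that $\Dec_\bot$ really is an endofunctor on $\Decomp^{\culf}$. On objects, Proposition~\ref{Dec=Segal+cULF} says that the lower dec of a decomposition space is a Segal space, hence by Proposition~\ref{prop:segal=>decomp1} again a decomposition space. On morphisms, Proposition~\ref{DecULF-is-right} shows that $\Dec_\bot$ sends cULF maps to right fibrations of Segal spaces, which are in particular cULF.

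Next, $\Dec_\bot$ preserves finite products on the nose: it is precomposition with the add-bottom functor $b:\Delta\to\Delta$, and products of simplicial spaces are computed levelwise, so $(\Dec_\bot(X\times Y))_n = X_{n+1}\times Y_{n+1} = (\Dec_\bot X \times \Dec_\bot Y)_n$, and similarly $\Dec_\bot(1)=1$. Since the monoidal product on $\Decomp^{\culf}$ is inherited from the cartesian product of simplicial spaces, $\Dec_\bot$ is symmetric monoidal with respect to $\times$. A product-preserving functor between symmetric monoidal $\infty$-categories carries monoid objects to monoid objects, so $\bigl(\Dec_\bot X,\ \Dec_\bot(m),\ \Dec_\bot(e)\bigr)$ acquires a natural monoid structure in $\Decomp^{\culf}$, making $\Dec_\bot X$ a monoidal decomposition space.

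Finally, the counit $d_\bot:\Dec_\bot X \to X$ is cULF by Proposition~\ref{Dec=Segal+cULF}, hence a morphism in $\Decomp^{\culf}$; and it is a monoid homomorphism simply by naturality of the counit applied to $m$ and $e$, which yields the required commuting squares
$$
\xymatrix{\Dec_\bot X \times \Dec_\bot X \ar[r]^-{\Dec_\bot(m)} \ar[d]_{d_\bot\times d_\bot} & \Dec_\bot X \ar[d]^{d_\bot} \\ X \times X \ar[r]_-m & X}
\qquad
\xymatrix{1 \ar[r]^-{\Dec_\bot(e)} \ar@{=}[d] & \Dec_\bot X \ar[d]^{d_\bot} \\ 1 \ar[r]_-e & X.}
$$
The only subtlety is the $\infty$-categorical claim that a product-preserving functor transports algebra structures with all higher coherence; but since $\Dec_\bot$ preserves products strictly, the coherence data for $\Dec_\bot X$ as a monoid is literally obtained by applying $\Dec_\bot$ to that of $X$, and the monoid-homomorphism coherences for $d_\bot$ are instances of naturality, so no genuinely higher-categorical manipulation is needed beyond what is already packaged in the fact that $d_\bot$ is a natural transformation of $\infty$-functors.
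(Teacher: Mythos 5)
Your proof is correct. The paper states this lemma without proof, so there is nothing to compare against, but your argument — $\Dec_\bot = b\upperstar$ preserves the levelwise product $\times$ strictly and preserves cULF maps (Propositions~\ref{Dec=Segal+cULF} and \ref{DecULF-is-right}), hence transports monoid objects in $(\Decomp^{\culf},\times,1)$, with the counit a monoid homomorphism by naturality — is precisely the intended one, and your handling of the coherence issue is consistent with the paper's declared informal treatment of these cartesian-induced monoidal structures.
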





\section{Incidence (co)algebras}
\label{sec:COALG}

\def\inputfile{coass.tex}

The goal in this section is to define a coalgebra (with $\infty$-groupoid coefficients)
from any decomposition space.  The following brief discussion explains
the origin of the decomposition space axioms.

The incidence coalgebra associated to a decomposition space $X$ will be a
comonoid object in the symmetric monoidal $\infty$-category $\LIN$ (described 
in the Appendix),
and the underlying object is $\Grpd_{/X_1}$.
Since $\Grpd_{/X_1} \tensor \Grpd_{/X_1} = \Grpd_{/X_1\times X_1}$,
and since linear functors are given by spans, to define a comultiplication
functor is to give a span
$$
X_1 \leftarrow M \to X_1 \times X_1 .
$$

For any simplicial space $X$, the span
$$
\xymatrix{
 X_1  & \ar[l]_{d_1}  X_2\ar[r]^-{(d_2,d_0)} &  X_1\times X_1 
}
$$
defines a linear functor, the {\em comultiplication}
\begin{eqnarray*}
  \Delta : \Grpd_{/ X_1} & \longrightarrow & 
  \Grpd_{/( X_1\times X_1)}  \\
  (S\stackrel s\to X_1) & \longmapsto & (d_2,d_0) \lowershriek \circ d_1 \upperstar(s) .
\end{eqnarray*}
The desired coassociativity diagram (which should commute up to equivalence)
$$
\xymatrix@!C=25ex@R-1.5ex{
\Grpd_{/X_1}\dto_\Delta\rto^-\Delta&\Grpd_{/X_1\times X_1}\dto^{\Delta\times \id}\\
\Grpd_{/X_1\times X_1}\rto_-{\id\times\Delta}&\Grpd_{/X_1\times X_1\times X_1}}
$$
is induced by the spans in the outline of this diagram:
$$\xymatrix@C+6ex@R+0ex{
X_1                  & X_2 \ar[l]_{d_1}\ar[r]^{(d_2,d_0)}   & X_1 \times X_1 \\
X_2 \ar[u]^{d_1} \ar[d]_{(d_2,d_0)}&
X_3\dlpullback\urpullback\ar[l]_{d_2}\ar[u]^{d_1}\ar[d]^{(d_2^2,d_0)}\ar[r]_{(d_3,d_0 d_0)}
          & X_2 \times X_1 \ar[u]_{d_1\times \id}\ar[d]^{(d_2,d_0)\times \id}\\
X_1\times X_1 & X_1 \times X_2 \ar[l]^-{\id\times d_1}
\ar[r]_-{\id\times (d_2,d_0)} &X_1 \times X_1 \times X_1
}
$$
Coassociativity will follow from Beck--Chevalley equivalences if the interior 
part of the diagram can be established, with pullbacks as indicated.
Consider the upper right-hand square: it will be a pullback if and only if
its composite with the first projection is a pullback:
$$\xymatrix@C+6ex@R+0ex{
X_2 \ar[r]^-{(d_2,d_0)}   & X_1 \times X_1  \ar[r]^-{\text{pr}_1} & X_1 \\
X_3\urpullback\ar[u]^{d_1}\ar[r]_{(d_3,d_0 d_0)}
  & X_2 \times X_1 \urpullback \ar[u]^{d_1\times \id} \ar[r]_-{\text{pr}_1} & X_2 
  \ar[u]_{d_1}
}
$$
But demanding the outer rectangle to be a pullback
is precisely one of the basic decomposition space axioms.
This argument is the origin of the decomposition space axioms.

Just finding an equivalence is not enough, though.  Higher coherence has to be 
established, which will be accounted for by the full decomposition space axioms.
To establish coassociativity in a strong homotopy sense we must deal on an equal footing with all `reasonable' spans
$$
\prod X_{n_j}\leftarrow \prod X_{m_j}\rightarrow\prod X_{k_i}
$$
which could arise from composites of products of the comultiplication and counit.
We therefore take a more
abstract approach, relying on some more simplicial machinery.  This also leads
to another characterisation of decomposition spaces, and is actually of
independent interest.


\def\inputfile{unDelta.tex}

\subsection{More simplicial preliminaries}

\label{sec:DD}

\begin{blanko}{The category $\un\Delta$ of finite ordinals (the algebraist's Delta).} 
  We denote by $\un \Delta$ the category of all finite ordinals (including the
  empty ordinal) and monotone maps.  Clearly $\Delta \subset \un\Delta$
  (presheaves on $\un\Delta$ are augmented simplicial sets), but this is not the most useful relationship between the two categories.
 We will thus use a different notation for the objects of $\un\Delta$, given  by their cardinality, with an underline:
  $$
  \un n = \{1,2,\ldots,n\} .
  $$
  The category $\un\Delta$ is monoidal under ordinal sum
  $$
  \un m + \un n := \un{m+n} ,
  $$
  with $\un 0$ as the neutral object.
  
  The cofaces $d^i :  \underline{n\!-\!1} \to \underline n$
and codegeneracies $s^i:\underline  {n\!+\!1} \to \underline n$ in $\un\Delta$ 
 are, as usual, the 
injective and surjective monotone maps which skip and repeat the $i$th element, respectively,  but note that now the index is $1 \leq i \leq n$.

%
%
\end{blanko}

\begin{lemma}\label{lem:Delta-duality}
There is a canonical equivalence of monoidal categories (an isomorphism, if
we consider the usual skeleta of these categories)
\begin{eqnarray*}
(\un \Delta, +, \un 0) &\simeq& (\Deltagen\op, \pm, [0]) \\
  \un k & \leftrightarrow & [k]
\end{eqnarray*}
\end{lemma}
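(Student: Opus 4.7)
My plan is to write down an explicit functor $F\colon \un\Delta \to \Deltagen\op$ (equivalently, a functor $\un\Delta\op \to \Deltagen$), exhibit an inverse, and verify compatibility with the two monoidal structures. On objects, set $F(\un k) := [k]$. On a monotone map $f \colon \un m \to \un n$, define the generic map $F(f)\colon [n] \to [m]$ by
\[
F(f)(i) \;=\; \bigl|\,f^{-1}(\{1,\ldots,i\})\,\bigr|, \qquad i \in [n].
\]
This is monotone in $i$, and $F(f)(0)=0$, $F(f)(n)=m$, so $F(f)$ is indeed generic. Conceptually, $F(f)$ records the block sizes of the fibres of $f$: since $f$ is monotone on $\un m$, each fibre is an interval, and $F(f)$ encodes the resulting ordered partition of $\un m$ into $n$ (possibly empty) consecutive blocks, which is the same data as a generic map $[n]\to[m]$.

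Next I verify contravariant functoriality. Given $\un\ell \stackrel{h}{\to} \un m \stackrel{f}{\to} \un n$ and $i \in [n]$, the preimage $f^{-1}(\{1,\ldots,i\})$ is the initial segment $\{1,\ldots,k\}$ with $k = F(f)(i)$ (because $f$ is monotone), so $(fh)^{-1}(\{1,\ldots,i\}) = h^{-1}(\{1,\ldots,k\})$ and therefore $F(fh)(i) = F(h)(k) = (F(h)\circ F(f))(i)$. Identities clearly go to identities. For full faithfulness I exhibit the inverse functor $G\colon \Deltagen \to \un\Delta\op$ sending $[k] \mapsto \un k$ and sending a generic $g\colon [n]\to[m]$ to the monotone map $G(g)\colon \un m \to \un n$ defined by $G(g)(j) = \min\{i : g(i) \geq j\}$. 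A direct check (using the bijection between generic maps $[n]\to[m]$, monotone maps $\un m \to \un n$, and compositions of $m$ into $n$ parts $a_i = g(i)-g(i-1) = |G(g)^{-1}(i)|$) shows $GF = \id$ and $FG = \id$ on the natural skeleta, so $F$ is an isomorphism there, and an equivalence in general.

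Finally I check the monoidal structure. On objects, $F(\un m + \un n) = F(\un{m+n}) = [m+n] = [m] \pm [n]$, and the unit $\un 0$ goes to $[0]$. On morphisms $f\colon \un m \to \un n$ and $f'\colon \un{m'}\to \un{n'}$, the sum $f+f'\colon \un{m+m'}\to \un{n+n'}$ has preimage of $\{1,\ldots,i\}$ equal to $f^{-1}(\{1,\ldots,i\})$ for $i \leq n$ and to $\un m \sqcup (f')^{-1}(\{1,\ldots,i-n\})$ for $i>n$. Reading off cardinalities gives exactly $F(f) \pm F(f')$ in the description of the amalgamated sum from \eqref{pm}. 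Coherence isomorphisms for the two strict monoidal structures are trivial, so $F$ is a strict monoidal isomorphism on the chosen skeleta and a monoidal equivalence of categories in general.

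The only obstacle worth flagging is bookkeeping: one must keep straight that $[k]$ has $k+1$ points but $k$ ``gaps'', and that the contravariance swaps the roles of source and target, so the composition of a generic map with the ``add on the right'' inclusion $[n]\rat [m]\pm[n]$ corresponds under $F$ to the projection ``forget the last $n$ elements'' $\un m + \un n \to \un m$. Once the book-keeping convention is fixed, the verification is a mechanical application of the cardinality formula defining $F$.
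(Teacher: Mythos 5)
Your proof is correct, and all the verifications (contravariant functoriality, the mutually inverse formulas, and compatibility with $+$ versus $\pm$) check out. The route is more hands-on than the paper's: the paper defines the correspondence by homming into the dualizing objects, sending $\un k$ to $\Hom_{\un\Delta}(\un k,\un 2)\simeq[k]$ and $[k]$ to $\Hom_{\Deltagen}([k],[1])\simeq\un k$, with functoriality given for free by precomposition; it does not spell out the monoidal compatibility at all. Your explicit formula $F(f)(i)=\lvert f^{-1}(\{1,\ldots,i\})\rvert$ is exactly what the paper's ``precompose with the map $\un n\to\un 2$ classified by $i$'' unpacks to, so the two constructions produce the same functor; the difference is that you pay for the concreteness with the (correctly executed) bookkeeping of fibres and initial segments, while the paper's representable-functor phrasing makes functoriality and naturality automatic but leaves the reader to unwind what the maps actually do. Your version has the advantage of actually verifying the monoidal statement, which the paper's proof silently omits, and of making visible the combinatorial content (generic maps $[n]\to[m]$ as ordered compositions of $m$ into $n$ possibly empty blocks) that is used repeatedly elsewhere in the paper.
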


\begin{proof}
  The map from left to right sends $\un k \in \un\Delta$ to 
  $$
  \Hom_{\un\Delta}(\un k, \un 2) \simeq [k] \in \Deltagen\op .
  $$
  The map in the other direction sends 
  $[k]$ to the ordinal
  $$
  \Hom_{\Deltagen}([k],[1]) \simeq \un k .
  $$
  In both cases, functoriality is given by precomposition.
\end{proof}

In both categories we can picture the objects as a line with some dots.
The dots then represent the elements in $\un k$, while the edges represent
the elements in $[k]$;  a map operates on the dots when considered a
map in $\un \Delta$ while it operates on the edges when considered a map in 
$\Deltagen$.
Here is a picture of a certain map $\un 5 \to \un 4$ in
$\un\Delta$ and of the corresponding map $[5] \leftarrow [4]$ in
$\Deltagen$.
\begin{center}
\begin{texdraw}
  \arrowheadtype t:V
  
    \arrowheadsize l:6 w:3  

  \move (0 0) 
  \bsegment
  \move (0 -5)
  \lvec (0 85)
  \move (0 8) \Onedot 
  \move (0 24) \Onedot 
  \move (0 40) \Onedot 
  \move (0 56) \Onedot 
  \move (0 72) \Onedot 
  \esegment
  
  \move (30 0)
    \bsegment
    \move (0 3)
  \lvec (0 77)
  \move (0 16) \Onedot 
  \move (0 32) \Onedot 
  \move (0 48) \Onedot 
  \move (0 64) \Onedot 
  \esegment

  \move   (27 8) \avec (3 0)
  \move  (27 72) \avec (3 80) 
    \move   (27 55) \avec (3 49)
    \move   (27 41) \avec (3 47)
    \move   (27 23.7) \avec (3 16.3)

        \arrowheadsize l:4 w:3  

    \linewd 0.9
    \move (0 8) \avec (15 12)\lvec (30 16)
    \move (0 24)\avec (15 28)\lvec (30 32)
    \move (0 40)\avec (15 36)\lvec (30 32)
    \move (0 56)\avec (15 60)\lvec (30 64)
    \move (0 72)\avec (15 68)\lvec (30 64)
\end{texdraw}
\end{center}


\def\inputfile{prop-proof.tex}

\begin{blanko}{A twisted arrow category of $\un\Delta$.}
  Consider the category  $\DD$ whose objects are the arrows $\un n \to \un k$ of $\un \Delta$ 
and whose morphisms $(g,f)$ from $a:\un m \to \un h$ to $b:\un n \to \un k$ are 
  commutative squares
      \begin{align}\label{mor-DD}
\vcenter{\xymatrix{
  \un m \ar[d]_a \ar[r]^g \ar@{}[rd]|{(g,f)}& \un n \ar[d]^b \\
\un h &  \ar[l]^f \un k .
  }}\end{align}
That is, $\DD\op$ is the twisted arrow category~\cite{MacLane:categories,BaWi:1985}
 of $\un\Delta$.


There is a canonical factorisation system on $\DD$: any morphism \eqref{mor-DD}
factors uniquely as
      $$\xymatrix@C=3.2pc{
  \un m \ar[d]_{a=fbg} \ar[r]^= \ar@{}[rd]|\varphi
&\un m \ar[d]|{bg} \ar[r]^g \ar@{}[rd]|\gamma
& \un n \ar[d]^b \\
\un h &  \ar[l]^f \un k&  \ar[l]^= \un k
  }$$
The maps $\varphi=(\id,f):fb\to b$ in the left-hand class of the factorisation system are termed {\em segalic},
 \begin{align}\label{cov-DD}\vcenter{\xymatrix{
  \un m \ar[r]^=  \ar[d]_{fb}\ar@{}[rd]|\varphi
& \un m \ar[d]^b \\
  \un h & \ar[l]^f \un k.}}
\end{align}
The maps $\gamma=(g,\id):bg\to b$ in the right-hand class are termed {\em ordinalic} and may be identified with maps in the slice categories $\un \Delta_{/\un h}$
\begin{align}\label{gen-DD}\vcenter{\xymatrix{
  \un m \ar[r]^g  \ar[d]_{bg}\ar@{}[rd]|\gamma
& \un n \ar[d]^{b} \\
  \un h & \ar[l]^= \un h.}}
\end{align}
Observe that $\un\Delta$ is isomorphic to the subcategory of objects with target $\un h=\un 1$, termed the {\em connected objects} of $\DD$, 
\begin{align}\label{unD-in-DD}
\un \Delta\xrightarrow{\;\;=\;\;}\un \Delta_{/\un1}\xrightarrow{\;\;\subseteq\;\;} \DD.\end{align} 

  The ordinal sum operation in $\un\Delta$ induces a monoidal 
  operation in $\DD$:
 the {\em external sum}  $(\un n\to \un k)\oplus(\un n'\to \un k')$ 
of objects in $\DD$ is their ordinal sum $\un n+\un n'\to \un k+\un k'$ as morphisms in $\un\Delta$.  The neutral object is $\un 0 
  \to \un 0$. 
The inclusion functor \eqref{unD-in-DD} is not monoidal, but it is easily seen to be oplax 
  monoidal by means of the codiagonal map $\un1+\un1\to \un1$.

Each object $\un m\xrightarrow{\;a\;}\un k$ of $\DD$ is an external sum of connected objects,
\begin{align}\label{unD-DD}
a\;\;=\;\;a_1\oplus a_2\oplus \dots \oplus a_k\;\;=\;\;\bigoplus_{i\in \un k}
\left(\un m_i\xrightarrow{\;a_i\;}\un1\right),
\end{align}
where $\un m_i$ is (the cardinality of) the fibre of $a$ over $i\in\un k$. 

Any segalic map \eqref{cov-DD} and any ordinalic map \eqref{gen-DD} in $\DD$ may be written uniquely as external sums 
\begin{align}\label{cov-sum}
 \varphi &\;\;= \;\;\varphi_1 \oplus \varphi_2\oplus
\dots \oplus \varphi_h \;\;=\;\;\bigoplus_{j\in \un h}
\left(\vcenter{\xymatrix@R1.4pc{
\un m_j\rto^=\dto
\ar@{}[rd]|{\varphi_j}
&\dto^{b_j} \un m_j\\  \un 1 &\lto \un k_j }}\right)
\\\label{gen-sum}
 \gamma &\;\;= \;\;\gamma_1 \oplus \gamma_2\oplus 
\dots \oplus \gamma_h \;\;=\;\;\bigoplus_{j\in \un h}\left(\un m_j\xrightarrow{\;\gamma_j\;}\un n_j\right)
\end{align}
where each $\gamma_j$ is a map in  $\un\Delta_{/\un1}=\un\Delta$.

\end{blanko}

In fact $\DD$ is a universal monoidal category in the following sense.
\begin{prop}
For any cartesian category $(\CC,\times,1)$, there is an equivalence
$$
\Fun(\Delta\op,\CC)\;\simeq \;\Fun^\otimes((\DD,\oplus,0),(\CC,\times,1))
$$
between the categories of simplicial objects $X$ in $\CC$ and of
monoidal functors $\overline X:\DD\to\CC$.
The correspondence between $X$ and $\ov X$ is determined by following properties.

(a) The functors $X:\Delta\op\to\CC$ and $\ov X:\DD\to\CC$ agree on the common subcategory $\Deltagen\op\cong\un\Delta$,
$$
\xymatrixrowsep{8pt}
\xymatrix@C+1pc{
\ar[dd]_{\cong}
\Deltagen\op\,\ar@{^(->}[r]&\Delta\op\ar[rd]^X\\
&&\CC .\\
\un\Delta\;\ar@{^(->}[r]
&\DD\ar[ru]_{\overline X}& }
$$

(b) Let $(\un m\stackrel a\to \un k)=\bigoplus_i (\un m_i\stackrel a\to \un 1)$ be the external sum decomposition \eqref{unD-DD} of any object of $\DD$, and denote by
$f_i:[m_i]\rat[m_1]\pm\dots\pm[m_k]=[m]$ the canonical free map in $\Delta$, for $i\in\un k$. Then
$$\ov X\!\left(\!\vcenter{\xymatrix@R1.1pc@C1.6pc{\un m\rto^=\dto
\ar@{}[rd]|\varphi
&\dto^a \un m\\  \un 1 &\lto \un k }}
\right)\;=\;(X(f_1),\dots,X(f_k))\;:\;
X_m\longrightarrow\prod_{i\in\un k} X_{m_i}
$$
and each $X(f_i)$ is the composite of $\ov X(\varphi)$ with the projection to $X_i$.
\end{prop}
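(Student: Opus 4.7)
The plan is to use the segalic–ordinalic factorisation system in $\DD$ together with the external sum decomposition \eqref{unD-DD}, \eqref{cov-sum}, \eqref{gen-sum} to reduce the construction of $\ov X$ to the two pieces of data prescribed by (a) and (b), and dually to use these pieces to reconstruct a simplicial object $X$ from any monoidal functor $\ov X$. The heart of the argument is that monoidality forces $\ov X$ to be determined by its action on connected objects and on segalic maps whose source is connected, and that condition (a) respectively (b) tells us what this action is in terms of $X$ on generic respectively free maps of $\Delta$.

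First I would construct $\ov X$ from a given simplicial $X$. On objects, monoidality and condition (a) together force $\ov X(\un m \stackrel a \to \un k) = \prod_{i\in \un k} X_{m_i}$ where $\un m_i$ is the $i$-th fibre of $a$. On ordinalic maps, the decomposition \eqref{gen-sum} reduces to maps in $\un\Delta \cong \Deltagen\op$, and condition (a) forces $\ov X(\gamma) = \prod_j X(g_j)$ where $g_j$ is the corresponding generic map in $\Delta$. On segalic maps with connected source, condition (b) gives $\ov X(\varphi) = (X(f_1), \dots, X(f_k))$; the general segalic case is then determined by the decomposition \eqref{cov-sum} and monoidality. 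On a general morphism, use the segalic–ordinalic factorisation.

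Next I would verify that $\ov X$ is a well-defined monoidal functor. Composition of two ordinalics or two segalics preserves the type, and functoriality in each case reduces to functoriality of $X$ on generic maps respectively to a cartesian-product identity (segalic composition corresponds to refining a partition, which matches grouping factors in a product). The genuinely nontrivial case is when a segalic is followed by an ordinalic (which is already in canonical form) versus an ordinalic followed by a segalic, whose composite must be re-factored using an essentially unique segalic–ordinalic square in $\DD$. Checking that $\ov X$ respects this re-factorisation reduces to the commutativity of the associated identity-extension squares \eqref{gffg} in $\Delta$ under $X$, which holds because $X$ is a functor on all of $\Delta\op$. Monoidality of $\ov X$ is built into the construction by the external-sum decompositions.

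For the inverse construction, given $\ov X$, set $X_n := \ov X(\un n \to \un 1)$. Restricting $\ov X$ along the inclusion \eqref{unD-in-DD} and transporting across the duality $\un\Delta \simeq \Deltagen\op$ defines $X$ on generic maps. For a free map $f_i: [m_i] \rat [m_1]\pm\dots\pm[m_k] = [m]$, condition (b) prescribes $X(f_i)$ as the $i$-th component of $\ov X(\varphi)$, where $\varphi$ is the unique segalic map from the connected object $\un m \to \un 1$ to the object with fibre partition determined by the $m_j$'s. That these actions assemble into a functor $\Delta\op \to \CC$ amounts to checking, for each identity-extension square \eqref{gffg}, a corresponding square in $\CC$, which follows from functoriality of $\ov X$ applied to the composite of the associated ordinalic and segalic maps in $\DD$.

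Finally, the two constructions are mutually inverse essentially by construction, since the data on objects and on each class of the factorisation system coincide on the nose. The main obstacle is not conceptual but combinatorial: one must carefully keep track of how free maps in $\Delta$ bundle into tuples indexed by a partition, how this matches the external-sum decomposition of segalic maps in $\DD$, and how these tuples interact with cartesian products in $\CC$. In essence, the proposition exhibits $(\DD, \oplus, \un 0)$ as the free cartesian-monoidal category generated by $(\un\Delta, +, \un 0)$ in a suitable sense, and the proof is bookkeeping around this universal property.
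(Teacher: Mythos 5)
Your proposal is correct and follows essentially the same route as the paper's proof: define the correspondence on the ordinalic and segalic classes separately using the external-sum decomposition into connected objects, and verify compatibility on the mixed (ordinalic-then-segalic) composites via the commutativity under $X$ of the associated identity-extension squares. The only difference is presentational — the paper spells out the well-definedness of $X(f_r)$ via the $k=3$ factorisation of $\varphi$ and the explicit diagram for composites of free maps, which you summarise more briefly but correctly.
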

\begin{proof}
Given $\ov X$, property (a) says that there is a unique way to define 
$X$ on objects and generic maps.
Conversely, given $X$, 
then for any object $a:\un m\to\un k$ in $\DD$ we have  
$$
\overline X_a\;\;=\;\;
\prod_{i\in\un k}
\overline X_{a_i}\;\;=\;\;\prod_{i\in\un k}X_{m_i}
$$
using \eqref{unD-DD}, and for any ordinalic map $\gamma$ we have
$$
\overline X(\gamma)\;\;=\;\;
\prod_{i\in\un k}
\overline X(\gamma_i)\;\;=\;\;\prod_{i\in\un k}X(g_i)
$$
using \eqref{gen-sum}, where $g_i\in\Deltagen\op$ corresponds to $\gamma_i\in\un\Delta$. 

Thus we have a bijection between functors $X$ defined on $\Deltagen\op$ and monoidal functors $\ov X$ defined on the ordinalic subcategory of $\DD$.
Now we consider the free and segalic maps. Given $\ov X$, 
property (b) says that for any free map $f_r:[m_r]\to[m]$ we may define 
$$X(f_r)=
\left(X_m\xrightarrow{\overline X(\varphi)}\prod_{i\in\un k} X_{m_i}\onto X_{m_r}\right)$$
We may
assume $k=3$:
given the factorisation
$$
\varphi=\left(\vcenter{\xymatrix@R1.5pc@C1.6pc{
\un m\rto^-=
\dto\ar@{}[rd]|{\varphi_2}
&\dto \un m_{<r}+\un m_r+\un m_{>r}\rto^-=
\dto\ar@{}[rd]|{\varphi_1\oplus\id\oplus \varphi_3}
&\dto\sum_{i\in \un k} m_i
\\  \un 1 &\lto \un 3&\lto\un k }}\right)
$$
one sees the value $X(f_r)$ is well defined from the following diagram 
$$
\xymatrix@C=6pc{X_m\rto^-{\overline X(\varphi_2)}\ar@/_0.9pc/[rrd]_(0.3){X(f_r)}
  &X_{m_{<r}}\times X_{m_r}\times X_{m_{>r}}
   \rto^-{\overline X(\varphi_1)\times\id\times\overline X(\varphi_3)}
   \ar@{->>}[dr]
       &\prod_{i\in\un k} X_{m_i}
        \ar@{->>}[d]
\\ &   &X_{m_r}\;.
}
$$
Functoriality of $X$ on a composite of free maps, say $[m_3]\rat[\sum_2^4m_i]\rat[\sum_1^5m_i]$,
now follows from the diagram
$$
  \xymatrixrowsep{16pt}
  \xymatrixcolsep{16pt}
\xymatrix
{
X_{\sum_1^5m_i}\drto\rrto &&\ar@{->>}[dr] \prod_1^5X_{m_i}\ar@{->>}[rr]&&X_{m_3}\\
&\!\!\!X_{m_1}\times X_{\sum_2^4m_i}\times X_{m_5}\!\!\!
\ar[ur]\ar@{->>}[dr]&&\prod_2^4X_{m_i}\ar@{->>}[ur]\\
&&X_{\sum_2^4m_i}\urto
}
$$
in which the first triangle commutes by functoriality of $\ov X$.

Conversely, given $X$, then property (b) says how to define $\ov X$ on segalic maps with connected domain and hence, by \eqref{cov-sum}, on all segalic maps.
Functoriality of $\ov X$ on a composite of segalic maps, say $(\id,\un 1\leftarrow\un h\leftarrow\un k)$, follows from functoriality of $X$:
$$
\xymatrix@C=9.5pc{
X_m
\ar@/_2.5pc/[rr]_-{(X([m_i]\rat[m]))_{i\in\un k}}
\rto^-{(X([m_j]\rat[m]))_{j\in\un h}}
&\displaystyle\prod_{j\in \un h}X_{m_j}
\rto^-{\prod_{j\in\un h}(X([m_i]\rat[m_j]))_{i\in\un k_j}}
&\displaystyle\prod_{j\in \un h}\prod_{i\in \un k_j}X_{m_i}
}
$$ 
It remains only to check that the construction of $\overline X$ from $X$ (and of $X$ from $\overline X$) is well defined on composites of ordinalic followed by segalic (free followed by generic) maps. One then has the mutually inverse equivalences required.  
Consider the factorisations in $\DD$,
$$
\vcenter{\xymatrix{
\un m \ar[d] \ar[r]^= \ar@{}[rd]|{\varphi}&\un m \ar[d] \ar[r]^g \ar@{}[rd]|{\gamma}& \un m' \ar[d] \\
\un 1 &  \ar[l] \un k&  \ar[l]^= \un k
  }}
\quad=\quad
\vcenter{\xymatrix{
\un m \ar[d] \ar[r]^g \ar@{}[rd]|{\gamma'}&\un m' \ar[d] \ar[r]^=  \ar@{}[rd]|{\varphi'}&\un m' \ar[d] \\
\un 1 &  \ar[l]^= \un 1&  \ar[l] \un k
  .}}
$$
To show that $\overline X$ is well defined,
we must show that the diagrams
$$
\xymatrix@C=5pc{
X_m\rrto^-{\ov X(\varphi)=(X(f_1),\dots,X(f_k))}\dto_{\ov X(\gamma')=X(\tilde g)}&& \prod
X_{m_i}\dto_{\ov X(\gamma')=\prod X(\tilde g_i)}\ar@{->>}[r]&X_{m_r}
\dto^{X(\tilde g_r)}\\
X_{m'}\rrto_-{\ov X(\varphi')=(X(f'_1),\dots,X(f'_k))}&&\prod
X_{m'_i}\ar@{->>}[r]&X_{m'_r},
}
$$
commute for each $r$, where $\tilde g$, $\tilde g_i$ in $\Deltagen$ correspond to $g$, $g_i$ in $\un\Delta$. This follows by functoriality of $X$, since $\tilde g$ restricted to $m'_r$ is the corestriction of $\tilde g_r$.
Finally we observe that this diagram, with $k=3$ and $r=2$, also serves to show that the construction of $X$ from $\ov X$ is well defined on
$$
\xymatrix@C5pc{
{[m_1+m_2+m_3]} &
 \ar@{ >->}[l]_-{f_2} {[m_2]}\\
{[m_1'+m_2'+m_3']} \ar@{>|}[u]^{\tilde g}&\ar@{>|}[u]_{\tilde g_2}
\ar@{ >->}[l]^-{f'_2}{m_2'}}
$$
\end{proof}

\begin{lemma}\label{lemma:DD-pbk}
  In the category $\DD$, ordinalic and segalic maps admit pullback 
  along each other, and the result is again maps of the same type.
\end{lemma}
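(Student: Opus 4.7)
The plan is to construct the pullback explicitly, exploiting the fact that ordinalic and segalic maps act independently on the top row and the bottom row of the twisted arrow squares defining $\DD$, so they should simply commute past each other.

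Given a cospan $a \xrightarrow{\gamma} b \xleftarrow{\varphi} c$ in $\DD$ with $\gamma$ ordinalic and $\varphi$ segalic, I would write $b: \un n \to \un k$, so that $\gamma$ is determined by some $g: \un m \to \un n$ in $\un\Delta$ with $a = b \circ g : \un m \to \un k$, and $\varphi$ is determined by some $f: \un k \to \un h$ with $c = f \circ b : \un n \to \un h$. The candidate pullback object is
$$e \;:=\; f \circ b \circ g \;=\; f \circ a \;=\; c \circ g \;:\; \un m \longrightarrow \un h,$$
equipped with a segalic projection $\varphi' := (\id,f) : e \to a$ and an ordinalic projection $\gamma' := (g,\id) : e \to c$. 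The square commutes because, under the composition law $(g_2,f_2)\circ(g_1,f_1)=(g_2 g_1, f_1 f_2)$ in $\DD$, both composites $\gamma\circ\varphi'$ and $\varphi\circ\gamma'$ equal $(g,f): e \to b$.

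For the universal property I would take a competing cone, consisting of an object $x:\un p \to \un q$ together with morphisms $\alpha=(g_\alpha,f_\alpha): x\to a$ and $\beta=(g_\beta,f_\beta): x\to c$ satisfying $\gamma\circ\alpha = \varphi\circ\beta$. Expanding this equation with the composition law reads off the two constraints $g \circ g_\alpha = g_\beta$ and $f_\alpha = f_\beta\circ f$. The unique mediating morphism is then forced to be $(g_\alpha,f_\beta): x\to e$: compatibility with $\varphi'$ demands its top component to be $g_\alpha$, compatibility with $\gamma'$ demands its bottom component to be $f_\beta$, and the two constraints above are precisely what is needed for $(g_\alpha,f_\beta)$ to be a well-defined morphism in $\DD$ (i.e.\ for the square $x = f_\beta\circ e\circ g_\alpha$ to commute).

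This shows the pullback exists, with $\varphi'$ segalic and $\gamma'$ ordinalic, which is the content of the lemma. The only real bookkeeping is keeping the variance in the twisted arrow category straight --- once the composition convention is fixed, the verification is a diagram chase in two independent registers (one for the top arrows in $\un\Delta$, one for the bottom arrows in $\un\Delta\op$), and no non-trivial property of $\un\Delta$ beyond the existence of composition is used.
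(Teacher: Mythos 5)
Your proof is correct and follows essentially the same route as the paper: both construct the pullback of the cospan $bg \to b \leftarrow fb$ explicitly as the object $fbg$, with the segalic and ordinalic legs obtained by repeating $f$ and $g$ and filling the remaining square components with identities. The paper simply asserts that the four identity maps make the universal property "clear," whereas you spell out the verification; the content is the same.
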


\begin{proof}
  This is straightforward: in the diagram below, the map from $a$ to $b$
  is segalic (given essentially by the bottom map $f$) and the map from
  $a'$ to $b$ is ordinalic (given essentially by the top map $g'$):
  \begin{equation} \label{eq:pbk-cube}
  \xymatrix@C3.9pc@R1.35pc{
  && \ar@{-->}[lldd]_{g'} m' \ar@{-->}[d] \ar@{-->}[rrdd]^*-[@]=0+!D{=} && \\
  && h && \\
  m \ar[d]_a \ar[rrdd]^*-[@]=0+!D{=} &&&& m' \ar[lldd]_{g'} \ar[d]^{a'} \\
  h \ar@{-->}[rruu]^*-[@]=0+!D{=} &&&& k \ar@{-->}[lluu]_f \\
  &&m \ar[d]_b && \\
  && \ar[lluu]^f k \ar[rruu]_*-[@]=0+!U{=} &&
  }\end{equation}
  To construct the pullback, we are forced to repeat $f$ and $g'$, completing 
  the squares with 
  the corresponding identity maps.
  The connecting map in the resulting object is 
$fbg': 
m' \to h$.
  It is clear from the presence of the four identity maps that this is a 
  pullback.
\end{proof}

\bigskip

%
%

We now have the following important characterisation of decomposition
spaces.
\begin{prop}\label{prop:DDecomp}
  A simplicial space $X: \Delta\op\to\Grpd$ is a decomposition space if and only if
  the corresponding monoidal functor $\overline X : \DD \to \Grpd$ preserves pullback squares of the kind 
  described in \ref{lemma:DD-pbk}.
\end{prop}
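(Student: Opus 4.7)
The plan is to explicitly unpack the action of $\ov X$ on a pullback square of Lemma \ref{lemma:DD-pbk}, reduce the preservation question to an atomic case via the external sum, and then translate between the resulting atomic squares and generic-free pushouts in $\Delta$ using the pasting lemma \ref{pbk}.

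First, consider the pullback cube \eqref{eq:pbk-cube} in $\DD$, with $b\colon \un m\to \un k$ having fibre sizes $m_j = |b^{-1}(j)|$, $f\colon \un k\to \un h$, and $g'\colon \un{m'}\to \un m$ giving $m'_j = |(bg')^{-1}(j)|$; set $M_i = \sum_{j\in f^{-1}(i)} m_j$ and $M'_i = \sum_{j\in f^{-1}(i)} m'_j$. By Property (b) of the preceding proposition, $\ov X$ sends this square to
\[
\vcenter{\xymatrix@C=3pc{
\prod_{i\in\un h} X_{M'_i} \ar[r] \ar[d] & \prod_{i\in\un h} X_{M_i} \ar[d] \\
\prod_{j\in\un k} X_{m'_j} \ar[r] & \prod_{j\in\un k} X_{m_j}
}}
\]
in which the horizontals (induced by the segalic $f$) are, over each $i$, the Segal-like projections by free maps splitting $[M_i] = [m_{j_1}]\pm\dots\pm[m_{j_r}]$, where $f^{-1}(i) = \{j_1,\dots,j_r\}$, and the verticals (induced by the ordinalic $g'$) are products over $j$ of $X$ applied to generic maps $[m_j]\to[m'_j]$. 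Since products of pullbacks are pullbacks in $\Grpd$, it suffices to check that for each $i\in\un h$ the \emph{atomic square} involving only the $r = |f^{-1}(i)|$ relevant factors is a pullback.

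For the forward direction, suppose $X$ is a decomposition space. The cases $r\leq 1$ are trivial. For $r\geq 2$, factor the generic map $[M_i]\to[M'_i]$ as a composite of single-slot steps $\id\pm\cdots\pm g_{j_s}^\dagger\pm\cdots\pm\id$, for $s=1,\dots,r$. This presents the atomic square as a vertical stack of $r$ sub-squares. For the $s$-th sub-square, post-compose horizontally with the projection to the $s$-th factor: the resulting outer rectangle is precisely $X$ applied to an identity-extension square of Lemma \ref{genfreepushout} (with the unmodified slots of the amalgamated sum playing the role of $[a]$ and $[b]$), hence a pullback by the decomposition space axiom, while the appended right-hand square is a trivial base change over the product of the unchanged factors. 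By Lemma \ref{pbk} the sub-square is a pullback, and composing the stack gives the atomic pullback.

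For the reverse direction, given an identity-extension square in $\Delta$ with generic map $g\colon[n]\to[k]$ and sides of widths $a,b$, build a corresponding $\DD$-pullback by taking $b_\DD\colon \un{a{+}n{+}b}\to\un 3$ with fibres $\un a,\un n,\un b$, $f\colon\un 3\to\un 1$ the unique map, and $g'_\DD = \id_{\un a}+g^\dagger+\id_{\un b}\colon \un{a{+}k{+}b}\to\un{a{+}n{+}b}$. By hypothesis, $\ov X$ sends this to a pullback; post-composing horizontally with the projection to the middle factor (again a trivial base change) and applying the pasting lemma recovers $X$ of the original identity-extension square as a pullback. By Lemma \ref{genfreepushout} this accounts for all generic-free pushouts in $\Delta$, so $X$ is a decomposition space. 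The main obstacle is purely bookkeeping---tracking orientations through the duality $\un\Delta\simeq\Deltagen\op$ and matching the external-sum indexing to the geometric splitting of $[M_i]$---but once these are in place the argument reduces entirely to the basic pasting lemma \ref{pbk}.
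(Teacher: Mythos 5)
Your proof is correct and follows essentially the same route as the paper's: reduce to the connected case $\un h=\un 1$ via the external sum decomposition (products of pullbacks), break the ordinalic/generic map into elementary steps affecting one slot at a time, and test each resulting square by projecting onto the nontrivial factor so that Lemma~\ref{pbk} identifies it with the image of a generic-free (identity-extension) square. The only cosmetic difference is that the paper runs the two implications simultaneously as a single if-and-only-if via Lemma~\ref{pbk}, whereas you treat the two directions separately.
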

\begin{proof}
  Since an ordinalic map is a sum, it can be decomposed into a sequence of maps 
  in
  which each map has only one nontrivial summand.  This means that a pullback 
  diagram like \eqref{eq:pbk-cube} is a sum of diagrams of the form in which $\un h = 
  \un 1$.  So to prove that these pullbacks are preserved, it is enough to 
  treat the case $h=1$.  In this case, the map $g'$ in the square
  is just a map in $\un \Delta$, so it can be decomposed into face and 
  degeneracy maps.
  The $X$-image is then
  a diagram of the form
  $$
  \xymatrix{
  X_m \ar[r] \ar[d] & X_{m_1} \times \cdots \times X_{m_k} \ar[d] \\
  X_n \ar[r] & X_{n_1} \times \cdots \times X_{n_k},
  }$$
  where the map on the left is a face map or a degeneracy map. It follows that
  the map on the right is a product of 
  maps in which all factors are identity maps except one, say the $i$th factor
  (which is again a face 
  or a degeneracy map).
  Now whether or not this is a pullback can be checked on the projections onto
  the nontrivial factor:
    $$
  \xymatrix{
  X_m \ar[r] \ar[d] & X_{m_1} \times \cdots \times X_{m_k}
  \ar[d] \ar[r] & 
  X_{m_i} \ar[d]\\
  X_n \ar[r] & X_{n_1} \times \cdots \times X_{n_k} 
  \ar[r] & X_{n_i}
  }$$
  But by construction of $\overline X$, 
  the composite horizontal maps are precisely free maps in the sense of the
  simplicial space $X$, and the vertical maps are precisely generic maps 
  in the sense that it is an arbitrary map in $\un\Delta$ and hence (in the 
  other direction) a generic map in $\Delta$, under the duality in \ref{lem:Delta-duality}.
  Since the right-hand square is always a pullback, by the standard pullback
  argument~\ref{pbk}, the total square is a pullback (i.e.~we have a decomposition space)
  if and only if the left-hand square is a pullback (i.e.~the pullback condition on 
  $\overline X$ is satisfied).
\end{proof}



\def\inputfile{coalg.tex}

\subsection{Incidence coalgebras}

\begin{blanko}{Comultiplication and counit.}
  For any decomposition space $ X$, the span
  $$
  \xymatrix{
   X_1  & \ar[l]_{m_ X}^{d_1}  X_2\ar[r]^-{p_ X}_-{(d_2,d_0)} &  X_1\times X_1 
  }
  $$
  defines a linear functor, the {\em comultiplication}
  \begin{eqnarray*}
    \Delta : \Grpd_{/ X_1} & \longrightarrow & 
    \Grpd_{/( X_1\times X_1)}  \\
    (S\stackrel s\to X_1) & \longmapsto & p_ X{} \lowershriek \circ m_ X \upperstar(s) .
  \end{eqnarray*}
Likewise, the span
  $$
  \xymatrix{
  X_1  & \ar[l]_{u_ X}^{s_0}  X_0\ar[r]^{t_ X} &  1 
  }
  $$
  defines a linear functor, the {\em counit}
  \begin{eqnarray*}
    \epsilon : \Grpd_{/ X_1} & \longrightarrow & 
    \Grpd  \\
        (S\stackrel s\to X_1) & \longmapsto & t_ X {}\lowershriek \circ u_ X \upperstar(s) .
  \end{eqnarray*}
\end{blanko}

We proceed to establish that this makes $\Grpd_{/X_1}$ a coassociative and 
counital coalgebra in a strong homotopy sense.
We have more generally, for any $n\geq 0$,
the generalised comultiplication maps
\begin{eqnarray}\label{Deltan}
\Delta_n:
\Grpd_{/X_1} & \longrightarrow & \Grpd_{/X_1\times\dots\times X_1}
\end{eqnarray}
given by the spans
\begin{align}\label{Deltanspan}
X_1 \leftarrow X_n \to X_1 \times\dots\times X_1.
\end{align}
The case $n=0$ is the counit map, and $n=1$ gives the identity, and $n=2$ is the
comultiplication we considered above.
The coassociativity will say that all combinations (composites and tensor products) of these agree whenever they have the same source and target.  For this we exploit the category $\DD$ introduced in 
\ref{sec:DD}, designed exactly to encode also cartesian powers of the various spaces
$X_k$. 

\begin{deff}
  A {\em reasonable span} in $\DD$ is a span $a \stackrel g \leftarrow m 
  \stackrel f\to b$ in 
  which $g$ is ordinalic and $f$ is segalic.  Clearly the external sum
  of two reasonable spans is reasonable, and the composite of two 
  reasonable spans is reasonable (by Lemma~\ref{lemma:DD-pbk}).
  
  Let $X:\Delta\op\to\Grpd$ be a fixed decomposition space,
  and interpret it also as a monoidal functor $\ov X:\DD\to\Grpd$.
  A span in $\Grpd$ of the form
$$  \ov X_{a} 	\leftarrow \ov X_{m} \to \ov X_{b}
$$
  is called reasonable if it is induced by a reasonable span in $\DD$.
  
  A linear map between slices of $\Grpd$ is called reasonable if it is
  given by a reasonable span.  That is, it is
  a pullback along a ordinalic map followed by a lowershriek
  along a segalic map.
  
\end{deff}

\begin{lemma} 
  Tensor products of reasonable linear maps are reasonable.  For a decomposition space,
  composites of reasonable linear maps are reasonable.
\end{lemma}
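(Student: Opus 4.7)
The plan is to reduce both claims to structural facts about the category $\DD$ that have already been established, and then transport them to $\Grpd$ via the monoidal functor $\overline X : \DD \to \Grpd$ associated to the decomposition space $X$.

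For the tensor product claim, I would start with two reasonable linear maps given by reasonable spans $a \xleftarrow{g} m \xrightarrow{f} b$ and $a' \xleftarrow{g'} m' \xrightarrow{f'} b'$ in $\DD$, with $g,g'$ ordinalic and $f,f'$ segalic. Their external sum
\[
a \oplus a' \;\xleftarrow{g \oplus g'}\; m \oplus m' \;\xrightarrow{f \oplus f'}\; b \oplus b'
\]
is again a reasonable span in $\DD$, since external sums of ordinalic (resp.\ segalic) maps are ordinalic (resp.\ segalic) by the decompositions \eqref{cov-sum} and \eqref{gen-sum}. Because $\overline X$ is monoidal, it sends this external sum to the cartesian product of the two original spans in $\Grpd$. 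Since tensor product in $\LIN$ is given by cartesian product of the underlying groupoids and spans, the resulting span represents exactly the tensor product of the two linear maps, and by construction it is reasonable.

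For composites, suppose we have two reasonable linear maps induced by reasonable spans $a \xleftarrow{g_1} m_1 \xrightarrow{f_1} b$ and $b \xleftarrow{g_2} m_2 \xrightarrow{f_2} c$ in $\DD$. By Lemma~\ref{lemma:DD-pbk}, the ordinalic map $g_2$ and the segalic map $f_1$ admit a pullback in $\DD$ whose two sides are again ordinalic and segalic respectively, giving a new reasonable span $a \leftarrow m \to c$ in $\DD$. Applying $\overline X$ and using Proposition~\ref{prop:DDecomp}, the image of this pullback square in $\DD$ is a pullback square in $\Grpd$. But the composite of the two induced linear maps in $\LIN$ is, by definition, computed by taking exactly this pullback of the spans $\overline X_a \leftarrow \overline X_{m_1} \to \overline X_b$ and $\overline X_b \leftarrow \overline X_{m_2} \to \overline X_c$ in $\Grpd$. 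Hence the composite span coincides with $\overline X$ applied to the composite reasonable span in $\DD$, and is therefore reasonable.

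The main conceptual point, and the only place where the decomposition space hypothesis is used, is invoking Proposition~\ref{prop:DDecomp} to know that $\overline X$ preserves these particular pullbacks; everything else is formal bookkeeping in $\DD$ using the ordinalic/segalic factorisation system and the monoidal structure. Note that this is also the reason the tensor-product half of the statement holds for arbitrary simplicial spaces (no pullback-preservation is needed), while the composite half genuinely requires $X$ to be a decomposition space.
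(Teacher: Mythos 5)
Your proof is correct and follows essentially the same route as the paper's (much terser) argument: monoidality of $\overline X$ turns external sums of reasonable spans into cartesian products, giving the tensor claim, while Lemma~\ref{lemma:DD-pbk} plus Proposition~\ref{prop:DDecomp} identify the span-composite in $\Grpd$ with the image of the composite reasonable span in $\DD$. Your closing observation about where the decomposition-space hypothesis is actually used is accurate and consistent with the way the lemma is stated.
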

\begin{proof}
Cartesian products of reasonable spans in $\Grpd$ are reasonable since $\ov X$ is monoidal. For decomposition spaces, a composite of reasonable linear maps is induced by the composite reasonable span in $\DD$, using Proposition \ref{prop:DDecomp}.
\end{proof}

  The interest in these notions is of course that the generalised
  comultiplication maps $\Delta_n$ are reasonable, see (\ref{Deltan},\ref{Deltanspan}) above. 
In conclusion:
\begin{prop}
  Any reasonable linear map $$\Grpd_{/X_1} \longrightarrow \Grpd_{/X_1\times\dots\times
  X_1}, \quad n\geq0$$ is canonically equivalent to the $n$th comultiplication map.
\end{prop}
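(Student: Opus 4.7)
The plan is to transport the question into $\DD$ via Proposition~\ref{prop:DDecomp} and exhibit the relevant reasonable span as essentially unique. Under the monoidal functor $\ov X:\DD\to\Grpd$, the source $\Grpd_{/X_1}$ corresponds to the connected object $(\un 1\to\un 1)$, and the target $\Grpd_{/X_1\times\cdots\times X_1}$ ($n$ factors) corresponds to the identity object $(\un n\to\un n)=\bigoplus_{i=1}^{n}(\un 1\to\un 1)$ under external sum.

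The first step is to classify reasonable spans $(\un 1\to\un 1)\xleftarrow{\gamma}(\un m\to\un h)\xrightarrow{\varphi}(\un n\to\un n)$ in $\DD$. The ordinalic leg $\gamma$ has identity on its bottom component by definition \eqref{gen-DD}, so the target and middle objects share the same $\un h$; since the target is $(\un 1\to\un 1)$, this forces $\un h=\un 1$. Dually, the segalic leg $\varphi$ has identity on its top component by definition \eqref{cov-DD}, so the middle and target share the same top object; since the target is $(\un n\to\un n)$, this forces $\un m=\un n$. Hence the middle object must be the connected object $(\un n\to\un 1)$, whose image under $\ov X$ is $X_n$, and each leg is then determined by the unique map $\un n\to\un 1$ in $\un\Delta$ (ordinalic) and the unique map $\un n\to\un 1$ appearing as the bottom datum of a segalic map into $(\un n\to\un n)$.

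It remains to apply $\ov X$ and recognise the resulting span. The ordinalic leg sits inside $\un\Delta\cong\Deltagen\op$ (property (a) of the correspondence between $X$ and $\ov X$), and the unique map $\un n\to\un 1$ dualises to the long-edge generic map $[1]\to[n]$ in $\Delta$; its image under $X$ is $X_n\to X_1$, i.e.~the iterated $d_1$. The segalic leg, by property (b) applied to the decomposition $\un n = \un 1+\cdots+\un 1$, maps to the tuple of principal free-face maps $X_n\to X_1\times\cdots\times X_1$. Together these produce precisely the span \eqref{Deltanspan} defining $\Delta_n$, so the reasonable span under consideration is canonically equivalent to the one defining $\Delta_n$, and hence so are the induced linear functors.

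The only delicacy is keeping the two directional conventions (the ordinalic/segalic top-versus-bottom identity condition, and the $\un\Delta \simeq \Deltagen\op$ duality of Lemma~\ref{lem:Delta-duality}) straight; once these are aligned, the forcedness of middle object and legs is immediate, and no homotopical coherence issue arises, since the whole argument happens at the level of isomorphism classes of spans.
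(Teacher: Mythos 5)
Your proof is correct and follows essentially the same route as the paper's: the ordinalic leg fixes the bottom object of the middle term to be $\un 1$ (so the middle is connected, hence of the form $\un m\to\un 1$ with image $X_m$), the segalic leg fixes its top object to be $\un n$ (forcing $m=n$), and the two legs are then the generic map $X_n\to X_1$ and the tuple of free maps $X_n\to X_1^n$, i.e.\ the span \eqref{Deltanspan}. You simply spell out the dualisation and the identification of the legs in more detail than the paper does.
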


\begin{proof}
    We have to show that the only reasonable span of the form
    $X_1 \leftarrow  \prod X_{m_i}\to X_1 \times\dots\times X_1$
    is \eqref{Deltanspan}.
Indeed, the left leg must come from an ordinalic map, so since 
$X_{1}$ has only one factor, the middle object has also only one 
factor, i.e.\ is the image of $\un m \to \un 1$.  On the other hand,
the right leg must be segalic, which forces $m=n$.
\end{proof}


\begin{theorem}\label{thm:comultcoass}
  For $X$ a decomposition space, the slice $\infty$-category $\Grpd_{/X_1}$ has
  the structure of strong homotopy comonoid in the symmetric monoidal
  $\infty$-category $\LIN$, with the comultiplication defined by the span
  $$
  X_1 \stackrel{d_1}\longleftarrow X_2 \stackrel{(d_2,d_0)}
  \longrightarrow X_1 \times X_1 . 
  $$
\end{theorem}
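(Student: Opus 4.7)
The strategy is to bootstrap the coassociativity sketched in the introduction to all higher coherences via a single functorial construction. Assign to each reasonable span $a \stackrel\gamma\leftarrow m \stackrel\varphi\to b$ in $\DD$ (with $\gamma$ ordinalic and $\varphi$ segalic) the linear functor
\[
\Phi(a\leftarrow m\to b) \;:=\; \varphi_!\circ\gamma^*\;:\; \Grpd_{/\ov X_a}\longrightarrow\Grpd_{/\ov X_b},
\]
where $\ov X_a$ denotes the image of $a$ under the monoidal functor $\ov X:\DD\to\Grpd$. The identification $\Grpd_{/A\times B}\simeq\Grpd_{/A}\otimes\Grpd_{/B}$ makes $\Phi$ compatible with the external sum $\oplus$ on $\DD$ and the tensor product $\otimes$ on $\LIN$; the desired comonoid structure on $\Grpd_{/X_1}$ will be obtained by restricting $\Phi$ to the reasonable spans connecting the connected object $\un 1\to\un 1$ to its tensor powers.

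Functoriality of $\Phi$ on composition of reasonable spans is the critical step, and it is exactly here that the decomposition space hypothesis enters. Composing two consecutive reasonable spans requires a pullback at the middle pasting which, by Lemma~\ref{lemma:DD-pbk}, is again an ordinalic-segalic pullback in $\DD$; the induced linear map then agrees with the composite of the two given linear maps provided $\ov X$ sends such pullbacks to pullbacks in $\Grpd$, so that the Beck--Chevalley equivalence~\eqref{BC} can be applied at the level of $\LIN$. But this is precisely Proposition~\ref{prop:DDecomp}. Applied to the reasonable span
\[
(\un 1 \to \un 1) \;\leftarrow\; (\un n \to \un 1) \;\to\; (\un n \to \un n)
\]
in $\DD$ (with left leg the unique ordinalic map and right leg segalic corresponding to identifying the source $\un n$ with an external sum $\un 1+\cdots+\un 1$), the functor $\Phi$ reproduces the span $X_1\leftarrow X_n\to X_1^n$ of~\eqref{Deltanspan}, i.e.\ the generalised comultiplication $\Delta_n$; the case $n=2$ is the stated comultiplication, and the case $n=0$ (with empty-sum source $(\un 0 \to \un 1)$ of the middle term) recovers the counit $\epsilon$. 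Coassociativity and counitality, together with all higher coherences, follow from the composition-and-tensor-product functoriality of $\Phi$ applied to the evident relations among these fundamental reasonable spans.

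The main obstacle is the $\infty$-categorical coherence: $\Phi$ must be upgraded to a functor of $\infty$-categories, not merely a $1$-functor on homotopy categories. All the higher associator cells arise from higher-dimensional pasting diagrams of ordinalic-segalic pullbacks in $\DD$, and $\ov X$ preserves each of them by exactly the same Proposition~\ref{prop:DDecomp}. Moreover, the uniqueness statement immediately preceding the theorem --- that every reasonable linear map $\Grpd_{/X_1}\to\Grpd_{/X_1\times\dots\times X_1}$ is canonically equivalent to $\Delta_n$ --- implies that the $\infty$-groupoid of systems of coherent compatibilities is essentially contractible, so the strong homotopy comonoid structure on $\Grpd_{/X_1}$ in $\LIN$ assembles automatically.
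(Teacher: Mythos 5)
Your proposal is correct and follows essentially the same route as the paper: the theorem is obtained exactly by combining the preceding machinery on $\DD$ --- reasonable spans, their closure under composition and tensor product (via Lemma~\ref{lemma:DD-pbk}, Proposition~\ref{prop:DDecomp} and Beck--Chevalley), the identification of $\Delta_n$ and $\epsilon$ as images of specific reasonable spans, and the uniqueness of reasonable maps $\Grpd_{/X_1}\to\Grpd_{/X_1^{\times n}}$ --- which is precisely the assembly you describe. The only caveat, shared with the paper's own treatment, is that the passage from this essential uniqueness to a fully coherent strong homotopy comonoid structure is left somewhat informal rather than spelled out as a contractibility statement in a precise $\infty$-category of comonoid structures.
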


\begin{blanko}{Covariant functoriality.}
  An important motivation for the notion of decomposition space is that they
  induce coalgebras.  Correspondingly, it is an important feature of
  cULF maps that they induce coalgebra homomorphisms:
\end{blanko}

\begin{lemma}\label{lem:coalg-homo}
  If $F:X \to Y$ is a conservative ULF map  between decomposition spaces
  then $F\lowershriek : \Grpd_{/X_1} \to \Grpd_{/Y_1}$ is a coalgebra 
  homomorphism.
\end{lemma}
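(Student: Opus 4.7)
\begin{proof*}{Proof sketch.}
The plan is to unpack what it means for $F_!$ to be a coalgebra homomorphism at the level of spans, and then to identify each required equivalence of spans as a direct consequence of one of the pullback squares furnished by the cULF hypothesis. The linear functor $F_!$ is encoded by the span $X_1 \stackrel{=}\leftarrow X_1 \stackrel{F_1}\to Y_1$, so composing spans amounts to taking a single pullback in each case.

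First I would verify comultiplicativity, i.e.\ that the two spans representing $\Delta_Y \circ F_!$ and $(F_! \otimes F_!) \circ \Delta_X$ from $X_1$ to $Y_1 \times Y_1$ are equivalent. Composing $F_!$ with the comultiplication span of $Y$ forces us to form the pullback of $F_1 : X_1 \to Y_1$ along $d_1 : Y_2 \to Y_1$. Since $d_1 : [1] \to [2]$ is the prototypical generic face map and $F$ is ULF, the naturality square
$$\xymatrix{
X_2 \ar[r]^{F_2} \ar[d]_{d_1} \drpullback & Y_2 \ar[d]^{d_1} \\
X_1 \ar[r]_{F_1} & Y_1
}$$
is a pullback (cf.\ Lemma~\ref{lem:cULFs0d1}). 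Hence the composite span is
$X_1 \stackrel{d_1}\leftarrow X_2 \stackrel{(d_2,d_0)\circ F_2}\to Y_1\times Y_1$, and by naturality of $F$ on the face maps $d_2$ and $d_0$ this coincides with $(F_1\times F_1)\circ (d_2,d_0)$, which is precisely the span for $(F_! \otimes F_!) \circ \Delta_X$.

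Next I would verify counit preservation, by the same recipe: the composite $\epsilon_Y \circ F_!$ requires the pullback of $F_1$ along $s_0 : Y_0 \to Y_1$, and since $F$ is conservative this square
$$\xymatrix{
X_0 \ar[r]^{F_0} \ar[d]_{s_0} \drpullback & Y_0 \ar[d]^{s_0} \\
X_1 \ar[r]_{F_1} & Y_1
}$$
is a pullback. The resulting composite span is $X_1 \stackrel{s_0}\leftarrow X_0 \to 1$, which is exactly $\epsilon_X$.

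There is no real obstacle here; the only thing to be slightly careful about is that coalgebra homomorphism is to be understood in the strong homotopy sense, as in Theorem~\ref{thm:comultcoass}, so one must check that the equivalences of spans just produced are coherent with respect to the higher comultiplications $\Delta_n$ of \eqref{Deltan}. This is handled uniformly by the same argument applied to the generic/free decomposition: every $\Delta_n$ is a reasonable span (in the sense of \ref{sec:DD}), its left leg $X_n \to X_1$ is built from generic maps and its right leg from free maps; $F$ being cartesian on all generic maps (cULF) gives the required pullback squares by Lemma~\ref{pbk}, and naturality on the free legs then matches the two composites. Equivalently, passing through the monoidal functor $\overline{X} : \DD \to \Grpd$ of Proposition~\ref{prop:DDecomp}, the cULF condition on $F$ is precisely what is needed for $\overline{F}$ to be cartesian on the ordinalic maps, which is all that composing reasonable spans requires.
\end{proof*}
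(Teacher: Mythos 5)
Your proof is correct and follows essentially the same route as the paper: the paper's own proof writes down, for each $n\geq 0$, the square comparing the generic legs $X_n\to X_1$ and $Y_n\to Y_1$, observes it is a pullback by conservativity ($n=0$) and ULFness ($n\geq 2$), and concludes by Beck--Chevalley plus naturality on the free legs. Your explicit treatment of the $n=2$ and $n=0$ cases followed by the reasonable-span argument for general $n$ is just a slightly more unpacked version of the same argument.
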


\begin{proof}
In the diagram
$$
\xymatrix{
   X_1  \ar[d]_{F_1}& \ar[l]_{g} \dlpullback X_n\ar[r]^-{f} \ar[d]^{F_n}
   &
   X_1^n \ar[d]^{F_1^n}
   \\
   Y_1  & \ar[l]^{g'} Y_n\ar[r]_-{f'} & Y_1^n
   }
   $$
   the left-hand square is a pullback since $F$ is conservative (case $n=0$) and ULF 
   (cases $n>1$).  Hence by the Beck--Chevalley condition we have an equivalence 
   of functors $g'{}\upperstar \circ F_1{}\lowershriek \simeq
   F_n{}\lowershriek\circ g{}\upperstar$, and by postcomposing with 
   $f'\lowershriek$ we arrive at the coalgebra homomorphism condition
$
\Delta'_nF_1{}\lowershriek
\cong
F_1{}\lowershriek^n\Delta_n
$ 
\end{proof}

\begin{BM}
  If $Y$ is a Segal space, then the statement can be improved to an if-and-only-if
  statement.
\end{BM}

\begin{blanko}{Example.}
  An important class of cULF maps are counits of decalage, 
  cf.~\ref{Dec=Segal+cULF}:
  $$
  d_\bot : \Dec_\bot X \to X \qquad \text{ and } \qquad d_\top : 
  \Dec_\top X \to X .
  $$
  We shall see that many coalgebra maps in the classical theory of incidence
  coalgebras, notably reduction maps, are induced from decalage in this way
  (see examples \ref{ex:N&L}, 
\ref{I=DecB}, 
\ref{ex:D&M}, 
\ref{ex:q},
\ref{ex:P=Dec(S)}, 
  \ref{ex:CK} below).
%
%
%
\end{blanko}

%
%
%



\begin{blanko}{Contravariant functoriality.}
  There is also a contravariant
  functoriality for certain simplicial maps, which we briefly explain,
  although it will not be needed elsewhere in this paper.
  
  A functor between decomposition spaces $F: X \to Y$ is called {\em relatively
  Segal} when for all `spines' (i.e.~inclusion of a string of principal edges
  into a simplex)
  $$
  \Delta^1 \underset{\Delta^0}\coprod \dots \underset{\Delta^0}\coprod \Delta^1
  \longrightarrow
  \Delta^n
  $$
  the space of fillers in the diagram
  $$\xymatrix{
    \Delta^1 \underset{\Delta^0}\coprod \dots \underset{\Delta^0}\coprod \Delta^1
    \ar[r]\ar[d] & X \ar[d] \\
     \Delta^n \ar[r]\ar@{..>}[ru] & Y
  }$$
  is contractible.
  Note that the precise condition is that the following square is a pullback:
  $$\xymatrix{
     \Map(\Delta^n, X)\drpullback \ar[r]\ar[d] & \Map(\Delta^1 \underset{\Delta^0}\coprod \dots \underset{\Delta^0}\coprod \Delta^1,X) \ar[d] \\
     \Map(\Delta^n,Y) \ar[r] & \Map(\Delta^1 \underset{\Delta^0}\coprod \dots \underset{\Delta^0}\coprod \Delta^1,Y)
  }$$
  This can be rewritten
  \begin{equation}\label{eq:relSegal}
  \xymatrix{
     X_n\drpullback \ar[r]\ar[d] & X_1 \times_{X_0} \cdots \times_{X_0} X_1 \ar[d] \\
     Y_n \ar[r] & Y_1 \times_{Y_0} \cdots \times_{Y_0} Y_1 .
  }    
  \end{equation}
  (Hence the ordinary Segal condition for a simplicial space $X$ is the case
  where $Y$ is a point.)

\end{blanko}

\begin{prop}
  If $F:X \to Y$ is relatively Segal and $F_0: X_0 \to Y_0$
  is an equivalence, then
  $$
  F\upperstar : \Grpd_{/Y_1} \to \Grpd_{/X_1}
  $$
  is naturally a coalgebra homomorphism.
\end{prop}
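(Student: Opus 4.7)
\medskip

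\noindent\emph{Proof plan.} The plan is to unfold what it means for $F\upperstar$ to be a coalgebra homomorphism into an equivalence of spans, and then extract this equivalence from the relatively Segal hypothesis together with the assumption on $F_0$. Recall that $F\upperstar$, being pullback along $F_1:X_1 \to Y_1$, corresponds to the span $Y_1 \stackrel{F_1}\leftarrow X_1 = X_1$. The compatibility of $F\upperstar$ with comultiplication amounts to an equivalence between the two spans from $Y_1$ to $X_1 \times X_1$ computing $\Delta_X \circ F\upperstar$ and $(F\upperstar \tensor F\upperstar) \circ \Delta_Y$. Composing these two spans via homotopy pullback, the required compatibility reduces to the equivalence
$$
X_2 \;\simeq\; Y_2 \times_{Y_1\times Y_1} (X_1 \times X_1).
$$

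First, I would invoke the relatively Segal condition \eqref{eq:relSegal} at $n=2$, obtaining
$$
X_2 \;\simeq\; Y_2 \times_{Y_1\times_{Y_0} Y_1} (X_1\times_{X_0} X_1).
$$
Second, I would reconcile the two pullbacks by establishing the auxiliary equivalence
$$
X_1 \times_{X_0} X_1 \;\simeq\; (Y_1\times_{Y_0} Y_1) \times_{Y_1\times Y_1} (X_1\times X_1).
$$
This uses the standard identification $X_1 \times_{X_0} X_1 \simeq (X_1\times X_1)\times_{X_0\times X_0} X_0$ and likewise for $Y$, together with the equivalence $F_0 : X_0 \isopil Y_0$, which forces $(X_0\times X_0)\times_{Y_0\times Y_0} Y_0 \simeq X_0$. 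Lemma~\ref{pbk} (pullback pasting) then assembles these into the required equivalence, yielding the desired comparison span-level. Preservation of the counit is analogous and simpler: the composite $\epsilon_X\circ F\upperstar$ is given by the span $Y_1 \leftarrow X_0 \to 1$, which is equivalent to the counit span $Y_1 \leftarrow Y_0 \to 1$ for $Y$ via the equivalence $F_0$ and the naturality square for $s_0$.

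The main obstacle is not in the $n=2$ case above but in promoting this pointwise equivalence to a coalgebra homomorphism in the fully coherent sense, i.e.\ compatibility with all the generalised comultiplications $\Delta_n$ of \eqref{Deltan}. The cleanest way to handle this is to apply the same pullback manipulation levelwise: the relatively Segal condition at arbitrary $n$ together with the equivalence $F_0$ gives
$$
X_n \;\simeq\; Y_n \times_{Y_1^{n}} (X_1)^{n},
$$
which says precisely that $F$ is cartesian (on each $X_n \to Y_n$) against the canonical map $Y_n \to Y_1^n$. Reinterpreted through the monoidal functor viewpoint of Proposition~\ref{prop:DDecomp}, this means $F$ induces a cartesian natural transformation $\overline X \Rightarrow \overline Y$ on the reasonable spans, which in turn (via the Beck--Chevalley equivalences \eqref{BC}) yields the full coalgebra-homomorphism coherence at the level of $\LIN$.
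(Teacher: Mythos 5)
Your proposal is correct and follows essentially the same route as the paper's proof: both reduce the coalgebra homomorphism condition to showing that the square $X_n \to X_1^n$ over $Y_n \to Y_1^n$ is a pullback, factor it through the Segal-type fibre products (where the relatively Segal hypothesis handles one square and the equivalence $F_0$, via the comparison of $\times_{X_0}$ with $\times$ over $X_0\times X_0\to Y_0\times Y_0$, handles the other), and then conclude by Beck--Chevalley. Your treatment of the counit as the $n=0$ case and your auxiliary equivalence for $X_1\times_{X_0}X_1$ are exactly the paper's two pullback verifications in different notation.
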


\begin{proof}
  In the diagram
  $$
  \xymatrix{
  X_1  \ar[d]_{F_1}& \ar[l]_{g} \drpullback X_n\ar[r]^-{f} \ar[d]_{F_n}
  &
  X_1^n \ar[d]^{F_1^n}
  \\
  Y_1  & \ar[l]^{g'} Y_n\ar[r]_-{f'} & Y_1^n
  }
  $$
  we claim that the right-hand square is a pullback for all $n$.
  Hence by the Beck--Chevalley condition we have an equivalence 
  of functors $f\lowershriek \circ F_n{}\upperstar \simeq
  F_1^n{}\upperstar \circ f'{}\lowershriek$, and by postcomposing with 
  $g'{}\upperstar $ we arrive at the coalgebra homomorphism condition
  $$
  \Delta_n F_1{}\upperstar 
  \cong
  F_1{}\upperstar {}^n\Delta'_n .
  $$
  The claim for $n=0$ amounts to 
    $$
  \xymatrix{
   \drpullback X_0\ar[r]^-{f} \ar[d]_{F_0}
  &
  1 \ar[d]
  \\
   Y_0\ar[r]_-{f'} & 1
  }
  $$
  which is precisely to say that $F_0$ is an equivalence.
  For $n>1$ we can factor the square as
      $$
  \xymatrix{
   \drpullback X_n\ar[r]^-{f} \ar[d]_{F_n}
   & X_1\times_{X_0} \cdots \times_{X_0} X_1 \ar[d]^{F_1^n} \ar[r]
   & X_1 \times \cdots \times X_1 \ar[d]^{F_1^n}
   \\
   Y_n\ar[r]_-{f'} 
   & Y_1\times_{Y_0} \cdots \times_{Y_0} Y_1 \ar[r]
   & Y_1 \times \cdots \times Y_1
  }
  $$
  Here the left-hand square is a pullback since $F$ is relatively Segal.
  It remains to prove that the right-hand square is a pullback.
  For the case $n=2$, this whole square is the pullback of the square
  $$\xymatrix{
     X_0\drpullback \ar[r]\ar[d] & X_0 \times X_0 \ar[d] \\
     Y_0 \ar[r] & Y_0 \times Y_0
  }$$
  which is a pullback precisely when $F_0$ is mono.  But we have
  assumed it is even an equivalence.
  The general case $n>2$ is easily obtained from the $n\!=\!2$ case by an iterative
  argument.
\end{proof}

\begin{blanko}{Remarks.}
  It should be mentioned that in order for contravariant functoriality to preserve
  finiteness, and hence restrict to the
  coefficients in $\grpd$, it is necessary
  furthermore to require that $F$ is finite.
  
  When both $X$ and $Y$ are Segal
  spaces, then the relative Segal condition is automatically satisfied, because
  the horizontal maps in \eqref{eq:relSegal} are then equivalences.
  In this case, we recover the classical results on contravariant functoriality
  by
  Content--Lemay--Leroux~\cite[Prop.~5.6]{Content-Lemay-Leroux} and
  Leinster~\cite{Leinster:1201.0413}, where the only condition is that the
  functor be bijective on objects (in addition to requiring $F$ finite,
  necessary since they work on the level of vector spaces).
\end{blanko}

\def\inputfile{monoids-b.tex}

\begin{blanko}{Bialgebras.}\label{sec:monoids}
  For a monoidal decomposition space as in \ref{sec:monoids-a} the resulting coalgebra is also a
  bialgebra.  Indeed, the fact that the monoid multiplication is cULF means that
  it induces a coalgebra homomorphism, and similarly with the unit.  Note that
  this notion of bialgebra is not symmetric: while the comultiplication is
  induced from internal, simplicial data in $X$, the multiplication is induced
  by extra structure (the monoid structure).  In the applications, the monoid
  structure will typically be given by categorical sum, and hence is associative
  up to canonical isomorphisms, something that seems much stricter than the
  comultiplication.
\end{blanko}


\begin{prop}\label{bialg-hm}
  If $f:X\to Y$ is a cULF monoidal functor between monoidal decomposition 
  spaces,
  then $f\lowershriek : \Grpd_{/X_1} \to \Grpd_{/Y_1}$ is a bialgebra homomorphism.
\end{prop}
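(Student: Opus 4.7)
\begin{proof*}{Proof plan.}
The strategy is to show separately that $f\lowershriek$ is a coalgebra homomorphism and an algebra homomorphism; combining these two facts yields the bialgebra homomorphism property.

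The coalgebra homomorphism property is already for free: since $f:X\to Y$ is cULF, Lemma~\ref{lem:coalg-homo} applies directly and gives that $f\lowershriek:\Grpd_{/X_1}\to\Grpd_{/Y_1}$ is a coalgebra homomorphism with respect to the incidence comultiplications.

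For the algebra homomorphism property, I would unpack the definition of monoidal functor of monoidal decomposition spaces. Let $m_X:X\times X\to X$, $m_Y:Y\times Y\to Y$ denote the multiplications and $e_X:1\to X$, $e_Y:1\to Y$ the units in $(\Decomp^{\culf},\times,1)$. The multiplication on the incidence coalgebra $\Grpd_{/X_1}$ induced by $m_X$ is the linear functor $(m_X)_1{}\lowershriek$, since the multiplication on $X$ as a monoidal decomposition space restricts in simplicial degree $1$ to a map $(m_X)_1:X_1\times X_1\to X_1$ (and similarly for the unit), and by Example~\ref{extensive} this is exactly the convolution-type multiplication at the incidence algebra level. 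Saying that $f$ is monoidal means there are canonical equivalences
$$f\circ m_X \;\simeq\; m_Y\circ (f\times f),\qquad f\circ e_X\;\simeq\; e_Y$$
in $\Decomp^{\culf}$, and in particular at simplicial degree $1$ we get equivalences $f_1\circ (m_X)_1 \simeq (m_Y)_1\circ (f_1\times f_1)$ and $f_1\circ (e_X)_1\simeq (e_Y)_1$.

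Now apply the lowershriek functor and use the fact that $(-)\lowershriek$ is a pseudofunctor, so that $(g\circ h)\lowershriek\simeq g\lowershriek\circ h\lowershriek$, together with the compatibility $(f_1\times f_1)\lowershriek \simeq f_1\lowershriek\tensor f_1\lowershriek$ of lowershriek with the monoidal structure on $\LIN$ (which holds because $\Grpd_{/A\times B}\simeq \Grpd_{/A}\tensor\Grpd_{/B}$). This yields commuting squares
$$\xymatrix{
\Grpd_{/X_1}\tensor\Grpd_{/X_1} \ar[r]^-{(m_X)_1{}\lowershriek} \ar[d]_{f_1\lowershriek\tensor f_1\lowershriek} & \Grpd_{/X_1} \ar[d]^{f_1\lowershriek} \\
\Grpd_{/Y_1}\tensor\Grpd_{/Y_1} \ar[r]_-{(m_Y)_1{}\lowershriek} & \Grpd_{/Y_1}
}$$
and analogously for the unit. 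This says $f\lowershriek$ is an algebra homomorphism.

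The only real content is thus assembling the two sides; the main subtlety is the $\infty$-categorical coherence of the monoidal and cULF structures, but since the multiplication here is given by postcomposition (lowershriek) along the components of a monoidal structure map, and the comultiplication is the reasonable span machinery of Theorem~\ref{thm:comultcoass}, the two structures only interact through the fact that $m_X$, $e_X$, $m_Y$, $e_Y$ are themselves cULF, so Lemma~\ref{lem:coalg-homo} applied to each of these gives the required compatibilities between the algebra and coalgebra structures on both sides, confirming that $f\lowershriek$ intertwines the whole bialgebra structure.
\end{proof*}
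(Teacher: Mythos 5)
Your proposal is correct, and in fact the paper states this proposition without giving any proof at all, so there is nothing to diverge from: your argument supplies exactly the reasoning that the surrounding text (Remark~\ref{sec:monoids}) leaves implicit. The two halves are right: the coalgebra-homomorphism half is immediate from Lemma~\ref{lem:coalg-homo} since $f$ is cULF, and the algebra-homomorphism half follows from pseudofunctoriality of lowershriek applied to the monoidality equivalences $f\circ m_X\simeq m_Y\circ(f\times f)$ and $f\circ e_X\simeq e_Y$ in degree $1$, together with the identification $\Grpd_{/X_1\times X_1}\simeq\Grpd_{/X_1}\tensor\Grpd_{/X_1}$. Two small infelicities, neither a gap: the citation of Example~\ref{extensive} is not really what identifies $(m_X)_1{}\lowershriek$ as the product on the incidence bialgebra (that is just the definition of the induced multiplication, cf.~\ref{sec:monoids}), and the word ``convolution'' is better reserved for the dual algebra $\Grpd^{X_1}$; the multiplication here lives on the coalgebra side. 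Your closing paragraph conflates the bialgebra axiom on $X$ and $Y$ (which is needed for source and target to be bialgebras, and does come from $m_X$, $m_Y$ being cULF) with the homomorphism property of $f\lowershriek$ (which only needs the two squares you exhibited), but this does not affect the validity of the argument, and the level of coherence-rigour matches the paper's own avowedly informal treatment of these monoidal structures.
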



\def\inputfile{complete.tex}

\section{Complete decomposition spaces}

\label{sec:complete}

\subsection{The completeness condition}

\begin{blanko}{Complete decomposition spaces.}\label{complete}
  A decomposition space $X$ is called {\em complete} if $s_0:X_0 \to X_1$ is a
  monomorphism.
\end{blanko}

\begin{blanko}{Discussion.}\label{complete-discussion}
  It is clear that a Rezk complete Segal space is complete in the sense of
  \ref{complete}.  It makes sense also to state the Rezk completeness condition
  for decomposition spaces.  We prefer the cheaper condition \ref{complete} for
  two reasons: first of all it is sufficient for its purpose, namely to ensure a
  well-behaved notion of nondegenerate simplices.  Second, it covers some
  important examples which are not Rezk complete.  In particular, the classical
  nerve of a group is a complete decomposition space in the sense of
  \ref{complete}, but is clearly not Rezk complete.  The incidence algebra of
  the classical nerve of a group is the group algebra, certainly an example
  worth covering.

  The motivating feature of the notion of complete decomposition space is that
  all issues about
  degeneracy can be settled in terms of the canonical projection maps $X_n \to
  (X_1)^n$ sending a simplex to its principal edges: a simplex is nondegenerate
  precisely when all its principal edges are nondegenerate.
  We shall see that if a \FILT decomposition space is
  a Segal space then it is also Rezk complete (\ref{prop:FILTSegal=Rezk}).

  The completeness condition is necessary to define the Phi functors (the odd
  and even parts of the `\M functor', see \ref{Phi}) and to establish the \M
  inversion principle at the objective level (\ref{thm:zetaPhi}).  The completeness condition is
  also needed to make sense of the notion of length (\ref{length}), and
  to define the length filtration (\ref{def:filt}), which is of independent interest,
  and is also required to be able to take cardinality of \M inversion.
  
\end{blanko}

The following basic result follows immediately from Lemma~\ref{lem:s0d1}.
\begin{lemma}\label{all-s-mono}
  In a complete decomposition space, 
  all degeneracy maps are monomorphisms.
\end{lemma}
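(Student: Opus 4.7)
The plan is to combine the completeness hypothesis with the structural result Lemma~\ref{lem:s0d1}, which says that in any decomposition space every degeneracy map arises as a pullback of the single map $s_0 : X_0 \to X_1$. Since the property of being a monomorphism of $\infty$-groupoids (i.e.\ having $(-1)$-truncated fibres, cf.~\ref{def:mono}) is stable under pullback, it suffices to know that $s_0 : X_0 \to X_1$ itself is a monomorphism. But this is precisely the definition of completeness in \ref{complete}.

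More explicitly, I would first recall from \ref{complete} that $X$ complete means $s_0 : X_0 \to X_1$ is a monomorphism. Then, given an arbitrary degeneracy $s_i : X_n \to X_{n+1}$, invoke Lemma~\ref{lem:s0d1} to exhibit a pullback square of the form
$$
\xymatrix{
X_n \drpullback \ar[r]^{s_i} \ar[d] & X_{n+1} \ar[d] \\
X_0 \ar[r]_{s_0} & X_1
}
$$
and conclude that $s_i$ is a monomorphism by pullback-stability. There is no real obstacle here; the entire content of the lemma is packaged into Lemma~\ref{lem:s0d1}, and the proof is a one-line consequence. The only minor care needed is to confirm that monomorphisms in $\Grpd$ are indeed closed under pullback, which is immediate from the fibre-theoretic characterisation in \ref{def:mono}: the fibres of a pullback of $s_0$ over a point of $X_{n+1}$ are (equivalent to) fibres of $s_0$ over the image point in $X_1$, hence are either empty or contractible.
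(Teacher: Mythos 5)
Your proof is correct and is exactly the paper's argument: the paper gives no separate proof but states that the lemma ``follows immediately from Lemma~\ref{lem:s0d1}'', and your write-up simply spells out that one-line deduction (pullback of the monomorphism $s_0:X_0\to X_1$, plus pullback-stability of monomorphisms in $\Grpd$). Nothing is missing.
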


\begin{blanko}{Completeness for simplicial spaces.}\label{completesimplicial}
  We shall briefly need completeness also for general simplicial spaces,
and the first batch of results hold in this generality.  We shall say that a
simplicial space $X:\Delta\op\to\Grpd$ is complete if all degeneracy maps are
monomorphisms.  In view of Lemma~\ref{all-s-mono},
this agrees with the previous definition when $X$ is a decomposition space.
\end{blanko}

\begin{blanko}{Word notation.}\label{w}
  Let $X$ be a complete simplicial space.  Since $s_0:X_0\to X_1$ is mono,
  we can identify $X_0$ with a full subgroupoid of $X_1$.  We
  denote by $X_a$ its  complement, the full
  subgroupoid of {\em nondegenerate $1$-simplices}: 
 $$
 X_1 = X_0 + X_a .
 $$
 We extend this notation as follows.
  Consider the alphabet with three letters $\{0,1,a\}$.
  Here $0$ is to indicate degenerate edges $s_0(x)\in X_1$, the letter 
  $a$ denotes
  the edges specified to be nondegenerate, and $1$
  denotes the edges which are not specified to be
  degenerate or nondegenerate.
%
%
For $w$ a word in this alphabet $\{0,1,a\}$, of length $|w|=n$,
  put
  $$
  X^w := \prod_{i\in w} X_i \subset (X_1)^n .
  $$
  This inclusion is full since $X_a \subset X_1$ is full by completeness.
  Denote by
  $X_w$ the $\infty$-groupoid of $n$-simplices whose
  principal edges have the types indicated in the word $w$, 
  or more explicitly, the full subgroupoid of $X_n$ given by the pullback diagram
  \begin{equation}\label{eq:Xw}
\vcenter{  \xymatrix{
  X_w \drpullback \ar[r] \ar[d] & X_n \ar[d] \\
  X^w  \ar[r] &(X_1)^n.
  }}
  \end{equation}
\end{blanko}

\begin{lemma}\label{lem:cons-X1}
  If $X$ and $Y$ are complete simplicial spaces and $f:Y \to X$ is conservative,
  then $Y_a$ maps to $X_a$, and the following square is a
  pullback:
  $$
  \xymatrix{
  Y_1 \ar[d] & Y_a \dlpullback \ar[l] \ar[d]  \\
  X_1 & \ar[l] X_a.}$$
\end{lemma}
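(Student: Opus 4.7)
The plan is to leverage the complementation property of monomorphisms of $\infty$-groupoids (recalled in \ref{def:mono}) together with the conservativity pullback for $s_0$. Since $Y$ and $X$ are complete, $s_0:Y_0\into Y_1$ and $s_0:X_0\into X_1$ are monomorphisms, so they admit complements, giving canonical disjoint-sum decompositions $Y_1\simeq Y_0+Y_a$ and $X_1\simeq X_0+X_a$ as in \ref{w}.

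First I would record the pullback square expressing conservativity on the bottom degeneracy:
$$
\xymatrix{
Y_0 \drpullback \ar[r]^{s_0} \ar[d]_{f_0} & Y_1 \ar[d]^{f_1} \\
X_0 \ar[r]_{s_0} & X_1 .
}
$$
Next, using that pullback of $\infty$-groupoids along $f_1:Y_1\to X_1$ preserves sums, I decompose
$$
Y_1 \;\simeq\; f_1\upperstar(X_1) \;\simeq\; f_1\upperstar(X_0) + f_1\upperstar(X_a).
$$
From the conservativity pullback, $f_1\upperstar(X_0)\simeq Y_0$, so the first summand is exactly the copy of $Y_0$ sitting inside $Y_1$ via $s_0$. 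By uniqueness of complements, the second summand must be $Y_a$. This simultaneously shows that $f_1$ restricts to a map $Y_a\to X_a$ (so the diagonal arrow of the square is defined) and that the resulting square
$$
\xymatrix{
Y_1 \ar[d]_{f_1} & Y_a \dlpullback \ar[l] \ar[d] \\
X_1 & \ar[l] X_a
}
$$
is a pullback, since pullback along $f_1$ of the summand inclusion $X_a\into X_1$ is by construction $Y_a\into Y_1$.

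There is no serious obstacle here: the argument is essentially an application of extensivity of $\Grpd$ (pullbacks distribute over coproducts) to the sum decompositions afforded by completeness, together with the single pullback square coming from conservativity. The only point to be careful about is that monomorphisms in $\Grpd$ genuinely admit complements, which is recorded in \ref{def:mono}.
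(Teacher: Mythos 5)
Your argument is correct and is essentially the paper's own proof: both rest on extensivity of $\Grpd$, identifying the square in question as the complement of the conservativity pullback for $s_0$, so that one is a pullback if and only if the other is. The only cosmetic difference is that you phrase it via pullback along $f_1$ distributing over the sum $X_1\simeq X_0+X_a$ and uniqueness of complements, whereas the paper invokes the general two-square statement for sum decompositions directly.
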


\begin{proof}
  This square is the complement of the pullback saying what conservative means.
  But it is general in extensive $\infty$-categories such as $\Grpd$, that in the situation
  $$\xymatrix{ A' \ar[r]\ar[d] & A'+B' \ar[d] & \ar[l] B' \ar[d] \\
  A \ar[r] & A+B & \ar[l] B,}
  $$
  one square is a pullback if and only if the other is.
\end{proof}

\begin{cor}\label{lem:cons-wn}
  If $X$ and $Y$ are complete simplicial spaces and $f:Y \to X$ is
  conservative, then for every word $w \in \{0,1,a\}\upperstar$,
  the following square is a pullback:
  \begin{equation}\label{eq:YnYw}
    \xymatrix{
    Y_n\ar[d] & \ar[l] Y_w \dlpullback \ar[d] \\
    X_n & \ar[l]  X_w.}
  \end{equation}
\end{cor}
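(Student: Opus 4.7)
The plan is to exhibit $Y_w$ as the pullback $Y_n \times_{X_n} X_w$, leveraging the defining pullback squares \eqref{eq:Xw} for $X_w$ and $Y_w$, together with the case $n=1$ which is already available.

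First I would collect the one-letter versions: for each letter $i\in\{0,1,a\}$, the square
\[
\xymatrix{Y_i \ar[d] \ar[r] & Y_1 \ar[d] \\ X_i \ar[r] & X_1}
\]
is a pullback. For $i=1$ this is trivial; for $i=0$ this is exactly the definition of conservativity (cartesian on $s_0$), noting that $Y_0$ and $X_0$ sit inside $Y_1$ and $X_1$ via $s_0$ by completeness (Lemma \ref{all-s-mono}); for $i=a$ this is Lemma \ref{lem:cons-X1}. Taking cartesian products of these squares — pullbacks of $\infty$-groupoids commute with finite products — I obtain, for any word $w$ of length $n$, that
\[
\xymatrix{Y^w \ar[d] \ar[r] & (Y_1)^n \ar[d] \\ X^w \ar[r] & (X_1)^n}
\]
is a pullback.

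Now I would simply compute. By definition of $X_w$ as the pullback $X_n \times_{(X_1)^n} X^w$,
\[
Y_n \times_{X_n} X_w \;\simeq\; Y_n \times_{X_n}\bigl(X_n \times_{(X_1)^n} X^w\bigr) \;\simeq\; Y_n \times_{(X_1)^n} X^w.
\]
On the other hand, using the pullback from the previous paragraph to rewrite $Y^w \simeq (Y_1)^n \times_{(X_1)^n} X^w$, and the definition $Y_w = Y_n \times_{(Y_1)^n} Y^w$,
\[
Y_w \;\simeq\; Y_n \times_{(Y_1)^n}\bigl((Y_1)^n \times_{(X_1)^n} X^w\bigr) \;\simeq\; Y_n \times_{(X_1)^n} X^w.
\]
Comparing the two equivalences yields $Y_w \simeq Y_n \times_{X_n} X_w$, which is precisely the pullback condition \eqref{eq:YnYw}.

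There is no real obstacle here: the argument is just a prism/cube-pasting using the basic pullback lemma (Lemma \ref{pbk}) and the fact that products preserve pullbacks. The only point requiring slight care is the case $i=a$ in the one-letter reduction, where one must know that $Y_a$ really is the fibre of $Y_1 \to X_1$ over $X_a$ — but this is exactly Lemma \ref{lem:cons-X1}, which in turn relies on extensivity of $\Grpd$ applied to the decomposition $X_1 = X_0 + X_a$ (and similarly for $Y_1$). Alternatively, one could present the same argument diagrammatically as a prism in which the two triangular/rectangular faces defining $X_w$ and $Y_w$ are pullbacks, the bottom face $Y^w \to (Y_1)^n \to (X_1)^n \leftarrow X^w$ is a pullback by the product argument, and then two applications of Lemma \ref{pbk} force the remaining face $Y_w \to Y_n \to X_n \leftarrow X_w$ to be a pullback as well.
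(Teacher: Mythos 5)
Your proposal is correct and is essentially the paper's own argument: the paper also reduces to showing that the square relating $Y^w \to (Y_1)^n$ and $X^w \to (X_1)^n$ is a pullback (via Lemma~\ref{lem:cons-X1} and products of the one-letter cases) and then pastes with the two defining squares \eqref{eq:Xw} using Lemma~\ref{pbk}. Your explicit spelling-out of the $i=0$ and $i=1$ letters and the iterated-pullback computation is just a more detailed rendering of the same prism-pasting.
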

\begin{proof}
  The square is connected to 
  \begin{equation}\label{eq:Y1nYw}
    \xymatrix{
    (Y_1)^n\ar[d] & \ar[l] Y^w \dlpullback \ar[d] \\
    (X_1)^n & \ar[l]  X^w} 
  \end{equation}
  by two instances of pullback-square \eqref{eq:Xw}, one for $Y$ and one for
  $X$.  It follows from \ref{lem:cons-X1} that \eqref{eq:Y1nYw} is a pullback,
  hence also \eqref{eq:YnYw} is a pullback, by an application of
  Lemma~\ref{pbk}.
\end{proof}

\begin{prop}\label{prop:cULF-nondegen}
  If $X$ and $Y$ are complete simplicial spaces and $f:Y \to X$ is cULF, then
  for any word $w\in \{0,1,a\}\upperstar$
  the following square is a pullback:
  $$
  \xymatrix{
  Y_1 \ar[d]& \ar[l]  Y_w \dlpullback \ar[d] \\
  X_1 & \ar[l]   X_w.}
  $$
\end{prop}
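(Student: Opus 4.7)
The plan is to factor the claimed pullback square through $Y_n$, by inserting an intermediate column, and then apply the standard pasting lemma~\ref{pbk}. Concretely, I would consider the horizontally composed diagram
$$\xymatrix{
Y_1 \ar[d] & \ar[l] Y_n \ar[d] & \ar[l] Y_w \ar[d] \\
X_1 & \ar[l] X_n & \ar[l] X_w}$$
where the horizontal maps $Y_n \to Y_1$ and $X_n \to X_1$ are the long-edge maps induced by the generic map $[1]\genmap[n]$ (sending $0\mapsto 0$, $1\mapsto n$), and the maps $Y_w\hookrightarrow Y_n$ and $X_w\hookrightarrow X_n$ are the inclusions of the full subgroupoids specified in~\eqref{eq:Xw}. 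The outer rectangle is the desired square, since its horizontal composite on $Y_w\to Y_1$ (respectively $X_w\to X_1$) is the restricted long-edge map.

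Next I would argue that each of the two small squares is a pullback. The right-hand square is a pullback by Corollary~\ref{lem:cons-wn}, which applies because every cULF map is in particular conservative. The left-hand square is a pullback by the ULF property: by the characterisation lemma in Section~\ref{sec:cULF}, a cULF map is cartesian on every generic map of the form $[1]\genmap[n]$, and the long-edge map is such a generic map.

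Given that both squares are pullbacks, Lemma~\ref{pbk} yields that the outer rectangle is a pullback as well, completing the proof. I do not foresee any real obstacle: the whole argument is simply the observation that the statement decomposes cleanly as (ULF on the generic long edge) followed by (conservative on the subgroupoid of prescribed edge types). The one point worth making explicit is that $Y_w \hookrightarrow Y_n \to Y_1$ does indeed equal the intended map $Y_w \to Y_1$, which is immediate from the fact that $Y_w$ is a full subgroupoid of $Y_n$ and the long-edge map is defined globally on $Y_n$.
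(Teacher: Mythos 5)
Your proof is correct and is essentially the paper's own argument: the paper likewise composes the pullback square of Corollary~\ref{lem:cons-wn} (from conservativity) with the square over the long-edge map $Y_n\to Y_1$, which is a pullback since $f$ is cULF. The only cosmetic difference is that you cite Lemma~\ref{pbk} for pasting, whereas the direction actually needed is just the elementary fact that a composite of two pullback squares is a pullback.
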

\begin{proof}
  Just compose the square of Corollary~\ref{lem:cons-wn} with the square 
  $$
  \xymatrix{
  Y_1 \ar[d] & \ar[l] Y_n \dlpullback \ar[d] \\
  X_1 & \ar[l]  X_n,}
  $$
  which is a pullback since $f$ is cULF.
\end{proof}

\begin{lemma}\label{lem:X1w=sum}
  Let $X$ be a complete simplicial space. Then for any words
  $v,v' \in \{0,1,a\}\upperstar$, we have
  $$
  X_{v1v'} = X_{v0v'} + X_{vav'} , 
  $$
  and hence
  $$
  X_n=\sum_{w\in\{0,a\}^n} X_w.
  $$
\end{lemma}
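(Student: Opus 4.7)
The plan is to bootstrap the identity $X_1 = X_0 + X_a$ (which is the definition of $X_a$ from completeness, see \ref{w}) up to an identity for arbitrary words, by exploiting two elementary properties of the $\infty$-category $\Grpd$: products distribute over sums, and pullbacks preserve sums (that is, $\Grpd$ is extensive).

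First I would unpack the definition of $X^w$ from \eqref{eq:Xw}. Since $X^w = \prod_{i \in w} X_i$, a word of the form $v1v'$ gives
$$X^{v1v'} \;=\; X^v \times X_1 \times X^{v'} \;=\; X^v \times (X_0 + X_a) \times X^{v'}.$$
Distributivity of binary product over sum in $\Grpd$ (which follows from the fact that $A \times (-)$ is a left adjoint) then yields
$$X^{v1v'} \;\simeq\; (X^v \times X_0 \times X^{v'}) + (X^v \times X_a \times X^{v'}) \;=\; X^{v0v'} + X^{vav'},$$
compatibly with the respective inclusions into $(X_1)^n$ (where $n = |v|+1+|v'|$).

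Next I would apply the pullback square \eqref{eq:Xw} defining $X_w$. We have
$$X_{v1v'} \;=\; X_n \times_{(X_1)^n} X^{v1v'} \;\simeq\; X_n \times_{(X_1)^n} (X^{v0v'} + X^{vav'}).$$
Since pullback in $\Grpd$ preserves sums (equivalently, the slice $\Grpd_{/(X_1)^n}$ is extensive, so $(-) \times_{(X_1)^n} (A+B) \simeq ((-) \times_{(X_1)^n} A) + ((-) \times_{(X_1)^n} B)$), this splits as
$$X_{v1v'} \;\simeq\; \bigl(X_n \times_{(X_1)^n} X^{v0v'}\bigr) + \bigl(X_n \times_{(X_1)^n} X^{vav'}\bigr) \;=\; X_{v0v'} + X_{vav'},$$
which is the first claim.

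The second claim follows by induction on $n$: starting from $X_{1\cdots 1} = X_n$ (since $X^{1^n} = (X_1)^n$ makes the defining pullback trivial), apply the first identity $n$ times, replacing each letter $1$ one at a time by $0+a$; at each stage the sum indexing set doubles, producing after $n$ steps the sum over all $w \in \{0,a\}^n$. The only subtle point is coherence of these iterated decompositions, but this is automatic because at each step we are splitting a single factor using the fixed decomposition $X_1 = X_0 + X_a$, and the two operations (splitting at distinct positions) commute up to canonical equivalence by the monoidal properties of product and sum.
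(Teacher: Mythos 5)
Your proposal is correct and is essentially the paper's own argument: both rest on the splitting $X_1=X_0+X_a$ propagating to $X^{v1v'}=X^{v0v'}+X^{vav'}$ and then on extensivity of $\Grpd$ (pullback along $X_n\to(X_1)^n$ preserving sums); the paper merely packages the identification $X_n\times_{(X_1)^n}X^{v0v'}\simeq X_{v0v'}$ via the prism lemma~\ref{pbk} rather than invoking the defining square~\eqref{eq:Xw} directly, which amounts to the same thing.
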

\begin{proof}
  Consider the diagram
  $$
  \xymatrix{
  X_{v0v'} \drpullback  \ar[r]\ar[d] & X_{v1v'} \ar[d] & \ar[l] \dlpullback X_{vav'} \ar[d]\\
  X^{v0v'} \ar[r] & X^{v1v'} & \ar[l] X^{vav'}
  }$$
  The two squares are pullbacks, by Lemma \ref{pbk}, since horizontal composition
  of either with the pullback square \eqref{eq:Xw} for $w=v1v'$ gives again the 
  pullback square \eqref{eq:Xw}, for $w=v0v'$ or $w=vav'$.




  Since the bottom row is a sum diagram, it follows that the top row is also
  (since the $\infty$-category of $\infty$-groupoids is locally cartesian closed, 
  and in particular extensive).
\end{proof}

We now specialise to complete decomposition spaces, although the following
result will be subsumed in Subsection~\ref{sec:stiff} in a more general situation.
\begin{prop}\label{degen-w}
  Let $X$ be a complete decomposition space.
  Then for any words $v,v'$ in the alphabet $\{0,1,a\}$ we have 
  $$
  X_{v0v'}=\Im(s_{|v|}:X_{vv'}\to X_{v1v'}) .
  $$
  That is, the $k$th principal edge of a simplex $\sigma$ is degenerate if and
  only if $\sigma=s_{k-1}d_k\sigma$.
\end{prop}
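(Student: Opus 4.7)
The strategy is to show that the map $s_{|v|}$ restricts to an equivalence
\[
s_{|v|}:X_{vv'}\xrightarrow{\ \sim\ }X_{v0v'}.
\]
Granting this, the asserted ``iff'' follows immediately: if $\sigma \in X_{v0v'}$, then $\sigma=s_{|v|}(\tau)$ for a unique $\tau$, and the simplicial identity $d_{|v|+1}s_{|v|}=\id$ gives $\tau=d_{|v|+1}\sigma$, so $\sigma=s_{|v|}d_{|v|+1}\sigma$. Conversely, any such $\sigma$ lies in the image of $s_{|v|}$ and hence in $X_{v0v'}$.

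Set $k=|v|$ and $n=|v|+|v'|$. The key input is Lemma~\ref{lem:s0d1}, which exhibits $s_k:X_n\to X_{n+1}$ as a pullback of $s_0:X_0\to X_1$:
\[
\xymatrix{
X_n \drpullback \ar[d]\ar[r]^{s_k} & X_{n+1}\ar[d]^{e_{k+1}} \\
X_0 \ar[r]^{s_0} & X_1,
}
\]
the right-hand vertical being the $(k+1)$-th principal edge map. I would enhance this to include all principal edges by stacking it below the trivial product pullback
\[
\xymatrix{
(X_1)^{k}\times X_0\times (X_1)^{n-k}\drpullback \ar[d]_{\pi_{k+1}}\ar[r] & (X_1)^{n+1}\ar[d]^{\pi_{k+1}}\\
X_0\ar[r]^{s_0} & X_1
}
\]
and applying the prism Lemma~\ref{pbk}, yielding the enhanced pullback
\[
\xymatrix{
X_n\drpullback\ar[d]\ar[r]^{s_k} & X_{n+1}\ar[d]^{(e_1,\dots,e_{n+1})}\\
(X_1)^{k}\times X_0\times (X_1)^{n-k}\ar[r]^-{\id\times s_0\times\id} & (X_1)^{n+1}.
}
\]

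Finally I would pull this square back along the full monomorphism
\[
X^v\times X_0\times X^{v'}\hookrightarrow (X_1)^{k}\times X_0\times (X_1)^{n-k},
\]
which is a mono thanks to the completeness of $X$. The right-hand column pulls back to $X_{v0v'}$ by the defining pullback~\eqref{eq:Xw}. The left-hand column pulls back to $X_{vv'}$: imposing that the position-$(k+1)$ coordinate of $X_n\to(X_1)^{k}\times X_0\times (X_1)^{n-k}$ lie in $X_0$ is automatic, since for $\sigma\in X_n$ this coordinate is $d_1 e_k(\sigma)$; only the genuine restrictions at positions $1,\dots,k,k+2,\dots,n+1$ remain, recovering the defining pullback of $X_{vv'}$. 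The induced map on top is the restriction of $s_k$, so $s_{|v|}:X_{vv'}\to X_{v0v'}$ is an equivalence.

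The main obstacle is the enhancement step, where one passes from the basic pullback of Lemma~\ref{lem:s0d1} to a pullback recording all $n+1$ principal edges on both sides. This is really bookkeeping, but care is needed to identify the bottom horizontal arrow as the insertion of $s_0$ at coordinate $k+1$ and the left vertical as the product of principal-edge projections together with the endpoint $d_1 e_k$. Once this is in place, the remaining two pullback identifications follow directly from Lemma~\ref{pbk} and the definition of $X_w$.
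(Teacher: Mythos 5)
Your proof is correct and follows essentially the same route as the paper's: both arguments combine the defining pullback \eqref{eq:Xw} with the degeneracy/free pullback square supplied by the decomposition-space axiom (Lemma~\ref{lem:s0d1}), pasted via Lemma~\ref{pbk}, to exhibit $X_{v0v'}$ as a pullback of $s_0:X_0\to X_1$ along the $(|v|+1)$th principal edge map and hence as the image of $s_{|v|}$ restricted to $X_{vv'}$. The only cosmetic differences are the order in which you paste the squares and that you phrase the conclusion as an equivalence $X_{vv'}\simeq X_{v0v'}$ rather than an image statement (equivalent here since completeness makes $s_{|v|}$ a monomorphism).
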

\noindent 
Recall that $|v|$ denotes the length of the word $v$ and,
as always, the notation $\Im$ refers to the essential image.
\begin{proof}
From \eqref{eq:Xw} we see that (independent of the decomposition-space axiom)
  $X_{v0v'}$ is characterised by the top pullback square in the diagram
  $$\xymatrix{
      X_{v0v'} \drpullback \ar[r]\ar[d] & X_{v1v'} \ar@/^1.5pc/[dd]^{{d_\bot\!}^{|v|}\,{d_\top\!}^{|v'|}} \ar[d]
      \\ 
      X^{v0v'} \drpullback \ar[r]\ar[d] & X^{v1v'} \ar[d]
      \\ 
      X_0 \ar[r]_{s_0} & X_1
  }$$
  But the decomposition-space axiom applied to the exterior pullback diagram
  says that the top horizontal map is $s_{|v|}$, and hence identifies $X_{v0v'}$
  with the image of $s_{|v|}: X_{vv'} \to X_{v1v'}$.  For the final statement,
  note that if $\sigma=s_{k-1}\tau$ then $\tau=d_k\sigma$.
\end{proof}
Combining this with Lemma \ref{lem:X1w=sum} we obtain the following result.
\begin{cor} \label{cor:X1w=sum}
  Let $X$ be a complete decomposition space.  For any words
  $v,v'$ in the alphabet $\{0,1,a\}$ we have
  $$
  X_{v1v'} = s_{|v|}(X_{vv'}) + X_{vav'} .
  $$
\end{cor}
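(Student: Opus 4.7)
The plan is to derive this as a direct combination of the two immediately preceding results, namely Lemma~\ref{lem:X1w=sum} and Proposition~\ref{degen-w}. No genuinely new argument is needed; the only content is a substitution and a bookkeeping check that the two decompositions of $X_{v1v'}$ are compatible as full subgroupoids.

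First, I would invoke Lemma~\ref{lem:X1w=sum} applied to the single-letter position in the middle of $v1v'$, to obtain the coproduct decomposition
\[
X_{v1v'} \;=\; X_{v0v'} \;+\; X_{vav'}
\]
as full subgroupoids of $X_n$, where $n = |v| + 1 + |v'|$. This is where completeness of $X$ enters, since that lemma relies on the decomposition $X_1 = X_0 + X_a$ coming from $s_0$ being a monomorphism.

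Next, I would apply Proposition~\ref{degen-w}, which is available because $X$ is a complete decomposition space, to identify the first summand as the essential image of the degeneracy
\[
s_{|v|} : X_{vv'} \longrightarrow X_{v1v'},
\]
that is, $X_{v0v'} = s_{|v|}(X_{vv'})$. Substituting this identification into the coproduct above yields the desired equality
\[
X_{v1v'} \;=\; s_{|v|}(X_{vv'}) \;+\; X_{vav'}.
\]

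The only point that requires a moment's care is that the identification in Proposition~\ref{degen-w} is really an equivalence of full subgroupoids of $X_{v1v'}$ (not merely a bijection on isomorphism classes), so that forming the coproduct on the right-hand side makes sense in the same ambient groupoid as in Lemma~\ref{lem:X1w=sum}. This is automatic from the construction via the pullback square in the proof of Proposition~\ref{degen-w}, where $X_{v0v'}$ is exhibited as a full subgroupoid of $X_{v1v'}$ via the degeneracy map. Consequently there is no real obstacle; the statement is essentially a packaging of the two previous results into a single convenient form that will be used in the sequel.
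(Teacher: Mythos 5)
Your proposal is correct and matches the paper's own derivation exactly: the paper states the corollary immediately after Proposition~\ref{degen-w} with the remark ``Combining this with Lemma~\ref{lem:X1w=sum} we obtain the following result,'' which is precisely the two-step substitution you describe. Your extra remark about the identification being an equivalence of full subgroupoids (not just a bijection on components) is a sensible check, and it holds for the reason you give.
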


\begin{blanko}{Effective simplices.}\label{effective}
  A simplex in a complete simplicial space $X$ is called {\em effective}
  when all its principal edges are nondegenerate.
  We put
  $$
  \nondeg{X}_n = X_{a\cdots a} \subset X_n ,
  $$
  the full subgroupoid of $X_n$ consisting of the effective simplices.  (Every
  $0$-simplex is effective by convention: $\nondeg X_0 = X_0$.)  It is clear
  that outer face maps $d_\bot,d_\top:X_{n}\to X_{n-1}$ preserve effective
  simplices, and that every effective simplex is nondegenerate, i.e.~is not in
  the image of any degeneracy map.  It is a useful feature of complete {\em
  decomposition} spaces that the converse is true too:
\end{blanko}

\begin{cor}\label{effective=nondegen}
  In a complete decomposition space $X$, 
  a simplex is effective if and only if it is nondegenerate:
  $$
  \nondeg{X}_n = X_n\setminus {\textstyle\bigcup_{i=0}^n}\Im(s_i).
  $$
\end{cor}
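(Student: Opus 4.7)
The plan is to extract the statement directly from the two preceding results, Lemma~\ref{lem:X1w=sum} and Proposition~\ref{degen-w}. One inclusion, $\nondeg X_n \subseteq X_n \setminus \bigcup_i \Im(s_i)$, was already noted in~\ref{effective}: if $\sigma = s_i(\tau)$, then the $(i{+}1)$st principal edge of $\sigma$ is $s_0$ applied to an edge of $\tau$, hence degenerate, so $\sigma$ is not effective. This inclusion does not even need the decomposition-space axiom.

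For the reverse inclusion, I would invoke the coproduct decomposition
\[
X_n \;=\; \sum_{w \in \{0,a\}^n} X_w
\]
of Lemma~\ref{lem:X1w=sum}, in which $\nondeg X_n = X_{a\cdots a}$ is singled out as one summand. Since the $\infty$-category $\Grpd$ is extensive, this is a genuine coproduct of full subgroupoids, so it suffices to show that each of the remaining summands $X_w$ (for $w \neq a\cdots a$) is contained in $\Im(s_k)$ for some $k$. Given such a $w$, pick any position carrying the letter $0$ and write $w = v0v'$ with $|v| = k$. Then Proposition~\ref{degen-w} identifies
\[
X_w \;=\; X_{v0v'} \;=\; \Im\!\bigl(s_k : X_{vv'} \to X_{v1v'}\bigr) \;\subseteq\; \Im\!\bigl(s_k : X_{n-1} \to X_n\bigr),
\]
which is exactly what is required. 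Putting the two inclusions together yields the claimed equality.

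There is no real obstacle: the content of the corollary is already packaged in Proposition~\ref{degen-w}, where the decomposition-space axiom was used (via the pullback of $s_0 : X_0 \to X_1$ along the projection to the appropriate principal edge) to upgrade the condition ``the $k$th principal edge of $\sigma$ is degenerate'' into the global equality $\sigma = s_{k-1} d_k \sigma$. The only points worth double-checking in writing it up are that the decomposition of $X_n$ into words is a coproduct in the extensive $\infty$-category $\Grpd$ (so nondegeneracy can be detected summand-wise), and that the image in $X_{v1v'} \subseteq X_n$ provided by Proposition~\ref{degen-w} coincides with the image of the global degeneracy $s_k : X_{n-1} \to X_n$ restricted to $X_{vv'}$; both are immediate.
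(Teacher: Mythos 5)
Your proof is correct and follows essentially the same route as the paper: the paper also observes that $\nondeg X_n$ is the complement of the subspaces $X_{1\cdots 101\cdots 1}$ (equivalently, of the non-all-$a$ summands in the word splitting) and then invokes Proposition~\ref{degen-w} to identify these with the images of the degeneracy maps. The only cosmetic difference is that you work with the finer $\{0,a\}$-decomposition of Lemma~\ref{lem:X1w=sum} where the paper uses the coarser $\{0,1\}$-words directly, which makes the identification with $\Im(s_i)$ an equality rather than a containment.
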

\begin{proof}
  It is clear that $\nondeg{X}_n$ is the complement of 
  $X_{01\cdots1} \cup \cdots \cup X_{1\cdots10}$
  and by Proposition~\ref{degen-w}
  we can identify each of these spaces with the image of a degeneracy map.
\end{proof}

\noindent
In fact this feature is enjoyed 
by a more general class of complete simplicial spaces, treated in 
Section~\ref{sec:stiff}.

\medskip

Iterated use of \ref{cor:X1w=sum} yields 
\begin{cor}\label{cor:Xn}
  For $X$ a complete decomposition space we have 
  $$
  X_{n} = \sum s_{j_k}\dots s_{j_1}(\nondeg X_{n-k}) ,
  $$
  where the sum is over all subsets $\{j_1<\dots<j_k\}$ of $\{0,\dots,n-1\}$.
\end{cor}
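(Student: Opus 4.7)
The plan is to iterate Corollary~\ref{cor:X1w=sum} (equivalently Proposition~\ref{degen-w}) starting from the decomposition of $X_n$ supplied by Lemma~\ref{lem:X1w=sum}, which expresses $X_n$ as the sum of the subspaces $X_w$ over all words $w\in\{0,a\}^n$. Thus it suffices to identify each $X_w$ with the image of a specific iterated degeneracy applied to some $\nondeg X_{n-k}$, and then reorganise the sum as one indexed by subsets of $\{0,\dots,n-1\}$.

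Fix $w\in\{0,a\}^n$ with $k$ letters $0$ occurring at positions $p_1<p_2<\dots<p_k$ in $\{1,\dots,n\}$, and set $j_\ell := p_\ell-1$, so $0\le j_1<\dots<j_k\le n-1$. Peel off the rightmost zero: write $w=v\,0\,v'$ with $|v|=j_k$ and $v'$ consisting only of $a$'s. Proposition~\ref{degen-w} gives $X_w = \Im\bigl(s_{j_k}\colon X_{vv'}\to X_w\bigr)$. The word $vv'$ has length $n-1$ and its zeros occur at positions $p_1,\dots,p_{k-1}$, so another application of Proposition~\ref{degen-w} identifies $X_{vv'}$ with the image of $s_{j_{k-1}}$. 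Iterating $k$ times removes all zeros and lands at the all-$a$ word of length $n-k$, whose associated subspace is by definition $\nondeg X_{n-k}$. Hence
$$
X_w \;=\; s_{j_k}\,s_{j_{k-1}}\cdots s_{j_1}(\nondeg X_{n-k}).
$$
The composition is well-defined because $j_\ell\le n-k+\ell-1$ (this follows from $j_k\le n-1$ together with $j_1<\dots<j_k$ being strictly increasing), so at each stage $s_{j_\ell}$ is applied to a space of the correct dimension.

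Summing over all words $w\in\{0,a\}^n$, using Lemma~\ref{lem:X1w=sum} to conclude $X_n=\sum_w X_w$, and noting that specifying $w$ is the same as specifying the subset $\{j_1<\dots<j_k\}\subseteq\{0,\dots,n-1\}$ of ``shifted zero-positions'', we obtain the claimed formula. The only real subtlety is the bookkeeping translating positions of zeros in the word $w$ into degeneracy indices; once the peel-from-the-right procedure is fixed, everything else is routine, and no further use of the decomposition-space axiom is required beyond the single invocation of Proposition~\ref{degen-w} at each peeling step.
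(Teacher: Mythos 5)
Your proof is correct and is essentially the paper's own argument: the paper derives this corollary by ``iterated use of Corollary~\ref{cor:X1w=sum}'', which is precisely the combination of Lemma~\ref{lem:X1w=sum} with Proposition~\ref{degen-w} that you iterate, and your peel-from-the-right index bookkeeping (including the bound $j_\ell\le n-k+\ell-1$) checks out. You have simply written out explicitly the induction the paper leaves implicit.
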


\begin{lemma}\label{lem:Segal:nondeg}
  If a complete decomposition space $X$ is a Segal space, then $\nondeg X_n
  \simeq \nondeg X_1 \times_{X_0} \cdots \times_{X_0} \nondeg X_1$, the
  $\infty$-groupoid of strings of $n$ composable nondegenerate arrows in $X_n
  \simeq X_1 \times_{X_0} \cdots \times_{X_0} X_1$.
\end{lemma}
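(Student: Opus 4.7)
The plan is to prove this by induction on $n$, using the iterated Segal pullback $X_n \simeq X_{n-1} \times_{X_0} X_1$ together with the observation that the face map $d_\top:X_n\to X_{n-1}$ preserves the first $n-1$ principal edges, while the long-edge projection $d_\bot^{n-1}:X_n\to X_1$ picks out the last principal edge $\partial_{n-1,n}$. The base cases $n=0,1$ are tautological since $\nondeg X_0=X_0$ and $\nondeg X_1=X_a$ by definition.

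For the inductive step, I would start from the Segal pullback square (Lemma~\ref{segalpqr}(3))
$$\xymatrix{X_n \drpullback \ar[r]^{d_\bot^{n-1}} \ar[d]_{d_\top} & X_1 \ar[d]^{d_\bot} \\ X_{n-1} \ar[r]_{d_\top} & X_0}$$
and then pull it back along the two full inclusions $\nondeg X_{n-1} \hookrightarrow X_{n-1}$ and $\nondeg X_1 = X_a \hookrightarrow X_1$. Applying the pasting lemma~\ref{pbk} twice, the resulting pullback is on the one hand $\nondeg X_{n-1} \times_{X_0} \nondeg X_1$, which by the inductive hypothesis equals $\nondeg X_1 \times_{X_0} \cdots \times_{X_0} \nondeg X_1$ ($n$ factors).

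On the other hand, the same pullback is the full subspace of $X_n$ consisting of simplices $\sigma$ with $d_\top\sigma$ effective and $d_\bot^{n-1}\sigma$ nondegenerate. Since $d_\top$ preserves the first $n-1$ principal edges of $\sigma$ and $d_\bot^{n-1}\sigma = \partial_{n-1,n}\sigma$ is the last principal edge, this subspace is precisely the locus of $n$-simplices all of whose principal edges are nondegenerate, i.e.~$\nondeg X_n$ by definition. Combining the two identifications gives the desired equivalence.

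The argument is essentially formal once the correct Segal square is chosen, and no serious obstacle is expected. The only point requiring a little care is that the subspaces $\nondeg X_k \subset X_k$ are full (in particular monomorphic) inclusions, which is ensured by the completeness hypothesis (via Lemma~\ref{all-s-mono} and the discussion in~\ref{w}); this is what licenses the two pastings of pullbacks and lets us identify pullbacks along these inclusions with subspace intersections in a homotopy-coherent way.
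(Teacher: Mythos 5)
Your argument is correct, but it is organized rather differently from the paper's, which disposes of the lemma in one line: by the defining pullback \eqref{eq:Xw}, $\nondeg X_n=X_{a\cdots a}$ is the pullback of the principal-edges map $X_n\to (X_1)^n$ along the full inclusion $(X_a)^n\subset (X_1)^n$, and since the Segal equivalence $X_n\simeq X_1\times_{X_0}\cdots\times_{X_0}X_1$ is compatible with the projections to $(X_1)^n$, this pullback is $X_a\times_{X_0}\cdots\times_{X_0}X_a$. Your induction on $n$ via the squares of Lemma~\ref{segalpqr} and the pasting lemma~\ref{pbk} is essentially an unfolding of that single pullback into $n$ steps; it buys nothing extra but also loses nothing, and it correctly isolates that only completeness (fullness of $X_a\subset X_1$, hence of each $\nondeg X_k\subset X_k$) and the Segal condition are used --- the decomposition-space axiom plays no role. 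One small index slip: in your Segal square the two legs into $X_0$ should be $d_\bot^{\,n-1}:X_{n-1}\to X_0$ (last vertex) and $d_\top:X_1\to X_0$ (source of the edge), i.e.\ the case $p=n-1$, $q=1$ of Lemma~\ref{segalpqr}(4); as written, $d_\top:X_{n-1}\to X_0$ is not a single face map for $n>2$, and $d_\bot$ on $X_1$ picks out the target rather than the source, so the matching condition would be stated at the wrong vertex. This does not affect the substance of the argument.
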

\noindent
This follows immediately from \eqref{eq:Xw}.
Note that if furthermore $X$ is Rezk complete, we can say non-invertible instead of 
nondegenerate.



\def\inputfile{convolution.tex}

\subsection{Convolution product and \M inversion}

\label{sec:Minv}

Let $X$ be a decomposition space. In this subsection we examine the incidence algebra $\Grpd^{X_1}$ which can be obtained from the incidence coalgebra $\Grpd_{/X_1}$ by taking the linear dual (see~\ref{Lineardual}).

\begin{blanko}{Convolution.}
  The $\infty$-category $\Grpd_{/S}$ plays the role of the vector space with basis 
  $S$.  Just as a linear functional is determined by its values on basis elements,
  linear functors $\Grpd_{/S} \to \Grpd$ correspond to arbitrary functors
  $S \to \Grpd$, hence the presheaf category $\Grpd^S$ can be considered the
  linear dual of the slice category $\Grpd_{/S}$ (see \cite{GKT:HLA} for the 
  precise statements and proofs).


If $X$ is a decomposition space, the coalgebra structure on $\Grpd_{/X_1}$
therefore induces an algebra structure on $\Grpd^{X_1}$.
%
%
%
  The convolution product of two linear functors
  $$
  F,G:\Grpd_{/ X_1}\longrightarrow \Grpd,
  $$
  given by spans $ X_1 \leftarrow M \to 1$ and $ X_1 \leftarrow N\to 1$,
  is the composite of their tensor product $F\otimes G$
  and the comultiplication,
  $$
  F*G: \quad \Grpd_{/ X_1}\stackrel{\Delta}\longrightarrow 
  \Grpd_{/ X_1}\otimes \Grpd_{/ X_1} \stackrel{F\tensor G}\longrightarrow \Grpd.
  $$
  Thus the convolution is given by the composite of spans
  $$
  \xymatrix@!C=9ex{
   X_1 && \\
   X_2 \ar[u] \ar[d]&\ar[l]\ar[d] M *N\ar[lu]\ar[rd]\dlpullback &
  \\
   X_1\times  X_1 &\ar[l]M\times N \ar[r] & 1.
  }$$
  The neutral element for convolution is $\epsilon:\Grpd_{/X_1}\to\Grpd$ defined
  by the span
  $$
  X_1 \stackrel {s_0}\leftarrow  X_0 \to 1\,.
  $$
\end{blanko}

\begin{blanko}{The zeta functor.}\label{zeta}
  The {\em zeta functor}
  $$
  \zeta:\Grpd_{/ X_1} \to \Grpd
  $$
  is the linear functor defined by the span
  $$
  X_1 \stackrel =\leftarrow  X_1 \to 1\,.
  $$
  As an element of the linear dual (\ref{Lineardual}), this corresponds to
  the terminal presheaf.
  We will see later that in the locally finite situation \ref{finitary}, 
  upon taking the homotopy cardinality of the zeta functor
  one obtains the constant function 1 on $\pi_0 X_1$, that is, the classical zeta
  function in the incidence algebra.
  
  It is clear from the definition of the convolution product that 
  the $k$th convolution power of the zeta functor is given by
  $$
  \zeta^k : \; 
  X_1 \stackrel g\leftarrow  X_k \to 1 ,
  $$
  where $g: [1] \to [k]$ is the unique generic map in degree $k$.
\end{blanko}

We also introduce the following elements of the incidence algebra $\Grpd^{X_1}$:
for each $a\in X_1$, let $(X_1)_{[a]}$ be the component of $X_1$ containing $a$, and
let $\delta^a$ be the linear functor given by the span
$$
\delta^a : \quad X_1 \leftarrow (X_1)_{[a]} \to 1 ,
$$
We also have the representable functors 
\begin{eqnarray*}
  h^a := \Map(a, - ) : X_1 & \longrightarrow & \Grpd 
\end{eqnarray*}
which viewed as linear functors $\Grpd_{/X_1} \to \Grpd$ are given by the spans
$$
h^a : \quad X_1 \stackrel{\name{a}}\leftarrow 1 \to 1 .
$$
Hence we have 
$$
\zeta = \sum_{a\in \pi_0 X_1} \delta^a = \int^a h^a.
$$
See \ref{scalar&hosum} for the integral notation for
   homotopy sums.

We are interested in the invertibility of the zeta functor under the convolution
product.  Unfortunately, at the objective level it can practically {\em never}
be convolution invertible, because the inverse $\mu$ should always be given by
an alternating sum (cf.~\ref{thm:zetaPhi})
$$
\mu = \Phieven - \Phiodd 
$$
(of the Phi functors defined below).
We have no minus sign available, but 
following the idea of Content--Lemay--Leroux~\cite{Content-Lemay-Leroux},
developed further by Lawvere--Menni~\cite{LawvereMenniMR2720184},
  we establish the sign-free equations
$$
  \zeta * \Phieven = \epsilon + \zeta * \Phiodd ,
  \qquad\qquad
  \Phieven * \zeta  = \epsilon + \Phiodd * \zeta.
$$

In the category case (cf.~\cite{Content-Lemay-Leroux}
and \cite{LawvereMenniMR2720184}),
$\Phieven$ (resp.~$\Phiodd$) are given by even-length (resp.~odd-length)
chains of non-identity arrows.  (We keep the $\Phi$-notation in honour of 
Content--Lemay--Leroux). In the general setting of decomposition spaces
we cannot talk about chains of arrows,
but in the complete case we can still talk about effective simplices
and their principal edges.



\def\inputfile{phi.tex}

From now on we assume again that $X$ is complete decomposition space.

\begin{blanko}{`Phi' functors.}\label{Phi}
  We define $\Phi_n$ to be the linear functor given by the span
  $$
  X_1 \stackrel m\longleftarrow \nondeg{X}_n \longrightarrow  1.
  $$
  If $n=0$ then $\nondeg X_0=X_0$ by convention, and $\Phi_0$ is given by the
  span
  $$
  X_1 \stackrel u\longleftarrow X_0 \longrightarrow  1.
  $$
  That is, $\Phi_0$ is the linear functor $\epsilon$.  Note that
  $\Phi_1=\zeta-\epsilon$.  The minus sign makes sense here, since $X_0$
  (representing $\epsilon$) is really a full subgroupoid of $X_1$ (representing
  $\zeta$).
\end{blanko}

To compute convolution with $\Phi_n$, a key ingredient is the following
general lemma (with reference to the word notation of \ref{w}).

\begin{lemma}\label{lem:X1w}
  Let $X$ be a complete decomposition space. Then for any words
  $v,v'$ in the alphabet $\{0,1,a\}$, the square
  $$\xymatrix{
  X_{vv'} \ar[d]\ar[r] & X_2 \ar[d] \\
  X_v \times X_{v'} \ar[r]& X_1\times X_1
  }$$
  is a pullback.
\end{lemma}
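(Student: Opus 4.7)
I would factor the given square as the horizontal pasting
$$\xymatrix@C=2.5pc{
X_{vv'} \ar[d] \ar[r] & X_{|v|+|v'|} \ar[d] \ar[r] & X_2 \ar[d]^{(d_2,d_0)} \\
X_v \times X_{v'} \ar[r] & X_{|v|} \times X_{|v'|} \ar[r] & X_1 \times X_1,
}$$
where the middle vertical is the front/back decomposition induced by the two free maps $[|v|], [|v'|] \rat [|v|+|v'|]$; the top-right horizontal is induced by the generic map $[2] \genmap [|v|+|v'|]$ sending $0,1,2$ to $0,|v|,|v|+|v'|$; and the bottom-right horizontal is the product of the two long-edge generic maps. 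Both squares will be shown to be pullbacks, and then Lemma~\ref{pbk} will yield the claim.

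For the left square, the plan is to stack it vertically on top of
$$\xymatrix{
X_v \times X_{v'} \drpullback \ar[d] \ar[r] & X_{|v|} \times X_{|v'|} \ar[d] \\
X^v \times X^{v'} \ar[r] & (X_1)^{|v|+|v'|},
}$$
which is a pullback as the product of the two defining pullbacks \eqref{eq:Xw} for $X_v$ and $X_{v'}$. Since the principal-edges map $X_{|v|+|v'|} \to (X_1)^{|v|+|v'|}$ factors through the front/back decomposition, the resulting outer rectangle is exactly the defining pullback \eqref{eq:Xw} for $X_{vv'}$, so by Lemma~\ref{pbk} the left square will be a pullback.

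For the right square---that is, the statement that, for all $p, q \geq 0$,
$$\xymatrix{
X_{p+q} \drpullback \ar[d] \ar[r] & X_2 \ar[d]^{(d_2,d_0)} \\
X_p \times X_q \ar[r] & X_1 \times X_1
}$$
is a pullback, with top horizontal induced by the generic $[2] \genmap [p+q]$ hitting the vertices $0,p,p+q$---I would invoke the monoidal characterisation of decomposition spaces (Proposition~\ref{prop:DDecomp}). In $\DD$ consider the square
$$\xymatrix{
(\un{p+q} \to \un 1) \ar[r] \ar[d] & (\un 2 \to \un 1) \ar[d] \\
(\un{p+q} \to \un 2) \ar[r] & (\un 2 \to \un 2),
}$$
whose horizontals are ordinalic (with underlying $\un\Delta$-map $g:\un{p+q} \to \un 2$ sending the first $p$ elements to $1$ and the remaining $q$ to $2$) and whose verticals are segalic (with underlying $f:\un 2 \to \un 1$). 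By Lemma~\ref{lemma:DD-pbk} this is the canonical pullback of a segalic map along an ordinalic one, and is therefore preserved by $\overline X$ since $X$ is a decomposition space. Unpacking $\overline X$ via the external-sum decomposition of each object produces precisely the right-hand square.

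Pasting with Lemma~\ref{pbk} will then deliver the claim. The hard part will be recognising the correct pullback square in $\DD$ and verifying that $\overline X$ sends the segalic leg to the front/back decomposition and the segalic map $(\un 2 \to \un 1)\to(\un 2\to\un 2)$ to $(d_2,d_0)$; once these translations are in place, everything is formal diagram-chasing.
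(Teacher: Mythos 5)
Your proof is correct. The left-hand square is handled exactly as in the paper: stack it on the product of the defining pullbacks \eqref{eq:Xw} for $X_v$ and $X_{v'}$, note that the outer rectangle is the defining pullback for $X_{vv'}$, and apply Lemma~\ref{pbk}. The difference is in the right-hand square. The paper factors $X_{m+n}\to X_2$ once more, through $X_{1+n}$, so that each of the two resulting squares (after cancelling the constant factor $X_n$, resp.\ $X_1$, from the products in the bottom row) is literally a generic-free pullback of the decomposition-space axiom; nothing beyond the axiom and Lemma~\ref{pbk} is needed. You instead treat $X_{p+q}\to X_2$ in one step via the twisted-arrow-category characterisation: your square in $\DD$ is indeed the canonical pullback of the segalic map $(\un 2\to\un 1)\to(\un 2\to\un 2)$ along the ordinalic map $(\un{p+q}\to\un 2)\to(\un 2\to\un 2)$ supplied by Lemma~\ref{lemma:DD-pbk}, Proposition~\ref{prop:DDecomp} guarantees that $\overline X$ preserves it, and your identifications of the four corners and legs (the front/back decomposition, $(d_2,d_0)$, the two long-edge maps, and the generic $[2]\genmap[p+q]$ hitting $0,p,p+q$) are all accurate. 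What the paper's route buys is economy --- two direct applications of the bare axiom; what yours buys is uniformity --- the same mechanism that establishes coassociativity in Section~\ref{sec:COALG} handles all such reasonable spans at once --- at the cost of importing Proposition~\ref{prop:DDecomp}, whose own proof reduces to the same elementary squares you could have used directly.
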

\begin{proof}
  Let $m=|v|$ and $n=|v'|$.
  The square is the outer rectangle in the top row of the diagram
  $$\xymatrix{
  X_{vv'} \ar[d]\ar[r] & X_{m+n} \ar[d] \drpullback \ar[r] 
  & X_{1+n} \ar[d] \drpullback \ar[r]& X_2 \ar[d] 
  \\
  X_v \times X_{v'}\drpullback \ar[d] \ar[r]& X_m\times X_n\ar[d]\ar[r]& X_1\times X_n 
  \ar[r]& X_1\times X_1 
  \\
  X^v \times X^{v'} \ar[r] & {X_1}^m\times{X_1}^n 
  }$$
  The left-hand outer rectangle is a pullback by definition of $X_{vv'}$, and
  the bottom square is a pullback by definition of $X_v$ and $X_{v'}$.  Hence
  the top-left square is a pullback.  But the other squares in the top row are
  pullbacks because $X$ is a decomposition space.
\end{proof}
%

\begin{lemma}
    We have $\Phi_n = (\Phi_1)^n = (\zeta-\epsilon)^n$, 
    the $n$th convolution product of $\Phi_1$ with itself.
\end{lemma}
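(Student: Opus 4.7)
The plan is to prove $\Phi_n = (\Phi_1)^n$ by induction on $n$, then appeal to the identity $\Phi_1 = \zeta - \epsilon$ for the second equality. The equation $\Phi_1 = \zeta - \epsilon$ is to be read objectively as the sum decomposition $\zeta = \epsilon + \Phi_1$, which follows from completeness of $X$: since $s_0: X_0 \to X_1$ is a monomorphism, the discussion in \ref{def:mono} and \ref{w} gives the sum decomposition $X_1 = X_0 + X_a$ with $X_a = \nondeg X_1$, and the span $X_1 \stackrel{=}{\leftarrow} X_1 \to 1$ representing $\zeta$ splits accordingly into the spans $X_1 \stackrel{s_0}{\leftarrow} X_0 \to 1$ and $X_1 \leftarrow X_a \to 1$ representing $\epsilon$ and $\Phi_1$ respectively.

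For the main identity, the base cases $n=0$ (with $\Phi_0 = \epsilon$ serving as the convolution unit) and $n=1$ hold by definition. For the inductive step, I compute $\Phi_n * \Phi_1$ via span composition. Using the spans $X_1 \leftarrow \nondeg X_n \to 1$ for $\Phi_n$ and $X_1 \leftarrow X_a \to 1$ for $\Phi_1$, together with the binary comultiplication span $X_1 \stackrel{d_1}{\leftarrow} X_2 \stackrel{(d_2,d_0)}{\to} X_1 \times X_1$, the convolution $\Phi_n * \Phi_1$ is represented by the composite span whose middle term is the pullback
$$P \;:=\; X_2 \times_{X_1 \times X_1} \bigl(\nondeg X_n \times X_a\bigr).$$

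The key identification is $P \simeq \nondeg X_{n+1}$, which I obtain by applying Lemma~\ref{lem:X1w} with words $v = a\cdots a$ (of length $n$) and $v' = a$: since $\nondeg X_n = X_v$ and $X_a = X_{v'}$ by definition of the word notation, the lemma yields $P = X_{vv'} = X_{a\cdots a} = \nondeg X_{n+1}$. It remains to check that the left leg $\nondeg X_{n+1} \hookrightarrow X_{n+1} \to X_2 \stackrel{d_1}{\to} X_1$ inherited from the span composition agrees with the long-edge map used in the defining span of $\Phi_{n+1}$; this is built into the factorisation in the proof of Lemma~\ref{lem:X1w}, where the map $X_{|v|+|v'|} \to X_2$ is the generic contraction. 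Hence $\Phi_n * \Phi_1 = \Phi_{n+1}$, completing the induction.

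The main subtle point is precisely this bookkeeping about long-edge maps, ensuring that the left legs of the spans in play are compatible; once Lemma~\ref{lem:X1w} is invoked with the right choice of words, everything else is a direct unwinding of the definitions of convolution, of $\Phi_n$, and of the word-notation pullbacks of \ref{w}. An alternative route would be to collapse all $n$ binary comultiplications into a single $\Delta_n$ (valid by the coassociativity of Theorem~\ref{thm:comultcoass}) and identify the pullback $X_n \times_{X_1^n} (X_a)^n$ with $X_{a\cdots a} = \nondeg X_n$ directly via \eqref{eq:Xw}, but the inductive version uses only the binary lemma.
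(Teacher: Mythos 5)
Your proof is correct and follows essentially the same route as the paper: the paper's own proof is the one-line "this follows from the definitions and Lemma~\ref{lem:X1w}", and your induction — identifying the middle term of the composite span $X_2 \times_{X_1\times X_1}(\nondeg X_n\times X_a)$ with $X_{a\cdots a\,a}=\nondeg X_{n+1}$ via that lemma applied to $v=a^n$, $v'=a$ — is exactly the natural unwinding of that citation. The remarks about the long-edge compatibility and about $\Phi_1=\zeta-\epsilon$ being the sum decomposition $X_1=X_0+X_a$ match the paper's setup in \ref{Phi} and \ref{w}.
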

\begin{proof}
    This follows from the definitions and Lemma~\ref{lem:X1w}.
\end{proof}

\begin{prop}
The linear functors $\Phi_n$ satisfy
$$
\zeta*\Phi_n
\;\;=\;\;
\Phi_n+\Phi_{n+1}
\;\;=\;\;
\Phi_n*\zeta.
$$
\end{prop}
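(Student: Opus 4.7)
My plan is to compute each of the three linear functors explicitly as a span and match them up using the machinery of Section~\ref{sec:complete}. Write $a^n$ for the word consisting of $n$ letters $a$. First I would unfold the convolution $\zeta*\Phi_n$ via the general recipe: it is the span $X_1\leftarrow P\to 1$, where $P$ is the pullback of $(d_2,d_0):X_2\to X_1\times X_1$ along $\id\times m:X_1\times\nondeg X_n\to X_1\times X_1$. Applying Lemma~\ref{lem:X1w} with words $v=1$ and $v'=a^n$ identifies $P$ with the subspace $X_{1a^n}\subset X_{n+1}$ of $(n+1)$-simplices whose last $n$ principal edges are nondegenerate. Tracking through this identification, the left leg $P\to X_2\xrightarrow{d_1}X_1$ becomes the long-edge map $m$ restricted to $X_{1a^n}$, because the $d_1$ of the 2-simplex is its long edge, and this is the long edge of the original $(n+1)$-simplex.

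Next I would invoke Corollary~\ref{cor:X1w=sum} with $v$ empty and $v'=a^n$ to split
$$
X_{1a^n}\;\simeq\;s_0(X_{a^n})\;+\;X_{a^{n+1}}\;\simeq\;\nondeg X_n\;+\;\nondeg X_{n+1},
$$
where the second equivalence uses that $s_0$ is a monomorphism by completeness. To confirm this matches $\Phi_n+\Phi_{n+1}$ as spans, I would check that the long-edge map $m$ on each summand agrees with the structure map of $\Phi_n$ or $\Phi_{n+1}$ respectively. On $\nondeg X_{n+1}$ this is tautological; on $s_0(X_{a^n})$ it follows from the simplicial identity $d_1 s_0=\id$, which (iterated appropriately) shows that the long edge of $s_0\sigma$ equals the long edge of $\sigma$. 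This gives the equivalence $\zeta*\Phi_n\simeq\Phi_n+\Phi_{n+1}$.

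The identity $\Phi_n*\zeta\simeq\Phi_n+\Phi_{n+1}$ is then obtained by the mirror argument: the relevant pullback is $X_{a^n 1}$, which by the symmetric instance of Corollary~\ref{cor:X1w=sum} (with $v=a^n$ and $v'$ empty) decomposes as $s_n(X_{a^n})+X_{a^{n+1}}\simeq\nondeg X_n+\nondeg X_{n+1}$, using completeness of $s_n$ and the identity $d_n s_n=\id$ for the compatibility of long-edge maps on the $s_n$-image summand. The main obstacle is not any single hard step but the simplicial bookkeeping in the very first paragraph: one must carefully unwind the chain of pullbacks implicit in the proof of Lemma~\ref{lem:X1w} to confirm that the composite $P\to X_2\xrightarrow{d_1}X_1$ is genuinely the long-edge map $m$ under the identification $P\simeq X_{1a^n}$, relying on the decomposition space axiom throughout.
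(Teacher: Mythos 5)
Your proposal is correct and follows essentially the same route as the paper: the paper likewise computes the convolution pullback via Lemma~\ref{lem:X1w} to identify it with $X_{1a\cdots a}$, splits this as $\nondeg X_n + \nondeg X_{n+1}$ using Lemma~\ref{lem:X1w=sum} together with Proposition~\ref{degen-w} (which is exactly your Corollary~\ref{cor:X1w=sum}), and handles the second identity by symmetry. Your extra care in verifying that the left legs agree with the long-edge maps on each summand is a detail the paper compresses into the phrase ``an equivalence of $\infty$-groupoids over $X_1$''.
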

\begin{proof}
  We can compute the convolution $\zeta * \Phi_n$ by
  Lemma~\ref{lem:X1w} as
  $$
  \xymatrix@!C=9ex{
   X_1 && \\
   X_2 \ar[u] \ar[d]&\ar[l]\ar[d] X_{1a\cdots a}\ar[lu]\ar[rd]\dlpullback &
  \\
   X_1\times  X_1 &\ar[l]X_1\times \nondeg X_n \ar[r] & 1
  }$$
  But Lemma~\ref{lem:X1w=sum} tells us that
  $X_{1a\cdots a} = X_{0a\cdots a} + X_{aa\cdots a} = \nondeg X_n +
  \nondeg X_{n+1}$, where the identification in the first summand is
  via $s_0$, in virtue of Proposition~\ref{degen-w}.
  This is an equivalence of $\infty$-groupoids over $X_1$
  so the resulting span is $\Phi_n+\Phi_{n+1}$ as desired.
  The second identity claimed follows similarly.
\end{proof}

Put
$$
\Phieven := \sum_{n \text{ even}} \Phi_n , \qquad
\Phiodd := \sum_{n \text{ odd}} \Phi_n .
$$

\begin{theorem}\label{thm:zetaPhi}
  For a complete decomposition space, the following \M inversion 
  principle holds:
  \begin{align*}
\zeta * \Phieven
 &\;\;=\;\; \epsilon\;\; +\;\; \zeta * \Phiodd,\\
=\;\;\Phieven *\zeta &\;\;=\;\; \epsilon \;\;+ \;\; \Phiodd*\zeta.
\end{align*}
\end{theorem}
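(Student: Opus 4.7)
The strategy is to derive everything directly from the preceding proposition, which states that $\zeta * \Phi_n = \Phi_n + \Phi_{n+1} = \Phi_n * \zeta$ for every $n \geq 0$. Once that is in hand, the theorem is essentially a bookkeeping matter, with the caveat that the sums involved are infinite and so must be interpreted on the objective level.

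First I would verify that convolution distributes over infinite sums of linear functors on either side. At the objective level, convolution is a composite of span operations (pullback along $d_1 : X_2 \to X_1$ followed by pushforward along $(d_2, d_0)$), and both lowershriek and pullback preserve sums in the slice categories, since $\Grpd$ is locally cartesian closed and in particular extensive. Concretely, an infinite sum of spans $X_1 \leftarrow A_n \to 1$ is simply the single span $X_1 \leftarrow \coprod_n A_n \to 1$, and the convolution of this sum with $\zeta$ is computed fibrewise.

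With distributivity available, I would sum the proposition over even $n$:
\begin{align*}
\zeta * \Phieven \;=\; \sum_{n \text{ even}} \zeta * \Phi_n \;=\; \sum_{n \text{ even}} (\Phi_n + \Phi_{n+1}) \;=\; \sum_{m \geq 0} \Phi_m,
\end{align*}
since as $n$ ranges over the nonnegative even integers, the pair $(n, n{+}1)$ enumerates every nonnegative integer exactly once. The analogous computation over odd $n$ yields
$$\zeta * \Phiodd \;=\; \sum_{n \text{ odd}} (\Phi_n + \Phi_{n+1}) \;=\; \sum_{m \geq 1} \Phi_m.$$
Separating off the $m=0$ term on the right of the first identity and using $\Phi_0 = \epsilon$ gives
$$\zeta * \Phieven \;=\; \epsilon + \sum_{m \geq 1} \Phi_m \;=\; \epsilon + \zeta * \Phiodd,$$
which is the first half of the theorem. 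The second half, with $\zeta$ on the right, is proved by the identical argument applied to the companion identity $\Phi_n * \zeta = \Phi_n + \Phi_{n+1}$ of the proposition.

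There is no hard step: the only real point to watch is that all rearrangements above are equivalences of $\infty$-groupoids over $X_1$, realised by disjoint unions of the effective-simplex spaces $\nondeg X_m$, rather than numerical identities. In particular the theorem is genuinely sign-free and requires no finiteness assumption; the even/odd split is exactly the device that lets us avoid writing $\mu = \Phieven - \Phiodd$ directly.
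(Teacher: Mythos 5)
Your proof is correct and follows essentially the same route as the paper, which simply observes that all four functors in the statement are equivalent to $\sum_{r\geq 0}\Phi_r$ by summing the identity $\zeta*\Phi_n=\Phi_n+\Phi_{n+1}=\Phi_n*\zeta$ over even and odd $n$ respectively. Your additional remarks on distributivity of convolution over homotopy sums and on the rearrangements being genuine equivalences over $X_1$ make explicit what the paper leaves implicit, but do not change the argument.
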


\begin{proof}
  This follows immediately from the proposition: all four linear functors are in fact
  equivalent to $\sum_{r\geq0}\Phi_r$.
\end{proof}

We note the following immediate corollary of 
Proposition~\ref{prop:cULF-nondegen}, which can be read as saying
`\M inversion is preserved by cULF functors':
\begin{cor}\label{phi=phi}
  If $f:Y \to X$ is cULF, then $f\upperstar \zeta = \zeta$ and
  $f\upperstar \Phi_n = \Phi_n$ for all $n\geq0$.
\end{cor}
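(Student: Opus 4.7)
The plan is to unpack the definitions of $\zeta$ and $\Phi_n$ as spans, and then observe that the required identification of pullback spans is precisely what Proposition~\ref{prop:cULF-nondegen} supplies.

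Recall that for a linear functor $F:\Grpd_{/X_1}\to\Grpd$ presented by a span $X_1 \leftarrow M \to 1$, the pullback $f\upperstar F : \Grpd_{/Y_1} \to \Grpd$ is presented by the span $Y_1 \leftarrow Y_1 \times_{X_1} M \to 1$ obtained by pulling $M$ back along $f_1 : Y_1 \to X_1$. So in each case it is enough to identify this fibre product.

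For $\zeta$, the span is $X_1 \stackrel{=}\leftarrow X_1 \to 1$, and the fibre product $Y_1 \times_{X_1} X_1$ is canonically $Y_1$; this yields the span $Y_1 \stackrel{=}\leftarrow Y_1 \to 1$, which is $\zeta$ on $Y$. For $\Phi_n$ with $n \geq 1$, the middle object is $\nondeg X_n = X_{a\cdots a}$, so I apply Proposition~\ref{prop:cULF-nondegen} with the word $w = a^n$. This says precisely that the square
$$
\xymatrix{
Y_1 \ar[d]_{f_1}& \ar[l] Y_{a\cdots a} \dlpullback \ar[d] \\
X_1 & \ar[l] X_{a\cdots a}
}
$$
is a pullback, so $Y_1 \times_{X_1} \nondeg X_n \simeq \nondeg Y_n$, and the pulled-back span is $Y_1 \leftarrow \nondeg Y_n \to 1$, which is $\Phi_n$ on $Y$. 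The case $n=0$, where $\Phi_0 = \epsilon$ is given by $X_1 \stackrel{s_0}\leftarrow X_0 \to 1$, is handled by the conservativity condition on $f$: the defining pullback square $Y_0 = Y_1\times_{X_1} X_0$ gives the span $Y_1 \stackrel{s_0}\leftarrow Y_0 \to 1$, which is $\epsilon$ on $Y$.

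There is essentially no obstacle here: the work has been done upstream, in Proposition~\ref{prop:cULF-nondegen} (and in the pullback-stability of the cULF hypothesis on degeneracies which handles $n=0$). The only thing worth double-checking is the compatibility of the maps to $1$ under pullback, which is automatic since $1$ is terminal.
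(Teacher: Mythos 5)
Your proof is correct and takes essentially the same route as the paper, which simply declares the corollary an immediate consequence of Proposition~\ref{prop:cULF-nondegen}. You have merely spelled out the routine details (pulling back the defining spans along $f_1$ and identifying $Y_1\times_{X_1} X_w \simeq Y_w$ for $w=a^n$, with the $n=0$ case being the conservativity square), which is exactly what the paper leaves implicit.
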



\def\inputfile{half.tex}

\subsection{Stiff simplicial spaces}

\label{sec:stiff}

We saw that in a complete decomposition space, degeneracy can be detected on
principal edges.  In the next subsection (\ref{sec:split}) we shall come to
split simplicial spaces, which share this property.  A common generalisation is
that of stiff simplicial spaces, which we now introduce.

\begin{blanko}{Stiffness.}
  A simplicial space $X: \Delta\op\to\Grpd$
  is called {\em stiff} if it sends degeneracy/free 
  pushouts in $\Delta$ to pullbacks in $\Grpd$.  These pushouts are
  examples of generic-free pushouts, so in particular every
  decomposition space is stiff.
\end{blanko}

\begin{lemma}\label{lem:stiff}
  A simplicial space $X$  is stiff if and only if the following diagrams
  are pullbacks for all $0\leq i\leq n$
      $$\xymatrix{
	  X_{n} \drpullback 
	  \ar[r]^{s_{i}}\ar[d]&\ar[d]^{{d_\bot\!}^{i}\,{d_\top\!}^{n-i}} X_{n+1}
	  \\ 
	  X_0 \ar[r]_{s_0} & X_1
      }$$
\end{lemma}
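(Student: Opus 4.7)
The plan is to identify the squares in the lemma as $X$ applied to specific degeneracy/free pushouts in $\Delta$, and then to reduce the general case to these via pullback pasting.

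First, I would unfold the definition using Lemma~\ref{genfreepushout}: every generic-free pushout in $\Delta$ is an identity extension square, so every degeneracy/free pushout has the form
$$\xymatrix{
[k+1] \ar[d]_{s^j} \ar@{>->}[r]^{(d^\bot)^a (d^\top)^b} & [a+k+1+b] \ar[d]^{s^{a+j}} \\
[k] \ar@{>->}[r]_{(d^\bot)^a (d^\top)^b} & [a+k+b].
}$$
The particular squares in the lemma are $X$ applied to the case $k=0$, $j=0$, $a=i$, $b=n-i$, where the identity extension of $s^0 : [1]\to[0]$ is $s^i:[n+1]\to[n]$, and the horizontal free maps $[1]\to[n+1]$ and $[0]\to[n]$ pick out the edge and vertex at position $i$. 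Thus the ``only if'' direction is immediate: if $X$ is stiff, then each such pushout is sent to a pullback, which is precisely the displayed square (up to transposition).

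For the converse, given the general degeneracy/free pushout above, applying $X$ yields
$$\xymatrix{
X_{a+k+b} \ar[r]^{s_{a+j}} \ar[d]_{(d_\top)^b(d_\bot)^a} & X_{a+k+1+b} \ar[d]^{(d_\top)^b(d_\bot)^a} \\
X_k \ar[r]_{s_j} & X_{k+1},
}$$
and I must show this is a pullback. Stacking below it the hypothesis square for $n=k$, $i=j$ (with vertical $(d_\bot)^j(d_\top)^{k-j}$ down to $X_0,X_1$), the composite vertical is $(d_\bot)^j(d_\top)^{k-j}\circ(d_\top)^b(d_\bot)^a$, which picks out the edge (resp.\ vertex) at position $a+j$ of $[a+k+1+b]$ (resp.\ $[a+k+b]$), i.e.\ equals $(d_\bot)^{a+j}(d_\top)^{k+b-j}$. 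Therefore the outer rectangle is itself a hypothesis square, for $n=a+k+b$ and $i=a+j$, noting $0\le a+j\le a+k+b$. Both the outer rectangle and the bottom square are pullbacks by hypothesis, so the pasting lemma (Lemma~\ref{pbk}, in its vertical incarnation) gives that the top square is a pullback, as required.

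I do not expect any serious obstacle: the essential content is the identification of the lemma's squares with a distinguished family of identity extension pushouts, and the observation that the free legs of any other degeneracy/free pushout factor through these canonical ones in a way compatible with the generic leg. The only minor care needed is to verify the simplicial identity $s_j\circ(d_\top)^b(d_\bot)^a = (d_\top)^b(d_\bot)^a\circ s_{a+j}$ (ensuring the top square commutes) and the composite formula $(d_\bot)^{a+j}(d_\top)^{k+b-j} = (d_\bot)^j(d_\top)^{k-j}\circ(d_\top)^b(d_\bot)^a$, both of which are routine vertex-chasing.
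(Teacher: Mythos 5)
Your proof is correct and follows essentially the same route as the paper: the lemma's squares are identified as the images of the identity-extension pushouts of $s^0:[1]\to[0]$, and a general degeneracy/free square is shown to be a pullback by stacking it on a hypothesis square so that the outer rectangle is again a hypothesis square, then cancelling via Lemma~\ref{pbk}. The paper only sketches this ("every degeneracy/free square sits in between two of the squares of the lemma"); your write-up supplies the index bookkeeping that the sketch omits.
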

\begin{proof}
  The squares in the lemma are special cases of the degeneracy/free squares.
  On the other hand, every degeneracy/free square sits in between two of the squares
  of the lemma in such a way that the standard pullback argument forces it
  to be a pullback too.  (The proof was essentially done in \ref{onlyfourdiags},
  but note however that while in the case of a decomposition space it is enough
  to check just two squares (namely those with $n=1$), in the present situation
  this is not the case.  The argument used to establish this in the 
  decomposition-space case \ref{onlyfourdiags} exploited pullbacks between
  generic face maps and free maps.)
\end{proof}

We also get, by Remark~\ref{cheaperbonus}:
\begin{cor}
  In a stiff simplicial space, the `bonus pullbacks' of Lemmas~\ref{bonus-pullbacks}
  and \ref{newbonuspullbacks} hold.
\end{cor}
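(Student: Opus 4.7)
The plan is to re-run the proofs of Lemmas~\ref{bonus-pullbacks} and~\ref{newbonuspullbacks}, checking at each step that the only pullback hypothesis used is stiffness, i.e.~degeneracy/free pullbacks, rather than general generic/free pullbacks. This is exactly what Remark~\ref{cheaperbonus} asserts, so the task is purely one of bookkeeping, following precisely the templates already written out in the proofs of those two lemmas.

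For Lemma~\ref{bonus-pullbacks}, I would treat the case $i<j$ first. Post-compose the given square horizontally with $(d_\top)^{n+1-j}$, as in the original proof. The total horizontal composites equal $(d_\top)^{n+2-j}$, a pure free map, so the outer rectangle has degeneracies $s_i$ on the sides and free maps top and bottom; this is a degeneracy/free square, hence a pullback by stiffness. The auxiliary right-hand square has free maps $(d_\top)^{n+1-j}$ on top and bottom and degeneracies on the sides, and is again a degeneracy/free square, a pullback by stiffness. Lemma~\ref{pbk} then yields that the original left-hand square is a pullback. The case $j\leq i$ is obtained by interchanging the roles of $\top$ and $\bot$ throughout.

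For Lemma~\ref{newbonuspullbacks}, I would exploit that $s_j$ is a section of $d_{j+1}$, post-composing horizontally with $d_{j+1}$. The simplicial identity $d_{j+1}s_i=s_id_j$ (for $i<j$) means the right-hand auxiliary square has precisely the form of Lemma~\ref{bonus-pullbacks} with $i<j$, hence is a pullback by the case just handled. The top composite $d_js_{j-1}$ and the bottom composite $d_{j+1}s_j$ are both identities, so the outer rectangle is trivially a pullback. Lemma~\ref{pbk} then gives that the left-hand square is a pullback, as desired.

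There is no real obstacle: both original proofs already reduce the claim to stiffness-type squares, and the only thing to verify is that no step secretly invoked a generic (non-free) face map. The one mild point of care is in Lemma~\ref{newbonuspullbacks}, where one must confirm that the auxiliary right-hand square produced by post-composing with $d_{j+1}$ is indeed an instance of Lemma~\ref{bonus-pullbacks} in the stiff (not merely decomposition-space) generality; but this is immediate from the order in which I would present the two arguments.
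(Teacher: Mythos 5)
Your proposal is correct and is exactly the paper's argument: the paper derives the corollary directly from Remark~\ref{cheaperbonus}, which records that the proofs of Lemmas~\ref{bonus-pullbacks} and \ref{newbonuspullbacks} only ever invoke pullbacks of degeneracy maps against free maps, and you have simply carried out the bookkeeping that the remark leaves implicit. Your uniform treatment of Lemma~\ref{newbonuspullbacks} (subsuming the outer-face case into the bonus-pullback square rather than splitting into cases as the paper's original proof does) is a harmless cosmetic variation.
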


\begin{lemma}
  In a stiff simplicial space $X$, every degeneracy map is
  a pullback of $s_0: X_0 \to X_1$.  In particular, if just $s_0:X_0 \to X_1$
  is mono then all degeneracy maps are mono.
\end{lemma}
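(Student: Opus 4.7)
The plan is to deduce both statements directly from the characterisation of stiffness given in Lemma~\ref{lem:stiff}. That lemma tells us that for a stiff simplicial space $X$ and for every $0\leq i\leq n$, the square
$$\xymatrix{
X_{n} \drpullback \ar[r]^{s_{i}}\ar[d] & X_{n+1} \ar[d]^{{d_\bot\!}^{i}\,{d_\top\!}^{n-i}} \\
X_0 \ar[r]_{s_0} & X_1
}$$
is a pullback. But this says precisely that the degeneracy map $s_i: X_n \to X_{n+1}$ is obtained as the pullback of $s_0: X_0 \to X_1$ along the free map ${d_\bot\!}^{i}\,{d_\top\!}^{n-i} : X_{n+1} \to X_1$, which is the first claim.

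For the second claim, I would invoke the standard fact (valid in $\Grpd$, or indeed in any $\infty$-category with pullbacks) that monomorphisms are stable under pullback: if $s_0$ is a monomorphism, then so is any map obtained as a pullback of $s_0$. Applying this to the square above yields that every $s_i : X_n \to X_{n+1}$ is a monomorphism.

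There is no substantive obstacle here: the argument is essentially a one-line consequence of Lemma~\ref{lem:stiff} together with pullback-stability of monomorphisms. The only thing worth being careful about is the definition of monomorphism in the $\infty$-groupoid setting (as recalled in \ref{def:mono}): a map is mono iff its fibres are $(-1)$-truncated, and this condition is manifestly preserved under pullback since fibres of the pullback are fibres of the original map.
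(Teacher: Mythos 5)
Your proof is correct and is essentially the same as the paper's: the paper simply points to the proof of Lemma~\ref{lem:s0d1}, which exhibits each $s_i$ as the pullback of $s_0$ along a free map (coming from the fact that every codegeneracy in $\Delta$ is a pushout of $s^0:[0]\to[1]$ along a free map), and your appeal to Lemma~\ref{lem:stiff} produces exactly those same pullback squares. The concluding step via pullback-stability of monomorphisms is also the intended one.
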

\begin{proof}
  The proof is contained in that of Lemma~\ref{lem:s0d1}.
\end{proof}
Finally, by the argument employed in \ref{lem:cULFs0d1} we have
\begin{lemma}\label{lem:conss0}
  A simplicial map $f:Y \to X$ between stiff simplicial spaces is
  conservative if and only if it is cartesian on the first degeneracy map
  $$\xymatrix{
     Y_0 \drpullback \ar[r]^{s_0}\ar[d] & Y_1 \ar[d] \\
     X_0 \ar[r]_{s_0} & X_1 .
  }$$
\end{lemma}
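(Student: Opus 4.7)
The forward direction is immediate from the definition of conservative: being cartesian on every degeneracy map is stronger than being cartesian on $s_0:X_0\to X_1$.

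For the converse, I need to show that for each $n\geq 0$ and each $0\leq i\leq n$, the naturality square of $f$ on $s_i:X_n\to X_{n+1}$ is a pullback, assuming it is for $s_0:X_0\to X_1$. The approach is to propagate cartesianness via the characterization of degeneracies established in the preceding lemma: by stiffness, each degeneracy map of $X$ (and of $Y$) fits in a pullback square
$$\xymatrix{ X_n \drpullback \ar[r]^{s_i} \ar[d] & X_{n+1} \ar[d] \\ X_0 \ar[r]_{s_0} & X_1. }$$
This square and its counterpart for $Y$, together with the four naturality squares of $f$, organise into a commutative cube whose top and bottom faces are the two stiffness pullbacks and whose front face is the hypothesised pullback on $s_0$.

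The key step is a double application of the basic pasting lemma~\ref{pbk}. First, paste the (candidate) naturality square on $s_i$ on top of the stiffness pullback in $X$; since the bottom square is a pullback, the naturality square on $s_i$ is a pullback if and only if the pasted outer rectangle is. Second, use commutativity of the cube to rewrite this outer rectangle as the pasting of the stiffness pullback in $Y$ on top of the naturality square on $s_0$. Both of these are pullbacks -- the first by stiffness of $Y$, the second by hypothesis -- so the outer rectangle is a pullback, and chasing back through the first pasting yields the desired conclusion.

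The only delicate point is bookkeeping the cube and matching up the two pastings correctly; no new ingredients beyond Lemma~\ref{pbk} and the stiffness characterization of degeneracies are needed, which is why the authors can dispatch the proof by reference to the analogous argument of~\ref{lem:cULFs0d1}.
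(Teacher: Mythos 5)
Your proof is correct and is essentially the paper's own argument: the paper dispatches this lemma by referring to ``the argument employed in \ref{lem:cULFs0d1}'', which is precisely the cube/double-pasting you describe, combining the stiffness pullbacks of Lemma~\ref{lem:stiff} (every $s_i$ is a pullback of $s_0$) with two applications of Lemma~\ref{pbk}. No gaps; you have simply written out the details the paper leaves implicit.
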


\begin{lemma}
  A stiff simplicial space $X$ is complete if and only if the canonical map from
  the constant simplicial space $X_0$ is conservative.
\end{lemma}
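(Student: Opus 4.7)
\begin{proof*}{Proof plan.}
The plan is to reduce the statement directly to Lemma~\ref{lem:conss0}. First I would identify the canonical map $\pi: \underline{X_0} \to X$ from the constant simplicial space: in degree $n$, the component $\pi_n: X_0 \to X_n$ is the iterated degeneracy $s_0^n$, and in particular $\pi_1 = s_0: X_0 \to X_1$ and $\pi_0 = \id$. The constant simplicial space $\underline{X_0}$ is trivially stiff, since all its structure maps are identities and so all the squares of Lemma~\ref{lem:stiff} are identity squares.

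Next, since both $\underline{X_0}$ and $X$ are stiff, Lemma~\ref{lem:conss0} applies and tells us that $\pi$ is conservative if and only if the first-degeneracy square
$$\xymatrix{
X_0 \drpullback \ar[r]^{\id}\ar[d]_{\id} & X_0 \ar[d]^{s_0} \\
X_0 \ar[r]_{s_0} & X_1
}$$
is a pullback.

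Finally, by the characterisation of monomorphisms of $\infty$-groupoids recalled in~\ref{def:mono} (a map $f$ is mono precisely when its diagonal $\id: A \to A \times_B A$ is an equivalence, i.e.\ when the square above with $f$ in place of $s_0$ is a pullback), this square is a pullback if and only if $s_0: X_0 \to X_1$ is a monomorphism, which is by definition~\ref{complete} the completeness condition on $X$. Combining the two equivalences yields the claim.

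No serious obstacle is expected; the only subtlety is the observation that the mono condition on $s_0$ is exactly the pullback condition appearing in Lemma~\ref{lem:conss0} once the source is the constant simplicial space, so that the completeness condition and conservativity of $\pi$ are literally the same pullback square.
\end{proof*}
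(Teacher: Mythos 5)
Your proof is correct and follows essentially the same line as the paper's: both directions ultimately come down to the observation that the naturality square of $\underline{X_0}\to X$ on $s_0$ is exactly the diagonal square whose being a pullback characterises $s_0:X_0\to X_1$ as a monomorphism. The only real difference is that you route the forward direction through Lemma~\ref{lem:conss0} (after checking that the constant simplicial space is stiff), whereas the paper verifies cartesianness on each degeneracy $s_i$ directly by stacking a trivial pullback on top of the mono square for $s_i$; your version is slightly more economical but relies on the same stacking argument hidden inside Lemma~\ref{lem:conss0}. One small point: since $X$ is a general simplicial space, ``complete'' here means all degeneracy maps are monomorphisms (\ref{completesimplicial}), not just $s_0$ as in \ref{complete}; the bridge from ``$s_0$ mono'' to ``complete'' is the preceding lemma stating that in a stiff simplicial space every degeneracy is a pullback of $s_0$, which you should cite instead of \ref{complete}.
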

\begin{proof}
  Suppose $X$ is complete.  Then any $s_i : X_k \to X_{k+1}$ is mono,
  and hence in the following diagram the bottom square is a pullback:
  $$\xymatrix{
     X_0 \drpullback \ar[r]^=\ar[d]_s & X_0 \ar[d]^s \\
     X_k \drpullback \ar[r]^=\ar[d]_= & X_k \ar[d]^{s_i} \\
     X_k \ar[r]_{s_i} & X_{k+1} .
  }$$
  Hence $X_0 \to X$ is cartesian on  $s_i$.
  Since this is true for any degeneracy map $s_i$, altogether
  $X_0 \to X$ is conservative.
  Conversely, if $X_0 \to X$ is conservative, then in particular
  we have the pullback square
  $$\xymatrix{
     X_0 \drpullback\ar[r]^=\ar[d]_= & X_0 \ar[d]^{s_0} \\
     X_0 \ar[r]_{s_0} & X_1
  }$$
  which means that $s_0 : X_0 \to X_1$ is a monomorphism.
  %
\end{proof}

For complete simplicial spaces, we can characterise stiffness
also in terms of degeneracy:

\begin{prop}\label{halfdecomp}
The following are equivalent for a complete simplicial space $X$
\begin{enumerate}
\item  $X$ is stiff.
\item  Outer face maps $d_\bot,d_\top:X_{n}\to X_{n-1}$ preserve nondegenerate simplices. 
\item  Any nondegenerate simplex is effective. More precisely,
  $$
  \nondeg{X}_n = X_n\setminus {\textstyle\bigcup_{i=0}^n}\Im(s_{i-1}).
  $$
\item  If the $i$th principal edge of $\sigma\in X_n$ is degenerate, 
then $\sigma=s_{i-1}d_{i-1}\sigma  =s_{i-1}d_{i}\sigma$, that is
  $$X_{1\dots101\dots1} = \Im(s_{i-1}:X_{n-1}\to X_n)$$
\item For each word $w\in\{0,a\}^n$ we have
  $$X_w = \Im(s_{j_k-1}\dots s_{j_1-1}:\nondeg X_{n-k}\to X_n) .
  $$
  where $\{j_1<\dots<j_k\}=\{j:w_j=0\}$.
\end{enumerate}
\end{prop}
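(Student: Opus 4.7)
My plan is to prove $(1)\Leftrightarrow(4)$, the cycle $(4)\Rightarrow(5)\Rightarrow(3)\Rightarrow(4)$, and the biconditional $(2)\Leftrightarrow(3)$. Completeness is used throughout: it guarantees that every degeneracy $s_j$ is a monomorphism, so $\Im(s_j)$ is a full subgroupoid. To show $(1)\Leftrightarrow(4)$, apply Lemma~\ref{lem:stiff}: the stiff pullback identifies $s_{i-1}:X_{n-1}\to X_n$ with the pullback of $s_0:X_0\to X_1$ along ${d_\bot}^{i-1}{d_\top}^{n-i}$, which by the defining square \eqref{eq:Xw} is the full subgroupoid $X_{1\cdots101\cdots1}\subseteq X_n$. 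Since $s_{i-1}$ is mono, stiffness is exactly the equality $\Im(s_{i-1})=X_{1\cdots101\cdots1}$ of full subgroupoids, which is condition~(4).

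For $(4)\Rightarrow(5)$, I would induct on the number $k$ of zeros in $w\in\{0,a\}^n$. The inductive step peels off the largest zero position $j_k$: by~(4), any $\sigma\in X_w$ satisfies $\sigma=s_{j_k-1}(d_{j_k}\sigma)$, and a direct inspection of principal edges shows that $d_{j_k}\sigma$ lies in $X_{w''}$, where $w''\in\{0,a\}^{n-1}$ is obtained by deleting the $j_k$th coordinate of $w$ and thus has zeros precisely at $j_1,\dots,j_{k-1}$. For $(5)\Rightarrow(3)$, Lemma~\ref{lem:X1w=sum} gives $X_n=\sum_{w\in\{0,a\}^n} X_w$. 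By~(5), each summand with $w\neq a^n$ is contained in $\Im(s_{j_1-1})\subseteq\bigcup_j\Im(s_j)$; conversely, for each $j$ we have $\Im(s_j)\subseteq X_{1^j01^{n-1-j}}=\sum_{w_{j+1}=0}X_w\subseteq\sum_{w\neq a^n}X_w$. The two sum decompositions of $X_n$ therefore agree, giving $\nondeg X_n=X_n\setminus\bigcup_j\Im(s_j)$.

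The biconditional $(3)\Leftrightarrow(2)$ is direct: outer face maps drop one end principal edge and hence send effective simplices to effective simplices, so by~(3) they preserve nondegeneracy, giving~(2); conversely, iterating outer face maps on a nondegenerate $\sigma\in X_n$ produces each principal edge as a nondegenerate $1$-simplex in $X_a$, making $\sigma$ effective. To close the cycle with $(3)\Rightarrow(4)$, I induct on the number $k$ of degenerate principal edges of $\sigma\in X_n$ (where the $i$th is assumed degenerate). For $k=1$, by~(3) $\sigma=s_j\tau$ for some $j$, and since the only degenerate principal edge of $s_j\tau$ must come from $s_j$ (with $\tau$ forced effective), it sits at position $j+1$, forcing $j=i-1$. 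For $k\geq 2$, pick any decomposition $\sigma=s_j\tau$ from~(3); if $j=i-1$ we are done, otherwise $\tau$ has a degenerate principal edge at position $l'\in\{i,i-1\}$ (according to whether $i<j+1$ or $i>j+1$), so by induction $\tau=s_{l'-1}\tau''$. The simplicial identity $s_js_{l'-1}=s_{l'-1}s_{j-1}$ (if $l'-1<j$) or $s_js_{l'-1}=s_{l'}s_j$ (if $l'-1\geq j$) then rewrites $\sigma=s_js_{l'-1}\tau''$ with $s_{i-1}$ outermost, placing $\sigma\in\Im(s_{i-1})$.

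The main obstacle is the induction $(3)\Rightarrow(4)$: when $\sigma$ has several degenerate principal edges, it admits several distinct decompositions $\sigma=s_j\tau$, and one must extract the specific index $i-1$ outermost. The simplicial identities supply the necessary commutation, but indices shift by $\pm 1$ depending on direction, so the two cases $i<j+1$ and $i>j+1$ must be tracked carefully; everything else reduces to Lemma~\ref{lem:stiff}, Lemma~\ref{lem:X1w=sum}, and definitional unwinding.
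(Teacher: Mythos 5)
Your proof is correct and uses essentially the same ingredients as the paper's: the identification of the stiffness pullback with $\Im(s_{i-1})=X_{1\cdots101\cdots1}$ via Lemma~\ref{lem:stiff} and completeness, the induction on degenerate principal edges with simplicial identities for $(3)\Rightarrow(4)$, the word-sum decomposition of Lemma~\ref{lem:X1w=sum} for $(5)$, and the outer-face/principal-edge observation relating $(2)$ and $(3)$. The only difference is a cosmetic reorganisation of the implication cycle (the paper runs $(1)\Rightarrow(2)\Rightarrow(3)\Rightarrow(4)\Rightarrow(1)$ with $(4)\Leftrightarrow(5)$ attached, while you route $(2)$ through $(3)$ and $(5)$ through the chain $(4)\Rightarrow(5)\Rightarrow(3)$), and your index bookkeeping in $(3)\Rightarrow(4)$ is, if anything, more explicit than the paper's.
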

\begin{proof}
  $(1)\Rightarrow(2)$: Suppose $\sigma \in X_n$ and that $d_\top \sigma$
is degenerate.  Then $d_\top \sigma$ is in the image
of some $s_i : X_{n-2} \to X_{n-1}$, and hence by
(1) already $\sigma$ is in the image of $s_i : X_{n-1} \to X_n$. 
  

$(2)\Rightarrow(3)$: 
The principal edges of a simplex are obtained by applying outer face maps, 
so nondegenerate simplices are also effective.  For the more precise statement,
just note that both subspaces are full, so are determined by the properties
characteristing their objects.

$(3)\Rightarrow(4)$: 
As $\sigma$ is not effective, we have 
$\sigma=s_j\tau$. If $j>i-1$ then the $i$th principal edge is of $\sigma$ 
is also that of $\tau$, so by induction $\tau\in\Im(s_{i-1})$. Therefore 
$\sigma\in\Im(s_{i-1})$ also, and 
$\sigma=s_{i-1}d_{i-1}\sigma  =s_{i-1}d_{i}\sigma$ as required. 
If $j< i-1$ the argument is similar.   


$(4)\Leftrightarrow(1)$: 
To show that $X$ is stiff, by Lemma~\ref{lem:stiff} it 
is enough to check that this is a pullback:
      $$\xymatrix{
	  X_{n} \drpullback 
	  \ar[r]^{s_{i}}\ar[d]&\ar[d]^{{d_\bot\!}^{i}\,{d_\top\!}^{n-i}} X_{n+1}
	  \\ 
	  X_0 \ar[r]_{s_0} & X_1
      }$$
But the pullback is by definition $X_{1\cdots 101\cdots 1} \subset X_{n+1}$.
But by assumption this is canonically identified with the image of $s_i : X_n 
\to X_{n+1}$, establishing the required pullback.


$(4)\Leftrightarrow(5)$: This is clear, using Lemma~\ref{lem:X1w=sum}.
\end{proof}

In summary, an important feature of stiff complete simplicial spaces
is that all information about degeneracy is encoded in the principal edges.  We
exploit this to characterise conservative maps between stiff complete simplicial
spaces:
\begin{prop}\label{stiff-cons}
  For $X$ and $Y$ stiff complete simplicial spaces, and  $f:Y \to X$ a
  simplicial map, the following are equivalent.
  \begin{enumerate}
    \item $f$ is conservative.  
    \item $f$ preserves the word splitting,
    i.e.~for every word $w\in \{0,a\}\upperstar$, $f$
  sends $Y_w$ to $X_w$.  
    \item $f_1$ maps $Y_a$ to $X_a$.
  \end{enumerate}
\end{prop}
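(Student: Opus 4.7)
The plan is to handle the three implications separately, with (3) $\Rightarrow$ (1) carrying the main content.

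First, (1) $\Rightarrow$ (2) is immediate from Corollary \ref{lem:cons-wn}: for every word $w \in \{0,1,a\}^*$ (and in particular $w \in \{0,a\}^*$), that corollary produces a pullback square
$$\xymatrix{Y_n \ar[d]_{f_n} & Y_w \ar[l] \ar[d] \\ X_n & X_w \ar[l]}$$
whenever $f$ is conservative; a fortiori $f_n$ restricts to a map $Y_w \to X_w$. Second, (2) $\Rightarrow$ (3) is trivial by taking $w = a$ of length $1$.

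For (3) $\Rightarrow$ (1), by Lemma \ref{lem:conss0}, since $X$ and $Y$ are stiff it suffices to verify that the naturality square at the single degeneracy $s_0$,
$$\xymatrix{Y_0 \ar[r]^{s_0}\ar[d]_{f_0} & Y_1 \ar[d]^{f_1} \\ X_0 \ar[r]_{s_0} & X_1,}$$
is a pullback. Completeness of $X$ and $Y$ provides sum splittings $Y_1 = Y_0 + Y_a$ and $X_1 = X_0 + X_a$, in which the summand inclusions $X_0 \into X_1$ and $Y_0 \into Y_1$ are the maps $s_0$. Naturality of $s_0$ with respect to $f$ shows that $f_1|_{Y_0}$ factors through $X_0$, while hypothesis (3) says that $f_1|_{Y_a}$ factors through $X_a$. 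Hence $f_1$ respects the binary sum decomposition, splitting as $f_0 + f_a$ for some $f_a : Y_a \to X_a$. By extensivity of the $\infty$-category of $\infty$-groupoids, the pullback of $X_0 \into X_1$ along such a split map $f_1 = f_0 + f_a$ is precisely $Y_0 \into Y_1$, giving the desired pullback square.

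The main (minor) obstacle is the (3) $\Rightarrow$ (1) step: a priori conservativeness demands cartesianness on every degeneracy map, but the stiffness reduction of Lemma \ref{lem:conss0} cuts this down to $s_0$ alone, after which extensivity applied to the word-splitting provided by completeness finishes the argument formally.
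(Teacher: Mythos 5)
Your proof is correct and follows essentially the same route as the paper: (1)$\Rightarrow$(2) via Corollary~\ref{lem:cons-wn}, (2)$\Rightarrow$(3) trivially, and (3)$\Rightarrow$(1) by reducing to the single square at $s_0$ via Lemma~\ref{lem:conss0} and then using the completeness sum splittings $Y_1 = Y_0 + Y_a$, $X_1 = X_0 + X_a$. The only difference is that you spell out the final step (that $f_1$ splits as $f_0 + f_a$ and that extensivity then forces the square to be a pullback), which the paper leaves as ``clearly a pullback''.
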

\begin{proof}
  We already saw (\ref{lem:cons-wn}) that conservative maps preserve the word 
  splitting (independently of $X$ and $Y$ being stiff), which proves 
  $(1)\Rightarrow(2)$.  The implication $(2)\Rightarrow(3)$ is trivial.
  Finally assume that $f_1$ maps $Y_a$ to $X_a$.
  To check that $f$ is conservative, it is enough (by \ref{lem:conss0}) 
  to check that the square
    $$\xymatrix{
     Y_0 \drpullback \ar[r]^{s_0}\ar[d] & Y_1 \ar[d] \\
     X_0 \ar[r]_{s_0} & X_1  
  }$$
  is a pullback.  But since $X$ and $Y$ are complete, this square is just
  $$\xymatrix{
     Y_0 \drpullback \ar[r]^-{s_0}\ar[d] & Y_0 + Y_a \ar[d] \\
     X_0 \ar[r]_-{s_0} & X_0 + X_a  ,
  }$$
  which is clearly a pullback when $f_1$ maps $Y_a$ to $X_a$.
\end{proof}

This proposition can be stated more formally as follows.
For $X$ and $Y$ stiff complete simplicial spaces,
the space of conservative maps $\operatorname{Cons}(Y,X)$ is given
as the pullback
$$\xymatrix@C+2pc{
   \operatorname{Cons}(Y,X) \drpullback \ar[r]\ar@{ >->}[d] & {\displaystyle \prod_{n\in \N} \;
   \prod_{w\in \{0,a\}^n}} \Map(Y_w,X_w) \ar@{ >->}[d] \\
   \operatorname{Nat}(Y,X) \ar[r] & {\displaystyle \prod_{n\in \N}} \;\Map(Y_n,X_n) .
}$$
The vertical arrow on the right is given as follows. We have
$$
\Map(Y_n,X_n)\; =\; \Map(\!\underset{w\in\{0,a\}^n}{\textstyle\sum}\!\! Y_w,\! \underset{v\in\{0,a\}^n}{\textstyle\sum}\!\! X_v )
\;= \!\!\!\prod_{w\in \{0,a\}^n} \!\!\!\Map(Y_w,\!\underset{v\in\{0,a\}^n}{\textstyle\sum} \!\!X_v ).
$$
For fixed $w\in 
\{0,a\}^n$, the space $\Map(Y_w,\sum_{v\in\{0,a\}^n} X_v )$
has a distinguished subobject, namely consisting of those maps that
map into $X_w$ for that same word $w$.

\def\inputfile{split.tex}

\subsection{Split decomposition spaces}

\label{sec:split}

\begin{blanko}{Split simplicial spaces.}
  In a complete simplicial space $X$, by definition all
  degeneracy maps are monomorphisms, so in particular it makes sense to
  talk about nondegenerate simplices in degree $n$: these form the full
  subgroupoid of $X_n$ given as the complement of the degeneracy maps $s_i :
  X_{n-1} \to X_n$.  A complete simplicial space, is called {\em split} if the
  face maps preserve nondegenerate simplices.
\end{blanko}

By Proposition~\ref{halfdecomp}, a split simplicial space is stiff, 
so the results from the previous subsection are available for 
split simplicial spaces.  In particular, nondegeneracy can be measured on 
principal edges, and we have
\begin{cor}\label{prop:word=degen}
  If $X$ is a split simplicial space, then the sum splitting
  $$
  X_n\;\; = \sum_{w\in\{0,a\}^n} X_w
  $$
  is realised by the degeneracy maps.
\end{cor}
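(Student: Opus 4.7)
The plan is to assemble this statement directly from the two main ingredients already established: Lemma~\ref{lem:X1w=sum} (which gives the sum splitting at the level of subobjects) and Proposition~\ref{halfdecomp} (which, in the stiff case, identifies each piece $X_w$ as the image of an iterated degeneracy). The only new content is to upgrade the word "image" to "equivalence", which will be automatic once we observe that the relevant degeneracies are monomorphisms.

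First, I would note that any split simplicial space is automatically stiff. Indeed, the split condition requires all face maps to preserve nondegenerate simplices, which in particular covers the outer face maps; hence condition~(2) of Proposition~\ref{halfdecomp} is satisfied, and the equivalence there gives stiffness. This is important because all the machinery of Subsection~\ref{sec:stiff} is now available.

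Next, for a given word $w\in\{0,a\}^n$ with set of zero-positions $\{j_1<\dots<j_k\}$, Proposition~\ref{halfdecomp}(5) provides the identification
$$X_w \;=\; \Im\bigl(s_{j_k-1}\cdots s_{j_1-1}: \nondeg X_{n-k}\to X_n\bigr).$$
By completeness, $s_0: X_0\to X_1$ is a monomorphism, and stiffness then forces every degeneracy map $s_i$ to be a pullback of $s_0$ (Lemma~\ref{lem:stiff}), hence also a monomorphism. A composite of monomorphisms is a monomorphism, so the composite degeneracy above is itself mono, and therefore restricts to an equivalence
$$\nondeg X_{n-k}\;\isopil\; X_w$$
onto its essential image.

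Combining this with the sum decomposition $X_n = \sum_{w\in\{0,a\}^n} X_w$ from Lemma~\ref{lem:X1w=sum} yields
$$X_n \;\simeq\; \sum_{w\in\{0,a\}^n} \nondeg X_{n-k(w)},$$
with the equivalence on each summand given by the appropriate iterated degeneracy. This is precisely the statement that the word splitting is realised by the degeneracy maps. There is no real obstacle here; the corollary is a packaging statement rather than a new result, and the only subtlety is the passage from "image" to "equivalence", which is handled cleanly by completeness.
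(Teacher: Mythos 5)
Your argument is correct and is exactly the assembly the paper intends: split implies stiff via Proposition~\ref{halfdecomp}(2), then item (5) of that proposition identifies each $X_w$ as the image of an iterated degeneracy, which is an equivalence onto $X_w$ since degeneracies are monomorphisms by completeness, and Lemma~\ref{lem:X1w=sum} supplies the sum decomposition. (The only quibble is a citation slip: the fact that all degeneracies are mono is immediate from the definition of completeness for simplicial spaces in~\ref{completesimplicial}, or from the unnumbered lemma following Lemma~\ref{lem:stiff}, rather than from Lemma~\ref{lem:stiff} itself.)
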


\begin{blanko}{Non-example.}\label{ex:rsi}
  The strict nerve of any category with a non-trivial section-retraction pair of
  arrows, $r \circ s = \id$, constitutes an example of a complete decomposition
  space which is not split. Indeed, the nondegenerate simplices are the
  chains of composable non-identity arrows, but we have $d_1(s,r)=\id$.
  
  In this way, splitness can be seen as an abstraction of the condition on a
  $1$-category that its identity arrows be indecomposable.  
  (Corollary~\ref{prop:tight=>split} further on
  generalises the classical fact that in a \M category, the identity arrows are
  indecomposable (Leroux~\cite{Leroux:1975}).)
%
%
%
\end{blanko}

\begin{blanko}{Semi-decomposition spaces.}
  Let $\Deltainj \subset \Delta$ denote the subcategory
  consisting of all the objects and only the injective maps.
  A {\em semi-simplicial} space is an object in the functor $\infty$-category
  $\Fun(\Deltainj\op, \Grpd)$.
  A {\em semi-decomposition} space is a semi-simplicial space preserving
  generic-free pullbacks in $\Deltainj\op$.  Since there
  are no degeneracy maps in $\Deltainj$, this means that we
  are concerned only with pullbacks between generic face maps and free face 
  maps.
  
  Every simplicial space has an underlying semi-simplicial space obtained by
  restriction along $\Deltainj \subset \Delta$.  
  The forgetful functor $\Fun(\Delta\op, \Grpd) \to
  \Fun(\Deltainj\op, \Grpd)$ has a left adjoint given by left Kan 
  extension along $\Deltainj \subset \Delta$:
  $$
  \xymatrix {
  \Deltainj\op \ar[d] \ar[r]^Z & \Grpd \\
  \Delta\op \ar@{..>}[ru]_{\overline Z} &
  }
  $$
  The left Kan extension has the following explicit description:
  \begin{align*}
    \overline Z_0 = & Z_0 \\
    \overline Z_1 = & Z_1 + Z_0\\
    \overline Z_2 = & Z_2 + Z_1 +Z_1 +Z_0\\
    \vdots & \\
    \overline Z_k = & \sum_{w\in\{0,a\}^k} Z_{|w|_a}
  \end{align*}
  For $w\in\{0,a\}^k$ and $\sigma\in Z_{|w|_a}$ the corresponding element of $\overline Z_k$ is denoted
$$
s_{i_r}\dots s_{i_2}s_{i_1}\sigma
$$
  where $r=k-|w|_a$ and $i_1<i_2<\dots<i_r$ with $w_{i_j}=0$.
  The faces and degeneracies of such elements are defined in the obvious way.
\end{blanko}

\begin{prop}
  A a simplicial space is split if and only if it is the
  left Kan extension of a semi-simplicial space.
\end{prop}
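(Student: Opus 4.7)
The plan is to combine Corollary~\ref{prop:word=degen} (which, for any split $X$, gives the sum splitting $X_n = \sum_{w\in\{0,a\}^n} X_w$) with Proposition~\ref{halfdecomp}(5) (which, under stiffness, identifies each $X_w$ with $\nondeg X_{|w|_a}$ via iterated degeneracies). Since split implies stiff, both apply to split $X$, and together they match the explicit Kan extension formula $\overline Z_n = \sum_{w\in\{0,a\}^n} Z_{|w|_a}$ term-by-term. The two directions then fall into place.

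For the ``if'' direction, suppose $X = \overline Z$ for some $Z:\Deltainj\op \to \Grpd$. From $\overline Z_0 = Z_0$ and $\overline Z_1 = Z_1 + Z_0$ one reads off that $s_0$ is the inclusion of the second summand, hence a monomorphism, so $X$ is complete. More generally, the top summand of $\overline Z_k$ (indexed by $w=a\cdots a$) is the complement of the images of the degeneracy operators, so $\nondeg X_k = Z_k$. The face maps of $\overline Z$ restrict on this top summand to the semi-simplicial face maps of $Z$, so they preserve nondegeneracy, and $X$ is split.

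For the ``only if'' direction, suppose $X$ is split. The split condition means the face maps restrict to $\nondeg X_\bullet$, defining a sub-semi-simplicial space $\un X \hookrightarrow X|_{\Deltainj}$ with $\un X_k = \nondeg X_k$. By the left Kan extension adjunction, this inclusion corresponds to a canonical simplicial map $\alpha:\overline{\un X} \to X$. Evaluated in degree $n$, the three identifications $(\overline{\un X})_n = \sum_w \nondeg X_{|w|_a}$ (Kan extension formula), $\nondeg X_{|w|_a} \simeq X_w$ (Proposition~\ref{halfdecomp}(5)), and $\sum_w X_w \simeq X_n$ (Corollary~\ref{prop:word=degen}) combine to show that $\alpha_n$ is an equivalence for every $n$, so $\alpha$ itself is an equivalence. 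The main thing to double-check is that the summand-by-summand matching is implemented by the universally-defined $\alpha$ rather than by ad hoc identifications; this is automatic from the standard colimit description of left Kan extension along $\Deltainj\hookrightarrow\Delta$, where a formal copy of $Z_j$ is adjoined for each surjection $[n]\twoheadrightarrow[j]$ --- precisely the same indexing as the degeneracy-by-word splitting on $X$. Beyond this bookkeeping no real obstacle arises.
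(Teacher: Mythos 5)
Your proof is correct and follows essentially the same route as the paper's: both directions rest on the explicit formula $\overline Z_k = \sum_{w\in\{0,a\}^k} Z_{|w|_a}$ together with the splitting $X_n = \sum_w X_w$ and the identification $X_w \simeq \nondeg X_{|w|_a}$ from stiffness. The only difference is that you spell out what the paper leaves as "it follows readily," namely that the comparison is implemented by the canonical map $\overline{\nondeg X}\to X$ coming from the Kan extension adjunction — a worthwhile clarification, but not a different argument.
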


\begin{proof}
  Given $Z: \Deltainj\op\to\Grpd$, it is clear from the construction
  that the new degeneracy maps in $\overline Z$ are monomorphisms.  Hence 
  $\overline Z$ is complete.
  On the other hand, to say that $\sigma\in \overline Z_n$ is nondegenerate
  is precisely to say that it belongs to the original component $Z_n$, and
  the face maps here are the original face maps, hence map $\sigma$ into
  $Z_{n-1}$ which is precisely the nondegenerate component of $\overline 
  Z_{n-1}$.  Hence $\overline Z$ is split.

  For the other implication, given a split simplicial space $X$, we know that
  nondegenerate is the same thing as effective (\ref{halfdecomp}),
  so we have a sum splitting
  $$
  X_n \;\;= \sum_{w\in \{0,a\}^n} X_w .
  $$
  Now by assumption the face maps restrict to the nondegenerate simplices
  to give a semi-simplicial space $\nondeg X : \Deltainj\op \to \Grpd$.
  It is now clear from the explicit description of the left Kan extension
  that $\overline{(\nondeg X_n)} =X_n$, from where it follows readily that
  $X$ is the left Kan extension of $\nondeg X$.
\end{proof}

\begin{prop}
  A simplicial space is a split decomposition space if and only if it is the
  left Kan extension of a semi-decomposition space.
\end{prop}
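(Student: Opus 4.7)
The strategy is to leverage the preceding proposition, which already establishes a correspondence via left Kan extension between semi-simplicial spaces $Z$ and split simplicial spaces $X = \overline Z$, with quasi-inverse $X \mapsto \nondeg X$. What remains is to show that, under this correspondence, $X$ being a decomposition space is equivalent to $Z$ being a semi-decomposition space.

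The forward direction should be immediate. If $X$ is a split decomposition space, then $X$ in particular sends each generic-free pushout in $\Deltainj \subset \Delta$ (one involving only face maps) to a pullback in $\Grpd$. Since $X$ is split, each face map preserves nondegenerate simplices, so $\nondeg X_n \into X_n$ is a full sub-$\infty$-groupoid closed under the face maps appearing in the square. Because full subgroupoid inclusions are monomorphisms, the pullback of such a square restricts to a pullback between the $\nondeg X_\bullet$, establishing the semi-decomposition space axiom for $Z = \nondeg X$.

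For the backward direction, let $Z$ be a semi-decomposition space and $X=\overline Z$ its left Kan extension. By the previous proposition, $X$ is split, hence stiff (Proposition~\ref{halfdecomp}). By Proposition~\ref{onlyfourdiags}, to show $X$ is a decomposition space it suffices to check four families of pullback squares: the two degeneracy-related squares involving $s_0,s_1,d_\bot,d_\top$, and for each $n \geq 2$ two face-only squares built from inner and outer face maps. The degeneracy-related squares hold automatically because $X$ is stiff. For the face-only squares, we exploit the word decomposition $X_m = \sum_{w \in \{0,a\}^m} Z_{|w|_a}$ from Corollary~\ref{prop:word=degen}. A face map $d_j : X_m \to X_{m-1}$ respects this splitting: it identifies the summand $X_{w'0w''}$ (with $0$ in position $j$) isomorphically with $X_{w'w''}$, and it sends $X_{w'aw''}$ (with $a$ in position $j$) to $X_{w'w''}$ via the corresponding face map of $Z$.

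Consequently, each face-only square decomposes as a sum, indexed by compatible pairs of words labelling the four corners, of smaller squares. The summands in which the face being contracted corresponds to a $0$ in the word are trivially pullbacks (the relevant leg is an identity between components), while the summands in which the contracted face is an $a$ reproduce exactly a generic-free pullback square for $Z$, which holds by the semi-decomposition space axiom. Since sums in $\Grpd$ commute with pullbacks (extensivity of $\Grpd$), the full square is a pullback.

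The main obstacle is this combinatorial bookkeeping in the backward direction: spelling out the action of face maps on the word decomposition and matching the resulting summands to either trivial pullbacks or semi-decomposition axioms for $Z$. The idea is routine once the word decomposition is invoked, but it requires care to make precise which pullback square of $Z$ each nontrivial summand corresponds to.
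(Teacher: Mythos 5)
Your proof is correct and follows essentially the same route as the paper's, which simply asserts that the four squares of Proposition~\ref{onlyfourdiags} are ``essentially from $Z$ with degenerate stuff added'' and that the two degeneracy squares are easy because the degeneracy maps are sum inclusions; your word-splitting bookkeeping for the face-only squares and your appeal to stiffness (via split $\Rightarrow$ stiff) for the degeneracy squares are just more explicit versions of this. One minor point on the forward direction: closure of the $\nondeg X_\bullet$ under the face maps together with monomorphy of the inclusions does not by itself make the restricted square a pullback --- you also need that a simplex of $X_p$ both of whose images are effective is itself effective, which holds because every principal edge of $\sigma$ occurs among the principal edges of its two images (the middle ones via the free leg, the outer ones via the generic leg).
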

\begin{proof}
  It is clear that if $X$ is a split decomposition space then $\nondeg X$
  is a semi-decomposition space.  Conversely, if $Z$ is a semi-decomposition
  space, then one can check by inspection that $\overline Z$ satisfies the
  four pullback conditions in Proposition~\ref{onlyfourdiags}: two of these
  diagrams concern only face maps, and they are essentially from $Z$, with
  degenerate stuff added.  The two diagrams involving degeneracy maps are
  easily seen to be pullbacks since the degeneracy maps are sum inclusions.
\end{proof}


\begin{thm}\label{thm:semisimpl=splitcons}
  The left adjoint from before, 
  $\Fun(\Deltainj\op,\Grpd) \to \Fun(\Delta\op,\Grpd)$,
  induces an equivalence of $\infty$-categories
  $$
\Fun(\Deltainj\op,\Grpd) \simeq \kat{Split}^{\mathrm cons},
$$
the $\infty$-category of split simplicial spaces and conservative maps.
\end{thm}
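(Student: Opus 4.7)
The plan is to exhibit the stated equivalence as an inverse equivalence of $\infty$-categories. Write $L \colon \Fun(\Deltainj\op, \Grpd) \to \Fun(\Delta\op, \Grpd)$, $Z \mapsto \overline Z$, for the left Kan extension functor. The preceding proposition shows that $L$ lands in split simplicial spaces, and that every split $X$ is of the form $\overline{(\nondeg X)}$. I first check that $L$ produces conservative maps: a semi-simplicial morphism $f \colon Z \to W$ induces $\overline f \colon \overline Z \to \overline W$ which is defined summand-wise on $(\overline Z)_n = \sum_{w\in\{0,a\}^n} Z_{|w|_a}$ as $f_{|w|_a}$, so $\overline f$ preserves the word splitting and is therefore conservative by Proposition~\ref{stiff-cons}. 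Hence $L$ factors through $\kat{Split}^{\mathrm{cons}}$.

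In the opposite direction, define $R \colon \kat{Split}^{\mathrm{cons}} \to \Fun(\Deltainj\op, \Grpd)$ by $X \mapsto \nondeg X$. This is well-defined on objects because $X$ split means that the face maps restrict to the nondegenerate parts. On morphisms, a conservative $g \colon X \to Y$ preserves the word splitting by Proposition~\ref{stiff-cons}, so in particular restricts to maps $\nondeg X_n \to \nondeg Y_n$; these components commute with face maps in $\Deltainj$ by the naturality of $g$ together with splitness. Thus $R$ is a well-defined functor.

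It remains to exhibit natural equivalences $R L \simeq \id$ and $L R \simeq \id$. The former is immediate from the explicit formula for $L$: the nondegenerate summand of $(\overline Z)_n$ is exactly the $aa\cdots a$-summand, canonically identified with $Z_n$, with face maps inherited from $Z$. The latter is precisely the content of the preceding proposition, $\overline{(\nondeg X)} \simeq X$ naturally in the split simplicial space $X$. The main obstacle is to ensure that these constructions upgrade from the homotopy-category level to full $\infty$-category functors with equivalent mapping spaces; this is handled by the pullback description of $\operatorname{Cons}(Y,X)$ displayed at the end of Section~\ref{sec:stiff}, which characterises conservative maps precisely as those compatible with the word-indexed sum decomposition in each degree. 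Under this characterisation, a conservative simplicial map between split simplicial spaces amounts to a family of component maps (one per word), and the restriction of each such to the $aa\cdots a$ component assembles naturally into a semi-simplicial map between the nondegenerate parts, delivering the required equivalence of $\infty$-categories.
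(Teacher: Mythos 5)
Your overall architecture matches the paper's: essential surjectivity comes from the preceding proposition ($X \simeq \overline{(\nondeg X)}$ for split $X$), and the real content is the equivalence of mapping spaces $\operatorname{Cons}(Y,X)\simeq\Nat(\nondeg Y,\nondeg X)$, which you correctly identify as the ``main obstacle'' and for which you correctly point at the pullback description of $\operatorname{Cons}(Y,X)$ from the end of Section~\ref{sec:stiff}. The problem is that you then only describe the \emph{forward} map: ``the restriction of each component to the $aa\cdots a$ summand assembles into a semi-simplicial map'' gives a comparison map $\operatorname{Cons}(Y,X)\to\Nat(\nondeg Y,\nondeg X)$, but nothing in your argument shows this map is an equivalence. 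A conservative map between split simplicial spaces carries, for each word $w$, a component $Y_w\to X_w$, \emph{plus} naturality constraints with respect to all degeneracy maps; you must show that all of this data is contractibly determined by the $aa\cdots a$ components and their compatibility with face maps alone. Asserting that the family ``assembles naturally'' is exactly the claim to be proved, not a proof of it.

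The paper closes this gap by writing both mapping spaces as limits and then eliminating, one by one, the vertices of the limit diagram indexed by degeneracy maps: for a degeneracy $s_i$ and a word $v$, the two legs into the vertex $\Map(Y_{|v|},X_{|v|+1})$ --- precomposition with $s_i\colon Y_{|v|}\to Y_{|v|+1}$ applied to the $(0v)$-component, and postcomposition with $s_i\colon X_{|v|}\to X_{|v|+1}$ applied to the $v$-component --- are both monomorphisms with the \emph{same} essential image $\Map(Y_v,X_{0v})$, so the limit factors through the diagonal and that vertex imposes no further condition. After all degeneracy vertices are removed, the remaining diagram is the one computing $\Nat(\nondeg Y,\nondeg X)$. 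Some argument of this kind (or an equivalent uniqueness-of-extension statement) is needed to complete your proof; without it the decisive step is missing. A secondary, smaller issue: establishing an equivalence of $\infty$-categories via explicit unit and counit equivalences $RL\simeq\id$ and $LR\simeq\id$ requires these to be coherently natural, which is precisely why the paper instead proves full faithfulness on mapping spaces directly.
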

\begin{proof}
  Let $X$ and $Y$ be split simplicial spaces, then $\nondeg X$ and $\nondeg Y$
  are semi-simplicial spaces whose left Kan extensions are $X$ and $Y$ again.
  The claim is that
  $$
  \operatorname{Cons}(Y,X) \simeq \Nat(\nondeg Y, \nondeg X).
  $$
  Intuitively, the reason this is true can be seen in the first  square as in
  the proof of Lemma~\ref{stiff-cons}: to give a pullback square
    $$\xymatrix{
     Y_0 \drpullback \ar[r]^-{s_0}\ar[d] & Y_0 + Y_a \ar[d] \\
     X_0 \ar[r]_-{s_0} & X_0 + X_a  ,
  }$$
  amounts to giving $Y_0 \to X_0$ and $Y_a\to X_a$ (and of course, in both cases
  this data is required to be natural in face maps), that is to give
  a natural transformation $\nondeg Y 
  \to \nondeg X$.  To formalise this idea, note first that $\Nat(\nondeg Y ,
  \nondeg X)$ can be described as a limit
  $$
  \Nat(\nondeg Y ,\nondeg X)  \longrightarrow \prod_{n\in \N} \Map(\nondeg Y_n, 
  \nondeg X_n) \to \ldots
  $$
  where the rest of the diagram contains vertices indexed by all the face maps,
  expressing naturality.  Similarly
   $\Nat(Y,X)$ is given as a limit
  $$
  \Nat( Y , X)  \longrightarrow \prod_{n\in \N} \Map( Y_n, 
   X_n) \to \ldots
  $$
  where this time the rest of the diagram furthermore
  contains vertices corresponding to degeneracy maps.  The full subspace of 
  conservative maps is given instead as
  $$
  \Cons( Y , X)  \longrightarrow \prod_{w\in \{0,a\}\upperstar} \Map( 
  Y_w, 
   X_w) \to \ldots
  $$
  as explained in connection with Lemma~\ref{stiff-cons}.  Now for each 
  degeneracy map $s_i : X_n \to X_{n+1}$, there is a vertex in the diagram. 
  For ease of notation, let us consider $s_0 : X_n \to 
  X_{n+1}$.  The corresponding vertex sits in the limit diagram as follows:
  for each word $v\in \{0,a\}^n$, we have
  $$\xymatrix{
     {\displaystyle\prod_{w\in \{0,a\}\upperstar} \Map(Y_w,X_w)  }
     \ar[r]^-{\text{proj}}\ar[d]_-{\text{proj}} & \Map(Y_{0v}, X_{0v}) 
     \ar[d]^{\text{pre }s_0} \\
     \Map(Y_v,X_v) \ar[r]_{\text{post }s_0} & \Map(Y_n,X_{n+1}) .     
  }$$
  Now both the pre and post composition maps are monomorphisms with essential
  image $\Map(Y_v, X_{0v})$, so the two projections coincide, which is to say
  that the limit factors through the corresponding diagonal.  Applying this
  argument for every degeneracy map $s_i : X_n \to X_{n+1}$, and for all words,
  we conclude that the limit factors through the product indexed only over the
  words without degeneracies,
  $$
  \prod_{n\in \N} \Map(\nondeg Y_n, \nondeg X_n) .
  $$
  Having thus eliminated all the vertices of the limit diagram that corresponded
  to degeneracy maps, the remaining diagram has precisely the shape of the 
  diagram
  computing $\Nat(\nondeg Y ,\nondeg X)$, and we have already seen that the
  `starting vertex' is the same, $\prod_{n\in \N} \Map(\nondeg Y_n, \nondeg 
  X_n)$.  For the remaining vertices, those corresponding to face maps, 
  it is readily seen that in each case the space is that of the $\Nat(\nondeg 
  Y, \nondeg X)$ diagram, modulo some constant factors that do not play any role in
  the limit calculation.  In conclusion, the diagram calculating $\Cons( Y , X)$
  as a limit is naturally identified with the diagram calculating 
  $\Nat(\nondeg Y, \nondeg X)$ as a limit.
\end{proof}

\begin{prop}\label{prop:split=semi}
  This equivalence restricts to an equivalence between semi-decomposition spaces
  and all maps and split decomposition spaces and conservative maps, and it 
  restricts further to an equivalence between semi-decomposition spaces and
  ULF maps and split decomposition spaces and cULF maps.
\end{prop}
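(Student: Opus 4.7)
The plan is to deduce both restricted equivalences by intersecting the equivalence of Theorem~\ref{thm:semisimpl=splitcons} with appropriate full subcategories on each side. By the proposition immediately preceding Theorem~\ref{thm:semisimpl=splitcons}, left Kan extension sends semi-decomposition spaces to split decomposition spaces, and conversely $X \mapsto \nondeg X$ sends split decomposition spaces to semi-decomposition spaces. Hence the existing equivalence already restricts to a bijection on objects between semi-decomposition spaces and split decomposition spaces, which yields the first claim immediately.

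For the second claim, the work lies in the morphisms. I must show that, under the correspondence of Theorem~\ref{thm:semisimpl=splitcons}, a conservative map $f : Y \to X$ between split decomposition spaces is cULF if and only if the corresponding map $\nondeg f : \nondeg Y \to \nondeg X$ of semi-decomposition spaces is ULF (that is, cartesian on every inner face map of $\Deltainj$). The conservative part of cULF is already guaranteed, so the content reduces to matching up the cartesianness condition on inner face maps.

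To do so, I would use the word decomposition $X_n = \sum_{w\in\{0,a\}^n} X_w$ (Corollary~\ref{prop:word=degen}), and its analogue for $Y$. Since $f$ is conservative, Proposition~\ref{stiff-cons} says $f$ respects this sum decomposition, so the naturality square for any inner face map $d_i : X_n \to X_{n-1}$ splits as a sum of squares indexed by words. The square indexed by $w = a^n$ reproduces precisely the naturality square for $\nondeg f$ with respect to the corresponding inner face map in $\Deltainj$, so cartesianness there is equivalent to ULF-ness of $\nondeg f$. For the other words $w$, the elements of $X_w$ are of the form $s_{j_r}\cdots s_{j_1}\sigma$ with $\sigma \in \nondeg X_m$ by Corollary~\ref{prop:word=degen}, and $d_i$ acts on such expressions via the simplicial identities, so cartesianness on these pieces reduces to cartesianness on the effective piece together with conservativeness of $f$.

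The main obstacle is this last piece of bookkeeping: an inner face map $d_i$ can interact with the word decomposition in several ways (collapsing two principal edges, acting on a degenerate slot, or acting on an effective slot), and each case must be handled via the simplicial identities. Once this is done uniformly, cartesianness of $f$ on every inner face map is equivalent to cartesianness of $\nondeg f$ on every inner face map in $\Deltainj$, and the second claim follows.
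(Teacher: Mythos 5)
Your argument is correct; note that the paper itself states this proposition without proof, so there is nothing to compare it against directly --- your write-up supplies the argument the paper leaves implicit. The first claim is indeed immediate from the two propositions preceding Theorem~\ref{thm:semisimpl=splitcons}, since the subcategories involved are full. For the second claim your strategy (split the naturality square for an inner face map along the word decomposition, which is legitimate because sums in $\Grpd$ are disjoint and universal and because a conservative map preserves the word pieces by Proposition~\ref{stiff-cons}) works, but the case analysis you flag as the main obstacle can be avoided entirely: by Lemma~\ref{lem:cULFs0d1}, a conservative map between split decomposition spaces is cULF as soon as it is cartesian on the single square $d_1\colon X_2\to X_1$, and the same reduction holds on the semi-simplicial side because the pushouts of Corollary~\ref{d1s0Delta} exhibiting every inner coface map as a pushout of $d^1\colon[1]\to[2]$ along a free map involve only injective maps and hence live in $\Deltainj$. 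This leaves only the four words of length $2$: the piece $w=aa$ is, after restricting the target along the pullback $Y_a=Y_1\times_{X_1}X_a$ supplied by conservativeness, exactly the naturality square of $\nondeg f$ on $d_1$; the pieces $a0$, $0a$, $00$ are identified via $s_1$, $s_0$, $s_0s_0$ with the inclusion squares $Y_a\subset Y_1$, $Y_a\subset Y_1$, $Y_0\xrightarrow{s_0}Y_1$, all pullbacks by conservativeness. One small point worth making explicit in your version: the square you index by $w=a^n$ has target $Y_{n-1}$, not $\nondeg Y_{n-1}$, so you need one further application of Corollary~\ref{lem:cons-wn} to cut the target down before it literally becomes the naturality square of $\nondeg f$; this is covered by your appeal to conservativeness but deserves a sentence.
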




\begin{blanko}{Dyckerhoff--Kapranov $2$-Segal semi-simplicial spaces.}
  Dyckerhoff and Kapranov's notion of $2$-Segal space 
  \cite{Dyckerhoff-Kapranov:1212.3563} does not refer to 
  degeneracy maps at all, and can be formulated already for
  semi-simplicial spaces:
  a $2$-Segal space is precisely a simplicial space whose underlying
  semi-simplicial space is a semi-decomposition space.
  We get the following corollary to the results above.
\end{blanko}

\begin{cor}
  Every split decomposition space is the left 
  Kan extension of a $2$-Segal semi-simplicial space.
\end{cor}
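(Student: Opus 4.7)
The plan is to deduce this corollary almost directly from the unnamed proposition stated immediately above (``A simplicial space is a split decomposition space if and only if it is the left Kan extension of a semi-decomposition space''), combined with the identification, discussed in the paragraph preceding the corollary, between semi-decomposition spaces and $2$-Segal semi-simplicial spaces.

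More precisely, I would proceed as follows. Given a split decomposition space $X$, I first invoke the equivalence of Theorem~\ref{thm:semisimpl=splitcons} to write $X \simeq \overline{\nondeg X}$, where $\nondeg X : \Deltainj\op \to \Grpd$ is the underlying semi-simplicial space of nondegenerate simplices (well-defined because $X$ is split, so that face maps preserve nondegenerate simplices, cf.~Proposition~\ref{halfdecomp}). By the proposition just preceding the corollary, the assumption that $X$ is moreover a decomposition space forces $\nondeg X$ to be a semi-decomposition space, i.e.~to send generic-free pullbacks in $\Deltainj\op$ to pullbacks of $\infty$-groupoids.

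It then remains to identify semi-decomposition spaces with $2$-Segal semi-simplicial spaces in the sense of Dyckerhoff--Kapranov. As recalled in Remark~\ref{DK}, the $2$-Segal condition is formulated entirely in terms of triangulations of convex polygons and makes no reference to degeneracies, so it makes perfect sense at the semi-simplicial level; and as noted in the paragraph preceding the corollary, a simplicial space is $2$-Segal precisely when its underlying semi-simplicial space is a semi-decomposition space. Thus a semi-simplicial space is $2$-Segal if and only if it is a semi-decomposition space, and in particular $\nondeg X$ is a $2$-Segal semi-simplicial space.

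There is essentially no obstacle to this argument: once the preceding proposition is in hand, the corollary is a matter of translating between two equivalent formulations of the $2$-Segal/semi-decomposition property at the semi-simplicial level. The only point requiring any care is to be explicit that the pullback criterion of Dyckerhoff--Kapranov's Proposition~2.3.2 (which is what makes the two notions coincide in the simplicial case) transfers verbatim to $\Deltainj$, since the pullbacks involved are generic-free and so do not involve degeneracy maps.
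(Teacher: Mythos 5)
Your proposal is correct and follows exactly the route the paper intends: the corollary is stated as an immediate consequence of the preceding proposition (split decomposition space $=$ left Kan extension of a semi-decomposition space) together with the identification, given in the remark just before, of semi-decomposition spaces with $2$-Segal semi-simplicial spaces. The paper offers no further proof, so your spelling-out of the two ingredients matches its argument.
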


\def\inputfile{length.tex}

\subsection{The length filtration}
\label{sec:length}

The \emph{long edge} of a simplex $\sigma\in X_n$ in a simplicial
space is the element $g(\sigma)\in X_1$, where $g:X_n\to X_1$ is the unique
generic map.


\begin{blanko}{Length.}\label{length}
  Let $a\in X_1$ be an edge in a complete decomposition space $X$.
  The {\em length} of $a$ is defined to be the biggest dimension
  of an effective
  simplex with long edge $a$:
  $$
  \ell(a) := \sup \{ \dim\sigma \mid \sigma\in \nondeg X, g(\sigma)= a \} ,
  $$
  where as usual $g: X_r \to X_1$ denotes the unique generic map.
  More formally: the length is the greatest $r$ such that the pullback 
  $$\xymatrix{
     (\nondeg X_r)_a \drpullback \ar[r]\ar[d] & \nondeg X_r \ar[d]^g \\
     1 \ar[r]_{\name a} & X_1
  }$$
  is nonempty (or $\infty$ if there is no such greatest $r$).
  Length zero can happen only for degenerate edges.
\end{blanko}


\begin{blanko}{Decomposition spaces of locally finite length.}
  A complete decomposition space $X$ is said to have {\em locally finite length}
  when every edge $a\in X_1$
  has finite length.  That is, the pullback
  $$\xymatrix{
     (\nondeg X_r)_a \drpullback \ar[r]\ar[d] & \nondeg X_r \ar[d]^g \\
     1 \ar[r]_{\name a} & X_1
  }$$
  is empty for $r \gg 0$.
  We shall also use the word {\em tight} as synonym for `of locally finite 
  length',
  to avoid confusion with the notion of `locally finite' introduced in 
  Section~\ref{sec:findec}.
\end{blanko}

\begin{eks}
  For posets, the notion of locally finite length coincides with the classical
  notion (see for example Stern~\cite{Stern:1999}), namely that for every $x\leq
  y$, there is an upper bound on the possible lengths of chains from $x$ to $y$.
  When $X$ is the strict (resp.~fat) nerve of a category, locally finite length means that
  for each arrow $a$, there is an upper bound on 
  the length of factorisations of $a$ containing no identity (resp.~invertible)
  arrows.

  A paradigmatic non-example is given by the strict nerve of a category
  containing an idempotent non-identity endo-arrow, $e= e\circ e$: clearly $e$ admits
  arbitrarily long decompositions $e = e\circ \cdots \circ e$.
\end{eks}

\begin{prop}\label{prop:cULF/FILT=FILT}
  If $f:Y \to X$ is cULF and $X$ is a \FILT decomposition space, then also $Y$ 
  is \FILT.
\end{prop}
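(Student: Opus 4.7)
The plan is to reduce tightness of $Y$ to tightness of $X$ by identifying the fibres of the long-edge map on effective simplices in $Y$ with those in $X$ above the image edge.

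First I would check that the statement of tightness even makes sense for $Y$. Since $f$ is cULF and $X$ is a decomposition space, Lemma~\ref{cULF/decomp} gives that $Y$ is a decomposition space. Furthermore, since $f$ is conservative, the square
$$\xymatrix{
Y_0 \ar[d]\ar[r]^{s_0}\drpullback & Y_1 \ar[d]\\
X_0 \ar[r]_{s_0} & X_1
}$$
is a pullback, and since $s_0:X_0\to X_1$ is a mono (because $X$ is complete), monos being stable under pullback yields $s_0:Y_0\to Y_1$ mono, so $Y$ is complete.

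The main step is to use Proposition~\ref{prop:cULF-nondegen}: specialising to the word $w=aa\cdots a$ of length $r$, for which $X_w=\nondeg X_r$ and $Y_w=\nondeg Y_r$, the cULF hypothesis gives a pullback
$$\xymatrix{
Y_1 \ar[d] & \nondeg Y_r \dlpullback \ar[d] \ar[l]_-{g} \\
X_1 & \nondeg X_r \ar[l]^-{g}
}$$
where the horizontal maps are the long-edge (generic) maps. Given any $b\in Y_1$, write $a:=f_1(b)\in X_1$, so $\name a = f_1\circ\name b$. Then by the pullback pasting lemma \ref{pbk} applied to
$$\xymatrix{
(\nondeg Y_r)_b \drpullback \ar[r]\ar[d] & \nondeg Y_r \drpullback\ar[d]^g \ar[r] & \nondeg X_r \ar[d]^g \\
1 \ar[r]_{\name b} & Y_1 \ar[r]_{f_1} & X_1
}$$
the outer rectangle is a pullback, and identifies $(\nondeg Y_r)_b \simeq (\nondeg X_r)_a$ canonically.

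The conclusion is now immediate: since $X$ is tight, there exists $N$ (depending on $a$) such that $(\nondeg X_r)_a$ is empty for $r>N$, and therefore $(\nondeg Y_r)_b$ is empty for $r>N$ as well. This shows $\ell_Y(b)\leq\ell_X(a)<\infty$, so every edge of $Y$ has finite length, i.e.\ $Y$ is tight. I do not anticipate any substantial obstacle: the only subtlety is keeping straight which maps in the cULF pullback square are the generic long-edge maps, but this is transparent from the proof of Proposition~\ref{prop:cULF-nondegen}, where the corresponding square for $Y_n\to Y_1$ versus $X_n\to X_1$ involves precisely the generic maps $[1]\to[n]$.
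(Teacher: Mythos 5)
Your argument is correct and is exactly the paper's proof, just written out in full: the paper's own two-line proof cites Lemma~\ref{cULF/decomp} for the decomposition-space property, notes that cULFness (via conservativity) gives completeness, and invokes Proposition~\ref{prop:cULF-nondegen} for tightness — precisely the three steps you carry out. Your explicit fibre identification $(\nondeg Y_r)_b \simeq (\nondeg X_r)_{f(b)}$ is the intended content of that last citation.
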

\begin{proof}
  We know that $Y$ is a decomposition space by Lemma~\ref{cULF/decomp}, and the
  cULF condition in fact ensures it is complete.  It will furthermore be \FILT by
  Proposition~\ref{prop:cULF-nondegen}.
\end{proof}

\begin{prop}\label{prop:FILTSegal=Rezk}
  If a \FILT decomposition space $X$ is a Segal space, then it is Rezk complete.
\end{prop}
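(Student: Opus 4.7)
The plan is by contradiction. Suppose $X$ is tight and Segal but not Rezk complete. Since $s_0: X_0 \hookrightarrow X_1$ is a monomorphism (by completeness of $X$) and there is the sum splitting $X_1 = X_0 + X_a$, a failure of Rezk completeness produces an equivalence $a \in X_a$, i.e., a nondegenerate edge whose image in the homotopy category of $X$ is invertible. Choose a homotopy inverse $b \in X_1$. I first verify that $b$ lies in $X_a$: if instead $b = s_0(y) \in X_0$, then in the Segal space $X$ the edge $b$ acts as an identity (witnessed by $s_1(a)\in X_2$), so the Segal composite $a\cdot b$ is equivalent in $X_1$ to $a$; combined with $a \cdot b \simeq \id$, this forces $a \in s_0(X_0)$, contradicting $a \in X_a$.

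Next, for each $n \geq 1$, I construct an effective $(2n+1)$-simplex with long edge equivalent to $a$. By Lemma~\ref{lem:Segal:nondeg} the Segal condition gives
$\nondeg X_{2n+1} \simeq \nondeg X_1 \times_{X_0} \cdots \times_{X_0} \nondeg X_1$
with $2n+1$ factors. Since $a$ and $b$ have matching source and target, the alternating tuple $(a, b, a, b, \ldots, b, a)$ with $n+1$ copies of $a$ and $n$ copies of $b$ defines an element $\sigma_n \in \nondeg X_{2n+1}$. The long edge $g(\sigma_n)\in X_1$ is the iterated Segal composite of this tuple, which telescopes via $a \cdot b \simeq \id$ to an element lying in the connected component of $a$ in $X_1$.

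Hence the fibre $(\nondeg X_{2n+1})_a$ is nonempty for every $n \geq 1$, contradicting the assumption that $a$ has finite length. Therefore every equivalence in $X_1$ must be degenerate, and $X$ is Rezk complete. The step requiring the most care is ensuring that the telescoping produces an equivalence $g(\sigma_n) \simeq a$ genuinely in the $\infty$-groupoid $X_1$, not merely in $\pi_0 X_1$. This is handled by iterated use of the Segal pullbacks $X_{r+1}\simeq X_r\times_{X_0} X_1$: at each stage one glues the previously constructed path in $X_1$ together with the 2-simplex in $X_2$ witnessing $a \cdot b \simeq s_0(x)$, to obtain a path in $X_1$ from the Segal composite of the longer tuple to $a$.
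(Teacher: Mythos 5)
Your proof is correct and follows essentially the same route as the paper: extract a nondegenerate equivalence, use Lemma~\ref{lem:Segal:nondeg} to identify effective simplices with strings of composable nondegenerate arrows, and alternate the arrow with its inverse to produce arbitrarily long effective simplices, contradicting tightness. The only cosmetic difference is that you use odd-length strings with long edge in the component of $a$, whereas the paper uses the same back-and-forth to subdivide $\id_x$ or $\id_y$; your added checks (that the inverse is itself nondegenerate, and that the composite lands in the right component) are details the paper leaves implicit.
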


\begin{proof}
  If $X$ is not Rezk complete, then there exists a nondegenerate invertible
  arrow $a\in X_1$.  Since for Segal spaces we have 
  $$
  \nondeg X_n \simeq \nondeg X_1 \times_{X_0} \cdots \times_{X_0} \nondeg{X_1}
  $$
  (by \ref{lem:Segal:nondeg}),
  we can use the arrow $a:x\to y$ and its inverse to go back and forth any number of 
  times to create nondegenerate simplices
  of any length (subdivisions of $\id_x$ or $\id_y$).
\end{proof}

\begin{lemma}\label{lem:tight=>split}
  Let $X$ be a tight decomposition space.  Then for every $r\geq 1$
  we have a pullback square
  $$\xymatrix{
     \varnothing\drpullback \ar[r]\ar[d] & \nondeg X_r \ar[d]^g \\
     X_0 \ar[r]_{s_0} & X_1 .
  }$$
  More generally an effective simplex has all of its 1-dimensional 
  faces non-degenerate, so all faces of an effective simplex are effective.
\end{lemma}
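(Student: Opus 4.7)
The plan is to prove the main claim by strong induction on $r$, with the case $r = 2$ as the heart of the proof. The base case $r = 1$ is immediate from completeness: by definition $\nondeg X_1 = X_1 \setminus s_0(X_0)$, so if $\sigma \in \nondeg X_1$ then $g_1(\sigma) = \sigma \notin s_0(X_0)$. For the inductive step with $r \geq 3$, suppose $\sigma \in \nondeg X_r$ satisfies $g_r(\sigma) = s_0(x)$; I would reduce to the $r = 2$ case by considering the 2-simplex $\tau \in X_2$ obtained from $\sigma$ by the face map onto the vertices $\{v_0, v_1, v_r\}$. Its first principal edge is $a_1$ (the first principal edge of $\sigma$, nondegenerate by effectiveness); its long edge is $g_r(\sigma) = s_0(x)$; and its second principal edge is the long edge of $d_\bot \sigma \in X_{r-1}$. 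Since $d_\bot \sigma$ inherits its principal edges $a_2, \ldots, a_r$ from $\sigma$, it lies in $\nondeg X_{r-1}$, and by the inductive hypothesis its long edge is nondegenerate. Hence $\tau \in \nondeg X_2$ with $g_2(\tau) = s_0(x)$, reducing to the case $r = 2$.

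The heart of the argument is therefore $r = 2$. Suppose $\sigma \in \nondeg X_2$ has principal edges $a_1 : x \to v_1$ and $a_2 : v_1 \to x$, both nondegenerate, and long edge $s_0(x)$. Consider also the degenerate 2-simplex $s_0(a_1) \in X_2$, whose principal edges are $s_0(x)$ and $a_1$ (via $d_2 s_0 = s_0 d_1$ and $d_0 s_0 = \id$) and whose long edge is $a_1$ (via $d_1 s_0 = \id$). By Theorem~\ref{thm:decomp-dec-segal}, $Y := \Dec_\bot X$ is a Segal space; its Segal condition identifies $X_{n+1}$ with composable strings of $n$ elements of $X_2$, where composability requires the long edge (in $X$) of the $i$th element to equal the first principal edge (in $X$) of the $(i+1)$th. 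Consequently, the alternating string $(\sigma, s_0 a_1, \sigma, s_0 a_1, \ldots, \sigma)$ of odd length $2k - 1$ is composable by construction and yields via the Segal decomposition a simplex $\sigma^{(k)} \in X_{2k}$. Unwinding the decomposition — the $i$th element of the chain is the 2-simplex of $\sigma^{(k)}$ on vertices $\{v_0, v_i, v_{i+1}\}$ — pins down the vertices of $\sigma^{(k)}$ as $(x, v_1, x, v_1, \ldots, x)$, its principal edges as the alternating string $(a_1, a_2, a_1, a_2, \ldots, a_2)$ (all nondegenerate), and its long edge as $g_{2k}(\sigma^{(k)}) = s_0(x)$. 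Thus $\sigma^{(k)} \in \nondeg X_{2k}$ witnesses $\ell(s_0(x)) \geq 2k$ for every $k \geq 1$, forcing $\ell(s_0(x)) = \infty$ and contradicting tightness.

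The main obstacle is this final unwinding of the Segal decomposition of $\Dec_\bot X$: one must carefully distinguish the $X$-indexing from the $Y$-indexing (the $Y$-vertices of a simplex in $Y = \Dec_\bot X$ correspond to $X$-edges emanating from the basepoint $v_0$) in order to identify the principal edges of $\sigma^{(k)}$ with the second principal edges of the chain elements $\tau_i$ and its long edge with that of the terminal chain element. The ``more generally'' clause then follows formally: any 1-dimensional face $f^* \sigma \in X_1$ of an effective $\sigma$ admits a generic-free factorisation $f = f_{\mathrm{free}} \circ g_k$ in $\Delta$, so $f^* \sigma = g_k^*(f_{\mathrm{free}}^* \sigma)$. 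The sub-simplex $f_{\mathrm{free}}^* \sigma \in X_k$ has principal edges forming a contiguous subset of those of $\sigma$ and is hence effective, so the main claim applied to it yields nondegeneracy of $f^* \sigma$. Since every face's principal edges are 1-dimensional faces of $\sigma$, it follows that every face of an effective simplex is effective.
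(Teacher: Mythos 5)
Your proof is correct. Both you and the paper argue by manufacturing effective simplices of arbitrarily large dimension whose long edge is the degenerate edge $s_0(x)$, which contradicts tightness; but the constructions differ. The paper works directly with an arbitrary effective $r$-simplex $\sigma$ with degenerate long edge $u=s_0x$ and glues \emph{two copies of $\sigma$} along the totally degenerate triangle $s_0s_0x$ (all three of whose edges are $u$), using two instances of the decomposition-space pullback axiom to produce $\sigma_1\#\sigma_2\in\nondeg X_{2r}$ with long edge $u$ again; iterating doubles the dimension at each step. You instead first reduce to $r=2$ by an induction that extracts the triangle on vertices $\{v_0,v_1,v_r\}$, and then thread the alternating chain $(\sigma,s_0a_1,\sigma,\dots,\sigma)$ through the Segal condition on $\Dec_\bot X$ (Theorem~\ref{thm:decomp-dec-segal}) to build $\sigma^{(k)}\in\nondeg X_{2k}$. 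The two gluing mechanisms are cousins---the Segal condition on the dec is itself a repackaging of the decomposition-space pullbacks---but your route isolates the combinatorial content in the single case $r=2$, at the cost of an extra induction and of invoking the dec/Segal machinery, whereas the paper's doubling is more self-contained and handles all $r$ uniformly without the detour through $\Dec_\bot X$. Your bookkeeping of the $X$-indexing versus the $Y$-indexing (first principal edge of $\sigma^{(k)}$ from $d_2\tau_1$, the rest from $d_0\tau_{j-1}$, long edge from $d_1\tau_{2k-1}$) checks out, as does the composability of the chain, which correctly exploits that $d_2s_0a_1=s_0d_1a_1=s_0x$ equals the long edge of $\sigma$ precisely because that long edge is degenerate. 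Your treatment of the ``more generally'' clause via the generic-free factorisation of the map $[1]\to[n]$ selecting the edge is essentially identical to the paper's, which phrases it as: every $1$-dimensional face is the long edge of the effective simplex ${d_\bot}^i{d_\top}^j\sigma$.
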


\begin{proof}
  For $r=1$ the first statement is simply that $s_0X_0$ and $\nondeg X_1$ are
  disjoint in $X_1$, which is true by construction, so we can assume $r\geq2$.
  Suppose that $\sigma\in \nondeg X_r$ has degenerate long edge $u = g\sigma$.
  The idea is to exploit the decomposition-space axiom to glue together two copies of
  $\sigma$, called $\sigma_1$ and $\sigma_2$, to get a bigger simplex 
  $\sigma_1\#\sigma_2 \in \nondeg X_{r+r}$ again
  with long edge $u$.  By repeating this construction we obtain a contradiction
  to the finite length of $u$.  It is essential for this construction that
  $u$ is degenerate, say $u = s_0 x$, because we glue along the $2$-simplex 
  $\tau=s_0u=s_1u= s_0s_0 x$ which has the property that all three edges are $u$.
  Precisely, consider the diagram
  $$\xymatrix @C3.5pc@R3.5pc {
     X_{r+r}\drpullback \ar[r]_{{d_1}^{r-1}}\ar[dd]_{{d_\top}^r}     \ar@/^1.2pc/[rr]^{{d_\bot}^r} & X_{r+1}\drpullback \ar[d]^{{d_2}^{r-1}} 
     \ar@/_1.2pc/[dd]_{{d_\top}^r}
     \ar[r]_{d_\bot} & X_r \ar[d]^{g={d_1}^{r-1}} \\
     & X_2 \ar[d]^{d_\top} \ar[r]_{d_\bot} & X_1 \\
     X_r \ar[r]_{g={d_1}^{r-1}} & X_1 .
  }$$
  The two squares are pullbacks since $X$ is a decomposition space, and the 
  triangles are simplicial identities.  In the
  right-hand square we have $\sigma_2\in X_r$ and $\tau\in X_2$, with $g\sigma_2
  = u =d_\bot \tau$.  Hence we get a simplex $\rho\in X_{r+1}$.  This simplex has
  ${d_\top}^r \rho =d_\top\tau= u$, which means that in the left-hand square it matches
  $\sigma_1\in X_r$, to produce altogether the desired simplex
  $\sigma_1\#\sigma_2\in X_{r+r}$.  By construction, this simplex belongs to
  $\nondeg X_{r+r}$: indeed, its first $r$ principal edges are the principal 
  edges of $\sigma_1$, and its last $r$ principal edges are those of $\sigma_2$.
  Its long edge is clearly the long edge of $\tau$, namely $u$
  again, so we have produced a longer decomposition of $u$ than the one
  given by $\sigma$, thus contradicting the finite length of $u$.

Now the final statement follows since any 1-dimensional face of an effective simplex $\sigma$ is the long edge of an effective simplex ${d_\bot\!}^i{d_\top}^j\sigma$.
\end{proof}


\begin{cor}\label{prop:tight=>split}
  A \FILT decomposition space is split.
\end{cor}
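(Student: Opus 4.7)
The plan is to derive this corollary immediately from Lemma~\ref{lem:tight=>split} together with Corollary~\ref{effective=nondegen}. Let $X$ be a tight decomposition space; by definition it is in particular a complete decomposition space, so Corollary~\ref{effective=nondegen} applies and tells us that the subspaces $\nondeg X_n \subset X_n$ of effective simplices coincide with the subspaces of nondegenerate simplices. Thus splitness --- the condition that every face map $d_i : X_n \to X_{n-1}$ restricts to a map $\nondeg X_n \to \nondeg X_{n-1}$ --- becomes the condition that face maps preserve \emph{effective} simplices.

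But this is precisely the content of the second sentence of Lemma~\ref{lem:tight=>split}: in a tight decomposition space, every face of an effective simplex is again effective. So there is essentially nothing more to prove.

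The heavy lifting has already occurred in the proof of Lemma~\ref{lem:tight=>split}, where the gluing construction $\sigma_1 \# \sigma_2 \in X_{r+r}$ (obtained by amalgamating two copies of an effective simplex with degenerate long edge along the doubly-degenerate $2$-simplex $\tau = s_0 s_0 x$, using the decomposition space pullback axioms) produces arbitrarily long effective decompositions of a degenerate edge, contradicting tightness. Once this rules out degenerate long edges of effective simplices, the same conclusion for all $1$-dimensional faces follows by applying the result to the effective simplices ${d_\bot}^i {d_\top}^j \sigma$, and hence all faces of an effective simplex are effective. In short, the corollary is a one-line consequence: tight $\Rightarrow$ (nondegenerate $=$ effective) $\wedge$ (faces of effective are effective) $\Rightarrow$ face maps preserve nondegenerate simplices $\Rightarrow$ split.
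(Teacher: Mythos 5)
Your proof is correct and matches the paper's intended argument: the corollary is stated immediately after Lemma~\ref{lem:tight=>split} precisely because it follows by combining that lemma (all faces of an effective simplex are effective) with Corollary~\ref{effective=nondegen} (effective $=$ nondegenerate in a complete decomposition space), so that face maps preserve nondegenerate simplices, which is the definition of split. Your recap of the gluing construction $\sigma_1\#\sigma_2$ correctly identifies where the real work happens, but it is not needed for the corollary itself.
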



For the next couple of corollaries, we shall need the following general lemma.

\begin{lemma}\label{lem:all-but-one}
Suppose $X$ is a complete decomposition space and $\sigma\in X_n$ has 
at least $n-1$ of its principal edges degenerate. Then the following 
are equivalent: 
\begin{enumerate}
  \item the long edge $g(\sigma)$ is degenerate, 
  \item all principal edges of $\sigma$ are degenerate, 
  \item $\sigma$ is totally degenerate, $\sigma\in{s_0}^n(X_0)$.
\end{enumerate} 
\end{lemma}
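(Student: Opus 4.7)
My plan is to show the easy chain $(3)\Rightarrow(2)\Rightarrow(1)$ without using the hypothesis on the $n-1$ principal edges, and then use that hypothesis only to force $(1)\Rightarrow(2)$. The equivalence $(2)\Leftrightarrow(3)$ is already essentially in the paper: if every principal edge is degenerate, then $\sigma\in X_{00\cdots 0}$ and Proposition~\ref{halfdecomp}(5) identifies this space with the image of $s_{n-1}\cdots s_0:X_0\to X_n$, which agrees with $s_0^n(X_0)$ since in $\Delta$ every codegeneracy $[n]\to[0]$ is forced to be the unique map. Conversely, if $\sigma=s_0^n(x)$ then each principal edge map $\partial_{k-1,k}$ composed with $s_0^n$ corresponds in $\Delta$ to the unique factorisation $[1]\to[n]\to[0]$, which equals $s^0:[1]\to[0]$, so every principal edge equals $s_0(x)$. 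The same computation gives $g\circ s_0^n=s_0$, hence $(3)\Rightarrow(1)$.

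The whole content is therefore $(1)\Rightarrow(2)$ under the hypothesis. I will argue by contradiction: suppose that not all principal edges of $\sigma$ are degenerate. By hypothesis, exactly $n-1$ of them are degenerate and exactly one, say the $k$th, is nondegenerate; so $\sigma$ belongs to $X_w$ where $w\in\{0,a\}^n$ has a single $a$ in position $k$. Since a complete decomposition space is stiff, Proposition~\ref{halfdecomp}(5) applies and writes
\[
X_w \;=\; \Im\bigl(s_{j_{n-1}-1}\cdots s_{j_1-1}:\nondeg X_1\to X_n\bigr),
\]
where $\{j_1<\cdots<j_{n-1}\}=\{j:w_j=0\}$. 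Thus $\sigma=s_{j_{n-1}-1}\cdots s_{j_1-1}(\tau)$ for some $\tau\in\nondeg X_1$.

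Now I will compute $g(\sigma)$. The key observation is that each codegeneracy $s^i$ in $\Delta$ is a generic map (it preserves endpoints), so the composite
\[
[1]\;\xrightarrow{g}\;[n]\;\xrightarrow{s^{i_1}}\;[n-1]\;\to\;\cdots\;\to\;[1]
\]
is a generic endomorphism of $[1]$, and hence is the identity. Transporting via $X$, this gives $g\circ s_{j_{n-1}-1}\cdots s_{j_1-1}=\id_{X_1}$, so $g(\sigma)=\tau\in\nondeg X_1$. But this contradicts (1), which said $g(\sigma)$ is degenerate. Hence the "exactly one nondegenerate" case cannot occur, and all $n$ principal edges must be degenerate, proving (2).

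The main obstacle, conceptually speaking, is not computational but organisational: making sure that the word splitting of Proposition~\ref{halfdecomp} really does produce $\sigma$ as the image of a single element of $\nondeg X_1$ (rather than just a general element of $X_1$) under the specified chain of $n-1$ degeneracies. Once that is in hand, the identification of the long edge with that element is a clean consequence of the fact that composing a generic map with codegeneracies stays generic, so the argument is really just bookkeeping in $\Delta$.
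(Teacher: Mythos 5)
Your proof is correct and follows essentially the same route as the paper's: both arguments write $\sigma$ as an $(n-1)$-fold degeneracy of its one possibly nondegenerate principal edge (the paper via iterated use of Proposition~\ref{degen-w}, you via the word-splitting of Proposition~\ref{halfdecomp}(5), which is the same machinery) and then identify the long edge with that principal edge by observing that the generic map $[1]\to[n]$ composed with codegeneracies is the identity. The only cosmetic difference is that you phrase $(1)\Rightarrow(2)$ as a contradiction where the paper states the equivalence directly.
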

\begin{proof}
  Proposition~\ref{degen-w} says that (2) and (3) are equivalent.  Moreover it
  says that if all principal edges of $\sigma$ except the $j$th are known to be
  degenerate then $\sigma$ is an $(n-1)$-fold degeneracy of its $j$th principal
  edge.  Therefore the long edge of $\sigma$ is equal to its $j$th principal
  edge, and so (1) and (2) are equivalent.
\end{proof}

\begin{cor}\label{cor:degdegdeg}
  For any $\sigma\in X_2$ in a tight decomposition space  $X$, we have
  that $d_1\sigma$ is degenerate if and only if both $d_0\sigma$ and $d_2\sigma$ 
  are degenerate.
\end{cor}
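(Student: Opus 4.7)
\begin{proof*}{Proof plan.}
The statement is an immediate consequence of the two preceding lemmas, so the plan is essentially to package Lemma~\ref{lem:tight=>split} and Lemma~\ref{lem:all-but-one} correctly. Recall that for $\sigma \in X_2$, the long edge is $d_1\sigma$ and the two principal edges are $d_2\sigma$ and $d_0\sigma$.

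For the implication $(\Leftarrow)$, assume both principal edges $d_0\sigma$ and $d_2\sigma$ are degenerate. Then in particular at least $n-1 = 1$ of the principal edges is degenerate, so Lemma~\ref{lem:all-but-one} (applied with $n=2$) gives the equivalence between total degeneracy of $\sigma$ and degeneracy of all principal edges; hence $\sigma = s_1 s_0 x$ for some $x\in X_0$, and the long edge $d_1\sigma = d_1 s_1 s_0 x = s_0 x$ is degenerate. (This direction is in fact purely about completeness and does not use tightness.)

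For the implication $(\Rightarrow)$, assume $d_1\sigma$ is degenerate. This is where tightness enters: by Lemma~\ref{lem:tight=>split}, an effective simplex in a tight decomposition space has non-degenerate long edge, so $\sigma$ cannot be effective. By Corollary~\ref{effective=nondegen} (effective = nondegenerate in a complete decomposition space), $\sigma$ must therefore have at least one degenerate principal edge, i.e.\ at least $n-1 = 1$ of the principal edges is degenerate. Now apply Lemma~\ref{lem:all-but-one} once more: since the long edge is degenerate and at least $n-1$ principal edges are degenerate, all principal edges are degenerate, i.e.\ both $d_0\sigma$ and $d_2\sigma$ are degenerate.

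The only subtlety worth flagging is that tightness is essential only in the $(\Rightarrow)$ direction; without it, one could have an effective $\sigma \in \nondeg X_2$ whose long edge happens to be degenerate, and the argument would fail at precisely this point.
\end{proof*}
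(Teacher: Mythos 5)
Your proof is correct and follows essentially the same route as the paper: Lemma~\ref{lem:tight=>split} forces at least one principal edge of $\sigma$ to be degenerate, and Lemma~\ref{lem:all-but-one} (with $n=2$) then closes both implications. The only cosmetic remark is that your appeal to Corollary~\ref{effective=nondegen} is superfluous — ``not effective'' already means by definition that some principal edge is degenerate — but this does not affect the argument.
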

\begin{proof}
  By Lemma~\ref{lem:tight=>split}, if $d_1\sigma$ is degenerate then
  at least one of the two principal edges is degenerate.  The result now
  follows from~\ref{lem:all-but-one}.
\end{proof}

\begin{cor}\label{cor:longedgedegen}
  In a \FILT decomposition space, if the long edge of a simplex is degenerate
  then all its edges are degenerate, and indeed the simplex is totally degenerate.
\end{cor}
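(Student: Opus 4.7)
The plan is to induct on the dimension $n$ of $\sigma \in X_n$, with the base case being Corollary \ref{cor:degdegdeg} and the inductive step obtained by splitting $\sigma$ as a pair of sub-simplices glued along a vertex.

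The cases $n=0,1$ are immediate: for $n=0$ there is nothing to prove, and for $n=1$ the long edge is the simplex itself. The case $n=2$ is exactly what Corollary \ref{cor:degdegdeg} gives: if $g(\sigma) = d_1\sigma$ is degenerate then both principal edges $d_2\sigma$ and $d_0\sigma$ are degenerate, and Lemma \ref{lem:all-but-one} then upgrades this to total degeneracy of $\sigma$.

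For the inductive step, take $\sigma \in X_n$ with $n \geq 3$ and $g(\sigma)$ degenerate. For any split position $1\leq k \leq n-1$, the generic map $[2] \to [n]$ sending $0\mapsto 0$, $1 \mapsto k$, $2 \mapsto n$ induces (via the corresponding face map $X_n \to X_2$) a $2$-simplex $\sigma_k \in X_2$ whose long edge $d_1\sigma_k$ is $g(\sigma)$, and whose principal edges $d_2\sigma_k$ and $d_0\sigma_k$ are the long edges of the faces $d_{k+1}\cdots d_n\sigma \in X_k$ and ${d_0}^k\sigma \in X_{n-k}$ respectively. Since $d_1\sigma_k$ is degenerate, Corollary \ref{cor:degdegdeg} forces $d_2\sigma_k$ and $d_0\sigma_k$ to be degenerate. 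By the inductive hypothesis applied to these faces, both are totally degenerate. Taking, say, $k=1$, this already shows that the first principal edge of $\sigma$ is degenerate and that $d_0\sigma$ is totally degenerate, whence its $n-1$ principal edges---which are the remaining $n-1$ principal edges of $\sigma$---are degenerate. Thus all $n$ principal edges of $\sigma$ are degenerate, and since $g(\sigma)$ is degenerate by hypothesis, Lemma \ref{lem:all-but-one} concludes that $\sigma$ is totally degenerate.

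I do not foresee a serious obstacle; the argument is bookkeeping once one recognises that Corollary \ref{cor:degdegdeg} is precisely the $n=2$ case of the statement and that the generic map $[1] \to [n]$ factors through any $[2]$ obtained by picking an intermediate vertex, reducing the claim on $n$-simplices to the conjunction of the $2$-simplex case and the claim in strictly lower dimensions. The mildest care needed is to keep straight that the iterated face maps $d_{k+1}\cdots d_n$ and ${d_0}^k$ send the generic $[1]\to [k]$ and $[1]\to[n-k]$ to the correct restrictions of $g$ on the simplex $\sigma$, so that the identification of $d_2\sigma_k$ and $d_0\sigma_k$ as long edges of sub-simplices is actually valid.
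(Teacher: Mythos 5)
Your proof is correct and is essentially the paper's own argument: unfolding your induction with $k=1$ at each step produces exactly the paper's chain of $2$-faces $\tau_j$ with vertices $j-1<j<n$, to each of which Corollary~\ref{cor:degdegdeg} is applied before invoking Lemma~\ref{lem:all-but-one}. The only difference is presentational (induction on $n$ versus direct iteration over the $\tau_j$).
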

\begin{proof}
  Let $\sigma$ be an $n$-simplex of a decomposition space $X$ and consider the
  2-dimensional faces $\tau_j$ of $\sigma$ defined by the vertices $j-1<j<n$.
  Applying Corollary~\ref{cor:degdegdeg} to each $\tau_j$, $j=1\,\dots,n-1$,
  shows that all principal edges of $\sigma$ are degenerate. 
  Lemma~\ref{lem:all-but-one} then says that $\sigma$ is in the image of ${s_0}^n$.
\end{proof}

We can now give alternative characterisations of the length of an arrow in a
\FILT decomposition space:
\begin{prop}\label{prop:alt-length}
  Let $X$ be a \FILT decomposition space, and $f\in X_1$.  Then the following
  conditions on $r\in \N$ are equivalent:
  \begin{enumerate}
    \item For all words $w$ in the alphabet $\{0,a\}$ in which the letter $a$ occurs at least $r+1$ times, the fibre $(X_w)_f$ is empty,
    $$\xymatrix{
       \varnothing \drpullback \ar[r]\ar[d] & X_w \ar[d] \\
       1 \ar[r]_{\name f} & X_1 .
    }$$
    \item For all $k\geq r+1$, the fibre $(\nondeg X_k)_f$ is empty.
    \item The fibre $(\nondeg X_{r+1})_f$ is empty.
  \end{enumerate}
  The length $\ell(f)$ of an arrow in a \FILT decomposition space is the least
  $r\in \N$ satisfying these equivalent conditions.
\end{prop}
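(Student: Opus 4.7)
The plan is to establish $(1)\Leftrightarrow(2)$ and $(2)\Leftrightarrow(3)$ separately, then read off the length statement. Two ingredients will do the heavy lifting: first, that a \FILT decomposition space is automatically split (Corollary~\ref{prop:tight=>split}), so by Proposition~\ref{halfdecomp}(5) every $X_w$ is the image of an iterated degeneracy map from $\nondeg X_{|w|_a}$; second, the last clause of Lemma~\ref{lem:tight=>split}, that every face of an effective simplex is again effective.

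For $(2)\Leftrightarrow(3)$, the implication $(2)\Rightarrow(3)$ is the special case $k=r+1$. Conversely, given some $\sigma\in(\nondeg X_k)_f$ with $k>r+1$, I would apply $k-r-1$ inner face maps to descend to an $(r+1)$-simplex. A quick check with the simplicial identities shows that for $0<i<k$ the composite of the unique generic $[1]\to[k-1]$ with the coface $d^i:[k-1]\to[k]$ is again the unique generic $[1]\to[k]$, so inner face maps preserve the long edge $f$. The resulting $(r+1)$-simplex is a face of an effective simplex, hence by Lemma~\ref{lem:tight=>split} is again effective, giving a nonempty $(\nondeg X_{r+1})_f$ and contradicting~(3).

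For $(1)\Leftrightarrow(2)$, the forward direction comes from specialising $w$ to $a^k$, so that $X_w=\nondeg X_k$. Conversely, given a non-empty $(X_w)_f$ with $|w|_a=k\geq r+1$, the split condition writes any such element as an iterated degeneracy applied to some $\tau\in\nondeg X_k$, and a parallel calculation with codegeneracies (for any $s^i:[m+1]\to[m]$ the composite with the generic $[1]\to[m+1]$ is again the generic $[1]\to[m]$) shows that degeneracies preserve the long edge. Hence $\tau\in(\nondeg X_k)_f$, contradicting~(2).

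I do not anticipate a serious obstacle: once the split condition and Lemma~\ref{lem:tight=>split} are in hand, the rest is simplicial bookkeeping with the generic-free factorisation. The final statement is then immediate from the definition of length as $\sup\{\dim\sigma:\sigma\in\nondeg X,\ g\sigma=f\}$: condition~(2) reads precisely $\ell(f)\leq r$, so the least such $r$ equals $\ell(f)$.
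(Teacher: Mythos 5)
Your proof is correct and follows essentially the same route as the paper: the paper also reduces elements of $X_w$ to iterated degeneracies of effective simplices with the same long edge (via Corollary~\ref{cor:Xn} rather than your splitness argument, but the content is the same), and then applies ${d_1}^{k-r-1}$ and the last clause of Lemma~\ref{lem:tight=>split} to produce an effective $(r+1)$-simplex over $f$. The only difference is organisational (you prove two biconditionals where the paper runs the cycle $(1)\Rightarrow(2)\Rightarrow(3)\Rightarrow(1)$), and the identification of $\ell(f)$ with the least such $r$ is read off from (2) in both cases.
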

\begin{proof}
  Clearly $(1)\Rightarrow(2)\Rightarrow(3)$ and, by definition, the length of
  $f$ is the least integer $r$ satisfying $(2)$.  It remains to show that $(3)$
  implies $(1)$.  Suppose (1) is false, that is, we have $w\in\{0,a\}^n$ with
  $k\geq r+1$ occurrences of $a$ and an element $\sigma\in X_w$ with
  $g(\sigma)=f$.  Then by Corollary~\ref{cor:Xn} we know that $\sigma$ is
  an $(n-k)$-fold degeneracy of some $\tau\in\nondeg X_{k}$, and $\sigma$ and
  $\tau$ will have the same long edge $f$.  Finally we see that (3) is false by
  considering the element ${d_1}^{k-r-1}\tau \in X_{r+1}$, which has long edge
  $f$, and is effective by Lemma~\ref{lem:tight=>split}.
\end{proof}

\begin{blanko}{The length filtration of the space of $1$-simplices.}
  Let $X$ be a \FILT decomposition space.
  We define the $k$th stage of the {\em length filtration} for $1$-simplices to
  consist of all the arrows of length at most $k$:
  $$
  X_1^{(k)} := \{ a \in X_1 \mid \ell(a)\leq k\}.
  $$
  

\begin{cor}
  For a tight decomposition space $X$ we have $X_1^{(0)} = X_0$.
  \qed
\end{cor}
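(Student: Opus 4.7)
The plan is to unpack the definition of $X_1^{(0)}$ and show separately that (a) every degenerate $1$-simplex has length $0$, and (b) every nondegenerate $1$-simplex has length at least $1$. Since $X$ is complete, $s_0:X_0 \into X_1$ is a monomorphism, so the image of $s_0$ is canonically equivalent to $X_0$, and the conclusion will follow from the sum splitting $X_1 = X_0 + X_a$ (cf.~\ref{w}).

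For direction (a), let $a = s_0(x)$ for some $x \in X_0$. By Lemma~\ref{lem:tight=>split}, for every $r \geq 1$ the pullback
$$\xymatrix{
\varnothing \drpullback \ar[r]\ar[d] & \nondeg X_r \ar[d]^g \\ X_0 \ar[r]_{s_0} & X_1}$$
is empty, so no effective simplex of dimension $\geq 1$ has $a$ as its long edge. Hence $\ell(a) \leq 0$, and $a \in X_1^{(0)}$.

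For direction (b), suppose $a \in X_a = \nondeg X_1$. Viewing $a$ itself as a $1$-simplex, its only principal edge is $a$ itself, which is nondegenerate by assumption; therefore $a$ is effective as a $1$-simplex, and its long edge (given by the identity generic map $[1]\to[1]$) is $a$. This exhibits an effective simplex of dimension $1$ with long edge $a$, so $\ell(a) \geq 1$ and $a \notin X_1^{(0)}$.

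Combining (a) and (b), $X_1^{(0)}$ is exactly the full subgroupoid $s_0(X_0) \subset X_1$, which by completeness of $X$ is equivalent to $X_0$. There is no real obstacle here; the only subtle point is to recognise that the convention $\nondeg X_0 = X_0$ already ensures $\ell(a) \geq 0$ in case (a), so the inequality $\ell(a) \leq 0$ obtained from Lemma~\ref{lem:tight=>split} gives equality.
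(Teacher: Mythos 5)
Your proof is correct and is exactly the argument the paper intends: the corollary is stated with \qed as an immediate consequence of Lemma~\ref{lem:tight=>split} (degenerate edges support no effective simplices of dimension $\geq 1$, hence have length $0$) together with the observation from \ref{length} that nondegenerate edges are themselves effective $1$-simplices and so have length $\geq 1$. Your care with the convention $\nondeg X_0 = X_0$ and the identification $s_0(X_0)\simeq X_0$ via completeness just makes explicit what the paper leaves implicit.
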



Then $X_1^{(k)}$ is
  the full subgroupoid of $X_1$ given by any of the following equivalent
  definitions:
  \begin{enumerate}
  \item 
  the complement of $\Im( \nondeg X_{k+1} \to X_1)$.
  \item 
  the complement of $\Im( \coprod_{|w|_a>k} X_w \to X_1)$.
  \item
  the full subgroupoid of $X_1$ whose objects $f$ satisfy 
  $(X_{k+1})_f \subset \bigcup s_i X_k$
  \item
  the full subgroupoid of $X_1$ whose objects $f$ satisfy 
  $(\nondeg X_{k+1})_f=\varnothing$
  \item
  the full subgroupoid of $X_1$ whose objects $f$ satisfy 
  $(X_w)_f=\varnothing$ for all $w\in\{0,a\}^{r}$ such that $|w|_a> k$
  \end{enumerate}
\end{blanko}

%
%
%

It is clear from the definition of length that we have a sequence of 
monomorphisms
$$
X_1^{(0)} \into X_1^{(1)} \into X_1^{(2)} \into \dots \into X_1.
$$
The following is now clear.
\begin{prop}
  A complete decomposition space is \FILT if and only if the $X_1^{(k)}$ 
  constitute a filtration, i.e.
  $$
  X_1 = \bigcup_{k=0}^\infty X_1^{(k)} .
  $$
\end{prop}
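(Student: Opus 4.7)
The plan is to verify both directions by directly unpacking the definitions involved, as the statement is essentially tautological once the definitions are aligned. The main ingredients are: the definition of the length $\ell(a)$ of an arrow $a \in X_1$ (a value in $\mathbb{N} \cup \{\infty\}$, equal to $\infty$ precisely when no finite upper bound exists on dimensions of effective simplices with long edge $a$); the definition of $X_1^{(k)}$ as the full subgroupoid of $X_1$ spanned by arrows $a$ with $\ell(a) \leq k$; and the definition of tightness as the condition that $\ell(a) < \infty$ for all $a \in X_1$.

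For the forward direction, I would assume $X$ is tight and show that $X_1 = \bigcup_{k=0}^\infty X_1^{(k)}$. Given any $a \in X_1$, by tightness $\ell(a) = k$ for some $k \in \mathbb{N}$, so $a \in X_1^{(k)}$. Since both sides are full subgroupoids of $X_1$, and they contain the same objects, they coincide (equivalence of full subgroupoids is detected on $\pi_0$, and here even on the nose, as the inclusions $X_1^{(k)} \hookrightarrow X_1$ are monomorphisms and $\bigcup_k$ is interpreted as the sequential colimit of monomorphisms, which in $\Grpd$ is again a full subgroupoid).

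For the backward direction, I would assume $X_1 = \bigcup_{k=0}^\infty X_1^{(k)}$ and deduce tightness. Pick any $a \in X_1$; by hypothesis there exists some $k$ with $a \in X_1^{(k)}$, so $\ell(a) \leq k < \infty$. Hence every arrow has finite length, which is precisely the tightness condition.

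There is no real obstacle here: the result is a direct reformulation of the definition of tightness in terms of the length filtration, and the only subtlety is the set-theoretic meaning of $\bigcup$ in the $\infty$-groupoid setting, which is resolved by observing that each inclusion $X_1^{(k)} \hookrightarrow X_1$ is a monomorphism of $\infty$-groupoids (in fact a full subgroupoid inclusion) so that the filtered colimit is computed by taking the union of the respective components of $X_1$.
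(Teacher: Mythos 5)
Your proof is correct and matches the paper's intent: the paper omits the argument entirely, declaring the proposition ``now clear'' after noting the chain of monomorphisms $X_1^{(0)} \into X_1^{(1)} \into \cdots \into X_1$, and your unpacking of the definitions is exactly the tautology being invoked. Your remark on interpreting $\bigcup$ as a union of full subgroupoids is a reasonable clarification but does not constitute a different route.
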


\begin{blanko}{Length filtration of a \FILT decomposition space.}\label{def:filt}
  Now define the length filtration for all of $X$:
  the length of a simplex $\sigma$ with longest edge $g\sigma=a$ is defined to 
  be the length of $a$:
  $$
  \ell(\sigma) := \ell(a) .
  $$
  In other words, we are defining the filtration in $X_r$ by pulling it back 
  from $X_1$ along the unique generic map $X_r \to X_1$.
  This automatically defines the generic maps in each filtration degree, 
  yielding a generic-map complex
$$
X_\bullet^{(k)} : \Deltagen\op\to\Grpd .
$$

To get the outer face maps, the idea is simply to restrict (since by construction
all the maps $X_1^{(k)}\into X_1^{(k+1)}$ are monos).  We need to check that
an outer face map 
applied to a simplex in $X_n^{(k)}$ again belongs to $X_{n-1}^{(k)}$.
This will be the content of Proposition~\ref{prop:dpresnondeg} below.
Once we have done that, it is clear that we have a sequence of
cULF maps
$$
X_\bullet^{(0)} \into X_\bullet^{(1)} \into \cdots \into X
$$
and we shall see that $X_\bullet^{(0)}$ is the constant simplicial space $X_0$.
\end{blanko}

\begin{prop}\label{prop:dpresnondeg}
  In a tight decomposition space $X$, face maps preserve length: 
  precisely, for any face map $d: X_{n+1} \to X_n$,
  if $\sigma\in X_{n+1}^{(k)}$, then $d\sigma \in X_n^{(k)}$.
\end{prop}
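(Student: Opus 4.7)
The approach is to show that for any face map $d: X_{n+1} \to X_n$ and any $\sigma \in X_{n+1}$, the inequality $\ell(g(d\sigma)) \leq \ell(g\sigma)$ holds; since $X_n^{(k)}$ is defined by pulling back $X_1^{(k)}$ along the generic map $g$, this reformulation is equivalent to the statement. I treat inner and outer face maps separately.

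For an inner face map $d_i$ (with $0 < i < n+1$), the unique generic $g^n \colon [1] \to [n]$ postcomposed with the inner coface $d^i \colon [n] \to [n+1]$ is again the generic map $g^{n+1}\colon [1]\to[n+1]$; contravariantly this reads $g_n \circ d_i = g_{n+1}$, so $g(d_i\sigma) = g\sigma$ and the lengths agree on the nose. The substantive case is therefore the outer face maps, and by the evident top/bottom symmetry it suffices to treat $d_\bot$. The geometric picture is to form the \emph{outer $2$-simplex} $\rho := d_2^{n-1}\sigma \in X_2$, obtained by keeping only the vertices $0$, $1$, and $n+1$ of $\sigma$. One checks directly that $d_\top\rho$ is the first principal edge of $\sigma$, that $d_\bot\rho = g(d_\bot\sigma)$, and that $d_1\rho = g\sigma$.

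I then split into two subcases according to whether the first principal edge of $\sigma$ is degenerate. If it is, Proposition~\ref{degen-w} (applied to $X_{0a} = \operatorname{Im}(s_0)$) gives $\rho = s_0 d_1\rho$, whence $g(d_\bot\sigma) = d_\bot\rho = d_0 s_0 d_1\rho = d_1\rho = g\sigma$, and the lengths coincide tautologically. Otherwise the first principal edge of $\sigma$ lies in $X_a$, and I argue by contradiction. Assume $\ell(g(d_\bot\sigma)) > k := \ell(g\sigma)$ and choose $\tau \in \nondeg X_{k+1}$ with $g\tau = g(d_\bot\sigma) = d_\bot\rho$. The pair $(\tau,\rho)$ is compatible in the decomposition-space pullback of Lemma~\ref{lem:fewerdiagrams}(3),
\begin{equation*}
\xymatrix{
X_{k+2} \drpullback \ar[r]^{d_2^{k}} \ar[d]_{d_\bot} & X_2 \ar[d]^{d_\bot} \\
X_{k+1} \ar[r]_{d_1^{k}} & X_1,
}
\end{equation*}
and so determines $\tau' \in X_{k+2}$ with $d_\bot\tau' = \tau$ and $d_2^k\tau' = \rho$. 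Its long edge is $g\tau' = d_1(d_2^k\tau') = d_1\rho = g\sigma$. Its first principal edge coincides with that of $\rho$, namely the nondegenerate first principal edge of $\sigma$; its remaining $k+1$ principal edges are those of $d_\bot\tau' = \tau$, all nondegenerate since $\tau$ is effective. Hence $\tau' \in \nondeg X_{k+2}$, giving $\ell(g\sigma) \geq k+2$, which contradicts $\ell(g\sigma) = k$.

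The main obstacle is precisely that the glued simplex $\tau'$ need not be effective in general: its first principal edge equals that of $\sigma$, which could perfectly well be degenerate. The degenerate subcase is not a nuisance but absorbs cleanly via Proposition~\ref{degen-w}, forcing $g(d_\bot\sigma) = g\sigma$ outright. A trivial degeneracy edge case (in particular, the low-dimensional situations where the pullback indices degenerate, or where $k=0$) can be dispatched up front using Corollary~\ref{cor:longedgedegen}: if $g\sigma$ is degenerate, then $\sigma$ is totally degenerate, hence $d\sigma$ is as well, and so $\ell(g(d\sigma)) = 0 = \ell(g\sigma)$ by Lemma~\ref{lem:tight=>split}.
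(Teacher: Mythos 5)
Your proof is correct and follows essentially the same route as the paper's: reduce to an outer face map, pass to the outer $2$-simplex (your $\rho = d_2^{n-1}\sigma$; the paper uses $d_1^{n-1}\sigma$ for the symmetric $d_\top$ case), and invoke the decomposition-space pullback to glue a long witness for the short edge onto that $2$-simplex, producing an at-least-as-long witness for $g\sigma$. The only divergence is in the final bookkeeping: the paper uses the word-characterisation of length (Proposition~\ref{prop:alt-length}(1)), so the glued simplex may land in $X_{w1}$ with the status of the extra principal edge left undetermined and no case split is required, whereas your use of characterisation (3) via fully effective simplices forces the case analysis on whether the first principal edge of $\sigma$ is degenerate, which you correctly absorb with Proposition~\ref{degen-w}.
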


\begin{proof}
  Since the length of a simplex only refers only to its long
  edge, and since a generic face map does not
  alter the long edge, it is enough to treat the case of
  outer face maps, and by symmetry it is enough to treat
  the case of $d_\top$.
  Let $f$ denote the long edge of $\sigma$.  
  Let $\tau$ denote the triangle ${d_1}^{n-1}\sigma$.
  It has long 
  edge $f$ again.  Let $u$ and $v$ denote the short edges of $\tau$,
  \begin{equation*}
    \xymatrixrowsep{10pt}
  \xymatrixcolsep{24pt}
\xymatrix @!=0pt {
  & \cdot \ar[rddd]^v & \\
  &&\\
  & \tau &\\
  \cdot \ar[ruuu]^u \ar[rr]_f && \cdot
  }
  \end{equation*}
  that is $v= d_\bot \tau = {d_\bot}^n \sigma$
  and $u= d_\top \tau$, the long edge of $d_\top \sigma$.
  The claim is that if $\ell(f) \leq k$, then $\ell (u)\leq k$.
  If we were in the category case, this would be true since any
  decomposition of $u$ could be turned into a decomposition of $f$
  of at least the same length, simply by postcomposing with $v$.
  In the general case, we have to invoke the decomposition-space
  condition to glue with $\tau$ along $u$.  Precisely, for any 
  simplex $\kappa \in X_w$ with long edge $u$ we can obtain a 
  simplex $\kappa \#_u \tau \in X_{w1}$
  with long edge $f$: since $X$ is a decomposition space,
  we have a pullback square
  $$\xymatrix{
  \kappa \#_u \tau \ar@{}[r]|\in &X_{w1} \drpullback \ar[r] \ar[d] & X_w \ar[d]^g 
    & \ar@{}[l]|\ni \kappa\\
  \tau \ar@{}[r]|\in & X_2 \ar[r]_{d_\top} \ar[d]_{d_1} & X_1 & 
  \ar@{}[l]|\ni u\\
  f \ar@{}[r]|\in &X_1}
  $$
  and $d_\top \tau = u = g(\kappa)$, giving us the desired simplex in
  $X_{w1}$.  With this construction, any simplex $\kappa$ of 
  length $>k$ violating $\ell(u)=k$ 
  (cf.~the characterisation of length given in (1) of 
  Proposition~\ref{prop:alt-length})
  would also yield a simplex $\kappa \#_u \tau$ (of at least the
  same length) violating $\ell(f) =k$.
\end{proof}

\begin{prop}
  In a \FILT decomposition space $X$, for any generic map $g:X_n \to X_1$ we have
  $$\xymatrix{
      X_0\ar[d]_{s_0} & \ar[l]_= X_0 \ar[d] \dlpullback \\
     X_1 & \ar[l]^g X_n .
  }$$
\end{prop}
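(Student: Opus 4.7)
The plan is to deduce this pullback statement directly from Corollary~\ref{cor:longedgedegen}. Informally, the square expresses the assertion that an $n$-simplex $\sigma\in X_n$ has degenerate long edge precisely when $\sigma$ is itself totally degenerate, i.e.~of the form ${s_0}^n(x)$ for a (unique) $x\in X_0$.

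First I would make the right-hand vertical map explicit. The natural candidate is the iterated degeneracy ${s_0}^n:X_0\to X_n$ (any iterated $s_0$ from $X_0$ to $X_n$ will do; they all coincide as maps in $\Delta$). Commutativity of the square amounts to $g\circ{s_0}^n=s_0$, which follows by iterating the simplicial identity $d_1 s_0=\id$ (so ${d_1}^{n-1}{s_0}^n=s_0$, and ${d_1}^{n-1}$ is the unique generic map $X_n\to X_1$).

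Next, let $P=X_0\times_{X_1}X_n$ denote the pullback, formed in $\Grpd$. By construction $P$ is the full subspace of $X_n$ whose objects are simplices whose long edge lies in the essential image of $s_0$. The canonical map $\iota:X_0\to P$ induced by the commutative square factors as ${s_0}^n:X_0\to X_n$ followed by the inclusion $P\hookrightarrow X_n$. Since $X$ is \FILT, it is in particular a complete decomposition space, so every $s_0$ is a monomorphism by Lemma~\ref{all-s-mono}, and hence so is the composite ${s_0}^n$. Consequently $\iota$ is a monomorphism, and it suffices to check that $\iota$ is essentially surjective.

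This is exactly where Corollary~\ref{cor:longedgedegen} does the work: any $\sigma\in X_n$ with degenerate long edge is totally degenerate, i.e.~lies in the essential image of ${s_0}^n$. Thus every object of $P$ is in the image of $\iota$, so $\iota$ is an equivalence, and the square is a pullback as claimed. The only nontrivial ingredient is Corollary~\ref{cor:longedgedegen}, which has already been established using the tight hypothesis via the gluing argument in Lemma~\ref{lem:tight=>split}; the rest is bookkeeping with monomorphisms and simplicial identities.
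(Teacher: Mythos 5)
Your proof is correct and follows essentially the same route as the paper, which simply cites Corollary~\ref{cor:longedgedegen} to identify the fibre of $g$ over the degenerate edges with the image of the maximal degeneracy $X_0\to X_n$. You supply the small extra bookkeeping (the comparison map $X_0\to P$ is a monomorphism because ${s_0}^n$ is, plus essential surjectivity from the corollary) that the paper leaves implicit; apart from a slip of phrasing where you say $\iota$ ``factors as ${s_0}^n$ followed by the inclusion'' when you mean the composite $X_0\to P\hookrightarrow X_n$ equals ${s_0}^n$, the argument is fine.
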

\begin{proof}
  By Corollary~\ref{cor:longedgedegen}, if the long edge of $\sigma\in X_n$ is
  degenerate, then $\sigma$ is in the image of the maximal degeneracy map $X_0 \to X_n$.
\end{proof}
\begin{cor}
  For a tight decomposition space, $X_n^{(0)} = X_0$, $\forall n$.
\end{cor}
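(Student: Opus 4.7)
\begin{proof*}{Proof sketch.}
The plan is to observe that this is essentially an immediate consequence of the preceding proposition combined with the definition of the length filtration in degree $n$.

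First, I would unravel the definitions. By construction (see \ref{def:filt}), the length of a simplex $\sigma \in X_n$ is defined as the length of its long edge $g(\sigma) \in X_1$, where $g: X_n \to X_1$ denotes the unique generic map. Equivalently, the filtration on $X_n$ is defined by pulling back the filtration on $X_1$ along $g$: that is, $X_n^{(k)}$ fits into a pullback
\[
\xymatrix{
X_n^{(k)} \drpullback \ar[r]\ar[d] & X_n \ar[d]^g \\
X_1^{(k)} \ar[r] & X_1.
}
\]
By the earlier Corollary (just above \ref{def:filt}), tightness gives $X_1^{(0)} = X_0$, with the inclusion into $X_1$ being $s_0$.

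Next, I would invoke the preceding Proposition, which states precisely that the square
\[
\xymatrix{
X_0 \dlpullback \ar[r]^= \ar[d] & X_0 \ar[d]^{s_0} \\
X_n \ar[r]_g & X_1
}
\]
is a pullback, the left vertical map being the maximal degeneracy ${s_0}^n : X_0 \to X_n$. Substituting $X_1^{(0)} = X_0$ (via $s_0$) into the defining pullback for $X_n^{(0)}$ and comparing with this square, we conclude $X_n^{(0)} \simeq X_0$, with the inclusion $X_n^{(0)} \hookrightarrow X_n$ realised by the maximal degeneracy.

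There is no real obstacle here: the work was already done in the preceding proposition, which in turn relied on Corollary~\ref{cor:longedgedegen} (degenerate long edge forces the simplex to be totally degenerate). The corollary is just the translation of that fact through the definition of the length filtration on higher simplices.
\end{proof*}
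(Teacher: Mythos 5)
Your proposal is correct and is exactly the argument the paper intends (the paper leaves the corollary without an explicit proof precisely because it follows by combining the definition of $X_n^{(0)}$ as the pullback of $X_1^{(0)}=X_0$ along the generic map $g$ with the pullback square of the preceding proposition). Nothing is missing.
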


\begin{blanko}{Coalgebra filtration.}\label{coalgebrafiltGrpd}
  If $X$ is a \FILT decomposition space, the sequence of
  cULF maps
  $$
  X_\bullet^{(0)} \into X_\bullet^{(1)} \into \cdots \into X 
  $$
  defines coalgebra homomorphisms
  $$
  \Grpd_{/X_1^{(0)}} \to \Grpd_{/X_1^{(1)}} \to \cdots \to \Grpd_{/X_1} 
  $$
  which clearly define a coalgebra filtration of $\Grpd_{/X_1}$.
  
  Recall that a filtered coalgebra is called connected if its $0$-stage
  coalgebra is the trivial coalgebra (the ground ring).  In the present
  situation the $0$-stage is $\Grpd_{/X_1^{(0)}} \simeq \Grpd_{/X_0}$, so we
  see that $\Grpd_{/X_1}$ is connected if and only if $X_0$ is contractible.
  
  On the other hand, the $0$-stage elements are precisely the degenerate arrows,
  which almost tautologically are group-like.  Hence the incidence coalgebra of
  a \FILT decomposition space will always have the property that the
  $0$-stage is spanned by group-like elements.  For some purposes, this property
  is nearly as good as being connected (cf.~\cite{Kock:1411.3098}, 
  \cite{Kock:1512.03027} for this 
  viewpoint in the context of renormalisation).
\end{blanko}

\begin{blanko}{Grading.}\label{grading}
  Given a $2$-simplex $\sigma\in X_2$ in a complete decomposition space $X$,
  it is clear that we have
  $$
  \ell(d_2\sigma) + \ell(d_0 \sigma) \leq \ell(d_1\sigma)
  $$
  generalising the case of a category, where $f=ab$  implies 
  $\ell(a)+\ell(b)\leq\ell(f)$.
  In particular, the following configuration of arrows 
  illustrates that one does not in general have equality:
  $$
  \xymatrix @! @R=10pt @C=5pt {
  && \cdot \ar[rr] && \cdot \ar[rrd] && \\
  \cdot \ar[rru] \ar[rrrrrr]^f \ar[rrrd]_a &&&&&& \cdot \\
  &&& \cdot \ar[rrru]_b &&&
  }
  $$
  Provided none of the arrows can be decomposed further, we have
  $\ell(f) = 3$, but $\ell(a) = \ell(b) =1$.
  For the same reason, the length filtration is not in general 
  a grading:  $\Delta(f)$ contains the term $a\tensor b$
  of degree splitting $1+1 < 3$.
  Nevertheless, it is actually common in examples of interest
  to have a grading: this happens when all maximal chains composing to a given 
  arrow $f$ have the same length, $\ell(f)$.  All the examples of 
  Section~\ref{sec:ex} will have this property.
  
  The abstract formulation of the condition for the length filtration to
  be a grading is this:
  For every $k$-simplex $\sigma \in X_k$ with long edge $a$ and principal edges
  $e_1,\ldots,e_k$, we have
  $$
  \ell (a) = \ell(e_1) + \cdots + \ell(e_k) .
  $$
  Equivalently, for every $2$-simplex $\sigma\in X_2$
  with long edge $a$ and short edges
  $e_1,e_2$, we have
  $$
  \ell (a) = \ell(e_1) +\ell(e_2) .
  $$
  
  The length filtration is a grading if and only if the functor $\ell :X_1 \to 
  \N$ extends to a simplicial map to the nerve of the monoid $(\N,+)$ (this map is
  rarely cULF though).  The monoid $(\N,+)$ is studied further in Example~\ref{ex:N&L}
  below, and the special case where $\ell: X \to (\N,+)$ is cULF in 
  \ref{ex:cULF/N}.
  
  If $X$ is the nerve of a poset $P$, then the length filtration is a grading
  if and only if $P$ is {\em ranked}, i.e.~for any $x,y \in P$, every maximal chain
  from $x$ to $y$ has the same length~\cite{Stanley}.
\end{blanko}

\def\inputfile{finite-decomp-and-sectioncoeff.tex}

\section{Locally finite decomposition spaces}
\label{sec:findec}

In order to be able to take cardinality of the $\Grpd$-coalgebra obtained from a
decomposition space $X$ to get a coalgebra at the numerical level (vector
spaces), we need to impose certain finiteness conditions.  First of all, just
for the coalgebra structure to have a cardinality, we need $X$ to be {\em
locally finite} (\ref{finitary}) (but it is not necessary that $X$ be complete).
Secondly, in order for  \M inversion to descend, what we need in addition is
precisely the filtration condition (which in turn assumes completeness).  We
shall define a {\em \M decomposition space} to be a locally finite \FILT
decomposition space (\ref{M}).

\subsection{Incidence (co)algebras and section coefficients}

\begin{blanko}{Locally finite decomposition spaces.}\label{finitary}
  A decomposition space $X:\Delta\op \to \Grpd$ is called
  \emph{locally finite} if $X_1$ is locally finite and both 
  $s_0:X_0 \to X_1$ and $d_1:X_2 \to X_1$ are finite maps.
\end{blanko}



\begin{lemma}
  Let $X$ be a decomposition space.
  \begin{enumerate}
    \item If $s_0:X_0\to X_1$ is finite then so are all degeneracy maps $s_i:X_n\to X_{n+1}$.
    \item If $d_1:X_2\to X_1$  is finite then so are all  generic face maps 
          $d_j:X_{n}\to X_{n-1}$, $j\neq0,n$.
    \item  $X$ is locally finite if and only if $X_n$ is locally finite for every
      $n$ and $g:X_m\to X_n$ is finite for every generic map $g:[n]\to[m]$ in $\Delta$.
  \end{enumerate}
\end{lemma}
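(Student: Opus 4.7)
The plan is to reduce everything to Lemma~\ref{lem:s0d1}, which states that in a decomposition space every degeneracy map is a pullback of $s_0:X_0\to X_1$ and every generic face map is a pullback of $d_1:X_2\to X_1$. Combined with the standard facts that finite maps of $\infty$-groupoids are stable under pullback and composition, this handles (1) and (2) immediately: since each $s_i:X_n\to X_{n+1}$ is the pullback of $s_0$, the hypothesis that $s_0$ is finite forces every $s_i$ to be finite; and since each inner face $d_j:X_n\to X_{n-1}$ (for $0<j<n$) is a pullback of $d_1$, the same reasoning yields (2).

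For (3), the backward implication is immediate: the generic maps $s^0:[1]\to[0]$ and $d^1:[1]\to[2]$ in $\Delta$ are precisely what induce $s_0$ and $d_1$, so the hypothesis gives finiteness of both, and local finiteness of $X_1$ is supplied directly. For the forward implication, I would first note that every generic map in $\Delta$ is a composite of codegeneracies and inner coface maps (cf.~\ref{generic-and-free}); by (1) and (2) these all induce finite simplicial maps, and composites of finite maps are finite, so every generic map $[n]\to[m]$ in $\Delta$ induces a finite map $X_m\to X_n$.

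It then remains to show $X_n$ is locally finite for each $n$. I would apply the previous paragraph to the unique generic map $[1]\to[n]$, obtaining a finite map $g:X_n\to X_1$. Since $X_1$ is locally finite and the fibres of $g$ are finite $\infty$-groupoids, the long exact sequence of homotopy groups (at any basepoint of $X_n$) for the fibration $g$ shows that $X_n$ has finite homotopy groups in each degree, trivial in sufficiently high degrees; hence $X_n$ is locally finite.

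There is no genuine obstacle: the argument is a mechanical consequence of the pullback characterisation of generic and degeneracy maps already established, together with the routine stability of finiteness under pullback, composition, and fibration sequences over a locally finite base. The only mild point of care is the last step, where one must remember that local finiteness of the domain of a finite map over a locally finite base really does follow from the long exact sequence; but this is a standard property of the finiteness notions introduced in the appendix.
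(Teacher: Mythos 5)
Your proof is correct and follows essentially the same route as the paper: parts (1) and (2) via Lemma~\ref{lem:s0d1} together with pullback-stability of finite maps, and part (3) by composing the generators and then invoking the fact (Lemma~\ref{locfinbase}, which you re-derive via the long exact sequence) that a finite map over a locally finite base has locally finite total space. The only difference is cosmetic: you spell out the composition argument and the fibration-sequence step that the paper delegates to cited lemmas.
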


\begin{proof}
  Since finite maps are stable under pullback (Lemma\ref{lem:finitemaps}),
  both (1) and (2)
  follow from Lemma~\ref{lem:s0d1}.
  
  Re (3): If $X$ is locally finite, then by definition $X_1$ is locally finite, and 
  for each $n\in \N$ the unique generic map $X_n \to X_1$ is finite by (1) or 
  (2).  It follows that $X_n$ is locally finite by Lemma~\ref{locfinbase}.
  The converse implication is trivial.
\end{proof}

%
%
\begin{BM}
  If $X$ is the nerve of a poset $P$, then it is locally finite in the above
  sense if and only if it is locally finite in the usual sense of posets~\cite{Stanley},
  viz.~for every $x, y\in P$, the interval $[x,y]$ is finite.  The points in
  this interval parametrise precisely the two-step factorisations of the unique 
  arrow $x\to y$, so this condition amounts to $X_2 \to X_1$ having finite 
  fibre over $x \to y$.  (The condition $X_1$ locally finite is void in this 
  case, as any discrete set is locally finite;  the condition on $s_0 : X_0 \to 
  X_1$ is also void in this case, as it is always just an inclusion.)
  
  For posets,
  `locally finite' implies `locally finite length'.  (The converse is not true:
  take an infinite set, considered as a discrete poset, and adjoin a top and a 
  bottom element: the result is of locally finite length but not locally finite.)
  Already for categories, it is not true that locally finite implies locally
  finite length: for example the strict nerve of a finite group is locally
  finite but not of locally finite length.
\end{BM}

\begin{lemma}
  If a decomposition space $X$ is locally finite then so are $\Dec_\bot(X)$ and
  $\Dec_\top(X)$.
\end{lemma}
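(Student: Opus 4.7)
The plan is to unwind the definition of locally finite for the two dec constructions and then read off the three required conditions from the preceding lemma, which ensures that in a locally finite decomposition space, every $X_n$ is locally finite, every degeneracy map is finite, and every generic face map is finite.

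Recall that in the lower dec, $(\Dec_\bot X)_n = X_{n+1}$, with the new face maps and degeneracies being the original $d_1,\dots,d_{n+1}$ and $s_1,\dots,s_{n+1}$. In particular
\[
(\Dec_\bot X)_1 = X_2, \qquad (\Dec_\bot X)_0\xrightarrow{\ s_0^{\rm new}\ }(\Dec_\bot X)_1 \;=\; s_1: X_1\to X_2,
\]
\[
(\Dec_\bot X)_2\xrightarrow{\ d_1^{\rm new}\ }(\Dec_\bot X)_1 \;=\; d_2: X_3\to X_2.
\]
For $\Dec_\bot X$ to be locally finite we need (i) $X_2$ locally finite, (ii) $s_1:X_1\to X_2$ finite, and (iii) $d_2:X_3\to X_2$ finite. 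Each is immediate from the preceding lemma: (i) is part (3) of that lemma, (ii) is an instance of part (1) (all degeneracies are finite once $s_0$ is), and (iii) is an instance of part (2) (all generic face maps are finite once $d_1$ is, and $d_2:X_3\to X_2$ is inner and hence generic).

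The argument for $\Dec_\top X$ is entirely symmetric. Here $(\Dec_\top X)_n=X_{n+1}$ with new face maps $d_0,\dots,d_n$ and new degeneracies $s_0,\dots,s_n$ inherited from $X$. In particular the new $s_0$ is the original $s_0:X_1\to X_2$ and the new $d_1$ is the original $d_1:X_3\to X_2$; both are again finite by parts (1) and (2) of the preceding lemma, and $(\Dec_\top X)_1=X_2$ is locally finite by part (3). Since there is no real obstacle beyond correctly matching the reindexed face/degeneracy maps against the hypotheses of the preceding lemma, the only bookkeeping care needed is to confirm that the new $d_1$ in each case is an \emph{inner} (generic) face map of $X$, which is clear from the shift.
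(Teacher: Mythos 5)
Your proof is correct; the paper actually states this lemma without proof, and your argument is exactly the evident intended one: after reindexing, $(\Dec_\bot X)_1=X_2$ is locally finite, the new $s_0$ is the old $s_1$, and the new $d_1$ is the old inner face $d_2:X_3\to X_2$ (dually $s_0$ and $d_1:X_3\to X_2$ for $\Dec_\top$), all covered by the three parts of the preceding lemma. The only point worth making explicit is that the preceding lemma's parts (1) and (2) rely on $X$ being a decomposition space (via Lemma~\ref{lem:s0d1}), which holds by hypothesis, and that $\Dec_\bot X$ and $\Dec_\top X$ are themselves decomposition spaces by Proposition~\ref{Dec=Segal+cULF}, so the definition of locally finite applies to them.
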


\begin{blanko}{Numerical incidence algebra.}
  It follows from \ref{finitetypespan} that, for any locally finite decomposition space
  $X$, the comultiplication maps
  \begin{eqnarray*}
    \Delta_n: \Grpd_{/ X_1} & \longrightarrow & 
    \Grpd_{/ X_1\times X_1\times \dots\times X_1  } 
  \end{eqnarray*}
  given for  $n\geq 0$ by the spans
  $$
  \xymatrix@R-15pt{
   X_1  & \ar[l]-_{m}  X_n\ar[r]^-{p} &  X_1\times X_1\times \dots\times X_1  
  }
  $$
  restrict to linear functors
  \begin{eqnarray*}
    \Delta_n: \grpd_{/ X_1} & \longrightarrow & 
    \grpd_{/ X_1\times X_1\times \dots\times X_1  } .
  \end{eqnarray*}
Now we can take
cardinality of the linear functors
$$
\grpd
\stackrel{\epsilon}\lTo
\grpd_{/X_1}
\stackrel{\Delta}\rTo 
\grpd_{/X_1\times X_1}
$$
to obtain a coalgebra structure,
$$
\Q\stackrel{\norm{\epsilon}}
\lTo\Q_{\pi_0 X_1}\stackrel{\norm{\Delta}}\rTo \Q_{\pi_0 X_1}\tensor\Q_{\pi_0 X_1} 
$$
termed the \emph{numerical incidence coalgebra} of $X$.
\end{blanko}

\begin{blanko}{Morphisms.}\label{rmk:morphisms}
  It is worth noticing that for {\em any} conservative ULF functor $F: Y \to X$ between locally finite decomposition spaces,
  the induced coalgebra homomorphism $F\lowershriek: \Grpd_{/Y_1}\to
  \Grpd_{/X_1}$ restricts to a functor $\grpd_{/Y_1} \to \grpd_{/X_1}$.  In other
  words, there are no further finiteness conditions to impose on morphisms.
\end{blanko}

\begin{blanko}{Incidence bialgebras.}
  If a locally finite decomposition space is monoidal, then the incidence coalgebra
  is in fact a {\em bialgebra}.  Note that since the algebra structure in our
  setting is given simply by a lowershriek map, by the previous remark~\ref{rmk:morphisms} there are
  no finiteness conditions needed in order for it to descend to the numerical
  level.  
  
  We also have the notion of incidence {\em algebra}, defined as the 
  (profinite-dimensional) linear dual of the incidence coalgebra.
  In the presence of a monoidal structure on the decomposition space, this
  causes a potential ambiguity regarding algebra structures.  We make the
  convention that {\em incidence bialgebra} always refers to the incidence
  {\em coalgebra} with its extra multiplication.
\end{blanko}

\begin{blanko}{Numerical convolution product.}
  By duality, if $X$ is locally finite, the convolution
  product descends to the profinite-dimensional vector space $\Q^{\pi_0 X_1}$
  obtained by taking cardinality of $\grpd^{X_1}$.  It follows from the general
  theory of homotopy linear algebra (see Appendix, and specifically 
  \ref{metacard}) that the cardinality of
  the convolution product is the linear dual of the cardinality of the 
  comultiplication.  Since it is the same span that defines the comultiplication
  and the convolution product, it is also the exact same matrix
  that defines the cardinalities of these two maps.
  It follows that the structure constants for the convolution product (with 
  respect to the pro-basis $\{\delta^x\}$) are the same as the structure 
  constants for the comultiplication (with respect to the basis
  $\{\delta_x\}$).
  These are classically called the section coefficients, and we proceed to
  derive formulae for them in simple cases.
\end{blanko}

%


\label{sec:tioncoeff}
Let $X$ be a locally finite decomposition space.
The comultiplication at the objective level
\begin{eqnarray*}
  \grpd_{/X_1} & \longrightarrow & \grpd_{/X_1\times X_1}  \\
  \name f & \longmapsto & \big[ R_f : (X_2)_f \to X_2 \to X_1 \times X_1 \big]
\end{eqnarray*}
yields a comultiplication of vector spaces by
taking cardinality (remembering that $\norm{\name f} = \delta_f$):
\begin{eqnarray*}
\Q_{\pi_0 X_1} & \longrightarrow & \Q_{\pi_0 X_1} \tensor \Q_{\pi_0 X_1} \\
\delta_f & \longmapsto & \norm{R_f} \\
&=& 
\int^{(a,b)\in X_1\times X_1} \norm{(X_2)_{f,a,b}} \delta_a \tensor \delta_b \\
&=& \sum_{a,b}  \norm{(X_1)_{[a]}}\norm{(X_1)_{[b]}} \norm{(X_2)_{f,a,b}} 
\delta_a \tensor \delta_b.
\end{eqnarray*}
where $(X_2)_{f,a,b}$ is the fibre over the three face maps.
The integral sign is a sum weighted by homotopy groups.
These weights together with the cardinality of the triple fibre are called
the {\em section coefficients}, denoted
$$
c^f_{a,b} := \norm{(X_2)_{f,a,b}} \cdot \norm{(X_1)_{[a]}}\norm{(X_1)_{[b]}} .
$$


\bigskip

In the case where $X$ is a Segal space (and even more, when $X_0$ is a 
$1$-groupoid), we can be very explicit about the section coefficients.
For a Segal space we have $X_2 \simeq X_1 \times_{X_0} X_1$, which helps
to compute the fibre of $X_2 \to X_1 \times X_1$:

\begin{lemma}\label{lem:Omega(y)}
  The pullback
  $$
  \xymatrix{
  S \drpullback \ar[r] \ar[d] & X_1 \times_{X_0} X_1 \ar[d] \\
  1 \ar[r]_-{\name{a,b}}& X_1 \times X_1
  }
  $$
  is given by
  $$
   S \ = \ 
  \begin{cases}
    \Omega(X_0,y) & \text{ if } d_0 a \simeq y \simeq d_1 b \\
    0 & \text { else.}
  \end{cases}
  $$
\end{lemma}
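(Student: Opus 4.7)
The plan is to recognise the pullback square of the lemma as a fibre of the diagonal of $X_0$, and then interpret this fibre as a mapping space in an $\infty$-groupoid.

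First, I would observe that the inclusion $X_1\times_{X_0} X_1 \hookrightarrow X_1\times X_1$ itself arises from a pullback: by definition of the fibre product, the square
$$
\xymatrix{
X_1\times_{X_0} X_1 \drpullback \ar[r]\ar[d] & X_1\times X_1 \ar[d]^{(d_0,\,d_1)} \\
X_0 \ar[r]_-{\Delta} & X_0\times X_0
}
$$
is a pullback, where $\Delta$ is the diagonal and the right-hand map sends $(a,b)$ to $(d_0 a,\, d_1 b)$ (using the convention that the composable pair $(a,b)$ in a Segal space has $d_0 a$ equal to $d_1 b$, cf.\ the Segal description $X_2\simeq X_1\times_{X_0}X_1$).

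Next, I would paste this pullback on top of the one in the statement of the lemma. Applying Lemma~\ref{pbk} to the resulting prism, the composite rectangle
$$
\xymatrix{
S \drpullback \ar[r]\ar[d] & X_0 \ar[d]^{\Delta} \\
1 \ar[r]_-{(d_0 a,\, d_1 b)} & X_0 \times X_0
}
$$
is again a pullback. Thus $S$ is exactly the fibre of the diagonal $\Delta:X_0\to X_0\times X_0$ over the point $(d_0 a, d_1 b)$, which is to say $S\simeq \Map_{X_0}(d_0 a,\, d_1 b)$.

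Finally, I would invoke the fact that $X_0$ is an $\infty$-groupoid to identify this mapping space. Since every morphism in an $\infty$-groupoid is invertible, $\Map_{X_0}(x,x')$ is either empty (when $x$ and $x'$ lie in different connected components of $X_0$) or a torsor under the loop space $\Omega(X_0, y)$ at any common representative; in particular it is equivalent to $\Omega(X_0, y)$ once one picks such a representative. This gives the two cases of the lemma. The only potential obstacle is keeping straight the conventions $d_0 = $ target and $d_1 = $ source, but this is settled by matching to the Segal identification $X_2\simeq X_1\times_{X_0}X_1$ used elsewhere in the paper.
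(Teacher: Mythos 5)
Your proposal is correct and follows essentially the same route as the paper: the paper also pastes the square exhibiting $X_1\times_{X_0}X_1$ as the pullback of the diagonal $X_0\to X_0\times X_0$ along $d_0\times d_1$ onto the given square, and reads off $S$ as the fibre of that diagonal, i.e.\ the loop space $\Omega(X_0,y)$ when $d_0a\simeq y\simeq d_1b$ and empty otherwise. Your extra remarks on the identification of the path space with the loop space just make explicit what the paper leaves implicit.
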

\begin{proof}
  We can compute the pullback as
  $$
  \xymatrix{
  S \drpullback \ar[r] \ar[d] & X_1 \times_{X_0} X_1 \drpullback\ar[d]  
  \ar[r] & X_0 \ar[d]^{\text{diag}}\\
  1 \ar[r]_-{\name{a,b}}& X_1 \times X_1 \ar[r]_-{d_0\times d_1} & X_0\times X_0 ,
  }
  $$
  and the result follows since in general 
  $$\xymatrix{
  A\times_C B \drpullback \ar[d] \ar[r] & C \ar[d]^{\text{diag}} \\
  A\times B \ar[r] & C\times C.
  }$$
\end{proof}

\begin{cor}
  Suppose $X$ is a Segal space, and that $X_0$ is a $1$-groupoid. 
  Given $a,b,f\in X_1$ such that $d_0a\cong
  y\cong d_1b$ and $ab=f$, then we have
  $$
  (X_2)_{f,a,b} = \Omega(X_0,y) \times \Omega(X_1,f) .
  $$
\end{cor}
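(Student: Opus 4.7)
\smallskip

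\noindent\textbf{Proof plan.} The strategy is to factor the triple fibre as an iterated pullback, apply the preceding Lemma to the intermediate stage, and compute one more fibre by hand using discreteness of $\Omega(X_0, y)$. First, I would write
$$\xymatrix{
(X_2)_{f,a,b} \drpullback \ar[r] \ar[d] & (X_2)_{a,b} \ar[d]^{d_1} \\
1 \ar[r]_-{\name{f}} & X_1
}$$
where $(X_2)_{a,b}$ is the fibre of $(d_2,d_0):X_2\to X_1\times X_1$ at $(a,b)$, and the right-hand vertical map is the restriction of $d_1$. The preceding Lemma, together with the compatibility $d_0 a \cong y \cong d_1 b$, gives $(X_2)_{a,b}\simeq\Omega(X_0,y)$.

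The second step is to analyse the induced map $\psi:\Omega(X_0,y)\to X_1$. Under the Segal equivalence $X_2\simeq X_1\times_{X_0} X_1$, the face map $d_1$ corresponds to the composition $m$. The identity loop at $y$ (the canonical identification $d_0 a\to y\to d_1 b$) maps under $\psi$ to $ab=f$, by hypothesis. For any other $\alpha\in\Omega(X_0,y)$, the corresponding 2-simplex still has short edges equivalent to $(a,b)$ in $X_1$, so that its long edge $m(a,b,\alpha)$ is equivalent to $f$; that is, the image of $\psi$ lies in the connected component $(X_1)_{[f]}$ of $f$.

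Since $X_0$ is a $1$-groupoid, $\Omega(X_0,y)=\Aut_{X_0}(y)$ is discrete, so the pullback over $f$ splits as a coproduct of path spaces
$$(X_2)_{f,a,b} \;\simeq\; \bigsqcup_{\alpha\in\Omega(X_0,y)} \Map_{X_1}(\psi(\alpha),f),$$
and since $\psi(\alpha)$ and $f$ lie in the same component of $X_1$, each path space is (non-canonically) equivalent to the loop space $\Omega(X_1,f)$. Assembling these equivalences yields $(X_2)_{f,a,b}\simeq\Omega(X_0,y)\times\Omega(X_1,f)$, as claimed.

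The main obstacle is the connectedness claim of step two, which rests essentially on the Segal structure: one must unfold the equivalence $X_2\simeq X_1\times_{X_0} X_1$ and verify that the composition functor $m$ sends every representative of the fibre $(X_2)_{a,b}$ to an arrow equivalent to $f$, regardless of the specific gluing at the middle object. Granting this, the rest of the argument is essentially formal.
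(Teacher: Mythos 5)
You follow essentially the same route as the paper: fibre first over $(a,b)$ using Lemma~\ref{lem:Omega(y)}, observe that the resulting space $\Omega(X_0,y)$ is discrete because $X_0$ is a $1$-groupoid, and then fibre the induced map $\psi\colon\Omega(X_0,y)\to X_1$ over $f$. You have also correctly isolated the one non-formal step, which the paper's own (very terse) proof passes over in silence: that every point of $\Omega(X_0,y)$ is sent by $\psi$ into the connected component of $f$. Unfortunately the justification you offer for this step --- ``the corresponding $2$-simplex still has short edges equivalent to $(a,b)$ in $X_1$, so its long edge is equivalent to $f$'' --- is a non sequitur. In a Segal space the long edge of a $2$-simplex is not determined, even up to equivalence in $X_1$, by the equivalence classes of its two short edges; this is exactly why the fibre $\Omega(X_0,y)$ is nontrivial in the first place. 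Concretely, $\psi(\alpha)$ is the composite of $a$ and $b$ glued along the automorphism $\alpha$ of $y$, i.e.\ essentially $b\circ\alpha\circ a$, and this need not be isomorphic to $b\circ a$ in $X_1$.

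Moreover, the gap cannot be closed without strengthening the hypotheses, because the claim fails in examples. Let $\CC$ be the category with objects $x,y,z$, trivial automorphisms at $x$ and $z$, $\Aut(y)=\Z/2=\{1,\alpha\}$, $\Hom(x,y)=\{a,\alpha a\}$, $\Hom(y,z)=\{b,b\alpha\}$, and $\Hom(x,z)=\{ba,\,b\alpha a\}$ with $ba\neq b\alpha a$; let $X$ be its fat nerve, so $X_0$ is a $1$-groupoid. Then $\Omega(X_0,y)=\Z/2$ and $\psi$ sends $1\mapsto ba$ and $\alpha\mapsto b\alpha a$, which lie in \emph{different} components of $X_1$ (an isomorphism $b\alpha a\to ba$ in the arrow groupoid would require nontrivial automorphisms of $x$ or $z$). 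Hence for $f=ba$ the triple fibre $(X_2)_{f,a,b}$ is contractible, whereas $\Omega(X_0,y)\times\Omega(X_1,f)$ has two points. So the connectedness claim --- and with it the corollary as stated --- requires an additional hypothesis, namely that all the twisted composites $\psi(\alpha)$, $\alpha\in\Omega(X_0,y)$, lie in the component of $f$. This does hold in the situations the paper actually exploits (e.g.\ $X_0$ contractible, as for the classifying space of a monoidal groupoid, or $X_1$ connected), but not in general; you should either add that hypothesis explicitly or restrict to those cases.
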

\begin{proof}
  In this case, since $X_0$ is a $1$-groupoid, the fibres of the diagonal map $X_0 \to X_0 
  \times X_0$ are $0$-groupoids.  Thus the fibre of the previous lemma is 
  the
  discrete space $\Omega(X_0,y)$.  When now computing the fibre
  over $f$, we are taking that many copies of the loop space of $f$.
\end{proof}

\begin{cor}\label{sectioncoeff!}
  With notation as above, the section coefficients for a locally finite Segal
  $1$-groupoid are
  $$
  c_{a,b}^{ab} = \frac{\norm{\Aut(y)}\norm{\Aut(ab)}}{\norm{\Aut(a)}\norm{ \Aut(b)}} .
  $$
\end{cor}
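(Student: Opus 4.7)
The plan is to combine the preceding corollary computing the triple fibre $(X_2)_{f,a,b}$ with the definition of the section coefficients and the standard formula for the cardinality of a connected component of a $1$-groupoid. Since everything is essentially worked out in the preceding lemma, this will be very short.

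First I would unpack the definition: by definition,
$$c^{ab}_{a,b} = \norm{(X_2)_{ab,a,b}} \cdot \norm{(X_1)_{[a]}}\norm{(X_1)_{[b]}}.$$
By the previous corollary (using that $X$ is Segal and $X_0$ is a $1$-groupoid, together with the hypothesis $d_0 a \cong y \cong d_1 b$ so that the composite $ab$ exists), we have an equivalence of $\infty$-groupoids
$$(X_2)_{ab,a,b} \simeq \Omega(X_0,y) \times \Omega(X_1,ab).$$
Since $X_0$ is a $1$-groupoid and (being a Segal space locally finite enough for the formula to make sense) $X_1$ is also a $1$-groupoid in the relevant components, these loop spaces are just the automorphism groups $\Aut(y)$ and $\Aut(ab)$, which are finite by local finiteness. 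Hence
$$\norm{(X_2)_{ab,a,b}} = \norm{\Aut(y)}\cdot\norm{\Aut(ab)}.$$

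Next I would compute the cardinality of a connected component. For any $1$-groupoid $G$ and object $x\in G$, the component $G_{[x]}$ is equivalent to the one-object groupoid $B\Aut(x)$, whose groupoid cardinality is $1/\norm{\Aut(x)}$. Applying this to $X_1$ at the points $a$ and $b$ gives
$$\norm{(X_1)_{[a]}} = \frac{1}{\norm{\Aut(a)}}, \qquad \norm{(X_1)_{[b]}} = \frac{1}{\norm{\Aut(b)}}.$$
Substituting into the formula for $c^{ab}_{a,b}$ yields the claimed expression
$$c^{ab}_{a,b} = \frac{\norm{\Aut(y)}\norm{\Aut(ab)}}{\norm{\Aut(a)}\norm{\Aut(b)}}.$$

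There is no real obstacle here, since both ingredients (the explicit fibre computation and the component cardinality formula) have already been established; the corollary is purely bookkeeping of cardinalities. The only point worth flagging is making sure the hypotheses of the previous corollary are correctly invoked (in particular that $X_0$ is a $1$-groupoid, so that the diagonal $X_0 \to X_0\times X_0$ has discrete fibres and $\Omega(X_0,y)$ really is just the finite set $\Aut(y)$), and that the local finiteness of $X$ guarantees all the cardinalities in sight are well-defined rational numbers.
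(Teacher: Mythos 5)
Your proof is correct and is exactly the computation the paper intends: the corollary is immediate from the fibre identification $(X_2)_{ab,a,b}\simeq\Omega(X_0,y)\times\Omega(X_1,ab)$, the definition of the section coefficients, and the relation $\norm{\Omega(B,b)}\cdot\norm{B_{[b]}}=1$ (Lemma~\ref{loop-finite}), which gives $\norm{(X_1)_{[a]}}=1/\norm{\Aut(a)}$. The hypotheses you flag (that $X_0$, and hence each $X_n$, is a $1$-groupoid, and local finiteness) are indeed what make the loop spaces finite automorphism groups and the cardinalities well-defined.
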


Coassociativity of the incidence coalgebra says that the section coefficients 
$\{c_{a,b}^{ab}\}$ form a {\em 2-cocycle},
$$c^{ab}_{a,b}c^{abc}_{ab,c}=c^{bc}_{b,c}c^{abc}_{a,bc}.$$
In fact this cocycle is cohomologically trivial, given by the 
coboundary of a 1-cochain,
$$c^{ab}_{a,b}=\partial(\phi)(a,b)=\phi(a)\phi(ab)^{-1}\phi(b),$$
In fact, if one fixes $s,t$ such that $s+t=1$, the 1-cochain
may be taken to be
$$
\phi(x\stackrel a\to y)=\frac{|\Aut(x)|^s|\Aut(y)|^t}{|\Aut(a)|}
$$.

\begin{blanko}{`Zeroth section coefficients': the counit.}
 Let us also say a word about the zeroth section coefficients, i.e.~the
 computation of the counit: the main case is when $X$ is complete (in the sense
 that $s_0$ is a monomorphism).  In this case, clearly we have
  $$
  \epsilon(f) = \begin{cases}
    1 & \text{ if $f$ degenerate } \\
    0 & \text{ else.}
  \end{cases}
  $$
  If $X$ is Rezk complete, the first condition is equivalent to being invertible.

  The other easy case is when $X_0=*$.  In this case
  $$
  \epsilon(f) = \begin{cases}
    \Omega(X_1,f) & \text{ if $f$ degenerate } \\
    0 & \text{ else.}
  \end{cases}
  $$
\end{blanko}

\begin{blanko}{Example.}
  The strict nerve of a $1$-category $\CC$ is a decomposition space which is
  discrete in each degree.  The resulting coalgebra at the numerical level
  (assuming the due finiteness conditions) is the coalgebra of
  Content--Lemay--Leroux~\cite{Content-Lemay-Leroux}, and if the category is
  just a poset, that of Rota et al.~\cite{JoniRotaMR544721}.

  For the fat nerve $X$ of $\CC$, we find
  \label{lem:h*h}
$$h^a * h^b = \begin{cases}
  \Omega(X_0,y) \, h^{ab} & \text{ if $a$ and $b$ composable at $y$} \\
  0 & \text{ else,}
\end{cases}
$$
\noindent
as follows from \ref{lem:Omega(y)}.
Note that the cardinality of the representable $h^a$ is generally different
from the canonical basis element $\delta^a$.
\end{blanko}

%
%

%
%
%
%
%
%

\begin{blanko}{Finite support.}\label{fin-supp}
  It is also interesting to consider the subalgebra of the incidence algebra
  consisting of functions with finite support, i.e.~the full subcategory
  $\grpd^{X_1}_{fin.sup} \subset \grpd^{X_1}$, and numerically $\Q^{\pi_0
  X_1}_{fin.sup} \subset \Q^{\pi_0 X_1}$.  Of course we have canonical
  identifications $\grpd^{X_1}_{fin.sup} \simeq \grpd_{/X_1}$, as well as
  $\Q^{\pi_0 X_1}_{fin.sup} \simeq \Q_{\pi_0 X_1}$, but it is important to keep
  track of which side of duality we are on.
  
  That the decomposition space is locally finite is not the appropriate condition
  for these subalgebras to
  exist.  Instead the requirement is that $X_1$ be locally finite and
  the functor
  $$
  X_2 \to X_1 \times X_1
  $$ be finite.  (This is always the case for a locally finite Segal $1$-groupoid, by 
  Lemma~\ref{lem:Omega(y)}.)
  Similarly, one can ask for the convolution unit to have finite support,
  which is to require $X_0 \to 1$ to be a finite map.
 
  Dually, the same conditions ensure that comultiplication and counit extend
  from $\grpd_{/X_1}$ to $\Grpd_{/X_1}^{rel.fin}$, which numerically is
  some sort of vector space of summable infinite linear combinations.
  An example of this situation is given by the bialgebra of $P$-trees, whose 
  comultiplication does extend to $\Grpd_{/X_1}^{rel.fin}$.  Importantly, this is
  the home for the Green function, an infinite (homotopy)
  sum of trees, and for the Fa\`a di Bruno formula it satisfies, which does not hold
  for any finite truncation.  See \cite{GalvezCarrillo-Kock-Tonks:1207.6404} for these 
  results.

\end{blanko}

\begin{blanko}{Examples.}\label{categoryalg}
  If $X$ is the strict nerve of a $1$-category $\CC$, then 
  the finite-support convolution algebra is precisely the {\em category algebra}
  of $\CC$.  (For a finite category, of course
  the two notions coincide.)  
  
  Note that the convolution unit is
  $$
  \epsilon = \sum_x \delta^{\id_x} = \begin{cases}
    1 & \text{for id arrows} \\
    0 & \text{ else,}
  \end{cases}
  $$
  the sum of all indicator functions of identity arrows, so it will be finite if
  and only if the category has only finitely many objects.
  
  In the case of the fat nerve of a $1$-category, the finiteness condition
  for comultiplication is
  implied by the condition that every object has a finite automorphism
  group (a condition implied by local finiteness).
  On the other hand,
  the convolution unit has finite support precisely when there is only a finite
  number of isoclasses of objects, already a more drastic condition.
  Note the `category algebra' interpretation: compared to the usual category 
  algebra there is a symmetry factor (cf.~\ref{lem:h*h}):
  $$h^a * h^b = \begin{cases}
  \Omega(X_0,y) \, h^{ab} & \text{ if $a$ and $b$ composable at $y$} \\
  0 & \text{ else.}
\end{cases}
$$
  
  Finally, the finite-support incidence algebras are important in the case
  of the Waldhausen $S$-construction: they are the Hall algebras, cf.~\ref{sec:Wald}
  below.  The finiteness conditions are then homological, namely
  finite $\operatorname{Ext}^0$ and $\operatorname{Ext}^1$.
\end{blanko}

\def\inputfile{M-condition.tex}

\subsection{\M decomposition spaces}

\label{sec:M}

\begin{lemma}\label{lem:comp-finite}
  If $X$ is a complete decomposition space then the following conditions are
  equivalent
  \begin{enumerate}
  \item $d_1:X_2\to X_1$ is finite.

  \item $d_1:\nondeg X_2\to X_1$ is finite.

  \item $d_1^{r-1}:\nondeg X_r\to X_1$ is finite for all $r\geq 2$.
  \end{enumerate}
\end{lemma}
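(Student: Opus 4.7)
The plan is to prove the cycle $(1) \Rightarrow (3) \Rightarrow (2) \Rightarrow (1)$, of which $(3) \Rightarrow (2)$ is immediate by specialising to $r = 2$.

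For $(2) \Rightarrow (1)$, I would exploit the sum decomposition of $X_2$ available in a complete decomposition space. By Corollary~\ref{cor:Xn},
$$X_2 \;=\; \nondeg X_2 \;+\; s_0(\nondeg X_1) \;+\; s_1(\nondeg X_1) \;+\; s_1 s_0(X_0).$$
Using the simplicial identities $d_1 s_0 = \id = d_1 s_1$ and $d_1 s_1 s_0 = s_0$, the map $d_1$ restricts on the three `degenerate' summands to the inclusion $\nondeg X_1 \hookrightarrow X_1$ (twice) and to $s_0 : X_0 \to X_1$. All three are monomorphisms (the last by completeness), and hence finite, since a monomorphism of $\infty$-groupoids has $(-1)$-groupoid fibres. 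Since fibres commute with finite sums, a map out of a finite sum is finite if and only if its restriction to each summand is; thus $d_1 : X_2 \to X_1$ is finite if and only if its restriction to $\nondeg X_2$ is.

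For $(1) \Rightarrow (3)$, let $r \geq 2$. By Lemma~\ref{lem:s0d1}, every generic face map in a decomposition space is a pullback of $d_1 : X_2 \to X_1$; in particular each inner face map $d_1 : X_k \to X_{k-1}$ with $k \geq 2$ is such a pullback, hence finite whenever $d_1 : X_2 \to X_1$ is, since finite maps are stable under pullback. Writing $d_1^{r-1} : X_r \to X_1$ as the composite of inner face maps $d_1 : X_k \to X_{k-1}$ for $k = r, r-1, \ldots, 2$, and invoking closure of finite maps under composition, shows that $d_1^{r-1} : X_r \to X_1$ is finite. Since $\nondeg X_r \hookrightarrow X_r$ is a monomorphism, hence finite, the composite $\nondeg X_r \hookrightarrow X_r \to X_1$ is again finite, giving $(3)$.

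The argument is essentially a matter of assembling closure properties of finite maps with the two structural results (Corollary~\ref{cor:Xn} and Lemma~\ref{lem:s0d1}) that make the decomposition-space axiom usable in this context. There is no genuine obstacle; the one point where completeness is indispensable is in guaranteeing that $s_0 : X_0 \to X_1$, and hence by Lemma~\ref{all-s-mono} every degeneracy map, is a monomorphism, which is what validates the transfer of finiteness between the whole of $X_2$ and its effective part $\nondeg X_2$.
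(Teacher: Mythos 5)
Your proof is correct and takes essentially the same approach as the paper: the paper establishes $(1)\Leftrightarrow(2)$ by exactly your sum decomposition of $X_2$ into effective and degenerate components (in its word notation, $\nondeg X_2 + X_{0a}+X_{a0}+X_{00}$), observing that $d_1$ restricts to monomorphisms on the degenerate summands, and dismisses $(3)$ as ``similar''. Your explicit handling of $(1)\Rightarrow(3)$ via Lemma~\ref{lem:s0d1}, pullback-stability of finite maps, and closure under composition is the natural way to fill in that omitted step and uses the same ingredients the paper deploys elsewhere (e.g.\ the lemma in Section~\ref{sec:findec} showing that finiteness of $d_1$ forces finiteness of all generic face maps).
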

\begin{proof}
  We show the first two conditions are equivalent; the third is similar.
  Using the word notation of \ref{w} we consider the map
  $$
  \nondeg X_2 + \nondeg X_1 + \nondeg X_1 + X_0\xrightarrow{~\simeq~}
  \nondeg X_2 + X_{0a} + X_{a0} + X_{00} \xrightarrow{~=~}X_2\xrightarrow{~d_1~}X_1
  $$
  Thus $d_1:X_2\to X_1$ is finite if and only if the restriction of this map to
  the first component, $d_1:\nondeg X_2\to X_1$, is finite.  By completeness the
  restrictions to the other components are finite (in fact, mono).
\end{proof}

\begin{cor}
  A complete decomposition space $X$ is locally finite if and only if $X_1$ is
  locally finite and $d_1^{r-1}:\nondeg X_r\to X_1$ is finite for all $r\geq 2$.
\end{cor}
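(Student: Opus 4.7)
The statement is essentially a bookkeeping corollary of Lemma~\ref{lem:comp-finite}, together with one observation about the completeness hypothesis. My plan is to check each direction by going through the three conditions in the definition of locally finite (Definition~\ref{finitary}): local finiteness of $X_1$, finiteness of $s_0 \colon X_0 \to X_1$, and finiteness of $d_1 \colon X_2 \to X_1$.

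\textbf{Forward direction.} Assuming $X$ is locally finite, $X_1$ is locally finite by definition, and $d_1\colon X_2\to X_1$ is finite. Applying the equivalence (1)~$\Leftrightarrow$~(3) of Lemma~\ref{lem:comp-finite} immediately gives that $d_1^{r-1}\colon \nondeg X_r \to X_1$ is finite for all $r\geq 2$.

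\textbf{Backward direction.} Assume $X_1$ is locally finite and each $d_1^{r-1}\colon \nondeg X_r \to X_1$ is finite for $r\geq 2$. The $r=2$ case together with the equivalence (2)~$\Leftrightarrow$~(1) of Lemma~\ref{lem:comp-finite} gives that $d_1\colon X_2 \to X_1$ is finite. It remains only to verify that $s_0\colon X_0 \to X_1$ is finite. This is where completeness enters: by definition (\ref{complete}), $s_0$ is a monomorphism, so its fibres are $(-1)$-groupoids (either empty or contractible, see \ref{def:mono}). Any such groupoid is finite, so $s_0$ is automatically a finite map. This verifies all three conditions of Definition~\ref{finitary}, so $X$ is locally finite.

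\textbf{Main obstacle.} There is essentially no obstacle, since the whole content has been isolated in Lemma~\ref{lem:comp-finite}; the only substantive remark is the automatic finiteness of $s_0$ granted by completeness, which should perhaps be pointed out explicitly to justify why no separate hypothesis on $s_0$ (or on $X_0$) appears in the statement.
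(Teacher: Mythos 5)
Your proof is correct and is precisely the argument the paper leaves implicit (the corollary is stated without proof, as an immediate consequence of Lemma~\ref{lem:comp-finite}). The one point worth making explicit --- that completeness makes $s_0$ a monomorphism, hence a finite map since its fibres are empty or contractible --- is exactly the observation you supply, so nothing is missing.
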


\begin{blanko}{\M condition.}\label{M}
  A complete decomposition space $X$ is called {\em \M}
  if it is locally finite and \FILT (i.e.~of locally finite length).
  It then follows that  the restricted composition map
  $$
  \sum_r{d_1}^{r-1}:\sum_r \nondeg X_r\to X_1
  $$ 
  is finite.  In other words, the spans defining $\Phieven$ and $\Phiodd$ are of
  finite type, and hence descend to the finite groupoid-slices $\grpd_{/X_1}$.
  In fact we have:
\end{blanko}

\begin{lemma}\label{lem:oldcharacterisationofM}
  A complete decomposition space $X$ is \M if and only if $X_1$ is locally
  finite and the restricted composition map
  $$
  \sum_r{d_1}^{r-1}:\sum_r \nondeg X_r\to X_1
  $$ 
  is finite. 
\end{lemma}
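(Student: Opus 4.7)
The plan is to prove both directions as essentially a direct unpacking of the definitions, using Lemma~\ref{lem:comp-finite} as the main input together with the basic fact that a finite $\infty$-groupoid has only finitely many connected components (from the definition of finite $\infty$-groupoid recalled in~\ref{sec:finite}). Throughout I interpret the map $\sum_r {d_1}^{r-1}$ at $r=0$ as the map $\nondeg X_0 = X_0 \xrightarrow{s_0} X_1$ (the unique generic map $[1]\to[0]$), and at $r=1$ as the inclusion $\nondeg X_1 \hookrightarrow X_1$.

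For the forward direction, assume $X$ is \M, so by definition $X$ is complete, locally finite, and \FILT. I need only verify that the span map is finite, i.e.\ has finite fibre over every $a\in X_1$. The fibre is $\sum_r (\nondeg X_r)_a$. First I check that each individual $(\nondeg X_r)_a$ is a finite $\infty$-groupoid: for $r\geq 2$ this is Lemma~\ref{lem:comp-finite} applied to the finite map $d_1:X_2\to X_1$; for $r=0$ this is the assumption that $s_0$ is finite; for $r=1$ the inclusion $\nondeg X_1 \hookrightarrow X_1$ is a monomorphism, so its fibres are $(-1)$-truncated and in particular finite. Then the \FILT hypothesis ensures that only finitely many $r$ contribute non-empty summands, so the total fibre is a finite sum of finite $\infty$-groupoids, hence finite.

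For the backward direction, I assume $X_1$ locally finite and that the sum map $\sum_r {d_1}^{r-1}$ is finite, and need to conclude that $X$ is both locally finite and \FILT. For \FILT, I fix $a \in X_1$ and note that its fibre $\sum_r (\nondeg X_r)_a$ under the sum map is by hypothesis a finite $\infty$-groupoid, so has only finitely many connected components; in particular the summands $(\nondeg X_r)_a$ are empty for all but finitely many $r$, giving exactly the locally finite length condition. For local finiteness, I observe that the restriction of a finite map to an individual summand of its domain is again finite (each fibre being a summand of the original fibre); applied to the $r=0$ summand this yields $s_0:X_0\to X_1$ finite, and applied to the $r=2$ summand it yields $d_1:\nondeg X_2\to X_1$ finite, which by Lemma~\ref{lem:comp-finite} is equivalent to $d_1:X_2\to X_1$ being finite.

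The only real subtlety, and the step to watch most carefully, is the passage from ``the total fibre $\sum_r (\nondeg X_r)_a$ is a finite $\infty$-groupoid'' to ``only finitely many $r$ give a non-empty $(\nondeg X_r)_a$'' in the backward direction; this relies specifically on the definition of finite $\infty$-groupoid from~\ref{sec:finite} requiring finite $\pi_0$, and is where both hypotheses of the equivalence are actually used in concert, since without the length bound one might have infinitely many empty-but-indexed summands rephrased as an infinite coproduct, which would no longer be a single finite $\infty$-groupoid.
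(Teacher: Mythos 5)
Your proof is correct and follows essentially the same route as the paper's: the forward direction is a direct unpacking of the definitions via Lemma~\ref{lem:comp-finite}, and the backward direction recovers local finiteness by restricting the finite sum map to individual summands and recovers the length condition from finiteness of $\pi_0$ of the total fibre. Your write-up merely makes explicit the low-degree cases ($r=0,1$) and the finite-$\pi_0$ step that the paper leaves implicit.
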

  
\begin{proof}
  `Only if' is clear.  Conversely, if the map $m:\sum_r{d_1}^{r-1}:\sum_r
  \nondeg X_r\to X_1$ is finite, in particular for each individual $r$ the map
  $\nondeg X_r\to X_1$ is finite, and then also $X_r \to X_1$ is finite, by
  Lemma~\ref{lem:comp-finite}.  Hence $X$ is altogether locally finite.  But it
  also follows from finiteness of $m$ that for each $a\in X_1$, the fibre
  $(\nondeg X_r)_a$ must be empty for big enough $r$, so the filtration
  condition is satisfied, so altogether $X$ is \M.
\end{proof}

\begin{BM}
  If $X$ is a Segal space, the \M condition says that for each arrow $a\in X_1$,
  the factorisations of $a$ into nondegenerate $a_i\in \nondeg X_1$ have
  bounded length.   In particular, if $X$ is the strict nerve of a $1$-category, then it is \M in the sense of
  the previous definition if and only if it is \M in the sense of Leroux.  (Note
  however that this would also have been true if we had not included the
  condition that $X_1$ be locally finite (as obviously this is automatic for any
  discrete set).  We insist on including the condition $X_1$ locally finite
  because it is needed in order to have a well-defined cardinality.)
\end{BM}

\begin{blanko}{Filtered coalgebras in vector spaces.}
  A \M decomposition space is in particular length-filtered.
  The coalgebra filtration (\ref{coalgebrafiltGrpd}) at the objective level
  $$
  \Grpd_{/X_1^{(0)}} \to \Grpd_{/X_1^{(1)}} \to \cdots \to \Grpd_{/X_1} 
  $$
  is easily seen to descend to the finite-groupoid coalgebras:
    $$
  \grpd_{/X_1^{(0)}} \to \grpd_{/X_1^{(1)}} \to \cdots \to \grpd_{/X_1}  ,
  $$
  and taking cardinality then yields a coalgebra filtration at
  the numerical level too.
  From the arguments in \ref{coalgebrafiltGrpd}, it follows that this
  coalgebra filtration
  $$
  C_0 \into C_1 \into \cdots \into C
  $$
  has the property that $C_0$ is generated by group-like elements. (This property
  is found useful in the context of perturbative
  renormalisation~\cite{Kock:1411.3098}, \cite{Kock:1512.03027}, where it serves
  as basis for recursive arguments, as an alternative to the more
  common assumption of connectedness.)  Finally, if $X$ is a graded \M
  decomposition space, then the resulting coalgebra at the algebraic level is
  furthermore a graded coalgebra.
\end{blanko}

The following is an immediate corollary to \ref{prop:FILTSegal=Rezk}.  It
extends the classical fact that a \M category in the sense of Leroux does not
have non-identity invertible arrows~\cite[Lemma~2.4]{LawvereMenniMR2720184}.

\begin{cor}\label{cor:MSegal=Rezk}
  If a \M decomposition space $X$ is a Segal space, then it is Rezk complete.
\end{cor}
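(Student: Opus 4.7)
The plan is to observe that this is genuinely immediate from the definitions and from Proposition~\ref{prop:FILTSegal=Rezk}, so there is no real content to add beyond unwinding the defining conditions. Specifically, by Definition~\ref{M}, a \M decomposition space is by construction a complete decomposition space that is both locally finite and \FILT (of locally finite length). Thus any \M decomposition space is in particular a \FILT decomposition space, and the local finiteness hypothesis plays no role for the conclusion at hand.

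Given this, I would first cite Definition~\ref{M} to extract that $X$ is \FILT. Then, since $X$ is assumed to be a Segal space, Proposition~\ref{prop:FILTSegal=Rezk} applies verbatim and yields that $X$ is Rezk complete. No further argument is required; in particular, there is no obstacle to overcome, since the work was already done in proving~\ref{prop:FILTSegal=Rezk} (where the key point was that in a Segal space a nondegenerate invertible $1$-simplex would produce, via~\ref{lem:Segal:nondeg}, effective simplices of unbounded length by shuttling back and forth along the arrow and its inverse, contradicting the \FILT hypothesis).

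In short, the whole proof is: \emph{A \M decomposition space is \FILT, hence Rezk complete by Proposition~\ref{prop:FILTSegal=Rezk}.} The only thing worth remarking on is that this generalises the classical fact recorded in Lawvere--Menni~\cite{LawvereMenniMR2720184} that a \M category in the sense of Leroux has no non-identity invertible arrows, a point which is already made in the sentence preceding the statement of the corollary and does not need to be reproved.
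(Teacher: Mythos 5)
Your proof is correct and is exactly the paper's argument: the corollary is stated there as an immediate consequence of Proposition~\ref{prop:FILTSegal=Rezk}, since a \M decomposition space is by definition \FILT. Nothing further is needed.
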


\def\inputfile{M-inversion-algebraic-level.tex}

\begin{blanko}{\M inversion at the algebraic level.}
  Assume $X$ is a locally finite complete decomposition space.
  The span
  $
  \xymatrix@R-15pt{
   X_1  & \ar[l]_=  X_1\ar[r] &  1}
  $
  defines the zeta functor (cf.~\ref{zeta}), which as a presheaf is 
  $\zeta=\int^t h^t$, the homotopy sum of the representables.
  Its cardinality is the usual zeta 
  function in the incidence algebra $\Q^{\pi_0X_1}$.

  The spans $\xymatrix@R-15pt{
   X_1  & \ar[l]  \nondeg X_r\ar[r] &  1}$ define the Phi functors
  \begin{eqnarray*}
    \Phi_r: \Grpd_{/ X_1} & \longrightarrow & \Grpd ,
  \end{eqnarray*}
  with $\Phi_0=\epsilon$. By Lemma~\ref{lem:comp-finite},
  these functors descend to 
    \begin{eqnarray*}
    \Phi_r: \grpd_{/ X_1} & \longrightarrow & \grpd ,
  \end{eqnarray*}
 and we can take cardinality to obtain functions
$
\norm{\zeta}: \pi_0(X_1) \to \Q
$
and $\norm{\Phi_r} : \pi_0(X_1) \to \Q$, elements in the incidence algebra 
$\Q^{\pi_0X_1}$.

Finally, when $X$ is furthermore assumed to be \M,
we can take cardinality of the abstract \M
inversion formula of \ref{thm:zetaPhi}:
\end{blanko}

\begin{thm}\label{thm:|M|}
  If $X$ is a \M decomposition space,
  then the cardinality of the zeta functor, $\norm{\zeta}:\Q_{\pi_0 X_1}\to\Q$,
  is convolution invertible with inverse $\norm{\mu}:= \norm{\Phieven} - \norm{\Phiodd}$:
  $$
  \norm\zeta * \norm\mu = \norm\epsilon = \norm\mu * \norm\zeta .
  $$
\end{thm}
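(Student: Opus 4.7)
The plan is to deduce the result by taking cardinality of the sign-free objective inversion formula established in Theorem~\ref{thm:zetaPhi}, which holds in the $\infty$-category $\LIN$ of slices and linear functors. At the algebraic level we will finally have subtraction available, so a rearrangement will produce the stated two-sided convolution inverse.

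First I would verify that $\zeta$, $\Phieven$, and $\Phiodd$ all descend to linear functors between finite groupoid slices, so that their cardinalities exist in the incidence algebra $\Q^{\pi_0 X_1}$. For $\zeta$ this is immediate from local finiteness (the span $X_1 \stackrel=\leftarrow X_1 \to 1$ is of finite type since $X_1$ is locally finite). For the Phi functors, the span defining $\Phi_r$ is $X_1 \leftarrow \nondeg X_r \to 1$, and by Lemma~\ref{lem:comp-finite} the map $\nondeg X_r \to X_1$ is finite for each $r$. The key point is then that the coproduct $\sum_r \nondeg X_r \to X_1$ is itself finite: this is precisely Lemma~\ref{lem:oldcharacterisationofM}, which repackages the \M condition. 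Restricting to even and odd components yields finite maps as well, so $\Phieven$ and $\Phiodd$ are given by finite-type spans and thus define linear functors $\grpd_{/X_1} \to \grpd$ in the sense of the Appendix.

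Next I would invoke the functoriality of the cardinality construction $\normnorm{\ }:\ind\lin\to\Vect$, which by \ref{metacard} respects tensor products and composition of spans; in particular it sends convolution products to convolution products and preserves the counit $\epsilon$. Applying $\normnorm{\ }$ to the two identities of Theorem~\ref{thm:zetaPhi} therefore yields
\begin{align*}
\norm\zeta * \norm{\Phieven} &\;=\; \norm\epsilon \;+\; \norm\zeta * \norm{\Phiodd},\\
\norm{\Phieven}*\norm\zeta &\;=\; \norm\epsilon \;+\; \norm{\Phiodd}*\norm\zeta,
\end{align*}
in the incidence algebra $\Q^{\pi_0 X_1}$. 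Setting $\norm\mu := \norm{\Phieven} - \norm{\Phiodd}$ and using bilinearity of the convolution product over $\Q$, the two identities rearrange to $\norm\zeta * \norm\mu = \norm\epsilon$ and $\norm\mu * \norm\zeta = \norm\epsilon$, as required.

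The main obstacle, and the only place where the full \M hypothesis (as opposed to mere local finiteness) is used, is ensuring that the infinite formal sums $\Phieven = \sum_{n \text{ even}} \Phi_n$ and $\Phiodd = \sum_{n \text{ odd}} \Phi_n$ are genuine linear functors at the finite level, i.e.\ that the total spans have finite fibres over each $f \in X_1$. This is exactly what tightness buys us: for any $f$, the fibre $(\nondeg X_r)_f$ is empty once $r > \ell(f)$, so the total fibre $\bigl(\sum_r \nondeg X_r\bigr)_f$ is a finite sum of finite groupoids. Without tightness, cardinalities might not exist and the formula for $\norm\mu$ would be merely formal. Beyond this verification, the proof is purely a functoriality argument; no further simplicial combinatorics is required, since all the simplicial content has already been absorbed into Theorem~\ref{thm:zetaPhi}.
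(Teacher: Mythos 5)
Your proposal is correct and follows essentially the same route as the paper: the individual $\Phi_r$ descend to finite coefficients by Lemma~\ref{lem:comp-finite}, the \M condition (via Lemma~\ref{lem:oldcharacterisationofM}) guarantees that the total span $\sum_r \nondeg X_r \to X_1$ is finite so that $\Phieven$ and $\Phiodd$ are genuine finite-type linear functors, and the theorem then follows by taking cardinality of both identities in Theorem~\ref{thm:zetaPhi} and rearranging with the subtraction now available. The only cosmetic slip is attributing the finite-type property of the zeta span to local finiteness of $X_1$ (its left leg is the identity, hence automatically finite; local finiteness is what makes the cardinality land in $\Q^{\pi_0 X_1}$), but this does not affect the argument.
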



\section{Examples}
\label{sec:ex}

\def\inputfile{ex.tex}

It is characteristic for the classical theory of incidence (co)algebras of
posets that most often it is necessary to impose an equivalence relation on the
set of intervals in order to arrive at the interesting `reduced' (co)algebras.
This equivalence relation may be simply isomorphism of posets, or equality of
length of maximal chains as in binomial posets \cite{Doubilet-Rota-Stanley}, or
it may be more subtle order-compatible relations \cite{Dur:1986},
\cite{Schmitt:1994}.  Content, Lemay and Leroux~\cite{Content-Lemay-Leroux}
remarked that in some important cases the relationship between the original
incidence coalgebra and the reduced one amounts to a conservative ULF functor,
although they did not make this notion explicit.  From our global simplicial
viewpoint, we observe that very often these cULF functors arise from decalage,
but often of a decomposition space which not a poset and perhaps not even a
Segal space.

\begin{blanko}{Decomposition spaces for the classical series.}
  Classically, the most important incidence algebras 
  are the power series representations.  From the
  perspective of the objective method, these representations appear as
  cardinalities of various monoidal structures on species, realised as incidence
  algebras with $\infty$-groupoid coefficients (or at least $1$-groupoid 
  coefficients).
  We list six examples illustrating some of the various kinds of
  generating functions listed by Stanley~\cite{Stanley:MR513004} (see also
  D\"ur~\cite{Dur:1986}). 
\begin{enumerate}
  \item Ordinary generating functions, the zeta function being $\zeta(z)= 
  \sum_{k\geq 0} z^k$.  This comes from ordered
  sets and ordinal sum, and the incidence algebra is that of ordered species
  with the ordinary product.
  
  \item Exponential generating functions, the zeta function being
  $\zeta(z)=\sum_{k\geq 0} \frac{z^k}{k!}$.
  Objectively, there are two versions of this: one coming from the standard
  Cauchy product of species, and one coming from the shuffle product of 
  $\mathbb L$-species (in the sense of \cite{Bergeron-Labelle-Leroux}).

  \item Ordinary Dirichlet series, the zeta function being $\zeta(z)=\sum_{k>0} k^{-s}$.
  This comes from ordered sets with the cartesian product.

  \item `Exponential' Dirichlet series, the zeta function being $\zeta(z)=
  \sum_{k>0} \frac{k^{-s}}{k!}$.
  This comes from the Dirichlet product of arithmetic species 
  \cite{Baez-Dolan:zeta}, also called the arithmetic product~\cite{Maia-Mendez:0503436}.

  \item $q$-exponential generating series, with zeta function $\zeta(z)=
  \sum_{k\geq 0} \frac{z^k}{[k]!}$.  This comes from 
  the Waldhausen $S$-construction on the category of finite vector 
  spaces.  The incidence algebra is that of $q$-species with a version of the
  external product of Joyal--Street~\cite{Joyal-Street:GLn}.

  \item Some variation with zeta function $\zeta(z)=
  \sum_{k\geq 0} \frac{z^k}{\#\Aut(\F_q^k)}$, which arises from 
  $q$-species with the `Cauchy' product studied  by 
  Morrison~\cite{Morrison:0512052}.
\end{enumerate}
Of these examples, only (1) and (3) have trivial section coefficients
and come from a \M category in the sense of Leroux.  We proceed to the 
details.

\end{blanko}

\subsection{Additive examples}

We start with several easy examples that serve to 
reiterate the importance of having incidence algebras of posets, monoids and 
monoidal $\infty$-groupoids on the same footing, with conservative ULF functors connecting them.

\begin{blanko}{Linear orders and the additive monoid.}\label{ex:N&L}
  Let $\mathbf L$ denote the nerve of the poset $(\N,\leq)$, and let
  $\mathbf N$ be the nerve of the additive monoid $(\N,+)$.
Imposing the equivalence relation `isomorphism of intervals' on the incidence coalgebra of $\mathbf L$ gives that of $\mathbf N$, and Content--Lemay--Leroux observed that this reduction is induced by a
  conservative ULF functor $r:\mathbf L \to \mathbf N$ sending $a\leq
  b$ to $b-a$. In fact we have:
\begin{lemma}
There is an isomorphism of simplicial sets 
  \begin{align*}
   \Dec_\bot (\mathbf N)  & \stackrel\simeq\longrightarrow  \mathbf L  \\
   \intertext{given in degree $k$ by}
    (x_0,\dots,x_k) & \longmapsto  [x_0\leq x_0+x_1 \leq  \dots \leq x_0+\dots+x_k]
  \end{align*}
and the conservative ULF functor $r$ is isomorphic to the structure map   $$
d_\bot: \Dec_{\bot}(\mathbf N) \to \mathbf N,\qquad (x_0,\dots,x_k)\mapsto (x_1,\dots,x_k).$$
\end{lemma}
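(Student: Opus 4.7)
The plan is to define the candidate isomorphism $\phi_k\colon \Dec_\bot(\mathbf N)_k\to\mathbf L_k$ in each degree by the explicit formula in the statement, verify it is a bijection of sets, and then check compatibility with all face and degeneracy maps. The identification of $r$ with $d_\bot$ will then fall out by direct computation.

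First I would spell out the levels. Since $\mathbf N$ is the nerve of the one-object monoid $(\N,+)$, we have $\mathbf N_n = \N^n$, and therefore $\Dec_\bot(\mathbf N)_k = \mathbf N_{k+1} = \N^{k+1}$, whose elements I write $(x_0,x_1,\dots,x_k)$. On the other hand $\mathbf L_k$ consists of weakly increasing chains $(a_0\leq a_1\leq\cdots\leq a_k)$ in $\N$. The map $\phi_k(x_0,x_1,\dots,x_k)=(x_0,\,x_0+x_1,\,\dots,\,x_0+\cdots+x_k)$ is a bijection in each degree, with two-sided inverse sending a chain $(a_0\leq\cdots\leq a_k)$ to its successive differences $(a_0,\,a_1-a_0,\,\dots,\,a_k-a_{k-1})$, which are non-negative integers by monotonicity.

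Second I would verify simpliciality. By definition of $\Dec_\bot$, the face maps are $d_i^{\Dec} = d_{i+1}^{\mathbf N}$ and the degeneracies are $s_i^{\Dec} = s_{i+1}^{\mathbf N}$. Unwinding, $d_0^{\Dec}$ replaces $(x_0,x_1,x_2,\dots)$ by $(x_0+x_1,x_2,\dots)$, an inner $d_i^{\Dec}$ sums the adjacent entries $x_i,x_{i+1}$, and $d_k^{\Dec}$ deletes $x_k$. Passing through $\phi_k$ to partial sums, each of these operations becomes precisely the deletion of the entry $a_i$ from the chain, i.e.\ the nerve face map of the poset $\mathbf L$. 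Similarly each $s_i^{\Dec}$ inserts a $0$ at the appropriate slot in the $x$-tuple, which under $\phi$ corresponds to repeating $a_i$ in the chain, matching the degeneracy in $\mathbf L$. These are the usual index-bookkeeping verifications and are entirely combinatorial.

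Finally, the counit $d_\bot\colon \Dec_\bot(\mathbf N)\to\mathbf N$ is in degree $k$ the map $d_0^{\mathbf N}\colon\N^{k+1}\to\N^k$, which simply deletes $x_0$. Composing with $\phi_k^{-1}$ gives $(a_0\leq\cdots\leq a_k)\mapsto(a_1-a_0,\,a_2-a_1,\,\dots,\,a_k-a_{k-1})$, which is precisely the value of $r_k$ on the chain (the monoid element associated to each step $a_{i-1}\to a_i$ is $a_i-a_{i-1}$). Thus $r = d_\bot\circ\phi^{-1}$ at the level of simplicial sets. The only potential obstacle anywhere is keeping track of the shift in indexing induced by $\Dec_\bot$ together with the convention of indexing tuples from $0$ versus $1$; once conventions are fixed at the outset, nothing nontrivial remains.
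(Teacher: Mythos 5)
Your proof is correct: the paper states this lemma without proof, treating it as a routine verification, and your argument (partial sums as an explicit levelwise bijection, checking compatibility with the shifted face and degeneracy maps of $\Dec_\bot$, and identifying the counit $d_0$ with $r$ via successive differences) is exactly the verification the paper leaves implicit. Nothing is missing.
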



  %

The comultiplication on $\Grpd_{/\mathbf N_1}$ is given by 
  $$\Delta(\name n) = \sum_{a+b=n} \name a \tensor \name b$$ and, taking cardinality,
the classical incidence coalgebra is the vector space 
  spanned by symbols $\delta_n$ with comultiplication 
  $\Delta(\delta_n) = \sum\limits_{a+b=n} \delta_a \tensor \delta_b$.
  The incidence algebra is the profinite-dimensional vector space
  spanned by the symbols $\delta^n$ with 
  convolution product $\delta^a * \delta^b = \delta^{a+b}$, and is isomorphic
  to the ring of power series in one variable,
  \begin{eqnarray*}
    \Inc{\mathbf{N}} & \stackrel\simeq\longrightarrow & \Q[[z]]  \\
    \delta^n & \longmapsto & z^n \\
    (\N\stackrel f\to\Q) &\longmapsto& \sum f(n)\, z^n  .
  \end{eqnarray*}
\end{blanko}

\begin{blanko}{Powers.}
  As a variation of the previous example, fix $k\in \N$ and 
  let $\mathbf{L}^k$ denote the nerve of the poset $(\N^k, \leq)$ and let
  $\mathbf{N}^k$ denote the nerve of the monoid $(\N^k,+)$.  Again there
  is a cULF functor $\mathbf{L}^k \to \mathbf{N}^k$ defined by coordinatewise
  difference, and again it is given by decalage, via a natural identification
  $\mathbf{L}^k\simeq\Dec_\bot(\mathbf{N}^k)$.  
  In contrast to the $k=1$ case,
  this functor is {\em not} just dividing out by isomorphism of intervals 
  (treated next).
  The incidence algebra of $(\mathbf{N}^k,+)$ is the power series ring 
  in $k$ variables.
\end{blanko}

\begin{blanko}{Symmetric powers.}
   In general, let $M$ be a monoid, considered as a decomposition space via its
   nerve.  For fixed $k\in \N$, the power $M^k$ is again a monoid, as in the
   previous example, we denote its nerve by $X$.  The symmetric group $\mathfrak
   S_k$ acts on $X_1 = M^k$ by permutation of coordinates,  
  and acts on $X_n = X_1^n = (M^k)^n$
  diagonally.  There is induced a simplicial groupoid $X/\mathfrak S_k$ given by
  homotopy quotient: in degree $n$ it is the groupoid $\frac{X_1\times\cdots
  \times X_1}{\mathfrak S_k}$.  Since taking homotopy quotient of a group action
  is a lowershriek operation, it preserves pullbacks, so it follows that this
  new simplicial groupoid again satisfies the Segal condition.  (It is no 
  longer a monoid, though, since in degree zero we have the space $*/\mathfrak 
  S_k$.)
  Furthermore, it
  is easy to check that the quotient map $X \to X/\mathfrak S_k$ is cULF.
  This construction gives a supply of cULF maps which do not arise from decalage.
  
  We now return to the poset $\mathbf{L}^k = (\N^k, \leq)$.  A reduced incidence
  coalgebra is obtained by identifying isomorphic intervals.  The reduced
  incidence coalgebra is the incidence coalgebra of $(\N^k,+)/\mathfrak S_k$,
  and the reduction map is
  $$
  (\N^k,\leq  ) \simeq\Dec_\bot (\N^k,+) \longrightarrow (\N^k,+) \longrightarrow
  (\N^k,+)/\mathfrak S_k .
  $$
  
  The incidence coalgebra of $(\N^2,+)/\mathfrak S_2$ is the simplest example 
  we know of in which the length filtration does not agree with the coradical 
  filtration (see Sweedler~\cite{Sweedler} for this notion).
  The elements $(1,1)$ and $(2,0)\simeq (0,2)$ are clearly of 
  length $2$.  On the other hand, the element
  $$
  P := (1,1) - (2,0) - (0,2)
  $$
  is primitive, meaning
  $$
  \Delta(P) = (0,0) \tensor P + P \tensor (0,0)
  $$
  and is therefore of coradical filtration degree $1$.  (Note that in 
  $(\N^2,+)$ it is not true that $P$ is primitive:
  it is the symmetrisation that make the $(0,1)$ terms cancel out in
  the computation, to make $P$ primitive.)
\end{blanko}

\begin{blanko}{Injections and the monoidal $\infty$-groupoid of sets 
  under sum.}\label{ex:SI&B}\label{Cauchy}\label{I=DecB}
  Let $\mathbf{I}$ be the nerve of the category of finite sets and injections,
  and let $\mathbf{B}$ be the nerve of the monoidal $\infty$-groupoid $(\B, +, 0)$ of
  finite sets and bijections, or of the corresponding 1-object bicategory (see
  Proposition~\ref{prop:BM}). 
D\"ur~\cite{Dur:1986} noted that imposing the equivalence relation `having isomorphic complements' on the incidence coalgebra of $\mathbf{I}$ gives the binomial coalgebra.
  Again, we can see this reduction map as induced by a conservative
  ULF functor from a decalage:
\begin{lemma}
There is an equivalence of simplicial $\infty$-groupoids 
  \begin{align*}
   \Dec_\bot (\mathbf B)  & \stackrel\simeq\longrightarrow  \mathbf{I}  \\
   \intertext{given in degree $k$ by}
    (x_0,\dots,x_k) & \longmapsto  [x_0\subseteq x_0+x_1 \subseteq  \dots \subseteq x_0+\dots+x_k]
  \end{align*}
and a conservative ULF functor $r:\mathbf{I}\to \mathbf B$ is given by   $$
d_\bot: \Dec_{\bot}(\mathbf B) \to \mathbf B,\qquad (x_0,\dots,x_k)\mapsto (x_1,\dots,x_k).$$
\end{lemma}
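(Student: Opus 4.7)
The plan is to use the explicit description of $\mathbf B$ from Proposition~\ref{prop:BM} to rewrite both sides as simplicial groupoids of tuples of finite sets, and then construct the equivalence degreewise and check naturality. Since $\B^{\eq}$ is contractible ($\emptyset$ is, up to iso, the only tensor-invertible object of $\B$, and it has no nontrivial automorphisms), Proposition~\ref{prop:BM} gives $\mathbf B_n \simeq \B^n$, with outer face maps projecting away an outer factor, inner face maps summing adjacent factors, and degeneracies inserting $\emptyset$. Consequently $\Dec_\bot(\mathbf B)_k = \mathbf B_{k+1} = \B^{k+1}$, with structure maps shifted by one index. I would then define $\Phi_k : \B^{k+1} \to \mathbf I_k$ by the stated formula, sending $(x_0,\ldots,x_k)$ to the chain of canonical coproduct inclusions $x_0 \hookrightarrow x_0+x_1 \hookrightarrow \cdots \hookrightarrow x_0+x_1+\cdots+x_k$.

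Next I would check that each $\Phi_k$ is an equivalence of groupoids. Essential surjectivity holds because any injection $A \hookrightarrow B$ is isomorphic (under $A$) to the canonical injection $A \hookrightarrow A + (B\smallsetminus A)$, so any chain of injections is isomorphic to a canonical chain. Full faithfulness holds because a morphism in $\mathbf I_k$ between two canonical chains is a ladder of bijections commuting with the canonical inclusions, and such a ladder is completely determined by its restrictions to the successive complements, yielding precisely a tuple of bijections in $\B^{k+1}$. Naturality is then a case-by-case check against the generators of $\Delta$: the inner face map $d_i^{\Dec} = d_{i+1}^{\mathbf B}$ replaces the pair $(x_i,x_{i+1})$ by $x_i+x_{i+1}$, which omits the $i$th object from the canonical chain and so matches $d_i^{\mathbf I}$; the outer maps $d_0^{\Dec}$ and $d_k^{\Dec}$ match $d_0^{\mathbf I}$ (drop the bottom) and $d_k^{\mathbf I}$ (drop the top) respectively; and the degeneracy $s_i^{\Dec}$, which inserts $\emptyset$, repeats the $i$th object of the chain, matching $s_i^{\mathbf I}$.

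Finally, under the equivalence $\Phi$, the augmentation $d_\bot^{\Dec}: \Dec_\bot(\mathbf B) \to \mathbf B$, given in degree $k$ by $(x_0,\ldots,x_k) \mapsto (x_1,\ldots,x_k)$, corresponds exactly to the functor $r: \mathbf I \to \mathbf B$ sending a chain $[A_0 \hookrightarrow \cdots \hookrightarrow A_k]$ to the tuple of successive complements $(A_1\smallsetminus A_0,\ldots,A_k\smallsetminus A_{k-1})$; this is D\"ur's reduction. That $r$ is cULF is then automatic from Proposition~\ref{Dec=Segal+cULF}, since $\mathbf B$ is a Segal space (hence a decomposition space by Proposition~\ref{prop:segal=>decomp1}). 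The only real bookkeeping is the naturality of $\Phi$, but since everything lives in ordinary groupoids and is built from the coproduct on $\B$, this amounts to elementary manipulations of canonical coproduct inclusions rather than any genuine coherence argument.
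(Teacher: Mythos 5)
Your proof is correct and fills in, degree by degree, exactly the identification the paper asserts; the paper itself offers nothing beyond the diagrammatic observation that both $\mathbf{I}_k$ and $\Dec_\bot(\mathbf B)_k=\mathbf B_{k+1}$ are equivalent to the groupoid of Waldhausen-style staircase diagrams, with the face maps on either side deleting a row and a column. Your use of Proposition~\ref{prop:BM} to reduce to tuples of finite sets, the generator-by-generator naturality check, and the appeal to Proposition~\ref{Dec=Segal+cULF} for the cULF property of $d_\bot$ are precisely the right way to make this precise.
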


  The isomorphism may also be represented diagrammatically using diagrams reminiscent
  of those in Waldhausen's $S$-construction (cf.~\ref{sec:Wald} below).
  As an example, both groupoids
$\mathbf{I}_3
$ and $\Dec_\bot(\mathbf B)_3=\mathbf B_4$ are equivalent to the groupoid
of  diagrams
  $$\xymatrix@R-0.8pc{
   &   &   &x_3 \ar[d] 
\\
   &   &x_2 \ar[d] \ar[r] & x_2+x_3 \ar[d] 
\\
   &x_1\ar[r]\ar[d] & x_1+x_2 \ar[d] \ar[r] & x_1+x_2+x_3 \ar[d] 
\\
x_0 \ar[r] & x_0+x_1 \ar[r] & x_0+x_1+x_2\ar[r] & x_0+x_1+x_2+x_3
  }$$
The face maps $d_i:\mathbf{I}_3\to \mathbf{I}_2$ 
and $d_{i+1}:\mathbf{B}_4\to \mathbf{B}_3$ both act by
deleting the column beginning $x_i$  and the row beginning $x_{i+1}$. 
In particular $d_\bot:\mathbf{I}\to\mathbf B$ deletes the bottom 
row, sending a string of injections to the sequence of 
successive complements $(x_1,x_2,x_3)$.  
We will revisit this theme in the treatment of the Waldhausen $S$-construction
(\ref{sec:Wald}).

\medskip

From Lemma~\ref{dec-mon-is-mon} and Proposition \ref{bialg-hm} we have:
\begin{lemma}
  Both $\mathbf{I}$ and $\mathbf{B}$ are monoidal decomposition spaces under
  disjoint union, and $\mathbf{I}\simeq\Dec_\bot (\mathbf B) \to \mathbf{B}$ is
  a monoidal functor inducing a (surjective) homomorphism of bialgebras
  $\Grpd_{/{\mathbf{I}}_1}\to\Grpd_{/\mathbf B_1}$.
\end{lemma}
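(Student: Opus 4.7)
The plan is to assemble this lemma from results already in hand, with the preceding identification $\mathbf{I}\simeq \Dec_\bot(\mathbf B)$ doing most of the heavy lifting. First I would verify that both simplicial $\infty$-groupoids are decomposition spaces: $\mathbf B$ is presented by Proposition~\ref{prop:BM} as the simplicial $1$-groupoid $[n]\mapsto\B^n$ with face maps given by disjoint union, and it is routine to check the Segal condition directly for this, hence $\mathbf B$ is a decomposition space by Proposition~\ref{prop:segal=>decomp1}; for $\mathbf{I}$, the general fat nerve construction of \ref{fatnerve} applies because $(\kat{FinSet},\kat{inj})$ is an essentially small $1$-category, yielding a Segal space and therefore a decomposition space.

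Next I would invoke Example~\ref{extensive} to promote these to \emph{monoidal} decomposition spaces: both $(\kat{FinSet},+,\emptyset)$ and $(\kat{FinSet}_{\mathrm{inj}},+,\emptyset)$ are monoidal extensive in the sense that $\EE_{/A}\times\EE_{/B}\simeq \EE_{/A+B}$ and $\EE_{/\emptyset}\simeq 1$, since an injection (or bijection) into a disjoint union splits uniquely as a disjoint union of injections (bijections) into the two summands. Example~\ref{extensive} then directly gives that $\mathbf{I}$ and $\mathbf B$ are monoidal decomposition spaces with multiplication induced by disjoint union.

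For the map, I would use the equivalence $\mathbf{I}\simeq\Dec_\bot(\mathbf B)$ from the preceding lemma to identify it with the counit $d_\bot:\Dec_\bot(\mathbf B)\to \mathbf B$. By Proposition~\ref{Dec=Segal+cULF} this counit is cULF, and Lemma~\ref{dec-mon-is-mon} tells us that the dec of a monoidal decomposition space inherits a canonical monoidal structure for which the counit is monoidal. It only remains to observe that this inherited monoidal structure on $\Dec_\bot(\mathbf B)$ agrees (up to equivalence) with the natural disjoint-union monoidal structure on $\mathbf{I}$ under the equivalence, which is immediate from the pointwise description: both carry a string of injections $x_0\subseteq\cdots\subseteq x_0+\cdots+x_k$ and $y_0\subseteq\cdots\subseteq y_0+\cdots+y_k$ to their coordinatewise disjoint union. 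Proposition~\ref{bialg-hm} then yields the asserted bialgebra homomorphism.

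Finally, for surjectivity, I would argue at the level of $\pi_0$: the map $\mathbf{I}_1\to\mathbf B_1$ sends an injection $x_0\hookrightarrow x_0+x_1$ to its complement $x_1$, so every finite set $S$ lies in the essential image by taking $x_0=\emptyset$ and $x_1=S$. Essential surjectivity of $\mathbf I_1\to\mathbf B_1$ propagates to surjectivity of the induced coalgebra map $\Grpd_{/\mathbf{I}_1}\to \Grpd_{/\mathbf B_1}$ since the latter is lowershriek along an essentially surjective functor. The only place where a subtlety could arise is in matching the two monoidal structures on $\mathbf I\simeq\Dec_\bot(\mathbf B)$, but this is transparent once both are described via the array-of-injections picture, so no serious obstacle is anticipated.
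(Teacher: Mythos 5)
Your proposal is correct and follows essentially the same route as the paper, whose entire proof is the citation ``From Lemma~\ref{dec-mon-is-mon} and Proposition~\ref{bialg-hm} we have:'' --- you simply make explicit the inputs the paper leaves implicit (monoidal extensivity of finite sets with injections via Example~\ref{extensive}, cULFness of the dec counit via Proposition~\ref{Dec=Segal+cULF}, and the match of the two monoidal structures). One small caution on the surjectivity step: the general principle that lowershriek along an essentially surjective map is surjective is false (e.g.\ for $*\to BG$ the image of $(\ )\lowershriek$ consists only of free $G$-spaces), but your explicit lift $S\mapsto(\emptyset\hookrightarrow S)$ is in fact a \emph{section} $s$ of $\mathbf I_1\to\mathbf B_1$, so $f\lowershriek s\lowershriek\simeq\id$ gives the surjectivity cleanly; you should phrase it that way rather than appealing to essential surjectivity alone.
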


Formula~\ref{sectioncoeff!} gives the comultiplication on $\Grpd_{/\mathbf B_1}$ as
  \begin{align*}
    \Delta&(\name S) 
\;= \;\sum_{A,B} \frac{\Bij(A+B,S)}{\Aut(A)\times \Aut(B)} \cdot \name A \tensor 
  \name B 
\;= 
\!\!\!\!\!\!\!\!\!\!
\sum_{
  \substack{A ,B\subset S \\  A\cup B = S,\; A\cap B = \emptyset}
  }
\!\!\!\!\!\!\!\!\!\!
 \name A \tensor \name B .
  \end{align*}
It follows that the convolution product on 
$\Grpd^{\B}$
is just the Cauchy product on groupoid-valued species
  $$
  (F*G)[S] = \sum_{A+B=S} F[A] \times G[B] .
  $$
For the representables, the formula says simply
$h^A*h^B=h^{A+B}$.

The decomposition space {\bf B} is locally finite, and taking cardinality gives
the classical binomial coalgebra, spanned by symbols $\delta_n$ with 
  $$\Delta(\delta_n) = \sum\limits_{a+b=n} \frac{n!}{a!\,b!}\,\delta_a \tensor 
  \delta_b.$$ 
As a bialgebra we have $(\delta_1)^n=\delta_n$ and one recovers the comultiplication from $\Delta(\delta_n)= \big( \delta_0 \tensor \delta_1 + \delta_1 \tensor \delta_0 \big)^n$.

Dually, the
incidence algebra $\Q^{\pi_0\mathbf B}$ is the profinite-dimensional vector space
spanned by symbols $\delta^n$ with convolution product
  $$
  \delta^a* \delta^b = \frac{n!}{a!\,b!}\,\delta^{a+b} ,
  $$
  This is isomorphic to the algebra $\Q[[z]]$, where $\delta^n$  corresponds to  $z^n/n!$ and the cardinality of a species $F$ corresponds to its exponential generating series.

\end{blanko}

\begin{blanko}{Finite ordered sets, and the shuffle product of $\mathbb L$-species.}
 Let $\mathbf{OI}$ denote (the fat nerve of) the  category of finite ordered sets and monotone injections. This is the decalage of the decomposition space $\mathbf{Z}$ with 
$\mathbf{Z}_n= \mathbf{OI}_{/\un n}$, the groupoid of arbitrary maps from a finite ordered set $S$ to $\un n$, or equivalently of $n$-shuffles of $S$.
  The incidence coalgebra of $\mathbf Z$ is the {\em shuffle coalgebra}.
  The section coefficients are the binomial coefficients, but  
  on the objective level the convolution algebra
  is the shuffle product of $\mathbb L$-species 
  (cf.~\cite{Bergeron-Labelle-Leroux}).  This example will be subsumed in
  our theory of restriction species, developed in Section~\ref{sec:restriction}.

There is a map $\mathbf Z\to \mathbf B$ that takes an $n$-shuffle to the underlying $n$-tuple of subsets, and the decalage of this functor is  
the cULF functor $\mathbf{OI}\to\mathbf{I}$ given by forgetting the order, see Example \ref{ex:SIOI}. 
\begin{lemma}
There is a commutative diagram of decomposition spaces and cULF functors,
$$\xymatrix{
 \mathbf{OI}\rto^-\simeq\dto& \Dec_\bot (\mathbf{Z})\dto \rto^-{d_\bot} & \mathbf{Z}\dto\\  
 \mathbf{I}\rto^-\simeq&\Dec_\bot (\mathbf{B}) \rto^-{d_\bot} & \mathbf{B}}$$
\end{lemma}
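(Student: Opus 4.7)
The plan is to follow the strategy of Example \ref{I=DecB}. First, I would construct the equivalence $\phi: \mathbf{OI} \stackrel{\simeq}{\to} \Dec_\bot(\mathbf{Z})$: in degree $n$, send a chain of monotone injections $S_0 \hookrightarrow S_1 \hookrightarrow \cdots \hookrightarrow S_n$ to the $(n+1)$-shuffle $(S_n, f)$, where $f: S_n \to \un{n+1}$ is defined by $f(s) = 1 + \min\{i : s \in S_i\}$. Equivalently, $\phi$ records the ordered successive complements $(S_0, S_1 \setminus S_0, \ldots, S_n \setminus S_{n-1})$. This will be a bijection in each degree, and naturality in the simplicial structure will be a routine inspection (inner face maps merge adjacent blocks, outer face maps delete the first or last block, degeneracies insert an empty block).

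Commutativity of the left square will then follow from the naturality of the ``discard the ordering'' functor: both paths from $\mathbf{OI}_n$ to $\Dec_\bot(\mathbf{B})_n$ send a chain to the $(n+1)$-tuple $(S_0, S_1 \setminus S_0, \ldots, S_n \setminus S_{n-1})$ viewed as unordered sets. Commutativity of the right square will be tautological, since the vertical map $\bar Z: \mathbf{Z} \to \mathbf{B}$ is simplicial and hence commutes with the counit of the decalage comonad.

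For the cULF assertions, I would invoke Example \ref{I=DecB} (which gives that $\mathbf{B}$ is a decomposition space and that $d_\bot: \Dec_\bot(\mathbf{B}) \to \mathbf{B}$ is cULF) together with Example \ref{ex:SIOI} (which gives that $\mathbf{OI} \to \mathbf{I}$ is cULF). The main remaining step, and the key technical point, will be to show that the right vertical map $\bar Z: \mathbf{Z} \to \mathbf{B}$ is cULF; by Lemma \ref{cULF/decomp} this will simultaneously establish that $\mathbf{Z}$ is a decomposition space, and then Proposition \ref{Dec=Segal+cULF} will supply the cULF property of $d_\bot: \Dec_\bot(\mathbf{Z}) \to \mathbf{Z}$. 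To verify that $\bar Z$ is cULF, I would check cartesianness on each generic map of $\Delta$ directly: for an inner face $d_j$, the required pullback expresses the elementary fact that an $n$-shuffle of an ordered set $S$ is uniquely determined by the induced $(n-1)$-shuffle together with an unordered partition of its merged $j$th block into two pieces; for a degeneracy $s_i$, cartesianness amounts to noting that an $(n+1)$-shuffle whose $(i+1)$st block is empty (as an unordered set) is uniquely the $s_i$-degeneration of an $n$-shuffle.

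The hard part will be this cULF verification for $\bar Z$, though in fact it reduces to the elementary observation that the ordering on $S$ together with an unordered partition of its underlying set uniquely reconstructs the shuffle.
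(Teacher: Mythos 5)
Your proposal is correct and follows exactly the route the paper intends: the paper states this lemma without a written proof, and the surrounding discussion indicates precisely your strategy (the degree-wise equivalence via successive complements mirroring the $\mathbf{I}\simeq\Dec_\bot\mathbf{B}$ case, plus the direct check that the order-forgetting map $\mathbf{Z}\to\mathbf{B}$ is cULF, from which $\mathbf{Z}$ being a decomposition space and the cULF-ness of $d_\bot:\Dec_\bot\mathbf{Z}\to\mathbf{Z}$ follow by Lemma~\ref{cULF/decomp} and Proposition~\ref{Dec=Segal+cULF}). The only quibble is the parenthetical describing the face maps: under your equivalence the \emph{bottom} outer face of $\mathbf{OI}$ (deleting $S_0$) corresponds to merging the first two blocks of the shuffle rather than deleting the first block, but this does not affect the correctness of the argument.
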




\bigskip

Let $A$ be a fixed set, an alphabet.  The slice category $\un \Delta^{\mathrm{inj}}_{/A}$
is the category of finite words (sequences) in $A$ and subword inclusions
(subsequences), cf.~Lothaire~\cite{Lothaire:MR675953} (see also
D\"ur~\cite{Dur:1986}).  Again it is the decalage of the {\em $A$-coloured
shuffle decomposition space} $\mathbf Z_A$ of $A$-words and complementary
subword inclusions.  More precisely, this space has in degree $k$ the groupoid
of $A$-words equipped with a non-necessarily-order-preserving map to $\un k$.
Precisely, the objects are spans
$$
\un k \leftarrow \un n \to A .
$$

The counit takes a subword inclusion to its 
complement word.  This gives the Lothaire shuffle algebra of words.
Again, it all amounts to observing that $A$-words admit a forgetful cULF functor to 
$1$-words, which is just the decomposition space $\kat Z$ from before, and that 
this in turn admits a cULF functor to $\mathbf B$.
%
%

Note the difference between $\mathbf Z_A$ and the free monoid on $A$: the 
latter is like allowing only the trivial shuffles, where the subword inclusions 
are  only concatenation inclusions.  In terms of the structure maps $\un n \to 
\un k$, the free-monoid nerve allows only monotone maps, whereas the shuffle 
decomposition space allows arbitrary set maps.
\end{blanko}

%
%
%
%

\begin{blanko}{Alternative, strict, version of $\mathbf{B}$.}\label{B/k}
  The following strict version of
  $\mathbf B$ will come in handy in the treatment of restriction species in 
  Section~\ref{sec:restriction}.
  First, an application of the Grothendieck construction gives an
  equivalence of groupoids over $\B$,
$$\xymatrix{
\B^k \ar[rd] \ar[rr]^\sim && \B_{/\underline k} \ar[ld] \\
&\B,&}
$$
that takes a $k$-tuple of finite sets to their disjoint union
$\sum_{i\in \underline k} S_i$ with the obvious projection map to 
$\underline k$.  Conversely, a map $S \to \underline k$
defines a the $k$-tuple $(S_1,\ldots,S_k)$ by taking fibres.
Unlike the groupoids $\B^k$, the groupoids $\B_{/\underline k}$ form a strict simplicial groupoid.
The generic maps (generated by inner faces and degeneracies) are given by postcomposition of  $S \to \underline k$ with the corresponding map
$\underline k \to \underline k'$. 
The outer faces
$d=d_\bot,d_\top : \B_{/\underline k}  \to 
\B_{/\underline {k\!-\!1}}$ take $S\to \underline k$
to the pullback
$$\xymatrix{
S' \drpullback \ar[r] \ar[d] & S \ar[d] \\
\underline{k\!-\!1} \ar[r]_-{d} & \underline k
}$$
The simplicial identities can be arranged to
hold on the nose: the only subtlety is the pullback construction
involved in defining the outer face maps, but these pullbacks
can all be chosen in terms of subset inclusions.
It is clear that the simplicial groupoid $\B_{/\underline k}$
is equivalent to the fat nerve of the classifying space of $\B$.
\end{blanko}

\subsection{Multiplicative examples}

\begin{blanko}{Divisibility poset and multiplicative monoid.}\label{ex:D&M}
  In analogy with \ref{ex:N&L}, 
let $\mathbf D$ denote the nerve of the divisibility poset $(\N^\times,
  |)$, and let $\mathbf M$ be the nerve 
of the multiplicative monoid $(\N^\times, \cdot)$.
Imposing the equivalence relation `isomorphism of intervals' on
the incidence coalgebra of $\mathbf D$ gives that of $\mathbf M$, 
and Content--Lemay--Leroux~\cite{Content-Lemay-Leroux}
observed that this reduction is induced by the
  conservative ULF functor $r:\mathbf D \to \mathbf M$ sending $d|
  n$ to $n/d$. In fact we have:

\begin{lemma}\label{lem:DtoM}
There is an isomorphism of simplicial sets 
  \begin{align*}
   \Dec_\bot (\mathbf M)  & \stackrel\simeq\longrightarrow  \mathbf D  \\
   \intertext{given in degree $k$ by}
    (x_0,x_1,\dots,x_k) & \longmapsto  [x_0| x_0x_1 | \dots | x_0x_1\cdots x_k]
  \end{align*}
and the conservative ULF functor $r$ is isomorphic to the structure map   $$
d_\bot: \Dec_{\bot}(\mathbf M) \to \mathbf M,\qquad (x_0,\dots,x_k)\mapsto (x_1,\dots,x_k).$$
\end{lemma}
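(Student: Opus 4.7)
The plan is to follow exactly the strategy of Example~\ref{ex:N&L}, replacing the additive monoid $(\N,+)$ with the multiplicative monoid $(\N^\times,\cdot)$. Since both $\mathbf M$ and $\mathbf D$ are (strict) nerves of ordinary discrete categories, everything takes place at the level of simplicial sets and no homotopy subtleties arise; the work is bookkeeping on face and degeneracy maps.

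First I would write down an explicit inverse to the claimed map. In degree $k$ we have $\Dec_\bot(\mathbf M)_k = \mathbf M_{k+1} = (\N^\times)^{k+1}$, and $\mathbf D_k$ is the set of divisibility chains $[a_0\mid a_1\mid\cdots\mid a_k]$. The inverse to $\varphi_k:(x_0,\ldots,x_k)\mapsto [x_0\mid x_0x_1\mid\cdots\mid x_0\cdots x_k]$ is
\[
\psi_k:[a_0\mid a_1\mid\cdots\mid a_k]\;\longmapsto\;(a_0,\; a_1/a_0,\; a_2/a_1,\;\ldots,\;a_k/a_{k-1}),
\]
which is well defined precisely because $a_{i-1}\mid a_i$ in $\N^\times$. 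A direct computation shows $\varphi_k\psi_k = \id$ and $\psi_k\varphi_k = \id$, so each $\varphi_k$ is a bijection of sets.

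Next I would check simpliciality. The face map $d_i$ in $\Dec_\bot(\mathbf M)$ corresponds to $d_{i+1}$ in $\mathbf M$, so for $(x_0,\ldots,x_k)\in\Dec_\bot(\mathbf M)_k$ the map $d_0$ multiplies the first two entries to give $(x_0x_1,x_2,\ldots,x_k)$, while an inner $d_i$ ($0<i<k$) multiplies $x_i$ and $x_{i+1}$, and $d_\top$ deletes $x_k$. Under $\varphi$ these translate respectively to: deleting $x_0$ from the chain (the face $d_0$ on $\mathbf D_k$), deleting the intermediate vertex $x_0\cdots x_i$ (the face $d_i$ on $\mathbf D_k$), and deleting the terminal vertex (the face $d_k$). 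Similarly, the degeneracy $s_i$ in $\Dec_\bot(\mathbf M)$ inserts a $1$ after position $i$, which under $\varphi$ duplicates the $i$th vertex of the divisibility chain. Thus $\varphi = (\varphi_k)_{k\geq 0}$ is a simplicial map, hence a simplicial isomorphism.

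For the identification of $r$ with $d_\bot$, recall $r$ sends $[a_0\mid\cdots\mid a_k]$ to the $k$-tuple of successive quotients $(a_1/a_0,\ldots,a_k/a_{k-1})\in\mathbf M_k$. Precomposing with $\varphi_k$ on $\Dec_\bot(\mathbf M)_k$ gives $(x_0,x_1,\ldots,x_k)\mapsto (x_1,\ldots,x_k)$, which is exactly the structure map $d_\bot = d_0^{\mathbf M}$. The only potential obstacle is keeping the index shifts and the generic/free orientations straight, but this is entirely mechanical; everything else is an immediate transcription of the additive case in Example~\ref{ex:N&L} via the isomorphism of monoids $(\N,+)\cong\big((\N^\times,\cdot)$ restricted to a single prime$\big)$ extended coordinate-wise across primes.
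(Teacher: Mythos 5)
Your proof is correct, but it takes a different route from the paper. The paper does not verify anything by hand: it observes that $\mathbf M\cong\prod_p\mathbf N$ and $\mathbf D\cong\prod_p\mathbf L$ (weak products over the primes, via unique factorisation), that $\Dec_\bot$ is a right adjoint and hence preserves products, and then simply cites the additive Lemma of Example~\ref{ex:N&L} coordinatewise. Your closing remark about extending the single-prime isomorphism across primes is exactly the paper's argument, but you relegate it to an aside and instead carry out the full degree-by-degree verification: explicit inverse $\psi_k$ via successive quotients, the check that faces and degeneracies match under the index shift of the dec, and the identification of $r$ with $d_\bot$. Both are fine. What your direct computation buys is self-containedness: the paper never actually proves the additive Lemma in Example~\ref{ex:N&L} either (it is merely asserted), so your verification is in effect the missing proof of that statement transported to the multiplicative setting. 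What the paper's reduction buys is brevity and a reusable structural point --- decalage commutes with (weak) products because it is defined by precomposition with the add-bottom functor and is a right adjoint --- which is also what powers the analogous treatment of $\mathbf L^k\to\mathbf N^k$ in the ``Powers'' example. One small point worth making explicit if you keep the direct route: the identity you need for simpliciality at the bottom face, namely that deleting the vertex $x_0$ from the chain corresponds to replacing $(x_0,x_1)$ by $x_0x_1$, is precisely where the definition of $\mathbf D_k$ as chains under divisibility (rather than arbitrary tuples) is used, and your $\psi_k$ is well defined only because $\N^\times$ has cancellation; both points are present in your write-up, so there is no gap.
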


This example can be obtained from Example  \ref{ex:N&L} directly, since
$\mathbf M= \prod_p \mathbf N$ and $\mathbf D=\prod_p
  \mathbf L$, where the (weak) product is over all primes $p$.  Now 
  $\Dec_\bot$ is a right adjoint, so preserves products, and
 Lemma \ref{lem:DtoM}
follows from Lemma~\ref{ex:N&L}.
  
  We can use the general formula \ref{sectioncoeff!}:
  since there are no nontrivial automorphisms the convolution product is 
  $
  \delta^m * \delta^n = \delta^{mn}
  $, and the incidence  
  algebra is isomorphic to the Dirichlet algebra:
  \begin{eqnarray*}
    \Inc(\mathbf D) & \longrightarrow & \D =\{ \sum_{k>0} a_k k^{-s} \}   \\
    \delta^n & \longmapsto & n^{-s} \\
    f & \longmapsto & \sum_{n>0} f(n) n^{-s} .
  \end{eqnarray*}
  
\end{blanko}

\begin{blanko}{Arithmetic species.}
  The Dirichlet coalgebra (\ref{ex:D&M})
  also has a fatter version: consider now instead
  the monoidal groupoid $(\B^\times, \times, 1)$ of non-empty finite sets under the 
  cartesian product.  It gives the classifying space
  $\mathbf A$ with $A_k := (\B^\times)^k$,
where this time the inner face maps take the cartesian product of two adjacent 
factors, and the outer face maps project away an outer factor.

The resulting coalgebra structure is
$$
\Delta(S) = \sum_{A\times B \simeq S} A \tensor B .
$$
Some care is due to interpret this correctly: 
the homotopy fibre over $S$ is the groupoid whose objects
are triples $(A,B,\phi)$ consisting of sets $A$ and $B$ equipped with a 
bijection
$\phi: A \times B \isopil S$, and whose morphisms are pairs of isomorphisms
$\alpha: A \isopil A'$, $\beta: B \isopil B'$ forming a commutative square
with $\phi$ and $\phi'$.

The corresponding incidence algebra $\grpd^{\B^\times}$ with the convolution
product is the algebra of arithmetic species \cite{Baez-Dolan:zeta} under the
Dirichlet product (called the arithmetic product of species by Maia and
M\'endez~\cite{Maia-Mendez:0503436}).

%
%
The section coefficients are given directly by \ref{sectioncoeff!},
and we find
$$
\delta^m * \delta^n = \frac{(mn)!}{m!n!} \, \delta^{mn}
$$
It follows that we can get an isomorphism with the Dirichlet algebra,
namely
\begin{eqnarray*}
  \Inc(\mathbf A) & \longrightarrow & \D=\{ \sum_{k>0} a_k k^{-s} \}  \\
  \delta^m & \longmapsto & \frac{m^{-s}}{m!} \\
  f & \mapsto & \sum_{n>0} f(n) \frac{k^{-s}}{n!};
\end{eqnarray*}
these are the 
`exponential' (or modified) Dirichlet series (cf.\  
Baez--Dolan~\cite{Baez-Dolan:zeta}.)
So the incidence algebra zeta function in this setting is
$$
\zeta = \sum_{k>0} \delta^k \mapsto \sum_{k>0} \frac{k^{-s}}{k!}
$$
(which is not the usual Riemann zeta function).
\end{blanko}

\subsection{Linear examples}

The following classical examples lead us to classes of decomposition spaces which
are not Segal spaces, namely  Waldhausen's $S$-construction (\ref{sec:Wald}).

\begin{blanko}{$q$-binomials: $\F_q$-vector spaces.}\label{ex:q}
  Let $\mathbb F_q$ denote a finite field with $q$ elements.  Let
  $\mathbf W$ denote the fat nerve of the category $\vect$ of
  finite-dimensional $\mathbb F_q$-vector spaces and $\mathbb
  F_q$-linear injections.  Impose the equivalence relation identifying
  two injections if their cokernels are isomorphic.  This gives the
  $q$-binomial coalgebra (see D\"ur~\cite[1.54]{Dur:1986}).
  
  The same coalgebra can be obtained without reduction as follows.  Put $\mathbf
  V_0 = *$, let $\mathbf V_1$ be the maximal groupoid of $\vect$, and let
  $\mathbf V_2$ be the groupoid of short exact sequences.  The span
  $$
   \xymatrixrowsep{4pt}
  \xymatrix {
  \mathbf V_1 & \ar[l] \mathbf V_2 \ar[r]  &\mathbf V_1\times\mathbf V_1 \\
  E & \ar@{|->}[l] [ E' \!\to\! E \!\to\! E''] \ar@{|->}[r]  & (E',E'')
  }$$
  (together with the span $\mathbf V _1 \leftarrow \mathbf V_0 \to 1$)
  defines a coalgebra on $\grpd_{/\mathbf V_1}$ which (after taking cardinality)
  is the $q$-binomial coalgebra, without further reduction.
  The groupoids and maps involved are part of a simplicial groupoid $\mathbf V: \Delta\op\to\Grpd$,
  namely the  Waldhausen $S$-construction of $\vect$, studied in more detail in 
  the next section (\ref{sec:Wald}), where we'll see that this is a decomposition
  space but not a Segal space.  The lower dec of $\mathbf V$ is naturally 
  equivalent 
  to the fat nerve $\mathbf W$ of the category of injections, and the
  comparison map $d_0$ is the reduction map of D\"ur.
  
  We calculate the section coefficients of 
  $\mathbf V$.
%
%
  From Section~\ref{sec:tioncoeff} we have the following formula
  for the section coefficients (which is precisely the standard formula
  for the {\em Hall numbers}, as explained further in \ref{Hall}):
   $$
  \frac{\norm{\text{SES}_{k,n,n-k}}}{\norm{\Aut(\F_q^k)}\norm{\Aut(\F_q^{n-k})}}  .
  $$
  Here $\text{SES}_{k,n,n-k}$ is the groupoid of short exact sequence with specified
  vector spaces of dimensions $k$, $n$, and $n-k$.  This is just a discrete 
  space, and it has
  $(q-1)^n q^{k\choose 2} q^{{n-k}\choose 2} [n]!$ elements.
  Indeed, there are $(q-1)^k q^{k \choose 2} 
  \frac{[n]!}{[n-k]!}$ choices for the injection $\F_q^k \into \F_q^n$, and then
  $(q-1)^n q^{n \choose 2} 
  [n]!$ choices for identifying the cokernel with $\F_q^{n-k}$.
  Some $q$-yoga yields altogether the $q$-binomials as section coefficients:
  $$
  =  { n \choose k}_q .
  $$
%
From this description we see that there is an isomorphism of algebras
\begin{eqnarray*}
  \Inc(\mathbf V) & \longrightarrow & \Q[[z]]  \\
  \delta^k & \longmapsto & \frac{z^k}{[k]!} .
\end{eqnarray*}

  Clearly this algebra is commutative.
  However, an important new aspect is revealed on the objective
  level: here the convolution product is the external product of $q$-species
  of Joyal-Street~\cite{Joyal-Street:GLn}.  They show (working with vector-space
  valued $q$-species), that this product has a natural 
  non-trivial braiding (which of course reduces to commutativity upon taking 
  cardinality).
\end{blanko}

\begin{blanko}{Direct sums of $\F_q$-vector spaces and `Cauchy' product of 
  $q$-species.}
  A coalgebra which is the $q$-analogue of $\mathbf B$ can be obtained 
  from the classifying space
of the monoidal groupoid $(\kat{vect}_{\mathbb 
  F_q}, \oplus, 0)$ of
  finite-dimensional $\mathbb F_q$-vector spaces under direct sum.
  Comultiplication of a vector space $V$ is the groupoid consisting of
  triples $(A,B,\phi)$ where $\phi$ is a linear isomorphism $A\oplus B\isopil 
  V$.  This groupoid projects to $\kat{vect}\times \kat{vect}$: the fibre over
  $(A,B)$ is discrete of cardinality $\norm{\Aut(V)}$, giving
   altogether the following section 
  coefficient
   $$
  \frac{\norm{\Aut(\F_q^n)}}{\norm{\Aut(\F_q^k)}\norm{\Aut(\F_q^{n-k})}} 
  = q^{k(n-k)} \, { n \choose k}_q .
  $$
 
    At the objective level, this convolution product corresponds to the `Cauchy'
    product of $q$-species in the sense of Morrison~\cite{Morrison:0512052}.

%
  The resulting coalgebra is therefore, if we let $\delta_n$ denote
  the cardinality of the name of an $n$-dimensional vector space $V$:
  $$
  \Delta(\delta_n) = \sum_{k\leq n} q^{k(n-k)}\, {n \choose k}_q \cdot\; 
  \delta_k 
  \tensor \delta_{n-k} .
  $$
  Hence this one also has a power series representation, this time not
  with $\varphi(n)=[n]!$, but rather with $\varphi(n) = \# \Aut(\F_q^n)$.
\end{blanko}

\subsection{\fdb bialgebra and variations}
\label{sec:fdb}


\begin{blanko}{\fdb bialgebra.}\label{ex:P=Dec(S)}
  Classically (cf.~Doubilet~\cite{Doubilet:1974}) the \fdb bialgebra is
  constructed by imposing a {\em type-equivalence} relation on the incidence
  coalgebra of the poset $\mathbf P$ of all partitions of finite sets.
  Joyal~\cite{JoyalMR633783} observed that it can also be realised directly from
  the category $\mathbf S$ of finite sets and surjections.  (See 
  also~\cite{GalvezCarrillo-Kock-Tonks:1207.6404} for further development of 
  this viewpoint.)
  Let $\mathbf S$ denote the fat nerve of the category of finite sets and 
  surjections. That is, $\mathbf S_k$ is the groupoid of strings of $k$ consecutive 
  surjections.  
  
%
%
%
  A partition $\sigma$ of a finite set $X$ is encoded by the surjection $X\onto
  S$, where $S$ is the set of parts.  Conversely, any surjection constitutes a
  partition of its domain.  There is an equivalence of groupoids between
  partitions and surjections.  Under this correspondence, if partition $\tau$
  refines partition $\sigma$, then the corresponding surjections $X \onto T$ and
  $X \onto S$ fit into a sequence of surjections $X \onto T \onto S$.  Hence we
  can write the partition poset nerve as having $P_0$ the groupoid of finite
  partitions (i.e.~surjections), and $P_k$ the groupoid of $k+1$ strings of
  surjections. 
Under this identification, the conservative ULF functor $F:\mathbf
  P \to \mathbf S$ simply deletes the first surjection in the string.  
  Precisely,
  the partition-poset nerve is
  simply the decalage of the surjections nerve:
  $$
  \mathbf P = \Dec_\bot (\mathbf S).
  $$
  Finally note that the functor $F$ is precisely reduction modulo type 
  equivalence:
  recall that an interval
  $[\tau,\sigma]$ has {\em type} $1^{\lambda_{1}} 2^{\lambda_{2}} \cdots$ if
  $\lambda_k$ is the number of blocks of $\sigma$ that consist of exactly $k$
  blocks of $\tau$.  Two intervals have the same type if and only if
  their images under $F:\mathbf P \to \mathbf S$ are isomorphic.
%
%
%

\end{blanko}

\begin{blanko}{\fdb section coefficients.}
  The category $\kat{Surj}$ of finite sets and surjections is monoidal under $+$,
  and is `monoidal extensive' in the sense that the canonical map
  $$
  \kat{Surj}_{/A} \times \kat{Surj}_{/B} \to \kat{Surj}_{/A+B}
  $$
  is an equivalence, just like in the ambient category $\Set$ which is extensive. 
  (We do not say that $\kat{Surj}$ is extensive in the strict sense of the word, since
  $+$ is not the categorical coproduct.)  It follows that the 
  fat nerve $\mathbf S$ is a monoidal decomposition space (under $+$),
  hence the incidence coalgebra is a bialgebra.
  Note also that automatically the decalage of a monoidal decomposition space
  is monoidal, and the counit cULF.  Hence the partition poset nerve
  is monoidal, and the reduction function a bialgebra homomorphism.
%
  Since $\mathbf S$ is monoidal, it is enough to describe the section
  coefficients on connected input.  (A connected surjection is one with codomain
  $1$.)
%
%
  Our general formula \ref{sectioncoeff!} gives
  $$
  \Delta(\un n\stackrel f\onto \un 1) = 
  \sum_{
  \substack{
  a:\un n \onto \un k\\  b:\un k\onto \un 1 
  }
  }
  \frac{\#\Aut(\un k)\cdot \# \Aut(ab)}{\# \Aut(a) \cdot 
  \#\Aut(b)} \,\name a\otimes\name b .
  $$
  The order of the automorphism group of $\un k$ and of a  surjection $\un k\onto\un 1$ is $k!$, and 
  for a general surjection 
  $a:\un n\onto \un k$ of type $1^{\lambda_1}2^{\lambda_2} \cdots$,
  $$
  \#\Aut(a)\;=\;\prod_{j=1}^\infty\lambda_j ! (j!)^{\lambda_j}
  $$
  and hence
  $$
  \Delta(\un n\stackrel f\onto \un 1) = 
  \sum_{
  \substack{
  a:\un n \onto \un k\\  b:\un k\onto \un 1 
  }
  }
  \frac{n! }{\prod_{j=1}^k\lambda_j ! (j!)^{\lambda_j}} 
   \,\name a\otimes\name b .
  $$
  The section coefficients, called the \fdb section coefficients,
  are the coefficients
  ${n \choose \lambda;k}$ of the Bell polynomials, cf.\ \cite[(2.5)]{Figueroa-GraciaBondia:0408145}.
\end{blanko}

\begin{blanko}{A decomposition space for the \fdb\ {\em Hopf} algebra.}
  The \fdb\ {\em Hopf} algebra is obtained by further reduction, classically
  stated as identifying two intervals in the partition poset if they are
  isomorphic as posets.  This is equivalent to forgetting the value of
  $\lambda_1$.  There is also a decomposition space that yields this Hopf
  algebra directly, obtained by quotienting the decomposition space $\mathbf S$
  by the same equivalence relation.  This means identifying two surjections (or
  sequences of composable surjections) if one is obtained from the other by
  taking disjoint union with a bijection.  One may think of this as `levelled 
  forests modulo linear trees'.  It is straightforward to check that this
  reduction respects the simplicial identities so as to define a simplicial
  groupoid, that it is a monoidal decomposition space, and that the quotient
  map from $\mathbf S$ is monoidal and cULF.
\end{blanko}

\begin{blanko}{Ordered surjections.}
  Let $\mathbf{OS}$ denote the fat nerve of the category of finite ordered set
  and monotone surjections.  It is a monoidal decomposition space under ordinal
  sum.  Hence to describe the resulting comultiplication, it is enough to say
  what happens to a connected ordered surjection, say $f: \un n \onto \un 1$,
  which we denote simply $n$: since there are no automorphisms around, we find
  $$
  \Delta(n) = \sum_{k=1}^n \sum_a a \tensor k
  $$
  where the second sum is over the ${n-1 \choose k-1}$ possible surjections 
  $a:n\onto k$. This comultiplication has appeared in
\cite{Baues:Hopf} and \cite{Goncharov:Hopf}.
\end{blanko}


\def\inputfile{trees.tex}

\subsection{Graphs and trees}

Various bialgebras of graphs and trees can be realised as incidence bialgebras
of decomposition spaces which are not Segal.  These examples will be 
subsumed in general classes of decomposition spaces,
namely coming from restriction species, and the new notion of
{\em directed restriction species} introduced in Section~\ref{sec:restriction}.

\medskip

All the examples in this section are naturally bialgebras, with the monoidal
structure given by disjoint union.

%


\begin{blanko}{Graphs and restriction species.}\label{ex:graphs}
  The following coalgebra of graphs seems to be due to Schmitt~\cite{Schmitt:1994}, \S 12.
  For a graph $G$ with vertex set $V$ (admitting multiple edges and loops),
  and a subset $U \subset V$, define
  $G|U$ to be the graph whose vertex set is $U$, and whose edges are those
  edges of $G$ both of whose incident vertices belong to $U$.
  On the vector space spanned by isoclasses of graphs,
  define a comultiplication by the rule
  $$
  \Delta(G) = \sum_{A+B=V} G|A \tensor G|B .
  $$
  
  This coalgebra is the cardinality of the coalgebra of a decomposition space
  but not directly of a category.  Indeed, define a simplicial groupoid with
  $\mathbf G_1$ the groupoid of graphs, and more generally let $\mathbf G_k$ be
  the groupoid of graphs with an ordered partition of the vertex set into $k$
  (possibly empty) parts.  In particular, $\mathbf G_0$ is the contractible
  groupoid consisting only of the empty graph.  The outer face maps delete the
  first or last part of the graph, and the inner face maps join adjacent parts.
  The degeneracy maps insert an empty part.  It is clear that this is not a
  Segal space: a graph structure on a given set cannot be reconstructed from
  knowledge of the graph structure of the parts of the set, since chopping up
  the graph and restricting to the parts throws away all information about edges
  going from one part to another.  One can easily check that it is a
  decomposition space.  It is clear that the resulting coalgebra is Schmitt's
  coalgebra of graphs.
  Note that disjoint union of graphs makes this into a bialgebra.

  The graph example is typical for a big family of decomposition spaces, which
  can be treated uniformly, namely decomposition spaces of restriction species,
  in the sense of Schmitt~\cite{Schmitt:hacs} (see also \cite{Aguiar-Mahajan}).
  We develop this theory further in Section~\ref{sec:restriction}.
\end{blanko}

\begin{blanko}{Butcher-Connes-Kreimer Hopf algebra.}\label{ex:CK}
  D\"ur~\cite{Dur:1986} (Ch.IV, \S3) constructed what was later called the Connes-Kreimer 
  Hopf algebra of rooted trees, after~\cite{Connes-Kreimer:9808042}: he starts with the notion of (combinatorial) tree
  (i.e.~connected and simply connected graphs with a specified root vertex);
  then a forest is a disjoint union  of rooted trees.  He 
  then considers the category of root-preserving inclusions of forests.
  A coalgebra is induced from this (in our language it is given by the 
  simplicial groupoid $\mathbf R$, where $\mathbf R_k$ is the groupoid of strings of
  $k$ root-preserving forest inclusions) but it is not
  the most interesting one.
  The Connes--Kreimer coalgebra is obtained by the reduction that
  identifies two root-preserving forest inclusions if their complement crowns
  are isomorphic forests.  
  
  We can obtain this coalgebra directly from a 
  decomposition space: let $\mathbf H_1$ denote the groupoid of forests, and 
  let $\mathbf H_2$ 
  denote the groupoid of forests with an admissible cut.  More generally, 
  $\mathbf H_0 $ is defined to be a point, and 
  $\mathbf H_k$ is the groupoid of forests with $k-1$ compatible admissible cuts.
  These form a simplicial groupoid in which the inner face maps forget a cut,
  and the outer face maps project away either the crown or the bottom layer 
  (the part of the forest below the bottom cut).
  The notion of admissible cut is standard,
  see for example~\cite{Connes-Kreimer:2000}.
  One convenient way to define what it means is
  to say that it is a root-preserving inclusion of forests: then the cut is
  interpreted as the division between the included forest and its complement.
  In this way we see that $\mathbf H_k$ is the groupoid of $k-1$ consecutive 
  root-preserving inclusions.
  
  
  There is a natural conservative ULF functor from $\mathbf R$ to
  $\mathbf H$: on $\mathbf R_1 \to \mathbf H_1$ it sends
  a root-preserving forest inclusion to its crown.  More generally, on 
  $\mathbf R_k \to \mathbf H_k$ it deletes the first inclusion in the string.
  Once again we see that $\mathbf R \simeq \Dec_\bot(\mathbf H)$, and that
  the reduction is just the counit of decalage.

%
  
  It is clear that $\mathbf H$ is not a Segal space:
  a tree with a cut cannot
  be reconstructed from its crown and its bottom tree, which is to say that 
  $\mathbf H_2$
  is not equivalent to $\mathbf H_1 \times_{\mathbf H_0} \mathbf H_1$.  
  It is straightforward to check  
  that it {\em is} a decomposition space.
\end{blanko}

\begin{blanko}{Operadic trees and $P$-trees.}\label{Ptrees}
  There is an important variation on the Connes-Kreimer Hopf algebra (but it is 
  only a bialgebra): instead of considering combinatorial trees one considers
  operadic trees (i.e.~trees with open incoming edges), or more generally
  $P$-trees for a finitary polynomial endofunctor $P$.  For details on this
  setting, see \cite{Kock:0807}, \cite{Kock:1109.5785}, \cite{Kock:MFPS28}
  \cite{GalvezCarrillo-Kock-Tonks:1207.6404}; it suffices here to note that the
  notion covers planar trees, binary trees, effective trees, Feynman diagrams,
  etc.

  There is a functor from operadic trees or $P$-trees to
  combinatorial trees which is {\em taking core} \cite{Kock:1109.5785}: it
  amounts to shaving off all open-ended edges (and forgetting the
  $P$-decoration).  This is a conservative ULF functor which realises the core
  bialgebra homomorphism from the bialgebra of operadic trees or $P$-trees to
  the Hopf algebra of combinatorial trees.

  For operadic trees, when copying over the description of the nerve $X$
  where $X_k$ is the groupoid of forests with $k-1$ compatible admissible cuts,
  there are two important differences: one is that $X_0$ is not just a point: it is
  the groupoid of node-less forests.  The second is that unlike $\mathbf
  H$, this one is a Segal space; this follows from the Key Lemma of 
  \cite{GalvezCarrillo-Kock-Tonks:1207.6404}.
  Briefly this comes from the fact that
  the cuts do not throw away the edges cut, and hence there is enough data to
  reconstruct a tree with a cut from its bottom tree and crown by grafting.
  More precisely, the Segal maps $X_k\to X_1 \times_{X_0} \cdots \times_{X_0}
  X_1$ simply return the layers seen in between the cuts.  It is easy to see
  that this is an equivalence:  given the layers separately, and a match of
  their boundaries, one can glue them together to reconstruct the original
  forest, up to isomorphism.  In this sense the operadic-forest decomposition
  space is a `category' with node-less forests as objects.  In this perspective,
  the combinatorial-forest decomposition space is obtained by throwing away the
  object information, i.e.~the data governing the possible ways to compose.
  These two differences are crucial in the work on Green functions and \fdb
  formulae in \cite{GalvezCarrillo-Kock-Tonks:1207.6404} and \cite{Kock:1512.03027}.
\end{blanko}

\begin{blanko}{Note about symmetries.}
  It may be worth stressing here that one can {\em not} obtain the same
  bialgebra (either the combinatorial or the operadic) by taking isomorphism
  classes of each of the groupoids $X_k$: doing this would destroy symmetries
  that constitute an essential ingredient in the Connes--Kreimer bialgebra.
  Indeed, define a simplicial set $Y$ in which $Y_k =
  \pi_0(X_k)$, the set of iso-classes of forests with $k$ compatible admissible
  cuts.  Consider the tree $T$ 
  \newcommand{\mytree}{ \bsegment \move (0 0) \lvec
  (0 8) \onedot \lvec (-5 17) \onedot \move (0 8) \lvec (5 17) \onedot \esegment
  }
\begin{center}\begin{texdraw}
  \move (0 0) \mytree
\end{texdraw}
\end{center}
belonging to $X_1$.  The fibre in $X_2$ is the (discrete) groupoid of
all possible cuts in this tree:
\begin{center}\begin{texdraw}
  \move (0 0) \mytree \move (0 21) 
      \bsegment \move (-10 0)\clvec (-4 4)(4 4)(10 0) \esegment
  \move (30 0) \mytree \move (30 10)
      \bsegment \move (-10 -2)\clvec (2 0)(-2 14)(10 14) \esegment
  \move (60 0) \mytree \move (60 10)
      \bsegment \move (10 -2)\clvec (-2 0)(2 14)(-10 14) \esegment
  \move (90 0) \mytree \move (90 10)
      \bsegment \move (-8 0)\clvec (-4 4)(4 4)(8 0) \esegment
  \move (120 0) \mytree \move (120 4)
      \bsegment \move (-6 0)\clvec (-4 -2)(4 -2)(6 0) \esegment
\end{texdraw}
\end{center}
The thing to notice here is that while the second and third
cuts are isomorphic as abstract cuts, and therefore get identified in 
$Y_2 = \pi_{0}(X_2)$, this isomorphism is not vertical over the underlying tree 
$T$, so in the comultiplication formula at the groupoid level of $X$ both
cuts appear, and there is a total of $5$ terms, whereas at the level of
$Y$ there will be only $4$ terms.  (Put in another way, the functor $X \to Y$
given by taking components
is not cULF.)

It seems that there is no way to circumvent this discrepancy directly at the
isoclass level: attempts involving ingenious decorations by natural numbers and
actions by symmetric groups will almost certainly end up amounting to actually
working at the groupoid level, and the conceptual clarity of the groupoid
approach seems much preferable.
\end{blanko}

\begin{blanko}{Free categories and free multicategories.}
  Let $G$ be a directed graph $E \rightrightarrows V$.  Consider the polynomial
  endofunctor $P$ given by $V \leftarrow E \stackrel = \to E \to V$.  Then the
  groupoid of $P$-trees (\ref{Ptrees}) (necessarily linear trees, since the
  middle map is an identity) is precisely (equivalent to) the set of arrows in
  the free category on $G$, and the decomposition space of $P$-trees described
  in \ref{Ptrees} coincides with the nerve of this category.
  
  More generally, for an arbitrary polynomial endofunctor $P$ given by a diagram
  of sets $I \leftarrow E \to B \to I$, the groupoid of $P$-trees is the
  groupoid of operations of the free monad on $P$.  Thinking of $P$ as
  specifying a signature, we can equivalently think of $P$-trees as operations
  for the free (coloured) operad on that signature, or as the multi-arrows of
  the free multicategory on $P$ regarded as a multigraph.  To a multicategory
  there is associated a monoidal category~\cite{Hermida:repr-mult}, whose object
  set is the free monoid on the set of objects (colours).  The decomposition space of
  $P$-trees is naturally identified with the nerve of the monoidal category
  associated to the multicategory of $P$-trees.
\end{blanko}

\begin{blanko}{Polynomial monads.}
  The decomposition space of $P$-trees for $P$ a polynomial endofunctor
  (\ref{Ptrees}) can be regarded as the decomposition space associated to the
  free monad on $P$, and as such the construction works for any (cartesian,
  discrete-finitary) polynomial monad, not just free ones, as we now proceed to
  explain.  Namely, let $P$ be a polynomial monad
  in groupoids, given by $I \leftarrow E \to B \to I$ with $E\to B$  finite and
  discrete, and assume that the monad is cartesian (i.e.~the naturality squares
  for the monad multiplication and unit are cartesian).  Following the graphical
  interpretation given in \cite{Kock-Joyal-Batanin-Mascari:0706}, one can regard $I$
  as the groupoid of decorated unit trees (i.e.~trees without nodes), and $B$ as
  the groupoid of corollas (i.e.~trees with exactly one node) decorated with
  $B$ on the node and $I$ on the edges, compatibly.  The arity of a
  corolla labeled by $b\in B$ is then the cardinality of the fibre $E_b$.  We
  can now form a simplicial groupoid $X$ in which $X_0$ is the groupoid of
  disjoint unions of decorated unit trees, $X_1$ is the groupoid of disjoint
  unions of decorated corollas, and where more generally $X_n$ is the groupoid of
  $P$-forests of height $n$.  For example, $X_2$ is the groupoid of forests of
  height $2$, which equivalently can be described as configurations consisting
  of a disjoint unions of bottom corollas whose leaves are decorated with other
  corollas, in such a way that the roots of the decorating corollas match the
  leaves of the bottom corollas.  This groupoid can more formally be described
  as the free symmetric monoidal category on $P(B)$.  Similarly, $X_n$ is the
  free symmetric monoidal category on $P^{n-1}(B)$.  The outer face maps project
  away the top or bottom layer in a level-$n$ forest.  For example $d_1: X_1
  \to X_0$ sends a disjoint union of corollas to the disjoint union of their
  root edges, while $d_0: X_1 \to X_0$ sends a disjoint union of corollas to the
  forest consisting of all their leaves.  The generic face maps (i.e.~inner face
  maps) join two adjacent layers by means of the monad multiplication on $P$.
  The degeneracy maps insert unary corollas by the unit operation of the monad.
  Associativity of the monad law ensures that this simplicial groupoid is
  actually a Segal space.  The operation of disjoint union makes this a monoidal
  decomposition space, and altogether an incidence bialgebra results from the
  construction.

  The example (\ref{Ptrees}) of $P$-trees (for $P$ a polynomial endofunctor) and
  admissible cuts is an example of this construction, namely corresponding to
  the free monad on $P$: indeed, the operations of the free monad on $P$ is the
  groupoid of $P$-trees, which now plays the role of $B$.  Level-$n$ trees in
  which each node is decorated by objects in $B$ is the same thing as $P$-trees
  equipped with $n-1$ compatible admissible cuts, and grafting of $P$-trees (as
  prescribed by the generic face maps in \ref{Ptrees}) is precisely the monad
  multiplication in the free monad on $P$.  It should be stressed that while
  the free thing is always automatically of locally finite length, the general case
  is not automatically so.  This condition must be imposed separately if
  numerical examples are to be extracted.

  The simplicial groupoid constructed can be regarded as a generalisation of the
  monoidal category associated to a multicategory, since a multicategory is
  essentially the same thing as a (discrete) polynomial monad equipped with a
  cartesian monad map to the free-monoid monad
  (cf.~\cite{Gambino-Kock:0906.4931}).
\end{blanko} 

\begin{blanko}{Directed graphs and free PROPs.}\label{ex:MM}
  The foregoing constructions readily generalise from trees to directed graphs
  (although the attractive polynomial interpretation does not).  By a
  directed graph
  we understand a
  finite oriented graph with a certain number of open input edges, a certain
  number of open output edges, and prohibited to contain an oriented cycle (see 
  \cite{Kock:1407.3744} for a recent elegant categorical formalism).  In
  particular, a directed graph has an underlying poset.  The directed
  graphs form a groupoid $G_1$.  We allow graphs without vertices, these form a
  groupoid $G_0$.  Let $G_2$ denote the groupoid of directed graphs with an
  {\em admissible cut}: by this we mean a partition of the set of vertices into
  two disjoint parts: a poset filter $F$ (i.e.~an upward closed subset) and a poset ideal
  $I$ (i.e.~a downward closed subset).  The edges connecting the two parts
  become the output edges of $F$ and input edges of $I$; hence $F$ and $I$
  become directed graphs again. 
  Similarly, let $G_k$ denote the groupoid of
  directed graphs with $k-1$ compatible admissible cuts, just like we did for
  forests.  It is clear that this defines a simplicial groupoid $\mathbf G$,
  easily verified to be a decomposition space and in fact a Segal space.  The
  directed graphs form the set of operations of 
  the free PROP with one generator in each input/output
  degree $(m,n)$.  The Segal space is the nerve of the associated monoidal
  category. The resulting coalgebra (in fact a bialgebra) has been studied in the
  context of Quantum Field Theory by Manchon~\cite{Manchon:MR2921530}.  Certain
  decorated directed graphs, and the resulting bialgebra have been studied by
  Manin~\cite{Manin:MR2562767}, \cite{Manin:0904.4921} in the theory of
  computation: his directed graphs are decorated by operations on partial
  recursive functions and switches.
  The decorating data is called a tensor 
  scheme in \cite{Joyal-Street:tensor-calculus}, and the class of decorated
  graphs form the set of operations of the free (coloured) PROP on the tensor scheme.
  Again, the resulting decomposition space is naturally identified with
  the  nerve of the associated monoidal category.
\end{blanko}

%
%


\def\inputfile{hall.tex}

\subsection{Waldhausen $S$-construction}

\label{sec:Wald}

%
%
%

\begin{blanko}{Waldhausen $S$-construction of an abelian category.}
  We follow Lurie~\cite[Subsection 1.2.2]{Lurie:HA} for the account of
  Waldhausen $S$.  For $I$ a linearly ordered set,
let $\Ar(I)$ denote the category of arrows in $I$: the objects
are pairs of elements $i\leq j$ in $I$, and the morphisms are relations
$(i,j) \leq (i',j')$ whenever $i\leq i'$ and $j\leq j'$.
A {\em gap complex} in an abelian category $\mathscr A$ is a functor
$F: N(\Ar(I)) \to \mathscr A$ such  that

\begin{enumerate}
  \item For each $i \in I$, the object $F(i,i)$ is zero.

  \item For every $i \leq j \leq k$, the associated diagram
  $$\xymatrix{
 \!\!\!\!\!\!\!\!\!\!0= F(j,j) \ar@{ >->}[r] & F(j,k) \\
  F(i,j) \ar@{->>}[u] \ar@{ >->}[r] & F(i,k) \ar@{->>}[u]
  }$$
  is a pushout (or equivalently a pullback).
\end{enumerate}

Remark:
since the pullback of a monomorphism is always a monomorphism, and the pushout
of an epimorphism is always an epimorphism, it follows that automatically the
horizontal maps are monomorphisms and the vertical maps are epimorphisms, as
already indicated with the arrow typography.  Altogether, it is just a fancy but
very convenient way of saying `short exact sequence' or `(co)fibration sequence'.





Let $\operatorname{Gap}(I,\mathscr A)$ denote the full subcategory of
$\Fun(\Ar(I),\mathscr A)$ consisting of the gap complexes.
This is a $1$-category, since $\mathscr A$ was assumed to be an abelian $1$-category.

The assignment
$$
[n] \mapsto \operatorname{Gap}([n],\mathscr A)^{\eq}
$$
defines a simplicial space $S\mathscr A: \Delta\op\to\Grpd$, which by
definition is the Waldhausen $S$-construction on $\mathscr A$.
Intuitively (or essentially), the
groupoid $\operatorname{Gap}([n],\mathscr A)^{\eq}$ has as objects
staircase diagrams like the following (picturing $n=4$):
$$\xymatrix{
&&& A_{34} \\
&& A_{23} \ar@{ >->}[r] & A_{24} \ar@{->>}[u] \\
& A_{12} \ar@{ >->}[r] & A_{13} \ar@{->>}[u]\ar@{ >->}[r] & A_{14} \ar@{->>}[u] \\
A_{01} \ar@{ >->}[r] & A_{02} \ar@{->>}[u]\ar@{ >->}[r] & A_{03} 
\ar@{->>}[u]\ar@{ >->}[r] & A_{04} \ar@{->>}[u]
}$$
\end{blanko}
The face map $d_i$ deletes all objects containing an $i$ index.
The degeneracy map $s_i$ repeats the $i$th row and the $i$th column.

%


A string of composable monomorphisms 
$(A_{1}\rightarrowtail A_{2}\rightarrowtail \dots \rightarrowtail A_{n})$
determines, up to canonical isomorphism,  short exact  
sequences $A_{ij}\rightarrowtail A_{ik}\twoheadrightarrow 
A_{jk}=A_{ij}/A_{ik}$ with $A_{0i}=A_i$.
Hence the whole diagram can be reconstructed up to isomorphism from the
bottom row.  (Similarly, since epimorphisms have uniquely determined kernels,
the whole diagram can also be reconstructed from the last column.)

We have  $s_0(*)=0$, and
\begin{align*}
d_0(A_1\rightarrowtail A_2\rightarrowtail \dots \rightarrowtail A_{n})&= 
(A_2/A_1\rightarrowtail \dots \rightarrowtail A_{n}/A_1)
\\s_0(A_1\rightarrowtail A_2\rightarrowtail \dots \rightarrowtail A_{n})&= 
(0\rightarrowtail 
A_1\rightarrowtail 
A_2\rightarrowtail \dots \rightarrowtail A_{n}) 
\end{align*}
The simplicial maps $d_i,s_i$ for $i\geq1$ are more straightforward: the 
simplicial set $\Dec_\bot (S\mathscr A)$ is just the nerve of $\mathrm{mono}(\mathscr A)$.

\begin{lemma}
  The projection $S_{n+1} \mathscr A \to \Map([n], \mathrm{mono}(\mathscr A))$
  is a trivial Kan fibration.  Similarly the projection 
  $S_{n+1} \mathscr A \to \Map([n], \mathrm{epi}(\mathscr A))$.
\end{lemma}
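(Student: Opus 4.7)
The plan is to exhibit an explicit (quasi-)inverse to the projection and deduce the trivial Kan fibration property from the resulting equivalence of groupoids. First I would construct a functor $F:\Map([n],\mathrm{mono}(\mathscr A)) \to S_{n+1}\mathscr A$ sending a chain $A_{01}\rightarrowtail A_{02}\rightarrowtail\cdots\rightarrowtail A_{0,n+1}$ to the gap complex with $A_{ii}=0$ and $A_{ij}:=\operatorname{coker}(A_{0i}\rightarrowtail A_{0j})$ for $i<j$; the monomorphisms $A_{ij}\rightarrowtail A_{ik}$ are induced by functoriality of cokernels, and the gap axiom on each square is precisely the third isomorphism theorem $A_{jk}\cong A_{0k}/A_{0j}\cong(A_{0k}/A_{0i})/(A_{0j}/A_{0i})$.

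Next I would check that $F$ is genuinely functorial with respect to isomorphisms of chains; this is immediate from the universal property of cokernels applied to isomorphisms of short exact sequences. By construction $\mathrm{proj}\circ F=\mathrm{id}$ on the nose, and conversely $F\circ\mathrm{proj}$ carries a canonical natural isomorphism to $\mathrm{id}_{S_{n+1}\mathscr A}$: in any gap complex, each entry $A_{ij}$ with $i>0$ is characterised up to canonical isomorphism as a cokernel of $A_{0i}\rightarrowtail A_{0j}$, and these canonical isomorphisms assemble into a natural transformation precisely because they are universal.

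This shows the projection is an equivalence of groupoids, and in fact it is a surjective-on-objects isofibration of groupoids (a strict section $F$ is exhibited, and any isomorphism in the target lifts via the equivalence), so its nerve is a trivial Kan fibration between Kan complexes. For the epimorphism version, I would run the completely dual argument, reconstructing a gap complex from its last column by setting $A_{ij}:=\ker(A_{i,n+1}\twoheadrightarrow A_{j,n+1})$, with the gap axiom again provided by the dual third isomorphism theorem.

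I do not anticipate any serious obstacle: the content is essentially that a staircase of short exact sequences is determined up to canonical isomorphism by its bottom row, and the only point requiring a little care is checking coherence of the canonical natural isomorphisms across the various entries of the staircase, which is automatic from the universal properties involved.
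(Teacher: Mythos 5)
Your proposal is correct and is essentially the argument the paper intends: the paper leaves the lemma unproved beyond the preceding remark that a string of monomorphisms determines the whole staircase up to canonical isomorphism via quotients $A_{jk}=A_{ik}/A_{ij}$ (dually via kernels from the last column), which is exactly your cokernel section $F$. Your additional care about coherence of the canonical isomorphisms and the isofibration property just makes explicit what the paper takes as evident.
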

More precisely (with reference to the fat nerve):

\begin{prop}
  These equivalences assemble into levelwise simplicial 
equivalences
$$
\Dec_\bot(S\mathscr A) \simeq N( \mathrm{mono}(\mathscr A))
$$
$$
\Dec_\top(S\mathscr A) \simeq N( \mathrm{epi}(\mathscr A)) .
$$
\end{prop}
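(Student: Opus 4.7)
The strategy is to upgrade the levelwise equivalences given by the previous Lemma to an equivalence of simplicial objects by checking that the projection to the bottom row (respectively the last column) is simplicial with respect to the relevant simplicial structures. I write out only the lower-dec case, since the upper-dec case is formally dual.

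First I would unwind the identifications. An object of $\Dec_\bot(S\mathscr A)_n = S_{n+1}\mathscr A$ is (up to canonical isomorphism) a gap complex on $[n+1]$, equivalently a staircase diagram with entries $A_{ij}$, $0\leq i\leq j\leq n+1$, with $A_{ii}=0$ and every sub-square a short exact sequence. The bottom row $A_{01}\rightarrowtail A_{02}\rightarrowtail\cdots\rightarrowtail A_{0,n+1}$ is a string of composable monomorphisms, i.e.~an object of $\Map([n],\mathrm{mono}(\mathscr A))$. This defines, for each $n$, a map $p_n\colon \Dec_\bot(S\mathscr A)_n \to N(\mathrm{mono}(\mathscr A))_n$, and the previous Lemma asserts $p_n$ is a trivial Kan fibration, hence an equivalence of $\infty$-groupoids.

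Next I would check simpliciality, i.e.~that the $p_n$ commute with faces and degeneracies. Recall that the structure maps of $\Dec_\bot(S\mathscr A)$ in degree $n$ are the maps $d_{i+1}, s_{i+1}\colon S_{n+1}\mathscr A\to S_{n+1\mp 1}\mathscr A$ for $0\leq i\leq n$. By construction, $d_{i+1}$ deletes all entries of the staircase with an $(i+1)$-index, and $s_{i+1}$ duplicates the $(i+1)$-row and column; in both cases the effect on the bottom row (restricted to indices $(0,j)$ with $1\leq j\leq n+1$) is exactly the face or degeneracy map $d_i$ or $s_i$ on the string of monos, as described in \ref{w} and \ref{fatnerve}. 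Thus the family $\{p_n\}$ assembles into a simplicial map $p\colon \Dec_\bot(S\mathscr A)\to N(\mathrm{mono}(\mathscr A))$, which is a levelwise equivalence by the Lemma, hence an equivalence of simplicial $\infty$-groupoids as claimed. For the upper dec, one uses instead the rightmost column $A_{0,n+1}\twoheadrightarrow A_{1,n+1}\twoheadrightarrow\cdots\twoheadrightarrow A_{n,n+1}$; the face map $d_i$ of $S_{n+1}\mathscr A$ with $0\leq i\leq n$ acts on this column precisely as the $i$-th face of $N(\mathrm{epi}(\mathscr A))_n$, and similarly for degeneracies.

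The main obstacle is really bookkeeping: one must verify that the quasi-inverse to $p_n$ chosen at each level can be promoted to a coherent simplicial map, or, equivalently, that the zig-zag of natural transformations arising from the simplicial identities is compatible with the contractible choices involved in reconstructing the staircase from its bottom row. The cleanest way is to avoid choosing an inverse and just observe that $p$ is a simplicial map which is a levelwise equivalence, hence an equivalence in $\Fun(\Delta\op,\Grpd)$; this reduces the question to the purely combinatorial check above that faces in $S\mathscr A$ truncate bottom rows to bottom rows, which is immediate from the definition of the face maps $d_i$ as deletion of $i$-indexed objects.
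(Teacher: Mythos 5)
Your proof is correct and follows essentially the same route the paper takes (the paper leaves the proposition unproved, but the surrounding discussion — reconstructing the staircase from its bottom row, the explicit formulas for the structure maps, and the preceding Lemma on the trivial Kan fibration $S_{n+1}\mathscr A\to\Map([n],\mathrm{mono}(\mathscr A))$ — is exactly your argument). Your closing observation, that one should work with the projection $p$ itself as a simplicial map which is a levelwise equivalence rather than trying to choose a coherent inverse, is the right way to finish.
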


\begin{theorem}
  The Waldhausen $S$-construction of an abelian category $\mathscr A$
  is a decomposition space.
\end{theorem}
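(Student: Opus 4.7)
The plan is to apply Theorem~\ref{thm:decomp-dec-segal} in the form of its equivalent condition~(4): a simplicial space $X$ is a decomposition space if and only if both $\Dec_\bot(X)$ and $\Dec_\top(X)$ are Segal spaces and the two explicit degeneracy/outer-face squares displayed there are pullbacks. With the preceding proposition in hand, both conditions reduce to direct verifications in the abelian category $\mathscr A$.

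First, for the Segal condition on the decalages, the preceding proposition gives equivalences
\[
\Dec_\bot(S\mathscr A) \;\simeq\; N(\mathrm{mono}(\mathscr A)), \qquad \Dec_\top(S\mathscr A) \;\simeq\; N(\mathrm{epi}(\mathscr A)).
\]
Each is the fat nerve of a $1$-category, hence is a Segal space by~\ref{fatnerve}.

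Second, for the two pullback squares of condition~(4), recall that $(S\mathscr A)_0 \simeq \ast$, $(S\mathscr A)_1 \simeq \mathscr A^{\eq}$, and $(S\mathscr A)_2$ is the groupoid of short exact sequences $A_{01} \rightarrowtail A_{02} \twoheadrightarrow A_{12}$. The first square demands that the homotopy fibre of $d_0 :(S\mathscr A)_2 \to (S\mathscr A)_1$ over $0$ be naturally equivalent to $(S\mathscr A)_1$ via $s_1$. This holds because an SES with zero cokernel forces $A_{01}\rightarrowtail A_{02}$ to be an isomorphism, exhibiting the SES canonically as $s_1(A) = (A \rightarrowtail A \twoheadrightarrow 0)$. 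The second square is strictly dual, with kernels in place of cokernels and $s_0$ in place of $s_1$.

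The main (and only real) obstacle is thus already absorbed in the preceding proposition, whose essential point is that a gap complex is determined up to canonical isomorphism by its bottom row (respectively its rightmost column): given an $n$-chain of monomorphisms $A_{01}\rightarrowtail\cdots\rightarrowtail A_{0n}$, all higher subquotients $A_{ij}\simeq A_{0j}/A_{0i}$ and the connecting morphisms fit uniquely into a staircase of short exact sequences. Granting this, the verifications just described complete the proof. No homotopy-coherence subtleties arise, because $\mathscr A$ is a $1$-category, so all the relevant $\infty$-groupoids are $1$-groupoids and the pullbacks involved are ordinary $1$-groupoid pullbacks.
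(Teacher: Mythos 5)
Your proposal is correct and follows essentially the same route as the paper: both invoke condition (4) of Theorem~\ref{thm:decomp-dec-segal}, use the preceding proposition to identify the two decalages with (fat) nerves of $\mathrm{mono}(\mathscr A)$ and $\mathrm{epi}(\mathscr A)$, and verify the two remaining degeneracy squares by observing that the fibre of $d_0$ (resp.\ $d_2$) over the zero object consists exactly of the short exact sequences whose cokernel (resp.\ kernel) vanishes, i.e.\ the isomorphisms.
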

\begin{proof}
  For convenience we write $S\mathscr A$ simply as $S$.
The previous proposition already implies that the two $\Dec$s of $S$
  are Segal spaces.  By Theorem~\ref{thm:decomp-dec-segal}, it is 
  therefore enough to
  establish that the squares
  $$
  \xymatrix{S_1 \ar[r]^{s_1} \ar[d]_{d_0} & S_2 \ar[d]^{d_0} \\
  S_0 \ar[r]_{s_0} & S_1
  }
  \qquad 
  \xymatrix{S_1 \ar[r]^{s_0} \ar[d]_{d_1} & S_2 \ar[d]^{d_2} \\
  S_0 \ar[r]_{s_0} & S_1
  }
  $$
  are pullbacks. Note that we have $S_0 = *$ and $S_1 = \mathscr 
  A^{\operatorname{iso}}$,
  and that $s_0 : S_0 \to S_1$ picks out the zero object, and since the zero object
  has no nontrivial automorphisms, this map is fully faithful.  The map $d_0 : S_2 \to S_1$
  sends a monomorphism to its quotient object.  We need to compute the fibre over the zero 
  object, but since $s_0$ is fully faithful, we are just asking for the full subgroupoid
  of $S_2$ consisting of those monomorphisms whose cokernel is zero.  Clearly these
  are precisely the isos, so the fibre is just $\mathscr A^{\operatorname{iso}} = S_1$.
  The other pullback square is established similarly, but arguing with epimorphisms
  instead of monomorphisms.
\end{proof}
\begin{BM}
  Waldhausen's $S$-construction was designed for more general categories than
  abelian categories, namely what are now called Waldhausen categories, where
  the cofibrations play the role of the monomorphisms, but where there is no stand-in
  for the epimorphisms.  The theorem does not generalise to Waldhausen categories in
  general, since in that case $\Dec_\top(S)$ is not necessarily
  a Segal space of any class of arrows.
\end{BM}

\begin{blanko}{Waldhausen $S$ of a stable $\infty$-category.}
  The same construction works in the $\infty$-setting, by considering stable
  $\infty$-categories instead of abelian categories.  Let $\mathscr A$ be a
  stable $\infty$-category (see Lurie~\cite{Lurie:HA}).  Just as in the abelian
  case, the assignment
  $$
  [n] \mapsto \operatorname{Gap}([n],\mathscr A)^{\eq}
  $$
  defines a simplicial space $S\mathscr A: \Delta\op\to\Grpd$, which by
  definition is the Waldhausen $S$-construction on $\mathscr A$.
  Note that in the case of a stable $\infty$-category, in contrast to the abelian 
  case, every map can arise as either horizontal or vertical arrow in a gap 
  complex.  Hence the role of monomorphisms (cofibrations)
  is played by all maps, and the role of epimorphisms is also played by all maps.
\end{blanko}

\begin{lemma}
  For each $k\in \N$, the two projection functors
$S_{k+1}\mathscr A \to \Map(\Delta[k], \mathscr A)$
are equivalences.
\end{lemma}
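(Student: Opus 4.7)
The plan is to recognise this as the stable $\infty$-categorical avatar of the abelian-category lemma just proved: a staircase gap diagram is determined (up to contractible choice) by its bottom row once one insists that the diagonal entries vanish.  In a stable $\infty$-category $\mathscr A$, a commutative square with a zero corner is a pushout if and only if it is a pullback if and only if the remaining entries form a (co)fiber sequence.  Consequently the bottom row $A_{0,1}\to A_{0,2}\to\cdots\to A_{0,k+1}$ of a gap complex $F:N(\Ar([k{+}1]))\to\mathscr A$ forces every other entry to be $A_{i,j}\simeq \operatorname{cofib}(A_{0,i}\to A_{0,j})$, and conversely any string of $k$ composable arrows in $\mathscr A$ extends, essentially uniquely, to a gap complex by iterated cofibers.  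The dual projection to the rightmost column is treated by the reflected argument, using that in the stable setting fibers and cofibers coincide.

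To implement this I would identify $\Ar([k{+}1])$ with the poset $P=\{(i,j):0\le i\le j\le k{+}1\}$ and let $Q\subset P$ denote the bottom row $(0,1),\ldots,(0,k{+}1)$, isomorphic to $[k]$.  Restriction along $Q\hookrightarrow P$ gives a functor
$$r:\Fun(N(P),\mathscr A)\longrightarrow \Fun(\Delta[k],\mathscr A),$$
and the claim is that $r$ restricts to an equivalence on the full subcategory $\operatorname{Gap}([k{+}1],\mathscr A)$ of gap complexes.  An inverse is produced by iterated left Kan extension: starting from the bottom row one fills in the staircase diagonal-by-diagonal by forming pushouts along the zero diagonal, each such pushout existing in $\mathscr A$ by stability.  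The pasting law for bicartesian squares, together with the fact that pushouts against $0$ compose to pushouts against $0$ in a stable $\infty$-category, ensures that every small staircase square in the extended diagram is bicartesian, so the output is a gap complex.

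The decisive formal input is the standard fact for stable $\infty$-categories (Lurie, \emph{Higher Algebra}, Lemma~1.2.2.4) that the full subcategory of $\Fun(N(P),\mathscr A)$ spanned by functors with $F(i,i)=0$ and all staircase squares bicartesian coincides with the subcategory of functors left Kan extended from $Q$ along the evident filtration of $P$ by diagonals.  Once this is invoked, essential surjectivity and full faithfulness of $r|_{\operatorname{Gap}}$ follow formally from the universal property of left Kan extension, and taking maximal sub-$\infty$-groupoids yields the asserted equivalence $S_{k+1}\mathscr A\simeq \Map(\Delta[k],\mathscr A)$.  The main potential obstacle is purely bookkeeping: one must organise the iterated cofiber construction coherently as a functor (not just on objects and equivalences), and verify that the two pairs of conditions — vanishing on the diagonal plus each square bicartesian, versus being left Kan extended from $Q$ — match up precisely; this is exactly what Lurie's lemma packages, and requires no further ideas beyond the pasting of bicartesian squares.
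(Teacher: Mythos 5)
Your argument is correct, and it is essentially the intended one: the paper states this lemma without proof, having set up the whole $S$-construction by reference to Lurie's \emph{Higher Algebra} \S 1.2.2, and the content of the lemma is exactly Lurie's Lemma~1.2.2.4 (restriction of gap complexes to the bottom row is a trivial Kan fibration), which you invoke. Your reduction --- zero diagonal plus bicartesian staircase squares forces $A_{i,j}\simeq\operatorname{cofib}(A_{0,i}\to A_{0,j})$, with the dual statement for the last column via fibres --- is the standard route, and passing to maximal sub-$\infty$-groupoids gives the stated equivalence.

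One small imprecision worth fixing: a functor left Kan extended from the bottom row $Q$ alone would take the value $\operatorname{colim}_{j\le i}A_{0,j}\simeq A_{0,i}$ at the diagonal entry $(i,i)$, not $0$. The correct formulation (and the one Lurie actually uses) is that gap complexes are the functors that vanish on the diagonal and are left Kan extended from the union of $Q$ with the diagonal; your constructive description (``filling in diagonal-by-diagonal by pushouts along the zero diagonal'') already reflects this, so the fix is purely in the statement of the characterisation, not in the argument.
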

From the description of the face and degeneracy maps, 
the following more precise result follows readily,
comparing with the fat nerves:

\begin{prop}
  We have natural (levelwise) simplicial equivalences
$$
\Dec_\bot(S\mathscr A) \simeq N(\mathscr A)
$$
$$
\Dec_\top(S\mathscr A) \simeq N(\mathscr A).
$$
\end{prop}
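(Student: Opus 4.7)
The plan is to produce explicit simplicial maps
\[
\pi_\bot: \Dec_\bot(S\mathscr A) \longrightarrow N(\mathscr A), \qquad
\pi_\top: \Dec_\top(S\mathscr A) \longrightarrow N(\mathscr A),
\]
and then invoke the preceding lemma to conclude that each is a levelwise, and hence simplicial, equivalence.

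I would define $\pi_\bot$ in degree $n$ by projecting a gap complex $F: \Ar([n+1]) \to \mathscr A$ onto its bottom row, i.e.~by precomposition with the functor $\iota_\bot: [n] \to \Ar([n+1])$, $i \mapsto (0, i+1)$. Recalling that $\Dec_\bot(S\mathscr A)$ is by definition precomposition with the add-bottom monad $b: \Delta \to \Delta$, naturality of $\pi_\bot$ reduces to the combinatorial identity
\[
\Ar(b(\alpha)) \circ \iota_\bot \;=\; \iota_\bot \circ \alpha
\]
of functors $[m] \to \Ar([n+1])$, for every morphism $\alpha: [m] \to [n]$ in $\Delta$. Since $b(\alpha)$ fixes $0$ and sends $i+1 \mapsto \alpha(i)+1$, both sides send $i$ to $(0,\alpha(i)+1)$, so the identity holds. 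An entirely analogous argument, using $\iota_\top: i \mapsto (i, n+1)$ and the add-top extension of $\alpha$ (which fixes the top element), establishes $\pi_\top$ as a simplicial map. By the preceding lemma, both $\pi_\bot$ and $\pi_\top$ are levelwise equivalences of groupoids, and therefore simplicial equivalences. Naturality in $\mathscr A$ is manifest from the functorial constructions.

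The main obstacle is the preceding lemma itself, which carries the essential geometric content: any gap complex in a stable $\infty$-category is determined, up to contractible choice, by its bottom row (or rightmost column), with the remaining entries reconstructed as iterated cofibres $A_{ij} \simeq \mathrm{cofib}(A_{0i} \to A_{0j})$, the gap conditions (each square a pushout) then following from stability together with pasting of pushouts. The coherence of this reconstruction in the $\infty$-categorical setting is the substantive input, but once it is granted the present proposition reduces to the short naturality verification above.
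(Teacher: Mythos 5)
Your proposal is correct and follows essentially the same route as the paper, which states that the result ``follows readily'' from the preceding lemma together with the description of the face and degeneracy maps (restriction to the bottom row, resp.\ last column, of the staircase diagram). You merely make explicit the naturality check $\Ar(b(\alpha))\circ\iota_\bot=\iota_\bot\circ\alpha$ that the paper leaves implicit, and correctly identify the preceding lemma as carrying the real content.
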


\begin{theorem}\label{thm:WaldhausenS}
  Waldhausen's $S$-construction of a
  stable $\infty$-category $\mathscr A$ is a decomposition space.
\end{theorem}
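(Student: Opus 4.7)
The plan is to mirror the proof in the abelian case (the preceding theorem): apply Theorem~\ref{thm:decomp-dec-segal}, specifically condition~(4). The preceding Proposition already supplies half of the input, namely that $\Dec_\bot(S\mathscr{A}) \simeq N(\mathscr{A})$ and $\Dec_\top(S\mathscr{A}) \simeq N(\mathscr{A})$ are both (Rezk complete) Segal spaces. It therefore remains to verify that the two squares
$$
\xymatrix{S_1 \ar[r]^{s_1} \ar[d]_{d_0} & S_2 \ar[d]^{d_0} \\ S_0 \ar[r]_{s_0} & S_1}
\qquad
\xymatrix{S_1 \ar[r]^{s_0} \ar[d]_{d_1} & S_2 \ar[d]^{d_2} \\ S_0 \ar[r]_{s_0} & S_1}
$$
are pullbacks.

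First I would identify the pieces. The space $S_0$ is contractible, since the gap complex on $[0]$ is determined by the zero object of $\mathscr{A}$, which is essentially unique in a stable $\infty$-category. The space $S_1$ is equivalent to $\mathscr{A}^{\simeq}$, the maximal sub-$\infty$-groupoid of $\mathscr{A}$, and the map $s_0 : S_0 \to S_1$ selects the connected component of the zero object; in particular it is a monomorphism in the sense of~\ref{def:mono}. The space $S_2$ is equivalent to the $\infty$-groupoid of cofibre/fibre sequences $A_{01} \to A_{02} \to A_{12}$, with $d_0$ returning $A_{12}$ and $d_2$ returning $A_{01}$.

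For the first square, the pullback is by construction the full sub-$\infty$-groupoid of $S_2$ on those gap complexes whose cofibre term $A_{12}$ is equivalent to zero. By the defining property of a stable $\infty$-category, a map $A_{01}\to A_{02}$ has vanishing cofibre if and only if it is an equivalence, so this sub-$\infty$-groupoid is naturally equivalent to $\mathscr{A}^{\simeq}\simeq S_1$; the equivalence is realised by $s_1$, which sends an object $A \in \mathscr{A}^{\simeq}$ to the trivial gap complex $A \xrightarrow{\id} A \to 0$. The second square is proved by the formally dual argument: the pullback consists of those cofibre sequences whose first term $A_{01}$ is zero, equivalently whose fibre vanishes, equivalently whose map $A_{02}\to A_{12}$ is an equivalence, and this space is identified with $S_1$ via $s_0$.

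The main obstacle is purely one of bookkeeping in the $\infty$-setting: one must verify that the informal identification "fibre $=$ equivalences" is an equivalence of $\infty$-groupoids and not merely a bijection on $\pi_0$. This follows from the universal property of the (co)fibre in a stable $\infty$-category, which ensures that the space of null-homotopies witnessing that a given cofibre vanishes is canonically contractible; equivalently, the two constant diagrams with value $0$ in $\operatorname{Gap}([2],\mathscr{A})$ over an equivalence $A\xrightarrow{\sim} B$ form a contractible space. Once this is spelled out, the two pullback squares above are immediate, and Theorem~\ref{thm:decomp-dec-segal}(4) completes the proof.
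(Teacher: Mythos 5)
Your proposal is correct and follows essentially the same route as the paper: the paper's proof also reduces to Theorem~\ref{thm:decomp-dec-segal} via the two Segal-space decalages, the fact that $s_0:S_0\to S_1$ is fully faithful (a monomorphism onto the component of the zero object), and the characterisation in a stable $\infty$-category that a map is an equivalence if and only if its cofibre (resp.\ fibre) is zero. Your extra remark on checking the identification at the level of $\infty$-groupoids rather than just $\pi_0$ is a reasonable elaboration of what the paper leaves implicit.
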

\begin{proof}
  The proof is exactly the same as in the abelian case, relying on the following
  three facts: 
  \begin{enumerate}
    \item The $\Dec$s are Segal spaces.
  
    \item $s_0 : S_0 \to S_1$ is fully faithful.
  
    \item A map (playing the role of monomorphisms) is an equivalence if and only if its cofibre
    is the zero object, and a map (playing the role of epimorphism) is an equivalence 
    if and only if its fibre is the zero object.
  \end{enumerate}
\end{proof}


\begin{BM}
  This theorem was proved independently (and first) by Dyckerhoff and 
  Kapranov~\cite{Dyckerhoff-Kapranov:1212.3563}, Theorem 7.3.3.
  They prove it more generally for exact $\infty$-categories, a notion they
  introduce.  Their proof that Waldhausen's $S$-construction of an exact 
  $\infty$-category is a decomposition space is somewhat more complicated than ours above.
  In particular their proof of unitality (the pullback condition on
  degeneracy maps) is technical and 
  involves Quillen model structures on certain marked 
  simplicial sets \`a la Lurie~\cite{Lurie:HTT}.
  We do not wish to go into exact $\infty$-categories here, and refer
  instead the reader to \cite{Dyckerhoff-Kapranov:1212.3563}, but we wish
  to point out that our simple proof above works as
  well for exact $\infty$-categories.  This follows since the three points in
  the proof hold also for exact $\infty$-categories,
  which in turn is a consequence of 
  the definitions and basic results provided in 
  \cite[Sections~7.2 and 7.3]{Dyckerhoff-Kapranov:1212.3563}.
\end{BM}

%
%

\begin{blanko}{Hall algebras.}\label{Hall}
  The finite-support incidence algebra of a decomposition space $X$ was
  mentioned in \ref{fin-supp}.  In order for it to admit a cardinality, the
  required assumption is that $X_1$ be locally finite, and 
  that $X_2 \to X_1 \times X_1$ be a finite map.  In the
  case of $X= S(\mathscr A)$ for an abelian category $\mathscr A$, this
  translates into the condition that $\Ext^0$ and $\Ext^1$ be finite (which in
  practice means `finite dimension over a finite field').  The finite-support
  incidence algebra in this case is the {\em Hall algebra} of $\mathscr A$
  (cf.~Ringel~\cite{Ringel:Hall}; see also 
  \cite{Schiffmann:0611617},
  although these sources twist the multiplication by the so-called Euler form).

  For a stable $\infty$-category $\mathscr A$, with mapping spaces assumed to be
  locally finite (\ref{finite}), the finite-support incidence algebra of
  $S(\mathscr A)$ is the {\em derived Hall algebra}.  These were introduced by
  To\"en~\cite{Toen:0501343} in the setting of dg-categories.
  
  Hall algebras were one of the main motivations for Dyckerhoff and 
  Kapranov~\cite{Dyckerhoff-Kapranov:1212.3563} to
  introduce $2$-Segal spaces.  We refer to their work for development 
  of this important topic; see in particular the lecture notes of 
  Dyckerhoff~\cite{Dyckerhoff:CRM}.
\end{blanko}

\def\inputfile{cancellation.tex}

\subsection{\M functions and cancellation}

\label{sec:cancellation}

We compute the \M functions in some of our examples.  While the formula $\mu =
\Phieven-\Phiodd$ seems to be the most general and uniform expression of the \M
function, it often not the most economical.  At the numerical level, it is
typically the case that much more practical expressions for the \M functions can
be computed with different techniques.  The formula $\Phieven-\Phiodd$ should not be
dismissed on these grounds, though: it must be remembered that it constitutes a
natural `bijective' account, valid at the objective level, in contrast to many
of the elegant cancellation-free expressions in the classical theory which are
often the result of formal algebraic manipulations, typically power-series 
representations.

Comparison with the economical formulae raises the question whether these too can
be realised at the objective level.  This can be answered (in a few cases) by
exhibiting an explicit cancellation between $\Phieven$ and $\Phiodd$, which in
turn may or may not be given by a {\em natural} bijection.  

Once a more economical expression has been found for some \M decomposition space
$X$, it can be transported back along any cULF functor $f:Y \to X$ to yield also
more economical formulae for $Y$.

%
%
%
%
%
%

\begin{blanko}{Natural numbers.}
  For the decomposition space $\mathbf N$ (see~\ref{ex:N&L}),
  the incidence algebra is $\grpd^{\N}$, spanned by the representables
  $h^n$, and with convolution product
  $$
  h^a * h^b = h^{a+b}.
  $$

  To compute the \M functor,
  we have
    $$
  \Phieven = \sum_{r \text{ even}} (\N\shortsetminus \{0\})^r ,
  $$
  hence $\Phieven(\un n)$ is
  the set of ordered compositions of the ordered 
  set $\un n$ into an even number of parts, or equivalently
  $$
  \Phieven(\un n) = \{ \un n \onto \un r \mid r \text{ even } \} ,
  $$
  the set of monotone surjections. 
%
%
%
%
  In conclusion, with an abusive sign notation,
  the \M functor is
  $$
  \mu(\un n) = \sum_{r\geq 0} (-1)^r  \{ \un n \onto \un r \} .
  $$
  
  At the numerical level, this formula simplifies to
  $$
  \mu(n)
  = \sum_{r\geq 0} (-1)^r {n-1 \choose r-1} = \begin{cases}
    1 & \text{ for } n=0 \\
    -1 & \text{ for } n=1 \\
    0 & \text{ else, }
  \end{cases}
  $$
%
%
  (remembering that ${-1 \choose -1} =  1$, and ${k \choose -1} = 0$ for $k\geq 
  0$).
  
  \bigskip
  
  On the other hand, since clearly the incidence algebra is isomorphic to the 
  power series ring under the identification $\norm {h^n} = \delta^n 
  \leftrightarrow z^n \in \Q[[z]]$, and since the zeta function corresponds to the 
  geometric 
  series $\sum_n x^n = \frac{1}{1-x}$, we find that the \M function is $1-x$.
  This corresponds the functor $\delta^0-\delta^1$.

  At the objective level, there is indeed a cancellation of $\infty$-groupoids taking
  place.  It amounts to an equivalence of the Phi-groupoids restricted to
  $n\geq2$:
$$\xymatrix{
\Phieven{}_{\mid r\geq 2} \ar[rd]\ar[rr]^\sim && \Phiodd{}_{\mid r\geq 2} \ar[ld] \\
& \N_{\geq 2} & }
$$
which cancels out most of the stuff, leaving us with
the much more economical \M function
$$
\delta^0 - \delta^1
$$
supported on $\N_{\leq 1}$.
Since $\N$ is discrete, this equivalence (just a bijection)
can be established fibrewise: 
  
  
  {\em For each $n\geq 2$ there is a natural fibrewise
  bijection}
  $$
  \Phieven(n) \simeq \Phiodd(n) .
  $$
  To see this, encode the elements $(x_1,x_2,\ldots,x_k)$ 
  in $\Phieven(n)$ as binary strings 
  of length $n$ and starting with $1$ as follows: each coordinate 
  $x_i$ is represented as a string of length $x_i$ 
  whose first bit is $1$ and whose other bits are 
  $0$, and all these strings are concatenated.
  In other words, thinking of the
  element $(x_1,x_2,\ldots,x_k)$ as a ordered partition of the 
  ordered set $n$, in the binary representation the $1$-entries
  mark the beginning of each part.  
  (The binary strings must start with $1$ since the first part must
  begin at the beginning.)  
  For example, with
  $n=8$, the element $(3,2,1,1,1)\in \Phiodd(8)$, 
  is encoded as the binary string $10010111$.
  Now the bijection between
  $\Phieven(n)$ and $\Phiodd(n)$ can be taken to simply flip the
  second bit in the binary representation.  In the example,
  $10010111$ is sent to $11010111$, meaning that
  $(3,2,1,1,1)\in \Phiodd(8)$ is sent to $(1,2,2,1,1,1)\in 
  \Phieven(8)$.  Because of this cancellation which occurs for
  $n\geq 2$ (we need the second bit in order to flip), the difference
  $\Phieven - \Phiodd$ is the same as $\delta_0 -\delta_1$, which is
  the cancellation-free formula.
  

  The minimal solution $\delta^0-\delta^1$ can also be checked immediately at 
  the objective level to satisfy the defining equation for the \M function:
  $$
  \zeta * \delta^0 = \zeta * \delta^1 + \delta^0
  $$
  This equation says
  $$
  \xymatrix{\N\times \{0\} \ar[d]_{\text{add}} \\ \N}
  =
  \xymatrix{(\N\times \{1\}) + \{0\} \ar[d]_{\text{add}+\text{incl}} \\ \N}
  $$
  
  In conclusion, the classical formula lifts to the objective level.
\end{blanko}

\begin{blanko}{Finite sets and bijections.}
  Already for the `next' example, that of the monoidal groupoid $(\B,+,0)$,
  whose incidence algebra is the algebra of species under the Cauchy convolution
  product (cf.~\ref{Cauchy}), the situation is much more subtle. 
  
  Similarly to the previous example, we have
  $\Phi_r(S) = \operatorname{Surj}(S, \un r)$, but this time we are 
  dealing with arbitrary surjections, as $S$ is just an abstract set.
  Hence the \M functor is given by
  $$
  \mu(S) = \sum_{r\geq 0} (-1)^r \operatorname{Surj}( S, \un r) .
  $$
  
  
  Numerically, 
  this is much more complicated than what is obtained from the observation that
  the incidence algebra, at the $\Q$-level, is just the power series algebra
  $\Q[[z]]$: since this time the zeta function is the exponential $\exp(z)$, 
  the \M function is $\exp(-z)$, corresponding to
  $$
  \mu(n) = (-1)^n .
  $$

%
%
%

The economical \M function suggests the existence of the following
equivalence at the groupoid level:
$$
\mu(S) = \int^r (-1)^r h^r(S) = \Beven(S) - \Bodd(S) ,
$$
where 
$$\Beven = \sum_{r \text{ even}} \B_{[r]} \quad \text{ and }\quad
\Bodd = \sum_{r \text{ odd}} \B_{[r]}$$
are the full subgroupoids of $\B$ consisting of the even and odd sets,
respectively.  However, it seems that such an equivalence is not possible, at
least not over $\B$: while we are able to exhibit a bijective proof, this
bijection is {\em not} natural, and hence does not assemble into a groupoid
equivalence.

\end{blanko}

\begin{prop}
  For a fixed set $S$, there are monomorphisms $\Beven(S) \into \Phieven(S)$
  and $\Bodd(S) \into \Phiodd(S)$, and a residual bijection
  $$
  \Phieven(S)-\Beven(S)= \Phiodd(S) -\Bodd(S) .
  $$
  This is {\em not} natural in $S$, though, and hence does not constitute
  an isomorphism of species, only an equipotence of species.
\end{prop}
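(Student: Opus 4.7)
The plan is to reduce to a concrete combinatorial statement about ordered partitions, construct the monomorphism and bijection via a chosen total order on $S$, and then exhibit the $\mathfrak{S}_S$-obstruction to naturality.

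First I would identify the four fibres. Since $\Beven, \Bodd \subset \B$ are full subgroupoid inclusions, $\Beven(S)$ is contractible when $|S|$ is even and empty otherwise, while $\Bodd(S)$ is contractible when $|S|$ is odd and empty otherwise. On the other hand, $\Phi_r(S)$ is the discrete groupoid of ordered partitions of $S$ into $r$ nonempty blocks, equivalently the set of surjections $S \onto \un r$, so $\Phieven(S)$ and $\Phiodd(S)$ are discrete sets. All four fibres being discrete, the statement reduces to set-level combinatorics.

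Second, fix a total order $s_1 < \cdots < s_n$ on $S$ (with $n = |S|$) and set $\pi^{\ast} := (\{s_1\}, \ldots, \{s_n\}) \in \Phi_n(S)$. Since $\pi^{\ast}$ has $n$ parts, it lies in $\Phieven(S)$ precisely when $n$ is even, matching the parity in which $\Beven(S)$ is nonempty; symmetrically for $\Phiodd$ and $\Bodd$. Define $\Beven(S) \into \Phieven(S)$ by sending the unique point to $\pi^{\ast}$ in the even case (the odd case being the empty map), and dually for $\Bodd(S) \into \Phiodd(S)$.

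Third, I would construct a sign-reversing involution $\phi$ on $\Phieven(S) \sqcup \Phiodd(S)$ whose unique fixed point is $\pi^{\ast}$, yielding the required bijection between the complements of the images. Given $\pi = (B_1, \ldots, B_r)$, locate $s_1 = \min S$ in the block $B_i$. If $|B_i| \geq 2$, split off $\{s_1\}$ immediately after $B_i \setminus \{s_1\}$ to produce $(B_1, \ldots, B_{i-1}, B_i \setminus \{s_1\}, \{s_1\}, B_{i+1}, \ldots, B_r)$ with $r+1$ parts. If instead $B_i = \{s_1\}$ and $i \geq 2$, merge $B_{i-1}$ with $B_i$ to get $r - 1$ parts. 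In the remaining case $B_1 = \{s_1\}$, recursively apply $\phi$ to the tail $(B_2, \ldots, B_r)$, regarded as an ordered partition of $S \setminus \{s_1\}$ with induced order, and prepend $\{s_1\}$. A direct case analysis verifies $\phi \circ \phi = \id$ and that $\phi$ changes the parity of $r$ outside its fixed set, and induction on $|S|$ identifies the unique fixed point as $\pi^{\ast}$. If one prefers to dispense with the explicit involution, one may instead invoke the identity $\sum_r (-1)^r r!\, S(n,r) = (-1)^n$, evident from the generating function $\sum_r (-1)^r (e^z - 1)^r = e^{-z}$, to match the cardinalities of the complements and conclude existence of a bijection on formal grounds.

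Fourth, I would exhibit the failure of naturality. The construction depends on the choice of order on $S$ and cannot be repaired: already for $|S| = 2$, the group $\mathfrak{S}_S = \mathbb{Z}/2$ acts trivially on $\Beven(S) = \{\ast\}$ but freely (by transposition of the two parts) on $\Phieven(S) = \{(\{a\},\{b\}), (\{b\},\{a\})\}$, so no $\mathfrak{S}_S$-equivariant map $\Beven(S) \to \Phieven(S)$ exists at all. A fortiori neither the monomorphism nor the complement bijection can be promoted to a morphism of species. The main obstacle is the verification that the recursive involution is well-defined and involutive with unique fixed point $\pi^{\ast}$; the recursive step forced by the configuration $B_1 = \{s_1\}$ is precisely what trims the fixed set down from `all partitions beginning with $\{s_1\}$' to the singleton $\{\pi^{\ast}\}$. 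Once this combinatorial step is granted, the rest reduces to routine parity bookkeeping and an elementary observation about group actions.
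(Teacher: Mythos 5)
Your proof is correct and follows essentially the same route as the paper's: reduce to surjections $\un n \onto \un k$ via a chosen total order, single out the identity partition $\pi^{\ast}=(\{s_1\},\dots,\{s_n\})$ as the image of the monomorphism, and cancel everything else by the same recursive parity-reversing involution (split the minimum off into its own block just after the rest of its original block, or merge it into the preceding block, recursing when it heads the partition as a singleton). Your $\mathfrak S_S$-equivariance obstruction at $|S|=2$ is a concrete instance of the paper's observation that the automorphism group of any object of $\Phieven$ lying over $\un n$ is a proper subgroup of $\mathfrak S_n$, so no natural choice is possible.
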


\begin{cor}
  For a fixed $S$ there is a bijection
  $$
  \mu(S) \simeq \Beven(S) - \Bodd(S)
  $$
  but it is {\em not} natural in $S$.
\end{cor}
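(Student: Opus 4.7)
The Corollary is an immediate consequence of the preceding Proposition. Given the monomorphisms $\Beven(S) \into \Phieven(S)$ and $\Bodd(S) \into \Phiodd(S)$ together with the residual bijection $\Phieven(S) - \Beven(S) \simeq \Phiodd(S) - \Bodd(S)$, rearranging yields $\Phieven(S) - \Phiodd(S) \simeq \Beven(S) - \Bodd(S)$ as formal differences of sets, and the \M inversion formula of Theorem~\ref{thm:zetaPhi} identifies the left-hand side with $\mu(S)$. Non-naturality is inherited from the Proposition, and can be witnessed concretely for $S = \{a,b\}$: the $\Aut(S) = S_2$-action on $\Phieven(S) + \Bodd(S) = \{(\{a\},\{b\}),(\{b\},\{a\})\}$ is a single free orbit, while the action on $\Phiodd(S) + \Beven(S) = \{(\{a,b\}),\ast\}$ is trivial, so no $S_2$-equivariant bijection can exist between the two sides.

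The real content is the Proposition itself, which I would prove by a parity-reversing involution combined with induction. Fix any linear order on $S$, and define an involution $\iota$ on ordered set partitions $\pi = (S_1,\ldots,S_r)$ of $S$ by locating the block $S_k$ that contains $\min S$: if $S_k = \{\min S\}$ and $k < r$, merge $S_k$ with $S_{k+1}$; if $|S_k| \geq 2$, split off $\{\min S\}$ as a new singleton block placed immediately before $S_k \setminus \{\min S\}$; otherwise ($S_k = \{\min S\}$ and $k = r$), declare $\pi$ to be a fixed point. Routine verification shows that $\iota$ is an involution, parity-reversing outside its fixed-point set, and that the fixed points --- all of the form $(T_1,\ldots,T_s,\{\min S\})$ --- correspond bijectively to ordered partitions of $S^- := S \setminus \{\min S\}$, with $r = s+1$ so that the parities are opposite.

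I would then induct on $n = |S|$; the cases $n=0$ and $n=1$ are direct. For $n \geq 2$, the involution gives canonical decompositions $\Phieven(S) = P_e \sqcup \Phiodd(S^-)$ and $\Phiodd(S) = P_o \sqcup \Phieven(S^-)$, where $P_e$ and $P_o$ are the non-fixed partitions and $\iota$ pairs them bijectively. The inductive hypothesis on $S^-$ provides a bijection $\Phieven(S^-) - \Beven(S^-) \simeq \Phiodd(S^-) - \Bodd(S^-)$; combining this with $\iota$ on the non-fixed strata yields the desired residual bijection on $S$. The distinguished monomorphism is defined recursively: when $n$ is even, the image of $\Beven(S) \into \Phieven(S)$ is obtained by appending $\{\min S\}$ as a final singleton to the (inductively defined) distinguished element of $\Phiodd(S^-)$; unwinding the recursion gives the explicit \emph{reversed singletons} partition $(\{x_n\},\{x_{n-1}\},\ldots,\{x_1\})$ for $x_1 < \cdots < x_n$ the chosen order.

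The only real obstacle is non-naturality, and this is unavoidable by design: every step of the construction --- the involution $\iota$, the distinguished element, and hence the residual bijection --- depends on the auxiliary choice of a linear order through $\min S$. The $S_2$-obstruction exhibited above confirms that no symmetric substitute exists. This mirrors the classical phenomenon that the cancellation-free identity $\sum_r (-1)^r h^r = \Beven - \Bodd$ (whose numerical avatar is $\exp(-z)$) lifts to an equipotence of species but not to an isomorphism.
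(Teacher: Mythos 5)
Your proof is correct and follows essentially the same route as the paper: fix a linear order on $S$ and exhibit a parity-reversing merge/split involution on ordered partitions, governed by the block containing the minimal element, whose unique fixed point under the recursion is a distinguished partition into singletons. Your involution is a mirror image of the paper's (which splits the element $1$ off to come \emph{after} its block and merges a singleton $\{1\}$ with the \emph{previous} block, so that the distinguished partition is $(\{1\},\ldots,\{n\})$ rather than your reversed one), and your explicit $\mathfrak S_2$-equivariance obstruction is a concrete instance of the paper's observation that the automorphism group of any object of $\Phieven$ lying over $\un n \in \mathbf B$ is a proper subgroup of $\mathfrak S_n$.
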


\begin{proof*}{Proof of the Proposition.}
  The map $\Beven \to \B$ is a  monomorphism, so for each set $S$ of even cardinality
  there is a single element to subtract from $\Phieven(S)$.  The groupoid
  $\Phieven$ has as objects finite sets $S$ equipped with a surjection $S \onto \un k$
  for some even $k$.  If $S$ is itself of even cardinality $n$, then among such
  partitions there are $n!$ possible partitions into $n$ parts.  If there were
  given a total order on $S$, among these $n!$ $n$-block partitions, there is
  one for which the order of $S$ agrees with the order of the $n$ parts.  We
  would like to subtract that one and then establish the required bijection.
  This can be done fibrewise: over a given $n$-element set $S$, we can establish
  the bijection by choosing first a bijection $S \simeq \un n =
  \{1,2,\ldots, n\}$, the totally ordered set with $n$ elements.

  {\em For each $n$, there is an explicit bijection
  $$
  \{ \text{surjections } p: \un n \onto \un k \mid k \text{ even}, p 
  \text{ not the identity map} \}
  $$
$$  \leftrightarrow$$
  $$
    \{ \text{surjections } p: \un n \onto \un k \mid k \text{ odd}, p 
  \text{ not the identity map} \} 
  $$
  }

  Indeed, define first the bijection on the subsets for which $p^{-1}(1)\neq \{1\}$,
  i.e.~the element $1$ is not alone in the first block.  In this case the 
  bijection goes as follows.  If the elements $1$ is alone in a block,
  join this block with the previous block.  (There exists a previous block as
  we have excluded the case where $1$ is alone in block $1$.)  If $1$ is not
  alone in a block, separate out $1$ to a block on its own, coming just after
  the original block.  Example
  $$
  (34,1,26,5) \leftrightarrow (134,2,6,5)
  $$
  For the remaining case, where $1$ is alone in the first block, we just leave 
  it
  alone, and treat the remaining elements inductively, considering now the
  case where the element $2$ is not alone in the second block.  In the end,
  the only case not treated is the case where for each $j$, we have 
  $p^{-1}(j)=\{j\}$, that is, each element is alone in the block with the same
  number.  This is precisely the identity map excluded explicitly in the 
  bijection.  (Note that for each $n$, this case only appears on one of the 
  sides
  of the bijection, as either $n$ is even or $n$ is odd.)
\end{proof*}



In fact, already subtracting the groupoid $\Beven$ from $\Phieven$ 
is not possible naturally.  We would have first to find a monomorphism
$\Beven\into\Phieven$ over $\B$.  But the automorphism group of an
object $\un n \in \B$ is $\mathfrak S_n$, whereas the 
automorphism
group of any overlying object in $\Phieven$ is a proper subgroup of 
$\mathfrak S_n$.  In fact it is the subgroup of those permutations 
that
are compatible with the surjection $\un n \onto \un k$.
So locally the fibration $\Phieven \to \B$ is a group monomorphism,
and hence it cannot have a section.
So in conclusion, we cannot even realise $\Beven$ as a full 
subgroupoid in $\Phieven$, and hence it doesn't make sense to 
subtract it.

%
%

\bigskip

One may note that  it is not logically necessary to be able to subtract
the redundancies from $\Phieven$ and $\Phiodd$ in order to find the economical
formula.
It is enough to establish directly (by a separate proof) that the economical 
formula holds, by actually convoluting it with the zeta functor.
At the object level the simplified \M function would be the groupoid
$$
\Beven - \Bodd .
$$
We might try to establish directly that
$$
\zeta * \Beven = \zeta * \Bodd + \epsilon .
$$
This should be a groupoid equivalence over $\B$.
But again we can only establish this
fibrewise.  This time, however, rather than exploiting a non-natural total
order, we can get away with a non-natural base-point.
On the left-hand side, the fibre over an $n$-element set $S$, consists
of an arbitrary set and an even set whose disjoint union is $S$.  In other 
words,
it suffices to give an even subset of $S$.  Analogously, on the right-hand
side, it amounts to giving an odd subset of $S$ --- or in the special case of
$S=\emptyset$, we also have the possibility of giving that set, thanks to the
summand $\epsilon$.  This is possible, non-naturally:

{\em 
For a fixed nonempty set $S$, there is an explicit bijection between even subsets of
  $S$ and odd subsets of $S$.}

  Indeed, fix an element $s\in S$.  The bijection consists of adding $s$ to the subset 
  $U$
  if it does not belong to $U$, and removing it if it already belongs to $U$.
  Clearly this changes the parity of the set.

Again, since the bijection involves the choice of a basepoint, it seems impossible to lift
it to a natural bijection.
%

\begin{blanko}{Finite vector spaces.}
  We calculate the \M function in the incidence algebra of the Waldhausen
  decomposition space of $\F_q$-vector spaces, cf.~\ref{ex:q}.
  In this case, $\Phi_r$ is the
  groupoid of strings of $r-1$ nontrivial injections.  The fibre over $V$ is the
  discrete groupoid of strings of $r-1$ nontrivial injections whose last space
  is $V$.  This is precisely the set of nontrivial $r$-flags in $V$, i.e.~flags
  for which the $r$ consecutive codimensions are nonzero.
  In conclusion,
$$
\mu(V) =  \sum_{r=0}^n (-1)^r  \{ \text{ nontrivial $r$-flags in $V$} \} .
$$
(That's in principle a groupoid, but since we have fixed $V$, it is just a 
discrete groupoid: a flag inside a fixed vector space has no automorphisms.)

The number of flags with codimension sequence $p$ is the $q$-multinomial 
coefficient
$$
{ n \choose p_1, p_2, \ldots, p_r }_q .
$$
In conclusion, at the numerical level we find
$$\mu(V) = \mu(n) = \sum_{r=0}^n (-1)^r 
\sum_{\substack {p_1+\cdots + p_r=n \\ p_i > 0}}
{ n \choose p_1, p_2, \ldots, p_r }_q .
$$
%
%
%

On the other hand, it is classical that from the power-series representation
(\ref{ex:q}) one gets the numerical \M function
$$\mu(n) = (-1)^n q^{n\choose 2}.$$
While the equality of these two expressions can easily be established at the
numerical level (for example via a zeta-polynomial argument, cf.~below), we do
not know of a objective interpretation of the expression $\mu(n) = (-1)^n
q^{n\choose 2}$.  Realising the cancellation on the objective level would
require first of all to being able to impose extra structure on $V$ in such a
way that among all nontrivial $r$-flags, there would be $q^{r\choose 2}$ special
ones!
%
%
%
%
%
%
\end{blanko}

\begin{blanko}{\fdb.}
  Recall (from \ref{ex:P=Dec(S)})
  that the incidence coalgebra of the category of surjections
  is the \fdb coalgebra.  Since this is a monoidal decomposition space,
  we have at our disposal the notion of multiplicative function, and these
  are determined by their values on the connected surjections.  The 
  multiplicative functions form a subalgebra of the incidence algebra, and
  clearly this subring contains both $\zeta$ and $\epsilon$, and hence 
  $\mu$.  It is therefore sufficient to calculate the \M function on
  connected surjections.
 
  The general formula gives
$$
\mu( \un n \onto \un 1) = \sum_{r=0}^n (-1)^n \kat{Tr}(n,r)
$$
where $\kat{Tr}(n,r)$ is the (discrete) groupoid of $n$-leaf $r$-level trees
with no trivial level (in fact, more precisely, strings of $r$ nontrivial
surjections composing to $n \onto 1$).

%
%
%

On the other hand, classical theory (see Doubilet--Rota--Stanley~\cite{Doubilet-Rota-Stanley})
gives the following `connected 
\M function':
$$
\mu(n) = (-1)^{n-1}(n-1)! .
$$
In conjunction, the two expressions yield
the following combinatorial identity:
$$
(-1)^{n-1}(n-1)! \ = \ \sum_{r=0}^n (-1)^r \# \kat{Tr}(n,r).
$$

We do not know how to realise the cancellation at the objective level.
This would require developing first the theory of monoidal decomposition spaces 
and incidence bialgebras a bit further, a task we plan to take up in the near 
future.

\end{blanko}


\begin{blanko}{Zeta polynomials.}
  For a complete decomposition space $X$, we can write
  $$
  X_r = \sum_w X_w = \sum_{k=0}^r {r \choose k} \nondeg X_k .
  $$
  where $w$ runs over the words of length $r$ in the alphabet $\{0,a\}$
  as in \ref{w}, and the binomial coefficient is an abusive shorthand
  for that many copies of  $\nondeg X_k$, embedded disjointly into $X_r$ by specific
  degeneracy maps.  Now we fibre over a fixed arrow $f\in X_1$, to obtain
  $$
  (X_r)_f =\sum_{k=0}^\infty {r \choose k} (\nondeg X_k)_f ,
  $$
  where we have now allowed ourselves to sum to $\infty$.
  
  The {\em `zeta polynomial}' of a decomposition space $X$ is the function
  \begin{eqnarray*}
    \zeta^r(f) : X_1 \times \N & \longrightarrow & \Grpd  \\
    (f,r) & \longmapsto & (X_r)_f
  \end{eqnarray*}
  assigning to each arrow $f$ and length $r$ the $\infty$-groupoid of $r$-simplices with 
  long edge $f$.
  We don't actually know whether in general this is a polynomial in $r$, but when we
  know how to compute it, and it is a polynomial, 
  then we can substitute  $r= -1$ into it to find (assuming of course that $X$ 
  is complete):
  $$
  \zeta^{-1}(f) = \sum_{k=0}^\infty (-1)^k \Phi_k(f) 
  $$
  Hence $\zeta^{-1}(f) = \mu(f)$, as the notation suggests.
  
  In some cases there is a polynomial formula for $\zeta^r(f)$.
  For example, in the case $X= (\N,+)$ we find
  $\zeta^r(n) = {n+r-1\choose n}$,
  and therefore $\mu(n) = {n-2\choose n}$, in agreement with the other 
  calculations (of this trivial example).
  
%
    
  \noindent
  In the case $X= (\B,+)$, we find $\zeta^r(n) = r^n$,
  and therefore $\mu(n) = (-1)^n$ again.
 
  Sometimes, even when a formula for $\zeta^r(n)$ cannot readily be found, the
  $(-1)$-value can be found by a power-series
  representation argument.  For example in the case of the Waldhausen $S$
  of $\vect$, we have that $\zeta^r(n)$ is the set of $r$-flags of $\F_q^n$
  (allowing trivial steps).  We have
  $$
\zeta^r(n) = \sum_{\substack{p_1+\cdots+p_r=n \\ p_i \geq 0}}
\frac{[n]!}{[p_1]!\cdots [p_r]!},
$$
and therefore 
$$
\sum_{n=0}^\infty \zeta^r(n) \frac{z^n}{[n]!} = \left( \sum_{n=0}^\infty 
\frac{z^n}{[n]!}\right)^r ,
$$
Now $\zeta^{-1}(n)$ can be read off as the $n$th coefficient in the inverted
series $\big( \sum_{n=0}^\infty 
\frac{z^n}{[n]!}\big)^{-1}$.  In the case at, hand, these coefficients are $(-1)^n q^{n
\choose 2}$, as we already saw.

\end{blanko}

%
%
%
%
%

Once a more economical \M function has been found for a decomposition space $X$,
it can be exploited to yield 
more economical formulae for any decomposition
space $Y$ with a cULF functor to $X$.  This is the content of the following
obvious lemma:
\begin{lemma}
  Suppose that for the complete decomposition space $X$ we have found
  a \M inversion formula
  $$
  \zeta * \Psi_0 = \zeta * \Psi_1 + \epsilon .
  $$
  Then for every decomposition space cULF over $X$, say $f:Y \to X$, we
  have the same formula
  $$
  \zeta * f\upperstar \Psi_0 = \zeta * f\upperstar \Psi_1 + \epsilon 
  $$  
  for $Y$.
\end{lemma}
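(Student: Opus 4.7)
\begin{proof*}{Proof plan.}
The plan is to apply the pullback functor $f\upperstar$ to the given identity and verify that each side transforms correctly. The three ingredients required are: (i) $f\upperstar$ preserves the zeta functor, (ii) $f\upperstar$ preserves the counit $\epsilon$, and (iii) $f\upperstar$ is compatible with the convolution product. Ingredients (i) and (ii) are essentially immediate from Corollary~\ref{phi=phi}, since $\epsilon = \Phi_0$. Ingredient (iii) is the only substantive point, and it is the dual statement of the fact (Lemma~\ref{lem:coalg-homo}) that $f\lowershriek : \Grpd_{/Y_1} \to \Grpd_{/X_1}$ is a coalgebra homomorphism.

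More precisely, recall that for any linear functors $\alpha,\beta: \Grpd_{/X_1}\to\Grpd$, the convolution $\alpha*\beta$ is defined as $(\alpha\tensor\beta)\circ\Delta_X$, where $\Delta_X$ is the comultiplication of the incidence coalgebra of $X$. By Lemma~\ref{lem:coalg-homo}, the coalgebra homomorphism property gives the equivalence $\Delta_X\circ f\lowershriek \simeq (f\lowershriek\tensor f\lowershriek)\circ \Delta_Y$. Since by definition $f\upperstar$ is precomposition with $f\lowershriek$, we compute
\[
f\upperstar(\alpha*\beta) \;\simeq\; (\alpha\tensor\beta)\circ \Delta_X\circ f\lowershriek
\;\simeq\; (\alpha\tensor\beta)\circ (f\lowershriek\tensor f\lowershriek)\circ \Delta_Y
\;\simeq\; (f\upperstar\alpha \tensor f\upperstar\beta)\circ\Delta_Y
\;\simeq\; f\upperstar\alpha * f\upperstar\beta,
\]
so $f\upperstar$ is an algebra homomorphism. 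Together with $f\upperstar\zeta\simeq \zeta$ and $f\upperstar\epsilon\simeq\epsilon$ (both by Corollary~\ref{phi=phi}), applying $f\upperstar$ to the assumed identity $\zeta * \Psi_0 = \zeta * \Psi_1 + \epsilon$ yields
\[
\zeta * f\upperstar\Psi_0 \;=\; \zeta * f\upperstar\Psi_1 \;+\; \epsilon,
\]
as required.

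There is no serious obstacle; the one place that demands a little care is the compatibility of $f\upperstar$ with convolution. In the span picture, this amounts to observing that the span defining $f\upperstar(\alpha*\beta)$ is obtained by pulling the composite span for $\alpha*\beta$ back along $f_1:Y_1\to X_1$, and then using the cULF pullback squares for $f$ (the same squares used in the proof of Lemma~\ref{lem:coalg-homo}) together with the Beck--Chevalley equivalence~\eqref{BC} to rewrite the resulting span as the composite of spans defining $f\upperstar\alpha * f\upperstar\beta$ with respect to the comultiplication of $Y$.
\end{proof*}
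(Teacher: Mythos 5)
Your proof is correct, and it supplies exactly the argument the paper has in mind: the authors state this lemma without proof, calling it obvious, and the intended justification is precisely that $f\upperstar$ (being precomposition with $f\lowershriek$) is a convolution-algebra homomorphism by the dual of Lemma~\ref{lem:coalg-homo}, and fixes $\zeta$ and $\epsilon=\Phi_0$ by Corollary~\ref{phi=phi}. No gaps.
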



\begin{blanko}{Length.}\label{ex:cULF/N}
  In most of the examples treated, the length filtration is actually a grading.
  Recall from \ref{grading} that this amounts to having a simplicial map from
  $X$ to the nerve of $(\N,+)$.  In the rather special situation when this is
  cULF, the economical \M function formula
  $$
  \mu = \delta^0 - \delta^1
  $$
  for $(\N,+)$
  induces the same formula for the \M functor of $X$.
  This is of course a very restrictive condition; in fact, 
  for nerves of categories, this happens only for free categories on
  directed graphs (cf.~Street~\cite{Street:categorical-structures}).
  For such categories, there is for each $n\in \N$ a linear span
  $\delta^n$ consisting of all the arrows of length $n$.
  In particular, $\delta^0$ is the span $X_1 \leftarrow X_0 \to 1$
  (the inclusion of the vertex set into the set of arrows),
  and $\delta^1$ is the span $X_1 \leftarrow E \to 1$,
  the inclusion of the original set of edges into the set of
  all arrows.
  The simplest example is the free monoid on a set $S$, i.e.~the monoid
  of words in the alphabet $S$.  The economical \M function is then
  $\delta^0-\delta^1$, where $\delta^1 = \sum_{s\in S} \delta^s$.  In
  the power series ring, with a variable $z_s$ for each letter $s\in S$,
  it is the series $1-\sum_{s\in S} z_s$.
\end{blanko}

\begin{blanko}{Decomposition spaces over $\mathbf B$ (\ref{ex:SI&B}).}
  Similarly, if a decomposition space $X$ admits a cULF functor $\ell : X \to 
  \mathbf B$ (which may be thought of as a `length function with symmetries')
  then at the numerical level  and at the objective level, locally for each 
  object $S\in X_1$, 
  we can pull back the economical \M `functor' $\mu(n) = (-1)^n$
  from $\mathbf B$ to $X$, yielding the numerical  \M function on $X$
  $$
  \mu(f) = (-1)^{\ell(f)} .
  $$
  An example of this is the coalgebra of graphs \ref{ex:graphs} of 
  Schmitt~\cite{Schmitt:hacs}: the functor from the decomposition space of
  graphs to $\mathbf B$ which to a graph associates its vertex set is cULF.
  Hence the \M function for this decomposition space is
  $$
  \mu(G) = (-1)^{\# V(G) } .
  $$
  In fact this argument  works for any restriction species.
  
%
%
%
\end{blanko}

We finish with a kind of non-example which raises certain interesting
questions.
\begin{eks}
  Consider the strict nerve of the category
  $$
  \xymatrix {
  x \ar@(lu,ld)[]_e  \ar@/_0.6pc/[r]_r  & y \ar@/_0.6pc/[l]_s
  }
  $$
  in which $r\circ s = \id_y$, $s\circ r = e$
  and $e\circ e = e$.
  This decomposition space $X$ is clearly locally finite, so it defines
  a vector-space coalgebra, in fact a finite-dimensional one.
  One can check by linear algebra (see 
  Leinster~\cite[Ex.6.2]{Leinster:1201.0413}),
  that this coalgebra has \M inversion.
  On the other hand, $X$ is not of locally finite length,
  because the identity arrow $\id_y$ can be written as an 
  arbitrary long string $\id_y = r\circ s \circ \cdots \circ r\circ s$.
  (It is not even split as then the identity arrow would have no decomposition at 
  all.)  In particular $X$ is not a \M decomposition space.
  So we are in the following embarrassing situation: on the objective level,
  $X$ has \M inversion (as it is complete), but the formula does not
  have a cardinality.  At the same time, at the numerical level \M inversion 
  exists nevertheless.  Since inverses are unique if they exist, it is
  therefore likely that the infinite \M inversion formula of the objective
  level admits some drastic cancellation at this level, yielding a finite
  formula, whose cardinality is the numerical formula.  Unfortunately we
  have not been able to pinpoint such a cancellation.
%
%
%
%
\end{eks}



\def\inputfile{restriction.tex}

\newcommand{\BB}{\mathscr{B}}
\newcommand{\Span}{\operatorname{Span}}

\section{Restriction species and directed restriction species}

\label{sec:restriction}

We show that restriction species and their associated coalgebras in the sense of
Schmitt~\cite{Schmitt:hacs} are examples of decomposition spaces.  Then we 
introduce the notion of {\em directed restriction species}, which covers various 
classical combinatorial coalgebras (such as for example the Connes-Kreimer 
bialgebra) and show that they also come from decomposition spaces.
We unify the proofs of these results by giving a general construction of decomposition
spaces from what we call sesquicartesian fibrations over the ordinal category 
$\un\Delta$, involving covariant functoriality in all maps, and contravariant 
functoriality in convex inclusions.

The general construction can be viewed as follows.  Since a monoid can be
considered a one-object category, it yields in particular a decomposition space.
Instead of regarding a monoid as a Segal space $X : \Delta\op\to\Grpd$ with the
property that $X_0 = 1$, monoids can be encoded as monoidal functors
$$
(\un\Delta, +,0) \to (\Grpd,\times,1),
$$
and hence in particular are certain kinds of left fibrations
$X \to \un\Delta$.
In this setting, a weaker structure than monoid
is sufficient to obtain a decomposition space.

\subsection{Restriction species (in the sense of Schmitt)}

\begin{blanko}{Restriction species.}
  The notion of restriction species was introduced by Schmitt~\cite{Schmitt:hacs}:
  it is simply a presheaf on the category $\I$ of finite sets and injections.
  Compared to a classical species~\cite{JoyalMR633783}, a restriction species 
  $R$ is thus
  functorial not only on bijections but also on injections, meaning that
  a given structure on a set $S$ induces also such a structure on every subset $A 
  \subset S$ (denoted with a restriction bar):
  \begin{eqnarray*}
    R[S] & \longrightarrow & R[A]  \\
    X & \longmapsto & X|A .
  \end{eqnarray*}
  
  The Schmitt construction associates to a restriction species $R: \I\op\to\Set$
  a coalgebra structure on the vector space spanned by the
  isoclasses of $R$-structures: the comultiplication is
  $$
  \Delta(X)=
  \sum_{A+B=S} X|A \tensor X|B , \qquad X \in R[S] ,
  $$
  and counit sending only the empty structures to $1$.
  
  A morphism of restriction species is just a natural transformation 
  $R\Rightarrow R'$ of functors
  $\I\op\to\Set$, i.e.~for each finite set $S$ a map $R[S]\to R'[S]$, natural 
  in $S$.  Since the summation in the comultiplication formula only involves 
  the underlying sets, it is clear that a morphism of restriction species 
  induces a coalgebra homomorphism.

  A great many combinatorial coalgebras can be realised by the Schmitt
  construction (see \cite{Schmitt:hacs} and also \cite{Aguiar-Mahajan}).  For
  example, graphs (\ref{ex:graphs}), matroids,
  posets, lattices, categories, etc., form restriction species and hence 
  coalgebras.
\end{blanko}

%
%

\begin{blanko}{Restriction species as decomposition spaces.}
  Let $R: \I\op\to \Set$ be a restriction species.  It corresponds
  by the Grothendieck construction to a (discrete) right fibration
  $$
  \R \to \I ,
  $$
  where the total space $\R$ is the category of all $R$-structures and their
  structure-preserving injections.  Precisely, a structure-preserving injection
  from 
  $X\in R[S]$ to $X'\in R[S']$ consists of
  an injection of
  underlying sets $S \subset S'$ such that $X'|S = X$.
  
  We construct a simplicial groupoid
  $\mathbf R$ where $\mathbf R_k$ is the groupoid
  of $R$-structures with an ordered partition of the underlying set
  into $k$ (possibly empty) parts.
%
%
%
%
  Precisely,  with reference to the strict version \ref{B/k} of the 
  finite-sets-and-bijections-nerve $\mathbf B$,
  we define $\mathbf R_k$ 
  as the pullback 
  $$
  \mathbf R_k = \B_{/k} \times_{\B} \R^\iso .
  $$
  The pullback construction delivers all the generic maps in $\mathbf 
  R$, and
  so far the construction works for any species.  To define
  also the free maps (i.e.~outer face maps) we need the restriction structure 
  on $R$:
for example, 
  the outer face map $d_\bot : \B_{/k} \to \B_{/k-1}$ is defined by sending $S
  \to \underline k$ to the pullback
  $$\xymatrix{
  S' \ar[d]\drpullback \ar[r]^\subset & S \ar[d]\\
  \underline{k\!-\!1} \ar[r] & \underline{k} .
  }$$
  Since $S' \into S$ is an injection, we can use functoriality of $R$
  (the fact that $R$ is a restriction species) to get also the face 
  map
  for $\mathbf R_k$.
  We shall formalise these constructions in \ref{restr-decomp-formal}. 
  Note that by construction, as cULF over a decomposition space (the 
  decomposition space $\mathbf B$ (cf.~\ref{I=DecB})),
  $\mathbf R$ is again a decomposition space.
    
  Note that the subtlety in getting the free maps involves
  projecting
  away some parts of the underlying set.  This means that maps lying over
  free maps are not vertical with respect to the projection down to $\I$.
   We shall develop theory to deal with this kind of problem.

\bigskip

  A morphism of restriction species $R \to R'$ corresponds to a morphism of right 
  fibrations $\R\to\R'$, and it is clear that the construction is functorial
  so as to induce a cULF functor of decomposition spaces.
\end{blanko}

\begin{theorem}
  Given a restriction species $R$, the corresponding simplicial groupoid
  $\mathbf R$ is a decomposition space, and the (cardinality of the)
  associated coalgebra is the Schmitt coalgebra of $R$.  A morphism of
  restriction species induces a cULF functor, whose cardinality is the
  coalgebra homomorphism resulting from
  the Schmitt construction.
\end{theorem}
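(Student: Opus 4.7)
The plan is to present $\mathbf R$ as a simplicial groupoid equipped with a cULF projection onto the decomposition space $\mathbf B$ (in its strict version \ref{B/k}); that $\mathbf R$ is a decomposition space then follows from Lemma~\ref{cULF/decomp}. First I would make the construction of $\mathbf R$ precise. Set $\mathbf R_k := \B_{/\un k} \times_\B \R^\iso$, whose objects are pairs $(p:S\to\un k,\, X\in R[S])$. The generic structure maps (inner face and degeneracy maps) are defined componentwise via the pullback: they act on the partition $p$ by post-composition with the corresponding map of ordinals, while leaving the pair $(S,X)$ untouched. The free structure maps (outer face maps) exploit the restriction structure: $d_\bot$ sends $(p,X)$ to $(p', X|S')$, where $p':S'\to\un{k-1}$ is the pullback of $p$ along $d_\bot:\un{k-1}\rat\un k$ (so $S'=p^{-1}\{2,\dots,k\}$), and $X|S'$ is the image of $X$ under the restriction map $R[S]\to R[S']$ along the inclusion $S'\subset S$; symmetrically for $d_\top$. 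The simplicial identities reduce to naturality of the pullback in $\B_{/\un k}$ combined with functoriality of $R$ on injections, making $\mathbf R$ a simplicial groupoid equipped with a simplicial projection $\pi:\mathbf R\to\mathbf B$.

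Next I would verify that $\pi$ is cULF; by Lemma~\ref{lem:cULFs0d1} together with Proposition~\ref{prop:ULF=>cons} it suffices to check that the squares over the generic face maps are pullbacks. Since inner face maps and degeneracy maps act only on the partition, leaving $(S,X)$ fixed, the fibre of $\pi_{k-1}$ over $d_j p$ coincides with the fibre of $\pi_k$ over $p$, both being (up to iso) the groupoid of $R$-structures on the underlying set $S$. This gives the pullback property on every inner face and degeneracy square. Lemma~\ref{cULF/decomp} then delivers that $\mathbf R$ is a decomposition space. Note that $\pi$ is deliberately \emph{not} cartesian on outer face maps: those involve genuine restriction of the $R$-structure, which is precisely the extra information $\mathbf R$ carries over $\mathbf B$.

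Third, I would identify the resulting coalgebra. The comultiplication is the span $\mathbf R_1 \xleftarrow{d_1}\mathbf R_2\xrightarrow{(d_2,d_0)}\mathbf R_1\times\mathbf R_1$; objects of $\mathbf R_2$ are $R$-structures $X\in R[S]$ equipped with an ordered partition $S=A+B$ (possibly with empty parts), the map $d_1$ forgets the partition, and $(d_2,d_0)$ returns $(X|A,\,X|B)$. Passing to $\pi_0$ and taking homotopy cardinality on $\Grpd_{/\mathbf R_1}$ yields a $\Q$-coalgebra with comultiplication $\Delta([X])=\sum_{A+B=S}[X|A]\otimes[X|B]$ and counit sending only the empty structure to $1$: this is verbatim Schmitt's coalgebra of $R$.

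Finally, a natural transformation $R\Rightarrow R'$ corresponds to a morphism of right fibrations $\R\to\R'$ over $\I$, and base-changing through this map induces a simplicial map $\mathbf R\to\mathbf R'$ commuting with the projections to $\mathbf B$. The inner-face and degeneracy squares for this map inherit the pullback property (the induced map acts only on the $R$-structure factor, independently of the partition), so it is cULF; by Lemma~\ref{lem:coalg-homo} its cardinality is a coalgebra homomorphism, manifestly equal to the one given by Schmitt. The main obstacle throughout is purely bookkeeping: verifying the mixed simplicial identities between the pullback-defined generic maps and the restriction-defined outer face maps. These all reduce to functoriality of $R$ on inclusions together with stability of pullbacks in $\B$, but they are what the later formalisation (\ref{restr-decomp-formal}) is designed to package cleanly via the sesquicartesian fibration framework.
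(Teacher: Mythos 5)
Your proposal is correct and follows essentially the same route as the paper: the paper also defines $\mathbf R_k=\B_{/\un k}\times_{\B}\R^{\iso}$, obtains the generic maps from the pullback and the outer face maps from the restriction structure, and concludes that $\mathbf R$ is a decomposition space precisely because it is cULF over $\mathbf B$ (Lemma~\ref{cULF/decomp}), deferring the bookkeeping of the mixed simplicial identities to the sesquicartesian-fibration formalisation in \ref{restr-decomp-formal} — exactly as you note. Your identification of the incidence coalgebra with Schmitt's and the treatment of morphisms via Lemma~\ref{lem:coalg-homo} likewise match the paper's argument.
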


  \bigskip
  
  We have already exploited (\ref{I=DecB}) that lower dec of $\mathbf B$ is 
  $\mathbf I$, the 
  nerve of the category of injections $\mathbb I$.  Similarly, it is 
  straightforward to check  that:
\begin{lemma}
  The lower dec of the decomposition space of a restriction species
  $\mathbb R$ is the fat nerve of $\mathbb R$.
\end{lemma}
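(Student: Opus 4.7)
\begin{proof*}{Proof plan.}
The plan is to identify both sides in each simplicial degree with the groupoid of chains of composable injections of $R$-structures, and then check that the simplicial operations agree.

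First I would unfold the definitions. By construction, $(\Dec_\bot \mathbf R)_k = \mathbf R_{k+1} = \B_{/\underline{k+1}} \times_{\B} \R^{\iso}$, using the strict model of $\mathbf B$ from \ref{B/k}. An object of $\B_{/\underline{k+1}}$ is a partition of a finite set $S$ into an ordered $(k+1)$-tuple of (possibly empty) parts $S = S_0 + S_1 + \cdots + S_k$, and an object of the pullback is such a partition together with an $R$-structure $X \in R[S]$.

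Second, I would apply the equivalence $\mathbf I \simeq \Dec_\bot(\mathbf B)$ from Example \ref{I=DecB} in the fibre: the partition $S_0 + \cdots + S_k$ is reinterpreted as the chain of injections
$$S_0 \rightarrowtail S_0 + S_1 \rightarrowtail \cdots \rightarrowtail S_0 + \cdots + S_k = S,$$
so that $(\Dec_\bot\mathbf R)_k$ is identified with the groupoid of chains of $k$ injections in $\mathbb I$ equipped with an $R$-structure on the terminal object. Now I would invoke the fact that $\R \to \I$ is a right fibration (this is just the Grothendieck construction applied to $R:\I\op\to\Set$): a chain $T_0 \rightarrowtail \cdots \rightarrowtail T_k$ in $\I$ together with $X \in R[T_k]$ determines, uniquely up to canonical isomorphism, a chain of structure-preserving injections $X|T_0 \rightarrowtail \cdots \rightarrowtail X|T_k = X$ in $\R$. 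Hence the above is canonically equivalent to the groupoid of chains of $k$ composable injections in $\R$, which is precisely the $k$-simplices of the fat nerve of $\R$ (cf.~\ref{fatnerve}).

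Third, I would check simpliciality of the comparison, which is essentially forced by the simpliciality of $\mathbf I \simeq \Dec_\bot(\mathbf B)$, since the construction is functorial over the base equivalence. Explicitly: inner face maps in $\Dec_\bot \mathbf R$ (which are the inner face maps $d_{i+1}$ in $\mathbf R$) merge two adjacent parts $S_i, S_{i+1}$, which under the identification becomes composition of two consecutive injections in the $\R$-chain; the new top face map (coming from the original $d_\top$ of $\mathbf R$) drops the last part, which under the identification drops the top object of the $\R$-chain; the new bottom face map (the old $d_1$ of $\mathbf R$) merges $S_0$ and $S_1$, which becomes dropping the first object of the chain; and degeneracies insert an empty part, which becomes repeating an $R$-structure via an identity injection. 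These match exactly the face and degeneracy maps of the fat nerve.

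The main obstacle is really only the bookkeeping in the second step: one must be careful that the right fibration property is used with the correct variance, and that the strict model of $\mathbf B$ (and its slices) really gives honest pullbacks rather than homotopy pullbacks with comparison data. Both points, however, have been set up cleanly in \ref{B/k} and in the description of $\R \to \I$ in the subsection above, so no genuine difficulty should arise.
\end{proof*}
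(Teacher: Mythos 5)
Your proof is correct and follows exactly the route the paper intends: the paper offers no proof of this lemma, merely asserting it is ``straightforward'' by analogy with the identification $\mathbf I \simeq \Dec_\bot(\mathbf B)$ of \ref{I=DecB}, and your argument fills in precisely that analogy --- base change along $\Dec_\bot(\mathbf B)\simeq\mathbf I$ in the fibre, followed by the (discrete) right-fibration lifting of a chain of injections with a prescribed $R$-structure on the top object. The only cosmetic remark is that since $\R\to\I$ is a discrete right fibration (coming from a $\Set$-valued presheaf), the lift of a chain is unique on the nose rather than merely up to canonical isomorphism, which makes your second step even cleaner than stated.
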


\begin{blanko}{Convex poset inclusions.}
  Recall that a subposet $V\subset P$ is {\em convex} if $a,b\in V$ and $a\leq x
  \leq b$ imply $x \in V$.  Let $\C$ denote the category of finite posets and
  convex poset inclusions.
  
  An {\em ordered monotone partition} of a poset $X$ is by definition a
  monotone map $X\to\un k$ for $\un k \in \un\Delta$.  Note that the fibres of
  such a map are convex subposets of $X$.
\end{blanko}

\begin{blanko}{Directed restriction species.}
  We introduce a new notion of directed restriction species,
  which is a generalisation of well-known constructions with
  lattices --- see for example Schmitt~\cite{Schmitt:1994} and also 
  Figueroa and Gracia-Bond\'ia~\cite{Figueroa-GraciaBondia:0408145}.

  A {\em directed
  restriction species} is by definition a functor
  $$
  R:\C\op\to\Grpd ,
  $$
  or equivalently, by the Grothendieck construction, a right fibration
  $\R\to\C$.  The idea is that the value on a poset $S$ is the groupoid of
  all possible $R$-structures that have $S$ as underlying poset.
  A morphism of directed restriction species is just a natural transformation.
\end{blanko}
  
\begin{eks}
  The category of posets and convex inclusions is the terminal directed
  restriction species.  Similarly there is a directed restriction species
  of lattices with convex inclusions, or
  categories with fully faithful cULF functors.  (Note that a category has an
  underlying poset, namely by $(-1)$-truncation of all hom sets.)  Rooted forests
  and convex maps form a directed restriction species.  Similarly for directed
  graphs.  In all these cases, there is a notion of underlying poset, which
  inherits the given structure from the ambient one.  Note that in each case
  there is also a plain restriction species: in fact any subset of elements,
  convex or not, inherits the given structure.
\end{eks}

\begin{blanko}{Coalgebras from directed restriction species.}
  Let $R$ be any directed restriction species.
  An {\em admissible cut} of an object $X\in R[S]$ is by definition 
  a monotone map from the underlying poset $S$ to $\un 2$.
 That is, an admissible cut is an ordered monotone partition
$A+B=S$.
 This agrees
  with the notion of admissible cut in Connes--Kreimer, and in related examples.
  Let $\mathbf R_2$ be the groupoid of $R$-structures with an admissible cut.
  
  A coalgebra is defined by the rule
  \begin{equation}\label{eq:comultRS}
  \Delta(X) = \sum_{A+B=S}  X|A \tensor X|B,  \qquad X \in R[S].
  \end{equation}
  Here the sum is over $\pi_0\mathbf R_2$, that is, all isomorphism classes of admissible cuts.
  
  A special case of this construction is the Connes--Kreimer coalgebra of
  (combinatorial) trees (\ref{ex:CK}).  And also the Manchon--Manin coalgebra of
  directed graphs (\ref{ex:MM}).  Various examples of cobordism categories can
  also be envisioned.
\end{blanko}

\begin{blanko}{Decomposition spaces from directed restriction species.}
  If $\R\to\C$ is a directed restriction species, let $\mathbf R_k$ be
  the groupoid of $R$-structures on posets $S$ with
  ordered monotone partitions into $k$ possibly empty parts. In other words, $\mathbf R_2$
  is the groupoid of $R$-structures with an admissible cut, and  $\mathbf R_k$ is the groupoid of $R$-structures with $k-1$ compatible admissible cuts. 
  The $\mathbf R_k$ form a simplicial groupoid.  The functoriality in generic maps is
  clear, as these do not alter the underlying poset $S$.  Functoriality in 
  free maps comes from the
  structural restrictions, noting that free maps correspond to convex 
  inclusions.
\end{blanko}

\begin{theorem}
  The construction just outlined defines a decomposition space, whose incidence
  coalgebra coincides with Formula~\eqref{eq:comultRS}.  Morphisms of directed restriction
  species induce cULF functors and hence coalgebra homomorphisms. 
\end{theorem}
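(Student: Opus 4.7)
The plan is to mimic the proof for ordinary restriction species, with the category $\I$ replaced by $\C$ and the role of $\mathbf B$ replaced by an analogous auxiliary decomposition space of finite posets with monotone partitions. First, I would introduce the simplicial groupoid $\mathbf P$ in which $\mathbf P_k$ is the groupoid of monotone maps $S \to \un k$ from a finite poset $S$. Generic face maps act by postcomposition $\un k \to \un{k-1}$, outer face maps send $S \to \un k$ to the pullback $S' \to \un{k-1}$ along the relevant endpoint-preserving inclusion $\un{k-1} \into \un k$, and degeneracies insert empty fibres. The crucial observation is that the preimage in $S$ of any interval in $\un k$ is automatically a \emph{convex} subposet, so the outer face maps land in convex inclusions, i.e.\ in $\C$. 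I would check that $\mathbf P$ is a decomposition space by verifying the generic-free pullback squares of Proposition~\ref{onlyfourdiags}; the verification is formally parallel to that for $\mathbf B$ (cf.~\ref{B/k},\ref{I=DecB}), reducing to the fact that postcomposition and interval-pullback in $\un\Delta$ commute with one another in the required fibred sense.

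Second, given the right fibration $\R \to \C$ corresponding to $R$, I would define
$$ \mathbf R_k \ :=\ \mathbf P_k \times_{\C} \R, $$
where the map $\mathbf P_k \to \C$ picks out the underlying poset. Generic face maps act purely on the $\mathbf P_k$-component (they do not change the poset, hence trivially lift); outer face maps use convexity of the resulting subposet inclusion together with the right fibration property of $\R \to \C$ to transport the $R$-structure from $X$ on $S$ to $X|_{S'}$ on the convex subposet $S'$. Degeneracies insert an empty fibre and leave $X$ unchanged.

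Third, to see that $\mathbf R$ is again a decomposition space, I would factor each generic-free square for $\mathbf R$ as a composite of the corresponding square for $\mathbf P$ (already a pullback by Step~1) with a square coming from the right fibration $\R \to \C$ (a pullback by the Grothendieck construction, since outer face maps act by a cartesian lift). Lemma~\ref{pbk} then yields the desired pullback. The incidence coalgebra can be read off directly: the span $\mathbf R_1 \leftarrow \mathbf R_2 \to \mathbf R_1\times\mathbf R_1$ parametrises $R$-structures $X\in R[S]$ together with an admissible cut $A+B=S$, and returns $X|A \otimes X|B$, matching formula~\eqref{eq:comultRS}. For functoriality, a natural transformation $R \Rightarrow R'$ yields a morphism of right fibrations over $\C$, hence a simplicial map $\mathbf R \to \mathbf R'$ that is cartesian on all generic and degeneracy maps (since these do not alter the underlying poset), hence cULF by Lemma~\ref{lem:cULFs0d1}.

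The main obstacle is the verification in Step~1 that the outer face maps in $\mathbf P$ fit into pullback squares with the generic face maps, and the subsequent verification in Step~3 that these pullbacks lift through $\R\to\C$. The key point making this go through is the observation that every \emph{interval} in $\un k$ pulls back along a monotone map $S\to\un k$ to a \emph{convex} subposet of $S$, so that outer face maps of $\mathbf P$ factor through $\C$; this is precisely the compatibility between the covariant functoriality in monotone maps and the contravariant functoriality in convex inclusions that the general sesquicartesian-fibration framework is designed to capture.
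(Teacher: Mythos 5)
Your proof is correct in outline, and it is in fact the ``direct verification'' that the paper explicitly acknowledges is possible but declines to render: the paper instead routes the whole argument through the abstract machinery of the category $\nabla$ of convex correspondences, sesquicartesian fibrations over $\un\Delta$, and the iesq condition (Propositions~\ref{NablaDecomp} and~\ref{NablaSesq}, Lemma~\ref{sesquilemma}, and the proposition that $\C\comma\un\Delta\to\un\Delta$ is an iesq sesquicartesian fibration). Your $\mathbf P_k$ is precisely the fibre of $\C\comma\un\Delta$ over $\un k$, your $\mathbf R_k=\mathbf P_k\times_{\C}\R$ is the pullback performed in Lemma~\ref{sesquilemma}, and your prism argument via Lemma~\ref{pbk} (equivalently: $\mathbf R\to\mathbf P$ is cULF, so Lemma~\ref{cULF/decomp} applies) replaces the formal transfer of the iesq property along the pullback. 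What the paper's abstraction buys is a uniform treatment of ordinary and directed restriction species and a clean isolation of where contravariant functoriality in free maps comes from; what your concrete route buys is independence from the $\nabla$/two-sided-fibration apparatus.

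One point deserves more care than you give it. You assert that the decomposition-space check for $\mathbf P$ is ``formally parallel to that for $\mathbf B$''. It is not: for $\mathbf B$ (equivalently $\I$) the key generic-free pullback follows from monoidal extensivity, $\I_{/A}\times\I_{/B}\simeq\I_{/A+B}$ (cf.~\ref{extensive}), and the paper explicitly notes that the analogous statement \emph{fails} for $\C$ --- indeed its failure is exactly why $\mathbf R$ is not a Segal space. The correct argument, which your closing remark about ``postcomposition and interval-pullback commuting in the required fibred sense'' only gestures at, is this: an object of the pullback $\mathbf P_{a+k+b}\times_{\mathbf P_k}\mathbf P_n$ consists of a poset $T\to\un a+\un k+\un b$ together with a refinement $S\to\un n$ of its middle fibre $S=T_{\un k}$, and since the generic (postcomposition) maps do not change the total poset, one reassembles a unique $T\to\un a+\un n+\un b$ by using the given maps on the three pieces. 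With that substitution your proof is complete and matches the content, if not the packaging, of the paper's argument.
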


The theorem can be proved by a direct verification.  The only subtlety is to
establish functoriality in free maps of $\Delta$.  Rather than rendering this
verification we prefer to take a rather abstract approach in the following 
subsections, establishing a
general method for providing functoriality in free maps.

\begin{blanko}{Decalage.}
  Taking upper or lower dec of the decomposition space of a directed restriction
  species yields Segal spaces.  The lower dec gives the (fat nerve of the)
  subcategory of $\R$ consisting of the maps that are order ideal inclusions
  (i.e.~convex inclusions which are also downward closed).  For example, in the
  case of the directed restriction species of forests, we get the
  category of forests and root-preserving inclusions of D\"ur~\cite{Dur:1986}.
  Similarly, the upper dec yields  the (fat nerve of the)
  subcategory of $\R$ consisting of the maps that are order filter inclusions
  (i.e.~convex inclusions which are also upward closed).
\end{blanko}
\subsection{Further simplicial preliminaries}

%


\begin{blanko}{Finite ordinals.}
  Recall that $\un \Delta$ is the category whose objects
are the finite (possibly empty) ordinals
$
\un k := \{1,2,\ldots,k\}$,
and whose arrows are the monotone maps. 
The distance-preserving maps in $\un\Delta$ (which in the subcategory 
$\Delta\subset \un\Delta$ we call `free maps') are called {\em convex}:
they are those $i: \un k' \to \un k$
such that $i(x+1)= i(x) + 1$, for all $1\leq x< k'$.
We denote the convex maps by arrows $\rat$.
Observe that the convex maps are just the canonical inclusions
$$f:\un n\to \un a+\un n+\un b,$$


\end{blanko}

\begin{lemma}
  Convex maps in $\un \Delta$ admit basechange
  along any map. 
  In other words, given the solid cospan consisting of $f$ and $i$,
  with $i$ convex,
  $$\xymatrix{
     \cdot \drpullback\ar@{-->}[r]\ar@{ >-->}[d]_{i'} & \cdot \ar@{ >->}[d]^i \\
     \cdot \ar[r]_f & \cdot
  }$$
  the pullback exists and $i'$ is again convex.  
\end{lemma}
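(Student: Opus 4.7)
The plan is to construct the pullback explicitly as a preimage in $\un\Delta$, and then verify convexity of the induced leg. Write the convex map $i$ in standard form as the inclusion $i : \un{k'} \rat \un a + \un{k'} + \un b = \un k$, so that the image $\Im(i) \subset \un k$ is precisely the convex subset $\{a+1,\dots,a+k'\}$. Given any monotone $f : \un m \to \un k$, consider the preimage
\[
\un{m'} := f^{-1}(\Im i) = \{\,x \in \un m : a+1 \leq f(x) \leq a+k'\,\}.
\]
Since $f$ is monotone and $\Im(i)$ is an interval in $\un k$, $\un{m'}$ is an interval in $\un m$, hence of the form $\un{a'}+\un{m'}+\un{b'}=\un m$; the inclusion $i' : \un{m'} \rat \un m$ is therefore convex. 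The restriction of $f$ corestricts to a monotone map $f' : \un{m'} \to \un{k'}$, and by construction $i \circ f' = f \circ i'$.

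The main step is to check the universal property. Given a further commutative square $\un n \to \un{k'}$, $\un n \to \un m$, with compositions to $\un k$ agreeing, any element $x \in \un n$ has image in $\un m$ whose further image under $f$ lies in $\Im(i)$; hence it factors uniquely through $\un{m'}$. Monotonicity and uniqueness of the factorisation in the category of sets, together with the fact that $i'$ is mono, promote this to a unique monotone map $\un n \to \un{m'}$ in $\un\Delta$. This gives the required pullback.

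I expect no real obstacle here: the whole argument is essentially that convex subsets of a totally ordered set are closed under preimage by monotone maps, and that pullbacks in $\un\Delta$ of monomorphisms can be computed as in $\kat{Set}$ (since $\un\Delta$ is a full subcategory of posets, closed under taking subposets of the form $f^{-1}$ of a subposet). The only thing worth being careful about is that both $\un{m'}$ and the factoring map land inside $\un\Delta$, which is immediate because the preimage of an interval under a monotone map of finite ordinals is again a finite (possibly empty) interval, i.e.~an object of $\un\Delta$, and the restriction of a monotone map is monotone.
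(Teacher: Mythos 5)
Your proof is correct. The paper in fact states this lemma without proof, treating it as evident; your argument --- that the preimage of the interval $\Im(i)$ under a monotone $f$ is again an interval (hence a convex inclusion, possibly empty), and that the set-level universal property lifts to $\un\Delta$ because $i$ and $i'$ are order-embeddings --- is exactly the standard verification one would supply.
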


\begin{blanko}{Convex correspondences.}
%
%
  Denote by $\nabla$ the category of {\em convex correspondences} in $\un
  \Delta$: the objects are those of $\un\Delta$, and a morphism is a span
  $$
  \xymatrix{
  \un k' & \ar@{ >->}[l]_i \un k \ar[r]^f & \un n}
  $$
  where $i$ is convex.
  Composition of such spans is given by pullback, as allowed by the lemma.
  By construction, $\nabla$ has a factorisation system in which the left-hand
  class (called {\em backward convex maps}) consists of spans of the form $\xymatrix{\cdot & \ar@{ >->}[l] \cdot 
  \ar[r]^= &\cdot}$, and the right class (called {\em ordinalic})
  consists of spans of the form $\xymatrix{\cdot & \ar[l]_= \cdot 
  \ar[r] &\cdot}$; the right hand class forms of course a subcategory isomorphic 
  to $\un \Delta$.
  Note that $\nabla$ has a zero object, namely $\un 0$.  The zero maps are 
$\un n \lat \un 0 \to \un k$.
\end{blanko}

    Note that pullback squares in $\un\Delta$ along a convex map
    are commutative squares in $\nabla$.
  A pullback square in $\un\Delta$ 
    $$\xymatrix{
     \un n \ar@{ >->}[r]\ar[d] & \un n' \ar[d] \\
     \un    k \ar@{ >->}[r] &\un  k'
  }$$
  has to be interpreted as a square in $\nabla$
  $$\xymatrix{
     \un n \ar[d] & \ar[l]\un n' \ar[d] \\
  \un    k  & \ar[l]\un  k'
  }$$
  in which the leftwards maps are backward convex maps, and the vertical maps
  are ordinalic.  That a square of this form
  commutes as a diagram of convex correpondences is
  precisely to say that it is a pullback in $\un\Delta$.
  \bigskip

A map in $\nabla$ can be understood as
a monotone map, but defined possibly only on a certain middle convex part of an 
ordinal.  The complement of the domain of definition consists of a bottom part
and a top part.  We can make such partial maps total by introducing
new artificial bottom and top elements,
and understand that the undefined parts are mapped there.  Hence we are led
to consider finite ordinals with a bottom and a top element:

\begin{blanko}{Finite strict intervals.}
  Let $\Xi$ denote the category of {\em finite strict intervals}
  (cf.~Joyal~\cite{Joyal:disks}): its objects are finite ordinals with a bottom
  and a top element required to be distinct, and the arrows are the monotone
  maps that also preserve bottom and top.  We denote an object by the number of
  inner points, so as to write for example
$$
\un k := \{\bot,1,2,\ldots,k,\top\} .
$$
(This naming convention is different from that we will use in Section~\ref{sec:master},
where our viewpoint on the same category is a bit different.)

There is a canonical embedding
$$
\un \Delta \into \Xi
$$
which to an ordinal adjoins a new bottom and a new top element.
In particular the indexing convention is
designed to reflect this embedding.
$\Xi$ has a factorisation system in which the left-hand class consists of maps
for which the inverse image of every inner point is singleton (called 
{\em coconvex}), and whose 
right-hand class are the maps for which the inverse image of each of the outer 
points is singleton, in other words, they are the maps coming from
$\un \Delta$ (called ordinalic).
\end{blanko}

From the descriptions we see that the categories $\nabla$ and $\Xi$ are almost the same;
the only difference is for maps factoring through $\un 0$: in $\nabla$ each hom 
set $\Hom_\nabla(\un n,\un k)$ contains exactly one such map, namely the zero 
map $\un n \lat \un 0 \to 
\un k$, whereas in $\Hom_\Xi(\un n,\un k)$ there are $n+1$ maps through $\un 0$,
depending on which elements map to top and bottom in the first step $\un n \to 
\un 0$.  
%
%

\begin{lemma}
  There is a canonical functor $\Xi\to \nabla$, which is bijective on objects,
  and restricts to an isomorphism on the common subcategory $\un\Delta$,
  and also restricts to an isomorphism $\Xi^{\geq 1}_{\mathrm{coconv.}} 
  \isopil \nabla^{\geq 1}_{\mathrm{back.conv.}}$:
  $$\xymatrix{
  &\ar[ld]\un\Delta\ar[rd] &\\
  \Xi \ar[rr] && \nabla \\
  \Xi^{\geq1}_{\mathrm{coconv.}}  \ar[u]
  \ar[rr]^\simeq&& \nabla^{\geq1}_{\mathrm{back.conv.}}\ar[u]
  }$$
  All maps $\un n \to \un 0$ in $\Xi$ are sent to the zero map
  $\un n \lat \un 0 \to \un 0$ in $\nabla$.
\end{lemma}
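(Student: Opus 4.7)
The plan is to define the functor $\Phi:\Xi\to\nabla$ explicitly on objects and morphisms, and then verify functoriality and the three restriction statements; the only substantive computation is the compatibility of composition with pullback.

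On objects I take $\Phi(\un k)=\un k$. Given a $\Xi$-morphism $f:\un n\to\un k$, monotonicity together with the conditions $f(\bot)=\bot$ and $f(\top)=\top$ imply that the preimage $\un m:=f^{-1}(\{1,\dots,k\})$ is a convex subset of $\{1,\dots,n\}$. I send $f$ to the span
$$
\un n \;\lat\; \un m \;\longrightarrow\; \un k,
$$
where the leftward arrow is the convex inclusion and the rightward arrow is the restriction $f|_{\un m}$ (which lands in $\{1,\dots,k\}$ by construction and is monotone). For an identity $f=\id_{\un n}$ one has $\un m=\un n$ and restriction equals the identity, so $\Phi(\id)=\id$.

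For functoriality the key step is the following. Let $f:\un n\to\un k$ and $g:\un k\to\un j$ be $\Xi$-morphisms, set $\un m=f^{-1}(\{1,\dots,k\})$ and $\un\ell=g^{-1}(\{1,\dots,j\})$. Composition of $\Phi(f)$ and $\Phi(g)$ in $\nabla$ is the pullback
$$
\xymatrix{
&& \un p \ar@{ >->}[dl]\ar[dr] & \\
& \un m\ar@{ >->}[dl]\ar[dr] && \un\ell\ar@{ >->}[dl]\ar[dr] & \\
\un n && \un k && \un j,
}
$$
and since pullback along the inclusion $\un\ell\hookrightarrow\un k$ is given by taking preimages, $\un p=(f|_{\un m})^{-1}(\un\ell)=f^{-1}(\un\ell)$, viewed as a convex subset of $\un n$. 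On the other hand, $(gf)^{-1}(\{1,\dots,j\})=f^{-1}(g^{-1}(\{1,\dots,j\}))=f^{-1}(\un\ell)$, and the associated restriction map to $\un j$ agrees on the nose with the composite $\un p\to\un\ell\to\un j$. Hence $\Phi(gf)=\Phi(g)\circ\Phi(f)$.

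It remains to verify the three restriction claims. A $\Xi$-map is ordinalic iff the preimages of $\bot$ and $\top$ are singletons, iff $\un m=\un n$; in that case $\Phi(f)$ is the span $\un n\stackrel=\lat\un n\to\un k$, which is exactly the image of $\un\Delta\hookrightarrow\nabla$, and both assignments are inverse to each other. A $\Xi$-map $f:\un n\to\un k$ with $k\geq1$ is coconvex iff every inner point of $\un k$ has a singleton preimage, iff $\un m\to\un k$ is the identity, iff $\Phi(f)$ has the form $\un n\lat\un k\stackrel=\to\un k$; conversely every such backward-convex span in $\nabla^{\geq1}$ is obtained from a unique coconvex $\Xi$-map (extending the convex inclusion by sending everything below to $\bot$ and everything above to $\top$), giving the claimed isomorphism. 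Finally, for any $f:\un n\to\un 0$ in $\Xi$ we have $\un m=\varnothing=\un 0$, so $\Phi(f)$ is the zero map $\un n\lat\un 0\to\un 0$ regardless of how $f$ splits $\un n$; this both accounts for statement (5) and explains why the $\geq 1$ restriction is necessary in (4). The main (and really only) obstacle is writing out the pullback identification above carefully, but it reduces to the bookkeeping identity $f^{-1}g^{-1}=(gf)^{-1}$.
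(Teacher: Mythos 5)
Your construction is correct, and it is exactly the functor the paper intends: the paper itself offers no proof, relying only on the preceding remark that $\Xi$ and $\nabla$ ``are almost the same'' except for maps factoring through $\un 0$, and your explicit description (send $f$ to the span whose middle object is the preimage of the inner points, with functoriality reduced to $f^{-1}g^{-1}=(gf)^{-1}$ via the pullback-as-preimage computation) fills in precisely the routine verification that is omitted. The observations about why the identification fails on coconvex maps into $\un 0$, and hence why the $\geq 1$ restriction is needed, also match the paper's remark that $\Hom_\Xi(\un n,\un 0)$ has $n+1$ elements collapsing to the single zero map in $\nabla$.
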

\noindent
%

The following is standard \cite{Joyal:disks}:
\begin{lemma}
  There is a canonical isomorphism of categories
  $$
  \Delta\op \simeq \Xi
  $$
  restricting to an isomorphism
  $$
  \Deltagen\op \simeq \un \Delta .
  $$
\end{lemma}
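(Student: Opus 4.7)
The plan is to construct the duality functor explicitly by the representable construction $[n]\mapsto\Hom_\Delta([n],[1])$ and its inverse $\un k\mapsto\Hom_\Xi(\un k,\un 0)$, then verify these are mutually inverse and restrict correctly to the stated subcategories.

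First I would define a functor $F:\Delta\op\to\Xi$. On objects, send $[n]$ to the set $\Hom_\Delta([n],[1])$ with its pointwise order. This set has $n+2$ elements: the constant map $\bot$ at $0$, the constant map $\top$ at $1$, and for each $0\leq j\leq n-1$ the cut map sending $\{0,\dots,j\}\mapsto 0$ and $\{j{+}1,\dots,n\}\mapsto 1$. The extremal constant maps serve as the required distinct bottom and top, so $F[n]$ is a finite strict interval with $n$ inner points, i.e.\ the object $\un n\in\Xi$. On morphisms, $F$ sends $f:[m]\to[n]$ to the precomposition map $f^*:\Hom_\Delta([n],[1])\to\Hom_\Delta([m],[1])$, which is automatically monotone and preserves the two constant maps, hence is a morphism in $\Xi$.

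Next I would define $G:\Xi\to\Delta\op$ by $\un k\mapsto\Hom_\Xi(\un k,\un 0)$ where $\un 0=\{\bot,\top\}$; this set has $k+1$ elements, one for each choice of cut between consecutive inner points (or above/below all of them), and is naturally a totally ordered set, hence identified with $[k]\in\Delta$. On morphisms, $G$ sends a map $\un k\to\un\ell$ in $\Xi$ to the precomposition map, landing in $\Delta\op$.

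The bulk of the verification is that $F$ and $G$ are mutually inverse. On objects this is a cardinality and order bookkeeping: $n+2$ elements versus $k$ inner points plus top and bottom. On morphisms one checks that the unit and counit of the adjunction-like composition are identities by a direct combinatorial calculation, using the evident bijection between cuts of $[n]$ and endpoint-preserving cut maps $[n]\to[1]$. The main obstacle, and the part that requires the most care, is to verify that $F$ is fully faithful on morphisms and not just bijective on objects: concretely, that every morphism $\un n\to\un m$ in $\Xi$ arises as $f^*$ for a unique $f:[m]\to[n]$ in $\Delta$. This follows because a morphism in $\Xi$ is determined by its values on the $n$ inner points of $\un n$, and each such value is a cut of $[m]$, corresponding to an element of $[m]$; monotonicity on both sides matches up exactly.

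Finally I would verify the restriction claim. A generic map $f:[m]\to[n]$, meaning $f(0)=0$ and $f(m)=n$, induces $f^*$ whose fibre over $\bot$ consists of all $\phi$ with $\phi(f(0))=\phi(f(m))=0$, forcing $\phi=\bot$; similarly for $\top$. Hence $f^*$ is ordinalic. Conversely, if $f$ fails to preserve endpoints, say $f(0)=k>0$, then the cut map $\phi$ with transition at $k$ satisfies $\phi\circ f=\top$, so the fibre over $\top$ is not a singleton and $f^*$ is not ordinalic. This establishes that $F$ restricts to an isomorphism $\Deltagen\op\isopil\un\Delta$, completing the proof.
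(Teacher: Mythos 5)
The paper offers no proof of this lemma at all, merely labelling it standard and citing Joyal's work on disks; your construction via $[n]\mapsto\Hom_\Delta([n],[1])$ and $\un k\mapsto\Hom_\Xi(\un k,\un 0)$ is precisely that standard duality, it is correct, and it mirrors the technique the paper itself uses to prove the neighbouring Lemma~\ref{lem:Delta-duality} identifying $\un\Delta$ with $\Deltagen\op$. One small imprecision worth fixing: the value of a $\Xi$-morphism at an inner point of $\un n$ is a point of $\un m\cong\Hom_\Delta([m],[1])$, i.e.\ one of the $m+2$ cuts of $[m]$, which do \emph{not} correspond to the $m+1$ elements of $[m]$; the unique $f:[m]\to[n]$ is instead recovered by the adjoint formula $f(i)=\min\{j:g(\phi_j)(i)=0\}$ (where $\phi_j$ is the cut with $\phi_j^{-1}(0)=\{0,\dots,j\}$), and it is this Galois-type correspondence, not a direct identification of values with elements of $[m]$, that makes the counting and the monotonicity match.
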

\noindent
The generic maps in $\Delta$ 
correspond to the ordinalic maps in $\Xi$, and the free maps in $\Delta$ correspond to 
the coconvex maps in $\Xi$.


Combining these maps we get

\begin{cor}\label{DeltaNabla}
%
%
    There is a canonical functor $\Delta\op\to \nabla$, which is bijective on objects,
  and restricts to an isomorphism on the common subcategories $\Deltagen\op$,
  takes the free maps to the backward-convex maps in $\nabla$, restricting
   to an isomorphism $(\Delta\op_{\mathrm{free}})^{\geq 1} 
  \isopil \nabla^{\geq 1}_{\mathrm{back.conv.}}$, as indicated here:
  $$\xymatrix{
  &\ar[ld]\Deltagen\op\ar[rd] &\\
  \Delta\op \ar[rr] && \nabla \\
  (\Delta\op_{\mathrm{free}})^{\geq 1}  \ar[u]
  \ar[rr]^\simeq&& \nabla^{\geq1}_{\mathrm{back.conv.}}\ar[u]
  }$$
  All maps $[0] \to [n]$ in $\Delta$ are sent to the zero map
  $\un n \lat \un 0 \to \un 0$ in $\nabla$.
\end{cor}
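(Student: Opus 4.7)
The plan is to obtain the functor $\Delta\op \to \nabla$ as the composite
$$\Delta\op \;\stackrel{\simeq}{\longrightarrow}\; \Xi \;\longrightarrow\; \nabla$$
of the two canonical functors provided by the preceding two lemmas, and then simply transport every claim through this composite. The first arrow is an isomorphism (of categories), so it is bijective on objects and preserves the factorisation systems in the strongest possible sense. The second arrow is bijective on objects and is an isomorphism away from the degenerate fibre over $\un 0$. Consequently the composite is bijective on objects, and the asserted properties follow by tracking where the various subcategories go.

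More precisely, I would proceed as follows. First, on objects, both functors send $[n]$ (equivalently $\un n$) to $\un n$, so the composite is the identity on objects. Second, the isomorphism $\Delta\op\simeq\Xi$ restricts to an isomorphism $\Deltagen\op\simeq\un\Delta$, and under $\Xi\to\nabla$ the subcategory $\un\Delta\subset\Xi$ is carried isomorphically onto the subcategory of ordinalic maps in $\nabla$ (which as noted in the text is again isomorphic to $\un\Delta$); composing the two gives the required restricted isomorphism on $\Deltagen\op$. Third, under $\Delta\op\simeq\Xi$ the free maps in $\Delta$ correspond exactly to the coconvex maps in $\Xi$, and the second lemma gives an isomorphism $\Xi^{\geq 1}_{\mathrm{coconv.}}\isopil \nabla^{\geq 1}_{\mathrm{back.conv.}}$; composing yields the asserted isomorphism $(\Delta\op_{\mathrm{free}})^{\geq 1}\isopil \nabla^{\geq 1}_{\mathrm{back.conv.}}$.

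The only point that needs a moment of care is the degenerate case of the maps $[0]\to[n]$ in $\Delta$, which are free. As maps in $\Delta\op$ they go from $[n]$ to $[0]$, so under $\Delta\op\simeq\Xi$ they correspond to the $n{+}1$ strict-interval maps $\un n\to\un 0$ in $\Xi$; these are coconvex but all factor through $\un 0$. The second lemma tells us that the functor $\Xi\to\nabla$ collapses every such map to the unique zero map $\un n\lat\un 0 \to\un 0$ of $\nabla$. This accounts for the failure of the restricted isomorphism to extend down to degree $0$, and confirms the final assertion of the corollary.

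In summary there is essentially no new content: the statement is the concatenation of the two lemmas, and the main (small) obstacle is bookkeeping the indexing conventions so that `free in $\Delta$' matches `coconvex in $\Xi$' matches `backward convex in $\nabla$' after the direction reversal imposed by $\Delta\op\simeq\Xi$, and so that the exceptional behaviour over $\un 0$ is correctly identified with the maps out of $[0]$ in $\Delta$.
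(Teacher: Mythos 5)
Your proposal is correct and is exactly the paper's intended argument: the corollary is introduced with the phrase ``Combining these maps we get,'' i.e.\ it is obtained precisely by composing the isomorphism $\Delta\op\simeq\Xi$ with the canonical functor $\Xi\to\nabla$ and tracking the two factorisation systems and the degenerate fibre over $\un 0$, just as you do. Your bookkeeping of the $n{+}1$ maps $\un n\to\un 0$ collapsing to the zero map is the right way to account for the restriction to degree $\geq 1$.
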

%
%

\begin{cor}
  A simplicial space $X: \Delta\op\to\Grpd$ with $X_0=1$ can be realised
  from a $\nabla$-diagram.
\end{cor}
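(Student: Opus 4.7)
The plan is to construct, given $X\colon \Delta\op\to\Grpd$ with $X_0=1$, a functor $Y\colon \nabla \to \Grpd$ such that $X \simeq Y \circ F$, where $F\colon \Delta\op\to\nabla$ is the functor from Corollary~\ref{DeltaNabla}. Since $F$ is bijective on objects, one sets $Y(\un n) := X_n$, and the remaining task is to extend $Y$ to morphisms compatibly with $F$.

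The key step is a fibre analysis of $F$ on hom-sets. A morphism $\sigma = (\un{k'} \lat \un m \to \un n)$ in $\nabla$ admits a unique lift $\varphi_\sigma\in\Delta\op([k'],[n])$ when $m\geq 1$: reading the $\Delta\op$-factorization as free-op followed by generic-op, the free-op piece $[k']\to[m]$ is forced by the convex inclusion $\un m\into\un{k'}$ and the generic-op piece $[m]\to[n]$ by the ordinalic map $\un m\to\un n$; uniqueness is the uniqueness of the generic-free factorization in $\Delta$ (Lemma~\ref{lem:genfactsplit}). When $m=0$, however, the fibre of $F$ over the zero span consists of the $k'+1$ constant maps $[n]\to[k']$ in $\Delta$, each of which has its generic-free factorization passing through $[0]$ as $[n]\to[0]\to[k']$ with the free part selecting a vertex of $[k']$; this is the precise content of the corollary's statement that all maps $[0]\to[n]$ are collapsed to the zero map.

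With this in hand, define $Y(\sigma):=X(\varphi_\sigma)$ using any lift. In case $m\geq 1$ this is unambiguous by uniqueness. In case $m=0$ each of the $k'+1$ lifts produces a composite $X_{k'}\to X_0\to X_n$, in which the leg $X_{k'}\to X_0$ is a face map into $X_0=1$ and hence is the unique terminal map; so all $k'+1$ composites coincide and $Y(\sigma)$ is well defined. Functoriality of $Y$ is then automatic: for composable $\sigma,\tau$ in $\nabla$, any choice of lifts $\widetilde\sigma,\widetilde\tau$ in $\Delta\op$ yields, by functoriality of $F$, a morphism $\widetilde\tau\circ\widetilde\sigma$ that is a lift of $\tau\circ\sigma$ (automatically a constant map in $\Delta$ when $\tau\circ\sigma$ is a zero span), whence $Y(\tau\circ\sigma)=X(\widetilde\tau\circ\widetilde\sigma)=X(\widetilde\tau)\circ X(\widetilde\sigma)=Y(\tau)\circ Y(\sigma)$. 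By construction $X = Y\circ F$, exhibiting $X$ as a $\nabla$-diagram.

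The main obstacle will be the fibre analysis at zero spans: specifically, using the correct generic-then-free factorization of Lemma~\ref{lem:genfactsplit} to identify the fibre over $\un{k'}\lat\un 0\to\un n$ as the $k'+1$ constant maps $[n]\to[k']$. Once this is in place the hypothesis $X_0=1$ kills the resulting $(k'+1)$-fold ambiguity at exactly the right places; morally, $\nabla$ is the universal modification of $\Delta\op$ obtained by forcing $[0]$ to become a terminal (hence zero) object, and the condition $X_0=1$ is precisely the simplicial translation of this universal property.
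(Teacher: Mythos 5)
Your argument is correct and is essentially the paper's proof, which is the one-line observation that since $X_0$ is terminal all maps $X_n\to X_0$ coincide, so $X$ factors through $\nabla$; your fibre analysis of the functor $\Delta\op\to\nabla$ (unique lifts for $m\geq 1$, the $k'+1$ constant maps over the zero span) just makes explicit why that observation suffices. No issues.
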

Indeed, since $X_0$ is terminal, all the maps $X_n \to X_0$ coincide,
so $X$ factors through $\nabla$.

\begin{blanko}{Identity-extension squares.}\label{iesq}
  A square in $\un \Delta$
  $$\xymatrix{
     \un n \ar[r]^j\ar[d]_f & \un n' \ar[d]^g \\
  \un    k \ar@{ >->}[r]_i &\un  k'
  }$$
  in which the bottom map $i$ is a convex map
  is called an {\em identity-extension square (iesq)} if is it of the form
    $$\xymatrix{
     \un n \ar@{ >->}[r]^-j\ar[d]_f & \un a+\un n+\un b \ar[d]^{\id_a+f+\id_b} \\
     \un k \ar@{ >->}[r]_-i & \un a+\un k+\un b .
  }$$
\end{blanko}
  
%

\begin{lemma}
  An identity-extension square is both a pullback and a pushout.
\end{lemma}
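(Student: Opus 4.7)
The plan is to verify the pullback and pushout claims separately, working directly in $\un\Delta$ for the pullback and invoking the duality machinery for the pushout.

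For the pullback, I would compute the fibered product $(\un a + \un n + \un b) \times_{\un a + \un k + \un b} \un k$ in $\un\Delta$ explicitly. An element is a pair $(x,y)$ satisfying $(\id_a + f + \id_b)(x) = i(y) = a + y$. Since $i$ is the middle inclusion, its image lies in the middle block $\{a+1,\ldots,a+k\}$, and since $\id_a + f + \id_b$ preserves the three-block decomposition (sending the outer $\un a$ and $\un b$ blocks identically to themselves and the middle $\un n$-block into the middle $\un k$-block), the equation forces $x$ to lie in the middle $\un n$-block. Writing $x = a + x_0$ with $x_0 \in \un n$, the condition becomes $a + f(x_0) = a + y$, whence $y = f(x_0)$. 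This identifies the pullback canonically with $\un n$ via $x_0 \mapsto (a+x_0, f(x_0))$, and the two projections recover exactly $j$ and $f$.

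For the pushout, the cleanest route is through the duality developed earlier in the paper. By Lemma \ref{lem:Delta-duality} we have $(\un\Delta, +, \un 0) \simeq (\Deltagen\op, \pm, [0])$, and by Corollary \ref{DeltaNabla} the functor $\Delta\op \to \nabla$ takes free maps in $\Delta$ to backward convex maps in $\nabla$ and restricts to an isomorphism $\Deltagen\op \simeq \un\Delta$ on the ordinalic part. Under this chain of correspondences, the identity-extension square in $\un\Delta$ transports to an identity-extension square in $\Delta$, which by Lemma \ref{genfreepushout} is precisely a pushout of a generic map along a free map. Dualising back then delivers the pushout property in $\un\Delta$. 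Alternatively, one may check directly: a cocone $(g : \un a + \un n + \un b \to M, h : \un k \to M)$ satisfying $g \circ j = h \circ f$ forces a unique candidate $\phi : \un a + \un k + \un b \to M$ defined piecewise by $g$ on the outer $\un a$- and $\un b$-blocks and by $h$ on the middle $\un k$-block, with consistency guaranteed by the relation $g(a + x_0) = h(f(x_0))$ for $x_0 \in \un n$.

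The main obstacle will be the careful handling of the duality correspondences: convex inclusions in $\un\Delta$ do not correspond to free maps in $\Delta$ directly, but only via the intermediate category $\nabla$ and the functor $\Delta\op \to \nabla$. One must verify that the universal property transfers correctly under this translation, in particular checking that cocones on one side correspond to cones on the other. A purely combinatorial direct proof is more delicate, since the required monotonicity of the piecewise-defined map $\phi$ at the block boundaries has to be extracted from the compatibility condition together with the structure of $j$ as the middle inclusion, and the edge cases (empty $\un n$, empty $\un a$ or $\un b$) must be handled individually.
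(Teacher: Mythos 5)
The paper states this lemma without proof, so there is no argument to compare against; your pullback verification is correct and complete (the compatibility condition forces any cone into the middle block, whence it factors uniquely through $\un n$), and is surely what the authors had in mind. The gap is in the pushout half, on both of your routes. The duality route misfires: under the equivalence $(\un\Delta,+,\un 0)\simeq(\Deltagen\op,\pm,[0])$ of Lemma \ref{lem:Delta-duality}, the convex inclusion $j:\un n\rat\un a+\un n+\un b$ corresponds to the \emph{generic} map $[a]\pm[n]\pm[b]\genmap[n]$ collapsing the outer blocks, not to a free map of $\Delta$, so the identity-extension square in $\un\Delta$ does not transport to a generic-free pushout square of $\Delta$; the bijection between the two kinds of identity-extension squares established after Lemma \ref{lemma:ie} is a bijection of data mediated by the functor $\Delta\op\to\nabla$, which is neither full nor faithful, so no universal property is carried along it.

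Your direct argument breaks at exactly the point you flag, and the break is not a removable technicality. The compatibility $g\circ j=h\circ f$ controls $g$ only on the middle block: it gives $g(a)\leq g(a+1)=h(f(1))$, but nothing compares $g(a)$ with $h(1)$, which is what monotonicity of $\phi$ at the boundary between the $\un a$-block and the $i$-image requires. Concretely, take $a=1$, $b=0$, $n=1$, $k=2$, and $f:\un 1\to\un 2$ with $f(1)=2$. The cocone given by $g:\un 2\to\un 3$ with $g(1)=2$, $g(2)=3$ and $h:\un 2\to\un 3$ with $h(1)=1$, $h(2)=3$ satisfies $g(j(1))=3=h(f(1))$, yet the unique set-level mediating map $\un 3\to\un 3$ is $1\mapsto 2$, $2\mapsto 1$, $3\mapsto 3$, which is not monotone. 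The square is always a pushout of underlying sets (the pushout of an injection whose complement is carried over), and the pullback property --- equivalently, commutativity of the corresponding square in $\nabla$ --- is what the sequel actually uses; but the pushout property in $\un\Delta$ fails whenever $f$ does not preserve the relevant endpoint, so no argument along your lines can close this gap without reinterpreting the statement.
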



  Any identity-extension square in $\un\Delta$ is a commutative square in $\nabla$,
  again called an identity-extension square.

  
\begin{lemma}\label{lemma:ie}
      \begin{enumerate}
	  \item
      An identity-extension square is uniquely determined by $i$ and 
      $f$.
      
      \item An identity-extension square is uniquely determined by $j$ and 
      $f$, provided $n>0$.
      \end{enumerate}
  \end{lemma}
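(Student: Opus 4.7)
The plan is to unwind the explicit shape of an identity-extension square recalled in \ref{iesq}: such a square is entirely encoded by the ordinalic map $f:\un n\to\un k$ together with a pair of non-negative integers $(a,b)$ specifying the extension, whereupon the two convex maps are forced to be the canonical inclusions $\un k\rat \un a+\un k+\un b$ and $\un n\rat \un a+\un n+\un b$, and the right-hand vertical map is forced to be $\id_a+f+\id_b$. Thus in each case, proving uniqueness reduces to showing that $(a,b)$ can be reconstructed from the data given.

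For part (1), I would observe that the convex map $i:\un k\rat\un k'$ directly exhibits a decomposition $\un k'=\un a+\un k+\un b$: one reads off $a$ (respectively $b$) as the number of elements of $\un k'$ strictly below (respectively above) the image of $i$. Combined with the given $f$, this reconstructs all remaining data of the iesq.

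For part (2), I would argue symmetrically: the convex map $j:\un n\rat\un n'$ specifies a decomposition $\un n'=\un a+\un n+\un b$ by reading off the elements of $\un n'$ lying below and above the image of $j$. This recovery requires the image of $j$ to be identifiable as a located subset of $\un n'$, which is automatic provided the image is non-empty, i.e. provided $n>0$. Once $(a,b)$ is recovered, $f$ determines $i$ and the right-hand vertical map as in (1).

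The only delicate point — and where I expect the hypothesis $n>0$ to be essential — is the failure of reconstruction when $n=0$: in that case the map $j:\un 0\rat\un{a+b}$ is the empty set-map and does not distinguish the different decompositions $\un{n'}=\un a+\un 0+\un b$ with $a+b=n'$, each of which yields a genuinely distinct iesq (with different $i$ and different right vertical map) while sharing the same $j$ and the same (uniquely determined trivial) $f:\un 0\to\un k$. This observation both explains the hypothesis and completes the plan.
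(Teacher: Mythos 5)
Your argument is correct and is exactly the evident unwinding the paper intends --- the paper in fact states this lemma without proof and immediately records, as a displayed remark, the same $n=0$ special case you identify as the reason for the hypothesis in (2). One small point: in part (1) the integers $a$ and $b$ cannot individually be read off from $i$ when $k=0$ (precisely the paper's displayed special case), but there $f:\un n\to\un 0$ forces $n=0$ and the square collapses to an identity square independent of the choice of $(a,b)$, so the conclusion of (1) still holds; it is worth making this degenerate case explicit since your reconstruction of $(a,b)$ from $i$ does not literally apply to it.
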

  Note a special case:
  $$\xymatrix{
     \un 0 \ar@{ >->}[r]\ar[d] & \un k' \ar[d]^{\id} \\
  \un    0 \ar@{ >->}[r]_i &\un  k'
  }$$
  is an identity-extension square, but there is
  more than one way to choose the
  $a$ and $b$ parts.

  \bigskip
  
  Recall from Lemma~\ref{genfreepushout} that in $\Delta$
the pushout  of a generic map along a free map is an iesq,
and every iesq  in which $g$ is generic is such a pushout.

\begin{prop}
  Under the correspondence of Corollary~\ref{DeltaNabla}, there is a bijection
  between the set of 
  identity-extensions squares in
  $\un \Delta$ and the set of identity-extension squares in $\Delta$ in which 
  the
  vertical maps are generic
  $$
  \left\{\vcenter{\xymatrix{
     \un n' \ar[d]&\ar@{ >->}[l]  \un n \ar[d] \\
  \un    k' &\ar@{ >->}[l] \un  k}} \quad \text{ in } 
  \un\Delta\right\}
  \qquad = \qquad
  \left\{\vcenter{\xymatrix{
     [n']  & \ar@{ >->}[l] [n] \\
      [k'] \ar@{->|}[u] & \ar@{ >->}[l] [k] \ar@{->|}[u]
}}
\quad \text{ in } \Delta \right\}
  $$
  except in the case $k=0$. 
\end{prop}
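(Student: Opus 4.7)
The plan is to reduce the claim to a bijection of parametrising data via Corollary~\ref{DeltaNabla}, exploiting the rigidity of iesqs supplied by Lemmas~\ref{lemma:ie} and \ref{lem:genfactsplit}.

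First, I would unpack each side to a common minimal data set. By Lemma~\ref{lemma:ie}(1), an iesq in $\un\Delta$ is uniquely determined by a pair $(i,f)$, where $i:\un k \rightarrowtail \un{k'}$ is the bottom convex inclusion (equivalently, a decomposition $\un{k'} = \un a + \un k + \un b$) and $f:\un n \to \un k$; the remaining entries $j$ and $\id_a + f + \id_b$ are then forced. Dually, by Lemma~\ref{lem:genfactsplit}, an iesq in $\Delta$ with generic vertical maps is uniquely determined by its bottom free map $i:[k]\rat[k']$ together with its generic $f:[n]\to[k]$; the top free map and the right vertical $\id \pm f \pm \id$ are then forced.

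Second, I would use Corollary~\ref{DeltaNabla} to match these two data sets: generic maps $[n]\to[k]$ in $\Delta$ correspond bijectively to $\un\Delta$-maps $\un n \to \un k$ (via $\Deltagen\op \simeq \un\Delta$), and free maps $[k]\rat[k']$ in $\Delta$ correspond bijectively to convex inclusions $\un k \rightarrowtail \un{k'}$ in $\un\Delta$ via the isomorphism $(\Delta\op_{\mathrm{free}})^{\geq 1} \isopil \nabla^{\geq 1}_{\mathrm{back.conv.}}$, which is stated in Corollary~\ref{DeltaNabla} to hold precisely when $\un k \neq \un 0$. Since the iesq on either side is reconstructed in the same functorial way from $(i,f)$, the bijection of data lifts to one of squares; commutativity is automatic from the rigid identity-extension form.

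The exception $k=0$ arises for exactly the reason the isomorphism of Corollary~\ref{DeltaNabla} requires restriction to non-zero objects: free maps $[0]\rat[k']$ in $\Delta$ form a set of cardinality $k'+1$ (one for each vertex of $[k']$), all collapsing in $\nabla$ to the unique zero map $\un{k'}\to\un 0$, while there is only one convex inclusion $\un 0 \rightarrowtail \un{k'}$ in $\un\Delta$. This dovetails with the note after Lemma~\ref{lemma:ie} that when $\un k = \un 0$ the decomposition $(a,b)$ is not recoverable from $i$ alone. The main obstacle is the bookkeeping in the translation of Corollary~\ref{DeltaNabla}, in particular tracking the direction reversal between generic maps in $\Delta$ and $\un\Delta$-maps; once this is carefully set up, the bijection is essentially a restatement of the rigidity of iesqs on both sides.
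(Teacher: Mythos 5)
Your proof is correct and follows essentially the same route as the paper: both reduce each set of identity-extension squares to a pair of maps (a vertical map plus the bottom convex/free map) using the rigidity of iesqs, match the pairs via Corollary~\ref{DeltaNabla}, and trace the $k=0$ exception to that corollary's restriction to $\nabla^{\geq1}_{\mathrm{back.conv.}}$. The only (inessential) difference is which vertical you retain as intermediate data — you keep the inner map $\un n\to\un k$ where the paper keeps the outer one $\un n'\to\un k'$ — and your citation of Lemma~\ref{lem:genfactsplit} where the paper invokes Lemma~\ref{lemma:ie}(2) restricted to $\Delta$; both supply the same rigidity.
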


In the case $k=0$, we necessarily have $n=0$ and $n'=k'$, but there is not 
even a bijection on the bottom arrows.

\begin{proof}
%
    The bijection is the composite of the three 
    bijections
  $$
  \left\{\vcenter{\xymatrix{
     \un n' \ar[d]&\ar@{ >->}[l]  \un n \ar[d] \\
  \un    k' &\ar@{ >->}[l] \un  k}} \right\}
   =   
   \left\{\vcenter{\xymatrix{
      \un n' \ar[d] & \\
  \un k' &\ar@{ >->}[l] \un  k}} \right\}
   = 
  \left\{\vcenter{\xymatrix{
      [n'] &\\
[k'] \ar@{->|}[u] & \ar@{ >->}[l]  [k] 
}} \right\} = 
  \left\{\vcenter{\xymatrix{
     [n']  & \ar@{ >->}[l] [n] \\
      [k'] \ar@{->|}[u] & \ar@{ >->}[l] [k] \ar@{->|}[u]
}} \right\}
  $$
   where the first bijection is by Lemma~\ref{lemma:ie} (1), the 
   second is by Corollary~\ref{DeltaNabla} (here we use that $k\neq0$), and the
   third is by Lemma~\ref{lemma:ie} (2) restricted to the subcategory 
   $\Delta$.
\end{proof}

%
%


\begin{blanko}{Iesq condition on functors.}
  For a functor $X:\nabla \to \Grpd$, the image of a backward convex map
  is denoted by upperstar.  Precisely, if the backward convex map correponds to
  $i: \un k \rat \un k'$ in $\un\Delta$, we denote its image by $i\upperstar : X_{k'} 
  \to X_k$.  Similarly, the image of an ordinalic map, corresponding to
  $f: \un n \to \un k$ in $\un\Delta$ is denoted $f\lowershriek : X_n \to X_k$.
  For any identity-extension square in $\un\Delta$,
  \begin{equation}\label{eq:iesq}
    \xymatrix{
     \un a+\un n+\un b \ar[d]_{\id_a+f+\id_b=g}
                        &   \un n \ar@{ >->}[l]_-j\ar[d]^f  \\
     \un a+\un k+\un b  &   \un k ,\ar@{ >->}[l]^-i 
    }
  \end{equation}
  since it is a commutative square in $\nabla$,
  it is automatic just from 
  functoriality that the corresponding square in $\Grpd$ commutes:
  \begin{equation}\label{eq:BC}
 \xymatrix{
     X_{a+n+b}\drpullback \ar[r]^-{j\upperstar }\ar[d]_{g\lowershriek}
     & X_{n} \ar[d]^{f\lowershriek} \\
      X_{a+k+b}\ar[r]_-{i\upperstar } &X_{k} .
  }
  \end{equation}
  (This is the `Beck--Chevalley condition' (BC).)
  We say that $X$ satisfies the {\em iesq condition} when \eqref{eq:BC}
  is furthermore a pullback for every identity-extension square \eqref{eq:iesq}.
\end{blanko}

\begin{prop}\label{NablaDecomp}
  If a covariant functor $M: \nabla \to \Grpd$ sends
  identity-extension squares to pullbacks then the composite
  $$
  \Delta\op\to \nabla \to \Grpd
  $$
  is a decomposition space.
\end{prop}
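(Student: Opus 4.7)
The plan is to verify the decomposition space axiom directly: setting $X := M \circ (\Delta\op \to \nabla)$, I must show that every generic-free pushout in $\Delta$ is sent by $X$ to a pullback in $\Grpd$.

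First, I recall from Lemma~\ref{genfreepushout} that every generic-free pushout in $\Delta$ is an identity-extension square
$$\xymatrix{
     [n] \ar@{ >->}[r]\ar@{->|}[d] & [a]\pm[n]\pm[b] \ar@{->|}[d] \\
     [k]\ar@{ >->}[r] & [a]\pm[k]\pm[b]
}$$
with generic vertical maps and free horizontal maps. For the principal case $k \geq 1$, the Proposition immediately preceding this statement establishes a bijection between such $\Delta$-squares and identity-extension squares in $\un\Delta$, via the comparison functor $\Delta\op \to \nabla$ of Corollary~\ref{DeltaNabla}. Each identity-extension square in $\un\Delta$ is in turn a commutative square in $\nabla$ (see~\ref{iesq}) that falls within the scope of the iesq hypothesis on $M$. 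Thus $M$ sends it to a pullback in $\Grpd$, and composing with $\Delta\op \to \nabla$ yields the required pullback for $X$.

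The only remaining case is $k=0$. Since a generic map $g:[n]\to[k]$ in $\Delta$ preserves endpoints ($g(0)=0$, $g(n)=k$), the condition $k=0$ forces $n=0$. The generic-free pushout then degenerates to
$$\xymatrix{
     [0] \ar@{ >->}[r]\ar@{->|}[d]_{=} & [a+b] \ar@{->|}[d]^{=} \\
     [0]\ar@{ >->}[r] & [a+b],
}$$
with identity vertical maps and equal horizontal maps, which $X$ sends to a square of the same shape: such a square is trivially a pullback, so there is nothing further to check.

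Since most of the work has been done in the preceding simplicial preliminaries, the proof is essentially a routine assembly. The only subtle point is the exceptional case $k=0$, which must be handled separately because the bijection of the preceding Proposition explicitly excludes it; fortunately the square in this case is trivial. Thus the main obstacle is really only notational bookkeeping around the correspondence between $\Delta$, $\un\Delta$ and $\nabla$, all of which is in place by the time one arrives at the statement.
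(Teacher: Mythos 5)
The paper states this proposition without proof, and your overall strategy --- assemble Lemma~\ref{genfreepushout}, the bijection of identity-extension squares, and the iesq hypothesis on $M$ --- is clearly the intended one. But your treatment of the exceptional case contains a genuine error. Writing the generic-free pushout as you do, with generic map $g:[n]\genmap[k]$ from top to bottom, the claim ``$k=0$ forces $n=0$'' is false: a generic map only requires $g(0)=0$ and $g(n)=k$, so \emph{every} map $[n]\to[0]$ is generic (e.g.\ $s^0:[1]\to[0]$), and $n$ is unconstrained. What the endpoint condition actually forces is the converse: $n=0$ implies $k=g(0)=0$ and $g=\id_{[0]}$. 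Correspondingly, the exceptional case of the preceding bijection is when the \emph{source} of the generic maps is $[0]$ (the ``$k$'' of that proposition is your ``$n$'', since its generic maps point the other way), not the target.

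As written, your case split therefore leaves unverified all squares with generic map $[n]\to[0]$ for $n\geq 1$ --- and these are not harmless: the pushout of $s^0:[1]\to[0]$ along $d^\bot$ or $d^\top$ yields exactly the two unitality squares of Proposition~\ref{onlyfourdiags} involving $s_0:X_0\to X_1$, which that proposition singles out as the degeneracy conditions that cannot be eliminated. The repair is easy: those squares have source $[n]$ with $n\geq1$, hence are covered by the bijection after all; the genuinely exceptional squares are the ones with source $[0]$, where the generic map is forced to be the identity and the square is trivially a pullback. With the roles of $n$ and $k$ corrected in this way, the argument goes through.
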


Similarly:
\begin{prop}
  Let $u: M'\Rightarrow M : \nabla \to \Grpd$ be a natural transformation
  between functors that send identity-extension squares to pullbacks.  If $u$
  is cartesian on arrows in $\un \Delta\subset \nabla$, then
  it induces a cULF functor between decomposition spaces.
\end{prop}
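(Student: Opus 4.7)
The plan is an almost immediate unraveling of definitions given the machinery already set up. First, by the preceding Proposition~\ref{NablaDecomp}, the composites
\[
X \;:=\; M \circ (\Delta\op \to \nabla), \qquad X' \;:=\; M' \circ (\Delta\op \to \nabla)
\]
are decomposition spaces. Whiskering $u$ with the functor $\Delta\op \to \nabla$ produces a natural transformation $X' \Rightarrow X$, that is, a simplicial map $f : X' \to X$. So the only content to check is the cULF property.

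Next I would invoke the key combinatorial input, Corollary~\ref{DeltaNabla}: the canonical functor $\Delta\op \to \nabla$ restricts to the isomorphism $\Deltagen\op \cong \un\Delta$ on the subcategory of generic maps. Consequently, for any generic map $g : [n] \to [m]$ in $\Delta$, its image in $\nabla$ is an ordinalic arrow, i.e.\ a map in the subcategory $\un\Delta \subset \nabla$.

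To verify that $f$ is cULF, by definition we must show it is cartesian on every generic map of $\Delta$. But the naturality square for $f$ at a generic map $g$ in $\Delta$ is, tautologically, the naturality square for $u$ at the corresponding arrow in $\un\Delta \subset \nabla$:
\[
\xymatrix{
X'_m \ar[r]^{g^*} \ar[d]_{u_m} & X'_n \ar[d]^{u_n} \\
X_m \ar[r]_{g^*} & X_n .
}
\]
By hypothesis $u$ is cartesian on every arrow of $\un\Delta$, so this square is a pullback. Hence $f$ is cartesian on all generic maps, hence cULF.

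There is no real obstacle: once Corollary~\ref{DeltaNabla} has identified generic maps of $\Delta$ with arrows of $\un\Delta \subset \nabla$, the cULF condition on $f$ is literally the hypothesis on $u$ transported through the functor $\Delta\op \to \nabla$. (Note that there is nothing to check concerning free maps of $\Delta$, since cULF only constrains generic maps; and the iesq hypothesis on $M$ and $M'$ is used only through Proposition~\ref{NablaDecomp} to guarantee that the source and target of $f$ are actually decomposition spaces.)
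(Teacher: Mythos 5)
Your proof is correct, and it is exactly the argument the paper intends: the paper states this proposition with only the word ``Similarly'' after Proposition~\ref{NablaDecomp}, leaving the verification implicit. Your unwinding — that Corollary~\ref{DeltaNabla} identifies the generic maps of $\Delta$ with the ordinalic arrows $\un\Delta\subset\nabla$, so that cartesianness of $u$ on $\un\Delta$ is literally cartesianness of the whiskered map on all generic maps, i.e.\ the cULF condition — fills the gap faithfully and completely.
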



\begin{blanko}{Example: monoids.}
  {\em A monoid viewed as a  
  monoidal functor $(\un\Delta, +,0) \to (\Grpd,\times,1)$
  defines a $\nabla$-space which sends iesq to pullbacks.} 
  The contravariant functoriality on the convex maps is given as follows.  
  The value on a convex map $\un n \rat \un a + \un n + \un b$ is simply the 
  projection
  $$
  X_{a+n+b} \simeq X_a \times X_n \times X_b \longrightarrow X_n ,
  $$
  where the first equivalence expresses that $X$ is monoidal.
  For any identity-extension square
      $$\xymatrix{
  \un a+\un n+\un 
     b \ar[d]_{\id_a+f+\id_b=g} &   \un n \ar@{ >->}[l]_-j\ar[d]^f & \\
\un a+\un k+\un b  &   \un k \ar@{ >->}[l]^-i 
  }$$
  the diagram
  $$\xymatrix{
     X_{a+n+b}\drpullback \ar[r]^-{j\upperstar }\ar[d]_{g\lowershriek}
     & X_{n} \ar[d]^{f\lowershriek} \\
      X_{a+k+b}\ar[r]_-{i\upperstar } &X_{k}
  }$$
  is a pullback, since the upperstar functors are just projections.
\end{blanko}

\begin{blanko}{Functors out of $\nabla$.}
  In view of the previous propositions, we are interested in defining functors
  out of $\nabla$.  By its construction as a category of spans, this amounts to
  defining a covariant functor on $\un\Delta$ and a contravariant functor on $\un
  \Delta_{\text{convex}}$ which agree on objects, and such that for every
  pullback along a convex map the Beck--Chevalley condition holds.  Better
  still, we can describe these as certain fibrations over $\un\Delta$, called
  sesquicartesian fibrations, introduced in the next subsection.  The fact that
  $\nabla$ is not the whole bicategory of spans, and that the fibrations are
  similarly restricted, are just a minor distracting point.  The essential
  points of the equivalence are well-understood and documented in the
  literature, as we proceed to explain. 
%
\end{blanko}

The following technical result seems to be due to
Hermida~\cite{Hermida:repr-mult}, with more detailed statement and proof given
by Dawson-Par\'e-Pronk~\cite{Dawson-Pare-Pronk:MR2116323}.
Our dependence on this result (which we don't quite know how to prove in the
$\infty$-setting) means that the rest of this section should be interpreted 
only in $1$-groupoids and $1$-categories.

\begin{prop}
  Let $\D$ be a $1$-category with pullbacks, and let $\BB$ be a bicategory.
  The natural functor $\D \to 
  \Span(\D)$ 
  induces an equivalence of categories
  $$
  \Hom(\Span(\D), \BB) \simeq \operatorname{Sin}_{\operatorname{BC}}(\D,\BB) .
  $$
\end{prop}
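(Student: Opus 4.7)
The plan is to construct mutually inverse (pseudo-)functors between the two categories, which amounts to spelling out the universal property of $\Span(\D)$ as the free bicategory on $\D$ in which every morphism acquires a right adjoint subject to Beck--Chevalley compatibility along pullback squares. The restriction functor $\Hom(\Span(\D),\BB) \to \operatorname{Sin}_{\operatorname{BC}}(\D,\BB)$ is the easy direction: given a homomorphism $F:\Span(\D)\to\BB$, precompose with the canonical inclusion $\iota:\D\hookrightarrow\Span(\D)$ sending $f:X\to Y$ to the span $X\stackrel{=}{\leftarrow}X\stackrel{f}{\to}Y$. This automatically lands in the sinister BC data because already inside $\Span(\D)$ every such forward span $f_* = \iota(f)$ has a right adjoint $f^* = (Y\leftarrow X\stackrel{=}{\to}X)$, and any bicategory homomorphism preserves adjunctions. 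Likewise, every pullback square in $\D$ yields an equality of span composites in $\Span(\D)$ (this is literally how span composition is defined), hence $F$ turns it into the Beck--Chevalley isomorphism in $\BB$.

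For the inverse direction, given $G:\D\to\BB$ sending each $f$ to a morphism with a chosen right adjoint $G(f)^*$ and satisfying Beck--Chevalley for pullback squares, I would extend $G$ to a homomorphism $\overline G:\Span(\D)\to\BB$ by declaring
\[
\overline G\bigl(X\stackrel{p}{\leftarrow}M\stackrel{q}{\to}Y\bigr)\;:=\;G(q)\circ G(p)^*,
\]
on 1-cells, and using the chosen units/counits of the adjunctions to define $\overline G$ on 2-cells (isomorphisms of spans) and on the compositor constraint. Functoriality on composable spans says that if $X\leftarrow M\to Y$ and $Y\leftarrow N\to Z$ compose by the pullback $M\times_Y N$, then $\overline G$ of the composite span agrees with $\overline G(N)\circ\overline G(M)$; unwinding the definition, this is exactly the Beck--Chevalley isomorphism supplied by the hypothesis on $G$, whiskered by $G$ of the outer legs. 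The unit object and unitor constraints are handled by the trivial adjunctions on identities.

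Once both constructions are in place, it remains to check that they are mutually inverse up to canonical equivalence. In one direction, restricting $\overline G$ along $\iota$ recovers $G$ on the nose on objects and 1-cells, and the adjoint structure is preserved by construction. In the other direction, given $F:\Span(\D)\to\BB$, one compares $F$ with $\overline{F\circ\iota}$: every span factors canonically as $q\circ p^*$ with $p\in\D$ and $q\in\D$, and the comparison is supplied by the canonical $F$-image of this factorisation, which is invertible because $F$ preserves the adjunction $p_*\dashv p^*$.

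The main obstacle is the bicategorical coherence: one must verify that $\overline G$ respects the associator and unitor constraints of $\BB$, and that all the isomorphisms produced above assemble into natural transformations of pseudofunctors, not merely pointwise equivalences. This is the delicate bookkeeping carried out in detail by Hermida~\cite{Hermida:repr-mult} and by Dawson--Paré--Pronk~\cite{Dawson-Pare-Pronk:MR2116323}, on which we rely; we do not attempt to reproduce the coherence calculation here.
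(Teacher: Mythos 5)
Your outline is correct and is essentially the standard argument from the literature; the paper itself offers no proof of this proposition, deferring entirely to Hermida and Dawson--Par\'e--Pronk, which is exactly where you also send the reader for the coherence bookkeeping. Your sketch of the two mutually inverse constructions (restriction along $\iota$, and extension via $\overline G(p,q)=G(q)\circ G(p)^*$ with Beck--Chevalley supplying the compositor) is consistent with the cited sources and adds useful detail beyond what the paper records.
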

\noindent
Here on the left we have pseudo-functors and pseudo-natural transformations,
and on the right we have the category whose objects are sinister pseudofunctors
satisfying the Beck-Chevalley condition (BC), and whose morphisms are the
sinister pseudo-natural transformations.  A
pseudofunctor is {\em sinister} \cite{Dawson-Pare-Pronk:MR2116323}
if it sends all arrows to left adjoints, and
it is {\em BC} if the image of any comma square has invertible mate.  A {\em
sinister pseudo-natural transformation} (between sinister pseudo-functors)
is one whose naturality squares have invertible mate.

On the other hand, when $\BB=\kat{Cat}$ we have:
\begin{prop}
  There is a natural equivalence of categories
  $$
  \operatorname{Sin}_{\operatorname{BC}}(\D,\kat{Cat})\simeq 
  \operatorname{Bicart}_{\operatorname{BC}}(\D) .
  $$
\end{prop}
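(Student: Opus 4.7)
The plan is to deduce the result from the classical Grothendieck correspondence between pseudofunctors $\D\to\kat{Cat}$ and cartesian fibrations over $\D$. First I would check that under this correspondence a pseudofunctor $F$ is sinister --- meaning every transport functor $f\upperstar:F(B)\to F(A)$ admits a left adjoint $f\lowershriek$ --- precisely when the associated cartesian fibration $p:Y\to\D$ is also cocartesian, the cocartesian lift of an arrow $f:A\to B$ at an object $x\in F(A)$ being obtained from the unit of the adjunction $f\lowershriek\dashv f\upperstar$ applied at $x$. Thus sinister pseudofunctors correspond (up to equivalence) to bicartesian fibrations over $\D$.

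Next I would verify that the two Beck--Chevalley conditions match. Given a comma (or pullback) square in $\D$, sinisterness produces a mate of the induced commuting square of transport functors, involving the left adjoints $f\lowershriek$, and requiring this mate to be invertible is by definition the pseudofunctor BC condition. On the fibration side, BC says that the induced square of cartesian and cocartesian lifts commutes up to canonical isomorphism; standard mate calculus shows that the two formulations are equivalent.

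For morphisms, a sinister pseudo-natural transformation $F\Rightarrow G$ has naturality squares whose mates, formed using the left adjoints $f\lowershriek$, are invertible. Under the Grothendieck construction such a transformation corresponds to a functor $Y_F\to Y_G$ over $\D$ preserving cartesian arrows, and the sinisterness condition on mates translates precisely to this functor also preserving cocartesian arrows, i.e.~to being a morphism of bicartesian fibrations. The analogous correspondence for $2$-cells is formal and completes the equivalence of categories.

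The main obstacle is the book-keeping in the mate calculus: one has to check carefully that the pseudofunctor coherence isomorphisms, combined with the units and counits of the adjunctions $f\lowershriek\dashv f\upperstar$, assemble into the coherence data governing cocartesian lifts in a manner compatible with the cartesian structure, and dually for the sinister pseudo-natural transformations. Once this is settled, the rest follows formally from the Grothendieck construction.
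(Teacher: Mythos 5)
Your argument is correct and is essentially the content of the proof the paper relies on: the paper does not prove this proposition itself but cites Lurie's \emph{Higher Algebra}, Proposition~6.2.3.17, for the correspondence between bicartesian fibrations and sinister functors and sinister transformations, adding only the remark that the Beck--Chevalley condition is clearly preserved on both sides of that correspondence. Your sketch via the classical Grothendieck construction and the mate calculus supplies exactly those details in the $1$-/$2$-categorical setting, which is the setting to which the paper explicitly restricts in this section.
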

\noindent
Here on the right we have the category whose objects are bicartesian fibrations
over $\D$ satisfying the Beck-Chevalley condition,
and whose morphisms are functors over $\D$
preserving both cartesian and cocartesian arrows.

The proof of this result can be found (in the $\infty$-case) in 
Lurie~\cite{Lurie:HA}, Proposition~6.2.3.17.  Note however that
Lurie does not consider the Beck-Chevalley condition (although he uses this name for 
something similar).  More precisely he proves that bicartesian fibrations 
correspond to sinister functors and sinister transformations (called by him
right-adjointable squares).  It is clear though that the Beck-Chevalley
condition goes on top of his result.

\bigskip

In the case at hand, the base category is $\un\Delta$, but we only allow
pullbacks along convex maps.

\subsection{Sesquicartesian fibrations}


A functor $X \to S$ is called a {\em bicartesian fibration} (\cite{Lurie:HA}, 
6.2.3.1) when it is
simultaneously a cartesian and a cocartesian
fibration.
We are interested in bicartesian fibrations over $\un\Delta$, except that we
only require the cartesianness over $\un\Delta_{\text{convex}}$.  We call these
{\em sesquicartesian fibrations}.  


A sesquicartesian fibration $X\to\un\Delta$ is said to have the {\em iesq 
property} if for every 
identity-extension square 
      $$\xymatrix{
  \un a+\un n+\un 
     b \ar[d]_{\id_a+f+\id_b=g} &   \un n \ar@{ >->}[l]_-j\ar[d]^f & \\
\un a+\un k+\un b  &   \un k \ar@{ >->}[l]^-i 
  }$$
   the diagram
  $$\xymatrix{
     X_{a+n+b}\drpullback \ar[r]^-{j\upperstar }\ar[d]_{g\lowershriek}
     & X_{n} \ar[d]^{f\lowershriek} \\
      X_{a+k+b}\ar[r]_-{i\upperstar } &X_{k}
  }$$
  not only commutes (that's BC) but is furthermore  a pullback.

\begin{prop}\label{NablaSesq}
  There is an equivalence of categories
  $$
  \Hom(\nabla,\kat{Cat}) \simeq \operatorname{Sesq}_{BC}(\un\Delta) ,
  $$
  under which the iesq conditions correspond to each other.
\end{prop}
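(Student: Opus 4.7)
\begin{proof*}{Proof proposal.}
The plan is to assemble the equivalence from the two preceding propositions, but restricted to the setting where the backward legs of spans are convex. First I would observe that $\nabla$ can be described as a subcategory of the bicategory $\Span(\un\Delta)$ constructed using only convex maps on the backward leg; the composition is well defined precisely because (by the basechange lemma for convex maps) convex maps admit pullback along arbitrary maps in $\un\Delta$, with the result again convex. Thus $\nabla$ has just enough structure to support the span calculus, even though $\un\Delta$ does not have all pullbacks.

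Next, I would adapt the Hermida/Dawson--Par\'e--Pronk proposition to this partial setting. A functor $M:\nabla\to\kat{Cat}$ restricts on the ordinalic subcategory $\un\Delta\subset\nabla$ to a covariant functor, and on the backward-convex subcategory to a contravariant functor; the span-composition law in $\nabla$ forces the contravariant action on a convex $i$ to be right adjoint to the covariant action on $i$ (viewed now as an ordinalic map), and forces the Beck--Chevalley condition for every convex pullback square in $\un\Delta$. Conversely, given a covariant pseudofunctor $\un\Delta\to\kat{Cat}$ that sends convex maps to left adjoints and whose convex pullback squares are BC (that is, a \emph{sinister-on-convex-maps} BC pseudofunctor), one extends it to a functor on $\nabla$ by sending a span $\un k'\lat\un k\to\un n$ to the composite of the right adjoint with the covariant image. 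The argument in Dawson--Par\'e--Pronk applies verbatim, because the only pullbacks invoked are along the distinguished class (convex maps), which is exactly what is available here.

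Then I would apply the second proposition (the straightening/unstraightening result of Lurie, HA~6.2.3.17, plus the fact that BC corresponds to right-adjointability of naturality squares) to translate sinister-on-convex BC pseudofunctors into fibrations that are cocartesian over all of $\un\Delta$ and cartesian over $\un\Delta_{\mathrm{convex}}$, with the BC condition imposed on convex pullback squares; but this is precisely the definition of sesquicartesian fibration with the BC property. Morphisms on both sides (pseudo-natural transformations whose convex naturality squares are right-adjointable, versus functors over $\un\Delta$ preserving both cocartesian and convex-cartesian lifts) match under this correspondence.

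Finally, for the claim about the iesq condition: an identity-extension square in $\un\Delta$ is a pullback along a convex map, hence gives a commutative square in $\nabla$, and under the equivalence $M\mapsto (X\to\un\Delta)$ the induced square of $\infty$-groupoids is exactly \eqref{eq:BC}. The mate transformation of this square (with respect to the adjunctions $i\lowershriek\dashv i\upperstar$ and $j\lowershriek\dashv j\upperstar$) is invertible iff the square is a pullback in $\kat{Cat}$, iff the corresponding BC square of fibres in the sesquicartesian fibration has invertible mate --- which is the iesq condition on the fibration side. The main obstacle I anticipate is bookkeeping in step two: one must check that the span-calculus in $\nabla$ produces no composites that would require pullbacks outside the convex class, and that the coherence data for the span composition can be transported correctly into the sinister framework despite the absence of all pullbacks in $\un\Delta$. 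Once this is done, the matching of iesq conditions is formal from the mate calculus.
\end{proof*}
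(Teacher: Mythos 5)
Your overall strategy is exactly what the paper intends: its entire proof of this proposition is the single sentence ``This is just a variation of the previous result,'' i.e.\ one reruns the Hermida/Dawson--Par\'e--Pronk correspondence $\Hom(\Span(\D),\BB)\simeq\operatorname{Sin}_{BC}(\D,\BB)$ and the (un)straightening of sinister BC pseudofunctors into bicartesian fibrations, with the contravariant functoriality restricted to the convex maps --- which is legitimate because convex maps admit basechange along arbitrary maps of $\un\Delta$, so the span calculus in $\nabla$ never demands a pullback outside the available class. Your first three paragraphs carry this out correctly and supply the bookkeeping the paper omits.

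However, your final paragraph misstates the iesq correspondence. You claim that invertibility of the mate of the square is equivalent to the square being a pullback in $\kat{Cat}$, and you then identify the iesq condition on the fibration side with invertibility of that mate. Both identifications are wrong: invertibility of the mate is precisely the Beck--Chevalley condition, which is already built into \emph{both} sides of the equivalence (it is the ``BC'' subscript), whereas the iesq condition is the strictly stronger requirement that the commuting square \eqref{eq:BC} be a pullback --- the paper says explicitly that the square ``not only commutes (that's BC) but is furthermore a pullback.'' Fortunately the correct argument is simpler than your mate calculus: under the equivalence, the square of fibres of the sesquicartesian fibration over an identity-extension square is \emph{literally the same} square of categories as the image of that square under the corresponding functor $\nabla\to\kat{Cat}$ (the objects are the fibres $X_{a+n+b}$, etc., and the functors are the cocartesian pushforwards and convex-cartesian pullbacks on one side, $g\lowershriek$ and $j\upperstar$ on the other). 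Hence ``being a pullback'' on one side is verbatim ``being a pullback'' on the other, and no mate argument is needed for this step.
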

This is just a variation of the previous result.

\bigskip

So in order to construct nabla spaces satisfying the iesq property, we can
construct sesquicartesian fibrations satisfying iesq, and then take maximal
sub-groupoid.

\bigskip

\begin{blanko}{Two-sided fibrations.}
  Classically (the notion is due to Street), 
  a {\em two-sided fibration} is 
%
%
%
%
  a span of functors
%
%
  $$
  \xymatrix{X \ar[d]_p\ar[r]^q & T \\
  S&
  }$$
 such that
 
 ---
 $p$ is a cocartesian fibration whose
 $p$-cocartesian arrows are precisely the $q$-vertical arrows, 
 
 ---
 $q$ is a cartesian fibration whose
  $q$-cartesian arrows are precisely the $p$-vertical arrows
  
  --- for $x\in X$, an arrow $f: px\to s$ in $S$ and $g:t \to qx$ in $T$, the canonical map 
  $f\lowershriek g\upperstar  x \to g\upperstar f \lowershriek x$ is an 
  isomorphism.
  
In the setting of $\infty$-categories, Lurie~\cite{Lurie:HA}, Section~2.4.7
(using the terminology `bifibration') characterise two-sided fibrations as
functors $X \to S \times T$ subject to a certain horn-filling condition,
which among other technical advantages makes it clear that the notion is
stable under base change $S' \times T' \to S \times T$.
The classical axioms are derived from the horn-filling condition.
%
%
%
\end{blanko}

\begin{blanko}{The category of arrows }
  $$
  \Ar(\CC) \stackrel{(\mathrm{codom}, \mathrm{dom})}\longrightarrow \CC\times\CC
  $$
  is a two-sided fibration.
  Assuming that $\CC$ has pullbacks, the codomain cocartesian fibration
  $$
  \Ar(\CC) \stackrel{\mathrm{codom}}\to \CC
  $$
  is a bicartesian fibration, and it satisfies BC.  
\end{blanko}


\begin{blanko}{Comma categories.}
  Given functors 
  $$\xymatrix{
  & B \ar[d]^G \\
  A \ar[r]_F & I
  }$$
  the {\em comma category} $A \comma B$
  is the category whose objects are triples $(a,b,\phi)$, where $a\in A$, $b\in 
  B$,
  and $\phi:Fa\to Gb$.  More formally it is defined as 
  the pullback two-sided fibration
  $$\xymatrix{
    A \comma B \drpullback  \ar[r]\ar[d] & \Ar(I) 
    \ar[d]^{(\mathrm{codom},\mathrm{dom})} \\
     B \times A \ar[r]_{G\times F} & I \times I .
  }$$
    Note that the factors come in the opposite order: $A\comma B \to B$ is the
  cocartesian fibration, and $A\comma B \to A$ the cartesian fibration.
\end{blanko}


\begin{lemma}
  Given a two-sided fibration $X \to S \times T$, and let
  $R \to T$ be any map.  Then the left-hand composite
  $$
  \xymatrix{X \times_T R \drpullback \ar[r] \ar[d] & R \ar[d]\\
  X \ar[d]_p\ar[r]^q & T \\
  S&
  }$$
  is a cocartesian fibration. 
\end{lemma}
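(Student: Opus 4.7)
The plan is to reduce the statement to the stability of two-sided fibrations under base change, a property that is manifest from Lurie's characterisation (\cite{Lurie:HA}, Section~2.4.7) of two-sided fibrations $X \to S \times T$ in terms of a horn-filling condition on the map to the product. Specifically, I would apply this to the map
$$ \id_S \times \phi \;:\; S \times R \longrightarrow S \times T, $$
where $\phi : R \to T$ is the given functor, obtaining that the base-changed functor $X \times_T R \to S \times R$ is again a two-sided fibration. Projection onto the first factor of a two-sided fibration is a cocartesian fibration by definition, and the composite $X \times_T R \to S \times R \to S$ is the one in the statement of the lemma.

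For readers preferring a hands-on argument, I would also sketch the following direct verification. Recall that in the two-sided fibration $X \to S \times T$, the $p$-cocartesian arrows coincide with the $q$-vertical arrows (those mapped to identities by $q$). Given $(x, r) \in X \times_T R$ (so that $q(x) = \phi(r)$) and an arrow $f : p(x) \to s$ in $S$, choose a $p$-cocartesian lift $\tilde f : x \to x'$ in $X$. Because $\tilde f$ is $q$-vertical, $q(x') = q(x) = \phi(r)$, so $(x', r) \in X \times_T R$, and $(\tilde f, \id_r)$ is a candidate lift of $f$ starting at $(x, r)$.

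The main thing to verify is that this candidate lift is cocartesian for the composite $X \times_T R \to S$. This follows from the universal property of the pullback combined with the cocartesianness of $\tilde f$: any morphism in $X \times_T R$ out of $(x, r)$ whose $S$-component factors through $f$ has its $X$-component factored uniquely through $\tilde f$ (by cocartesianness in $X$), and since the resulting connecting arrow in $X$ is again $q$-vertical, it is compatible with the identity in the $R$-component. The only subtlety worth flagging is that in the $\infty$-categorical setting, `unique factorisation' must be interpreted up to contractible choice; but this is automatic from the cocartesianness condition in $X$ and is preserved by pullback, so no real extra work is needed.
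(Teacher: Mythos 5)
Your first paragraph is exactly the paper's proof: the lemma is the base change of the two-sided fibration $X \to S\times T$ along $S\times R \to S\times T$, using the stability of two-sided fibrations under such base changes noted earlier in the section. The additional hands-on verification is a harmless (and correct) elaboration, but the core argument coincides with the one in the paper.
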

\begin{proof}
  It is the pullback two-sided fibration of $X \to S\times T$ along $S \times R \to S 
  \times T$.
\end{proof}

\begin{cor}
  In the situation of the previous lemma, 
  if $X \to S$ is furthermore a bicartesian fibration and if
  $R \to T$ is a cartesian 
  fibration, then the left-hand 
  composite is a bicartesian fibration.  
  If $X \to S$ satisfies BC, then
  so does the left-hand composite. 
\end{cor}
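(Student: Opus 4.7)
The plan is to verify the two remaining assertions directly, using the explicit description of the pullback $Y := X\times_T R$. Write $\bar p: Y \to S$ for the left-hand composite, $\bar q: Y \to R$ for the right-hand projection, and $\bar\pi: Y \to X$ for the top projection. The previous lemma already supplies the cocartesian structure on $\bar p$: its $\bar p$-cocartesian arrows are those whose $\bar q$-component is an identity of $R$, and whose $\bar\pi$-component is a $p$-cocartesian arrow of $X$ (necessarily $q$-vertical by the two-sided fibration axioms, so that the pullback condition defining $Y$ is preserved).

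First I would show that $\bar p$ is a cartesian fibration. Given $(x,r)\in Y$ with $q(x)=\pi(r)$, and an arrow $f: s\to p(x)$ in $S$, use that $X\to S$ is cartesian to pick a $p$-cartesian lift $\tilde f: x'\to x$; then $q(\tilde f): q(x')\to q(x)=\pi(r)$ is an arrow in $T$, and by the cartesianness of $R\to T$ it admits a $\pi$-cartesian lift $\rho: r'\to r$. Then $(x',r')\in Y$ and $(\tilde f,\rho): (x',r')\to(x,r)$ lies over $f$. I would verify cartesianness by the standard factorisation argument: a morphism $(y,s)\to(x,r)$ in $Y$ together with a factorisation of its $\bar p$-image through $f$ is filled in uniquely, component-wise, using $p$-cartesianness of $\tilde f$ in the $X$-coordinate and $\pi$-cartesianness of $\rho$ in the $R$-coordinate. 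The two fillers are compatible (i.e.\ assemble into an arrow of $Y$) because both sit over the same arrow $q(\tilde f)$ of $T$, by naturality.

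Next, for BC: a BC square over $\un\Delta$ (or the relevant subcategory) gives a natural transformation between compositions of push and pull functors on the fibres of $\bar p$, which must be shown invertible. Since $\bar p$-cocartesian arrows in $Y$ are, by construction, products of a $p$-cocartesian arrow in $X$ with an identity in $R$, and $\bar p$-cartesian arrows in $Y$ were just described as products of a $p$-cartesian arrow in $X$ with a $\pi$-cartesian arrow in $R$, the mate in $Y$ decomposes as the product (over $T$) of the corresponding mate in $X$ with a mate in $R$ built entirely from $\pi$-cartesian arrows along composites of identities. The first factor is invertible by the assumption that $X\to S$ satisfies BC; the second is an identity.

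The main obstacle I anticipate is bookkeeping around two-sided fibrations: one must check carefully that the lift $(\tilde f,\rho)$ is well-defined as an arrow of $Y$ (requiring $q(\tilde f)=\pi(\rho)$ and hence the compatibility $(x',r')\in Y$), and that both the cartesianness and the BC mate decompose componentwise. This uses repeatedly the characterising property of the two-sided fibration $X\to S\times T$ that $p$-cartesian (resp.\ cocartesian) arrows interact controllably with $q$, so that pulling or pushing in $X$ produces arrows of $T$ that can be lifted along $R\to T$ without ambiguity. Everything else is formal, and the statement then follows.
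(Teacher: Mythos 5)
The paper states this corollary without any proof: it is immediately followed by the remark that the result is not actually needed, and only the more special Lemma~\ref{sesquilemma} is proved (by observing that everything is obtained by applying $-\times_T R$). So there is no argument of the authors to compare yours against; judged on its own, your verification is essentially correct, and note that the whole section is explicitly restricted to the $1$-categorical setting, so your unique-filler arguments are legitimate here.

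Two remarks. First, the cartesian-fibration part can be obtained more economically: $X\times_T R \to X$ is a pullback of the cartesian fibration $R\to T$, hence cartesian, and the left-hand composite is then a composite of cartesian fibrations $X\times_T R \to X \to S$, hence cartesian; the description of its cartesian arrows as pairs (a $p$-cartesian arrow of $X$, a $\pi$-cartesian arrow of $R$ over its $q$-image) falls out of the standard description of cartesian arrows for a composite, which is exactly what your component-wise filler argument reproves. Second, your BC step is right in substance but loosely phrased: the $R$-components of the two sides of the mate are \emph{not} cartesian lifts along identities of $T$ --- they are cartesian lifts of $r$ along $q(\tilde j\ \text{at}\ x)$ and $q(\tilde i\ \text{at}\ g_!x)$ respectively. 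The point is rather that cocartesian pushforward in $X$ is $q$-vertical, so once the mate in $X$ is known to be invertible these two arrows of $T$ are canonically identified, and the two pullbacks of $r$ agree by uniqueness of cartesian lifts. Equivalently, and closer to the paper's treatment of the special case, the fibre of the composite over $s$ is $X_s\times_T R$ and both push and pull functors are induced by those of $X$ together with base change in $R$, so the mate for $X\times_T R$ is carried by a functor applied to the mate for $X$. With that rephrasing the argument is complete.
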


%

We don't actually need this result, but rather the following more special case.
\begin{lemma}\label{sesquilemma}
  If $X \to \un\Delta \times T$ is a two-sided fibration such that
  $X \to \un\Delta$ is a sesquicartesian fibration, then for any cartesian 
  fibration $R \to T$, the left-hand composite in the diagram
    $$
  \xymatrix{X \times_T R \drpullback \ar[r] \ar[d] & R \ar[d]\\
  X \ar[d]\ar[r] & T \\
  \un\Delta&
  }$$
  is a sesquicartesian fibration. 
Furthermore, if $X \to \un\Delta$ is iesq,
then so is the left-hand composite.
\end{lemma}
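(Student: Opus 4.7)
The strategy is to treat the sesquicartesian structure on $Y := X \times_T R$ and the iesq property separately, reducing in each case to the given structure on $X$ and to the cartesian fibration $R \to T$.

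For the sesquicartesian structure, I reuse the preceding lemma and corollary with $S = \un\Delta$. The preceding lemma exhibits $Y$ as a pullback two-sided fibration over $\un\Delta \times R$; in particular $Y \to \un\Delta$ is a cocartesian fibration, with cocartesian lifts of the form $(\tilde f, \mathrm{id}_r)$, where $\tilde f : x \to f_! x$ is any $p$-cocartesian lift in $X$ (automatically $q$-vertical, so that $(f_! x, r)$ remains in $Y$). For cartesianness over convex maps, I factor $Y \to \un\Delta$ as $Y \to X \to \un\Delta$. The first map is the pullback of the cartesian fibration $\pi : R \to T$ along $q : X \to T$ and is therefore itself cartesian; the second is cartesian over convex maps by hypothesis, so the composite is cartesian over convex maps. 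Explicitly, the cartesian lift of a convex $i$ at $(x, r)$ is $(\tilde i, \tilde\phi)$, where $\tilde i$ is an $X$-cartesian lift with $T$-image $\phi$, and $\tilde\phi$ is a $\pi$-cartesian lift of $r$ along $\phi$.

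For the iesq property, fix an identity-extension square
\[\xymatrix{
\un a + \un n + \un b \ar[d]_-g & \un n \ar@{ >->}[l]_-j \ar[d]^-f \\
\un a + \un k + \un b & \un k \ar@{ >->}[l]^-i
}\]
in $\un\Delta$, whose image in $X$ is a pullback by hypothesis. The corresponding square in $Y$ has fibres $Y_m = X_m \times_T R$; the cocartesian face map $g_!$ acts as the identity on the $R$-coordinate, while the cartesian face map $i^*$ combines an $X$-cartesian lift with an $R$-cartesian lift along its $T$-image. Given matching data $(y, s) \in Y_n$ and $(z, u) \in Y_{a+k+b}$ with $f_!(y,s) \simeq i^*(z,u)$ in $Y_k$, unpacking gives $f_! y \simeq i^* z$ in $X_k$ and $s \simeq \psi^* u$ in $R$, where $\psi$ is the $T$-image of the $X$-cartesian lift of $i$ at $z$. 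By iesq for $X$, there is a unique $x \in X_{a+n+b}$ with $j^* x \simeq y$ and $g_! x \simeq z$; setting $r := u$ yields a point $(x, r) \in Y_{a+n+b}$, well-defined since $qx = q(g_! x) = qz = \pi u$ because $g_!$-lifts are $q$-vertical.

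The anticipated main obstacle is the verification that $(x, u)$ maps to $(y, s)$ under $j^*$, which reduces to the equality $\phi = \psi$ in $T$, where $\phi$ is the $T$-image of the $X$-cartesian lift of $j$ at $x$. This equality holds because the iesq-square in $\un\Delta$ lifts to a commutative square in $X$ (with cartesian arrows $\tilde j$, $\tilde i$ and cocartesian arrows $\tilde g$, $\tilde f$), whose projection to $T$ has both vertical sides equal to identities (the cocartesian arrows being $q$-vertical), forcing $\phi = \psi$. Once this is in hand, the iesq property of $Y$ follows from that of $X$ together with the cartesian lifting in $R$: uniqueness of $(x, r)$ is immediate, since the $g_!$-condition forces $r = u$ and $g_!x = z$, which combined with $j^*x = y$ forces $x$ by iesq for $X$.
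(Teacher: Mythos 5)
Your proof is correct, and its overall strategy -- deduce the sesquicartesian structure from the preceding lemma on two-sided fibrations plus composition of cartesian lifts, and reduce the iesq pullback for $X\times_T R$ to the iesq pullback for $X$ together with cartesian lifting in $R$ -- is the same as the paper's. The difference is one of completeness: the paper disposes of the iesq condition in a single line, asserting that the square for $X\times_T R$ is ``obtained by applying $-\times_T R$'' to the square for $X$ and is therefore again a pullback. Taken literally this is too quick, because the horizontal maps $i\upperstar$ and $j\upperstar$ do \emph{not} commute with the projections $X_m \to T$ (their cartesian lifts have nontrivial $T$-image), so the naturality squares comparing the $Y$-square to the $X$-square are not themselves pullbacks and one cannot simply base-change the whole diagram along $R\to T$. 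Your element chase supplies exactly the missing verification: the only nontrivial compatibility is that the $T$-images $\phi$ and $\psi$ of the two cartesian lifts agree, and you correctly derive this from the fact that the cocartesian (vertical) arrows are $q$-vertical, so that projecting the lifted square in $X$ to $T$ collapses its vertical sides to identities. With that in hand, existence and uniqueness of the lift $(x,u)$ follow from the iesq property of $X$ just as you say. So your argument is a legitimate, more careful rendering of the paper's intended proof rather than a different one; what it buys is an actual justification of the step the paper asserts.
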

\begin{proof}
  The only non-trivial statement is about the iesq condition:
  given the pullback square 
  $$\xymatrix{
     X_{a+n+b}\drpullback \ar[r]^-{j\upperstar }\ar[d]_{g\lowershriek}
     & X_{n} \ar[d]^{f\lowershriek} \\
      X_{a+k+b}\ar[r]_-{i\upperstar } &X_{k}}$$
      expressing that $X \to S$ has the iesq property, the corresponding square 
      for  $X\times_T R \to S$ is simply obtained applying
  $- \times_T R$ to it, hence is again a pullback.
\end{proof}

\subsection{Decomposition spaces from sesquicartesian fibrations}

\begin{blanko}{Restriction species and directed restriction species.}
  Recall that a restriction species is a right fibration $\R\to\I$, where $\I$
  is the category of finite sets and injections, and that a {\em
  directed restriction species} is a right fibration $\R\to\C$, where $\C$
  denotes the category of posets and convex poset inclusions.
\end{blanko}


\begin{prop}\label{IDelta}
  The projection $\I \comma \un\Delta \to \un\Delta$ is an iesq sesquicartesian
  fibration.
\end{prop}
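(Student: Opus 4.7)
The plan is to identify $\I \comma \un\Delta$ concretely as a subcategory of the arrow category of $\kat{FinSet}$, and then to use the general fact that codomain projections are bicartesian fibrations. An object of $\I\comma\un\Delta$ is a triple $(S,\un k,\phi)$ with $S$ a finite set, $\un k$ an ordinal, and $\phi:S\to \un k$ an arbitrary function of underlying sets; a morphism is a commutative square with an injection on the left (from $\I$) and a monotone map on the right (from $\un\Delta$). The projection $p:\I\comma\un\Delta\to\un\Delta$ sends $(S,\un k,\phi)$ to $\un k$. Its fibre over $\un k$ is equivalent to the groupoid $\I^{\un k}$ of $\un k$-indexed families of finite sets, where $(S,\phi)$ corresponds to $(\phi^{-1}(i))_{i\in\un k}$.

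First I would exhibit the cocartesian lifts. Given $(S,\un k,\phi)$ and a monotone map $f:\un k\to\un k'$, the square with identity $S\to S$ on top and $(\phi,f\phi)$ on the vertical sides is cocartesian by the usual universal property of composition in an arrow category. In fibre-theoretic terms, the pushforward $f_{!}:\I^{\un k}\to\I^{\un k'}$ is given by disjoint union along fibres, $(S_i)_{i\in\un k}\mapsto (\coprod_{f(i)=j}S_i)_{j\in\un k'}$.

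Next I would exhibit cartesian lifts over convex maps. Given $(S',\un k',\phi')$ and a convex map $i:\un k\rat\un k'$, the cartesian lift is the pullback square in $\kat{FinSet}$
$$\xymatrix{
S\drpullback\ar[r]\ar@{ >->}[d] & \un k\ar@{ >->}[d]^i \\
S'\ar[r]_{\phi'} & \un k'
}$$
with $S=(\phi')^{-1}(\un k)$; the left vertical is an injection since pullback of an injection is an injection, so this square lives in $\I\comma\un\Delta$, and it is cartesian by the universal property of pullbacks in $\kat{FinSet}$. The corresponding fibre operation $i\upperstar:\I^{\un k'}\to\I^{\un k}$ is just restriction of a family to the coordinates in the image of $i$. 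Together with the previous step this establishes that $p$ is a sesquicartesian fibration; the Beck--Chevalley condition is automatic from the uniqueness of pullbacks in $\kat{FinSet}$.

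Finally I would verify the iesq condition. For an identity-extension square
$$\xymatrix{
\un a+\un n+\un b\ar[d]_{g=\id+f+\id} & \un n\ar@{ >->}[l]_-j\ar[d]^f \\
\un a+\un k+\un b & \un k\ar@{ >->}[l]^-i
}$$
the induced square on fibres is
$$\xymatrix{
\I^a\times\I^n\times\I^b\ar[d]_{\id\times f_!\times\id}\ar[r]^-{\mathrm{pr}_2} & \I^n\ar[d]^{f_!} \\
\I^a\times\I^k\times\I^b\ar[r]_-{\mathrm{pr}_2} & \I^k
}$$
which is manifestly a pullback of groupoids since the horizontal maps are projections and the verticals act as the identity on the outer factors. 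The only real care needed is the identification of the fibre of $p$ with $\I^{\un k}$ (and the observation that under this identification convex-map pullbacks become projections), which is where I expect the bookkeeping to be slightly delicate; however, this is a standard reindexing and should present no genuine obstacle. Once this is in place the iesq property is immediate.
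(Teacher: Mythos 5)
Your proof is correct and follows essentially the same route as the paper's: cocartesian lifts from the comma-category structure, cartesian lifts over convex maps via pullback of injections in $\Set$, and the iesq property from the product decomposition of the fibres $\I_{/\un k}\simeq \I^{\un k}$ (which is exactly what the paper invokes as monoidal extensivity of $\I$, cf.\ \ref{extensive}). The only cosmetic slip is calling the fibre a \emph{groupoid}: it is the category of $\un k$-indexed families of finite sets and injections, though this does not affect the argument.
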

\begin{proof}
  The comma category is taken over $\Set$.  The objects of $\I\comma \un\Delta$
  are maps $S \to \un k$, and the arrows are squares in $\Set$
  $$\xymatrix{
     T \ar[r]\ar[d] & S \ar[d] \\
     \un n \ar[r] & \un k
  }$$
  with $T\to S$ injective and $\un n \to \un k$ monotone.
  Just from being a comma category projection, $\I \comma \un\Delta \to \un\Delta$ is a
  cocartesian fibration.  The cocartesian arrows are squares in $\Set$ of the form
  $$\xymatrix{
     S \ar[r]^=\ar[d] & S \ar[d] \\
     \un n \ar[r] & \un k .
  }$$
  Over $\un\Delta_{\text{convex}}$ it is also a
  cartesian fibration, as follows readily from the fact that the pullback lying
  over a convex map is injective: the cartesian arrows over a convex map are 
  squares in $\Set$ of the form
  $$\xymatrix{
     S'\drpullback \ar[r]\ar[d] & S \ar[d] \\
     \un k' \ar@{ >->}[r] & \un k
  }$$
  Beck-Chevalley is a consequence of the iesq property.
%
For the latter we need to check that given
      $$\xymatrix{
  \un a+\un n+\un 
     b \ar[d]_{\id_a+f+\id_b=g} &   \un n \ar@{ >->}[l]_-j\ar[d]^f & \\
\un a+\un k+\un b  &   \un k \ar@{ >->}[l]^-i 
  }$$
the resulting square
  $$\xymatrix{
     \I_{/a+n+b}\drpullback \ar[r]^-{j\upperstar }\ar[d]_{g\lowershriek}
     & \I_{/n} \ar[d]^{f\lowershriek} \\
      \I_{/a+k+b}\ar[r]_-{i\upperstar } &\I_{/k}
  }$$
  is a pullback.   But this is clear since $\I$ is a monoidal  extensive 
  category in the sense of \ref{extensive}.
\end{proof}

\begin{cor}\label{restr-decomp-formal}
  For any restriction species $\R\to\I$ the comma category projection
  $\R \comma \un\Delta \to \un\Delta$ is an iesq sesquicartesian fibration.
\end{cor}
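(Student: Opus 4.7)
The plan is to realise $\R \comma \un\Delta$ as a base change of $\I \comma \un\Delta$ along the restriction species projection, and then invoke Lemma~\ref{sesquilemma} together with Proposition~\ref{IDelta}.

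First I would exhibit a pullback square
$$\xymatrix{
\R \comma \un\Delta \drpullback \ar[r] \ar[d] & \I \comma \un\Delta \ar[d] \\
\R \ar[r]_U & \I
}$$
where $U:\R\to\I$ is the right fibration encoding the restriction species. This is an immediate consequence of the universal property of comma categories: objects of $\R\comma\un\Delta$ are triples $(X,\un k,\phi:U(X)\to\un k)$, and these are precisely pairs consisting of an object $X\in\R$ together with an object of $\I\comma\un\Delta$ whose $\I$-component equals $U(X)$. The morphisms match up similarly.

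Next I would invoke the two-sided fibration structure coming for free from the comma construction: $\I\comma\un\Delta \to \un\Delta \times \I$ is a two-sided fibration, whose cocartesian projection to $\un\Delta$ is, by Proposition~\ref{IDelta}, an iesq sesquicartesian fibration. Meanwhile $\R\to\I$, as a right fibration arising from a presheaf, is a cartesian fibration (in fact with discrete fibres).

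With these ingredients in hand, the situation matches precisely the hypotheses of Lemma~\ref{sesquilemma} with $T=\I$ and $R=\R$: the pullback $(\I\comma\un\Delta)\times_\I\R \simeq \R\comma\un\Delta$ therefore projects to $\un\Delta$ as an iesq sesquicartesian fibration, which is the statement of the corollary. The main obstacle, such as it is, is simply the bookkeeping of keeping the two directions of the two-sided fibration straight --- verifying that $\un\Delta$ plays the cocartesian role while $\I$ plays the cartesian role, so that pulling back along $\R\to\I$ (a cartesian fibration over the cartesian side) is exactly what Lemma~\ref{sesquilemma} is designed to handle.
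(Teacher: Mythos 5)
Your proposal is correct and is exactly the paper's argument: the paper's proof of this corollary is the one-line citation of Lemma~\ref{sesquilemma} (applied to the two-sided fibration $\I\comma\un\Delta\to\un\Delta\times\I$ of Proposition~\ref{IDelta} and the cartesian fibration $\R\to\I$), and your write-up simply supplies the identification $(\I\comma\un\Delta)\times_{\I}\R\simeq\R\comma\un\Delta$ and the direction-checking that the one-liner leaves implicit.
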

\begin{proof}
  This follows from Lemma~\ref{sesquilemma}.
\end{proof}

\begin{prop}
  The projection $\C \comma \un\Delta \to \un\Delta$ is an iesq sesquicartesian
  fibration.
\end{prop}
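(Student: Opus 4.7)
The plan is to imitate the proof of Proposition~\ref{IDelta} line by line, replacing `injection' and `subset' by `convex poset inclusion' and `convex subposet', and then verifying that convex subposets are closed under the pullback operations needed to make the argument run.

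First I would unpack the comma category: an object of $\C\comma\un\Delta$ is a monotone map $\phi:P\to\un k$ from a finite poset $P$ to an ordinal, and a morphism is a square
$$
\xymatrix{Q\ar[r]\ar[d]&P\ar[d]\\\un n\ar[r]&\un k}
$$
in which $Q\into P$ is a convex poset inclusion and $\un n\to\un k$ is monotone. Being a comma projection makes $\C\comma\un\Delta\to\un\Delta$ a cocartesian fibration, with cocartesian arrows the `vertical' squares in which $Q=P$; this step is purely formal. For cartesianness over $\un\Delta_{\text{convex}}$, given $\phi:P\to\un k$ and a convex inclusion $i:\un k'\rat\un k$, the set-theoretic pullback $P':=\phi^{-1}(\un k')$ is a convex subposet of $P$, since the preimage of a convex subset under a monotone map is always convex. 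This provides cartesian lifts and shows the two lift-classes interact compatibly.

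The substantive step is the iesq condition. Given an identity-extension square
$$
\xymatrix{\un a+\un n+\un b\ar[d]_{g=\id+f+\id}&\un n\ar@{ >->}[l]_-j\ar[d]^f\\\un a+\un k+\un b&\un k\ar@{ >->}[l]^-i}
$$
I need to show that the induced square on fibres
$$
\xymatrix{(\C\comma\un\Delta)_{a+n+b}\ar[r]^-{j\upperstar}\ar[d]_{g\lowershriek}&(\C\comma\un\Delta)_n\ar[d]^{f\lowershriek}\\(\C\comma\un\Delta)_{a+k+b}\ar[r]_-{i\upperstar}&(\C\comma\un\Delta)_k}
$$
is a pullback. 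The key geometric observation is that any monotone map $\phi:P\to\un a+\un k+\un b$ induces a convex decomposition $P=P_1\sqcup P_2\sqcup P_3$ with $P_1=\phi^{-1}(\un a)$ a downward-closed piece, $P_2=\phi^{-1}(\un k)$ a convex piece, and $P_3=\phi^{-1}(\un b)$ an upward-closed piece. Given compatible data $(\beta:R\to\un a+\un k+\un b)$ from the bottom-left together with a lift $(\alpha:Q\to\un n)$ of $\beta|_{R_2}$, I recover a unique element of the top-left by taking $P=R$ with the same underlying poset and replacing the middle component $\beta|_{R_2}:R_2\to\un k$ by $\alpha:R_2\to\un n$ (using $R_2\cong Q$). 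Monotonicity within each piece is inherited from $\beta$ or $\alpha$; monotonicity between pieces is automatic because in $\un a+\un n+\un b$ every element of $\un a$ is already below every element of $\un n$, which is below every element of $\un b$. Uniqueness is clear from the reconstruction.

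I expect no real obstacle: the only subtlety to watch is that convex subposets behave well under pullback along convex ordinal maps (used for cartesianness) and that preimages of convex subsets really are convex (used throughout); both are elementary. Beck--Chevalley will then follow from the iesq property exactly as in the proof of Proposition~\ref{IDelta}. In fact the proof can be packaged by remarking that $\C$, viewed via its slices over ordinals, enjoys the same `monoidal extensive'-type decomposition used for $\I$, with ordinal sum $\un a+\un n+\un b$ replacing disjoint union, and the convexity constraint ensuring the decomposition of the total poset into ideal/convex/filter pieces is functorial.
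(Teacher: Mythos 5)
Your proof is correct and follows essentially the same route as the paper's: cartesian lifts over convex maps via preimages (which are automatically convex), and the iesq condition by a direct check that reassembles the map on the three preimage pieces $T_a, T_k, T_b$, using crucially that the covariant functoriality leaves the total poset unchanged. The only caveat is your closing remark that $\C$ enjoys the same ``monoidal extensive''-type decomposition as $\I$ — the paper explicitly notes that $\C$ is \emph{not} extensive in that sense (the pieces of a poset over $\un a+\un k+\un b$ can have order relations between them, so $\C_{/a+k+b}\not\simeq\C_{/a}\times\C_{/k}\times\C_{/b}$), which is precisely why the direct check you carried out is needed instead.
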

\begin{proof}
  The comma category is taken over $\kat{Poset}$.  The objects of $\C\comma \un\Delta$
  are poset maps $S \to \un k$, and the arrows are squares in $\kat{Poset}$
  $$\xymatrix{
     T \ar[r]\ar[d] & S \ar[d] \\
     \un n \ar[r] & \un k
  }$$
  with $T\to S$ a convex poset inclusion and $\un n \to \un k$ a monotone 
  map.
  Just from being a comma category projection, $\C \comma \un\Delta \to \un\Delta$ is a
  cocartesian fibration.  The cocartesian arrows are squares in $\kat{Poset}$ of the form
  $$\xymatrix{
     S \ar[r]^=\ar[d] & S \ar[d] \\
     \un n \ar[r] & \un k
  }$$
  Over $\un\Delta_{\text{convex}}$ it is also a
  cartesian fibration, as follows readily from the fact that the pullback lying
  over a convex map is a convex poset inclusion: the cartesian arrows over a convex map are 
  squares in $\kat{Poset}$ of the form
  $$\xymatrix{
     S'\drpullback \ar@{ >->}[r]\ar[d] & S \ar[d] \\
     \un k' \ar@{ >->}[r] & \un k
  }$$
  Beck-Chevalley is obvious from the fact that the
  cartesian arrows are pullback squares.
Finally for the iesq property, here the argument is trickier than in the case 
of finite sets and injections.
We 
need to check
that given
      $$\xymatrix{
  \un a+\un n+\un 
     b \ar[d]_{\id_a+f+\id_b=g} &   \un n \ar@{ >->}[l]_-j\ar[d]^f & \\
\un a+\un k+\un b  &   \un k \ar@{ >->}[l]^-i 
  }$$
the resulting square
  $$\xymatrix{
     \C_{/a+n+b}\drpullback \ar[r]^-{j\upperstar }\ar[d]_{g\lowershriek}
     & \C_{/n} \ar[d]^{f\lowershriek} \\
      \C_{/a+k+b}\ar[r]_-{i\upperstar } &\C_{/k}
  }$$
  is a pullback.   This time it is not the case that $\C$ is extensive.
  Nevertheless, the iesq property is a direct check:
  an element in the pullback $\C_{/n} \times_{\C_{/k}} \C_{/k'}$ amounts 
  of a diagram
$$\xymatrix{
S \ar[r]
\ar[d] \drpullback & T \ar[dd] \\
\un n \ar[d]  & \\
\un k \ar@{ >->}[r] & \un k'
}$$
  Here the part $S\to \un n$ is the element in $\C_{/n}$, and
  $T \to a+k+b$ is the element in $\C_{/k'}$, and saying that they
  have the same image in $\C_{/k}$ is to say that we have the 
  pullback diagram.
  The claim is that given this diagram, there is a unique way to 
  complete it to
  $$\xymatrix{
S \ar[r]
\ar[d] \drpullback & T \ar[dd]|\hole \ar@{-->}[rd] &\\
\un n \ar@{ >->}[rr]\ar[d]  && \un n' \ar[ld] \\
\un k \ar@{ >->}[r] & \un k'
}$$
Namely, on the element level $T$ has three parts, namely the inverse 
images $T_a$, $T_k$ and 
$T_b$.  (We don't need to worry about the poset structure, since
we already know all of $T$.  The crucial thing is therefore
that the covariant functoriality does not change the total space!).
We now define $T\to n'=a+n+b$ as follows: we use $T_a \to a$ and
$T_b \to b$ on the outer parts.  On the middle part we know that
$T_k = S$, so here we just use the map $S\to n$. 
\end{proof}

\begin{cor}
  For any directed restriction species $\R\to\C$ the comma category projection
  $\R \comma \un\Delta \to \un\Delta$ is an iesq sesquicartesian fibration.
\end{cor}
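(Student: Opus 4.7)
The plan is to derive this corollary from the preceding proposition by invoking Lemma~\ref{sesquilemma}, in exact parallel with how Corollary~\ref{restr-decomp-formal} was obtained from Proposition~\ref{IDelta} in the ordinary restriction-species case. The only substantive difference is that the base of the comma construction is now $\C$ rather than $\I$, but this has already been absorbed into the preceding proposition.

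First, I would observe that the comma-category projection $\C \comma \un\Delta \to \C \times \un\Delta$ is a two-sided fibration: the cocartesian lifts over $\un\Delta$ are the squares constant on the $\C$-component (the underlying poset $S$ is left fixed), and the cartesian lifts over $\C$ are the pullback squares along convex inclusions $S' \rat S$. Together with the preceding proposition, which identifies $\C \comma \un\Delta \to \un\Delta$ as an iesq sesquicartesian fibration, this delivers exactly the hypotheses of Lemma~\ref{sesquilemma} with $X = \C\comma\un\Delta$ and $T = \C$.

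Next, I would note that a directed restriction species $\R \to \C$ is by definition a right fibration, hence in particular a cartesian fibration, so it may play the role of $R \to T$ in Lemma~\ref{sesquilemma}. A direct unwinding of the comma-category definition then gives a natural identification
$$
\R \comma \un\Delta \;\simeq\; (\C \comma \un\Delta) \times_\C \R,
$$
since an object on either side is precisely the data of $X \in \R$, an ordinal $\un k \in \un\Delta$, and a monotone map $FX \to \un k$, where $F \colon \R \to \C$ sends an $R$-structure to its underlying poset. Morphisms on either side match equally readily.

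Applying Lemma~\ref{sesquilemma} to this setup then yields that the composite $\R \comma \un\Delta \to \C \comma \un\Delta \to \un\Delta$ is an iesq sesquicartesian fibration, which is the desired conclusion. I do not anticipate any genuine obstacle here: all the substantive work was carried out in the preceding proposition, where the extensivity argument used in the undirected case had to be replaced by an explicit fibrewise check of the iesq pullback square. Once that proposition is in hand, passing from $\C \comma \un\Delta$ to $\R \comma \un\Delta$ is pure formal base change along a right fibration, and both the sesquicartesian and iesq properties are inherited automatically from Lemma~\ref{sesquilemma}.
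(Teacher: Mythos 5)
Your proposal is correct and matches the paper's own argument, which likewise deduces the corollary directly from Lemma~\ref{sesquilemma} by pulling back the iesq sesquicartesian fibration $\C\comma\un\Delta\to\un\Delta$ along the cartesian (right) fibration $\R\to\C$, exactly as in the undirected case. The extra detail you supply — the identification $\R\comma\un\Delta\simeq(\C\comma\un\Delta)\times_\C\R$ — is left implicit in the paper but is the intended content.
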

\begin{proof}
  This follows from Lemma~\ref{sesquilemma}.
\end{proof}

%

Now, by Propositions~\ref{NablaDecomp} and~\ref{NablaSesq}, iesq sesquicartesian fibrations over $\un\Delta$ define decomposition spaces. The previous two corollaries therefore imply:

\begin{cor}
  Restriction species and directed restriction species define decomposition 
  spaces.
\end{cor}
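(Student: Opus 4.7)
The plan is to assemble the corollary directly from the machinery built up immediately before it, so the proof is essentially a short chain of citations. First, for a restriction species $\R \to \I$ we have already established (in the corollary following Proposition~\ref{IDelta}) that the comma projection $\R \comma \un\Delta \to \un\Delta$ is an iesq sesquicartesian fibration; and analogously for a directed restriction species $\R \to \C$ we have the same conclusion from the paired proposition and corollary in the convex case. So the starting point of the proof is already in place: both kinds of restriction species produce iesq sesquicartesian fibrations over $\un\Delta$.

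Next, I would invoke Proposition~\ref{NablaSesq}: iesq sesquicartesian fibrations over $\un\Delta$ correspond to functors $\nabla \to \kat{Cat}$ which send identity-extension squares to pullbacks. Applied to $\R \comma \un\Delta$, this gives a functor $M : \nabla \to \kat{Cat}$ with the iesq property. To land inside $\Grpd$ (as required to feed into Proposition~\ref{NablaDecomp}) I would postcompose with the maximal-sub-groupoid functor $\kat{Cat} \to \Grpd$; since this right adjoint preserves pullbacks, the resulting functor $\nabla \to \Grpd$ still sends identity-extension squares to pullbacks. Concretely, the value on $\un k$ is the groupoid $\mathbf{R}_k$ of $R$-structures equipped with an ordered (monotone) partition of the underlying set (resp.\ poset) into $k$ parts, with generic maps acting by postcomposition with maps in $\un\Delta$ and backward-convex maps acting by restriction.

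Finally I would apply Proposition~\ref{NablaDecomp}: precomposing this iesq-preserving functor $\nabla \to \Grpd$ with the canonical functor $\Delta\op \to \nabla$ of Corollary~\ref{DeltaNabla} yields a simplicial groupoid which is a decomposition space. For the numerical match, one checks that the resulting comultiplication on $\Grpd_{/\mathbf{R}_1}$, given by the span $\mathbf{R}_1 \leftarrow \mathbf{R}_2 \to \mathbf{R}_1 \times \mathbf{R}_1$, sends an $R$-structure $X$ on $S$ to $\sum_{A+B=S} X|A \otimes X|B$, which is the Schmitt coalgebra formula (resp.\ the comultiplication \eqref{eq:comultRS} in the directed case). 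Functoriality is clear: a morphism of (directed) restriction species induces a morphism of right fibrations, and hence by the constructions above a natural transformation which is cartesian on arrows of $\un\Delta \subset \nabla$, so by the remark following Proposition~\ref{NablaDecomp} it induces a cULF functor.

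The only non-routine step is the verification of the iesq property for the two base cases $\I \comma \un\Delta \to \un\Delta$ and $\C \comma \un\Delta \to \un\Delta$, which is exactly what was done in Proposition~\ref{IDelta} and its analogue for $\C$; the passage from there to a general (directed) restriction species is then free, thanks to Lemma~\ref{sesquilemma}, which shows that the iesq property is stable under pullback along a cartesian fibration $\R \to \I$ (resp.\ $\R \to \C$). Thus the anticipated main obstacle---transporting the iesq property through the comma construction with a general restriction species on top---is already resolved by that stability lemma, and nothing remains except to string the statements together.
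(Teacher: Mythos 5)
Your proposal follows exactly the paper's own route: the paper deduces this corollary by combining the two preceding corollaries (that $\R\comma\un\Delta\to\un\Delta$ is an iesq sesquicartesian fibration in both the plain and directed cases, each obtained from the base case via Lemma~\ref{sesquilemma}) with Propositions~\ref{NablaSesq} and~\ref{NablaDecomp}, passing through maximal sub-groupoids to land in $\Grpd$. Your additional remarks on recovering the Schmitt comultiplication and on cULF functoriality are consistent with the theorems stated earlier in the section, so the argument is correct and essentially identical to the paper's.
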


\begin{blanko}{Towards decomposition categories.}
  An iesq sesquicartesian fibration defines actually a decomposition {\em category},
  not just a decomposition {\em space}.  In fact we started this section
  observing that we are generalising the notion of monoid, which in our 
  terminology includes monoidal groupoids.  But many of our
  examples were actually monoidal categories, not just monoidal groupoids.  It
  is therefore natural that the constructions meant to generalise these give
  actually simplicial diagrams in categories, not just in spaces or sets.  We
  leave for another occasion the study of decomposition categories.
\end{blanko}



\section{The decomposition space of \M intervals}
\label{sec:master}

\def\inputfile{complete-fish.tex}

Lawvere (in 1988, unpublished until Lawvere-Menni~\cite{LawvereMenniMR2720184})
observed that there is a coalgebra (in fact a Hopf algebra)
of isoclasses of \M intervals, which receives a canonical coalgebra homomorphism
from any incidence coalgebra of a \M category.  Furthermore, this 
Hopf algebra has \M inversion, and therefore \M inversion in all other incidence
algebras (of \M categories) are induced from this master inversion formula.

Here is the idea: a {\em \M interval} is a \M category (in the sense of Leroux)
having an initial and a terminal object (not necessarily distinct).  (It follows
that it is actually a finite category.)  An arrow $a:x\to y$ in a \M category
$\C$ determines (\cite{Lawvere:statecats}) a \M interval $I(a)$ (mimicking the
identification of arrows and intervals in a poset), namely the category of
factorisations of $a$: this category has an initial object $0$ given by the
factorisation $\id$-followed-by-$a$, and a terminal object $1$ given by the
factorisation $a$-followed-by-$\id$.  There is a
canonical conservative ULF functor $I(a)\to\C$, sending $0$ to $x$,
sending $1$ to $y$, and sending $0\to 1$ to $a$. 
The longest arrow $0\to 1$ in $I(a)$ has
the same decomposition structure as $a$ in $\C$, and hence
the comultiplication of $a$ can be computed inside $I(a)$.

Any collection of \M intervals closed under subintervals defines a coalgebra.  
It is an interesting integrability condition for such a collection to come from a
single \M category.  
The Lawvere--Menni coalgebra is simply the collection of {\em
all} isomorphism classes of \M intervals.

Now, the coalgebra of \M intervals cannot be the coalgebra of a single Segal
space, because such a Segal space $U$ would have to have $U_1$ the space
of all \M intervals, and $U_2$ the space of all subdivided \M intervals.  But a
\M interval with a subdivision (i.e.~a `midpoint') contains more information than
the two parts of the subdivision: one from $0$ to the midpoint, and one from the
midpoint to $1$:

\begin{center}\begin{texdraw}

  \setunitscale 1
  \arrowheadtype t:V
  \arrowheadsize l:5 w:4
  \move (0 0) 
  \bsegment
  \move (0 0) \clvec (12 10)(28 10)(40 0) 
  \move (0 0) \clvec (12 -10)(28 -10)(40 0)
  \move (0 0) \onedot
  \move (20 0) \onedot
  \move (40 0) \onedot
  \esegment

  \move (70 0)\htext{$\neq$}
  \move (80 0)
  \bsegment
  \move (20 0)
  \clvec (26 7)(34 7)(40 0)
  \move (20 0)
  \clvec (26 -7)(34 -7)(40 0)
  \move (40 0)
  \clvec (46 7)(54 7)(60 0)
  \move (40 0)
  \clvec (46 -7)(54 -7)(60 0)
  \move (20 0) \onedot
  \move (40 0) \onedot
  \move (60 0) \onedot
  \move(60 12)
  \move(60 -12)
  \esegment
\end{texdraw}\end{center}
This is to say that the Segal condition is not satisfied: we have
\begin{eqnarray*}
  U_2 & \neq & U_1 \times_{U_0} U_1 .
\end{eqnarray*}

We shall prove that the simplicial space of all intervals and their subdivisions
{\em is} a decomposition space, as suggested by this figure:

\begin{center}\begin{texdraw}

  \setunitscale 0.9
  \arrowheadtype t:V
  \arrowheadsize l:5 w:4
  \move (0 90)
  \move (0 75) 
  \bsegment
  \move (0 0) \clvec (20 16)(40 16)(60 0)
  \move (0 0) \clvec (20 -16)(40 -16)(60 0)
  \move (0 0) \onedot
  \move (20 0) \onedot
  \move (40 0) \onedot
  \move (60 0) \onedot
  \esegment
  
  \move (80 -3) \rlvec (0 6)
  \move (80 0) \ravec (40 0)
  \move (80 72) \rlvec (0 6)
  \move (80 75) \ravec (40 0)
  \move (27 55) \rlvec (6 0)
  \move (30 55) \ravec (0 -35)
  \move (167 58) \rlvec (6 0)
  \move (170 58) \ravec (0 -40)
  
  \move ( 60 55) \rlvec (5 0) \rlvec (0 5)
  \move (0 0)
  \bsegment
  \move (0 0) \clvec (20 16)(40 16)(60 0)
  \move (0 0) \clvec (20 -16)(40 -16)(60 0)
  \move (0 0) \onedot
  \move (40 0) \onedot
  \move (60 0) \onedot
  \esegment
  
  \move (140 75) 
  \bsegment
  \move (0 0) \clvec (12 10)(28 10)(40 0) \clvec (46 7)(54 7)(60 0)
  \move (0 0) \clvec (12 -10)(28 -10)(40 0) \clvec (46 -7)(54 -7)(60 0)
  \move (0 0) \onedot
  \move (20 0) \onedot
  \move (40 0) \onedot
  \move (60 0) \onedot
  \esegment

    \move (140 0) 
  \bsegment
  \move (0 0) \clvec (12 10)(28 10)(40 0) \clvec (46 7)(54 7)(60 0)
  \move (0 0) \clvec (12 -10)(28 -10)(40 0) \clvec (46 -7)(54 -7)(60 0)
  \move (0 0) \onedot
  \move (40 0) \onedot
  \move (60 0) \onedot
  \esegment\move (0 -12)
\end{texdraw}\end{center}
meant to indicate that this diagram is a pullback:
$$\xymatrix @C=44pt @R=30pt{
   U_3 \drpullback \ar[r]^-{(d_3,d_0d_0)}\ar[d]_{d_1} & U_2\times_{U_0} U_1 \ar[d]^{d_1 \times 
   \id} \\
   U_2 \ar[r]_-{(d_2,d_0)} & U_1 \times_{U_0} U_1
}$$
which in turn is one of the conditions involved in the decomposition-space 
axiom. 

While the ideas outlined have a clear intuitive content, a considerable amount of
machinery is needed actually to construct the universal decomposition space, and to get
sufficient hold of its structural properties to prove the desired results about
it.  The main technicalities concern factorisation systems.  We start with a
subsection on general theory about factorisation systems, some results of which
are already available in Lurie's book~\cite{Lurie:HTT}.

We then develop the theory of intervals, and construct the decomposition space
of all intervals.  We do it first without finiteness conditions, which we impose
at the end.

\subsection{Factorisation systems and cartesian fibrations}
\label{sec:fact}

For background to this subsection, see Lurie~\cite{Lurie:HTT},  \S~5.2.8.

\begin{blanko}{Factorisation systems.}
  A {\em factorisation system} on an $\infty$-category $\DD$ consists of two classes $E$ and $F$
  of maps, that we shall depict as $\onto$ and $\rat$, such that
  \begin{enumerate}
    \item The classes $E$ and $F$ are closed under equivalences.
  
    \item The classes $E$ and $F$ are orthogonal, $E\bot F$.  That is, given $e\in E$ and $f\in F$, for every solid square
    $$\xymatrix{
       \cdot \ar[r]\ar@{->>}[d]_e & \cdot \ar@{ >->}[d]^f \\
       \cdot \ar@{-->}[ru]\ar[r] & \cdot
    }$$
    the space of fillers is contractible.

    \item Every map $h$ admits a factorisation
  $$
  \xymatrix@R-1em@C+1em{\cdot\ar[rr]^h\ar@{->>}[rd]_e&&\cdot\\&\cdot\ar@{ >->}[ru]_f}
  $$
  with $e\in E$ and $f\in F$.
  \end{enumerate}

  (Note that in \cite[Definition 5.2.8.8]{Lurie:HTT},
  the first condition is given as `stability under formation of retracts'.  In 
  fact this stability follows from the three conditions above.  Indeed, suppose 
  $h \bot F$;  
  factor $h = f \circ e$ as above.  Since $h\bot f$, there is a diagonal filler in
  $$\xymatrix{
     \cdot \ar@{->>}[r]^e\ar[d]_h & \cdot \ar@{ >->}[d]^f \\
     \cdot \ar@{-->}[ru]^-d \ar[r]_{\id} & \cdot
  }$$
  Now $d$ belongs to ${}^\bot F$ since $e$ and $h$ do, and $d$ belongs to
  $E^\bot$ since $f$ and $\id$ do.  Hence $d$ is an equivalence, and therefore
  $h \in E$, by equivalence stability of $E$.  Hence $E= {}^\bot F$,
  and is therefore closed under retracts.  Similarly for $F$.  It also follows 
  that the two classes are closed under composition.)
\end{blanko}

\begin{blanko}{Set-up.}\label{setup}
  In this subsection, fix an $\infty$-category $\DD$ with a factorisation system
  $(E,F)$ as above.  
Let $\Ar(\DD)= \Fun(\Delta[1],\DD)$, 
 whose $0$-simplices we depict vertically, then the domain projection $\Ar(\DD) 
  \to\DD$ (induced by the inclusion $\{0\} \into \Delta[1]$) is a cartesian 
  fibration; the cartesian arrows are the squares of the form
  $$\xymatrix{
  \cdot \ar[r] \ar[d] & \cdot \ar[d] \\
  \cdot 
  \ar[r]^\sim & \cdot}
  $$

  Let $\Ar^E(\DD) \subset \Ar(\DD)$ denote the full subcategory spanned by the
  arrows in the left-hand class $E$.
\end{blanko}

\begin{lemma}\label{ArED}
  The domain projection $\Ar^E(\DD) \to \DD$ is a cartesian fibration. The cartesian
  arrows in $\Ar^E(\DD)$ are given by squares of the form
  $$\xymatrix{
  \cdot \ar[r] \ar@{->>}[d] & \cdot \ar@{->>}[d] \\
  \cdot 
  \ar@{ >->}[r] & \cdot}$$
\end{lemma}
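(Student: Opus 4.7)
The plan is to produce, for every edge $\alpha:x\to x'$ of $\DD$ and every object $b:x'\onto y'$ of $\Ar^E(\DD)$ over $x'$, an explicit cartesian lift of $\alpha$ with target $b$, and then to check that the cartesian morphisms of $\Ar^E(\DD)\to\DD$ are exactly the squares of the displayed shape.

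First I would produce the candidate lift by applying the factorisation system to the composite $b\alpha:x\to y'$, obtaining $b\alpha\simeq f\circ e$ with $e\in E$ and $f\in F$. This assembles into a commutative square
$$\xymatrix{
x \ar[r]^\alpha \ar@{->>}[d]_e & x' \ar@{->>}[d]^b \\
y \ar@{ >->}[r]_f & y'
}$$
whose vertical arrows lie in $E$, hence a morphism $\phi:e\to b$ in $\Ar^E(\DD)$ projecting to $\alpha$ under the domain functor.

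Next I would verify that $\phi$ is cartesian, i.e.~that for every $c:z\onto w$ in $\Ar^E(\DD)$ the comparison map
$$\Map_{\Ar^E(\DD)}(c,e)\longrightarrow \Map_{\Ar^E(\DD)}(c,b)\times_{\Map_\DD(z,x')}\Map_\DD(z,x)$$
is an equivalence. A point of the target consists of a morphism $c\to b$ in $\Ar^E(\DD)$, i.e.~a square with top edge $\gamma:z\to x'$ and bottom edge $\delta:w\to y'$, together with a factorisation $\gamma\simeq\alpha\tilde\gamma$ through $x$. The identities $fe\tilde\gamma\simeq b\alpha\tilde\gamma\simeq b\gamma\simeq\delta c$ yield a commutative square
$$\xymatrix{
z \ar[r]^{e\tilde\gamma} \ar@{->>}[d]_c & y \ar@{ >->}[d]^f \\
w \ar[r]_\delta & y'
}$$
whose space of fillers $w\to y$ is contractible by orthogonality $E\bot F$. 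Such a filler is precisely the additional data needed to upgrade $\tilde\gamma$ to a morphism $c\to e$ in $\Ar^E(\DD)$, so the contractible space of fillers is the fibre of the comparison map. Hence the comparison is an equivalence and $\phi$ is cartesian.

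For the converse shape claim, cartesian lifts over a prescribed edge with prescribed target are unique up to contractible choice, so any cartesian morphism over $\alpha$ ending at $b$ is equivalent to the square constructed above; since $F$ and $E$ are closed under equivalences, its bottom arrow lies in $F$ and its vertical arrows in $E$ (the latter automatic in $\Ar^E(\DD)$). The main obstacle will be giving the verification of cartesianness a clean $\infty$-categorical formulation: rather than manipulating squares and factorisations elementwise, one has to identify the fibre of the comparison map with the $\infty$-groupoid of fillers of the orthogonality diagram, which requires organising the various pieces of data into a genuine homotopy pullback of mapping spaces. Once that bookkeeping is in place, the contractibility supplied by $E\bot F$ yields the desired equivalence and the rest is formal.
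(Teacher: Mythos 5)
Your proposal is correct and follows essentially the same route as the paper: the candidate lift is obtained by $(E,F)$-factoring the composite, and cartesianness is verified by identifying the fibre of the comparison map with the contractible space of fillers supplied by the orthogonality $E\bot F$ — which is exactly the paper's one-line reduction to a unique dashed filler. The only addition is your explicit treatment of the converse via uniqueness of cartesian lifts, which the paper leaves implicit.
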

\begin{proof}
  The essence of the argument is to
  provide uniquely the dashed arrow in
  $$\xymatrix{A \ar[rrd]\ar[rd]\ar@{->>}[d] &&\\
  S \ar[rrd]|\hole\ar@{-->}[rd] & \cdot \ar[r]\ar@{->>}[d] & \cdot \ar@{->>}[d] \\
  & X \ar@{ >->}[r] & Y
  }$$
  which amounts to filling
  $$\xymatrix{ A \ar[r]\ar@{->>}[d] & X \ar@{ >->}[d] \\
  S \ar@{-->}[ru] \ar[r] & Y ,}$$
  in turn uniquely fillable by  orthogonality $E \bot F$.
\end{proof}


\begin{lemma}\label{coreflection:w}
  The inclusion $\Ar^E(\DD)\to \Ar(\DD)$ admits a right adjoint $w$.
  This right adjoint $w :\Ar(\DD) \to \Ar^E(\DD)$ sends an arrow $a$ to its $E$-factor.
  In other words, if $a$ factors as $a= f\circ e$ then $w(a)=e$.  
\end{lemma}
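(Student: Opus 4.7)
The plan is to construct the right adjoint $w$ pointwise, by factoring each arrow $a\in\Ar(\DD)$ as $a=f\circ e$ and exhibiting its $E$-factor $e$ as a terminal object in the slice $\Ar^E(\DD)\times_{\Ar(\DD)}\Ar(\DD)_{/a}$. By the standard $\infty$-categorical adjoint functor criterion (Lurie, HTT 5.2.4.2), showing that each such slice has a terminal object is equivalent to producing a right adjoint to the inclusion, sending $a$ to the chosen terminal object.

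First I would fix an arrow $a:X\to Y$ and choose a factorisation $a=f\circ e$ with $e:X\onto Z$ in $E$ and $f:Z\rat Y$ in $F$; the space of such factorisations is contractible by the existence and orthogonality axioms. The commutative square
$$\xymatrix{X \ar[r]^= \ar@{->>}[d]_e & X \ar[d]^a \\ Z \ar@{ >->}[r]_f & Y}$$
is a morphism $\eta_a: e\to a$ in $\Ar(\DD)$, which I would take as the candidate terminal object of $\Ar^E(\DD)\times_{\Ar(\DD)}\Ar(\DD)_{/a}$.

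Next I would verify terminality. An object of this slice is an $E$-arrow $e':A\onto B$ together with a square $e'\to a$; a morphism from $(e',e'\to a)$ to $\eta_a$ is the data of a square $e'\to e$ in $\Ar^E(\DD)$ compatible with the projection to $a$. The top component is forced to be the given map $A\to X$, and the bottom component is a diagonal filler
$$\xymatrix{A \ar[r] \ar@{->>}[d]_{e'} & Z \ar@{ >->}[d]^f \\ B \ar@{-->}[ru] \ar[r] & Y}$$
of the outer square. By the orthogonality $E\perp F$, the space of such fillers is contractible, so the mapping space from $(e',e'\to a)$ to $\eta_a$ is contractible, which is exactly the terminality condition.

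The only delicate point is keeping careful track of the homotopy coherences: both the space of factorisations of $a$ and the space of diagonal fillers need to be contractible rather than merely nonempty, but this is precisely what the $\infty$-categorical orthogonality condition delivers. From the pointwise description it is then immediate that $w$ sends $a=f\circ e$ to its $E$-factor $e$, as claimed.
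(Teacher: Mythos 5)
Your proof is correct. For comparison: the paper's own ``proof'' is a one-line citation to the dual of Lurie's \cite[5.2.8.19]{Lurie:HTT}, which states that for a factorisation system the full subcategory of the arrow category spanned by the right class is reflective (dually: the left class is coreflective). What you have written is essentially a direct unwinding of that citation --- the standard argument that the inclusion of a full subcategory admits a right adjoint as soon as each slice $\Ar^E(\DD)\times_{\Ar(\DD)}\Ar(\DD)_{/a}$ has a terminal object, together with the verification that the square $\eta_a\colon e\to a$ (identity on top, the $F$-factor $f$ on the bottom) is terminal because the relevant mapping space is exactly a space of diagonal fillers for a lifting problem of an $E$-map against an $F$-map, hence contractible by orthogonality. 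The identification of the mapping space with the filler space is right: since the top of $\eta_a$ is an identity, postcomposition with $\eta_a$ fixes the top component of a candidate square $e'\to e$ and leaves only the bottom map $B\to Z$ and the two triangle homotopies, which is precisely the filler datum. The only cosmetic point is the reference: the adjoint-functor criterion you want is the one the paper itself invokes elsewhere, namely the dual of \cite[5.2.7.8]{Lurie:HTT} (colocalisations), rather than 5.2.4.2. This does not affect the argument.
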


\begin{proof}
  This is dual to \cite[5.2.8.19]{Lurie:HTT}.
\end{proof}

\begin{lemma}
  The right adjoint $w$ sends cartesian arrows 
in $\Ar(\DD)$ 
to cartesian arrows
in $\Ar^E(\DD)$.
\end{lemma}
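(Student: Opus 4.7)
The plan is to show that applying $w$ to a cartesian arrow of $\Ar(\DD)$ produces exactly the explicit form of a cartesian arrow in $\Ar^E(\DD)$ given in Lemma~\ref{ArED}, the crucial ingredient being uniqueness of $E$--$F$ factorisations.

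First I would recall the two descriptions of cartesian arrows. Up to equivalence, a cartesian morphism in $\Ar(\DD)$ over $\phi\colon A\to A'$ with target $b\colon A'\to B'$ can be taken to be the square with left edge $b\circ\phi$ and bottom edge $\id_{B'}$. Dually, by Lemma~\ref{ArED}, the cartesian lift in $\Ar^E(\DD)$ of $\phi$ with target $e_b\colon A'\twoheadrightarrow X_b$ is obtained by $E$--$F$ factoring the composite $e_b\circ\phi$ as $A\twoheadrightarrow X\rightarrowtail X_b$.

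Next I would compute $w$ on the cartesian square, using Lemma~\ref{coreflection:w}: if $b=f_b\circ e_b$ and $b\phi=f_a\circ e_a$ are the $E$--$F$ factorisations, then $w$ applied to the cartesian square is the commuting diagram
$$
\xymatrix{
A \ar[r]^\phi \ar@{->>}[d]_{e_a} & A' \ar@{->>}[d]^{e_b} \\
X_a \ar[r]_{\bar\phi} & X_b,
}
$$
where $\bar\phi$ is the induced map, obtained equivalently from functoriality of $w$ or from orthogonality $E\bot F$ applied to the outer rectangle with sides $e_b\phi$ (top) and $f_a$ (bottom).

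The heart of the argument is now to verify that $\bar\phi$ lies in $F$ and that the displayed square is the cartesian lift prescribed by Lemma~\ref{ArED}. Write $e_b\circ\phi=f'\circ e'$ as an $E$--$F$ factorisation. Then $b\phi=(f_b\circ f')\circ e'$ is also an $E$--$F$ factorisation of $b\phi$, since $F$ is closed under composition. By uniqueness of factorisations, $e'\simeq e_a$, the intermediate object is canonically equivalent to $X_a$, and $\bar\phi\simeq f'\in F$. But then the square is precisely the cartesian lift of $\phi$ at $e_b$ described in Lemma~\ref{ArED}, so $w$ sends our chosen cartesian arrow to a cartesian arrow. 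The main subtlety is the identification of the mediating map $\bar\phi$ produced by functoriality of $w$ with the $F$-factor $f'$ of $e_b\circ\phi$; but this is just the uniqueness of orthogonal fillers, so the argument is routine once the set-up is in place.
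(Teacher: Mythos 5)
Your proof is correct and follows essentially the same route as the paper: factor the vertical arrows of the cartesian square, identify the induced middle horizontal map, show it lies in $F$, and conclude via the explicit description of cartesian arrows in $\Ar^E(\DD)$ from Lemma~\ref{ArED}. The only (cosmetic) difference is that you derive $\bar\phi\in F$ from uniqueness of $(E,F)$-factorisations, whereas the paper invokes the right-cancellation closure property of the right class $F$ applied to $f_b\circ\bar\phi\simeq(\mathrm{equivalence})\circ f_a$ --- these are interchangeable consequences of orthogonality.
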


\begin{proof}
  This can be seen from the factorisation:
$$
\vcenter{\xymatrix{
  \cdot \ar[r] \ar[d] & \cdot \ar[d] \\
  \cdot 
  \ar[r]^\sim & \cdot}}\qquad\mapsto\qquad
\vcenter{
\xymatrix{  \cdot \ar@{->>}[d] \ar[r] & \cdot \ar@{->>}[d] \\
  \cdot \ar[r] \ar@{ >->}[d] & \cdot \ar@{ >->}[d] \\
  \cdot 
  \ar[r]^\sim & \cdot}
}
$$
The middle horizontal arrow is forced into $F$ by the closure properties of 
right classes.
\end{proof}

Let $\Fun'(\Lambda^1_2, \DD) = \Ar^E(\DD) \times_{\DD} \Ar^F(\DD)$ denote the 
$\infty$-category whose objects are pairs of composable arrows where the first
arrow is in $E$ and the second in $F$.  Let $\Fun'(\Delta[2],\DD)$ denote the
$\infty$-category of $2$-simplices in $\DD$ for which the two `short' edges are in
$E$ and $F$ respectively.  The projection map 
$\Fun'(\Delta[2],\DD) \to \Fun'(\Lambda^1_2,\DD)$ is always a trivial Kan 
fibration, just because $\DD$ is an $\infty$-category.
\begin{prop}\label{HTT:5.2.8.17}
  (\cite[5.2.8.17]{Lurie:HTT}.) The projection $\Fun'(\Delta[2],\DD) \to 
  \Fun(\Delta[1],\DD)$ induced by the long edge $d_1 : [1] \to [2]$
  is a trivial Kan fibration.
\end{prop}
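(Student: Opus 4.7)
The plan is to prove the statement by showing that the restriction map has contractible fibres over objects and is itself a (categorical) fibration, so that it is a trivial Kan fibration. The two ingredients are exactly the two axioms of a factorisation system: the factorisation axiom delivers nonemptiness of fibres, while the orthogonality $E \perp F$ upgrades `unique up to iso' to `parametrised by a contractible $\infty$-groupoid'. We already have the adjunction machinery of Lemma \ref{coreflection:w} available, and this is what will carry the higher coherences for us.

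First I would check that the restriction is an inner (in fact categorical) fibration. This follows since restriction along $\{0,2\} \hookrightarrow \Delta[2]$ is an inner fibration between functor categories, and the subcategory $\Fun'(\Delta[2],\DD) \subset \Fun(\Delta[2],\DD)$ is carved out by pointwise conditions (membership in $E$ or $F$) that are closed under equivalence, so the restriction remains a categorical fibration.

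Next I would construct an explicit section $s : \Fun(\Delta[1],\DD) \to \Fun'(\Delta[2],\DD)$ using the coreflection $w : \Ar(\DD) \to \Ar^E(\DD)$ from Lemma \ref{coreflection:w}. Given $h : X \to Y$, the coreflection produces the $E$-factor $e : X \to M$, and the counit of the adjunction is precisely a commutative square whose top edge is an identity and whose bottom edge $f : M \to Y$ is the $F$-factor of $h$; membership $f \in F$ is built into the construction. Assembling the resulting $2$-simplex gives $s$, and the composite of the restriction with $s$ is tautologically the identity on $\Fun(\Delta[1],\DD)$.

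It remains to show that the other composite is equivalent to the identity on $\Fun'(\Delta[2],\DD)$, which amounts to showing that any $2$-simplex $(e',f')$ in $\Fun'(\Delta[2],\DD)$ with long edge $h$ is canonically equivalent to $s(h) = (e,f)$. This is where orthogonality enters: the square comparing the two factorisations of $h$ admits a contractible space of diagonal fillers by $E \perp F$, which the coreflection of Lemma \ref{coreflection:w} packages into the required natural equivalence via the universal property of its counit. The main obstacle is making sure these identifications are genuinely coherent in the $\infty$-categorical sense; this is handled by invoking that Lemma \ref{coreflection:w} supplies an $\infty$-categorical adjunction, so all higher simplicial data are automatically transported along the unit and counit. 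Combined with categorical fibrancy, this proves the map is a trivial Kan fibration.
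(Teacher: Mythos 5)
The paper offers no proof of its own here: it imports the statement verbatim from \cite[5.2.8.17]{Lurie:HTT}, where it is established by a direct lifting argument (right lifting property against each $\partial\Delta[n]\into\Delta[n]$, the case $n=0$ being the existence of factorisations and the cases $n\geq 1$ repackaging orthogonality). Your route --- categorical fibration plus categorical equivalence, the equivalence exhibited via a section built from the coreflection $w$ --- is genuinely different. It is not circular within the paper's development, since Lemma \ref{coreflection:w} is imported from \cite[5.2.8.19]{Lurie:HTT}, whose proof uses only a chosen factorisation and the mapping-space characterisation of orthogonality, not 5.2.8.17. What your approach buys is that both factorisation-system axioms are consumed once in establishing the coreflection, after which the proposition should be formal; the cost is that the coherence of ``uniqueness of factorisations'' must then be controlled by hand, and that is where your write-up is thin.

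Two concrete repairs are needed. First, your opening criterion is false as stated: a categorical fibration between $\infty$-categories with contractible fibres over objects need not be a trivial Kan fibration ($\partial\Delta[1]\into\Delta[1]$ is a categorical fibration with point fibres). What you actually need, and what the body of your argument is aiming at, is that a categorical fibration which is a categorical equivalence is a trivial Kan fibration; the framing sentence should say that. Second, the step $s\circ p\simeq\id$ is asserted rather than proved; ``the adjunction automatically transports all higher simplicial data'' does not by itself produce the required \emph{natural} equivalence. The missing argument is: for a $2$-simplex with short edges $e'\in E$, $f'\in F$ and long edge $h$, the square $(\id,f')\colon e'\to h$ in $\Ar(\DD)$ \emph{itself} exhibits $e'$ as the $\Ar^E(\DD)$-coreflection of $h$ --- this is precisely where $f'\in F$ and orthogonality enter, dually to Lurie's proof of 5.2.8.19 --- so the comparison map $e'\to w(h)$ furnished by the universal property of the counit is an equivalence, naturally in the $2$-simplex. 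Even then you have only matched the two short edges; to upgrade this to an equivalence of objects of $\Fun'(\Delta[2],\DD)$ you should pass through the trivial fibration $\Fun'(\Delta[2],\DD)\to\Ar^E(\DD)\times_{\DD}\Ar^F(\DD)$ recorded in the paper immediately before the proposition. (A smaller point: $p\circ s$ is not ``tautologically'' the identity, since the long edge of the triangle extracted from the counit square is the diagonal of that square, which is equivalent to $h$ but not equal to it; this is harmless for the equivalence argument but should be stated correctly.) With these additions your argument goes through.
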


\begin{cor}\label{cor:ArD=EF}
  There is an equivalence of $\infty$-categories
  $$
  \Ar(\DD) \isopil \Ar^E(\DD) \times_{\DD} \Ar^F(\DD)
  $$
  given by $(E,F)$-factoring an arrow.
\end{cor}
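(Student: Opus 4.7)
\begin{proof*}{Proof sketch.}
The plan is to assemble the statement directly from Proposition~\ref{HTT:5.2.8.17} together with the observation preceding it. Note first that, tautologically,
\[
\Fun'(\Lambda^1_2,\DD) \;=\; \Ar^E(\DD) \times_{\DD} \Ar^F(\DD),
\]
where the fibre product is taken along the codomain projection out of $\Ar^E(\DD)$ and the domain projection out of $\Ar^F(\DD)$, since an object of $\Fun'(\Lambda^1_2,\DD)$ is precisely a composable pair $(e,f)$ with $e\in E$, $f\in F$.

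First I would consider the two projections
\[
\Ar^E(\DD) \times_{\DD} \Ar^F(\DD) \;=\; \Fun'(\Lambda^1_2,\DD) \;\stackrel{p}{\longleftarrow}\; \Fun'(\Delta[2],\DD) \;\stackrel{q}{\longrightarrow}\; \Ar(\DD),
\]
where $p$ restricts a $2$-simplex to its $\{01,12\}$-horn and $q$ restricts it to its long edge $d_1$. The map $p$ is a trivial Kan fibration simply because $\DD$ is an $\infty$-category (inner horn filling, applied levelwise), and the map $q$ is a trivial Kan fibration by Proposition~\ref{HTT:5.2.8.17}.

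Since both $p$ and $q$ are trivial Kan fibrations, they are in particular categorical equivalences. Choosing a section $s$ of $q$ (which exists because $q$ is a trivial fibration), the composite $p \circ s : \Ar(\DD) \to \Ar^E(\DD) \times_{\DD} \Ar^F(\DD)$ is an equivalence of $\infty$-categories. Unwinding the definitions, $s$ picks out, functorially, an $(E,F)$-factorisation of a given arrow, and $p$ then returns the pair consisting of the $E$-part and the $F$-part; this is precisely the functor described in the statement.

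The only point that requires any care is seeing that $p \circ s$ is well-defined up to contractible choice and does not depend on the choice of section: this follows from the fact that the space of sections of a trivial Kan fibration is contractible, so any two such composites are canonically equivalent. The main work is thus packaged in Proposition~\ref{HTT:5.2.8.17}; the corollary is a formal consequence.
\end{proof*}
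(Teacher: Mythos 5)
Your proof is correct and follows essentially the same route as the paper: identify $\Ar^E(\DD)\times_{\DD}\Ar^F(\DD)$ with $\Fun'(\Lambda^1_2,\DD)$, use that the restriction from $\Fun'(\Delta[2],\DD)$ to the horn is a trivial Kan fibration because $\DD$ is an $\infty$-category, invoke Proposition~\ref{HTT:5.2.8.17} for the restriction to the long edge, and compose a section of the latter with the former. The paper's own proof is exactly this, stated in one line.
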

\begin{proof}
  Pick a section  to the map in \ref{HTT:5.2.8.17} and compose with the 
  projection discussed just prior.
\end{proof}

%
%

\bigskip

Let $x$ be an object in $\DD$, and denote by $\DD^E_{x/}$ the $\infty$-category
of $E$-arrows out of $x$.  More formally it is given by the pullback 
$$\xymatrix{
  \DD^E_{x/} \drpullback \ar[r] \ar[d] & \Ar^E(\DD) \ar[d]^{\mathrm{dom}} \\
  {*} \ar[r]_{\name{x}} & \DD}$$
\begin{cor}\label{cor:DxF}
  We have a pullback
  $$\xymatrix{
  \DD_{x/} \drpullback \ar[r] \ar[d] & \Ar^F(\DD) \ar[d]^{\mathrm{dom}} \\
  \DD^E_{x/} \ar[r] & \DD}$$
 \end{cor}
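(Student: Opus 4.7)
\textbf{Proof plan for Corollary \ref{cor:DxF}.} The plan is to deduce the pullback square directly from Corollary~\ref{cor:ArD=EF}, by taking the fibre over the basepoint $x \in \DD$ along a suitably chosen domain projection.

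First I would unpack the equivalence $\Ar(\DD) \isopil \Ar^E(\DD) \times_{\DD} \Ar^F(\DD)$ of Corollary~\ref{cor:ArD=EF}, noting carefully that the fibre product is formed using the codomain map $\Ar^E(\DD) \to \DD$ on the left and the domain map $\Ar^F(\DD) \to \DD$ on the right (these are the two maps that record the intermediate object of an $(E,F)$-factorisation). Under this equivalence, the global domain map $\Ar(\DD) \to \DD$ corresponds to the domain map $\Ar^E(\DD) \to \DD$ applied to the $E$-factor.

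Next I would form the pullback of both sides of this equivalence along $\name x : * \to \DD$ using the domain map. On the left, by definition, this pullback is $\DD_{x/}$. On the right, using the pasting lemma for pullbacks (Lemma~\ref{pbk}), the pullback decomposes as a two-stage pullback: first pull $\Ar^E(\DD)$ back along $\name x$ using its domain, which by definition of $\DD^E_{x/}$ yields $\DD^E_{x/}$; this leaves the already-present pullback over $\DD$ with $\Ar^F(\DD)$ untouched, matching the codomain of the $E$-factor with the domain of the $F$-factor. The resulting square is exactly
$$\xymatrix{
  \DD^E_{x/} \times_{\DD} \Ar^F(\DD) \drpullback \ar[r] \ar[d] & \Ar^F(\DD) \ar[d]^{\mathrm{dom}} \\
  \DD^E_{x/} \ar[r]_-{\mathrm{cod}} & \DD.
}$$

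Combining with the identification of the total space with $\DD_{x/}$, the square displayed in the statement is a pullback. The only real obstacle is the bookkeeping of which leg (domain vs codomain) is used at each stage of the fibre-product decomposition; once that is made precise, the result is an immediate consequence of Corollary~\ref{cor:ArD=EF} together with the standard pullback pasting lemma, and no further analysis of the factorisation system is required.
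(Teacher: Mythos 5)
Your proposal is correct and follows essentially the same route as the paper: both arguments rest on Corollary~\ref{cor:ArD=EF} together with the pasting lemma for pullbacks, pulling the factorisation equivalence back along $\name{x}:\ast\to\DD$ via the domain projection and identifying the two stages with $\DD_{x/}$ and $\DD^E_{x/}$ respectively. The paper merely packages the same bookkeeping into a single $2\times 2$ diagram of pullback squares.
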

\begin{proof}
  In the diagram
  $$\xymatrix{
    \DD_{x/} \drpullback \ar[r] \ar[d] & \Ar(\DD) \drpullback \ar[r]\ar[d]_w & \Ar^F(\DD) \ar[d]^{\mathrm{dom}} \\
  \DD^E_{x/} \drpullback \ar[r] \ar[d]& \Ar^E(\DD) \ar[r]_{\mathrm{codom}} \ar[d]^{\mathrm{dom}} & \DD\\
  {*} \ar[r]_{\name{x}} & \DD
  }$$
  the right-hand square is a pullback by \ref{cor:ArD=EF};
  the bottom square and the left-hand rectangle are clearly pullbacks, hence the
  top-left square is a pullback, and hence the top rectangle is too.
\end{proof}

\begin{lemma}\label{lem:Dx'Dx}
  Let $e: x\to x'$ be an arrow in the class $E$. Then we have a pullback square
  $$\xymatrix{
  \DD_{x'/} \drpullback \ar[d]_w \ar[r]^{e\uppershriek} & \DD_{x/} \ar[d]_w \\
  \DD^E_{x'/}   \ar[r]_{e\uppershriek} & \DD^E_{x/}
  }$$
  Here $e\uppershriek$ means `precompose with $e$'.
\end{lemma}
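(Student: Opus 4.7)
The plan is to reduce the statement to the product decomposition of slice categories afforded by Corollary~\ref{cor:DxF}, and then invoke the basic pullback lemma~\ref{pbk}. Applying Corollary~\ref{cor:DxF} to both $x$ and $x'$ yields equivalences
$$
\DD_{x/} \;\simeq\; \DD^E_{x/} \times_{\DD} \Ar^F(\DD),
\qquad
\DD_{x'/} \;\simeq\; \DD^E_{x'/} \times_{\DD} \Ar^F(\DD),
$$
under which the coreflection $w$ becomes the first projection in each case.

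First I would verify that under these identifications the precomposition map $e\uppershriek : \DD_{x'/} \to \DD_{x/}$ acts as $e\uppershriek \times \id$, i.e.\ it precomposes on the $E$-factor and leaves the $F$-factor untouched. This is the one substantive point, but it is essentially immediate: given $a : x' \to y$ with $(E,F)$-factorisation $a = f \circ e_0$, composing with $e \in E$ yields $a \circ e = f \circ (e_0 \circ e)$, and $e_0 \circ e$ is in $E$ by the closure of the left class under composition. By uniqueness of the factorisation, this is the $(E,F)$-factorisation of $a\circ e$, so the $F$-factor is preserved and the $E$-factor is $e\uppershriek w(a)$.

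With this identification established, the square in the lemma becomes the left-hand square in the diagram
$$
\xymatrix{
\DD^E_{x'/}\times_{\DD}\Ar^F(\DD) \ar[r]^{e\uppershriek\times\id}\ar[d] &
\DD^E_{x/}\times_{\DD}\Ar^F(\DD) \ar[r]\ar[d] &
\Ar^F(\DD) \ar[d]^{\mathrm{dom}} \\
\DD^E_{x'/} \ar[r]_{e\uppershriek} & \DD^E_{x/} \ar[r] & \DD.
}
$$
The right-hand square is a pullback by definition of the fibre product; the outer rectangle is a pullback since it simply computes $\DD^E_{x'/}\times_{\DD}\Ar^F(\DD)$. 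Hence by Lemma~\ref{pbk} the left-hand square is a pullback, proving the claim.

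The only potentially delicate point is the behaviour of $e\uppershriek$ on the $F$-factor; everything else is formal manipulation of the pullback decomposition of Corollary~\ref{cor:DxF}.
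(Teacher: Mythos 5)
Your proof is correct and is essentially the paper's own argument: the paper also applies Corollary~\ref{cor:DxF} to both $x$ and $x'$, observes that precomposition with an $E$-map does not change the $F$-factor (so the horizontal composites in the two-square diagram are again ``take $F$-factor'' and ``codomain''), and concludes by the prism Lemma~\ref{pbk}. The only cosmetic difference is that you make the identification of the coslices with the fibre products explicit, whereas the paper works directly with $\DD_{x/}$ and $\DD_{x'/}$.
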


\begin{proof}
  In the diagram 
    $$\xymatrix{
  \DD_{x'/} \ar[d]_w \ar[r]^{e\uppershriek} & \DD_{x/} \drpullback \ar[d]_w  
  \ar[r] & \Ar^F(\DD) \ar[d]^{\mathrm{dom}}\\
  \DD^E_{x'/}   \ar[r]_{e\uppershriek} & \DD^E_{x/} \ar[r]_{\mathrm{codom}} & \DD
  }$$
  the functor $\DD_{x/}\to \Ar^F(\DD)$ is `taking $F$-factor'.
  Note that the horizontal composites are again `taking $F$-factor' and 
  codomain, respectively, since 
  precomposing with an $E$-map does not change the $F$-factor.
  Since both the right-hand  square and the rectangle are pullbacks by 
  \ref{cor:DxF},
  the left-hand square is a pullback too.
\end{proof}

\begin{blanko}{Restriction.}\label{setup-end}
  We shall need a slight variation of these results.  We continue the assumption
that $\DD$ is a $\infty$-category with a factorisation system $(E,F)$.
Given a full subcategory $\A \subset \DD$,  we denote by
$\A\comma \DD$ the `comma category of arrows in $\DD$ with domain in $\A$'.  More precisely
it is defined as the pullback 
$$\xymatrix{
\A\comma \DD \drpullback \ar[d]_{\mathrm{dom}}\ar[r]^{\mathrm{f.f}} & \Ar(\DD) \ar[d]^{\mathrm{dom}} \\
\A \ar[r]_{\mathrm{f.f}} & \DD}$$
(This is dual to Artin gluing (cf.~\cite{Gepner-Kock}).)
The map $\A\comma \DD \to \A$ is a cartesian fibration.
Similarly, let $\Ar^E(\DD)_{|\A}$ denote the comma category of $E$-arrows with domain 
in $\A$, defined as the pullback 
$$\xymatrix{
\Ar^E(\DD)_{|\A} \drpullback \ar[d]_{\mathrm{dom}}\ar[r]^{\mathrm{f.f}} & \Ar^E(\DD) \ar[d]^{\mathrm{dom}} \\
\A \ar[r]_{\mathrm{f.f}} & \DD}$$
Again $\Ar^E(\DD)_{|\A} \to \A$ is a cartesian fibration (where the cartesian arrows
are squares whose top part is in $\A$ and whose bottom horizontal arrow belongs
to the class $E$).  These two fibrations are just the restriction to $\A$ of
the fibrations $\Ar(\DD) \to \DD$ and $\Ar^E(\DD) \to \DD$. Since the coreflection
$\Ar(\DD) \to \Ar^E(\DD)$ is vertical for the domain fibrations, it restricts to
a coreflection $w:\A\comma \DD \to \Ar^E(\DD)_{|\A}$.

Just as in the unrestricted  situation (Corollary~\ref{cor:ArD=EF}),
we have a pullback square
$$
\xymatrix{\A\comma \DD \drpullback \ar[r] \ar[d]_w & \Ar^F(\DD) \ar[d] \\
\Ar^E(\DD)_{|\A} \ar[r] & \DD}
$$
saying that an arrow in $\DD$ factors like before, also if it starts in
an object in $\A$.
Corollary~\ref{cor:DxF} is the same in the restricted situation --- just assume
that $x$ is an object in $\A$.   Lemma~\ref{lem:Dx'Dx} is also the same, just
assume that $e:x'\to x$ is an $E$-arrow between $\A$-objects.
\end{blanko}

\bigskip

The following easy lemma expresses the general idea of extending a 
factorisation system.

\begin{lemma}\label{fact-ext}
  Given an adjunction $\xymatrix{L:\DD \ar@<+3pt>[r] & \ar@<+3pt>[l] \CC:R}$
  and given
  a factorisation system $(E,F)$ on $\DD$ with
  the properties

  --- $RL$ preserves the class $F$;

  --- $R\epsilon$ belongs to $F$;

  \noindent
  consider the full subcategory $\wtil \DD \subset \CC$ spanned by the image of
  $L$.  (This can be viewed as the Kleisli category of the monad $RL$.)  Then
  there is an induced factorisation system $(\wtil E, \wtil F)$ on $\wtil
  \DD\subset \CC$ with $\wtil E := L(E)$ (saturated by equivalences), and $\wtil
  F := R^{-1}F \cap \wtil \DD$.
\end{lemma}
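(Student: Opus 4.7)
The plan is to transport the factorisation problem on $\wtil\DD \subset \CC$ back to the factorisation system $(E,F)$ on $\DD$ via the adjunction $L \dashv R$, writing $(-)^\flat \colon \Map_\CC(Lx,c) \simeq \Map_\DD(x,Rc)$ and $(-)^\sharp$ for the mutually inverse transposition equivalences.

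First I would verify the orthogonality $\wtil E \perp \wtil F$. A lifting problem in $\wtil\DD$
\[
\xymatrix{
Lx \ar[r] \ar@{->>}[d]_{Le} & Lu \ar@{ >->}[d]^{f} \\
Ly \ar[r] & Lv
}
\]
with $e \in E$ and $f \in \wtil F$ (so $Rf \in F$) transposes under the adjunction to a square in $\DD$
\[
\xymatrix{
x \ar[r] \ar@{->>}[d]_{e} & RLu \ar@{ >->}[d]^{Rf} \\
y \ar[r] & RLv
}
\]
whose left edge lies in $E$ and whose right edge lies in $F$. Orthogonality of $(E,F)$ in $\DD$ provides a contractible space of fillers $y \to RLu$, and the adjunction equivalence of mapping spaces translates this into a contractible space of fillers $Ly \to Lu$ for the original square.

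Next I would construct factorisations. For $h \colon Lx \to Lv$ in $\wtil\DD$, $(E,F)$-factor the transpose in $\DD$ as $h^\flat = k \circ e$ with $e \colon x \twoheadrightarrow z$ in $E$ and $k \colon z \rightarrowtail RLv$ in $F$. Set $g := k^\sharp = \epsilon_{Lv} \circ L(k) \colon Lz \to Lv$. Then $Le \in L(E) \subset \wtil E$, and the adjunction triangle identity combined with naturality of $\eta$ gives $(g \circ Le)^\flat = Rg \circ \eta_z \circ e = g^\flat \circ e = k \circ e = h^\flat$, whence $h = g \circ Le$.

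The crux is verifying $g \in \wtil F$, i.e.\ that $Rg \in F$. Applying $R$ to $g = \epsilon_{Lv} \circ L(k)$ yields
\[
Rg \;=\; R\epsilon_{Lv} \circ RL(k).
\]
By the hypothesis that $RL$ preserves $F$, $RL(k) \in F$; by the hypothesis $R\epsilon \in F$, $R\epsilon_{Lv} \in F$; and $F$ is closed under composition in any factorisation system. Hence $Rg \in F$. Closure under equivalences is free: $\wtil E$ is saturated by definition, and $\wtil F$ is closed under equivalences because $R$ preserves equivalences and $F$ does so in $\DD$. The main obstacle is precisely this identification of $Rg$ as a composite of two $F$-maps --- this is where both hypotheses of the lemma are essential --- while everything else is routine adjunction yoga layered on top of the ambient factorisation system on $\DD$.
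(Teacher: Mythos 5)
Your proposal is correct and follows essentially the same route as the paper's proof: transpose the lifting problem across the adjunction to reduce orthogonality to $e \bot R\tilde f$ in $\DD$, and obtain factorisations by $(E,F)$-factoring the transpose $A \to RX$ and transposing back as $\epsilon \circ Lf \circ Le$, with $R(\epsilon \circ Lf) = R\epsilon \circ RLf \in F$ by the two hypotheses. You have merely written out in more detail the steps the paper leaves as "clearly" and "because of the two conditions imposed".
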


\begin{proof}
  It is clear that the classes $\wtil E$ and $\wtil F$ are closed under
  equivalences.  The two classes are orthogonal: given $Le\in \wtil E$ and
  $\tilde f \in \wtil F$ we have $Le \bot \tilde f$ in the full subcategory
  $\wtil\DD \subset\CC$ if and only if $e \bot R\tilde f$ in $\DD$, and the
  latter is true since $R\tilde f \in F$ by definition of $\wtil F$.  Finally,
  every map $g: LA \to X$ in $\wtil\DD$ admits an $(\wtil E, \wtil
  F)$-factorisation: indeed, it is transpose to a map $A \to RX$, which we
  simply $(E,F)$-factor in $\DD$,
  $$
  \xymatrix@R-1em@C+1em{A\ar[rr]\ar[rd]_e&&RX,\\&D\ar[ru]_f}
  $$
  and transpose back the factorisation (i.e.~apply $L$ and postcompose with the counit):
  $g$ is now the composite
  $$
  \xymatrix{ LA \ar[r]^{Le} & LD \ar[r]^{Lf} & LRX \ar[r]^\epsilon & X ,}
  $$
  where clearly $Le \in \wtil E$, and we also have $\epsilon \circ Lf \in \wtil F$
  because of the two conditions imposed.
\end{proof}


\begin{blanko}{Remarks.}\label{fact-decomp}
  By general theory (\ref{coreflection:w}),
  having the factorisation system $(\wtil E , \wtil F)$ 
  implies the existence of a right adjoint to the inclusion
  $$
  \Ar^{\wtil E}(\wtil\DD)  \longrightarrow \Ar(\wtil \DD).
  $$
  This right adjoint returns the $\wtil E$-factor of an arrow.

  Inspection of the proof of \ref{fact-ext} shows that we have the 
  same factorisation property 
  for other maps in $\CC$ than those between objects in $\Im L$, namely
  giving up the requirement that the codomain
  should belong to $\Im L$: it is enough that the domain belongs to $\Im L$:
  {\em every map in $\CC$ whose domain belongs to $\Im L$
  factors as a map in $\wtil E$ followed by a map in $\wtil F:=R^{-1}F$, and we
  still have $\wtil E \bot \wtil F$, without restriction on the codomain
  in the right-hand class.}  This result amounts to a coreflection:
\end{blanko}

\begin{theorem}\label{fact-theorem}
  In the situation of Lemma~\ref{fact-ext}, let $\wtil\DD \comma\CC \subset \Ar(\CC)$
  denote the {\em full} subcategory spanned by the maps with domain in $\Im L$.
  The inclusion functor
$$
\Ar^{\wtil E}(\wtil\DD) \into \wtil\DD\comma\CC
$$
has a right adjoint,
given by factoring any map with domain in $\Im L$
and returning the $\wtil E$-factor.
Furthermore, the right adjoint preserves cartesian arrows (for the domain 
projections).
\end{theorem}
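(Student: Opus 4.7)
The plan is to imitate the argument for Lemma~\ref{coreflection:w} (or Lurie~\cite{Lurie:HTT}, 5.2.8.19), but applied to the extended factorisation system of Remark~\ref{fact-decomp} --- in which $\wtil E \bot \wtil F$ holds without imposing any restriction on the codomain in the right class, and in which every map with domain in $\Im L$ admits an $(\wtil E,\wtil F)$-factorisation. First I construct the right adjoint on objects: given $g:LA\to X$ in $\wtil\DD\comma\CC$, factor $g = \phi_g\circ w(g)$ with $w(g):LA\twoheadrightarrow LD_g$ in $\wtil E$ and $\phi_g:LD_g\rightarrowtail X$ in $\wtil F$ (obtainable concretely by transposing $g$ to $\bar g:A\to RX$, $(E,F)$-factoring as $A\stackrel{e_g}\to D_g\stackrel{f_g}\to RX$, and setting $w(g)=Le_g$, $\phi_g=\epsilon_X\circ Lf_g$); define the putative counit at $g$ to be the square in $\wtil\DD\comma\CC$ with identity on top and $\phi_g$ on the bottom, regarded as a morphism from $w(g)$ (viewed in $\wtil\DD\comma\CC$) to $g$.

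To establish adjointness, I verify the universal property pointwise: a morphism in $\wtil\DD\comma\CC$ from an $\wtil E$-object $e:LA'\twoheadrightarrow LB'$ to $g$ consists of a commutative square with sides $u:LA'\to LA$ and $v:LB'\to X$; to factor it uniquely through the counit amounts to providing a unique filler $LB'\to LD_g$ over $v$ in the square with top $e$ and bottom $\phi_g$. Since $e\in\wtil E$ and $\phi_g\in\wtil F$, the extended orthogonality from Remark~\ref{fact-decomp} supplies a contractible space of such fillers, and the usual functoriality of factorisations (cf.~\ref{HTT:5.2.8.17}) upgrades this pointwise coreflection into an adjunction.

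For the second claim, recall from Lemma~\ref{ArED} that cartesian arrows in $\Ar^{\wtil E}(\wtil\DD)\to\wtil\DD$ are precisely the squares with sides in $\wtil E$ and bottom in $\wtil F$, while cartesian arrows in $\wtil\DD\comma\CC\to\wtil\DD$ are (restrictions of cartesian arrows in $\Ar(\CC)\to\CC$ and hence are) squares whose bottom edge is an equivalence in $\CC$. A cartesian lift of $h:LA'\to LA$ to $g$ is therefore just the precomposition $gh=g\circ h$ with identity on $X$. Applying $w$, the resulting square has top $h$, sides $w(gh)$ and $w(g)$, and some induced bottom map $\psi:LD_{gh}\to LD_g$ in $\wtil\DD$. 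To identify $\psi$, $(\wtil E,\wtil F)$-factor the composite $w(g)\circ h$ as $w(g)\circ h = \tilde f\circ\tilde e$; then $gh = (\phi_g\circ\tilde f)\circ\tilde e$ is itself an $(\wtil E,\wtil F)$-factorisation of $gh$, since $\phi_g\circ\tilde f\in\wtil F$ by closure under composition. Uniqueness of factorisation forces $\tilde e\simeq w(gh)$ and $\psi\simeq\tilde f\in\wtil F$, so the image square is cartesian by the characterisation of Lemma~\ref{ArED}.

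The main potential obstacle is the $\infty$-categorical bookkeeping: factorisations are defined only up to the contractible spaces of fillers provided by the orthogonality axiom, and `uniqueness of factorisation' must be read in that homotopical sense. However, this is precisely the content of the functorial factorisation result underpinning \ref{coreflection:w} and Corollary~\ref{cor:ArD=EF}, which applies verbatim in the present setting with $(E,F)$ replaced by $(\wtil E,\wtil F)$ and $\DD$ replaced by $\wtil\DD\comma\CC$.
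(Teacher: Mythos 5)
Your proof is correct and follows essentially the same route as the paper, whose own proof is just a pointer to Lurie's Lemma~5.2.8.18 and the dual of his Proposition~5.2.7.8: you construct the counit by $(\wtil E,\wtil F)$-factoring, verify the coreflection pointwise via the extended orthogonality of Remark~\ref{fact-decomp}, and invoke the standard local-to-global criterion to get the adjunction. Your explicit check that the $\wtil E$-factor functor sends bottom-equivalence squares to squares with bottom in $\wtil F$ (hence cartesian by Lemma~\ref{ArED}) is exactly the argument the paper leaves implicit in its citation.
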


\begin{proof}
  Given that the factorisations exist as explained above, the proof now follows
  the proof of Lemma~5.2.8.18 in Lurie~\cite{Lurie:HTT}, using the dual of his
  Proposition~5.2.7.8.
\end{proof}

%
%

\bigskip

The following restricted version of these results will be useful.

\begin{lemma}\label{fact-Kl-E}
  In the situation of Lemma~\ref{fact-ext}, assume
%
%
%
   there is a full subcategory $J:\A \into\DD$ such that
  
  --- All arrows in $\A$ belong to $E$.
  
  --- If an arrow in $\DD$ has its domain in $\A$, then its $E$-factor
  also belongs to $\A$.
    
  \noindent
  Consider the full subcategory $\wtil \A \subset \CC$ spanned by the image of
  $LJ$.  (This can be viewed as some kind of restricted Kleisli category.)  Then
  there is induced a factorisation system $(\wtil E, \wtil F)$ on $\wtil
  \A\subset \CC$ with $\wtil E := LJ(E)$ (saturated by equivalences), and $\wtil
  F := R^{-1}F \cap \wtil \A$.
\end{lemma}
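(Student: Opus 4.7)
The plan is to mimic the proof of Lemma~\ref{fact-ext}, restricted to the subcategory $\wtil\A$, and to verify the three axioms of a factorisation system. The two new hypotheses on $\A$ are exactly what is needed so that the $E$-factor of a relevant map stays inside $J(\A)$.

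First, closure of $\wtil E$ and $\wtil F$ under equivalences is immediate from the definitions. For orthogonality, fix $LJe\in\wtil E$ (so $e$ is an arrow of $\A$, and $Je\in E$ by the first hypothesis) and $\tilde f\in\wtil F$ (so $R\tilde f\in F$). Because $\wtil\A\subset\CC$ is full, the space of fillers for a lifting square against $LJe$ and $\tilde f$ in $\wtil\A$ is the same as in $\CC$, and by the adjunction $L\dashv R$ it is equivalent to the space of fillers in $\DD$ for the transposed square whose sides are $Je$ and $R\tilde f$. This latter space is contractible since $Je\in E$ and $R\tilde f\in F$.

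The main step is the existence of factorisations. Given $g\colon LJA\to LJA'$ in $\wtil\A$, transpose to $\bar g\colon JA\to RLJA'$ in $\DD$ and take its $(E,F)$-factorisation $\bar g=f\circ e$. Since $\bar g$ has domain $JA\in J(\A)$, the second hypothesis guarantees that the $E$-factor $e$ has codomain in $J(\A)$; by full-faithfulness of $J$ we write $e=Je_0$ with $e_0$ an arrow of $\A$. Transposing back, $g$ is the composite
$$
LJA \xrightarrow{LJe_0} LJD \xrightarrow{Lf} LRLJA' \xrightarrow{\epsilon_{LJA'}} LJA'.
$$
The first map lies in $\wtil E$ by definition. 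For the composite of the last two, applying $R$ yields $R\epsilon_{LJA'}\circ RLf$, in which $RLf\in F$ because $RL$ preserves $F$ and $R\epsilon_{LJA'}\in F$ by the running hypotheses on $RL$ and $\epsilon$; closure of $F$ under composition then gives $R(\epsilon\circ Lf)\in F$, so $\epsilon\circ Lf\in\wtil F$. This yields the required $(\wtil E,\wtil F)$-factorisation of $g$ inside $\wtil\A$.

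The whole argument is structurally identical to that of Lemma~\ref{fact-ext}; the only place where something non-formal is used is in keeping the intermediate object of the factorisation inside $J(\A)$, and this is precisely the content of the second hypothesis, so I expect no further obstacle. In particular, the verification of orthogonality requires nothing new beyond the first hypothesis (which ensures $Je\in E$), and the stability properties of $F$ under $RL$ and under $R\epsilon$ are already built into the setting of Lemma~\ref{fact-ext}.
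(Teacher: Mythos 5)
Your proof is correct and is precisely the argument the paper intends: the paper's own proof of Lemma~\ref{fact-Kl-E} is simply the remark that ``the proof is the same as before,'' i.e.\ as in Lemma~\ref{fact-ext}, and you have carried out that adaptation faithfully, with the second hypothesis on $\A$ doing exactly the work of keeping the intermediate object of the factorisation inside $J(\A)$.
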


\begin{proof}
    The proof is the same as before.
\end{proof}

\medskip


\begin{blanko}{A basic factorisation system.}\label{eq1-cart}
    Suppose $\CC$ is any $\infty$-category, and $\DD$ is an $\infty$-category 
    with a terminal object $1$.  Then evaluation on
    $1$ defines a cartesian fibration
    $$ev_1:\Fun(\DD,\CC) \to \CC$$ 
    for which the cartesian arrows are precisely the cartesian natural
    transformations.  The vertical arrows are the natural 
    transformations whose component at $1$ is an equivalence.
    Hence the functor $\infty$-category has a factorisation system in which the
    left-hand class is the class of vertical natural transformations, 
    and the right-hand class is the class of cartesian natural 
    transformations:
  $$
  \xymatrix@R-1em@C+1em{X\ar[rr]\ar[rd]_{\mathrm{eq. on 1}}&&Y\\
  &Y'\ar[ru]_{\mathrm{cartesian}}}
  $$
\end{blanko}

\bigskip

Finally we shall need the following general result (not related to factorisation 
systems):
\begin{lemma}\label{preservespullback}
  Let $\DD$ be any $\infty$-category.  Then the functor
  \begin{eqnarray*}
    F: \DD\op & \longrightarrow & \Grpd  \\
    D & \longmapsto & (\DD_{D/})^{\eq},
  \end{eqnarray*}
  corresponding to the right fibration $\Ar(\DD)^{\mathrm{cart}}\to\DD$,
  preserves pullbacks.
\end{lemma}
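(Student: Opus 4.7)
The plan is to reduce the claim to the universal property of pushouts. Since pullbacks in $\DD\op$ are pushouts in $\DD$, what must be shown is that every pushout
$$\xymatrix@R-0.6pc{A \ar[r]\ar[d] & B \ar[d] \\ C \ar[r] & D}$$
in $\DD$ induces a pullback of coslice cores
$$\xymatrix@R-0.6pc{(\DD_{D/})^{\eq} \ar[r]\ar[d] & (\DD_{B/})^{\eq} \ar[d] \\ (\DD_{C/})^{\eq} \ar[r] & (\DD_{A/})^{\eq}}$$
in $\Grpd$.

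The key observation is that each coslice core $(\DD_{E/})^{\eq}$ naturally identifies with the total space of a left fibration over $\DD^{\eq}$: its objects are pairs $(X, f\colon E \to X)$, its morphisms are equivalences $X\isopil X'$ compatible with the arrows out of $E$, and the projection $(X,f)\mapsto X$ has as fibre over $X$ the mapping space $\Map_\DD(E, X)$. This presentation is contravariantly natural in $E$ since precomposition does not change the codomain. For each $X$, the presheaf $\Map_\DD(-, X)$ preserves all small limits, so the fibrewise square of mapping spaces is a pullback by the universal property of pushouts. Passing to total spaces, this exhibits the square of coslice cores as a pullback of left fibrations over the common base $\DD^{\eq}$; since pullbacks of left fibrations over a fixed base are computed fibrewise (equivalently, are created by the forgetful functor $\Grpd_{/\DD^{\eq}}\to\Grpd$), the square is a pullback of $\infty$-groupoids.

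The main subtlety is to justify that the coslice cores really assemble into a natural diagram of left fibrations over $\DD^{\eq}$; this follows from straightening-unstraightening, but can be bypassed by a direct universal-property argument. A $0$-simplex of the putative pullback is a tuple $(X, f\colon B\to X, Y, g\colon C\to Y)$ together with an equivalence $e\colon X\isopil Y$ satisfying $e\circ f\circ(A\to B)\simeq g\circ(A\to C)$; transporting along $e$ reduces to the case $X=Y$ with coincident restrictions to $A$, which by the pushout property determines a unique arrow $D\to X$ restricting to $f$ and $g$. The analogous statement on higher cells is automatic because $\Map_\DD(-, X)$ preserves all small limits of $\infty$-groupoids, not only pullbacks of $0$-simplices.
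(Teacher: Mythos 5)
Your proof is correct and is essentially the paper's argument: the paper writes $F=\colim_{X\in\DD^\eq}\Map(-,X)$ as a homotopy sum of representables and concludes by the distributive law (universality of groupoid-indexed colimits), which under the fundamental equivalence $\Grpd_{/\DD^\eq}\simeq\Fun(\DD^\eq,\Grpd)$ is exactly your statement that the coslice cores form left fibrations over $\DD^\eq$ with fibres $\Map(E,X)$ and that pullbacks of such fibrations are detected fibrewise. The only difference is the fibrational versus colimit phrasing of the same descent argument.
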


\begin{proof}
  Observe first that $F = \colim_{X\in \DD^\eq} \Map ( - , X)$,
  a homotopy sum of representables.  
  Given now a pushout in $\DD$,
  $$\xymatrix{ D \drpullback & \ar[l] B \\
\ar[u] A & \ar[l] \ar[u] C}$$
  we compute, using the distributive law:
  \begin{eqnarray*}
     F(A \coprod_C B )& = & \colim_{X\in \DD^\eq} \Map ( A \coprod_C B , X)   \\
 & = & \colim_{X\in \DD^\eq} \big( \Map (A,X) \times_{\Map(C,X)} \Map(B,X) \big)  \\
 & = & \colim_{X\in \DD^\eq}  \Map (A,X) \times_{\colim\Map(C,X)} \colim_{X\in \DD^\eq}\Map(B,X)  \\
 & = & F(A) \times_{F(C)} F(B) .
  \end{eqnarray*}
%
%
%
%
\end{proof}

\subsection{Flanked decomposition spaces}
\label{sec:flanked}

\begin{blanko}{Idea.}
  The idea is that `interval' should mean complete decomposition space (equipped)
  with both an initial and a terminal object.  An object $x\in X_0$ is {\em
  initial} if the projection map $X_{x/} \to X$ is a levelwise equivalence.
  Here the {\em coslice} $X_{x/}$ is defined as the pullback of the lower dec
  $\Dec_\bot X$ along $1 \stackrel{\name{x}} \to X_0$.  Terminal objects are
  defined similarly with slices, i.e.~pullbacks of the upper dec.  It is not
  difficult to see (compare Proposition~\ref{prop:i*flanked=Segal} below) that
  the existence of an initial or a terminal object forces $X$ to be a Segal
  space.
  
  While the intuition may be helpful, it is not obvious that the above
  definition of initial and terminal object should be meaningful for Segal
  spaces that are not Rezk complete.  In any case, it turns out to be practical
  to approach the notion of interval from a more abstract viewpoint, which will
  allow us to get hold of various adjunctions and factorisation systems that are
  useful to prove things about intervals.  We come to intervals in the next 
  subsection.  First we have to deal with flanked decomposition spaces.
\end{blanko}

\begin{blanko}{The category $\Xi$ of finite strict intervals.}
  We denote by $\Xi$ the category of finite strict intervals
  (cf.~\cite{Joyal:disks}, see also Section~\ref{sec:restriction} where we took a slightly 
  different viewpoint), that is, a skeleton of the category whose objects
  are nonempty finite linear orders with a bottom and a top element, required to
  be distinct, and whose arrows are the maps that preserve both the order and
  the bottom and top elements.  We depict the objects as columns of dots, with
  the bottom and top dot white, then the maps are the order-preserving maps that
  send white dots to white dots, but are allowed to send black dots to white
  dots.


  There is a forgetful functor $u:\Xi\to\Delta$ which forgets that there is anything
  special about the white dots, and just makes them black.  This functor has a left
  adjoint $i:\Delta\to\Xi$ which to a linear order (column of black dots) adjoins a
  bottom and a top element (white dots).

  Our indexing convention for $\Xi$ follows
  the free functor $i$: the object in $\Xi$ with $k$ black dots (and two outer white
  dots) is denoted $[k-1]$.  Hence the objects in $\Xi$ are $[-1]$, $[0]$, $[1]$, etc.
  Note that $[-1]$ is an initial object in $\Xi$.
  The two functors can therefore be described on objects as $u([k])=[k+2]$
  and $i([k])=[k]$, and the adjunction is given by the following 
  isomorphism:
  \begin{equation}\label{eq:XiDelta}
    \Xi([n],[k]) = \Delta([n],[k\!+\!2]) \qquad n\geq 0, k\geq -1 .
  \end{equation}
\end{blanko}

\begin{blanko}{New outer degeneracy maps.}
  Compared to $\Delta$ via the inclusion $i:\Delta\to \Xi$, the category $\Xi$ 
  has one extra 
  coface map in $\Xi$,
  namely $[-1]\to [0]$.  It also has, in each degree, 
  two extra {\em outer codegeneracy 
  maps}:  $s^{\bot-1}: [n]\to [n-1]$ sends the bottom black dot to the bottom white 
  dot, and $s^{\top+1} : [n]\to [n-1]$ sends the top black dot to the top white 
  dot.  (Both maps are otherwise bijective.)  
\end{blanko}

\begin{blanko}{Basic adjunction.}
  The adjunction   
$ i \isleftadjointto u $
induces an adjunction   $
  i\upperstar \isleftadjointto u\upperstar
  $
$$\xymatrix{
\Fun(\Xi\op,\Grpd) \ar@<+3pt>[r]^-{i\upperstar} & \ar@<+3pt>[l]^-{u\upperstar} 
\Fun(\Delta\op,\Grpd)
}$$
which will play a central role in all the constructions in this section.

The functor $i\upperstar$ takes {\em 
underlying simplicial space}:
concretely, applied to a $\Xi\op$-space $A$, the functor $i\upperstar $ deletes $A_{-1}$
and removes all the extra outer degeneracy maps.

On the other hand, the functor $u\upperstar $,
applied to a simplicial space $X$, 
deletes $X_0$ and removes all outer face maps (and then reindexes).

The comonad 
$$
i\upperstar u\upperstar : \Fun(\Delta\op,\Grpd) \to \Fun(\Delta\op,\Grpd)
$$
is precisely the double-dec construction $\Dec_\bot\Dec_\top$, and the
counit of the adjunction is precisely the comparison map
$$
\varepsilon_X=d_\top d_\bot:i\upperstar u\upperstar X=\Dec_\bot\Dec_\top X
\longrightarrow X .
$$

On the other hand, the monad
$$
u\upperstar i\upperstar : \Fun(\Xi\op,\Grpd) \to \Fun(\Xi\op,\Grpd)
$$
is also a kind of double-dec, removing first the extra outer degeneracy
maps, and then the outer face maps.  The unit 
$$\eta_A = s_{\bot-1} s_{\top+1}: A \to 
u\upperstar i\upperstar A$$ 
will also play an important role.

%
%
%
\end{blanko}

\begin{lemma}\label{u*f(cULF)=cart}
    If $f: Y \to X$ is a cULF map of simplicial spaces,
  then  $u\upperstar f :u\upperstar Y \to u\upperstar X $ is cartesian.
\end{lemma}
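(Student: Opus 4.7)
The plan is to reduce the claim to the definition of cULF via the explicit description of the functor $u: \Xi \to \Delta$. Concretely, being cartesian for $u^*f: u^*Y \to u^*X$ as a natural transformation of $\Xi^{op}$-diagrams means that for every morphism $\alpha: [n] \to [k]$ in $\Xi$, the naturality square
$$
\xymatrix{
  Y_{u[k]} \ar[r]^{(u\alpha)^*} \ar[d]_{f_{u[k]}} & Y_{u[n]} \ar[d]^{f_{u[n]}} \\
  X_{u[k]} \ar[r]^{(u\alpha)^*} & X_{u[n]}
}
$$
is a pullback; so it suffices to show that every morphism in the image of $u$ is generic in $\Delta$.

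First I would unpack the adjunction $i \dashv u$ via the bijection $\Xi([n],[k]) = \Delta([n], u[k]) = \Delta([n],[k{+}2])$ of \eqref{eq:XiDelta} together with the more basic fact that $u: \Xi \to \Delta$ is fully faithful on objects with $n \geq -1$, identifying $\Xi([n],[k])$ with the subset of $\Delta([n{+}2],[k{+}2])$ consisting of those maps that preserve both the bottom element $0$ and the top element. By the very definition of generic maps in $\Delta$ (Subsection~\ref{generic-and-free}) — maps preserving both endpoints — this is exactly the set of generic maps $u[n] \genmap u[k]$. In other words, $u$ factors through the wide subcategory $\Delta_{\mathrm{gen}} \subset \Delta$, and in fact every generic map between objects of the form $[k{+}2]$ is in the image of $u$.

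Now the hypothesis that $f$ is cULF means precisely that $f$ is cartesian on all generic maps (cf.\ the lemma preceding Proposition~\ref{prop:ULF=>cons}). Applying this to $u\alpha$ for any $\alpha$ in $\Xi$ gives the desired pullback square, which is exactly the cartesianness of $u^*f$ at $\alpha$. Since this holds for every morphism of $\Xi$, the natural transformation $u^*f$ is cartesian.

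There is no real obstacle here — the content of the lemma is bookkeeping about indexing conventions, matching the description of morphisms in $\Xi$ (order-preserving maps of finite intervals sending white dots to white dots) with the characterisation of generic maps in $\Delta$ (endpoint-preserving maps). The only subtle point to verify carefully is the boundary case $n = -1$: a morphism $[-1] \to [k]$ in $\Xi$ corresponds under $u$ to the unique generic map $[1] \to [k{+}2]$ sending $0$ and $1$ to bottom and top respectively, which is indeed generic, so nothing goes wrong at this extreme.
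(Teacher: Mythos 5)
Your proof is correct and is essentially the paper's own argument, just spelled out in more detail: the paper's one-line proof likewise observes that cULF means cartesian on everything except the outer face maps, which are exactly what $u\upperstar$ discards, i.e.\ that every morphism in the image of $u:\Xi\to\Delta$ is generic. The extra care you take with the boundary case $[-1]\to[k]$ and with identifying $\Xi([n],[k])$ with the endpoint-preserving maps $[n{+}2]\to[k{+}2]$ is harmless bookkeeping that the paper leaves implicit.
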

\begin{proof}
  The cULF condition on $f$ says
  it is cartesian on `everything' except outer face 
  maps, which are thrown away when taking $u\upperstar f$.
\end{proof}
\noindent
Note that the converse is not always true: if $u\upperstar f$ is cartesian then
$f$ is ULF, but there is no information about $s_0: Y_0 \to Y_1$, so we 
cannot conclude that $f$ is conservative.

Dually:
\begin{lemma}\label{i*cart}
  If a map of  $\Xi\op$-spaces  $g : B \to A$ is cartesian
 (or just cartesian on inner face and degeneracy maps),
  then $i\upperstar g : i\upperstar B \to i\upperstar A$ is cartesian.
\end{lemma}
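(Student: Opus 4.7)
The plan is to unpack the cartesianness condition and reduce to the hypothesis by a generator-and-pasting argument. For any $\Xi\op$-space $C$, the definition gives $(i\upperstar C)_{[n]} = C_{[n]}$ and the structure maps act through $i(\alpha)$ for $\alpha \in \Delta$. Consequently, for every morphism $\alpha: [m]\to[n]$ of $\Delta$, the naturality square of $i\upperstar g$ on $\alpha$ \emph{is} the naturality square of $g$ on the $\Xi$-morphism $i(\alpha)$. To show $i\upperstar g$ is cartesian, I therefore need the latter square to be a pullback for every $\Delta$-morphism $\alpha$. Since every morphism of $\Delta$ factors as a composite of cofaces $d^j$ and codegeneracies $s^j$, and since pullback squares paste both horizontally and vertically, it suffices to check the condition on these generators.

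The remaining step is purely combinatorial: I will verify that $i$ sends every $\Delta$-coface and $\Delta$-codegeneracy to an \emph{inner} face or degeneracy map of $\Xi$, in the sense of the preceding discussion (i.e.\ not one of the extra outer morphisms $[-1]\to[0]$, $s^{\bot-1}$, $s^{\top+1}$). Each $i(d^j)$ is a coface $[n-1]\to[n]$ of $\Xi$ with $n\geq 1$, so it cannot be the extra coface $[-1]\to[0]$; hence it is inner. Each $i(s^j)$ is a codegeneracy of $\Xi$ arising from a $\Delta$-codegeneracy, and by the explicit description of the outer codegeneracies as the two surjections that identify a black dot with a white endpoint, $i(s^j)$ coincides with neither $s^{\bot-1}$ nor $s^{\top+1}$; hence it is inner. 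Invoking the hypothesis that $g$ is cartesian on inner face and degeneracy maps of $\Xi$ now supplies the pullback squares, completing the argument.

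There is essentially no obstacle beyond this bookkeeping — indeed, the statement is really a consequence of the fact that $i(\Delta) \subset \Xi$ avoids precisely the extra outer morphisms of $\Xi$ singled out in the definition of the wide $\Xi$-structure, so the hypothesis automatically covers everything $i\upperstar$ can see.
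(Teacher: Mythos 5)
Your proof is correct and coincides with the paper's intended argument: the paper offers no written proof beyond introducing the statement with ``Dually'' after Lemma~\ref{u*f(cULF)=cart}, the point being exactly yours — that $i$ sends every coface and codegeneracy of $\Delta$ to a coface (missing a black dot) or codegeneracy (identifying two black dots) of $\Xi$, so none of them is the extra coface $[-1]\to[0]$ or one of the extra outer codegeneracies $s^{\bot-1}$, $s^{\top+1}$, and hence the hypothesis covers every structure map that $i\upperstar$ retains. The only (inessential) slip is in your closing aside: the extra outer morphisms are singled out in the paragraph on new outer degeneracy maps, not in the definition of the wide maps, which instead concerns the $[-1]$-component of a map of $\Xi\op$-spaces.
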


\begin{blanko}{Representables.}
  The representables on $\Xi$ we denote by $\Xi[-1]$, $\Xi[0]$, etc.  By
  convention we will also denote the terminal presheaf on $\Xi$ by $\Xi[-2]$,
  although it is not representable since we have chosen not to include $[-2]$ (a
  single white dot) in our definition of $\Xi$.  Note that \eqref{eq:XiDelta}
  says that $i\upperstar$ preserves representables:
  \begin{equation}\label{eq:Deltak+2}
  i\upperstar (\Xi[k]) = \Delta[k\!+\!2], \qquad k\geq-1 .
  \end{equation}

%
%
\end{blanko}

\begin{blanko}{Wide-cartesian factorisation system.}
  Call an arrow in $\Fun(\Xi\op,\Grpd)$ {\em wide} if its $[-1]$-component is an
  equivalence.  Call an arrow {\em cartesian} if it is a cartesian natural
  transformation of $\Xi\op$-diagrams.  By general theory (\ref{eq1-cart}) we
  have a factorisation system on $\Fun(\Xi\op,\Grpd)$ where the left-hand class
  is formed by the wide maps and the right-hand class consists of the cartesian maps.
  In concrete terms, given any map $B \to
  A$, since $[-1]$ is terminal in $\Xi\op$, one can pull back the whole
  diagram $A$ along the map $B_{-1} \to A_{-1}$.  The resulting
  $\Xi\op$-diagram $A'$ is cartesian over $A$ by construction,
  and by the universal property of the pullback it receives a map from
  $B$ which is manifestly the identity in degree $-1$, hence
  wide.
  $$
  \xymatrix@R-1em@C+1em{B\ar[rr]\ar[rd]_{\mathrm{wide}}&&A\\&A'\ar[ru]_{\mathrm{cartesian}}}
  $$
\end{blanko}

\begin{blanko}{Flanked $\Xi\op$-spaces.}\label{flanked}
    A $\Xi\op$-space $A$ is called \emph{flanked} if the extra outer degeneracy
  maps form cartesian squares with opposite outer face maps. 
  Precisely, for $n\geq 0$
  $$
  \xymatrix{
  A_{n-1}   \ar[d]_{s_{\bot-1}} & \ar[l]_{d_\top} \dlpullback A_n \ar[d]^{s_{\bot-1}} \\
  A_n & \ar[l]^{d_\top} A_{n+1}
  }
  \qquad
  \xymatrix{
  A_{n-1}   \ar[d]_{s_{\top+1}} & \ar[l]_{d_\bot} \dlpullback A_n 
  \ar[d]^{s_{\top+1}} \\
  A_n & \ar[l]^{d_\bot} A_{n+1}
  }
  $$
  Here we have included the special extra face map $A_{-1} \leftarrow A_0$ both as
  a top face map and a bottom face map.
%
\end{blanko}

\begin{lemma}\label{s-1-newbonuspullbacks}
  (`Bonus pullbacks' for flanked spaces.)
  In a flanked $\Xi\op$-space $A$, all the following squares are pullbacks:
$$
\xymatrix @C=12pt @C=12pt {
A_{n-1} \ar[d]_{s_{\bot-1}} & \ar[l]_-{d_i} \dlpullback A_n \ar[d]^{s_{\bot-1}} \\
A_n  & \ar[l]^-{d_{i+1}} A_{n+1}
}
\
\xymatrix @C=12pt @C=12pt {
A_{n-1} \drpullback \ar[r]^-{s_j} \ar[d]_{s_{\bot-1}} & A_n \ar[d]^{s_{\bot-1}} \\
A_n \ar[r]_-{s_{j+1}} & A_{n+1}
}
\
\xymatrix @C=12pt @C=12pt {
A_{n-1} \ar[d]_{s_{\top+1}} & \ar[l]_-{d_i} \dlpullback A_n \ar[d]^{s_{\top+1}} \\
A_n  & \ar[l]^-{d_i} A_{n+1}
}
\
\xymatrix @C=12pt @C=12pt {
A_{n-1} \drpullback \ar[r]^-{s_j} \ar[d]_{s_{\top+1}} & A_n \ar[d]^{s_{\top+1}} \\
A_n \ar[r]_-{s_j} & A_{n+1}
}
$$
This is for all $n\geq 0$, and the running indices are $0 \leq i \leq n$ and $-1 \leq j \leq n$.
\end{lemma}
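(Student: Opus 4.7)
The proof will follow the template of the bonus pullbacks of Lemmas~\ref{bonus-pullbacks} and~\ref{newbonuspullbacks} for decomposition spaces, with the flanked axioms of~\ref{flanked} playing the role there played by the elementary degeneracy/outer-face pullbacks. I will treat the two families involving $s_{\bot-1}$ (the first and second squares); the families involving $s_{\top+1}$ will follow by the symmetric argument, interchanging the roles of $\bot$ and $\top$.

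For the face/degeneracy square (first family), my plan is to extend horizontally on the left by $(d_\top)^{n-i}$ applied to both rows,
$$\xymatrix @C=18pt {
A_{i-1} \ar[d]_{s_{\bot-1}} & A_{n-1} \ar[l]_-{(d_\top)^{n-i}} \ar[d]_{s_{\bot-1}} & A_n \ar[l]_-{d_i} \ar[d]^{s_{\bot-1}} \\
A_i  & A_n \ar[l]^-{(d_\top)^{n-i}} & A_{n+1} \ar[l]^-{d_{i+1}}.
}$$
The left-hand square is an iterated instance of the flanked axiom from~\ref{flanked}, hence a pullback. The crux is the simplicial identity $(d_\top)^{n-i} \circ d_i = (d_\top)^{n-i+1}$ on $A_n$ (and likewise $(d_\top)^{n-i} \circ d_{i+1} = (d_\top)^{n-i+1}$ on $A_{n+1}$), obtained by repeatedly applying the commutation $d_p \circ d_q = d_q \circ d_{p+1}$ (for $q \leq p$) to slide $d_i$ leftward through the iterated top-face composite, followed by the collapse $d_i \circ d_i = d_i \circ d_{i+1}$. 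This is exactly the combinatorics underlying the proof of~\ref{bonus-pullbacks}. With this identity the outer rectangle has horizontal composites equal to pure iterated top faces, making it an iterated flanked-axiom pullback, and the basic pullback lemma~\ref{pbk} delivers the original (right-hand) square as a pullback.

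The degeneracy/degeneracy square (second family) I would deduce from the first by the standard section argument used in the proof of~\ref{newbonuspullbacks}: postcompose the degeneracy square horizontally with a face map $d$ for which $s_j$ serves as a section (so $d \circ s_j = \id$ by one of the standard simplicial identities $d_i s_j = \id$ for $i = j$ or $i = j+1$). The resulting outer rectangle has horizontal composites equal to identities, hence is trivially a pullback, while the adjoined right-hand square is of the form treated in the first family and is therefore a pullback. A second application of the basic pullback lemma~\ref{pbk} yields the original degeneracy square as a pullback.

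The chief technical obstacle will be the careful bookkeeping of the indexing conventions in $\Xi$---in particular handling the extra outer degeneracies $s_{\bot-1}, s_{\top+1}$, the special face $A_0 \to A_{-1}$, and the shift in indices for the bottom row of the first square---and verifying the edge cases ($i=0,n$ and $j=-1,n$) in which the section argument must select an appropriate retracting face map. Once these bookkeeping matters are settled the required simplicial-identity collapse is identical to the one already performed in~\ref{bonus-pullbacks}, and the pullback arguments reduce to repeated applications of~\ref{pbk}.
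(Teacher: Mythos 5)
Your proposal is correct and is exactly the argument the paper intends: the paper's own proof is just the one-line remark ``easy argument with pullbacks, as in Lemma~\ref{bonus-pullbacks}'', and your composition with iterated $d_\top$ (resp.\ $d_\bot$) to reduce to pasted flanked-axiom squares, followed by the section/retraction trick for the degeneracy--degeneracy squares and Lemma~\ref{pbk}, is precisely that argument spelled out. The index bookkeeping you flag (including the retraction $d_\bot s_{\bot-1}=\id$, $d_\top s_{\top+1}=\id$ for the outer degeneracies in the edge cases $j=-1,n$) does work out as you anticipate.
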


\begin{proof}
  Easy argument with pullbacks, as in \ref{bonus-pullbacks}.
\end{proof}

Note that in the upper rows, all face or degeneracy maps are present, whereas
in the lower rows, there is one map missing in each case.  In particular,
all the `new' outer degeneracy maps appear as pullbacks of `old' degeneracy
maps.

\begin{blanko}{Flanked decomposition spaces.}
  By definition, a {\em flanked decomposition space} is a $\Xi\op$-space
  $A:\Xi\op\to\Grpd$ that is flanked and whose underlying $\Delta\op$-space
  $i\upperstar A$ is a decomposition space.  Let $\FD$ denote the full
  subcategory of $\Fun(\Xi\op,\Grpd)$ spanned by the flanked
  decomposition spaces.
\end{blanko}

\begin{lemma}\label{lem:flanked}
  If $X$ is a decomposition space, then 
  $u\upperstar X$ is a flanked decomposition space.
\end{lemma}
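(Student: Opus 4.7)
My plan is to verify the two ingredients of \ref{flanked} separately: (i) that $i^*(u^* X)$ is a decomposition space, and (ii) that both families of outer pullback squares for the extra degeneracies $s_{\bot-1}$ and $s_{\top+1}$ hold.

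For (i), I would first observe, as already noted in the discussion of the ``Basic adjunction,'' that the comonad $i^*u^*$ is the double decalage $\Dec_\bot\Dec_\top$. Applying Proposition~\ref{Dec=Segal+cULF} to the decomposition space $X$ gives that $\Dec_\top X$ is a Segal space, which is in particular a decomposition space by Proposition~\ref{prop:segal=>decomp1}; applying Proposition~\ref{Dec=Segal+cULF} once more to $\Dec_\top X$ yields that $\Dec_\bot\Dec_\top X=i^*u^*X$ is again a Segal space, hence a decomposition space.

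For (ii), the plan is to translate each flanked square into a square in $X$ using the identifications $(u^*X)_k=X_{k+2}$. Tracking what happens under $u$, the extra degeneracy $s_{\bot-1}:(u^*X)_{n-1}\to(u^*X)_n$ becomes $s_0:X_{n+1}\to X_{n+2}$, and $s_{\top+1}$ becomes the top degeneracy $s_{n+1}:X_{n+1}\to X_{n+2}$; meanwhile $d_\top$ and $d_\bot$ of $u^*X$ at level $n$ translate to the inner face maps $d_{n+1}$ and $d_1$ of $X_{n+2}$ (at $n=0$ these coincide as the single map $d_1:X_2\to X_1$, coming from the unique coface $[-1]\to[0]$ in $\Xi$). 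Under this dictionary, the $(s_{\bot-1},d_\top)$-square at level $n$ is precisely the bonus pullback of Lemma~\ref{bonus-pullbacks} (first form) with $i=0<j=n+1$, and the $(s_{\top+1},d_\bot)$-square is the bonus pullback of the second form with $j=1\le i=n+1$. Both are pullbacks in any decomposition space, so both flanked conditions follow.

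The only obstacle is the indexing bookkeeping between $\Xi$ and $\Delta$ via $u([k])=[k+2]$: one must correctly identify the outer degeneracies of $u^*X$ with $s_0$ and $s_\top$ of $X$, and $d_\bot,d_\top$ of $u^*X$ with the specific inner face maps $d_1$ and $d_{n+1}$ of $X$. Once this dictionary is in place, the problem reduces mechanically to Lemma~\ref{bonus-pullbacks}, with no further conceptual content.
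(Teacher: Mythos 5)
Your proposal is correct and follows essentially the same route as the paper's (much terser) proof: the underlying simplicial space $i^*u^*X=\Dec_\bot\Dec_\top X$ is a Segal space, hence a decomposition space, and the flanking squares are exactly the bonus pullbacks of Lemma~\ref{bonus-pullbacks} after the $\Xi$-to-$\Delta$ reindexing you spell out. Your index bookkeeping ($s_{\bot-1}\mapsto s_0$, $s_{\top+1}\mapsto s_{n+1}$, $d_\bot\mapsto d_1$, $d_\top\mapsto d_{n+1}$, coinciding at $n=0$ as $d_1:X_2\to X_1$) is accurate, and you correctly identify which of the two bonus-pullback forms applies in each case.
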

\begin{proof}
  The underlying simplicial space is clearly a decomposition space (in fact a 
  Segal space), since all we have done is to throw away some outer face maps 
  and reindex.  The flanking condition comes from the `bonus pullbacks' of $X$,
  cf.~\ref{newbonuspullbacks}.
\end{proof}

It follows that the basic adjunction $i\upperstar \isleftadjointto u\upperstar $
restricts to an adjunction 
$$\xymatrix{
i\upperstar  : \FD \ar@<+3pt>[r] & \ar@<+3pt>[l]\Decomp : u\upperstar 
}$$
between flanked decomposition spaces (certain $\Xi\op$-diagrams)
and decomposition spaces.

\begin{lemma}\label{counit-cULF}
  The counit
  $$
  \epsilon_X : i\upperstar u\upperstar X \to X
  $$
  is cULF, when $X$ is a decomposition space.
\end{lemma}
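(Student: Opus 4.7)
The plan is to factor the counit $\epsilon_X$ as a composite of two elementary decalage counits and invoke Proposition~\ref{Dec=Segal+cULF} twice. Since cULF maps are closed under composition (pullback squares paste, cf.~Lemma~\ref{pbk}), this will complete the argument.

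Explicitly, the preceding discussion already identifies $i\upperstar u\upperstar$ with the double-dec comonad $\Dec_\bot\Dec_\top$, and the counit in degree $n$ with $d_\top d_\bot \colon X_{n+2}\to X_n$. Correspondingly, $\epsilon_X$ factors as
$$
i\upperstar u\upperstar X \;=\; \Dec_\bot\Dec_\top X \;\xrightarrow{\;d_\bot\;}\; \Dec_\top X \;\xrightarrow{\;d_\top\;}\; X .
$$

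First I would apply Proposition~\ref{Dec=Segal+cULF} to the decomposition space $X$ itself: this yields that $\Dec_\top X$ is a Segal space and that $d_\top\colon \Dec_\top X \to X$ is cULF. Next, since every Segal space is a decomposition space (Proposition~\ref{prop:segal=>decomp1}), a second application of Proposition~\ref{Dec=Segal+cULF}, this time to $\Dec_\top X$, gives that $d_\bot\colon \Dec_\bot\Dec_\top X \to \Dec_\top X$ is also cULF. Composing the two cULF maps then shows that $\epsilon_X$ is cULF.

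There is no real obstacle here: the substantive content is packaged in Proposition~\ref{Dec=Segal+cULF}, and the present lemma is simply its iterated application, together with the already-established identification of the counit of the double comonad $i\upperstar u\upperstar$ with $d_\top d_\bot$. The only side remark needed is closure of the cULF class under composition, which is immediate from the definition via standard pullback pasting.
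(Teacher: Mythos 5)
Your proof is correct and is essentially the paper's argument: the paper's proof simply cites Theorem~\ref{thm:decomp-dec-segal} (equivalently Proposition~\ref{Dec=Segal+cULF}), leaving implicit exactly the factorisation $\epsilon_X = d_\top\circ d_\bot$ and the closure of cULF maps under composition that you spell out. Your remark that the second application needs $\Dec_\top X$ to be a decomposition space, which is supplied by its being a Segal space, is precisely the detail that makes the citation rigorous.
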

\begin{proof}
  This was proved in Theorem~\ref{thm:decomp-dec-segal}.
\end{proof}

\begin{lemma}\label{unit-cart}
  The unit
  $$
  \eta_A: A \to u\upperstar i\upperstar A
  $$
  is cartesian, when $A$ is flanked.
\end{lemma}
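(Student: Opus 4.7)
The plan is to check that each naturality square of $\eta_A$ is a pullback, which, since pullbacks compose, reduces to the generators of $\Xi$. The key structural observation is that at each object $[k]$ the component is
$$ \eta_A|_{[k]} \;=\; s_{\bot-1}\,s_{\top+1} \;=\; s_{\top+1}\,s_{\bot-1}\colon A_k \longrightarrow A_{k+2}, $$
the two outer degeneracies commuting because the underlying $\Xi$-morphism $[k+2]\to[k]$ that they implement symmetrically collapses both extreme black dots onto the corresponding white ones. This lets me split every naturality square vertically into an upper $s_{\bot-1}$-square and a lower $s_{\top+1}$-square.

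Concretely, for each generator $f$ I first identify $u\upperstar i\upperstar f$: the composite $iu$ shifts inner positions by one, so an inner face $d^{j}$ goes to the inner face $d^{j+1}$ at the shifted level, an inner codegeneracy $s^{j}$ goes to $s^{j+1}$, the outer codegeneracies $s^{\bot-1}$ and $s^{\top+1}$ become inner codegeneracies of the shifted simplex, and the special coface $[-1]\to[0]$ becomes the inner face $d^{2}:[1]^\Xi\to[2]^\Xi$. In each case the naturality square decomposes as
$$
\xymatrix{
A_k \ar[r]^{A(f)} \ar[d]_{s_{\bot-1}} & A_{k'} \ar[d]^{s_{\bot-1}} \\
A_{k+1} \ar[r]^{A(\tilde f)} \ar[d]_{s_{\top+1}} & A_{k'+1} \ar[d]^{s_{\top+1}} \\
A_{k+2} \ar[r]_{A(u\upperstar i\upperstar f)} & A_{k'+2}
}
$$
where $\tilde{f}$ is the once-shifted intermediate $\Xi$-morphism. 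The upper half-square is then an instance of the first or second pullback of Lemma~\ref{s-1-newbonuspullbacks} according as $f$ is a face or a degeneracy, and the lower half-square is correspondingly an instance of the third or fourth pullback; pasting pullbacks vertically gives the claim.

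The main bookkeeping obstacle is keeping the indexing conventions straight and matching each resulting square with the right running index in Lemma~\ref{s-1-newbonuspullbacks}. In particular one must interpret the lemma's index $s_j$ for $j=-1$ and for $j=n$ respectively as $s_{\bot-1}$ and $s_{\top+1}$ at the given level, thereby covering the cases where an outer codegeneracy occurs on the horizontal side (as happens when $f$ itself is $s^{\bot-1}$ or $s^{\top+1}$). For the special coface $[-1]\to[0]$ one uses the convention in~\ref{flanked} that the face map $A_0 \to A_{-1}$ counts as both $d_\bot$ and $d_\top$, so that at $n=0$ both the first and third squares of the lemma apply and the factorisation above goes through unchanged.
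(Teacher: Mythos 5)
Your proof is correct and follows essentially the same route as the paper's: the paper's (two-line) argument likewise writes $\eta_A$ componentwise as $s_{\bot-1}$ followed by $s_{\top+1}$ and observes that the resulting squares are exactly the `bonus pullbacks' of Lemma~\ref{s-1-newbonuspullbacks}. You have simply made explicit the bookkeeping (the identification of $u\upperstar i\upperstar$ on generators, the vertical pasting, and the index conventions) that the paper leaves implicit.
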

\begin{proof}
  The map $\eta_A$ is given by $s_{\bot-1}$ followed by $s_{\top+1}$.
  The asserted pullbacks are precisely the `bonus pullbacks' of 
  Lemma~\ref{s-1-newbonuspullbacks}.
\end{proof}

From Lemma~\ref{unit-cart} and Lemma~\ref{counit-cULF} we get:

\begin{cor}\label{u*i*(cart)=cart}
  The monad $u\upperstar i\upperstar : \FD \to \FD$ preserves cartesian maps.
\end{cor}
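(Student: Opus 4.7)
The plan is to argue directly and componentwise, exploiting that the composite endofunctor $u\upperstar i\upperstar$ on $\Xi\op$-spaces is nothing other than restriction along the functor $iu:\Xi\to\Xi$. Unpacking indices, one has $(iu)[k] = i[k+2] = [k+2]$, so $(u\upperstar i\upperstar B)([k]) = B([k+2])$, and the action of a morphism $\phi:[n]\to[m]$ of $\Xi$ on $u\upperstar i\upperstar B$ is precisely the action of $iu\phi:[n+2]\to[m+2]$ on $B$ (noting that $iu\phi$ is again a morphism of $\Xi$). It follows that, for a natural transformation $f:B\to A$ and any morphism $\phi$ of $\Xi$, the naturality square of $u\upperstar i\upperstar f$ at $\phi$ is literally the naturality square of $f$ at $iu\phi$.

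Hence, if $f$ is cartesian --- meaning every naturality square of $f$ is a pullback --- then in particular the naturality square at $iu\phi$ is a pullback, for every $\phi$. So $u\upperstar i\upperstar f$ is cartesian. The flankedness of $A$ and $B$ plays no role in verifying cartesianness; it enters only via Lemma~\ref{lem:flanked} (applied to the decomposition spaces $i\upperstar A$ and $i\upperstar B$) to ensure that $u\upperstar i\upperstar A$ and $u\upperstar i\upperstar B$ are again flanked decomposition spaces, so that the statement lives inside $\FD$.

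A more conceptual alternative would invoke naturality of the unit, which gives the identity $u\upperstar i\upperstar f\circ \eta_B = \eta_A\circ f$; then combine the fact that $\eta_A,\eta_B$ are cartesian (Lemma~\ref{unit-cart}, using flankedness of $A, B$) with the cartesianness of $f$ to deduce cartesianness of $u\upperstar i\upperstar f$ by pasting pullbacks at each morphism of $\Xi$. This route ultimately reduces to the same componentwise check, so I would present the direct argument above. No real obstacle is expected: the content is essentially formal once one recognises the monad $u\upperstar i\upperstar$ as precomposition with $iu$.
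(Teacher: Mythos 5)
Your main argument is correct and complete: $u\upperstar i\upperstar$ is precomposition with the endofunctor $iu$ of $\Xi$, so each naturality square of $u\upperstar i\upperstar f$ is (equivalent to) a naturality square of $f$ itself, and cartesianness is inherited for free; flankedness enters only through Lemma~\ref{lem:flanked}, to guarantee the monad lands back in $\FD$. This is a different, and more economical, route than the paper's. The paper offers no proof text, simply deducing the corollary from Lemma~\ref{unit-cart} and Lemma~\ref{counit-cULF}; the natural reconstruction of its argument goes through the two preservation lemmas flanking the adjunction, namely that a cartesian map $f$ in $\FD$ has $i\upperstar f$ cartesian (Lemma~\ref{i*cart}), hence in particular cULF, whence $u\upperstar i\upperstar f$ is cartesian (Lemma~\ref{u*f(cULF)=cart}). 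Both arguments ultimately rest on the same formal fact that restriction along a functor preserves cartesian natural transformations; yours performs it in one step and makes transparent that no flankedness is needed for the cartesianness itself, which is a genuine clarification.

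One caveat: the ``more conceptual alternative'' you sketch does not work as stated. From $u\upperstar i\upperstar f\circ\eta_B=\eta_A\circ f$ you learn that the composite $u\upperstar i\upperstar f\circ\eta_B$ is cartesian and that $\eta_B$ is cartesian; but the cancellation property for pasted pullbacks (Lemma~\ref{pbk}) goes the other way (outer rectangle and \emph{second} square pullbacks imply the first square is a pullback), so knowing the outer rectangle and the \emph{first} square are pullbacks does not yield the second. Hence you cannot conclude that $u\upperstar i\upperstar f$ is cartesian by pasting alone. Since you explicitly present the direct argument as your proof, this does not affect its correctness, but the alternative should not be advertised as reducing to the same check.
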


\begin{lemma}\label{cart&cULF}
  $i\upperstar A \to X$ is cULF in $\Decomp$ if and only if the transpose
  $A \to u\upperstar  X$ is cartesian in $\FD$.
\end{lemma}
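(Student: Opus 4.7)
\smallskip

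\noindent\textbf{Proof proposal.} The plan is to use the basic adjunction $i\upperstar \dashv u\upperstar$ together with the two characterisations of the unit and counit established just above, namely Lemma~\ref{unit-cart} (the unit $\eta_A: A \to u\upperstar i\upperstar A$ is cartesian when $A$ is flanked) and Lemma~\ref{counit-cULF} (the counit $\epsilon_X: i\upperstar u\upperstar X \to X$ is cULF when $X$ is a decomposition space). Recall that the transpose of $f: i\upperstar A \to X$ is $\tilde f = u\upperstar f \circ \eta_A : A \to u\upperstar X$, and the transpose of $g: A \to u\upperstar X$ is $\tilde g = \epsilon_X \circ i\upperstar g : i\upperstar A \to X$.

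For the forward direction, I would argue as follows. Suppose $f: i\upperstar A \to X$ is cULF. By Lemma~\ref{u*f(cULF)=cart}, $u\upperstar f$ is a cartesian natural transformation of $\Xi\op$-diagrams. Since $A$ is flanked, Lemma~\ref{unit-cart} gives that $\eta_A$ is also cartesian. Cartesian natural transformations compose, so $\tilde f = u\upperstar f \circ \eta_A$ is cartesian in $\FD$, as desired.

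For the converse, suppose $g: A \to u\upperstar X$ is cartesian. By Lemma~\ref{i*cart}, $i\upperstar g$ is cartesian, hence in particular cULF (cULF requires pullback squares only on generic maps, so any fully cartesian transformation qualifies). Since $X$ is a decomposition space, Lemma~\ref{counit-cULF} gives that $\epsilon_X$ is cULF. The composite of two cULF maps is cULF, so $\tilde g = \epsilon_X \circ i\upperstar g$ is cULF in $\Decomp$.

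There is no real obstacle here: the argument is a clean bookkeeping check that runs through the unit/counit of the basic adjunction and invokes exactly the two preceding lemmas together with the elementary observations that cartesian transformations compose, that cartesian implies cULF, and that cULF maps compose. The only thing worth verifying carefully is the standard triangle-identity computation showing that the two transpose assignments are mutually inverse on the nose in $\infty$-groupoids, but this is automatic from the adjunction $i\upperstar \dashv u\upperstar$ and does not interact with the cartesian/cULF conditions.
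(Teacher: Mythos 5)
Your proof is correct and follows exactly the same route as the paper's: the paper's own proof is a one-line citation of the unit being cartesian (\ref{unit-cart}), the counit being cULF (\ref{counit-cULF}), and $u\upperstar$, $i\upperstar$ exchanging the two classes (\ref{u*f(cULF)=cart}, \ref{i*cart}), which is precisely the argument you spell out. Your version merely makes the two composites and the closure properties explicit.
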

\begin{proof}
  This follows since the unit is cartesian (\ref{unit-cart}), the counit is 
  cULF (\ref{counit-cULF}), and $u\upperstar $ and $i\upperstar $ send those
  two classes to each other (\ref{u*f(cULF)=cart} and \ref{i*cart}).
\end{proof}

\begin{prop}\label{prop:i*flanked=Segal}
  If $A$ is a flanked decomposition space, then $i\upperstar A $ is a Segal 
  space.
\end{prop}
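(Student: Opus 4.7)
My plan is to obtain the Segal condition on $i^* A$ by transferring it backwards along a suitable cartesian map into a Segal space, exploiting the adjunction $i^\ast \dashv u^\ast$ and Lemma~\ref{cart/Segal=Segal}.

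The first step is to observe that, since $A$ is flanked, Lemma~\ref{unit-cart} tells us that the unit $\eta_A : A \to u^\ast i^\ast A$ is cartesian as a natural transformation of $\Xi^{\mathrm{op}}$-diagrams. Applying $i^\ast$ and invoking Lemma~\ref{i*cart}, I obtain a cartesian simplicial map
$$
i^\ast \eta_A : i^\ast A \longrightarrow i^\ast u^\ast i^\ast A ,
$$
which is in particular cartesian on every outer face map of $\Delta$.

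The second step is to recognise the target as a Segal space. The comonad $i^\ast u^\ast$ on simplicial spaces is the double dec $\mathrm{Dec}_\bot \mathrm{Dec}_\top$, as recalled in the ``Basic adjunction'' paragraph. Since $i^\ast A$ is a decomposition space by definition of flanked decomposition space, Proposition~\ref{Dec=Segal+cULF} shows that $\mathrm{Dec}_\top(i^\ast A)$ is a Segal space; since Segal spaces are themselves decomposition spaces (Proposition~\ref{prop:segal=>decomp1}), a second application of Proposition~\ref{Dec=Segal+cULF} gives that $\mathrm{Dec}_\bot \mathrm{Dec}_\top(i^\ast A) = i^\ast u^\ast i^\ast A$ is Segal.

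Finally, Lemma~\ref{cart/Segal=Segal} lets me conclude: since $i^\ast \eta_A$ is cartesian on outer face maps and its target is a Segal space, the source $i^\ast A$ is itself a Segal space. The argument is essentially structural bookkeeping; the only point that requires care is to verify that under $i$ the outer face maps of $\Delta$ correspond to face maps of $\Xi$ (which they do, being $d^\bot$ and $d^\top$ of $\Xi$ respectively), so that the cartesianness of $\eta_A$ transports through $i^\ast$ precisely where Lemma~\ref{cart/Segal=Segal} requires it.
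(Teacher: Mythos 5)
Your proof is correct and follows essentially the same route as the paper's: both use that the unit $\eta_A$ is cartesian for flanked $A$ (Lemma~\ref{unit-cart}), push it through $i\upperstar$ (Lemma~\ref{i*cart}), identify $i\upperstar u\upperstar i\upperstar A$ with the double dec of the decomposition space $i\upperstar A$ and hence a Segal space, and conclude via Lemma~\ref{cart/Segal=Segal}. Your two-step justification that the double dec is Segal (via Proposition~\ref{Dec=Segal+cULF} applied twice) is just a slightly more explicit spelling-out of what the paper cites from Theorem~\ref{thm:decomp-dec-segal}.
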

\begin{proof}
  Put $X = i\upperstar A$.  We have the maps
  $$
  \xymatrix{
  i\upperstar A \ar[r]^-{i\upperstar  \eta_{A}} 
  & i\upperstar u\upperstar i\upperstar  A =
  u\upperstar  i\upperstar X \ar[r]^-{\epsilon_{X}} 
  & X= i\upperstar 
  A}
  $$
  Now $X$ is a decomposition space by assumption, so $i\upperstar 
  u\upperstar X = \Dec_\bot \Dec_\top X$ is a Segal space and the counit is cULF
  (both statements by Theorem~\ref{thm:decomp-dec-segal}).
  On the other hand, since $A$ is flanked, the unit $\eta$ is 
  cartesian by Lemma~\ref{unit-cart}, hence $i\upperstar \eta$ is 
  cartesian by Lemma~\ref{i*cart}.  Since $i\upperstar A$ is thus cartesian over a Segal 
  space, it is itself a Segal space (\ref{cart/Segal=Segal}).
\end{proof}

\begin{lemma}\label{Flanked/cart} 
  If $B\to A$ is a cartesian map of $\Xi\op$-spaces and $A$ is a flanked
  decomposition space then so is $B$.
\end{lemma}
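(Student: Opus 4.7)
The plan is to check the two conditions in the definition of a flanked decomposition space: that $B$ is flanked in the sense of \ref{flanked}, and that the underlying simplicial space $i\upperstar B$ is a decomposition space.

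The second part is essentially a corollary of earlier material. Applying $i\upperstar$ to the cartesian map $B\to A$ yields, by Lemma~\ref{i*cart}, a cartesian natural transformation $i\upperstar B \to i\upperstar A$ of simplicial spaces. Being cartesian on every arrow of $\Delta\op$, it is in particular cULF. Since $i\upperstar A$ is a decomposition space by hypothesis, Lemma~\ref{cULF/decomp} gives that $i\upperstar B$ is a decomposition space as well.

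For the flanking condition, the strategy is pure pullback-pasting. Fix $n \geq 0$, and reorient the flanking square of $B$ so the prospective pullback vertex sits at the top-left:
$$\xymatrix{
B_n \ar[r]^{d_\top} \ar[d]_{s_{\bot-1}} & B_{n-1} \ar[d]^{s_{\bot-1}} \\
B_{n+1} \ar[r]_{d_\top} & B_n
}$$
Together with the analogous square in $A$ and the naturality squares between the two, this assembles into a commutative cube in $\Grpd$. The four side faces are the naturality squares of $B \to A$ for the operators $d_\top$ and $s_{\bot-1}$, and are all pullbacks by the cartesianness assumption; the bottom face is a pullback by the flanking of $A$. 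A standard cube-of-pullbacks argument (iterated application of Lemma~\ref{pbk}) then forces the top face to be a pullback. Concretely, substituting the side pullbacks into the iterated fibre product gives
$$B_{n-1} \times_{B_n} B_{n+1} \;\simeq\; B_n \times_{A_n} (A_{n-1} \times_{A_n} A_{n+1}) \;\simeq\; B_n \times_{A_n} A_n \;\simeq\; B_n,$$
where the middle equivalence uses flanking of $A$. The dual square, involving $d_\bot$ and $s_{\top+1}$, is handled identically, as the hypotheses on $A$ and on $B\to A$ are entirely symmetric in top and bottom.

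No substantive obstacle is anticipated; the whole argument reduces to routine pullback-pasting together with a single invocation of Lemma~\ref{cULF/decomp}. The only care needed is in tracking the two roles played by $B_n$ in the flanking square (as top-left pullback and as bottom-right apex of the cospan), but this is handled transparently by the reorientation above.
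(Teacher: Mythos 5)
Your proof is correct; note that the paper actually states this lemma with no proof at all, treating it as routine, and your argument (cartesian $\Rightarrow$ cULF on underlying simplicial spaces, so Lemma~\ref{cULF/decomp} handles the decomposition-space part; prism/pasting of the naturality pullbacks with the flanking pullbacks of $A$ handles the flanking part) is exactly the intended one. The only point deserving a moment's care is that the displayed fibre-product chain should be read as shorthand for the two-rectangle application of Lemma~\ref{pbk} — pasting the flanking square of $B$ against the naturality square for $s_{\bot-1}$ and comparing with the naturality square for $d_\top$ pasted against the flanking square of $A$ — which you indicate correctly.
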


\begin{cor}
  The wide-cartesian factorisation system
  restricts to a factorisation system on $\FD$.
\end{cor}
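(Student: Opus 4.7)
The plan is to verify that both classes of the wide-cartesian factorisation system on $\Fun(\Xi\op,\Grpd)$ restrict to $\FD$, and that the factorisation of any map between flanked decomposition spaces stays within $\FD$. Since orthogonality and closure under equivalences are inherited automatically from the ambient factorisation system on $\Fun(\Xi\op,\Grpd)$, the only real content is the third axiom: that factorisations exist in $\FD$.

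First I would recall the concrete description of the factorisation: given a map $B \to A$ in $\Fun(\Xi\op,\Grpd)$, the middle object $A'$ is obtained by pulling back the entire diagram $A$ along $B_{-1} \to A_{-1}$, so that $A' \to A$ is cartesian and $B \to A'$ is wide (an equivalence on $[-1]$). Now suppose $B \to A$ is a map in $\FD$, i.e.\ both $B$ and $A$ are flanked decomposition spaces. The cartesian factor $A' \to A$ is then a cartesian map of $\Xi\op$-spaces whose codomain $A$ is a flanked decomposition space. By Lemma~\ref{Flanked/cart}, this forces $A'$ itself to be a flanked decomposition space. Hence the whole factorisation $B \to A' \to A$ takes place in $\FD$, with $B \to A'$ wide and $A' \to A$ cartesian.

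It remains to note that the two classes are stable under equivalence when restricted to $\FD$ (trivial, since they were in the ambient category) and that orthogonality $\text{wide} \perp \text{cartesian}$ holds for maps in $\FD$ simply because $\FD \subset \Fun(\Xi\op,\Grpd)$ is a full subcategory: the space of fillers for a lifting problem in $\FD$ coincides with the space of fillers computed in $\Fun(\Xi\op,\Grpd)$, which is contractible by the ambient factorisation system.

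I do not anticipate any serious obstacle; the only non-formal input is Lemma~\ref{Flanked/cart}, which guarantees that the middle object of the factorisation inherits the flanked decomposition-space structure. Everything else is bookkeeping about restricting a factorisation system to a full subcategory that happens to be closed under the relevant pullback construction.
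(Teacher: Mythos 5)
Your proof is correct and follows exactly the route the paper intends: the paper states this corollary without proof immediately after Lemma~\ref{Flanked/cart}, and that lemma is indeed the only non-formal input — it guarantees the middle object of the wide-cartesian factorisation (being cartesian over a flanked decomposition space) lies in $\FD$, while orthogonality and equivalence-stability are inherited because $\FD$ is a full subcategory.
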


\begin{lemma}\label{repr=flanked}
  The representable functors $\Xi[k]$ are flanked.
\end{lemma}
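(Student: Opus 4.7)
The plan is to check the flanking conditions directly for the representable $\Xi[k]=\Xi(-,[k])$. Since each $\Xi[k]_n$ is a set (equivalently, a discrete $\infty$-groupoid), a pullback of $\infty$-groupoids in this context is just a pullback of sets, which by the Yoneda lemma is dual to a pushout in $\Xi$. Accordingly, the two flanking squares for $\Xi[k]$ (ranging over all $k$) are equivalent to the assertion that, for every $n\geq 0$, the squares
$$
\vcenter{\xymatrix{
[n] \ar[r]^{d^\top}\ar[d]_{s^{\bot-1}} & [n+1] \ar[d]^{s^{\bot-1}} \\
[n-1] \ar[r]_{d^\top} & [n]
}}
\qquad\text{and}\qquad
\vcenter{\xymatrix{
[n] \ar[r]^{d^\bot}\ar[d]_{s^{\top+1}} & [n+1] \ar[d]^{s^{\top+1}} \\
[n-1] \ar[r]_{d^\bot} & [n]
}}
$$
are pushouts in $\Xi$. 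By the manifest symmetry of $\Xi$ interchanging $\bot$ and $\top$, it suffices to verify that the first square is a pushout.

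The intuition is that $d^\top$ affects only the top of the ordinal (inserting a new black element adjacent to the white $\top$), whereas $s^{\bot-1}$ affects only the bottom (collapsing the bottommost black element onto $\bot$); since these operations are supported on disjoint ends, they commute and the square is a pushout. To make this concrete, label the black elements of $[n]$ as $b_1<\dots<b_{n+1}$, and similarly for $[n-1]$ and $[n+1]$, and consider maps $\alpha\colon[n-1]\to Z$ and $\beta\colon[n+1]\to Z$ in $\Xi$ satisfying the compatibility $\alpha\circ s^{\bot-1}=\beta\circ d^\top$. Unpacking, this condition is $\beta(b_1)=\bot$ together with $\alpha(b_i)=\beta(b_{i+1})$ for $1\le i\le n$. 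One then checks that the unique $\gamma\colon[n]\to Z$ with $\gamma\circ d^\top=\alpha$ and $\gamma\circ s^{\bot-1}=\beta$ is forced to be $\gamma(\bot)=\bot$, $\gamma(\top)=\top$, $\gamma(b_i)=\alpha(b_i)=\beta(b_{i+1})$ for $1\le i\le n$, and $\gamma(b_{n+1})=\beta(b_{n+2})$; monotonicity and preservation of the outer elements follow from those of $\alpha$ and $\beta$ combined with the compatibility condition.

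The only step requiring a bit of attention is the case $n=0$, where the square involves the object $[-1]$; but with $s^{\bot-1}\colon[0]\to[-1]$ understood as the map folding the unique black element of $[0]$ onto $\bot$, the compatibility condition on $\alpha\colon[-1]\to Z$ and $\beta\colon[1]\to Z$ reduces to $\beta(b_1)=\bot$, and $\gamma\colon[0]\to Z$ is uniquely determined by $\gamma(b_1)=\beta(b_2)$. No real obstacle arises; the argument is essentially a bookkeeping exercise about where the maps $d^\top$ and $s^{\bot-1}$ act.
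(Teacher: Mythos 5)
Your proof is correct and follows essentially the same route as the paper: both reduce the flanking pullbacks for $\Xi[k]$ to the assertion that the corresponding squares of coface/codegeneracy maps are pushouts in $\Xi$, using that representables turn colimits into limits. The only difference is that the paper leaves the pushout claim unverified, whereas you carry out the (correct) element-by-element check.
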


\begin{proof}
  Since the pullback squares required for a presheaf to be flanked
  are images of pushouts in $\Xi$, this follows since representable
  functors send colimits to limits.
\end{proof}

\subsection{Intervals and the factorisation-interval construction}
\label{sec:fact-int}

\begin{blanko}{Complete $\Xi\op$-spaces.}
  A $\Xi\op$-space is called {\em complete} if all degeneracy maps are
  monomorphisms.  We are mostly interested in this notion for flanked
  decomposition spaces.  In this case, if just $s_0: A_0 \to A_1$ is a
  monomorphism, then all the degeneracy maps are monomorphisms.  This follows
  because on the underlying decomposition space, we know from \ref{all-s-mono} that
  $s_0: A_0 \to A_1$ being a monomorphism implies that all the simplicial
  degeneracy maps are monomorphisms, and by flanking we then deduce that also the new
  outer degeneracy maps are monomorphisms.  Denote by $\cFD \subset\FD$ the full
  subcategory spanned by the complete flanked decomposition spaces.
  
  It is clear that if $X$ is a complete decomposition space, then $u\upperstar X$
  is a complete flanked decomposition space, and if $A$ is a complete flanked
  decomposition space then $i\upperstar A $ is a complete decomposition space.
  Hence the fundamental adjunction
  $\xymatrix{
  i\upperstar  : \FD \ar@<+3pt>[r] & \ar@<+3pt>[l]\Decomp : u\upperstar 
  }$
  between flanked decomposition
  spaces and decomposition spaces restricts to an adjunction
  $$\xymatrix{
  i\upperstar  : \cFD \ar@<+3pt>[r] & \ar@<+3pt>[l]\cDecomp : u\upperstar 
  }$$
 between complete flanked decomposition
  spaces and complete decomposition spaces.
  
  Note that anything cartesian over a 
  complete $\Xi\op$-space is again complete.
\end{blanko}

\begin{blanko}{Reduced $\Xi\op$-spaces.}
  A $\Xi\op$-space $A:\Xi\op\to\Grpd$ is called {\em reduced} when $A[-1]\simeq *$.
\end{blanko}

\begin{lemma}\label{reduced/wide}
  If $A\to B$ is a wide map of $\Xi\op$-spaces and $A$ is reduced then $B$ is 
  reduced.
\end{lemma}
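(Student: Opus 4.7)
The proof is essentially immediate by unwinding the two definitions. By definition, a $\Xi\op$-space $A$ is reduced precisely when $A[-1]\simeq *$, and a map $f:A\to B$ is wide precisely when its $[-1]$-component $f_{-1}:A[-1]\to B[-1]$ is an equivalence of $\infty$-groupoids. The plan is therefore to simply compose these two observations: assuming $A$ reduced and $f:A\to B$ wide, we have $B[-1]\simeq A[-1]\simeq *$, where the first equivalence uses that $f_{-1}$ is an equivalence (wideness) and the second uses that $A$ is reduced. Hence $B$ is reduced.

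There is no main obstacle here — the lemma records a compatibility between two pointwise conditions both concerned only with degree $-1$, and equivalences of contractible $\infty$-groupoids are transported along equivalences without further argument. The only care required is to remember that ``equivalence'' here means equivalence of $\infty$-groupoids (as opposed to, say, levelwise equality), but this is subsumed by the convention that all constructions are considered up to equivalence, as set out in Section~0.
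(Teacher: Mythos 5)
Your proof is correct and is precisely the one-line argument the paper has in mind (the lemma is stated without proof there, being immediate from the definitions): wideness gives $A[-1]\simeq B[-1]$ and reducedness gives $A[-1]\simeq *$, so $B[-1]\simeq *$.
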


\begin{blanko}{Algebraic intervals.}\label{aInt}
  An {\em algebraic interval} is  by definition a reduced complete flanked 
  decomposition space.
  We denote by $\aInt$ the full subcategory of $\Fun(\Xi\op,\Grpd)$
  spanned by the algebraic intervals.  In other words, a {\em morphism of
  algebraic intervals} is just a natural transformation of functors
  $\Xi\op\to\Grpd$.
  Note that the underlying decomposition space of an interval is always a Segal
  space.
\end{blanko}

\begin{lemma}
  All representables $\Xi[k]$ are algebraic intervals (for $k\geq -1$), 
  and also the terminal presheaf $\Xi[-2]$ is an algebraic interval.
\end{lemma}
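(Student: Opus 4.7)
The plan is to verify, for each of the presheaves $\Xi[k]$ (for $k \geq -1$) and $\Xi[-2]$, the four defining conditions of an algebraic interval: (a) flanked, (b) underlying simplicial space $i\upperstar A$ is a decomposition space, (c) complete, and (d) reduced. Three of these are essentially formal; only one has already been isolated as a separate lemma.

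For the flanking condition (a), Lemma~\ref{repr=flanked} does all the work for the representables $\Xi[k]$ (the cartesian squares required are images under $\Xi[k]$ of pushout squares in $\Xi$, and these are sent to pullbacks by representables). For the terminal presheaf $\Xi[-2]$, all squares of the terminal object are trivially pullbacks. For condition (b), I would use the key identity~\eqref{eq:Deltak+2}, which gives $i\upperstar(\Xi[k]) \simeq \Delta[k\!+\!2]$ for $k \geq -1$. This representable is the nerve of the linearly ordered set $[k+2]$, hence in particular a Segal space (the Segal maps are even bijections on discrete sets), hence a decomposition space by Proposition~\ref{prop:segal=>decomp1}. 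For the terminal presheaf, $i\upperstar \Xi[-2]$ is the terminal simplicial space, trivially a decomposition space.

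For completeness (c), since the representables take values in discrete sets, the condition amounts to checking that $s_0 : \Xi[k]_0 \to \Xi[k]_1$ is injective; but this is precomposition with the codegeneracy $s^0 : [1] \to [0]$ in $\Xi$, which is a surjection, so precomposition with it is injective. The terminal case is trivial. For the reduced condition (d), I use that $[-1]$ is an initial object of $\Xi$, so $\Xi[k][-1] = \Hom_\Xi([-1],[k])$ is a singleton; and for the terminal presheaf $\Xi[-2][-1] = \ast$ by definition.

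There is no substantive obstacle: the statement is a straightforward bookkeeping check combining the already-proved Lemma~\ref{repr=flanked}, the adjunction identity~\eqref{eq:Deltak+2}, and the observation that initial objects in $\Xi$ represent singletons. The only point worth flagging is that the proof relies on the indexing convention in which $[-1]$ is genuinely initial in $\Xi$ (one white dot above, one below, no black dots), so that $\Hom_\Xi([-1],[k])$ is automatically a singleton for every $k \geq -1$; this is what makes the reduced condition immediate for every representable in the stated range.
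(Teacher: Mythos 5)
Your proposal is correct and follows essentially the same route as the paper's own (much terser) proof: reducedness from $[-1]$ being initial, flanking from Lemma~\ref{repr=flanked}, and the underlying-complete-decomposition-space condition from the identity $i\upperstar(\Xi[k])=\Delta[k+2]$ of \eqref{eq:Deltak+2} (the paper just notes these are even Rezk complete Segal spaces, where you instead invoke Proposition~\ref{prop:segal=>decomp1} and check $s_0$ directly). The extra bookkeeping you supply is all sound.
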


\begin{proof}
  It is clear that all these presheaves are contractible in degree $-1$, and
  they are flanked by Lemma~\ref{repr=flanked}.  It is also clear from
  \eqref{eq:Deltak+2} that their underlying simplicial spaces are complete
  decomposition spaces (they are even Rezk complete Segal spaces).
\end{proof}

\begin{lemma}
  $\Xi[-1]$ is an initial object in $\aInt$.
\end{lemma}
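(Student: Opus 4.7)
The plan is to prove this by a direct application of the Yoneda lemma, combined with the defining condition of being reduced. First I would note that $\Xi[-1]$ is itself an object of $\aInt$: this is the $k=-1$ case of the preceding lemma establishing that every representable $\Xi[k]$ is an algebraic interval. So we just need to check the universal property.

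For any algebraic interval $B \in \aInt$, I would compute the mapping space in $\aInt$ using that $\aInt \subset \Fun(\Xi\op,\Grpd)$ is a full subcategory, so
\[
\Map_{\aInt}(\Xi[-1], B) \;\simeq\; \Map_{\Fun(\Xi\op,\Grpd)}(\Xi[-1], B) \;\simeq\; B[-1]
\]
by the Yoneda lemma (applied in the $\infty$-categorical setting). By the reducedness condition built into the definition of algebraic interval (\ref{aInt}), we have $B[-1] \simeq *$. Hence $\Map_{\aInt}(\Xi[-1], B)$ is contractible, which is the defining property of an initial object in an $\infty$-category.

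There is no serious obstacle here; the only point that requires any attention is the Yoneda identification, but this is standard for presheaf $\infty$-categories on ordinary $1$-categories such as $\Xi$. The argument relies essentially only on unpacking the definitions, and in particular does not use the flanking or decomposition-space conditions on $B$ beyond what is already packaged into the membership $B \in \aInt$ via the reducedness clause.
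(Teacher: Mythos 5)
Your argument is correct and is evidently the intended one: the paper omits the proof precisely because it reduces to the Yoneda identification $\Map_{\Fun(\Xi\op,\Grpd)}(\Xi[-1],B)\simeq B_{-1}$ together with fullness of $\aInt$ and the reducedness condition $B_{-1}\simeq *$. Nothing is missing.
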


\begin{lemma}\label{aInt:map=wide}
  Every morphism in $\aInt$ is wide.
\end{lemma}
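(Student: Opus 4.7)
The plan is very short: unfold the definitions. An algebraic interval is by definition reduced (\ref{aInt}), i.e.\ $A_{-1} \simeq *$. So given any morphism $f : A \to B$ in $\aInt$, both $A_{-1}$ and $B_{-1}$ are contractible $\infty$-groupoids, and hence the component $f_{-1} : A_{-1} \to B_{-1}$ is automatically an equivalence. This is precisely the definition of $f$ being wide. There is no obstacle; the lemma is a direct consequence of the definition of reduced together with the definition of wide.
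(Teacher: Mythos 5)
Your proof is correct and is exactly the intended argument: the paper states this lemma without proof precisely because, as you observe, a map between the contractible $\infty$-groupoids $A_{-1}$ and $B_{-1}$ is automatically an equivalence, which is the definition of wide.
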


\begin{cor}
  If a morphism of algebraic intervals is cartesian, then it is an equivalence.
\end{cor}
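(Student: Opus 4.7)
My plan is to exploit the two properties defining algebraic intervals: being reduced (contractible in degree $-1$) and the abstract nonsense of cartesian transformations over a terminal object.

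First I would recall that $[-1]$ is an initial object of $\Xi$, hence $[-1]$ is terminal in $\Xi\op$. So for every object $[k]$ of $\Xi$ there is a unique map $[k]\to[-1]$ in $\Xi\op$, and the naturality square for $f:A\to B$ with respect to this map has the form
$$\xymatrix{
A_k \ar[r]^{f_k} \ar[d] & B_k \ar[d] \\
A_{-1} \ar[r]_{f_{-1}} & B_{-1}.
}$$
Since $A$ and $B$ are algebraic intervals they are reduced, so $A_{-1}\simeq *\simeq B_{-1}$; in particular $f_{-1}$ is an equivalence (so every morphism in $\aInt$ is automatically wide, reproving Lemma~\ref{aInt:map=wide}). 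The cartesian hypothesis says the above square is a pullback, and pullback along an equivalence is an equivalence, hence $f_k$ is an equivalence for every $k\geq -1$. Therefore $f$ is a levelwise equivalence, i.e.\ an equivalence in $\aInt$.

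I do not anticipate any real obstacle: the argument is essentially formal, relying only on the standard fact that a cartesian natural transformation over a point is an equivalence, combined with the reducedness built into the definition of algebraic interval. The only thing to be mildly careful about is the direction conventions ($[-1]$ is initial in $\Xi$, terminal in $\Xi\op$), but this is immediate from the definitions in \ref{aInt}.
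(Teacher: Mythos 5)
Your proof is correct and is essentially the paper's argument unpacked: the paper states this corollary as an immediate consequence of Lemma~\ref{aInt:map=wide} (every morphism of algebraic intervals is wide, because both ends are reduced) together with the general fact that a map lying in both classes of a factorisation system is an equivalence. Your direct verification via the pullback square over the terminal object $[-1]$ of $\Xi\op$ is exactly how that general fact plays out for the wide--cartesian factorisation system of \ref{eq1-cart}, so there is nothing to add.
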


\begin{blanko}{The factorisation-interval construction.}
  We now come to the important notion of factorisation
  interval $I(a)$ of a given arrow $a$ in a decomposition space $X$. 
  In the case where $X$ is a $1$-category the construction is due to
  Lawvere~\cite{Lawvere:statecats}:
  the objects of $I(a)$ are the two-step
  factorisations of $a$, with initial object id-followed-by-$a$ and terminal object
  $a$-followed-by-id.  The $1$-cells are arrows between such
  factorisations, or equivalently $3$-step factorisations, and so on.

  For a general (complete) decomposition space $X$, the idea is this:
  taking the double-dec of $X$ gives a simplicial object starting at $X_2$, but
  equipped with an augmentation $X_1 \leftarrow X_2$.   Pulling back this 
  simplicial object along $\name a: 1 \to X_1$ yields a new simplicial object
  which is $I(a)$.  This idea can be formalised in terms of the basic adjunction
  as follows.
  
  By Yoneda, to give an arrow $a\in X_1$ is to give $\Delta[1] \to X$ in
  $\Fun(\Delta\op,\Grpd)$, or in the full subcategory $\cDecomp$.
  By adjunction, this is equivalent to giving $\Xi[-1] \to u\upperstar X$ in
  $\cFD$.  Now factor this map as a wide map followed by a cartesian map:
  $$\xymatrix{
  \Xi[-1] \ar[rr] \ar[rd]_{\mathrm{wide}} && u\upperstar X  .\\
  & A \ar[ru]_{\mathrm{cart}} &
  }$$
  The object appearing in the middle is an algebraic
  interval since it is wide under $\Xi[-1]$ (\ref{reduced/wide}).
  By definition, the factorisation interval of $a$ is
  $I(a) := i\upperstar A$, equipped with a cULF map to $X$, as seen in the diagram
  $$\xymatrix{
  \Delta[1] \ar[rr] \ar[rd] && i\upperstar u\upperstar X 
  \ar[r]^-{\epsilon}_-{\mathrm{cULF}} & X .\\
  & I(a) \ar[ru]_{\mathrm{cULF}} &
  }$$
  The map $\Delta[1] \to I(a)$ equips $I(a)$ with two endpoints, and a longest 
  arrow between them.
  The cULF map $I(a) \to X$ sends the longest arrow of $I(a)$ to $a$.
  
  More generally, by the same adjunction argument,
  given an $k$-simplex $\sigma: \Delta[k]\to X$ with long edge $a$,
  we get a $k$-subdivision of $I(a)$, i.e.~a wide map $\Delta[k]\to I(a)$.

  The construction shows, remarkably, that 
  as far as comultiplication is concerned, any decomposition space is locally
  a Segal space, in the sense that the comultiplication of an arrow $a$
  may as well be performed inside $I(a)$, which is a Segal space by 
  \ref{prop:i*flanked=Segal}.
  So while there may be no global way to compose arrows even if their source
  and targets match, the {\em decompositions} that exist do compose again.
\end{blanko}

We proceed to formalise the factorisation-interval construction. 

\begin{blanko}{Coreflections.}
%
  Inside the
  $\infty$-category of arrows $\Ar(\cFD)$, denote by
  $\Ar^{\mathrm w}(\cFD)$ the full subcategory spanned by the
  wide maps.  The wide-cartesian factorisation system amounts to a coreflection
  $$
  w: 
  \Ar(\cFD) \longrightarrow \Ar^{\mathrm 
  w}(\cFD) ;
  $$
  it sends an arrow $A\to B$ to its wide factor $A \to B'$, and in particular can
  be chosen to have $A$ as domain again (\ref{coreflection:w}).
  In particular, for each algebraic interval $A\in\aInt\subset\cFD$,
  the adjunction restricts to an adjunction between coslice categories, with coreflection
  $$
  w_A: \cFD _{A/} \longrightarrow \cFD^{\mathrm{w}}_{A/}  .
  $$
  The first $\infty$-category is that of flanked decomposition spaces under $A$, and the
  second $\infty$-category is that of flanked decomposition spaces with a wide map from
  $A$.  Now, if a flanked decomposition space receives a
  wide map from an algebraic interval then it is itself an algebraic interval
  (\ref{reduced/wide}), and all maps of algebraic intervals are wide
  (\ref{aInt:map=wide}).  So in the end the cosliced adjunction takes the form
  of the natural full inclusion functor
  $$
  v_A: \aInt_{A/} \to \cFD_{A/}
  $$
  and a right adjoint
  $$
  w_A: \cFD_{A/} \to \aInt_{A/} .
  $$
  
\begin{blanko}{Remark.}
  These observations amount to saying that
  the functor $v: \aInt \to \cFD$ is a {\em colocal left adjoint}.
  This notion is dual to the important concept
  of local right adjoint~\cite{Weber:TAC18}.
\end{blanko}
  
  We record the following obvious lemmas:
  \begin{lemma}
    The coreflection $w$ sends cartesian maps to equivalences.
  \end{lemma}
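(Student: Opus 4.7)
The plan is to exploit orthogonality of the wide-cartesian factorisation system on $\cFD$: the wide factor of any cartesian map must be an equivalence, which is exactly what is claimed. Concretely, let $f: A \to B$ be a cartesian morphism in $\cFD$, viewed as an object of $\Ar(\cFD)$, and let $f = g \circ e$ be its wide-cartesian factorisation with $e: A \to A'$ wide and $g: A' \to B$ cartesian. By construction of the coreflection (\ref{coreflection:w}), $w(f) \simeq e$, so it suffices to prove that $e$ is an equivalence in $\cFD$.

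First, I would consider the commutative square
\[
\xymatrix{A \ar[r]^{\id_A} \ar[d]_e & A \ar[d]^f \\ A' \ar[r]_g & B}
\]
whose left edge $e$ is wide and whose right edge $f$ is (by hypothesis) cartesian. Orthogonality $e \perp f$ then produces an essentially unique filler $d: A' \to A$ satisfying $d \circ e \simeq \id_A$ (top triangle) and $f \circ d \simeq g$ (bottom triangle). The first of these identifies $d$ as a left inverse of $e$.

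Next, to upgrade this to a two-sided inverse, I would consider the square
\[
\xymatrix{A \ar[r]^e \ar[d]_e & A' \ar[d]^g \\ A' \ar[r]_g & B}
\]
again with $e$ wide on the left and $g$ cartesian on the right. Both $\id_{A'}$ and $e \circ d$ are fillers: the first trivially, and for the second, $(e \circ d) \circ e \simeq e \circ \id_A \simeq e$ takes care of the top triangle while $g \circ (e \circ d) = f \circ d \simeq g$ takes care of the bottom. By contractibility of the space of fillers, $e \circ d \simeq \id_{A'}$. Hence $e$ is an equivalence, so $w(f)$ is an equivalence.

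There is no substantive obstacle: the result is the standard observation that in a factorisation system, the left factor of a right-class morphism is trivial. The only caveat in the $\infty$-categorical setting is to read "unique filler" as contractibility of the filler space, so that the resulting identities are equivalences composing coherently, but this is built into the definition of factorisation system used in \ref{sec:fact}.
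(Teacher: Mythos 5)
Your argument is correct and is exactly the standard orthogonality argument the paper has in mind: the lemma is stated without proof as ``obvious'', and the intended justification is precisely that the left (wide) factor of a map already in the right (cartesian) class is an equivalence, which you establish via the two lifting squares. The only point worth noting is that this is the same retract-style argument the paper itself spells out in \ref{sec:fact} when showing $E={}^{\bot}F$, so your proof fills the gap in the intended way.
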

  \begin{lemma}
    The counit is cartesian.
  \end{lemma}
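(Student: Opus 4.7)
The plan is to unpack the definition of the coreflection $w$ in terms of the wide–cartesian factorisation system on $\cFD$ and observe that the counit is literally the cartesian factor produced by that factorisation. First I would set up notation: for an object $(a: A \to B)$ of $\cFD_{A/}$, the factorisation system gives a canonical factorisation
$$
A \xrightarrow{\;w_a\;} B' \xrightarrow{\;c_a\;} B
$$
with $w_a$ wide and $c_a$ cartesian, and by the coreflection described just before the lemma (obtained from \ref{coreflection:w} applied to the wide–cartesian factorisation system on $\Ar(\cFD)$), we have $w(a) = (A \xrightarrow{w_a} B')$. Since any wide map out of $A$ lands in $\aInt$ (by \ref{reduced/wide}, as $A$ is reduced) and every morphism of algebraic intervals is wide (\ref{aInt:map=wide}), this indeed defines $w: \cFD_{A/} \to \aInt_{A/}$, right adjoint to the inclusion $v$.

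Next I would identify the counit explicitly. The counit $\varepsilon_a : v\,w(a) \to a$ is a morphism in $\cFD_{A/}$, i.e.\ a commutative triangle under $A$:
$$
\xymatrix@R-1em{
& A \ar[ld]_{w_a} \ar[rd]^{a} & \\
B' \ar[rr]_{\varepsilon_a} && B.
}
$$
By the universal property of the coreflection (or, equivalently, by tracing through the construction of $w$ via the factorisation system), $\varepsilon_a$ is forced to be the cartesian factor $c_a$. So $\varepsilon_a = c_a$ is cartesian in $\cFD$ by construction of the factorisation system.

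Since this is essentially a tautology given the definition of $w$ via the wide–cartesian factorisation, there is no serious obstacle; the only thing to verify carefully is that the abstract counit really does coincide with $c_a$, which follows from the usual uniqueness of factorisations in a factorisation system (any competing triangle under $A$ with a wide first leg and arbitrary second leg factors uniquely through the canonical $w_a$, and the second leg is then forced to agree with $c_a$ up to equivalence).
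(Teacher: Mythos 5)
Your proof is correct and is exactly the argument the paper has in mind (the paper records this lemma without proof as an "obvious" consequence of the construction of $w$ via the wide--cartesian factorisation system): the counit at $a:A\to B$ is by construction the cartesian factor $c_a$ of the factorisation $A\xrightarrow{w_a}B'\xrightarrow{c_a}B$, hence cartesian. Your extra care in identifying the abstract counit with $c_a$ via uniqueness of factorisations is sound and consistent with the paper's reference to the dual of Lurie's Lemma~5.2.8.19.
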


\begin{blanko}{Factorisation-interval as a comonad.}\label{I=LvwR}
  We also have the basic adjunction $i\upperstar \isleftadjointto u\upperstar $
  between complete decomposition spaces and complete flanked decomposition spaces.
  Applied to coslices over an algebraic interval $A$, and its underlying 
  decomposition space $\un A =i\upperstar A$ we get the 
  adjunction
  $$\xymatrix{
  L : \cFD_{A/} \ar@<+3pt>[r] & \ar@<+3pt>[l]\cDecomp_{\un A/} : R   .
  }$$
  Here $L$ is simply the functor $i\upperstar$,
  while the right adjoint $R$ is given by applying $u\upperstar$
  and precomposing with the unit $\eta_A$.
  Note that the unit of this adjunction $L \isleftadjointto R$ at an object
  $f:A \to X$ is given by
  $$
  \xymatrix {& A \ar[ld]_f \ar[rd]^{u\upperstar i\upperstar f \circ \eta_A} \\
  X \ar[rr]_{\eta_X} && u\upperstar i\upperstar  X
  }
  $$
  
  We now combine the two adjunctions:
  $$\xymatrix{
  \aInt_{A/} \ar@<+3pt>[r]^v & \ar@<+3pt>[l]^w \cFD_{A/} 
  \ar@<+3pt>[r]^-L
  & \ar@<+3pt>[l]^-R \cDecomp_{\un A/}  \, .
  }$$

  The factorisation-interval functor is the $\un A= \Delta[k]$ instantiation:
  $$
  I := L \circ v \circ w \circ R
  $$
  Indeed, this is precisely what we said in the construction, 
  just phrased more functorially.
  It follows that the factorisation-interval construction is a comonad
  on $\cDecomp_{\un A/}$.
\end{blanko}

\begin{lemma}
  The composed counit is cULF.
\end{lemma}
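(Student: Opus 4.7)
The plan is to apply the standard formula for the counit of a composite adjunction. Since $I = LvwR$ arises from composing $v \dashv w$ (the wide-cartesian coreflection on $\cFD_{A/}$) with $L \dashv R$ (the cosliced basic adjunction between $\cFD_{A/}$ and $\cDecomp_{\un A/}$), we have $Lv \dashv wR$, and the counit of the resulting comonad $I$ at an object $(f:\un A \to X)$ is the composite
$$I(X) = LvwR(X) \xrightarrow{L(\varepsilon^{vw}_{R(X)})} LR(X) \xrightarrow{\varepsilon^{LR}_X} X,$$
where $\varepsilon^{vw}$ and $\varepsilon^{LR}$ denote the counits of the two constituent adjunctions. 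So the problem reduces to showing that each factor is cULF; their composite is then cULF, as the defining family of pullback conditions is stable under horizontal pasting.

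For the right-hand factor, observe that $LR = i\upperstar u\upperstar$ is the double-dec comonad on $\cDecomp$, and $\varepsilon^{LR}_X$ unfolds to the basic-adjunction counit $\varepsilon_X : i\upperstar u\upperstar X \to X$. Lemma \ref{counit-cULF} gives precisely that this map is cULF, as $X$ is a complete decomposition space.

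For the left-hand factor, I would invoke the observation recorded just above the statement: the counit of the wide-cartesian coreflection $v \dashv w$ is cartesian in $\cFD$. Thus $\varepsilon^{vw}_{R(X)} : vwR(X) \to R(X)$ is a cartesian morphism of $\Xi\op$-spaces, and Lemma \ref{i*cart} then shows that $L = i\upperstar$ carries it to a cartesian map in $\cDecomp$. A cartesian map between simplicial spaces is, in particular, cartesian on inner face maps and on degeneracies, hence cULF.

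No real obstacle arises — all the technical content is already in place in the preceding lemmas. The only care needed is the correct identification of the composite counit as the stated two-step composite, and the elementary observation that `cartesian' (on all of $\Delta\op$) automatically implies `cULF' (cartesian on generic maps only).
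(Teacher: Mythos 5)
Your proposal is correct and is essentially the argument the paper intends: its proof is the one-line "Follows readily from Lemma~\ref{counit-cULF}", and your decomposition of the composite counit into $\varepsilon^{LR}_X \circ L(\varepsilon^{vw}_{RX})$ — with the first factor cULF by Lemma~\ref{counit-cULF} and the second cartesian (hence cULF) via the coreflection counit and Lemma~\ref{i*cart} — is exactly the intended unpacking. No gaps.
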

\begin{proof}
  Follows readily from \ref{counit-cULF}.
\end{proof}

\end{blanko}

\begin{prop}
  The composed unit $\eta : \Id \Rightarrow w \circ R \circ L \circ v$ is an
  equivalence.
\end{prop}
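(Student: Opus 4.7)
The plan is to split the composite unit according to the two constituent adjunctions and to check each piece separately. Writing $\eta^v:\Id\to wv$ for the unit of $v\isleftadjointto w$ and $\eta^L:\Id\to RL$ for the unit of $L\isleftadjointto R$, the composite unit factors as
\[
\Id \xrightarrow{\;\eta^v\;} wv \xrightarrow{\;w\eta^L v\;} wRLv,
\]
so it suffices to show that both factors are componentwise equivalences.

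The first factor is immediate: $v:\aInt_{A/}\to\cFD_{A/}$ is a fully faithful inclusion, since $\aInt$ is by definition the full subcategory of $\Fun(\Xi\op,\Grpd)$ spanned by the algebraic intervals (a morphism of algebraic intervals being simply a natural transformation). A fully faithful left adjoint has invertible unit, so $\eta^v$ is an equivalence.

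For the second factor, fix an object $f:A\to A'$ of $\aInt_{A/}$. By the explicit description of $\eta^L$ recalled in \ref{I=LvwR}, the component of $\eta^L$ at $v(f)=f$ is represented, in $\cFD_{A/}$, by the morphism $\eta_{A'}:A'\to u\upperstar i\upperstar A'$. Since $A'$ is an algebraic interval, in particular it is flanked, and Lemma \ref{unit-cart} then gives that $\eta_{A'}$ is cartesian. As noted just before \ref{I=LvwR}, the coreflection $w$ sends cartesian maps to equivalences, so $w(\eta_{A'})$ is an equivalence for every $f$. Hence $w\eta^L v$ is a componentwise equivalence, and the composite unit $\eta$ is an equivalence, as required.

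The argument is essentially formal once the pieces are correctly assembled, and there is no real obstacle. The conceptual content is concentrated in Lemma \ref{unit-cart}: flanking is precisely the condition on $A'$ ensuring that the unit $\eta_{A'}$ of the double-dec monad $u\upperstar i\upperstar$ is cartesian, and cartesian maps are exactly those inverted by the wide-cartesian coreflection $w$. In other terms, the result says that algebraic intervals are already fixed (up to equivalence) under the factorisation-interval comonad $I=L\circ v\circ w\circ R$, which is the coherence fact that makes the interval construction well behaved.
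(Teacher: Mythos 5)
Your proof is correct and rests on the same essential ingredients as the paper's: the fact that $\eta_{A'}$ is cartesian for flanked $A'$ (Lemma~\ref{unit-cart}) together with the wide--cartesian factorisation system. The only difference is organisational --- you split the composite unit via the standard formula for a composite adjunction and invoke full faithfulness of $v$ plus the recorded lemma that $w$ inverts cartesian maps, whereas the paper writes down the unit in a single diagram and concludes directly from orthogonality; these are equivalent packagings of the same argument.
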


\begin{proof}
  The result of applying the four functors to an algebraic interval map $f:A\to B$
  is the wide factor in
    $$\xymatrix{
  A \ar[rr] \ar@{->>}[rd]_{\mathrm{wide}} && u\upperstar i\upperstar B \\
  & D \ar@{ >->}[ru]_{\mathrm{cart}} &
  }$$
  The unit on $f$ sits in this diagram
  $$
  \xymatrix{
  & \ar@{->>}[ld]_f  A  \ar@{->>}[rd] & \\
  B \ar@{-->}[rr]_{\eta_f} \ar@{ >->}[rd]_{\eta_B}&& D \ar@{ >->}[ld]\\
  & u\upperstar i\upperstar B ,}
  $$
  where $\eta_B$ is cartesian by \ref{unit-cart}.
  It follows now from 
  orthogonality of the
  wide-cartesian factorisation system
  that $\eta_f$ is an equivalence.
\end{proof}

\begin{cor}\label{i*v:ff}
  The functor $i\upperstar \circ v : \aInt \to \cDecomp_{\Delta[1]/}$
  is fully faithful.
\end{cor}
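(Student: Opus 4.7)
The plan is to deduce this immediately from the preceding proposition by the standard fact that a left adjoint is fully faithful if and only if its unit is an equivalence.

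First I would observe that $\Xi[-1]$ is initial in $\aInt$ (established earlier), so the forgetful functor induces an equivalence $\aInt_{\Xi[-1]/} \simeq \aInt$. Similarly, under the basic adjunction $i^*\dashv u^*$, we have $i^* \Xi[-1] = \Delta[1]$, so an algebraic interval $A$ equipped with its unique map from $\Xi[-1]$ is sent by $i^*$ to the pair $(\Delta[1]\to i^* A)$, which is exactly an object of $\cDecomp_{\Delta[1]/}$. Hence, under these identifications, the functor $i^*\circ v:\aInt\to\cDecomp_{\Delta[1]/}$ of the statement coincides with the composite
\[
L\circ v \;:\; \aInt_{\Xi[-1]/}\xrightarrow{\;v\;} \cFD_{\Xi[-1]/}\xrightarrow{\;L\;}\cDecomp_{\Delta[1]/}
\]
from paragraph \ref{I=LvwR}, taken at $A=\Xi[-1]$.

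Next I would assemble the two adjunctions $v\dashv w$ and $L\dashv R$ into the single adjunction $Lv\dashv wR$, whose unit at a map $f:\Xi[-1]\to B$ in $\aInt$ is precisely the composite unit $\eta:\mathrm{Id}\Rightarrow w\circ R\circ L\circ v$ that the previous proposition analyses. That proposition asserts that this composite unit is an equivalence. Applying the standard categorical fact that the left adjoint of an adjunction is fully faithful precisely when its unit is an equivalence (a statement which goes through verbatim in the $\infty$-categorical setting), we conclude that $Lv$, and hence $i^*\circ v$, is fully faithful.

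There is essentially no obstacle beyond bookkeeping: the only substantive content has already been handled by the preceding proposition. The one point to double-check is that the identification $\aInt\simeq\aInt_{\Xi[-1]/}$ really sends the bare functor $i^*\circ v$ to $L\circ v$ in its cosliced form; this is immediate from the construction of $L$ as $i^*$ together with precomposition by the unit $\eta_A$ of $i^*\dashv u^*$, which at $A=\Xi[-1]$ is the canonical map $\Xi[-1]\to u^*\Delta[1]$ that is transpose to the identity on $\Delta[1]$.
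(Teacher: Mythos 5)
Your proposal is correct and is exactly the deduction the paper intends: the corollary is stated without an explicit proof precisely because it follows from the preceding proposition via the standard fact that a left adjoint is fully faithful if and only if its unit is an equivalence, applied to the composite adjunction $Lv\dashv wR$ at $A=\Xi[-1]$ (using that $\Xi[-1]$ is initial in $\aInt$ and $i\upperstar\Xi[-1]=\Delta[1]$). Your bookkeeping identifying $i\upperstar\circ v$ with the cosliced $L\circ v$ matches the paper's setup in \ref{I=LvwR}.
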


%

\begin{prop}\label{prop:I(cULF)=eq}
  $I$ sends cULF maps to equivalences.  In detail, for a cULF functor $F: Y\to 
  X$ and any arrow $a \in Y_1$
  we have a natural equivalence of intervals (and hence of underlying Segal spaces)
  $$
  I(a) \isopil I(Fa).
  $$
\end{prop}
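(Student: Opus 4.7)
The plan is to exploit the wide-cartesian factorisation system on $\cFD$ together with Lemma~\ref{u*f(cULF)=cart}, which says that if $F: Y \to X$ is cULF, then $u\upperstar F : u\upperstar Y \to u\upperstar X$ is a cartesian natural transformation.

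First I would unwind the definition $I = L \circ v \circ w \circ R$ from \ref{I=LvwR}. Given an arrow $a \in Y_1$, presented as $\name{a} : \Delta[1] \to Y$ in $\cDecomp$, transposing across the basic adjunction $i\upperstar \isleftadjointto u\upperstar$ yields a map $\tilde a : \Xi[-1] \to u\upperstar Y$ in $\cFD$. Factor this as wide followed by cartesian,
$$\xymatrix{
\Xi[-1] \ar[rr]^-{\tilde a} \ar@{->>}[rd]_{\mathrm{wide}} && u\upperstar Y, \\
& A_a \ar@{ >->}[ru]_{\mathrm{cart}} &
}$$
so that $I(a) = i\upperstar A_a$, equipped with a cULF map to $Y$.

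Now the composite arrow $Fa \in X_1$ is represented by $\name{Fa}: \Delta[1] \to Y \to X$, whose transpose $\widetilde{Fa}: \Xi[-1] \to u\upperstar X$ is precisely the composite $u\upperstar F \circ \tilde a$. The key observation is that since $F$ is cULF, Lemma~\ref{u*f(cULF)=cart} gives that $u\upperstar F$ is cartesian; and since cartesian maps are closed under composition (being the right-hand class of a factorisation system), the composite
$$\xymatrix{A_a \ar@{ >->}[r]^-{\mathrm{cart}} & u\upperstar Y \ar@{ >->}[r]^-{u\upperstar F}_-{\mathrm{cart}} & u\upperstar X}$$
is again cartesian. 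Thus the diagram
$$\xymatrix{
\Xi[-1] \ar[rr]^-{\widetilde{Fa}} \ar@{->>}[rd]_{\mathrm{wide}} && u\upperstar X \\
& A_a \ar@{ >->}[ru]_{\mathrm{cart}} &
}$$
is already a wide-cartesian factorisation of $\widetilde{Fa}$. By the essential uniqueness of such factorisations we obtain a canonical equivalence $A_{Fa} \simeq A_a$ in $\aInt$, and applying $i\upperstar$ yields the desired equivalence $I(Fa) \simeq I(a)$ in $\cDecomp$. Naturality of this equivalence, and compatibility with the canonical cULF maps to $Y$ and $X$, follows from the functoriality of the wide-cartesian factorisation (i.e.~from the coreflection $w$ of \ref{coreflection:w}).

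The only subtle point is verifying that the equivalence really is an equivalence of intervals (in particular of Segal spaces via Proposition~\ref{prop:i*flanked=Segal}) compatible with the endpoints $\Delta[1] \to I(a)$ and $\Delta[1] \to I(Fa)$; but this is automatic because the wide factor $\Xi[-1] \to A_a$ is preserved identically under the second factorisation, so the transpose endpoint map $\Delta[1] \to i\upperstar A_a$ is literally the same on both sides. No genuine obstacle remains—the argument is essentially a one-line application of uniqueness of factorisations, once the right categorical picture is in place.
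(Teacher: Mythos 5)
Your proof is correct and is essentially the paper's own argument: the paper compresses it into the two recorded facts that $R$ sends cULF maps to cartesian maps (your Lemma~\ref{u*f(cULF)=cart} step) and that the coreflection $w$ sends cartesian maps to equivalences (your uniqueness-of-factorisation step). You have simply unwound the same factorisation-system argument into its concrete form, so nothing further is needed.
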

\noindent
\begin{proof}
  $R$ sends cULF maps to cartesian maps, and $w$ send cartesian maps to 
  equivalences.
\end{proof}

\begin{cor}
  If $X$ is an interval, with longest arrow $a\in X_1$, then $X\simeq I(a)$.
\end{cor}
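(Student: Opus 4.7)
The plan is to apply the preceding Proposition (that the composed unit $\eta : \mathrm{Id} \Rightarrow w \circ R \circ L \circ v$ is an equivalence) to the specific algebraic interval representing $X$, together with the initial map out of $\Xi[-1]$. The key observation is that the factorisation-interval construction $I$ applied to the longest arrow $a$ reassembles exactly the data defining $X$.

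By definition of the $\infty$-category of intervals as a full subcategory of $\cDecomp$, the interval $X$ is of the form $X = i\upperstar A$ for some algebraic interval $A \in \aInt$; this $A$ is determined up to equivalence by the full faithfulness recorded in Corollary~\ref{i*v:ff}. Since $\Xi[-1]$ is initial in $\aInt$, there is an essentially unique morphism $\Xi[-1] \to A$, and under the basic adjunction $i\upperstar \dashv u\upperstar$ this transposes to the map $\Delta[1] \to i\upperstar A = X$ that picks out precisely the longest arrow $a \in X_1$.

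I would now compute $I(a)$ directly from the definition in \ref{I=LvwR}. We need the wide-cartesian factorisation in $\cFD$ of the adjunct map $\Xi[-1] \to u\upperstar X = u\upperstar i\upperstar A$, which by naturality of the adjunction equals the composite
\[
\Xi[-1] \;\longrightarrow\; A \;\xrightarrow{\;\eta_A\;}\; u\upperstar i\upperstar A.
\]
The first map is wide: both $\Xi[-1]_{-1}$ and $A_{-1}$ are contractible (the latter because $A$ is reduced, the former by construction), so the $[-1]$-component is an equivalence. The second map $\eta_A$ is cartesian by Lemma~\ref{unit-cart}, since $A$ is flanked. By uniqueness (up to equivalence) of wide-cartesian factorisations, this \emph{is} the factorisation, with wide factor equivalent to $A$. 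Therefore
\[
I(a) \;=\; L \circ v \circ w \circ R(a) \;\simeq\; i\upperstar A \;=\; X,
\]
as required.

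The argument is essentially a bookkeeping corollary of the Proposition, so there is no serious obstacle; the only point that deserves care is verifying that the transpose of the map $\Delta[1] \to X$ really is a map out of the initial object $\Xi[-1]$ of $\aInt$, and that the two-step composite above legitimately qualifies as the wide-cartesian factorisation (rather than merely a factorisation into a wide followed by a cartesian map). The first is immediate from the adjunction, and the second follows from orthogonality of the factorisation system, exactly as in the proof of the preceding Proposition.
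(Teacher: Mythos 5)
Your proof is correct and is exactly the argument the paper intends: the corollary is stated without proof as an immediate consequence of the preceding proposition that the composed unit $\eta : \Id \Rightarrow w \circ R \circ L \circ v$ is an equivalence, and your unfolding of that (identifying the transpose of $a$ with $\Xi[-1]\to A \xrightarrow{\eta_A} u\upperstar i\upperstar A$, the first map wide by \ref{aInt:map=wide}/reducedness and the second cartesian by \ref{unit-cart}, then invoking uniqueness of wide--cartesian factorisations) is precisely the computation carried out in the paper's proof of that proposition, specialised to $\Xi[-1]\to A$. No gaps.
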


%

\begin{prop}\label{aInt-faithful}
  The composed functor 
  $$
  \aInt \to \cDecomp_{\Delta[1]/} \to \cDecomp
  $$
  is faithful (i.e.~induces a monomorphism on mapping spaces).
\end{prop}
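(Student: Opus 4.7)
The plan is to use Corollary \ref{i*v:ff} to reduce the problem to a concrete contractibility statement about a loop space in $B_1$, and then establish it via the flanking and completeness axioms.

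By Corollary \ref{i*v:ff}, the functor $i\upperstar \circ v : \aInt \to \cDecomp_{\Delta[1]/}$ is fully faithful, so for algebraic intervals $A, B$,
$$
\Map_{\aInt}(A, B) \;\simeq\; \Map_{\cDecomp_{\Delta[1]/}}\bigl((i\upperstar A, f_A),\, (i\upperstar B, f_B)\bigr),
$$
where $f_A, f_B$ are the canonical longest-arrow maps (images under $i\upperstar$ of the unique maps $\Xi[-1]\to A, B$ in $\aInt$). It therefore suffices to show that for all $A, B \in \aInt$ the forgetful comparison
$$
\Map_{\cDecomp_{\Delta[1]/}}\bigl((i\upperstar A, f_A),\, (i\upperstar B, f_B)\bigr) \;\to\; \Map_{\cDecomp}(i\upperstar A, i\upperstar B)
$$
is a monomorphism of $\infty$-groupoids. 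Since mapping spaces in a coslice are homotopy fibres, this forgetful map is the natural inclusion of the fibre over $f_B$ of the postcomposition map $\Map_{\cDecomp}(i\upperstar A, i\upperstar B) \to \Map_{\cDecomp}(\Delta[1], i\upperstar B) = B_1$. A fibre inclusion of $\infty$-groupoids is a monomorphism precisely when the loop space of the base at the chosen point is contractible, so the whole statement reduces to the claim that $\Omega_{f_B} B_1 \simeq *$ for every algebraic interval $B$.

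To establish this, I will analyse $B_1$ via the source-target projection $(d_\top, d_\bot): B_1 \to B_0 \times B_0$. The longest arrow $f_B$ lies over the point $(\bot, \top)$, where $\bot := s_{\bot-1}(\bullet)$ and $\top := s_{\top+1}(\bullet)$ are the images of the unique point of $B_{-1}$. The associated fibre sequence $(B_1)_{\bot, \top} \to B_1 \to B_0 \times B_0$ yields
$$
\Omega_{f_B}(B_1)_{\bot, \top} \;\to\; \Omega_{f_B} B_1 \;\to\; \Omega_\bot B_0 \times \Omega_\top B_0,
$$
so contractibility of the outer terms will imply contractibility of $\Omega_{f_B} B_1$. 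The right-hand term is contractible by completeness of $B$: all degeneracies, including the outer ones $s_{\bot-1}, s_{\top+1}: B_{-1} = * \to B_0$, are monomorphisms and hence fully faithful, whence $\Omega_\bot B_0 \simeq \Omega_\bullet B_{-1} = *$ and similarly at $\top$.

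For the left-hand term I exploit the flanking pullback at $n=0$: since $B_{-1} \simeq *$, it identifies $s_{\bot-1}: B_0 \to B_1$ as an equivalence onto the fibre $(B_1)_{\text{source}=\bot}$, and the cosimplicial identity $s^{\bot-1} \circ d^\bot = \id_{[0]}$ in $\Xi$ exhibits $d_\bot$ as its inverse equivalence. Pulling back once more along $\top: * \to B_0$ identifies $(B_1)_{\bot,\top}$ with the image of $s_{\top+1}: B_{-1} \to B_0$, once again contractible by completeness. The main technical point is the use of flanking together with completeness to render the distinguished points $\bot, \top$ as strict initial and terminal objects of the underlying decomposition space at the simplicial level; both axioms of an algebraic interval are genuinely needed, and with them the remainder of the argument is routine manipulation of fibre sequences and cosimplicial identities in $\Xi$.
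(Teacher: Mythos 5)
Your proof is correct, and its first two steps coincide with the paper's: both invoke Corollary~\ref{i*v:ff} to reduce the problem to the second functor, and both identify the coslice mapping space as the fibre of $\Map_{\cDecomp}(i\upperstar A,i\upperstar B)\to\Map_{\cDecomp}(\Delta[1],i\upperstar B)\simeq B_1$ over the point $\name{g}$ classifying the longest edge of $B$. Where you diverge is in how you certify that this fibre inclusion is a monomorphism. The paper simply observes that $\name{g}\colon 1\simeq B_{-1}\xrightarrow{\;s_{\top+1}\;} B_0 \xrightarrow{\;s_{\bot-1}\;} B_1$ is a composite of two degeneracy maps, hence a composite of monomorphisms by completeness of the flanked decomposition space $B$, and that a pullback of a monomorphism is a monomorphism. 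You instead prove the equivalent statement $\Omega_{g}B_1\simeq *$ by fibring $B_1$ over $B_0\times B_0$ and combining the flanking pullback at $n=0$ with completeness; this is valid, and it yields along the way the stronger structural fact that $(B_1)_{\bot,\top}\simeq *$ (the space of edges from the initial to the terminal point of an interval is contractible), but it is considerably longer than necessary for the stated proposition. Two small points: the map out of $\Map_{\cDecomp}(i\upperstar A,i\upperstar B)$ is precomposition with $f_A$, not postcomposition; and your assertion that a fibre inclusion is a monomorphism ``precisely when'' the loop space of the base at the chosen point is contractible is an overstatement --- only the implication you actually use (contractible loop space implies the name of the point is a monomorphism, whence so is its pullback) holds in general, the converse failing e.g.\ when the total space misses the relevant loops.
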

\begin{proof}
  Given two algebraic intervals $A$ and $B$, 
  denote by $f:\Delta[1] \to i\upperstar A$ and $g: \Delta[1] \to i\upperstar B$
  the images in $\cDecomp_{\Delta[1]/}$.
  The claim is that the map
  $$
  \Map_{\aInt}(A,B) \longrightarrow \Map_{\cDecomp_{\Delta[1]/}}(f,g) \longrightarrow
  \Map_{\cDecomp}(i\upperstar A, i\upperstar B)
  $$
  is a monomorphism.
  We already know that the first part is an equivalence (by Corollary~\ref{i*v:ff}).
  The second map will be a monomorphism because of the special nature of $f$ and $g$.
  We have a pullback diagram (mapping space fibre sequence for coslices):
  $$\xymatrix{
     \Map_{\cDecomp_{\Delta[1]/}}(f,g)\drpullback \ar[r]\ar[d] & 
     \Map_{\cDecomp}(i\upperstar A, i\upperstar B)\ar[d]^{\mathrm{precomp. }f} \\
     1 \ar[r]_-{\name g} & \Map_{\cDecomp}(\Delta[1], i\upperstar B).
  }$$
  Since $g:\Delta[1]\to i\upperstar B$ is the image of the canonical map 
  $\Xi[-1]\to B$,
  the map 
  $$
  \xymatrix{
  1 \ar[r]^-{\name g}& \Map_{\cDecomp}(\Delta[1], i\upperstar B)}
  $$
  can be identified with
  $$
  \xymatrix @C=48pt{
  B_{-1} \ar[r]^{s_{\bot-1}s_{\top+1}} & B_1 ,
  }
  $$
  which is a monomorphism since $B$ is complete.  It follows that the top map
  in the above pullback square is a monomorphism, as asserted.
  (Note the importance of completeness.)
\end{proof}

\subsection{The decomposition space of intervals}

\begin{blanko}{Interval category as a full subcategory in $\cDecomp$.}\label{towards-Int}
  We now invoke the general results about Kleisli categories (\ref{fact-Kl-E}).
  Let 
  $$
  \Int := \wtil \aInt
  $$
  denote the restricted Kleisli category for the adjunction
  $$\xymatrix{
  i\upperstar  : \cFD \ar@<+3pt>[r] & \ar@<+3pt>[l]\cDecomp : u\upperstar 
  },$$ as in~\ref{fact-Kl-E}.  Hence $\Int \subset \cDecomp$ is the full 
  subcategory of
  decomposition spaces underlying algebraic intervals.
  Say a map in $\Int $ is {\em wide} if it is the $i\upperstar $ image of a 
  map in $\aInt$ (i.e.~a wide map in $\cFD$).
\end{blanko}
  
\begin{prop}\label{fact-Int}
  The wide maps as left-hand class and the cULF maps as right-hand class form a 
  factorisation system on $\Int$.
\end{prop}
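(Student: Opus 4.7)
The plan is to obtain this as an instance of Lemma~\ref{fact-Kl-E} applied to the basic adjunction $i\upperstar \dashv u\upperstar : \cDecomp \to \cFD$, with the wide--cartesian factorisation system on $\cFD$ playing the role of $(E,F)$ and the full subcategory $J:\aInt\into\cFD$ playing the role of $\A$. Under this setup, $\tilde\A$ is precisely $\Int\subset\cDecomp$ by definition (\ref{towards-Int}), so once the hypotheses are checked we will directly get a factorisation system $(\tilde E,\tilde F)$ on $\Int$, and the only remaining task is to identify these two classes with the wide maps and the cULF maps respectively.

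First I would verify the two hypotheses of Lemma~\ref{fact-ext} that underlie~\ref{fact-Kl-E}. The monad $u\upperstar i\upperstar : \cFD \to \cFD$ preserves cartesian maps by Corollary~\ref{u*i*(cart)=cart}. For $R\epsilon$: the counit $\epsilon_X : i\upperstar u\upperstar X \to X$ is cULF by Lemma~\ref{counit-cULF}, and $u\upperstar$ sends cULF maps to cartesian maps by Lemma~\ref{u*f(cULF)=cart}, so $u\upperstar \epsilon_X$ is cartesian as required. Next I would check the two extra hypotheses of Lemma~\ref{fact-Kl-E}: (i) every morphism in $\aInt$ is wide, which is exactly Lemma~\ref{aInt:map=wide}; (ii) given any map $A\to B$ in $\cFD$ with $A\in\aInt$, its wide--cartesian factorisation $A \twoheadrightarrow D \rightarrowtail B$ has $D\in\aInt$. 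For the latter, $D$ is cartesian over the flanked decomposition space $B$ and therefore is itself a complete flanked decomposition space by Lemma~\ref{Flanked/cart} (completeness being preserved under cartesian pullback), while $D$ is reduced because it receives a wide map from the reduced object $A$ (Lemma~\ref{reduced/wide}).

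With these hypotheses in place, Lemma~\ref{fact-Kl-E} produces a factorisation system $(\tilde E,\tilde F)$ on $\Int$ with $\tilde E = i\upperstar J(E)$ saturated by equivalences, which is by definition the class of wide maps in $\Int$ introduced in~\ref{towards-Int}. It remains to identify $\tilde F := (u\upperstar)^{-1}(F)\cap\Int$ with the cULF maps. One direction is immediate from Lemma~\ref{u*f(cULF)=cart}: a cULF map in $\Int$ is sent by $u\upperstar$ to a cartesian map, hence lies in $\tilde F$. Conversely, if $f:Y\to X$ is a map in $\Int$ with $u\upperstar f$ cartesian, then $f$ is ULF (as already noted after Lemma~\ref{u*f(cULF)=cart}, the only information lost by $u\upperstar$ is on $s_0$); but $X$ and $Y$ are decomposition spaces (by Proposition~\ref{prop:i*flanked=Segal} they are even Segal), so Proposition~\ref{prop:ULF=>cons} promotes ULF to cULF automatically.

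The only subtle step I anticipate is the verification of hypothesis (ii) above: one has to be slightly careful that completeness and flanking really descend from $B$ to the middle object $D$ of the wide--cartesian factorisation. Completeness transports along cartesian maps because $s_0:D_0\to D_1$ is pulled back from $s_0:B_0\to B_1$, and a pullback of a monomorphism is a monomorphism; flanking transports by Lemma~\ref{Flanked/cart}; reducedness comes from the wide side via Lemma~\ref{reduced/wide}. Once these closure properties are in hand, the proof reduces to bookkeeping and the structural results on factorisation systems lifted through adjunctions already established in Subsection~\ref{sec:fact}.
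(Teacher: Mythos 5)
Your proof is correct and follows essentially the same route as the paper: both apply the general Lemma~\ref{fact-Kl-E} to the basic adjunction $i\upperstar\isleftadjointto u\upperstar$ with the wide--cartesian factorisation system on $\cFD$ and $\aInt$ as the distinguished subcategory, verifying the hypotheses via \ref{u*i*(cart)=cart}, \ref{counit-cULF}, \ref{u*f(cULF)=cart} and \ref{reduced/wide}. The only (harmless) divergence is in identifying the right-hand class: the paper invokes Lemma~\ref{cart&cULF} directly, whereas you argue that $u\upperstar f$ cartesian gives ULF and then upgrade to cULF via Proposition~\ref{prop:ULF=>cons}; both arguments are valid.
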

\begin{proof}
  The wide-cartesian factorisation system on $\cFD$ is compatible with the
  adjunction $i\upperstar \isleftadjointto u\upperstar$ and the subcategory 
  $\Int$ precisely as required to apply the general 
  Lemma~\ref{fact-Kl-E}. Namely, we have:
  
  --- $u\upperstar i\upperstar$
  preserves cartesian maps by Corollary~\ref{u*i*(cart)=cart}.
  
  ---  $u\upperstar \epsilon$ is cartesian by \ref{u*f(cULF)=cart}, since
  $\epsilon$ is cULF by \ref{counit-cULF}.
  
  
  --- If $A \to B$ is wide, $A$ an algebraic interval, then so is $B$,
  by \ref{reduced/wide}.
  
  The general Lemma~\ref{fact-Kl-E} now tells us that there is a
  factorisation system on $\Int$ where the left-hand class are the maps of
  the form $i\upperstar $ of a wide map.  The right-hand class of $\Int$,
  described by 
  Lemma~\ref{fact-Kl-E} as those maps $f$ for which $u\upperstar f$ is 
  cartesian,
  is seen by Lemma~\ref{cart&cULF} to be precisely the cULF maps.
\end{proof}

We can also restrict the Kleisli category and the factorisation system
to the category $\Xi+$ consisting of
the representables together with the terminal object $\Xi[-2]$.

\begin{lemma}\label{IntDelta}
  The restriction of the Kleisli category to $\Xi+$ gives $\Delta$, and the
   wide-cULF factorisation systems on $\Int$ restricts to the
  generic-free factorisation system on $\Delta$.
\end{lemma}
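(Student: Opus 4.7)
My plan is to compute, using the basic adjunction $i \dashv u$, what the Kleisli subcategory on $\Xi_+$ looks like both as a bare category and as equipped with its wide-cULF factorisation system.

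First I would identify objects and morphisms. By \eqref{eq:Deltak+2}, $i\upperstar\Xi[k] \simeq \Delta[k+2]$ for $k \geq -1$; and since $\Xi[-2]$ is (by convention) the terminal presheaf on $\Xi$, and $i\upperstar$ preserves limits (being a precomposition functor), $i\upperstar\Xi[-2] \simeq \Delta[0]$. So the objects hit in $\cDecomp$ are precisely the representables $\{\Delta[n] : n \geq 0\}$. By the Yoneda lemma, $\Map_{\cDecomp}(\Delta[n], \Delta[m]) \simeq \Delta([n],[m])$, giving an equivalence of the full subcategory with $\Delta$.

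Next I would identify the wide class. By \ref{aInt:map=wide}, every map in $\aInt$ is wide, so "wide in $\Int$" on this subcategory consists (up to equivalence) of precisely the $i\upperstar$-images of maps in $\aInt$ between objects of $\Xi_+$. For $k, m \geq -1$, maps $\Xi[k] \to \Xi[m]$ in $\aInt$ are elements of $\Xi([k],[m]) = \Delta([k],[m+2])$ by \eqref{eq:XiDelta}, and under $i\upperstar$ these appear as exactly the top- and bottom-preserving maps $[k+2] \to [m+2]$ — namely, the generic maps in $\Delta$. The unique map $\Xi[k] \to \Xi[-2]$ (automatically wide) becomes the unique map $\Delta[k+2] \to \Delta[0]$, which is vacuously endpoint-preserving, hence generic. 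In the other direction, there are no maps $\Xi[-2] \to \Xi[m]$ in $\aInt$ for $m \geq -1$, since naturality with respect to both $s^{\bot-1}, s^{\top+1}: [0]\to[-1]$ in $\Xi$ would force the black dot of $[0]$ to be sent to both $\bot$ and $\top$ of $[m]$; this matches the fact that no non-trivial map $[0] \to [m+2]$ in $\Delta$ is generic.

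For the cULF class, I would appeal to uniqueness of factorisation systems: since wide-cULF is a factorisation system on $\Int$ by Proposition \ref{fact-Int}, and on the $\Delta$-subcategory its left class has just been identified with the generics, the right class is forced to coincide with the free maps (which form the right class of the generic-free system on $\Delta$). Alternatively this can be checked directly from Lemma \ref{lem:cULFs0d1}, by reducing cartesianness on $[1]\to[2]$ for a map between representables to the distance-preserving condition on $f: [n]\to[m]$.

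The main subtlety is the careful handling of $\Xi[-2]$, which is not a $\Xi$-representable, so its mapping spaces in $\Fun(\Xi\op,\Grpd)$ are given by limits rather than direct Yoneda. As sketched above, the computation reduces to the observation that the only maps in $\aInt$ involving $\Xi[-2]$ are the unique maps to it from each $\Xi[k]$, and these match exactly the unique generic maps $[k+2]\to[0]$ in $\Delta$ under the object identification. Once this is verified, the remainder of the argument is a routine unwinding of the adjunction and the definitions.
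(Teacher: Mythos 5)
Your proposal is correct and follows essentially the same route as the paper's (very terse) proof: identify the objects and mapping spaces via $i\upperstar\Xi[k]\simeq\Delta[k+2]$ and Yoneda, observe that the wide maps between these objects are exactly the generic maps, and identify cULF with free. The only caveat is that your appeal to uniqueness of factorisation systems is slightly circular — to know the wide-cULF system even \emph{restricts} to a factorisation system on $\Delta$ one must already know that the generic-free factorisation of a map of $\Delta$ is a wide-cULF factorisation, i.e.\ that free maps are cULF — but your alternative direct check via Lemma~\ref{lem:cULFs0d1} supplies exactly this, and your careful treatment of $\Xi[-2]$ fills in a point the paper glosses over entirely.
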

$$\xymatrix{
\Delta \ar[r]^-{\mathrm{f.f.}} & \Int \ar[r]^-{\mathrm{f.f.}} & \cDecomp 
\ar@<+3pt>[d]^{u\upperstar}\\
\Xi+ \ar[u] \ar[r] &\aInt \ar[u] \ar[r] & \cFD 
\ar@<+3pt>[u]^{i\upperstar }
}$$

\begin{proof}
  By construction the objects are $[-2], [-1], [0], [1],\ldots$.
  and the mapping
  spaces are
  \begin{eqnarray*}
    \Map_{\Int}(\Xi[k],\Xi[n]) & = & \Map_\Decomp( i\upperstar \Xi[k], i\upperstar \Xi[n])\\
     & = &\Map_{\widehat\Delta}(\Delta[k+2],\Delta[n+2]) \\
     & = & \Delta([k+2],[n+2]).
  \end{eqnarray*}
  It is clear by the explicit description of $i\upperstar $ that it takes
  the maps in $\Xi+$ to the generic maps in $\Delta$.  On the other hand,
  it is clear that the cULF maps in $\Delta$ are the free maps.
\end{proof}

%
%
%

\begin{blanko}{Arrow category and restriction to $\Delta$.}
  Let $\Ar^{\mathrm w}(\Int) \subset \Ar(\Int)$ denote the full subcategory of the arrow 
  category spanned by the wide maps.  Recall (from \ref{ArED}) that 
  $\Ar^{\mathrm w}(\Int)$ is a
  cartesian fibration over $\Int$ via the domain projection.
  We now restrict this cartesian fibration to $\Delta \subset \Int$ as in 
  \ref{setup-end}:
  $$\xymatrix{
     \Ar^{\mathrm{w}}(\Int)_{|\Delta} \drpullback \ar[r]^-{\mathrm{f.f.}}\ar[d]_{\mathrm{dom}} & 
     \Ar^{\mathrm{w}}(\Int) \ar[d]^{\mathrm{dom}} \\
     \Delta \ar[r]_-{\mathrm{f.f.}} & \Int
  }$$
  We put
  $$
  \mathscr U := \Ar^{\mathrm{w}}(\Int)_{|\Delta} .
  $$
  $\mathscr U \to \Delta$ is the {\em Cartesian fibration of subdivided 
  intervals}: the objects of $\mathscr U$ are the wide 
  interval maps $\Delta[k] \onto A$, which we think of as 
  subdivided intervals.
  The arrows are
  commutative squares
  $$\xymatrix{
  \Delta[k] \ar[r] \ar@{->>}[d] & \Delta[n] \ar@{->>}[d] \\
  A \ar[r] & B
  }$$
  where the downwards maps are wide, and the
  rightwards maps are in $\Delta$ and in $\cDecomp$, respectively.
  (These cannot be realised in the world of $\Xi\op$-spaces, and the necessity
  of having them was the whole motivation for constructing $\Int$.)
  By \ref{ArED}, the cartesian maps are squares
  $$\xymatrix{
  \Delta[k] \ar[r] \ar@{->>}[d] & \Delta[n] \ar@{->>}[d] \\
  A \ar@{ >->}[r] & B   .
  }$$
  Hence, cartesian lift are performed by precomposing and then coreflecting
  (i.e.~wide-cULF factorising and keeping only the wide part).
  For a fixed domain $\Delta[k]$, we have (in virtue of 
  Proposition~\ref{aInt-faithful})
  $$
  \Int^{\mathrm{w}}_{\Delta[k]/} \simeq \aInt_{\Xi[k-2]/} .
  $$
\end{blanko}

\begin{blanko}{The (large) decomposition space of intervals.}\label{U}
  The cartesian fibration $\mathscr U =\Ar^{\mathrm{w}}(\Int)_{|\Delta} \to \Delta$ determines a right fibration,
$U := \mathscr U^\mathrm{cart} = \Ar^{\mathrm{w}}(\Int)_{\mid
\Delta}^{\mathrm{cart}} \to \Delta$, and hence by straightening 
(\cite{Lurie:HTT}, Ch.2)
a simplicial $\infty$-groupoid 
$$
U:\Delta\op\to\widehat{\Grpd},
$$ 
where $\widehat{\Grpd}$ is the very large $\infty$-category of not necessarily small $\infty$-groupoids.
We shall see that it is a complete decomposition space.

We shall not actually need the straightening, as it is more convenient to work
directly with the right fibration $U \to \Delta$.
Its fibre over $[k]\in\Delta$ is the $\infty$-groupoid $U_k$ of $k$-subdivided 
intervals.  That is, an interval $\un A$ equipped with a wide map
$\Delta[k] \onto A$.
Note that $U_1$ is equivalent to the $\infty$-groupoid $\Int^\eq$.
Similarly, $U_2$ is equivalent to the $\infty$-groupoid of subdivided intervals,
more precisely intervals with a wide map from $\Delta[2]$.
Somewhat more exotic is
$U_0$, the $\infty$-groupoid of intervals with a wide 
map from $\Delta[0]$.  This means that the endpoints must coincide.
This does not imply that the interval is trivial.
For example, any category with a zero object provides an example of an object
in $U_0$.

\begin{blanko}{A remark on size.}
  The fibres of the right fibration $U \to \Delta$ are large
  $\infty$-groupoids.  Indeed, they are all variations of $U_1$, the
  $\infty$-groupoid of intervals, which is of the same size as the
  $\infty$-category of simplicial spaces, which is of the same size as
  \Grpd.  Accordingly, the corresponding presheaf takes values in
  large $\infty$-groupoids, and $U$ is therefore a large decomposition space.
  These technicalities do not affect the following results, but will
  play a role from \ref{conjecture} and onwards.
\end{blanko}

Among the generic maps in $U$, in each degree the unique map $g:U_r \to U_1$
consists in forgetting the subdivision.  The space $U$ also has the codomain
projection $U \to \Int$.  In particular we can describe the $g$-fibre over a
given interval $A$:
\begin{lemma}\label{lem:U_r_A}
  We have a pullback square
  $$\xymatrix{
      (A_r)_a \drpullback \ar[r]\ar[d] &  U_r \ar[d]^g \\
     {*} \ar[r]_{\name A} & U_1
  }$$ 
  where $a\in A_1$ denotes the longest edge.
\end{lemma}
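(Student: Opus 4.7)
The plan is to compute the homotopy fibre of $g: U_r \to U_1$ directly over $A \in U_1$, using the fibrational description of $U$ together with the full faithfulness of $\aInt \to \cDecomp_{\Delta[1]/}$ established in Corollary~\ref{i*v:ff}.

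First, I would unpack what $g: U_r \to U_1$ does. An object of $U_r$ is a wide interval map $\phi: \Delta[r] \onto A$, and $g$ is the action in the right fibration $U \to \Delta$ of the unique generic map $[1] \to [r]$ in $\Delta$, whose realisation is the long-edge inclusion $\iota: \Delta[1] \to \Delta[r]$. The cartesian lift of $\iota$ at $(A,\phi)$ is obtained by wide-cULF factoring the composite $\phi \circ \iota : \Delta[1] \to A$. But by Lemma~\ref{IntDelta} every generic map in $\Delta$ is already wide in $\Int$, and wide maps are closed under composition (being the left class of a factorisation system, Proposition~\ref{fact-Int}), so $\phi \circ \iota$ is wide and the cULF factor is an equivalence. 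Hence $g$ simply sends $(A,\phi) \mapsto (A, \phi\circ\iota)$; up to the equivalence $U_1 \simeq \Int^{\eq}$ (recording the canonical longest-edge structure on each interval), $g$ forgets the $r$-subdivision.

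Second, the homotopy fibre of $g$ over $\name{A}$ is therefore the $\infty$-groupoid of wide interval maps $\phi: \Delta[r] \onto A$ compatible with the canonical longest-edge structures on both sides. Via the chain of equivalences
\[
\Int^{\mathrm{w}}_{\Delta[r]/}\;\simeq\;\aInt_{\Xi[r-2]/},
\]
and by Corollary~\ref{i*v:ff}, which says $i\upperstar \circ v : \aInt \to \cDecomp_{\Delta[1]/}$ is fully faithful, this mapping space is canonically equivalent to $\Map_{\cDecomp_{\Delta[1]/}}(\Delta[r], A)$, where both $\Delta[r]$ and $A$ are pointed by their canonical longest edges.

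Third, by the defining pullback for mapping spaces in a coslice,
\[
\Map_{\cDecomp_{\Delta[1]/}}(\Delta[r], A) \;\simeq\; \{a\} \times_{\Map_{\cDecomp}(\Delta[1], A)} \Map_{\cDecomp}(\Delta[r], A).
\]
Yoneda identifies $\Map_{\cDecomp}(\Delta[n], A) \simeq A_n$, and the restriction map $A_r \to A_1$ induced by $\iota$ is precisely the unique generic face map, so the basepoint on $A_1$ is $a$. The fibre is therefore $(A_r)_a$, as desired.

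The main obstacle is the first step: correctly identifying the action of $g$ in the right fibration $U \to \Delta$ via its cartesian lift, and verifying that the wide-cULF factorisation degenerates in this case because generic maps in $\Delta$ are already wide in $\Int$. Everything after that is a straightforward assembly of the adjunction and mapping-space computations already available.
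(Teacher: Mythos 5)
Your proof is correct and follows essentially the same route as the paper's (which simply observes that the fibre of $g$ over $\name{A}$ is $\Map_{\mathrm{wide}}(\Delta[r],A)$, the full subgroupoid of $A_r$ of simplices with long edge $a$). You merely make explicit two points the paper leaves implicit — that the cartesian lift of the generic map degenerates because generic maps of $\Delta$ are already wide in $\Int$, and that wideness amounts to preservation of the longest edge via Corollary~\ref{i*v:ff} — which is a faithful elaboration rather than a different argument.
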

\begin{proof}
  Indeed, the fibre over a coslice is the mapping space, so the pullback is
at first
$$
\Map_{\mathrm{wide}}(\Delta[r], A)
$$
But that's the full subgroupoid inside $\Map(\Delta[r], A) \simeq A_r$
consisting of the wide maps, but that means those whose 
restriction to the long edge is $a$.
\end{proof}

\end{blanko}

\begin{thm}\label{UcompleteDecomp}
  The simplicial space $U: \Delta\op \to \widehat{\Grpd}$ is a (large) complete
  decomposition space.
\end{thm}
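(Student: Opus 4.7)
\begin{proof*}{Proof plan.}
The first step is to unpack what $U$ looks like. By construction $U_r \simeq \bigl(\Int^{\mathrm{w}}_{\Delta[r]/}\bigr)^{\eq}$, and via the adjunction $i\upperstar \isleftadjointto u\upperstar$ together with Proposition~\ref{aInt-faithful} and Lemma~\ref{aInt:map=wide}, for $r\geq 1$ this is the homotopy sum $\int^{A\in\Omega}\Map_{\aInt}(\Xi[r-2],A)$, where $\Omega := \aInt^{\eq}$; by Yoneda each fibre over $A$ is $A_{r-2}$. Since each algebraic interval is reduced, the cases $r=0,1$ both collapse to $\Omega$. Under Lemma~\ref{IntDelta}, generic maps in $\Delta$ are wide and free maps are cULF in $\Int$; hence inner face and degeneracy maps in $U$ act by simple precomposition (preserving the underlying interval), while outer face maps act by precomposition followed by the wide-cULF coreflection $w$ of Theorem~\ref{fact-theorem}, possibly landing in a sub-interval. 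In particular, $s_0\colon U_0\to U_1$ corresponds fibrewise over $\Omega$ to the canonical map $*\to*$ and is therefore an equivalence; this gives completeness.

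To establish the decomposition-space axiom I would verify the four pullback squares of Proposition~\ref{onlyfourdiags}, taking $i=1$ for each $n\geq 2$. The two degeneracy squares involve only generic maps on the $s_0$ side and so do not invoke the coreflection; they reduce to the fact that every algebraic interval $A$ is complete and reduced, together with the pullback characterisation of the outer face action via Corollary~\ref{cor:DxF}. The serious content lies in the inner-outer face squares
\[
\xymatrix{
   U_{1+n}\ar[d]_{d_\bot}\ar[r]^{d_{2}} &  U_n\ar[d]^{d_\bot} \\
   U_{n}\ar[r]_{d_1}  &  U_{n-1}
  }
\qquad\text{and its $d_\top$-companion.}
\]
To verify these I would unfold both sides: an element of the pullback is a pair $\bigl(\beta\colon\Delta[n]\onto A,\ \alpha\colon\Delta[n]\onto A'\bigr)$ together with an equivalence in $U_{n-1}$ between $d_1\beta$ (inner face, interval unchanged) and the wide-cULF factor of $\alpha\circ d^\bot$ (which exhibits $A'$ as a sub-interval of $A$ via a cULF inclusion $A'\hookrightarrow A$). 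One then produces the unique lift $\phi\colon\Delta[1+n]\onto A$ as follows: by Proposition~\ref{prop:i*flanked=Segal} the underlying $\un A=i\upperstar A$ is a Segal space, and in particular the pullback square in $\un A$ corresponding to the generic-free pushout $[n-1]\hookleftarrow[n-1]\to[n]$ holds; this produces a unique $(1+n)$-simplex of $\un A$ reducing to the given data. It remains to check that this simplex is wide as a map $\Delta[1+n]\to\un A$, which follows because its long edge agrees with that of $\beta$ (which is wide) and because the outer degeneracies of $A$ detect bottom/top (Lemma~\ref{s-1-newbonuspullbacks}). Functoriality and naturality of the wide-cULF coreflection (Lemma~\ref{lem:Dx'Dx} applied to the inclusion $A'\hookrightarrow A$) guarantee that the two routes around the square give canonically equivalent data. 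The $d_\top$ companion is symmetric, and the two degeneracy squares admit an analogous argument reducing to the completeness of $\un A$.

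The main obstacle is not conceptual but bookkeeping: keeping track of how the sub-interval $A'$ produced by an outer face map fits compatibly into the ambient $A$ throughout the square, and verifying that the resulting lift in $\un A$ really is wide rather than merely a simplex of the underlying decomposition space. This is precisely where the wide-cULF factorisation system and its coreflection (rather than just the Segal property of $\un A$) become indispensable: the universality of the coreflection gives the required naturality, and Lemma~\ref{lem:Dx'Dx} (in the restricted comma form of \ref{setup-end}) supplies the crucial pullback identifying the fibres over $A$ and over $A'$. Once these ingredients are in place, fibring the candidate square over $\Omega$ via the long-edge projection (Lemma~\ref{lem:U_r_A}) reduces each pullback condition to a pullback of $A$-simplices, which holds because $\un A$ is a Segal space.
\end{proof*}
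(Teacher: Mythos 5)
There is a concrete error in your completeness argument. You claim that $U_0$ and $U_1$ both collapse to $\Omega=\aInt^{\eq}$ because intervals are reduced, so that $s_0\colon U_0\to U_1$ is fibrewise $*\to *$ and hence an equivalence. This is false: the fibre of $U_0\to U_1$ over an interval $\un A$ is $\Map_{\mathrm{wide}}(*,\un A)=\Map_{\widehat\Xi}(\Xi[-2],A)$, and $\Xi[-2]$ is the \emph{terminal} presheaf, not the initial object $\Xi[-1]$; this mapping space is $\lim A$, the limit of the whole cosimplicial diagram $*\rightrightarrows A_0\cdots$, not just $A_{-1}$. For the walking-arrow interval $i\upperstar\Xi[-1]$ the two coface maps $*\rightrightarrows A_0$ pick out distinct points, so the fibre is \emph{empty}; the paper explicitly warns that $U_0$ is the space of intervals whose endpoints coincide (e.g.\ categories with a zero object) and is not equivalent to $U_1$. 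The correct statement, and what completeness actually requires, is only that each fibre is empty or contractible; this follows because the coface maps of the limit diagram are monomorphisms (by completeness of $A$) into the contractible $A_{-1}$. Your conclusion survives, but the reasoning as written is wrong and proves too much.

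For the decomposition-space axiom your route is genuinely different from the paper's, and as written it does not close. The paper factors each relevant square as a composite of three squares of coslice categories, $\Int^{\mathrm w}_{\Delta[k]/}\to\Int_{\Delta[k]/}\to\Int_{\Delta[m]/}\to\Int^{\mathrm w}_{\Delta[m]/}$, where the outer two are pullbacks by full faithfulness and by Lemma~\ref{lem:Dx'Dx}, and the middle one is a pullback by Lemma~\ref{preservespullback}: the functor $D\mapsto(\DD_{D/})^{\eq}$ sends pushouts to pullbacks, applied to the generic-free pushout in $\Delta$. This is precisely the device that avoids element-chasing. Your plan instead unfolds objects of the pullback and appeals to the Segal property of each individual $\un A$ to ``produce the unique lift'', with the compatibility of the sub-interval $A'\hookrightarrow A$ across the square deferred to ``functoriality and naturality of the coreflection''. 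In the $\infty$-categorical setting these are exactly the steps that require proof, and they are not mere bookkeeping: the outer face maps change the ambient interval, so the square does not fibre over a fixed $A$, and the Segal property of one interval does not by itself identify the fibre of $d_\bot\colon U_{1+n}\to U_n$ over a simplex of $A'$ with a space of simplices of $A$ — that identification is the content of Lemma~\ref{lem:Dx'Dx} combined with the pullback-preservation of the coslice functor. Without an argument at the level of universal properties (or an explicit substitute for Lemma~\ref{preservespullback}), the central pullback squares remain unverified.
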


\begin{proof}
  We first show it is a decomposition space.
  We need to show that for a generic-free  pullback square in $\Delta\op$,
  the image under $U$ is a pullback:
  $$
  \xymatrix{U_k \drpullback\ar[r]^{f'} \ar[d]_{g'} & U_m \ar[d]^g \\
  U_n \ar[r]_f & U_s}$$
  This square is the outer rectangle in
   $$
  \xymatrix{\Int_{\Delta[k]/}^{\mathrm w} \ar[r]^j \ar[d]_{g'} 
& \Int_{\Delta[k]/} \ar[r]^{f'} \ar[d]_{g'} 
& 
\Int_{\Delta[m]/}\ar[d]^g \ar[r]^w  & \Int_{\Delta[m]/}^{\mathrm w}\ar[d]^g \\
  \Int_{\Delta[n]/}^{\mathrm w} \ar[r]_j & 
  \Int_{\Delta[n]/}{} \ar[r]_f & 
\Int_{\Delta[s]/}\ar[r]_w & \Int_{\Delta[s]/}^{\mathrm w}
  }$$
  (Here we have omitted taking maximal $\infty$-groupoids, but it doesn't affect the 
  argument.)
  The first two squares consist in precomposing with the free 
  maps $f$, $f'$.  The result will no longer be a
  wide map, so in the middle columns we allow arbitrary maps.
  But the final step just applies the coreflection to take the wide part.  Indeed this is how cartesian lifting goes in
  $\Ar^{\mathrm{w}}(\Int)$.
  The first square is a pullback since $j$ is fully
  faithful.  The last square is a pullback since it is a special case of
  Lemma~\ref{lem:Dx'Dx}.  The main point is the second square
  which is a pullback by Lemma~\ref{preservespullback} --- this is where we use that
  the generic-free square in $\Delta\op$ is a pullback.
  
  To establish that $U$ is complete, we need to check
  that the map $U_0 \to U_1$ is a monomorphism.  This map is just the forgetful functor
  $$
  (\Int^{\mathrm w}_{*/})^\eq \to \Int^\eq
 .
  $$
  The claim is that its fibres are empty or contractible.
  The fibre over an interval $\un A= i\upperstar  A$ is
  $$
  \Map_{\mathrm{wide}}(*,\un A)  = \Map_{\aInt}( \Xi[-2], A) = \Map_{\widehat 
  \Xi}(\Xi[-2],A).
  $$
  Note that in spite of the notation, $\Xi[-2]$ is not a representable:
  it is the terminal object, and it is hence the colimit of all the 
  representables.  It follows that $\Map_{\widehat \Xi}(\Xi[-2],A) = \lim A$.
  This is the limit of a cosimplicial diagram
  $$
  \lim A \stackrel e \longrightarrow * \rightrightarrows A_0 \cdots
  $$
  In general the limiting map of a cosimplicial diagram 
  does not have to be a monomorphism, but in this case
  it is, as all the coface maps (these are the degeneracy maps of $A$)
  are monomorphisms by completeness of $A$, and since $A_{-1}$ is contractible.
  Since finally $e$ is a monomorphism into the contractible space
  $A_{-1}$, the limit must be empty or contractible.  Hence $U_0 \to U_1$ is a 
  monomorphism,
  and therefore $U$ is complete.
\end{proof}

\subsection{Universal property of $U$}

The refinements discussed in \ref{fact-decomp} now pay off to give us the
following main result.  Let $\Int\comma\cDecomp$ denote the comma category (as
in \ref{fact-theorem}).  It is the full subcategory in $\Ar(\cDecomp)$ spanned
by the maps whose domain is in $\Int$.  Let $\Ar^{\mathrm w}(\Int)$ denote the full
subcategory of $\Ar(\Int)$ spanned by the wide maps.  Recall (from \ref{ArED})
that both $\Int\comma\cDecomp$ and $\Ar^{\mathrm w}(\Int)$ are cartesian fibrations over
$\Int$ via the domain projections, and that the inclusion $\Ar^{\mathrm w}(\Int) \to
\Int\comma\cDecomp$ commutes with the projections (but does not preserve
cartesian arrows).

\begin{theorem}\label{Thm:I}
  The inclusion functor $\Ar^{\mathrm w}(\Int) \into \Int\comma\cDecomp$
  has a right adjoint
  $$I:\Int\comma\cDecomp \to \Ar^{\mathrm w}(\Int)  ,
  $$
  which takes cartesian arrows to cartesian arrows.
\end{theorem}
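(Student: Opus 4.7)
The plan is to derive Theorem \ref{Thm:I} as an instance of the general factorisation-system framework of Subsection \ref{sec:fact}, specialised to the basic adjunction
$$i\upperstar \colon \cFD \rightleftarrows \cDecomp \colon u\upperstar$$
equipped with the wide-cartesian factorisation system on $\cFD$, restricted to the full subcategory $\aInt \subset \cFD$ of algebraic intervals. The functor $I$ should then be the composite ``transpose across the adjunction, take the wide factor, transpose back''.

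First I would verify the hypotheses required by Lemma \ref{fact-Kl-E}, with $L = i\upperstar$, $R = u\upperstar$, $\mathcal{A} = \aInt$, and $(E,F) = (\text{wide},\text{cartesian})$. The four conditions read: (i) the monad $u\upperstar i\upperstar$ preserves cartesian maps, which is Corollary \ref{u*i*(cart)=cart}; (ii) for each $X$ in $\cDecomp$, the component $u\upperstar \epsilon_X$ is cartesian, since $\epsilon_X$ is cULF by Lemma \ref{counit-cULF} and $u\upperstar$ sends cULF maps to cartesian maps by Lemma \ref{u*f(cULF)=cart}; (iii) every morphism in $\aInt$ is wide, by Lemma \ref{aInt:map=wide}; (iv) the wide factor of any map in $\cFD$ with domain in $\aInt$ remains in $\aInt$, since it is reduced by Lemma \ref{reduced/wide} (it is wide under a reduced object) and a complete flanked decomposition space by Lemma \ref{Flanked/cart}.

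With these hypotheses, the restricted version of Lemma \ref{fact-ext} produces the wide-cULF factorisation system on $\Int = \wtil{\aInt}$ already recorded as Proposition \ref{fact-Int}. The extension principle of Remark \ref{fact-decomp} then enlarges the factorisation property to arbitrary maps in $\cDecomp$ whose domain lies in $\Int$: given $f \colon i\upperstar A \to X$, transpose to $A \to u\upperstar X$ in $\cFD$, wide-cartesian factor there as $A \twoheadrightarrow B \rightarrowtail u\upperstar X$, and transpose back to obtain a wide-cULF factorisation $i\upperstar A \twoheadrightarrow i\upperstar B \to X$ in $\cDecomp$, with orthogonality holding without restriction on the codomain. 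This is exactly the situation of Theorem \ref{fact-theorem} in its restricted Kleisli form, which hands us a right adjoint $I \colon \Int\comma\cDecomp \to \Ar^{\mathrm w}(\Int)$ to the inclusion, namely the functor that wide-cULF factors and returns the wide part.

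Finally, for the cartesian-arrow preservation: a cartesian arrow in $\Int\comma\cDecomp$ over $e \colon \un A' \to \un A$ is obtained by precomposition with $e$, and applying $I$ amounts to wide-cULF factoring the precomposite. By Lemma \ref{lem:Dx'Dx} applied to the wide-cULF system on $\Int$, this operation agrees with pulling back the wide factor of $f$ along $e$, which is precisely the formula for a cartesian lift in the domain fibration $\Ar^{\mathrm w}(\Int) \to \Int$ provided by Lemma \ref{ArED}. The main obstacle here is really clerical: Subsection \ref{sec:fact} states Theorem \ref{fact-theorem} for the unrestricted Kleisli category, and one has to check that the restriction to $\wtil{\aInt}$ causes no harm; but this is exactly what the closure property (iv) above guarantees, so the argument is a careful assembly of the pieces already in place rather than a fresh construction.
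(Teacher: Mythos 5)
Your proposal is correct and follows essentially the same route as the paper: the paper's proof likewise reduces Theorem~\ref{Thm:I} to the general Theorem~\ref{fact-theorem}, noting that its hypotheses were already verified in the proof of Proposition~\ref{fact-Int} and that the only remaining point is the harmlessness of restricting to $\aInt\subset\cFD$, which is exactly your closure condition (iv). Your final paragraph on cartesian-arrow preservation is redundant (that clause is already part of the statement of Theorem~\ref{fact-theorem}) but the verification you sketch is consistent with the framework.
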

\begin{proof}
  We have already checked, in the proof of \ref{fact-Int},
  that the conditions of the general Theorem~\ref{fact-theorem}
  are satisfied by the adjunction $i\upperstar \isleftadjointto u\upperstar $
  and
  the wide-cartesian factorisation system on $\cFD$.  It remains to restrict
  this adjunction to the full subcategory 
  $\aInt \subset \cFD$.
%
%
%
%
\end{proof}
Note that over an interval $\un A$, the adjunction restricts to the adjunction
of \ref{I=LvwR} as follows:
$$\xymatrix{
   \Int^{\mathrm{w}}_{\un A/} \ar@<+2.5pt>[r]\ar[d]_\simeq
   & \ar@<+2.5pt>[l]^-I \cDecomp_{\un A/} \ar@<+2.5pt>[d]^R \\
   \aInt_{A/} \ar@<+2.5pt>[r]^v &\ar@<+2.5pt>[l]^w \cFD_{A/} \ar@<+2.5pt>[u]^L
}$$

\bigskip

We now restrict these cartesian fibrations further to $\Delta \subset \Int$.
We call the coreflection $I$, as it is the factorisation-interval construction:
$$\xymatrix{
\mathscr U=\Ar^{\mathrm{w}}(\Int)_{|\Delta}\drto_{\mathrm{dom}}
\ar@<+3pt>[rr] && \ar@<+3pt>[ll]^I
\Delta\comma\cDecomp\dlto^{\mathrm{dom}}\\
&\Delta
}
$$

The coreflection 
$$
I : \Delta\comma\cDecomp  \to \mathscr U
$$
is a morphism of cartesian fibrations over $\Delta$ (i.e.~preserves
cartesian arrows).  Hence it induces a morphism of right fibrations
$$
I : (\Delta\comma\cDecomp)^\mathrm{cart}  \to  U .
$$
\begin{theorem}\label{thm:IU=cULF}
  The morphism of right fibrations
  $$
  I : (\Delta\comma\cDecomp)^\mathrm{cart}  \to  U
  $$ 
  is cULF.
\end{theorem}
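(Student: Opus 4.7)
The plan is to translate cULF-ness, via the equivalence between right fibrations over $\Delta$ and simplicial $\infty$-groupoids, into the pullback condition on the naturality squares of $I$ at every generic map of $\Delta$, and then to identify each such square with an instance of Lemma~\ref{lem:Dx'Dx} transported to the Kleisli-style setting of Theorem~\ref{Thm:I}. The fibres of $(\Delta\comma\cDecomp)^{\mathrm{cart}}$ and $U$ over $[n]$ are, respectively, $\cDecomp^{\eq}_{\Delta[n]/}$ and $(\Int^{\mathrm{w}}_{\Delta[n]/})^{\eq}$, and the component $I_n$ is the coreflection returning the wide factor of a map out of $\Delta[n]$.

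The crucial observation is that a generic map $g\colon[n]\to[k]$ in $\Delta$ is, via $\Delta\subset\Int$, a \emph{wide} map of intervals (Lemma~\ref{IntDelta}). Consequently, cartesian transport along $g$ in $\mathscr U=\Ar^{\mathrm w}(\Int)_{|\Delta}$ is plain precomposition: the wide--cULF factorisation of $\Delta[n]\xrightarrow{g}\Delta[k]\onto A$ is trivial because composites of wide maps are wide (Lemma~\ref{ArED} then identifies the cartesian lift with precomposition, no cULF part). The same is obvious on the left side, where $g^{*}$ is always precomposition. Hence the naturality square I must prove to be a pullback is
\[
\xymatrix{
\cDecomp_{\Delta[k]/} \ar[r]^{I_{k}}\ar[d]_{g^{*}} & \Int^{\mathrm{w}}_{\Delta[k]/} \ar[d]^{g^{*}} \\
\cDecomp_{\Delta[n]/} \ar[r]_{I_{n}} & \Int^{\mathrm{w}}_{\Delta[n]/},
}
\]
with the cores of the four corners giving what is needed; this reduction to a pullback of $\infty$-categories is legitimate because $(-)^{\eq}\colon\kat{Cat}\to\Grpd$ is right adjoint to the inclusion and so preserves limits.

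This square is precisely the assertion of Lemma~\ref{lem:Dx'Dx} applied to the wide map $e=g$ for the factorisation system $(\mathrm{wide},\mathrm{cULF})$ on $\Int\subset\cDecomp$, in the Kleisli-restricted form from~\ref{setup-end} and~\ref{fact-decomp}--\ref{fact-theorem}: the upper row lives in the ambient $\cDecomp$ rather than in $\Int$ itself, but the pasting-of-pullbacks argument used in the proof of~\ref{lem:Dx'Dx} goes through verbatim once one has the restricted Corollary~\ref{cor:DxF},
\[
\xymatrix{
\cDecomp_{x/} \drpullback \ar[r]\ar[d]_{w} & \Ar^{\mathrm{cULF}}(\cDecomp)\ar[d]^{\mathrm{dom}} \\
\Int^{\mathrm{w}}_{x/} \ar[r] & \cDecomp,
}
\]
which is exactly the content of the coreflection $I\colon\Int\comma\cDecomp\to\Ar^{\mathrm{w}}(\Int)$ of Theorem~\ref{Thm:I} read off at a single $x\in\Int$. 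The main obstacle is therefore just the careful verification of this Kleisli-restricted adaptation of~\ref{lem:Dx'Dx}; once established, cartesianness of $I$ on every generic $g$ follows, and hence $I$ is cULF.
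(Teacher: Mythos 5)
Your proposal is correct, and it reaches the same reduction as the paper: cULF-ness of $I$ comes down to showing that the square of coslices with horizontal arrows the coreflections $I_k$, $I_n$ and vertical arrows precomposition with a generic $g$ is a pullback (the paper checks only the generic maps $[1]\to[k]$, which suffices by the lemma on cULF maps; you check all of them, which is harmless). Your preliminary observation that cartesian transport in $\mathscr U$ along a generic map is plain precomposition --- because generic maps of $\Delta$ are wide in $\Int$ and wide maps compose, so the coreflection step in the cartesian lift is trivial --- is exactly right and is implicitly used in the paper's proof of Theorem~\ref{UcompleteDecomp}. Where you diverge is in how the pullback is verified. The paper argues fibrewise: the fibre of the top horizontal map over $a:\Delta[1]\to X$ is $\Map_{\cDecomp_{\Delta[1]/}}(g,a)$, the fibre of the bottom map over $I_1(a)$ is $\Map_{\Int^{\mathrm{w}}_{\Delta[1]/}}(g,I_1(a))$, and these agree by the adjunction of Theorem~\ref{Thm:I} applied to the object $g\in\Int^{\mathrm{w}}_{\Delta[1]/}$; nothing beyond the hom-space equivalence of the coreflection is needed. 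You instead invoke a Kleisli-restricted form of Lemma~\ref{lem:Dx'Dx}, whose proof is a pasting of pullbacks against the analogue of Corollary~\ref{cor:DxF} in which the right-hand class consists of cULF maps with arbitrary codomain in $\cDecomp$. This does work --- Remark~\ref{fact-decomp} and Theorem~\ref{fact-theorem} supply existence and orthogonality of factorisations for arrows with domain in $\Int$ and unrestricted codomain, so the proofs of \ref{cor:ArD=EF}, \ref{cor:DxF} and \ref{lem:Dx'Dx} transfer --- but these transferred statements are not recorded in the paper, so the ``careful verification'' you flag is a real (if routine) obligation your route carries and the paper's does not. In exchange, your argument establishes the pullback at the level of $\infty$-categories rather than their cores, and makes visible that the theorem is a pure factorisation-system formality.
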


\begin{proof}
  We need to establish that for the unique generic map $g: \Delta[1] \to \Delta[k]$,
  the following square is a pullback:
  $$\xymatrix{
     {\cDecomp_{\Delta[k]/}}^\eq \ar[r]^{\mathrm{pre. } g}\ar[d]_{I_k} & 
     {\cDecomp_{\Delta[1]/}}^\eq  \ar[d]^{I_1} \\
     {\Int^{\mathrm{w}}_{\Delta[k]/}}^\eq \ar[r]_{\mathrm{pre. }g} & {\Int^{\mathrm{w}}_{\Delta[1]/}}^\eq .
  }$$
  Here the functors $I_1$ and $I_k$ are the coreflections of Theorem~\ref{Thm:I}.
  We compute the fibres of the horizontal maps over a point $a: \Delta[1]\to 
  X$.  For the first row, the fibre is
  $$
  \Map_{\cDecomp_{\Delta[1]/}}(g,a) .
  $$
  For the second row, the fibre is
  $$
  \Map_{\Int^{\mathrm{w}}_{\Delta[1]/}}(g, I_1(a)). 
  $$
  But these two spaces are equivalent by the adjunction of 
  Theorem~\ref{Thm:I}.
\end{proof}

Inside $\Delta\comma\cDecomp$, we have the fibre over $X$, for the codomain
fibration (which is a cocartesian fibration).  This fibre is just $\Delta_{/X}$
the Grothendieck construction of the presheaf $X$.
%
%
%
%
This fibre clearly includes into the
cartesian part of $\Delta\comma\cDecomp$.  

\begin{lemma}
  The associated morphism of right fibrations
  $$
  \Delta_{/X} \to (\Delta\comma\cDecomp)^\mathrm{cart}
  $$
  is cULF.
\end{lemma}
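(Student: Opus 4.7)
The plan is to verify the cULF conditions directly via fibre computations on the simplicial spaces classifying the two right fibrations. Under straightening, $\Delta_{/X}\to\Delta$ corresponds to the simplicial space $X_\bullet$, while $(\Delta\comma\cDecomp)^{\mathrm{cart}}\to\Delta$ corresponds to the simplicial space $Z_\bullet$ with
$$
Z_k \;:=\; (\cDecomp_{\Delta[k]/})^\eq ,
$$
and the morphism becomes the levelwise map $X_k \to Z_k$, $\sigma\mapsto(X,\sigma)$, natural in $[k]\in\Delta\op$ via precomposition in the coslice.

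I will in fact show the stronger statement that for every $\alpha:[n]\to[k]$ in $\Delta$ the naturality square
$$
\xymatrix{
X_k \ar[r]^{\alpha\upperstar} \ar[d] & X_n \ar[d] \\
Z_k \ar[r]^{\alpha\upperstar} & Z_n
}
$$
is a pullback in $\Grpd$; cULF follows as the special case in which $\alpha$ is a generic face map or a degeneracy map. To check the square I will identify both vertical fibres over a common point $\tau\in X_n$ with image $(X,\tau)\in Z_n$. By Yoneda, the fibre of the left vertical is the coslice mapping space
$$
\Map_{\cDecomp_{\Delta[n]/}}\bigl(\alpha:\Delta[n]\to\Delta[k],\;\tau:\Delta[n]\to X\bigr) ,
$$
that is, the space of $\sigma:\Delta[k]\to X$ equipped with a homotopy $\sigma\alpha\simeq\tau$; this is the same Yoneda computation used in the proof of Theorem~\ref{thm:IU=cULF}. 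For the right vertical, the codomain projection $(\cDecomp_{\Delta[k]/})^\eq\to\cDecomp^\eq$ exhibits $Z_k$ as the Grothendieck construction over $\cDecomp^\eq$ of the presheaf $Y\mapsto Y_k = \Map_{\cDecomp}(\Delta[k],Y)$, and the map $\alpha\upperstar:Z_k\to Z_n$ covers the identity on $\cDecomp^\eq$, restricting fibrewise to the precomposition $\alpha\upperstar:Y_k\to Y_n$. Consequently the fibre of $\alpha\upperstar:Z_k\to Z_n$ over $(X,\tau)$ is the fibre of $\alpha\upperstar:X_k\to X_n$ over $\tau$, which is again the same coslice mapping space.

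The natural map from the left-hand fibre to the right-hand fibre, induced by the square, is this canonical identification, so the square is a pullback and the proof is complete. The principal technical point is the identification of $(\cDecomp_{\Delta[k]/})^\eq$ with the Grothendieck construction over $\cDecomp^\eq$ of the presheaf $Y\mapsto Y_k$, but this is a standard feature of maximal sub-$\infty$-groupoids of coslice $\infty$-categories, and once granted it reduces the whole claim to the straightforward Yoneda argument on fibres.
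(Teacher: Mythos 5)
Your proof is correct and follows essentially the same route as the paper's: both arguments reduce to the fact that the fibre of the codomain projection $(\cDecomp_{\Delta[k]/})^\eq \to \cDecomp^\eq$ over $X$ is $\Map(\Delta[k],X)=X_k$, the paper packaging this via the prism lemma applied to the composite with the codomain projection, while you unwind it into an explicit comparison of fibres (and thereby prove the marginally stronger statement that the map is cartesian on all arrows of $\Delta$, which the paper's diagram also gives if one replaces the generic map $g$ by an arbitrary $\alpha$). One small terminological slip: the fibres you compute are those of the \emph{horizontal} maps $\alpha\upperstar$ of your square, over $\tau\in X_n$ and its image $(X,\tau)\in Z_n$, not of the vertical comparison maps.
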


\begin{proof}
  For $g: \Delta[k] \to \Delta[1]$ the unique generic map in degree $k$,
  consider the diagram
  $$\xymatrix{
   \Map(\Delta[k],X) \drpullback \ar[r]^-{\mathrm{pre. } g}\ar[d] & 
   \Map(\Delta[1],X)\drpullback \ar[d] 
   \ar[r] & 1 \ar[d]^{\name X}\\
   {\cDecomp_{\Delta[k]/}}^\eq \ar[r]_-{\mathrm{pre. } g}& 
   {\cDecomp_{\Delta[1]/}}^\eq \ar[r]_-{\mathrm{codom}} & \cDecomp^\eq .}
   $$
  The right-hand square and the outer rectangle are obviously pullbacks, 
  as the fibres of coslices are the mapping spaces.  Hence the left-hand
  square is a pullback, which is precisely to say that the vertical map is
  cULF.  
\end{proof}

So altogether we have cULF map
$$
\Delta_{/X} \to (\Delta\comma\cDecomp)^\mathrm{cart} \to U ,
$$
or, by straightening, a cULF map of complete decomposition spaces
$$
I: X \to U ,
$$
the {\em classifying map}.
It takes a $k$-simplex in $X$ to a $k$-subdivided interval,
as already detailed in \ref{sec:fact-int}.

\medskip

The following conjecture expresses the idea that $U$ should be terminal in the 
category of complete decomposition spaces and cULF maps,
but since $U$ is large this cannot literally be true,
and we have to formulate it slightly differently.
\begin{blanko}{Conjecture.}\label{conjecture}
  {\em $U$ is the universal complete decomposition space for cULF maps.
  That is, for each (legitimate) complete decomposition space $X$, the space 
  $\Map_{\cDecomp^{\culf}}(X,U)$ is contractible.
}
\end{blanko}
At the moment we are only able to prove the following weaker statement.

\begin{theorem}\label{thm:connected}
  For each (legitimate) complete decomposition space $X$, the space 
  $\Map_{\cDecomp^{\culf}}
(X,U)$ is connected.
\end{theorem}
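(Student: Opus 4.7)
The plan is to exhibit the canonical classifying map $I_X : X \to U$ (constructed just before the statement) as a basepoint of $\Map_{\cDecomp^{\culf}}(X,U)$, and to show that every other cULF map $f:X\to U$ is equivalent to it in that mapping space. Connectedness then follows.

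First I would identify $f$ with $I_X$ on edges. Since $U_1 \simeq \Int^{\eq}$ parametrises intervals, the map $f$ assigns to each $a\in X_1$ an interval $f(a)\in U_1$. Applying Proposition~\ref{prop:I(cULF)=eq} to the cULF functor $f$ yields a natural equivalence $I(a)\simeq I(f(a))$. But $f(a)$ is itself an interval (its longest arrow is $f(a)$), so by the corollary following Proposition~\ref{prop:I(cULF)=eq} we have $I(f(a))\simeq f(a)$. Composing, $f(a)\simeq I(a)=I_X(a)$, naturally in $a\in X_1$.

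Next I would propagate the equivalence to higher degrees. Both $f$ and $I_X$ are cartesian on every generic map, in particular on the long-edge generic map $[1]\genmap[n]$. For $\sigma\in X_n$ with long edge $a$, cartesianness forces $f(\sigma)\in U_n$ to be (up to equivalence) a wide map $\Delta[n]\onto f(a)$ and $I_X(\sigma)$ to be the analogous wide map $\Delta[n]\onto I(a)$. Via Lemma~\ref{lem:U_r_A}, the degree-$1$ equivalence $f(a)\simeq I(a)$ identifies the fibres of $U_n\to U_1$ over $f(a)$ and $I(a)$ and transports $f(\sigma)$ canonically to $I_X(\sigma)$. To glue these pointwise identifications into a coherent path in $\Map_{\cDecomp^{\culf}}(X,U)$, I would use the correspondence between cULF maps $X\to U$ and cartesian-preserving morphisms of right fibrations $\Delta_{/X}\to U=\Ar^{\mathrm{w}}(\Int)_{|\Delta}^{\mathrm{cart}}$, combined with the adjunction of Theorem~\ref{Thm:I}: the coreflection $I$ there forces any cartesian-preserving morphism of right fibrations into $U$ to be determined, up to equivalence, by its effect on the fibre over $[1]$, which is exactly the data we have already matched.

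The main obstacle is turning this pointwise, degree-$1$ identification into a fully coherent $\infty$-categorical natural equivalence of simplicial maps. This coherence step is precisely the place where we fall short of proving the full Conjecture~\ref{conjecture}: the universal property would demand the vanishing of \emph{all} higher homotopy groups of $\Map_{\cDecomp^{\culf}}(X,U)$, whereas the degree-$1$ recognition outlined above only delivers the vanishing of $\pi_0$, which is exactly the statement of connectedness asserted by the theorem.
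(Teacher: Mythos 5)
Your proposal is correct and follows essentially the same strategy as the paper: identify an arbitrary cULF map with the canonical classifying map on the $1$-component, and then invoke the fact that a cULF map into $U$ is determined up to equivalence by its $1$-component. The only cosmetic difference is that you obtain the degree-$1$ identification $f(a)\simeq I(a)$ from Proposition~\ref{prop:I(cULF)=eq} and its corollary, whereas the paper computes the fibres of the cULF pullback squares directly over each $a:\Delta[1]\to X$; both routes rest on the same coreflection adjunction of Theorem~\ref{Thm:I}, and both leave the final `determined by its $1$-component' step at the same level of detail --- which is also exactly why neither argument controls the higher homotopy of the mapping space required for Conjecture~\ref{conjecture}.
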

\begin{proof}
  Suppose $J: X\to U$ and $J': X \to U$ are two cULF functors.  
  As in the proof of Theorem~\ref{thm:IU=cULF}, cULFness is equivalent to saying
  that we have a pullback
    $$\xymatrix{
     \Map_{\cDecomp}(\Delta[k],X) \drpullback \ar[r]^{\mathrm{pre. } g}\ar[d]_{J_k} & 
     \Map_{\cDecomp}(\Delta[1],X) \ar[d]^{J_1} \\
     {\Int^{\mathrm{w}}_{\Delta[k]/}}^\eq \ar[r]_{\mathrm{pre. }g} & 
     {\Int^{\mathrm{w}}_{\Delta[1]/}}^\eq  .
  }$$
  We therefore have equivalences between the fibres over
  a point $a: \Delta[1]\to 
  X$:
  $$
  \Map_{\cDecomp_{\Delta[1]/}}(g,a) \simeq
   \Map_{\Int^{\mathrm{w}}_{\Delta[1]/}}(g, J_1(a)).
   $$
  But the second space is equivalent to $\Map_{\Int^{\mathrm{w}}}(\Delta[k],J_1(a))$.
  Since these equivalences hold also for $J'$, we get
  $$
  \Map_{\Int^{\mathrm{w}}}(\Delta[k],J_1(a)) \simeq 
  \Map_{\Int^{\mathrm{w}}}(\Delta[k],J'_1(a)),
  $$
  naturally in $k$.  This is to say that $J_1(a)$ and $J_1'(a)$ are
  levelwise equivalent simplicial spaces.  But a cULF map is determined by its
  $1$-component, so $J$ and $J'$ are equivalent in the functor category.  In
  particular, every object in $\Map^{\culf}(X,U)$ is equivalent to the canonical $I$
  constructed in the previous theorems. 
\end{proof}

\begin{blanko}{Size issues and cardinal bounds.}\label{kappa}
  We have observed that the decomposition space of intervals is large, in the
  sense that it takes values in the very large $\infty$-category of large
  $\infty$-groupoids.  This size issue prevents $U$ from being a terminal object in
  the category of decomposition spaces and cULF maps.

  A more refined analysis of the situation is possible by standard techniques,
  by imposing cardinal bounds, as we briefly explain.
  For $\kappa$ a regular uncountable cardinal, say that a simplicial space
  $X:\Delta\op\to\Grpd$ is {\em $\kappa$-bounded}, when for each $n\in\Delta$
  the space $X_n$ is $\kappa$-compact.  In other words, $X$ takes values in
  the (essentially small) $\infty$-category
  $\Grpd^\kappa$ of $\kappa$-compact $\infty$-groupoids.  Hence the $\infty$-category
  of $\kappa$-bounded simplicial spaces is essentially small.  The attribute
  $\kappa$-bounded now also applies to decomposition spaces and intervals.
  Hence the $\infty$-categories of $\kappa$-bounded decomposition spaces and
  $\kappa$-bounded intervals are essentially small.  Carrying the $\kappa$-bound
  through all the constructions, we see that there is an essentially small
  $\infty$-category $U_1$ of $\kappa$-bounded intervals, and a legitimate
  presheaf $U^\kappa: \Delta\op\to\Grpd$ of $\kappa$-bounded intervals.

  It is clear that if $X$ is a $\kappa$-bounded decomposition space, then
  all its intervals are $\kappa$-bounded too.  It follows that if
  Conjecture~\ref{conjecture} is true then it is also true that $U^\kappa$,
  the (legitimate) decomposition space of all $\kappa$-bounded intervals,
  is universal for $\kappa$-bounded decomposition spaces, in the sense that
  for any $\kappa$-bounded decomposition space $X$, the space
  $\Map_{\cDecomp^{\culf}}(X,U^\kappa)$ is contractible.
\end{blanko}

%
%

\subsection{\M intervals and the universal \M function}
\label{sec:MI}

We finally impose the \M condition.

\begin{blanko}{Nondegeneracy.}
  Recall from \ref{effective} that for a complete decomposition space $X$ we have
  $$
  \nondeg X_r \subset X_r
  $$
  the full subgroupoid of $r$-simplices none of whose principal edges are 
  degenerate.   These can also be described as the full subgroupoid
  $$
  \nondeg X_r \simeq \Map_{\operatorname{nondegen}}(\Delta[r],X) \subset
  \Map(\Delta[r],X)\simeq X_r
  $$
  consisting of the nondegenerate maps, i.e.~maps for which the restriction to
  any principal edge $\Delta[1]\to \Delta[r]$ is nondegenerate.

  Now assume that $A$ is an interval.  Inside
  $$
  \Map_{\operatorname{nondegen}}(\Delta[r],A) \simeq \nondeg A_r
  $$
  we can further require the maps to be wide.  It is clear that this 
  corresponds to considering only nondegenerate simplices whose
  longest edge is the longest
    edge $a \in A_1$:
  \begin{lemma}\label{wide+nondegA}
    $$
    \Map_{\operatorname{wide+nondegen}}(\Delta[r],A) \simeq (\nondeg A_r)_a .
    $$
  \end{lemma}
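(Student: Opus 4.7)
The plan is to combine Lemma~\ref{lem:U_r_A} with the definition of $\nondeg A_r$, observing that the wide condition and the nondegeneracy condition are independent: wideness is a condition on the long edge (the image of $\Delta[1]\to\Delta[r]$ spanning the endpoints), while nondegeneracy is a condition on the principal edges.

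First I would recall that by Lemma~\ref{lem:U_r_A}, applied internally to $A$ itself (or, equivalently, by the same adjunction argument used there for $U$), we have an identification
$$\Map_{\mathrm{wide}}(\Delta[r],A) \;\simeq\; (A_r)_a,$$
realised as the pullback
$$\xymatrix{
(A_r)_a \drpullback \ar[r]\ar[d] & A_r \ar[d]^g \\
1 \ar[r]_{\name a} & A_1
}$$
where $g:A_r \to A_1$ picks out the long edge. Concretely, under $A_r\simeq\Map(\Delta[r],A)$, the subspace $(A_r)_a$ is the full subgroupoid of those maps whose restriction along the long edge $\Delta[1]\to\Delta[r]$ is (equivalent to) $a$.

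Next I would invoke the description of $\nondeg A_r$ from \ref{effective}, namely that under the identification $A_r\simeq \Map(\Delta[r],A)$, the full subgroupoid $\nondeg A_r\subset A_r$ corresponds precisely to $\Map_{\mathrm{nondegen}}(\Delta[r],A)$, since by definition nondegeneracy of an $r$-simplex is measured on principal edges. Here I can use that $A$ is complete (being an interval), so that the notion of nondegenerate simplex is well-behaved, cf.\ \ref{complete-discussion} and \ref{effective=nondegen}.

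Finally, I would assemble the result by pulling back the diagram above along the inclusion $\nondeg A_r \hookrightarrow A_r$:
$$\xymatrix{
(\nondeg A_r)_a \drpullback \ar[r]\ar[d] & \nondeg A_r \ar@{^(->}[r]\ar[d] & A_r \ar[d]^g \\
1 \ar[r]_{\name a} & A_1 \ar[r]_= & A_1
}$$
The left-hand square identifies $(\nondeg A_r)_a$ with the intersection, inside $\Map(\Delta[r],A)$, of the full subgroupoids of wide maps and of nondegenerate maps, which is by definition $\Map_{\mathrm{wide+nondegen}}(\Delta[r],A)$. There is no real obstacle — the lemma is essentially a bookkeeping statement — and the only point meriting attention is that both conditions cut out \emph{full} subgroupoids (using completeness of $A$ and fullness of $\nondeg A_1\subset A_1$), so the intersection is itself full and the pullback computes it correctly.
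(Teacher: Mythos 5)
Your proposal is correct and follows essentially the same route as the paper, which treats the lemma as immediate from the two identifications you spell out: the paper's proof of Lemma~\ref{lem:U_r_A} already records that $\Map_{\mathrm{wide}}(\Delta[r],A)$ is the full subgroupoid of $\Map(\Delta[r],A)\simeq A_r$ of maps whose long edge is $a$, and \ref{effective} identifies $\nondeg A_r$ with $\Map_{\mathrm{nondegen}}(\Delta[r],A)$, so intersecting the two full subgroupoids gives $(\nondeg A_r)_a$. Your explicit pullback bookkeeping just makes precise what the paper asserts as clear.
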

%
\end{blanko}

\begin{blanko}{Nondegeneracy in $U$.}
  In the case of $U: \Delta\op\to\widehat{\Grpd}$, it is easy to describe the
  spaces $\nondeg U_r$.  They consist of wide maps $\Delta[r] \to A$ for which
  none of the restrictions to principal edges $\Delta[1] \to A$ are degenerate.
  In particular we can describe the fibre over a given interval $A$
  (in analogy with \ref{lem:U_r_A}):
\begin{lemma}\label{lem:nondegU_r_A}
  We have a pullback square
  $$\xymatrix{
     (\nondeg A_r)_a \ar[r]\ar[d] & \nondeg U_r \ar[d] \\
     {*} \ar[r]_{\name A} & U_1   .
  }$$ 
\end{lemma}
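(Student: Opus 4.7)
The plan is to obtain this pullback by restricting the pullback of Lemma~\ref{lem:U_r_A} to the full subgroupoids of nondegenerate simplices on both sides, and then identifying the resulting fibre using Lemma~\ref{wide+nondegA}. Since $\nondeg U_r \subset U_r$ and $\nondeg A_r \subset A_r$ are both full subgroupoid inclusions (monomorphisms), it suffices to establish that an element $\sigma : \Delta[r] \to A$ in $(A_r)_a \simeq (U_r)_A$ belongs to $\nondeg U_r$ if and only if it belongs to $\nondeg A_r$.

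First I would analyse the degeneracy map $s_0 : U_0 \to U_1$ explicitly. An object of $U_0$ is a wide map $\Delta[0] \to B$, i.e.~an interval whose two endpoints coincide; the map $s_0$ sends this to the wide map $\Delta[1] \to B$ obtained by precomposing with $\Delta[1]\to\Delta[0]$. Since $U$ is a complete decomposition space (Theorem~\ref{UcompleteDecomp}), $s_0 : U_0 \to U_1$ is a monomorphism, and its image consists precisely of those intervals $(B,b)$ whose longest edge $b$ is degenerate in $B$. Next, for the $i$-th principal edge map $U_r \to U_1$, which corresponds by construction of $U$ to precomposing a wide map $\Delta[r] \to A$ with the $i$-th edge inclusion $\Delta[1]\into\Delta[r]$ and then taking its wide factor, the image in $U_1$ is the factorisation interval $I(\sigma|_{[i-1,i]})$ equipped with its longest edge $\sigma|_{[i-1,i]}$. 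By Proposition~\ref{prop:I(cULF)=eq} and the identification of the longest edge under the cULF map $I(\sigma|_{[i-1,i]}) \to A$, this element is in the image of $s_0 : U_0 \to U_1$ if and only if the underlying edge $\sigma|_{[i-1,i]} \in A_1$ is degenerate.

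Consequently, $\sigma$ has all its principal edges nondegenerate as an element of $U_r$ if and only if all its principal edges are nondegenerate as an element of $A_r$; that is, $(\nondeg U_r)_A = (\nondeg A_r)_a$. Combined with Lemma~\ref{lem:U_r_A}, restricting the pullback
$$
\xymatrix{
  (A_r)_a \drpullback \ar[r]\ar[d] & U_r \ar[d] \\
  * \ar[r]_{\name A} & U_1
}
$$
to the full subgroupoids of nondegenerate simplices on the two right-hand objects yields the desired pullback square.

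The only non-routine step is the identification in the previous paragraph of the principal edges of $\sigma\in U_r$ in terms of those of $\sigma\in A_r$; this is where one has to unwind carefully the construction of $U$ as the simplicial space underlying the right fibration $\mathscr U^{\mathrm{cart}} \to \Delta$, and use that the wide-cULF factorisation of the restriction of $\sigma$ to a principal edge preserves the longest edge. Once this is in hand, the comparison of degeneracy conditions on both sides is immediate from completeness of $U$ and of $A$.
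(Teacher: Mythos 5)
Your overall strategy---restrict the pullback of Lemma~\ref{lem:U_r_A} to the nondegenerate parts and identify the fibre via Lemma~\ref{wide+nondegA}---is exactly the paper's (implicit) argument: the paper offers no proof, asserting beforehand that $\nondeg U_r$ consists of the wide maps $\Delta[r]\to A$ whose restrictions to principal edges are nondegenerate in $A$. The substance of your proposal is therefore the justification of that assertion. Most of it is sound: the description of the principal-edge maps of $U$ as precompose-then-take-wide-factor is correct, and the equivalence between ``the longest edge of $I(b)$ is degenerate in $I(b)$'' and ``$b$ is degenerate in $A$'' does follow from the cULF map $I(b)\to A$ being conservative, i.e.\ cartesian on $s_0$ (Proposition~\ref{prop:I(cULF)=eq}, which you cite, is not really the relevant statement).

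The gap is the word ``precisely'' in your characterisation of $\Im(s_0\colon U_0\to U_1)$. Completeness of $U$ only tells you that $s_0$ is a monomorphism; it says nothing about its image. The inclusion $\Im(s_0)\subseteq\{\text{intervals with degenerate longest edge}\}$ is immediate from your description of $s_0$, and it is the direction giving $(\nondeg A_r)_a\subseteq(\nondeg U_r)_A$. But you also need the reverse inclusion, to rule out that a $\sigma$ with a principal edge degenerate in $A$ could still be effective in $U$; this requires producing a \emph{wide} map $\Delta[0]\to B$ whenever the longest edge of $B$ is degenerate, i.e.\ showing that $\Map_{\aInt}(\Xi[-2],B')$ is nonempty for the corresponding algebraic interval $B'$. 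Your gloss of $U_0$ as ``intervals whose two endpoints coincide'' records only the degree-$0$ necessary condition for this, not sufficiency. The claim is true but needs an argument, e.g.: if the longest edge is $s_0(x)$, the canonical wide map $\Delta[1]\onto B$ factors through $\Delta[0]\stackrel{x}\to B$, and since $\Delta[1]\to\Delta[0]$ is wide, right cancellation for the left class of the wide--cULF factorisation system forces $\Delta[0]\to B$ to be wide. Alternatively the whole comparison can be short-circuited: the classifying map $A\to U$ is cULF (Theorem~\ref{thm:IU=cULF}) and sends $\name a$ to $\name A$ (since $I(a)\simeq A$), so Proposition~\ref{prop:cULF-nondegen} with the word $w=a\cdots a$ exhibits $\nondeg A_r$ as the pullback of $\nondeg U_r\to U_1$ along $A_1\to U_1$, and composing with $*\stackrel{\name a}\to A_1$ gives the lemma at once.
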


\end{blanko}

\begin{blanko}{\M intervals.}
  Recall (from~\ref{M}) that a complete decomposition space $X$ is called \M when
  it is  locally finite and of locally finite length (i.e.~tight).
  A {\em \M interval} is an interval which is \M as a decomposition space.
\end{blanko}

\begin{prop}\label{prop:Mint=Rezk}
  Any \M interval is a Rezk complete Segal space.
\end{prop}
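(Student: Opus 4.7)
The plan is to combine two results already established in the paper. First, recall that an interval in the sense of \ref{towards-Int} is, by construction, the underlying complete decomposition space $i\upperstar A$ of an algebraic interval $A \in \aInt$, i.e.~a reduced complete flanked decomposition space. By Proposition \ref{prop:i*flanked=Segal}, the underlying decomposition space $i\upperstar A$ of any flanked decomposition space is automatically a Segal space. Hence every interval is a Segal space.

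Next, a \M interval is by definition an interval which is \M as a decomposition space, so in particular it is tight (of locally finite length), by the definition in \ref{M}. Now I would invoke Proposition \ref{prop:FILTSegal=Rezk}, which states that a tight decomposition space which is also a Segal space is automatically Rezk complete. Combining these two observations — Segal (from being an interval) and tight (from being \M) — yields Rezk completeness.

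In short, the proof is a one-line composition: \ref{prop:i*flanked=Segal} provides the Segal condition from the interval structure, and \ref{prop:FILTSegal=Rezk} upgrades `tight Segal' to `Rezk complete Segal' using the \M (hence tight) hypothesis. There is no obstacle here, since both ingredients are already in place; the statement is really a corollary of \ref{cor:MSegal=Rezk} applied in the interval setting, the only thing to note being that the Segal condition for the underlying decomposition space of an interval is guaranteed by flanking rather than having to be assumed separately.
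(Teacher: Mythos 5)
Your proof is correct and is exactly the paper's own argument: the Segal condition comes from \ref{prop:i*flanked=Segal} via the flanked structure, and tightness (from the \M hypothesis) upgrades this to Rezk completeness via \ref{prop:FILTSegal=Rezk}. Nothing to add.
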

\begin{proof}
  Just by being an interval it is a Segal space (by \ref{prop:i*flanked=Segal}).
  Now the filtration condition implies the Rezk condition by
  Proposition~\ref{prop:FILTSegal=Rezk}.
\end{proof}

\begin{lemma}\label{XFILT=>IFILT}
  If $X$ is a \FILT decomposition space, then for each $a\in X_1$, the
  interval $I(a)$ is a \FILT decomposition space.
\end{lemma}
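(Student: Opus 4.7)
The plan is to reduce the claim to a direct application of Proposition~\ref{prop:cULF/FILT=FILT}, which says that a cULF map into a \FILT decomposition space pulls back the \FILT property. So the single thing I need to verify is that the canonical map $I(a)\to X$ produced by the factorisation-interval construction is cULF; once that is in hand, completeness of $I(a)$ and the filtration condition are both automatic.

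First, I would recall the construction of $I(a)$: the arrow $a\in X_1$ transposes (under the basic adjunction $i\upperstar\isleftadjointto u\upperstar$) to a map $\Xi[-1]\to u\upperstar X$ in $\cFD$, which the wide--cartesian factorisation system on $\Fun(\Xi\op,\Grpd)$ factors as $\Xi[-1]\to A\to u\upperstar X$ with $A\to u\upperstar X$ cartesian. By definition $I(a):=i\upperstar A$, and the map $I(a)\to X$ is the transpose of $A\to u\upperstar X$. Since $A\to u\upperstar X$ is cartesian in $\cFD$, Lemma~\ref{cart&cULF} gives at once that $I(a)\to X$ is cULF in $\cDecomp$. (Equivalently, one can observe that $I(a)\to X$ factors as $i\upperstar A\to i\upperstar u\upperstar X\xrightarrow{\epsilon}X$, where the first map is cULF by Lemmas~\ref{i*cart} and \ref{cart&cULF} and the second by Lemma~\ref{counit-cULF}.)

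With the cULF map $I(a)\to X$ established, Proposition~\ref{prop:cULF/FILT=FILT} applies directly: since $X$ is \FILT, so is $I(a)$. For concreteness, the completeness and decomposition-space axioms for $I(a)$ come from Lemma~\ref{cULF/decomp} together with the observation that a cULF map is in particular conservative; the finite-length condition follows from Proposition~\ref{prop:cULF-nondegen}, which provides, for every word $w\in\{0,a\}\upperstar$, a pullback square
\[
\xymatrix{
I(a)_1 \ar[d] & I(a)_w \dlpullback \ar[l]\ar[d] \\
X_1 & X_w. \ar[l]
}
\]
Specialising to $w=aa\cdots a$, the fibre of $\nondeg I(a)_r\to I(a)_1$ over any $b\in I(a)_1$ is equivalent to the fibre of $\nondeg X_r\to X_1$ over its image $f(b)\in X_1$, which is empty for $r>\ell(f(b))$ by \FILT ness of $X$. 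Hence $\ell(b)\le\ell(f(b))<\infty$, so $I(a)$ is of locally finite length.

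The only conceivable obstacle is to ensure that the map $I(a)\to X$ produced in Section~\ref{sec:fact-int} is genuinely cULF rather than merely ULF or just conservative; but this is exactly the content of Lemma~\ref{cart&cULF}, so the proof is essentially a one-line invocation of Proposition~\ref{prop:cULF/FILT=FILT} applied to this structural cULF map. No further input about the specific interval structure of $I(a)$ is needed.
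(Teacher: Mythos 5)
Your proof is correct and follows exactly the paper's own argument: the paper simply notes that the canonical map $I(a)\to X$ from the factorisation-interval construction is cULF and then invokes Proposition~\ref{prop:cULF/FILT=FILT}. The extra detail you supply about why that map is cULF (via Lemma~\ref{cart&cULF} and the wide--cartesian factorisation) is accurate but already established in Section~\ref{sec:fact-int}, so the paper takes it as given.
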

\begin{proof}
  We have a cULF map $I(a)\to X$.  Hence by Proposition~\ref{prop:cULF/FILT=FILT},
  $I(a)$ is again \FILT.
\end{proof}

\begin{lemma}
  If $X$ is a locally finite decomposition space then for each $a\in X_1$,
  the interval $I(a)$ is a locally finite decomposition space.
\end{lemma}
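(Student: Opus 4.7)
The plan is to verify the three conditions of Definition~\ref{finitary} for $I(a)$, using two features of the factorisation-interval construction from Section~\ref{sec:fact-int}: first, that the canonical map $I(a) \to X$ is cULF, and second, the explicit pullback description $I(a)_n = (X_{n+2})_a$ coming from cartesianness. Note that $I(a)$ is a Segal space (hence a decomposition space) by \ref{prop:i*flanked=Segal}, so only the finiteness clauses require proof.

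First I would dispatch the conditions on $s_0$ and $d_1$. Since $I(a) \to X$ is cULF, it is cartesian on degeneracies and on inner face maps, yielding pullback squares
$$
\xymatrix@C=1.6em@R=1.4em{
I(a)_0 \drpullback \ar[r]^{s_0} \ar[d] & I(a)_1 \ar[d] \\
X_0 \ar[r]_{s_0} & X_1
}
\qquad
\xymatrix@C=1.6em@R=1.4em{
I(a)_2 \drpullback \ar[r]^{d_1} \ar[d] & I(a)_1 \ar[d] \\
X_2 \ar[r]_{d_1} & X_1 .
}
$$
The bottom maps $s_0$ and $d_1$ are finite by the hypothesis that $X$ is locally finite, and finite maps are stable under pullback, so the top maps are finite too.

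Next I would show that $I(a)_1$ is locally finite. Unpacking the construction, $I(a) = i\upperstar A$ where $A \to u\upperstar X$ is a cartesian morphism of $\Xi\op$-spaces with $A_{-1} = 1 \xrightarrow{\name{a}} X_1 = (u\upperstar X)_{-1}$. Cartesianness at every level yields $A_n = 1 \times_{X_1} X_{n+2}$, the structure map $X_{n+2} \to X_1$ being the unique generic (long-edge) map, since the only arrow $[-1] \to [n]$ in $\Xi$ corresponds under $u$ to the long edge $[1] \to [n+2]$ in $\Delta$. Setting $n=1$ gives $I(a)_1 = (X_3)_a$, the fibre of the generic map $X_3 \to X_1$ over $a$. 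By part (3) of the lemma following \ref{finitary}, every generic map $X_m \to X_1$ is finite in a locally finite decomposition space, so $(X_3)_a$ is a finite $\infty$-groupoid, in particular locally finite.

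The main (and rather mild) obstacle is simply the bookkeeping in the second step: unwinding the $\Xi\op$-level construction of $I(a)$ to realise $I(a)_1$ as the fibre of a \emph{generic} map in $X$, so that one can invoke finiteness of generic maps rather than just the finiteness of $s_0$ and $d_1$. Once that identification is in place, all three clauses of \ref{finitary} for $I(a)$ follow formally from the corresponding clauses for $X$ combined with stability of finite maps under pullback.
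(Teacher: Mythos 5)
Your proof is correct and rests on the same underlying facts as the paper's: the paper simply observes that $I(a)\to X$ is constructed by pullback of $\name{a}:1\to X_1$, which is finite since $X_1$ is locally finite (Lemma~\ref{lem:1->B}), so that $I(a)\to X$ is a finite morphism and $I(a)$ inherits local finiteness from $X$. Your clause-by-clause verification --- using cULF-ness for the $s_0$ and $d_1$ conditions and the identification $I(a)_1=(X_3)_a$ together with finiteness of generic maps for local finiteness of $I(a)_1$ --- is just a more explicit unwinding of that one-line argument.
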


\begin{proof}
  The morphism of decomposition spaces $I(a) \to X$ was constructed by pullback
  of the map $1 \stackrel{\name a}\to X_1$ which is finite (by
  Lemma~\ref{lem:1->B}) since $X_1$ is locally finite.  Hence $I(a) \to X$
  is a finite morphism of decomposition spaces.  So $I(a)$ is locally finite 
  since $X$ is.
\end{proof}
From these two lemmas we get
\begin{cor}\label{XM=>IM}
  If $X$ is a \M decomposition space, then for each $a\in X_1$, the
  interval $I(a)$ is a \M interval.
\end{cor}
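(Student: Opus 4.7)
The plan is to assemble this corollary directly from the two preceding lemmas, since the hard work has already been done. Recall that by definition (\ref{M}), a \M decomposition space is a complete decomposition space that is simultaneously locally finite and of locally finite length (tight), and a \M interval is an interval which is \M as a decomposition space. So three properties of $I(a)$ need to be verified: completeness, local finiteness, and tightness.

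First I would note that $I(a)$ is an interval by construction, so in particular it is a complete decomposition space (being the underlying decomposition space of an algebraic interval, hence reduced complete flanked with its simplicial part automatically complete in the sense of \ref{complete}). Second, I would invoke Lemma~\ref{XFILT=>IFILT} to pass tightness from $X$ to $I(a)$; the mechanism there is that the structural map $I(a) \to X$ is cULF, and Proposition~\ref{prop:cULF/FILT=FILT} transports the finite-length property along cULF maps. Third, I would invoke the immediately preceding lemma to transport local finiteness from $X$ to $I(a)$; the point there is that $I(a) \to X$ is not merely cULF but a finite map, because it arises as a pullback of $\name a : 1 \to X_1$ (and $\name a$ is finite by Lemma~\ref{lem:1->B} since $X_1$ is locally finite).

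Combining these three facts gives that $I(a)$ is complete, locally finite, and tight, hence \M as a decomposition space, and since it is an interval it is a \M interval. There is no substantive obstacle: the entire statement is essentially a restatement of the definition of \M once the two preceding lemmas are in hand, and the proof reduces to a one-line observation that the three properties making up the \M condition each hold for $I(a)$ for the reason already established. The only real content is the observation that the map $I(a) \to X$ produced by the factorisation-interval construction is both cULF (Proposition~\ref{prop:I(cULF)=eq} and surrounding discussion) and finite, which is what makes both transfers work.
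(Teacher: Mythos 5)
Your proof is correct and follows exactly the paper's route: the corollary is stated as an immediate consequence of Lemma~\ref{XFILT=>IFILT} (tightness via the cULF map $I(a)\to X$) and the preceding lemma (local finiteness via the finiteness of $I(a)\to X$ as a pullback of $\name a$), with completeness coming for free since $I(a)$ is an interval. The only nit is that cULF-ness of $I(a)\to X$ is established in the factorisation-interval construction itself rather than in Proposition~\ref{prop:I(cULF)=eq}, but your ``surrounding discussion'' hedge covers this.
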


\begin{prop}\label{prop:Mint=finite}
  If $A$ is a \M interval then for every $r$, the space $A_r$ is finite.
\end{prop}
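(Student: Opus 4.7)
The plan is to identify $A_r$ with the fibre of the unique generic face map $g: A_{r+2} \to A_1$ over the long edge $a \in A_1$ of $A$, and then invoke local finiteness of $A$ to conclude that the fibre is a finite $\infty$-groupoid.

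First I would single out the long edge $a \in A_1$: since $A$ is a reduced flanked decomposition space, $A_{-1}$ is contractible, and the composite outer degeneracy $s_{\bot-1}s_{\top+1}: A_{-1} \to A_1$ selects $a$. By Lemma~\ref{unit-cart}, the unit $\eta_A: A \to u\upperstar i\upperstar A$ is a cartesian natural transformation of $\Xi\op$-diagrams. I would instantiate this cartesianness on the unique morphism $[-1] \to [r]$ of $\Xi$, whose image under $u$ is the unique generic map $[1] \to [r+2]$ of $\Delta$ (under the identification $(u\upperstar X)_k = X_{k+2}$). The resulting naturality square
$$
\xymatrix{
A_r \ar[r] \ar[d]_\eta & A_{-1} \ar[d]^-{\name a} \\
A_{r+2} \ar[r]_-g & A_1
}
$$
is then a pullback, and since $A_{-1}$ is contractible this yields a canonical equivalence $A_r \simeq (A_{r+2})_a$.

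Second, the \M hypothesis supplies the finiteness: since $A$ is locally finite in the sense of Definition~\ref{finitary}, the first lemma of Section~\ref{sec:findec} implies that every generic face map $d_j: A_n \to A_{n-1}$ is a finite map, and hence so is the composite generic $g: A_{r+2} \to A_1$. Its fibre $(A_{r+2})_a$ over $a$ is therefore a finite $\infty$-groupoid. Combined with the identification from the previous paragraph, this gives that $A_r$ is finite for all $r \geq 0$; the case $r = -1$ is trivial, since $A_{-1}$ is contractible.

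The main point requiring care will be the indexing bookkeeping in step one: one must verify that the unique $\Xi$-morphism $[-1] \to [r]$ corresponds under $u$ to the unique generic map $[1] \to [r+2]$ in $\Delta$, so that the bottom horizontal in the naturality square for $\eta$ really is the generic $g$ whose fibres we care about. Once this is confirmed, the argument is formal, combining the cartesian unit with the closure of finite maps under composition and pullback.
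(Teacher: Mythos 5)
Your proof is correct and is essentially the paper's own argument: the paper writes out explicitly the two flanking pullback squares $A_r\to A_{r+1}\to A_{r+2}$ over $1\to A_0\to A_1$, which is exactly the content of the cartesianness of the unit $\eta_A=s_{\bot-1}s_{\top+1}$ that you invoke (Lemma~\ref{unit-cart} is proved from precisely those bonus pullbacks). Your indexing bookkeeping also checks out: the unique $\Xi$-map $[-1]\to[r]$ does go under $u$ to the unique generic map $[1]\to[r+2]$, so the identification $A_r\simeq (A_{r+2})_a$ and the finiteness of the composite generic map give the result just as in the paper.
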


\begin{proof}
  The squares
  $$\xymatrix{
     A_0 \drpullback \ar[r]^-{s_{\top+1}}\ar[d] & A_1 \drpullback 
     \ar[r]^-{s_{\bot-1}}\ar[d]_{d_0} & A_2 \ar[d]^{d_1} \\
     1 \ar@/_1pc/[rr]_{\name a} \ar[r]^-{s_{\top+1}} & A_0\ar[r]^-{s_{\bot-1}} & A_1
  }$$
  are pullbacks by the flanking condition \ref{flanked} (the second is a bonus
  pullback, cf.~\ref{s-1-newbonuspullbacks}).  The bottom composite arrow picks
  out the long edge $a\in A_1$.  (That the outer square is a pullback can be
  interpreted as saying that the $2$-step factorisations of $a$ are parametrised
  by their midpoint, which can be any point in $A_0$.)  Since the generic maps 
  of $A$ are finite (simply by the assumption that $A$ is locally finite) 
  in particular the map $d_1:A_2 \to A_1$ is finite, hence
  the fibre $A_0$ is finite.  The same argument works for arbitrary $r$, by
  replacing the top row by $A_r \to A_{r+1} \to A_{r+2}$, and letting the columns
  be $d_0^r$, $d_0^r$ and $d_1^r$.
\end{proof}
\noindent
(This can be seen as a homotopy version of \cite{LawvereMenniMR2720184} Lemma 
2.3.)

\begin{cor}
  For a \M interval, the total space of all nondegenerate simplices $\sum_r \nondeg 
  A_r$ is finite.
\end{cor}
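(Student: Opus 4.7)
\begin{proof*}{Proof plan.}
The strategy is to combine the previous corollary (that $A_r$ is finite for every $r$) with the characterisation of \M{} decomposition spaces established in Lemma~\ref{lem:oldcharacterisationofM}. The point is that we do not need to bound the $r$'s for which $\nondeg A_r$ is non-empty; it suffices to bound everything fibrewise over $A_1$ and observe that $A_1$ itself is already finite as an $\infty$-groupoid.

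Concretely, the plan is as follows. First, since $A$ is a \M{} interval, it is in particular a \M{} decomposition space, so Lemma~\ref{lem:oldcharacterisationofM} applies and produces the finite map
\[
m \;=\; \sum_r {d_1}^{r-1} \;:\; \sum_r \nondeg A_r \;\longrightarrow\; A_1.
\]
Thus every fibre of $m$ is a finite $\infty$-groupoid. Second, by Proposition~\ref{prop:Mint=finite} applied with $r=1$, the space $A_1$ is finite (not merely locally finite): this is the crucial extra input provided by the interval structure, via the flanking condition that identifies $A_0$ with the fibre of $d_1:A_2\to A_1$ over the long edge. Third, I will combine these two facts by noting that a finite map with finite target has finite total space, since
\[
\sum_r \nondeg A_r \;\simeq\; \sum_{[b]\in \pi_0 A_1} \bigl(\textstyle\sum_r \nondeg A_r\bigr)_b \Big/\!\!\Big/ \Aut(b),
\]
which is a finite sum (because $\pi_0 A_1$ is finite) of homotopy quotients of finite $\infty$-groupoids by finite groups, hence finite.

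There is really no obstacle here: the work has already been done in Proposition~\ref{prop:Mint=finite} (which packages the flanking-plus-local-finiteness argument) and in Lemma~\ref{lem:oldcharacterisationofM} (which packages the tightness-plus-local-finiteness reformulation of the \M{} condition). The only mildly non-trivial point is the elementary $\infty$-groupoid fact that a finite map into a finite base has a finite total space, which is immediate from the decomposition of the total space as a homotopy sum of fibres over the (finitely many) components of the base, given that loop spaces at points of a finite $\infty$-groupoid are themselves finite.
\end{proof*}
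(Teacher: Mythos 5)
Your proof is correct and follows exactly the same route as the paper, which simply cites Proposition~\ref{prop:Mint=finite} and Lemma~\ref{lem:oldcharacterisationofM} and leaves the combination implicit. Your spelled-out final step (a finite map with finite base has finite total space, directly from the definition of finite map by pulling back along the identity) is exactly the intended glue.
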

\begin{proof}
  This follows from Proposition~\ref{prop:Mint=finite} and
  Lemma~\ref{lem:oldcharacterisationofM}.
\end{proof}

\begin{cor}\label{cor:<kappa}
A \M interval is $\kappa$-bounded for any uncountable
cardinal $\kappa$.
\end{cor}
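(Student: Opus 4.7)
The plan is to show that every level $A_n$ of a \M interval $A$ is a finite $\infty$-groupoid, and then observe that finite $\infty$-groupoids are $\kappa$-compact for any uncountable $\kappa$. This gives $\kappa$-boundedness in the sense of \ref{kappa}.

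First I would note that finiteness of every $A_r$ is essentially already established. For $r=-1$ we have $A_{-1}\simeq *$ because $A$, being an interval (\ref{aInt}), is reduced. For $r\geq 0$, Proposition~\ref{prop:Mint=finite} gives directly that $A_r$ is finite: the argument there iterates the flanking pullbacks (\ref{flanked}, \ref{s-1-newbonuspullbacks}) to exhibit each $A_r$ as a fibre of a generic face map $d_1^r:A_{2r}\to A_r$, and these generic face maps are finite since $A$ is locally finite as part of being \M. (Equivalently, this is just the content of Lemma~\ref{lem:oldcharacterisationofM} applied to each degree separately.)

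Next I would invoke the standard fact that a finite $\infty$-groupoid in the sense of \ref{finite}---finitely many components, each with finite homotopy groups concentrated in finitely many degrees---is $\kappa$-compact in $\Grpd$ for every uncountable regular cardinal $\kappa$. Indeed, such an $\infty$-groupoid is equivalent to a finite CW-complex, hence in particular is $\aleph_1$-compact, and $\kappa$-compact objects form an increasing system as $\kappa$ grows. Thus each $A_n$ lies in $\Grpd^\kappa$, which by definition (\ref{kappa}) means that $A$ is $\kappa$-bounded, first as a $\Xi\op$-diagram, and then, by restriction along $i:\Delta\to\Xi$, as a simplicial space (its underlying complete decomposition space $i\upperstar A$).

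There is no real obstacle here; the result is a direct packaging of Proposition~\ref{prop:Mint=finite} together with the size remark. The only thing to double-check is the convention on $\kappa$-compactness of finite $\infty$-groupoids, which is standard and holds as soon as $\kappa$ is uncountable.
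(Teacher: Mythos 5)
Your proof is correct and is exactly the intended argument: the paper states the corollary immediately after Proposition~\ref{prop:Mint=finite} with no written proof, the point being precisely that each $A_r$ is finite and finite $\infty$-groupoids are $\kappa$-compact for uncountable regular $\kappa$. One small inaccuracy in your justification: a finite $\infty$-groupoid in the sense of \ref{finite} is \emph{not} in general equivalent to a finite CW-complex (e.g.\ $BG$ for a nontrivial finite group $G$ has nonvanishing cohomology in infinitely many degrees); it is, however, equivalent to a \emph{countable} CW-complex, which is all you need for $\aleph_1$-compactness and hence $\kappa$-compactness for every uncountable regular $\kappa$.
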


%
%
%
%

\begin{blanko}{The decomposition space of \M intervals.}  
  There is a decomposition space $\mathbf {MI}\subset U$ consisting of all \M
  intervals.  In each degree, $\MI_k$ is the full subgroupoid of $U_k$
  consisting of the wide maps $\Delta[k] \to A$ for which $A$ is \M. While $U$
  is large, $\MI$ is a legitimate decomposition space by \ref{cor:<kappa} and
  \ref{kappa}.
\end{blanko}

\begin{theorem}\label{thm:MI=M}
  The decomposition space $\MI$ is \M.
\end{theorem}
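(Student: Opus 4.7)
The strategy is to apply Lemma~\ref{lem:oldcharacterisationofM}: once $\MI$ is known to be a complete decomposition space, it suffices to verify that $\MI_1$ is locally finite and that the restricted composition map $m:\sum_r \nondeg\MI_r\to\MI_1$ is finite.

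First I would check that $\MI\hookrightarrow U$ is closed under all simplicial maps, so that $\MI$ inherits from Theorem~\ref{UcompleteDecomp} the structure of a complete decomposition space. Generic face maps and degeneracies act on a wide map $\Delta[r]\onto A$ by forgetting or inserting subdivisions (after wide–cULF factorisation), producing wide maps whose codomain is either $A$ itself or an interval receiving a cULF map to $A$; free face maps restrict to subintervals. In every case, the resulting interval is a factorisation interval of some arrow of $A$ and is therefore M\"obius by Corollary~\ref{XM=>IM}. Since each inclusion $\MI_n\hookrightarrow U_n$ is a union of connected components, all generic–free pullback squares of $U$ restrict to pullback squares in $\MI$, and $s_0:\MI_0\to\MI_1$ inherits monicity from $U$.

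Next, I would use Lemma~\ref{lem:nondegU_r_A} to identify the fibre of $m$ over any M\"obius interval $A$ with $\sum_r (\nondeg A_r)_a$, where $a$ is the longest edge of $A$. Tightness of $A$ implies this sum has only finitely many non-empty summands; local finiteness of $A$, combined with Lemma~\ref{lem:comp-finite}, shows each summand to be finite. Thus $m$ is finite, and as a bonus this verifies tightness of $\MI$ itself: the length of $A$ as an object of $\MI_1$ is bounded by the largest $r$ for which $(\nondeg A_r)_a$ is non-empty.

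The hard part is showing $\MI_1$ locally finite, i.e.\ that $\Aut(A)=\Omega(\MI_1,A)$ is a finite $\infty$-groupoid for each M\"obius interval $A$. Since $A$ is a Rezk complete Segal space (Proposition~\ref{prop:Mint=Rezk}) with each $A_r$ a finite $\infty$-groupoid (Proposition~\ref{prop:Mint=finite}), it is an essentially finite $\infty$-category. Its space of self-equivalences is the invertible component of the mapping space $\Map(A,A)$, which by the Segal and Rezk completeness conditions is equivalent to a finite limit built from $A_0,A_1$ and $A_2$ alone; each being a finite $\infty$-groupoid, this finite limit is itself a finite $\infty$-groupoid, as is its full subgroupoid of equivalences. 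The main obstacle is precisely this step: turning the informal ``$A$ is essentially finite, so $\Aut(A)$ is finite'' argument into a rigorous verification, which requires either careful simplicial bookkeeping of the Rezk-complete Segal presentation, or an appeal to size-constraints such as Corollary~\ref{cor:<kappa}.
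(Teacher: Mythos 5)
Your first two steps track the paper's proof closely: the paper likewise verifies the two conditions of Lemma~\ref{lem:oldcharacterisationofM}, and proves finiteness of $\sum_r \nondeg\MI_r\to\MI_1$ exactly as you do, by using Lemma~\ref{lem:nondegU_r_A} to identify the fibre over a \M interval $A$ with $\sum_r(\nondeg A_r)_a$ and then invoking the \M property of $A$. The genuine gap is in your third step, the local finiteness of $\MI_1$, and you have correctly located it but your proposed resolution does not work. The claim that $\Map(A,A)$ is ``equivalent to a finite limit built from $A_0$, $A_1$ and $A_2$ alone'' is not justified by the Segal and Rezk conditions: the space of natural self-transformations of a simplicial space is a limit over an infinite diagram (one vertex for every simplicial operator), and homotopy-coherent naturality carries nontrivial data in every degree; an infinite limit of finite $\infty$-groupoids need not be finite, so one cannot conclude without an actual finite cofinal subdiagram. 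Your fallback, Corollary~\ref{cor:<kappa}, is also a non-starter: $\kappa$-boundedness is a set-theoretic smallness statement controlling the size of $\MI$ as a presheaf, and says nothing about homotopy finiteness of $\Aut(A)$.

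What the paper does instead is exploit a property you did not use: a \M interval is tight, hence \emph{split} (Corollary~\ref{prop:tight=>split}). By Proposition~\ref{prop:split=semi} (via Theorem~\ref{thm:semisimpl=splitcons}) the space of self-equivalences of $A$ can therefore be computed as $\Map_{\Fun(\Deltainj\op,\Grpd)}(\nondeg A,\nondeg A)$ in the category of semi-simplicial spaces. Now Proposition~\ref{prop:Mint=finite} gives that each $\nondeg A_k$ is a finite $\infty$-groupoid, and tightness gives $\nondeg A_k=\emptyset$ for $k>r$, so the mapping space lives over the truncation $\Deltainj^{\leq r}$, a \emph{finite} simplicial set. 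The remaining step --- that mapping spaces between finite-groupoid-valued presheaves on a finite simplicial set are finite --- is itself a nontrivial lemma, which the paper proves by expressing the mapping space as a limit over the edgewise subdivision (again finite) with values in mapping spaces of finite $\infty$-groupoids, using that $\grpd$ is cartesian closed and closed under finite limits. If you want to repair your argument, this splitness route is the missing idea; the Segal/Rezk structure alone does not provide the required finite presentation of $\Aut(A)$.
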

\begin{proof}
  We first prove that the map $\sum_r \nondeg \MI_r \to \MI_1$ is a finite 
  map. Just check the fibre: fix a \M interval $A\in \MI_1$, with longest edge
  $a\in A_1$.
  From Lemma~\ref{lem:nondegU_r_A} we see that the fibre over $A$ is 
  $(\sum_r \nondeg A_r)_a=\sum_r (\nondeg A_r)_a$.  But this is the fibre 
  over $a\in  A_1$ of the map $\sum_r \nondeg A_r \to A_1$, which is finite
  by the assumption that $A$ is \M.
  
  Next we show that the $\infty$-groupoid $\MI_1$ is locally finite.
  But $\MI_1$ is the space of
  \M intervals, a full subcategory of the space of all decomposition spaces,
  so we need to show, for any \M interval $A$, that $\Eq_{\Decomp}(A)$ is 
  finite.  Now $A$ is in particular split, so we can compute this space of 
  equivalences inside the $\infty$-groupoid of split decomposition spaces,
  which by Proposition~\ref{prop:split=semi} is equivalent to the $\infty$-groupoid
  of semi-decomposition spaces.  So we have reduced to computing
  $$
  \Map_{\Fun(\Deltainj\op,\Grpd)}(\nondeg A,\nondeg A) .
  $$
  Now we know that all $\nondeg A_k$ are finite, so the mapping space can 
  be computed in the functor category with values in $\grpd$.
  On the other hand we also know that these groupoids are
  are empty for $k$ big enough, say 
  $\nondeg A_k = \emptyset$ for $k > r$.  Hence we can compute this mapping 
  space as a functor category on the truncation $\Deltainj^{\leq r}$.
  So we are finally talking about a functor category over a finite simplicial set
  (finite in the sense: only finitely many nondegenerate simplices), and with 
  values in finite groupoids.
  So we are done by the following lemma.
\end{proof}

\begin{lemma}
  Let $K$ be a finite simplicial set, and let $X$ and $Y$ be finite-groupoid-valued 
  presheaves on $K$. Then
  $$
  \Map_{\Fun(K\op,\grpd)}(X,Y)
  $$
  is finite.
\end{lemma}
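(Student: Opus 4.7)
\begin{proof*}{Proof sketch.}
The plan is to induct on the number of nondegenerate simplices of $K$, using cell attachment to reduce the problem to the case of a single simplex, and then to the case of a single point.

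First I would fix the pushout stability of the problem. If $K$ is obtained from $K'$ by attaching an $n$-cell along a map $\partial\Delta[n]\to K'$, there is a pushout of simplicial sets
$$
K \;\simeq\; K'\coprod_{\partial\Delta[n]}\Delta[n].
$$
Passing to $\infty$-categories of presheaves turns this pushout into a pullback
$$
\Fun(K\op,\grpd)\;\simeq\;\Fun(K'{}\op,\grpd)\times_{\Fun(\partial\Delta[n]\op,\grpd)}\Fun(\Delta[n]\op,\grpd),
$$
and since the mapping space in a pullback of $\infty$-categories is a pullback of mapping spaces,
restricting $X$ and $Y$ to each piece gives
$$
\Map(X,Y)\;\simeq\;\Map(X|_{K'},Y|_{K'})\;\times_{\Map(X|_{\partial\Delta[n]},Y|_{\partial\Delta[n]})}\;\Map(X|_{\Delta[n]},Y|_{\Delta[n]}).
$$
Since finite groupoids are closed under pullback, an induction on the number of nondegenerate simplices of $K$ reduces the statement to the cases $K=\Delta[n]$ and $K=\partial\Delta[n]$, and iterating further reduces it to the base case $K=\Delta[0]=*$. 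In that case $\Map(X,Y)=\Map_{\grpd}(X(*),Y(*))$, which is finite because the mapping space between two finite groupoids is finite (its set of components is a finite set of conjugacy classes of homomorphisms, and the automorphism group of any vertex is a subgroup of the finite automorphism group of the target).

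The main subtlety to check is the cell-attachment step: one must make sure that the pushout of simplicial sets $K'\coprod_{\partial\Delta[n]}\Delta[n]$ really does translate into a pullback square of presheaf $\infty$-categories after the $\Fun(-{}\op,\grpd)$ construction, and that taking $\Map(X,Y)$ then yields a pullback square of $\infty$-groupoids. Both of these are standard consequences of the universal property of presheaves and of the fact that $\Map(-,-)$ takes (co)limits in either variable into limits; they are used freely in Lurie~\cite{Lurie:HTT}. Once this is granted, the two-step inductive argument above finishes the proof.
\end{proof*}
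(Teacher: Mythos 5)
Your strategy is sound and genuinely different from the paper's. The paper computes $\Map_{\Fun(K\op,\grpd)}(X,Y)$ in one stroke as the limit of a single diagram indexed by the edgewise subdivision $\widetilde K$ of $K$, namely $\lim\bigl(\widetilde K\to K\times K\op\to\grpd\op\times\grpd\xrightarrow{\Map}\Grpd\bigr)$ (citing Glasman for this formula), observes that $\widetilde K$ is again a finite simplicial set, and concludes by closure of $\grpd$ under finite limits (Proposition~\ref{prop:closedunderfinlims}) together with cartesian closedness of $\grpd$ (Proposition~\ref{prop:cartesianclosed}). You instead decompose $K$ cell by cell and turn the colimit into an iterated pullback of mapping spaces. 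The paper's route requires only one limit and no induction; yours avoids the twisted-arrow/edgewise-subdivision formula at the price of more bookkeeping.

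Two points in your write-up need attention before the argument closes. First, the cell-attachment induction does not by itself reduce $K=\Delta[n]$ to $K=\Delta[0]$: the only cell decomposition of $\Delta[n]$ attaches its top cell along $\partial\Delta[n]$ to produce $\Delta[n]$ again, which is circular, so the simplices themselves must be handled separately --- for instance by using that the spine inclusion $\Delta[1]\coprod_{\Delta[0]}\cdots\coprod_{\Delta[0]}\Delta[1]\to\Delta[n]$ is inner anodyne, hence induces an equivalence on $\Fun(-\op,\grpd)$, reducing to $\Delta[1]$ and $\Delta[0]$; for $\Delta[1]$ the mapping space is the explicit finite limit $\Map(X_0,Y_0)\times_{\Map(X_1,Y_0)}\Map(X_1,Y_1)$. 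Relatedly, the induction should be organised skeletally (dimension first, then number of top cells), since $\partial\Delta[n]$ may well have more nondegenerate simplices than $K$ itself. Second, your justification of the base case is the argument for $1$-groupoids; in this paper $\grpd$ consists of finite $\infty$-groupoids, and the finiteness of $\Map_{\grpd}(X(*),Y(*))$ is precisely Proposition~\ref{prop:cartesianclosed}, whose proof needs truncatedness of the target and a finite cell approximation of the source. Neither point breaks the approach, but both must be supplied.
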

\begin{proof}
This mapping space may be calculated as the limit of the diagram
$$
\widetilde 
K\xrightarrow{\;f\;}K\times K^{\op}\xrightarrow{X^{\op}\times Y}
\grpd\op\times\grpd\xrightarrow{\Map}\Grpd
$$
See for example \cite[Proposition 2.3]{glasman-1408.3065v3} for a proof. 
Here $\widetilde K$ is the edgewise subdivision of $K$, introduced 
in \cite[Appendix 1]{Segal-conf-sp} as follows:
$$
\widetilde K_n=K_{2n+1},
\qquad \widetilde d_i=d_id_{2n+1-i},
\quad \widetilde s_i=s_is_{2n+1-i}.
$$ 
and $f:\widetilde K\to K\times K\op$ is defined by $(d_{n+1},d_0)^{n+1}:K_{2n+1}\to K_n\times K_n$.
Now $\widetilde K$ is also finite: for each nondegenerate simplex $k$ of $K$, only a finite number of the degeneracies $s_{i_j}\dots s_{i_1}k$ will be nondegenerate in $\widetilde K$. 
Furthermore, mapping spaces between finite groupoids are 
  again finite, since $\grpd$ is cartesian closed by 
  Proposition~\ref{prop:cartesianclosed}. Thus 
  the mapping space in question can be computed as a finite limit of finite groupoids, so it is finite by 
  Proposition~\ref{prop:closedunderfinlims}.
\end{proof}

\begin{prop}
  Let $X$ be a  decomposition space $X$ with locally finite $X_1$.  Then the 
  following are equivalent.
  \begin{enumerate}
    \item $X$ is \M.
  
    \item All the intervals in $X$ are \M.
  
    \item Its classifying map factors through  $\MI\subset U$.
  \end{enumerate}
\end{prop}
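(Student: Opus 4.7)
The plan is to establish the cycle $(1) \Rightarrow (2) \Leftrightarrow (3) \Rightarrow (1)$. The implication $(1) \Rightarrow (2)$ is essentially the content of Corollary~\ref{XM=>IM}: each interval $I(a)$ of a \M decomposition space $X$ inherits tightness via Proposition~\ref{prop:cULF/FILT=FILT} (since $I(a) \to X$ is cULF by construction), completeness since cULF maps pull back completeness, and local finiteness because $I(a) \to X$ is finite, being a pullback of the finite name $1 \to X_1$ available since $X_1$ is locally finite.

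For the equivalence $(2) \Leftrightarrow (3)$, I would simply unpack the classifying map $X \to U$ of Section~\ref{sec:fact-int}: a $k$-simplex $\sigma \in X_k$ with long edge $a = g(\sigma) \in X_1$ is sent to the $k$-subdivided interval $\Delta[k] \onto I(a)$ viewed as an object of $U_k$. Since $\MI \subset U$ is by definition the full simplicial sub-$\infty$-groupoid spanned by the \M intervals in each degree, the map $X \to U$ factors through $\MI$ if and only if $I(a) \in \MI_1$ for every $a \in X_1$, which is precisely condition~(2). The factorisation at higher levels is automatic from the factorisation at level $1$, because each point of $U_k$ is determined up to subdivision by its underlying interval in $U_1$.

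For $(3) \Rightarrow (1)$, assume the classifying map factors as $X \to \MI \hookrightarrow U$. The composite $X \to U$ is cULF by Theorem~\ref{thm:IU=cULF}, and since $\MI \hookrightarrow U$ is a levelwise full inclusion, the two-out-of-three property for pullbacks (Lemma~\ref{pbk}) shows that the factored map $X \to \MI$ is itself cULF. Now $\MI$ is \M by Theorem~\ref{thm:MI=M}, and I would check that each defining property of \M pulls back along the cULF map $X \to \MI$: completeness holds because being cULF includes cartesianness on $s_0$, so $s_0 : X_0 \to X_1$ is a pullback of the monomorphism $s_0 : \MI_0 \to \MI_1$, and monos are pullback-stable; finiteness of $s_0 : X_0 \to X_1$ and $d_1 : X_2 \to X_1$ holds because these are pullbacks of the corresponding finite maps in $\MI$, and finite maps are pullback-stable by Lemma~\ref{lem:finitemaps}; and tightness transfers directly by Proposition~\ref{prop:cULF/FILT=FILT}. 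Combined with the standing hypothesis that $X_1$ is locally finite, this yields that $X$ is \M.

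The main subtlety is the verification that the factored map $X \to \MI$ is cULF, which is really where Theorem~\ref{thm:IU=cULF} gets combined with the fullness of the inclusion $\MI \hookrightarrow U$; the rest of $(3) \Rightarrow (1)$ is then pullback-stability bookkeeping, and the equivalence $(2) \Leftrightarrow (3)$ reduces to the construction of the classifying map.
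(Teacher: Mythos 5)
Your proposal is correct and follows essentially the same route as the paper: the same cycle of implications using Corollary~\ref{XM=>IM} for $(1)\Rightarrow(2)$, the definition of $\MI$ as the full sub-simplicial-groupoid of $U$ spanned by \M intervals for $(2)\Rightarrow(3)$, and the fact that $X$ is cULF over the \M space $\MI$ (via Propositions~\ref{prop:cULF/FILT=FILT} and pullback-stability of finite maps, together with the standing hypothesis on $X_1$) for $(3)\Rightarrow(1)$. Your extra care in checking that the factored map $X\to\MI$ is cULF, via fullness of $\MI\subset U$ and Lemma~\ref{pbk}, is a detail the paper leaves implicit but does not change the argument.
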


\begin{proof}
  If the classifying map factors through $X\to \MI$, then $X$ is cULF over a \M
  space, hence is itself \FILT (by \ref{prop:cULF/FILT=FILT}), and has finite
  generic maps.  Since we have assumed $X_1$ locally finite, altogether $X$ is
  \M. We already showed (\ref{XM=>IM}) that if $X$ is \M then so are all its
  intervals.  Finally if all the intervals are \M, then clearly the classifying
  map factors through $\MI$.
\end{proof}


\begin{BM}
  For $1$-categories, Lawvere and Menni~\cite{LawvereMenniMR2720184}
  show that a category is \M if and only
  if all its intervals are \M. This is not quite true in our setting: even if
  all the intervals of $X$ are \M, and in particular finite, there is no
  guarantee that $X_1$ is locally finite.
\end{BM}

\begin{blanko}{Conjecture.}
  The decomposition space $\MI$ is terminal in the category of \M decomposition
  spaces and cULF maps.
\end{blanko}
This would follow from Conjecture \ref{conjecture}, but could be strictly weaker.

\begin{blanko}{The universal \M function.}
  The decomposition space $U$ of all intervals is complete, hence it has \M
  inversion at the objective level, in the sense of \ref{thm:zetaPhi}.  The \M
  function is the formal difference $\Phieven-\Phiodd$.  Note that the map
  $m:\nondeg U_k\to U_1$ in $\widehat{\Grpd}$, that defines $\Phi_k$, has fibres
  in {\Grpd} by Lemma~\ref{lem:U_r_A}.  Since every complete decomposition space
  $X$ has a canonical cULF map to $U$, it follows from Corollary \ref{phi=phi}
  that the \M function of $X$ is induced from that of $U$.  The latter can
  therefore be called the {\em universal \M function}.

  The same reasoning works in the \M situation, and implies the existence of a
  universal \M function numerically.  Namely, since $\MI$ is \M, its \M 
  inversion formula admits a cardinality by Theorem~\ref{thm:|M|}:
\end{blanko}

\begin{theorem}
  In the incidence algebra $\Q^{\pi_0 \MI}$, the zeta function $\norm \zeta: 
  \pi_0 \MI \to \Q$ is invertible under convolution, and its inverse is
  the universal \M function
  $$
  \norm \mu := \norm{\Phieven}-\norm{\Phiodd} .
  $$
  The \M function in the (numerical) incidence algebra of any \M decomposition
  space is induced from this universal \M function via the classifying map.
\end{theorem}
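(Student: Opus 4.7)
\begin{proof*}{Proof plan.}
The proof assembles three results already in hand. First, by Theorem~\ref{thm:MI=M}, the decomposition space $\MI$ is \M. Second, Theorem~\ref{thm:|M|} applied to $\MI$ yields immediately the first two assertions: the cardinality of the zeta functor on $\MI$ is convolution-invertible in the numerical incidence algebra $\Q^{\pi_0 \MI_1}$, with inverse $\norm{\mu}=\norm{\Phieven}-\norm{\Phiodd}$. So the only substantive task is to explain the third assertion, namely that the \M function of any other \M decomposition space is pulled back from this one along the classifying map.

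For the third assertion, let $X$ be any \M decomposition space, and consider the classifying map $J:X\to U$ constructed from the factorisation-interval comonad in Subsection~\ref{sec:fact-int}. By Corollary~\ref{XM=>IM}, for each $a\in X_1$ the interval $I(a)$ is itself a \M interval, so $J$ factors through the full simplicial subspace $\MI\subset U$, giving $J:X\to\MI$. Since the inclusion $\MI\hookrightarrow U$ is a levelwise full inclusion and $J:X\to U$ is cULF by Theorem~\ref{thm:IU=cULF}, the corestricted map $J:X\to\MI$ is cULF too (cULFness is detected by pullback squares of generic maps, and these are inherited from the ambient $U$).

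Now apply Corollary~\ref{phi=phi} to the cULF map $J:X\to\MI$: at the objective level, $J\upperstar\zeta=\zeta$ and $J\upperstar\Phi_n=\Phi_n$ for all $n$, hence $J\upperstar(\Phieven-\Phiodd)$ equals the \M formula on $X$. Since $J$ is a map of \M decomposition spaces (in particular of locally finite decomposition spaces, cf.~Remark~\ref{rmk:morphisms}), the induced coalgebra homomorphism $J\lowershriek:\grpd_{/X_1}\to\grpd_{/\MI_1}$ descends under homotopy cardinality to a coalgebra homomorphism $\Q_{\pi_0 X_1}\to\Q_{\pi_0\MI_1}$, whose linear dual is an algebra homomorphism $\Q^{\pi_0\MI_1}\to\Q^{\pi_0 X_1}$ sending $\norm{\zeta}_{\MI}\mapsto\norm{\zeta}_X$ and $\norm{\Phi_n}_{\MI}\mapsto\norm{\Phi_n}_X$. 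Applying this algebra homomorphism to the universal identity $\norm{\zeta}*\norm{\mu}=\norm{\epsilon}=\norm{\mu}*\norm{\zeta}$ in $\Q^{\pi_0 \MI_1}$ yields the corresponding identity in $\Q^{\pi_0 X_1}$, proving that the \M function of $X$ is the image of the universal one.

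The only point requiring minor care is checking that the corestricted classifying map $X\to\MI$ is indeed cULF and that pullback along it preserves the Phi functors even though $\MI$ is only a priori a legitimate (small) decomposition space in the sense of~\ref{kappa}; but this is automatic since \M intervals are $\kappa$-bounded for any uncountable $\kappa$ (Corollary~\ref{cor:<kappa}), so all the linear-algebra arguments take place in the honest $\infty$-category $\LIN$ without any size issue, and no further obstacle arises.
\end{proof*}
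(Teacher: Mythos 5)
Your proposal is correct and follows essentially the same route the paper takes: the paper derives the theorem from Theorem~\ref{thm:MI=M} plus Theorem~\ref{thm:|M|} for the invertibility statement, and from the factorisation of the classifying map through $\MI$ (the Proposition preceding the theorem) together with Corollary~\ref{phi=phi} for the universality statement. Your write-up merely makes explicit the cardinality/duality bookkeeping that the paper leaves implicit, and the size remark at the end matches the paper's own discussion in \ref{kappa} and \ref{cor:<kappa}.
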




\def\inputfile{appendix.tex}

\newcommand{\PP}{\mathscr{P}}

\appendix
\setcounter{lemma}{-1}

\section{Homotopy linear algebra and homotopy cardinality}

\begin{blanko}{Objective algebraic combinatorics.}
  One may say that algebraic combinatorics is the study of combinatorial
  structures via algebraic objects associated to them.  In the classical theory
  of \M inversion of Rota et al., the combinatorial objects are locally finite posets,
  and the associated algebraic structures are their incidence coalgebras and
  algebras, whose underlying vector spaces are freely generated by intervals in
  the poset.  In our theory, decomposition spaces are viewed as a generalisation
  of the notion of poset.  Instead of vector spaces to support the associated
  algebraic structures, we work with certain
%
%
%
%
  linear structures generated directly by the
  combinatorial objects (with coefficients in $\infty$-groupoids).
  This is the so-called `objective method', advocated in particular by
  Lawvere and Schanuel (see Lawvere--Menni~\cite{LawvereMenniMR2720184} for an explicit
  objective treatment of the Leroux theory of \M categories); the next level of
  objectivity is often called `groupoidification', developed in particular by
  Baez, Hoffnung and Walker~\cite{Baez-Hoffnung-Walker:0908.4305},
  where the scalars are (suitably finite) $1$-groupoids.  In the present work we 
  take coefficients in $\infty$-groupoids, and hence incorporate homotopy theory. 
  At the same time,
  the abstract viewpoints forced upon us by this setting lead to some 
  conceptual simplifications even at the $1$-groupoid level.
\end{blanko}

\begin{blanko}{Groupoid slices as vector spaces.}
  To deal with algebraic structures at the objective level requires at least to
  be able to form sums (linear combinations).  In analogy with taking the free
  vector space on a set, we can take the homotopy-sum completion of
  an $\infty$-groupoid $S$: this is (cf.~\ref{HoSum}) the homotopy slice $\infty$-category
  $\Grpd_{/S}$, whose objects are groupoid maps $X\to S$.
  It stands in
  for the free vector space on a set $\pi_0S$: just as a vector is a
  (finite) $\pi_0S$-indexed family of scalars (namely its coordinates with
  respect to the basis), an object $X\to S$ in $\Grpd_{/S}$ is interpreted as
  an $S$-indexed family of $\infty$-groupoids $X_s$, hence the fibre $X_s$ plays the role of
  the $s$th coordinate.
  
  The groupoid slices form an $\infty$-category in which the morphisms are the
  homotopy-sum preserving functors, the objective analogue of linear maps.  They
  are given by spans of $\infty$-groupoids, i.e.~doubly indexed families of $\infty$-groupoids,
  just as ordinary linear maps are given by matrices of numbers (once a basis 
  has been chosen).

  To really mimic vector spaces, where linear combinations are finite sums, we
  should require the total space $X$ to be finite in a suitable sense (while the base
  is allowed to be infinite).  Then 
  one can take homotopy cardinality, and recover linear algebra over $\Q$.
%
%
%
%
  The finiteness conditions are needed to be able to take homotopy cardinality.
  However, as long as we are working at the objective level, it is not necessary
  to impose the finiteness conditions, and in fact, the theory is simpler
  without them.  Furthermore, the notion of homotopy
  cardinality is not the only notion of size: Euler characteristic and various
  multiplicative cohomology theories are other alternatives, and it is
  reasonable to expect that the future will reveal more comprehensive and
  unified notions of size and measures.  For these reasons, we begin
  (\ref{sec:LIN}) with `linear algebra' without finiteness conditions, and then
  proceed to incorporate finiteness conditions expressed in terms of homotopy
  groups.
%
\end{blanko}

\begin{blanko}{Overview.}
%

  In Subsection~\ref{sec:LIN} we define the $\infty$-category $\LIN$ of groupoid slices 
  and linear functors, without imposing any finiteness conditions.
  
  For the finiteness conditions, the goals are:
  
  (1) Define `finite $\infty$-groupoid' and define homotopy cardinality of a 
  finite $\infty$-groupoid.
  
  (2) Define homotopy cardinality of `finite' families (for example 
  elements in the 
  incidence coalgebras): if $x:X \to S$ is a family with
  $X$ finite, its cardinality should be an element in the vector space 
  $\Q_{\pi_0 S}$ freely generated by the set $\pi_0 S$.

  (3) Define homotopy cardinality of finite presheaves (this is needed for the
  incidence algebras): these will take values in profinite-dimensional vector
  spaces.

To set this up uniformly, we follow
  Baez-Hoffnung-Walker~\cite{Baez-Hoffnung-Walker:0908.4305} and define a
  cardinality {\em functor} from a certain $\infty$-category of finite slices and linear
  functors to vector spaces.  From this `meta cardinality', all the individual
  notions of cardinality of families and presheaves are induced, by observing
  that vectors are the same thing as linear maps from the ground field.

  The `linear' $\infty$-categories of groupoid slices are introduced as follows.
  There is an $\infty$-category $\lin$ whose objects are $\infty$-categories of the form 
  $\grpd_{/\alpha}$ where $\alpha$ is a finite $\infty$-groupoid.  The morphisms are
  given by
  finite spans $\alpha \leftarrow \mu \to \beta$.  This $\infty$-category corresponds to
  the category $\vect$ of
  finite-dimensional vector spaces. We need infinite indexing,
  so the following two extensions are introduced.
  
  There is an $\infty$-category $\ind\lin$ whose objects are $\infty$-categories of the form
  $\grpd_{/S}$ with $S$ an `arbitrary' $\infty$-groupoid, and whose morphisms are
  spans of finite type (i.e.~the left leg has finite fibres).  This $\infty$-category 
  corresponds to the category $\ind\vect$ of general vector spaces
  (allowing infinite-dimensional ones).
  
  Finally we have the $\infty$-category $\pro\lin$ whose objects are $\infty$-categories of the form
  $\grpd^S$ with $S$ an `arbitrary' $\infty$-groupoid, and whose morphisms are
  spans of profinite type (i.e.~the right leg has finite fibres).  This $\infty$-category 
  corresponds to the category $\pro\vect$ of pro-finite-dimensional vector spaces.

\bigskip

\noindent
{\bf Remark.} To set up all this in order to define meta cardinality,
   it is actually only necessary to have $1$-categories.  This means that it
   is enough to consider equivalence classes of spans.  However, although
   cardinality is a main motivation, we are equally interested in 
   understanding
   how all this works at the objective level.  This turns out to throw light
   on the deeper meaning of ind and pro, and actually to  understand vector 
   spaces better.
   
   In order to introduce $\lin$, $\ind\lin$ and $\pro\lin$ as 
   $\infty$-categories, we first `extend scalars' from $\grpd$ to $\Grpd$,
   where there is more elbow room to perform the constructions. We work in
   the ambient $\infty$-category
   $\LIN$.  So we define,
   as subcategories of $\LIN$: the $\infty$-category
   $\Lin$ consisting of $\Grpd_{/\alpha}$ and finite spans, the $\infty$-category
   $\ind\Lin$ consisting of $\Grpd_{/S}$ and spans of finite
   type, and the $\infty$-category $\pro\Lin$ consisting of $\Grpd^S$ and spans of
   profinite type.  In the latter case, we can characterise the mapping
   spaces in terms of an attractive continuity condition (\ref{prop:cont}).
   
   The three $\infty$-categories constructed with $\Grpd$ coefficients 
   are in fact equivalent to the three $\infty$-categories with $\grpd$ coefficients
   introduced heuristically.
   
   There is a perfect pairing $\grpd_{/S} \times \grpd^S \to \grpd$,
   which upon taking cardinality yields the pairing $\Q_{\pi_0 S} \times 
   \Q^{\pi_0 S} \to \Q$.

\end{blanko}

\subsection{Homotopy linear algebra without finiteness conditions}
\label{sec:LIN}

\begin{blanko}{Fundamental equivalence.}\label{fund-eq}
  Fundamental to many constructions and arguments is the canonical equivalence
$$
\Grpd_{/S} \simeq \Grpd^S
$$
which is the homotopy version of the equivalence $\Set_{/S} \simeq \Set^S$ (for 
$S$ a set),
expressing the two ways of encoding a family of sets $\{X_s \mid s\in S\}$: 
either regarding the 
members of the family as the fibres of a map $X \to S$, or as a parametrisation
of sets $S \to \Set$.
To an object $X \to S$ one associates the functor $S\op\to\Grpd$ sending
$s\in S$ to the $\infty$-groupoid $X_s$.  The other direction is the Grothendieck construction,
which works as follows:
to any presheaf $F:S\op \to \Grpd$, that sits over the terminal presheaf $*$,
one associates the object $\colim (F)\to\colim(*)$.
It remains to observe that $\colim (*)$ is equivalent to $S$ itself.
More formally, the Grothendieck construction equivalence is a consequence of
a finer result, namely Lurie's straightening theorem
(\cite[Theorem 2.1.2.2]{Lurie:HTT}).  Lurie constructs 
a Quillen equivalence between the category of right
fibrations over $S$ and the category of (strict) simplicial presheaves on
$\mathfrak{C}[S]$.  
Combining this result with the fact that simplicial presheaves on $\mathfrak{C}[S]$
is a model for the functor $\infty$-category $\Fun(S\op,\Grpd)$ (see \cite{Lurie:HTT}, 
Proposition 5.1.1.1), the Grothendieck 
construction equivalence follows.
Note that when $S$ is just an $\infty$-groupoid (i.e.~a Kan complex), $X \to S$
is a right fibration if and only if $X$ itself is an $\infty$-groupoid.  Hence
altogether
$
\Grpd_{/S} \simeq \Fun(S\op,\Grpd)$, and 
since $S\op$ is canonically equivalent to $S$ (since it is just an 
$\infty$-groupoid), this establishes the fundamental equivalence
from this fancier viewpoint.
\end{blanko}

\begin{blanko}{Scalar multiplication and homotopy sums.}\label{scalar&hosum}
  The  `lowershriek' operation
$$
f\lowershriek:\Grpd_{/I}\to\Grpd_{/J}
$$
along a map $f:I\to J$ has two special cases, which play the role of 
  scalar multiplication (tensoring with an $\infty$-groupoid)
  and vector addition (homotopy sums):
  
The $\infty$-category $\Grpd_{/I}$
is {\em tensored} over $\Grpd$. Given $S \in \Grpd$ and $g:X \to I$ in $\Grpd_{/I}$
we have
$$
S\otimes g\;\;:=\;\; g\lowershriek
(S\times X\to X):S\times X\to I
\text{ in }\Grpd_{/I}. 
$$
It also has {\em homotopy sums}, by which we mean colimits 
indexed by an $\infty$-groupoid.
The colimit of a functor $F: B \to \Grpd_{/I}$ is a special case of the 
lowershriek.  Namely, the functor $F$ corresponds by adjunction to an object
$g: X \to B \times I$ in $\Grpd_{/B\times I}$, and we have
$$
\colim(F) = p\lowershriek(g)
$$
where $p: B\times I \to I$ is the projection.
We interpret this as the homotopy sum of the family $g:X\to B\times I$
with members $$g_b:X_b\longrightarrow \{b\}\times I=I,$$
and we denote  the homotopy sum by an integral sign:
\begin{equation}\label{fibsum}
\int^{b\in B}g_b \ := \ p\lowershriek g \;\text{ in }\Grpd_{/I}.
\end{equation}

\begin{eks}
  With $I=1$, 
this
  gives the important formula
  $$
  \int^{b\in B} X_b = X ,
  $$
  expressing the total space of $X\to B$ as the homotopy sum of its fibres.
\end{eks}

Using the above, we can define the $B$-indexed {\em linear
combination} of a family of vectors $g:X\to B\times I$ and scalars $f:
S \to B$, 
$$
\int^{b\in B} S_b \tensor g_b = 
p\lowershriek ( g \lowershriek (f'))
\;: S\times_B X\to I
\;\text{ in }\Grpd_{/I},
$$
as illustrated in the first row of the following diagram
 \begin{equation}\label{lincomb}\vcenter{
 \xymatrix@R1.8pc{ 
 S\times_B X
 \drpullback  \ar[r]^-{f'
} \ar[d] &X \ar[dr]\ar^-g[r]& B\times I \dto^q \ar[r]^-p & I\\
 S \ar^{f
}[rr]& & B .
 }}\end{equation}
Note that the members of the family $g\lowershriek(f'
)$  are  just $(g\lowershriek(f'
))_b=S_b\otimes g_b$. 

\begin{blanko}{Basis.}
  In $\Grpd_{/S}$, the names $\name s : 1 \to S$ play the role of a basis.
  Every object $X\to S$ can be written uniquely as a linear combination of basis
  elements; or, by allowing repetition of the basis elements instead of scalar 
  multiplication, as a homotopy sum of basis elements:
\end{blanko}
%
\end{blanko}

\begin{lemma}\label{BasisRep}
  Given $f:S\to B$  in $\Grpd_{/B}$ , we have
  $$
  f = \int^{s\in S} \name{f(s) } = \int^{b\in B} S_b\otimes \name{b} .
  $$
\end{lemma}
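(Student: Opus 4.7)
The plan is to derive both equalities directly from the definitions of homotopy sum \eqref{fibsum} and linear combination \eqref{lincomb}, together with the fundamental equivalence \ref{fund-eq} that identifies a family of objects with a single map into the product.

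For the first equality $f = \int^{s\in S}\name{f(s)}$, I would observe that the $S$-indexed family of names $\{\name{f(s)}:1\to B\}_{s\in S}$ corresponds, under the Grothendieck construction, to the graph map $\langle\id_S,f\rangle:S\to S\times B$ viewed as an object of $\Grpd_{/S\times B}$ (each fibre over $s$ being the singleton $\{f(s)\}$). Applying the definition \eqref{fibsum} with indexing groupoid $S$ and target $B$, the homotopy sum is $p_B\lowershriek\langle\id_S,f\rangle$, where $p_B:S\times B\to B$ is the projection. Since lowershriek along $p_B$ is just postcomposition, this evaluates to $p_B\circ\langle\id_S,f\rangle = f$.

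For the second equality $f = \int^{b\in B} S_b\tensor\name{b}$, I would apply the linear-combination formula \eqref{lincomb} with indexing groupoid $B$, target $I=B$, the family of basis vectors $\{\name{b}:1\to B\}_{b\in B}$ encoded as the diagonal $g=\Delta:B\to B\times B$, and scalars provided by $f:S\to B$ itself (whose fibres over $b$ are precisely $S_b$). The pullback $S\times_B B$ (in which the right-hand map $B\to B$ is $\pi_1\circ\Delta=\id_B$) is canonically equivalent to $S$, under which identification $f':S\times_B B\to B$ becomes $f$. Tracing through the diagram of \eqref{lincomb}, the linear combination equals $p\circ\Delta\circ f = f$, where $p:B\times B\to B$ is the second projection.

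Both equalities are essentially tautological once the definitions are unwound; there is no genuine obstacle beyond the bookkeeping of translating between families indexed by a groupoid and single maps via the Grothendieck construction. The conceptual content, which is the substance of the lemma, is that the names $\name{b}:1\to B$ function as a basis in a strong sense: every object $f:S\to B$ decomposes either as an $S$-indexed homotopy sum of basis elements (repeating each $\name{b}$ according to its preimage in $S$) or equivalently as a $B$-indexed linear combination weighting each $\name{b}$ by the fibre $S_b$.
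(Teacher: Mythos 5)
Your proof is correct and follows essentially the same route as the paper: for the first equality the paper also takes the family $(\id,f):S\to S\times B$ and applies \eqref{fibsum}, and for the second it takes the diagonal $(\id,\id):B\to B\times B$ with scalars $f$ and observes that $p\lowershriek(g\lowershriek(f'))=f$ because $pg$ and $qg$ are identities. The only cosmetic difference is that you spell out the identification $S\times_B B\simeq S$ explicitly, which the paper leaves implicit.
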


\begin{proof}
  For the first expression, take as family $S \stackrel{(\id, f)} 
  \longrightarrow S \times B$. 
  Then the members of the family are the names $\name {f(s)}$, and the formula follows from \eqref{fibsum}.
  For the second expression, take as family
  $g:B \stackrel{(\id, \id)}
  \longrightarrow B \times B$, and as scalars $f:
S\to B$ itself.
  Then the members of $g$ are the names $\name b$, 
  and the scalars are $S_b$,
  and in \eqref{lincomb} 
we have $p\lowershriek ( g \lowershriek (f'
))=f
$ since $pg$ 
  and $qg$ are the identity. 
\end{proof}

The name $\name b : 1 \to B$ corresponds under the
Grothendieck construction to the representable functor
\begin{eqnarray*}
  B & \longrightarrow & \Grpd  \\
  x & \longmapsto & \Map(b,x) .
\end{eqnarray*}
Thus, interpreted in the presheaf category $\Grpd^B$, the
Lemma is the standard result expressing any presheaf as a colimit of representables.

\begin{prop}\label{HoSum}
  $\Grpd_{/S}$ is the homotopy-sum completion of $S$.
  Precisely, for $\CC$ an $\infty$-category admitting
  homotopy sums, precomposition with the Yoneda embedding
  $S \to \Grpd_{/S}$ induces an equivalence of $\infty$-categories
  $$
  \Fun^{\int}(\Grpd_{/S},\CC) \isopil \Fun(S,\CC) ,
  $$
  where the functor category on the left consists of homotopy-sum preserving functors.
\end{prop}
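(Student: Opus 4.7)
The plan is to combine the fundamental equivalence $\Grpd_{/S} \simeq \Fun(S,\Grpd)$ from \ref{fund-eq} (recalling that $S\op \simeq S$ since $S$ is an $\infty$-groupoid) with the basis-expansion Lemma~\ref{BasisRep}. The latter says that every object $x : X \to S$ of $\Grpd_{/S}$ is canonically a homotopy sum of names (= representables), $x \simeq \int^{\xi\in X}\name{x(\xi)}$. Thus the Yoneda embedding $S \hookrightarrow \Grpd_{/S}$, $s \mapsto \name{s}$, exhibits $\Grpd_{/S}$ as generated under homotopy sums by $S$; the statement is then a restricted form of the universal property of the $\infty$-category of presheaves as the free cocompletion (\cite{Lurie:HTT}, 5.1.5.6).

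Concretely, to construct an inverse to the restriction functor, I would start with $F : S \to \CC$ and define $\bar F : \Grpd_{/S} \to \CC$ by
$$
\bar F(x : X \to S) \; := \; \int^{\xi \in X} F(x(\xi)),
$$
well-defined because $\CC$ admits homotopy sums. Restricting along $S \hookrightarrow \Grpd_{/S}$ recovers $F$, since $\name{s}$ corresponds to $1 \to S$ picking $s$, and the homotopy sum over a point is the identity. For homotopy-sum preservation, given a family $G : B \to \Grpd_{/S}$ corresponding by adjunction to $g : Y \to B \times S$, the homotopy sum $\int^{b} G(b)$ is the map $p_S \lowershriek g : Y \to S$; then a Fubini rewrite gives
$$
\bar F\!\left(\int^{b\in B} G(b)\right) \;=\; \int^{(b,\eta)\in Y} F\bigl(g(b,\eta)\bigr) \;\simeq\; \int^{b\in B}\!\int^{\eta\in Y_b} F\bigl(g_b(\eta)\bigr) \;=\; \int^{b\in B} \bar F(G(b)).
$$
Uniqueness is forced by Lemma~\ref{BasisRep}: any homotopy-sum preserving functor $H : \Grpd_{/S} \to \CC$ agreeing with $F$ on names satisfies
$$
H(x) \;\simeq\; H\!\left(\int^{\xi\in X}\name{x(\xi)}\right) \;\simeq\; \int^{\xi\in X} F(x(\xi)) \;=\; \bar F(x),
$$
so restriction is essentially surjective and conservative on objects. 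Naturality in morphisms is handled the same way, by writing a morphism $x \to x'$ in $\Grpd_{/S}$ as a homotopy sum of morphisms between names and invoking homotopy-sum preservation on both sides.

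The main technical obstacle is upgrading this argument from the pointwise/object level to a genuine equivalence of $\infty$-categories, in particular checking fully faithfulness on the mapping spaces $\Fun^{\int}(\Grpd_{/S},\CC)(\bar F, \bar G) \to \Fun(S,\CC)(F,G)$. The cleanest route is to deduce the statement from the general free-cocompletion theorem of Lurie: under the Grothendieck equivalence, $\Grpd_{/S} \simeq \PP(S) := \Fun(S\op,\Grpd)$, and Lurie's Theorem~5.1.5.6 of \cite{Lurie:HTT} identifies $\Fun^{\mathrm{colim}}(\PP(S),\CC)$ with $\Fun(S,\CC)$ for any cocomplete $\CC$. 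Since $S$ is an $\infty$-groupoid, the only colimits needed to generate every object of $\PP(S)$ from representables (by Lemma~\ref{BasisRep}) are $S$-indexed, i.e.\ homotopy sums; hence the general result restricts to the claimed equivalence even when $\CC$ is only assumed to admit homotopy sums. This reduction, together with the explicit formula for $\bar F$ above, yields the proposition.
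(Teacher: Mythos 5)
Your proposal is correct and follows essentially the same route as the paper: the paper's proof also reduces to Lurie's free-cocompletion theorem (HTT 5.1.5.6), observing that since $S$ is an $\infty$-groupoid the left Kan extension along Yoneda exists (via HTT 4.3.2.13) under the weaker hypothesis that $\CC$ merely admits homotopy sums, and that homotopy-sum preservation already implies colimit preservation because every object of $\Grpd_{/S}$ is a homotopy sum of names. Your explicit formula $\bar F(x)=\int^{\xi\in X}F(x(\xi))$ and the Fubini check simply unpack the pointwise Kan extension that the paper delegates to the citation.
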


\begin{proof}
  Since every object in $\Grpd_{/S}$ can be written
  as a homotopy sum of names, to preserve
  homotopy sums is equivalent to preserving all colimits, so the natural
  inclusion $\Fun^{\colim}(\Grpd_{/S},\CC) \to \Fun^{\int}(\Grpd_{/S},\CC)$
  is an equivalence.  It is therefore enough to establish the equivalence
  $$\Fun^{\colim}(\Grpd_{/S},\CC) \isopil \Fun(S,\CC).$$
  In the case where $\CC$ is cocomplete, this is true
  since
  $\Grpd_{/S} \simeq \Fun(S\op,\Grpd)$ is the colimit completion of $S$.  The
  proof of this statement (Lurie~\cite{Lurie:HTT}, Theorem~5.1.5.6) goes as
  follows: it is enough to prove that left Kan extension of any functor $S \to
  \CC$ along Yoneda exists and preserves colimits.  Existence 
  follows from \cite[Lemma~4.3.2.13]{Lurie:HTT}
  since $\CC$ is assumed cocomplete, and the fact
  that left Kan extensions preserve colimits \cite[Lemma~5.1.5.5 (1)]{Lurie:HTT}
  is independent of the cocompleteness of $\CC$.  In our case $\CC$ is not
  assumed to be cocomplete but only to admit homotopy sums.  But since $S$ is
  just an $\infty$-groupoid in our case, 
  this is enough to apply Lemma~4.3.2.13 of \cite{Lurie:HTT} 
  to guarantee the existence of the left Kan extension.
\end{proof}

\begin{blanko}{Linear functors.}\label{linearfunctorsfromspans}
  A span 
$$
I \stackrel p \leftarrow M \stackrel q \to J
$$
defines a {\em linear functor} 
 \begin{equation}\label{linfunctor}
 \Grpd_{/I} \stackrel {p^{\upperstar} } \rTo 
 \Grpd_{/M} \stackrel{q\lowershriek} \rTo \Grpd_{/J} .
 \end{equation}
\end{blanko}

\begin{lemma}\label{linearity}
  Linear functors preserve linear combinations.
\end{lemma}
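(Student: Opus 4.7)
The plan is to reduce the statement to the fact that linear functors, being composites of a pullback $p^{*}$ and a postcomposition $q_{!}$, preserve all (small) colimits, and then observe that both scalar multiplication $S \tensor g$ and homotopy sums $\int^{b} g_{b}$ are instances of colimits in $\Grpd_{/I}$.

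First I would unpack the definition \eqref{linfunctor}: a linear functor factors as $F = q_{!} \circ p^{*}$. For $q_{!}$ (postcomposition with $q$), note that it is left adjoint to the pullback $q^{*}$, hence preserves all colimits by general nonsense. For $p^{*}$, the crucial input is that $\Grpd$ is locally cartesian closed, so that for any map $p$ the pullback functor $p^{*}$ itself admits a right adjoint (the pushforward, or `dependent product' functor), and therefore preserves all colimits. Thus $F = q_{!} \circ p^{*}$ preserves colimits indexed by arbitrary $\infty$-groupoids.

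Next I would identify the two pieces that make up a linear combination as colimits. By \ref{scalar&hosum}, a homotopy sum $\int^{b \in B} g_{b}$ is by definition $p_{!}$ applied to a family, equivalently the colimit of the corresponding functor $B \to \Grpd_{/I}$; tensoring by a scalar $S \in \Grpd$ is the colimit of the constant $S$-shaped diagram on $g$. Consequently the general linear combination $\int^{b \in B} S_{b} \tensor g_{b}$, being built by combining these two operations (as in diagram \eqref{lincomb}), is a colimit in $\Grpd_{/I}$. Preservation under $F$ then follows immediately.

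If one prefers a more hands-on verification without invoking local cartesian closedness abstractly, the same conclusion can be extracted directly from the Beck--Chevalley equivalence \eqref{BC} and stability of pullbacks under colimits in $\Grpd$: given the family $g : X \to B \times I$ and scalars $f : S \to B$ presenting the linear combination as in \eqref{lincomb}, pull back the whole diagram along $p : I' \to I$ and apply $q_{!}$ for $q : I' \to J$; the resulting span presents $F\bigl(\int^{b} S_{b} \tensor g_{b}\bigr)$, and rearranging using Beck--Chevalley rewrites it as $\int^{b} S_{b} \tensor F(g_{b})$.

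The main (minor) obstacle is simply to make the compatibility of $p^{*}$ with homotopy colimits precise in the $\infty$-categorical setting; but this is standard, being either a direct consequence of local cartesian closedness of $\Grpd$, or equivalently of the fact that colimits in $\Grpd$ are universal. Once this is in hand, the lemma is a one-line consequence, and no finiteness hypothesis is used.
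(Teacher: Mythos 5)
Your proposal is correct, but your primary argument is genuinely different from the paper's. The paper proves the lemma by a direct diagrammatic computation: it applies the span $I \stackrel{p}\leftarrow M \stackrel{q}\to J$ to the two-row diagram \eqref{lincomb} presenting $\int^{b\in B} S_b\tensor g_b$, pastes pullbacks (the only tool being the prism lemma~\ref{pbk}), and reads off that the resulting object is presented by exactly the diagram for $\int^{b\in B}S_b\tensor L(g)_b$, where $L(g)$ is the family $X'\to B\times M \to B\times J$. That is precisely the ``hands-on'' alternative you sketch in your third paragraph, so you have in effect reproduced the paper's proof as your fallback. Your main route instead observes that a linear combination $\int^{b\in B}S_b\tensor g_b$ is the homotopy colimit of the composite $S\stackrel{f}\to B\to \Grpd_{/I}$, and that $q\lowershriek p\upperstar$ preserves such colimits because $q\lowershriek$ is a left adjoint and $p\upperstar$ is a left adjoint by local cartesian closedness (equivalently, universality of colimits) of $\Grpd$. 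This is valid and arguably more conceptual: it anticipates the later identification of $\LIN$ with a full subcategory of $\kat{Pr}^{\mathrm L}$ (see \ref{internalLin}), where colimit preservation is built into the definition of the morphisms, and it makes clear that nothing about the specific shape of the diagram \eqref{lincomb} matters. What the paper's proof buys in exchange is self-containedness at this point in the text --- it needs only pullback pasting, not the dependent-product adjunction --- and it produces the explicit span presenting $L\bigl(\int^b S_b\tensor g_b\bigr)$, which is then used immediately afterwards in the ``Coordinates'' paragraph to compute $q\lowershriek p\upperstar f = \int^{i,j}X_i\tensor M_{i,j}\tensor\name{j}$. Both arguments are sound; no gap.
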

\begin{proof}
Suppose $\int^{b\in B} S_b\otimes g_b
$ is the $B$-indexed linear combination of $f:
S\to B$  and $g:X\to B\times I$ 
in $\Grpd_{/I}$. This is shown in the middle row of the following diagram, and in the top row is shown the result of applying a linear functor $L$ given by \eqref{linfunctor}
$$
\xymatrix{
L(\int^{b\in B} S_b\otimes g_b):\!\!\!\!\!\!\!\!\!\!\!\!\!
&E\drpullback\rto^{f''
}\dto&\dto X'
\drpullback\rto^-{g'}&\drpullback\dto_{B\times p} B\times M\rto&M\dto^p\rto^q&J
\\
\int^{b\in B} S_b\otimes g_b:\!\!\!\!\!\!\!\!\!\!\!\!\!
&
S\times_BX\ar[d]\drpullback\rto^-{f'
}&X\rto^-g\ar[rd]&\dto B\times I\rto&I
\\&S\ar^-f
[rr]&&B&
}
$$
Now observe that  $f''
$ is the pullback of $f
$ along
$X'\stackrel{g'}\to B\times M
\to B$, and that the family $L(g)$ is given by 
$X'\stackrel{g'}\to B\times M
\stackrel{B\times q}\longrightarrow B\times J$.
The result is now clear, since the first row of the diagram coincides with
$$
\xymatrix{
\int^{b\in B} S_b\otimes L(g)_b:\!\!\!\!\!\!\!\!\!
&E\rto^{f''
}& X'
\rto^-{g'}&B\times M\rto^{B\times q}&B\times J\rto&J
}
$$
as required.
  %
  %
  %
\end{proof}

\begin{blanko}{Coordinates.}
  Coming back to the span
$$I\stackrel p\longleftarrow M \stackrel q\longrightarrow J$$
and the linear functor
$$
q\lowershriek p\upperstar
:\Grpd_{/I}\longrightarrow\Grpd_{/J},
$$
consider an element $\name i:1\to I$. Then we have, by Lemma~\ref{BasisRep},
$$
q\lowershriek p\upperstar\name i
=(M_i\to J)=
\int^{j\in J}{M_{i,j}}\otimes \name j
$$
$$
\xymatrix{&M_i\drto\dlto\\1\drto_{\name i}&&\dlto^pM\drto_q\\&I&&J}
$$
For  a more general element $f:X\to I$ we have $f=\int^iX_i\otimes \name{i}$ and so by homotopy linearity \ref{linearity}
\begin{align*}
q\lowershriek p\upperstar f&=
\int^{i,j}X_i\otimes M_{i,j}\otimes \name j.
\end{align*}
\end{blanko}


\begin{blanko}{The symmetric monoidal closed $\infty$-category $\kat{Pr}^{\mathrm L}$.}
  There is an $\infty$-category $\kat{Pr}^{\mathrm L}$, defined and studied in
  \cite[Section 5.5.3]{Lurie:HTT}, whose objects are the presentable
  $\infty$-categories, and whose morphisms are the left adjoint functors, or
  equivalently colimit-preserving functors.
  More precisely, given presentable
  $\infty$-categories $\CC$ and $\DD$, the mapping space in $\kat{Pr}^{\mathrm 
  L}$
  is
  $\Fun^{\mathrm L}(\CC,\DD)\subset\Fun(\CC,\DD)$,
  the full subcategory spanned by the colimit-preserving functors.
  The mapping space $\Map_{\kat{Pr}^{\mathrm L}}(\CC,\DD) = \Fun^{\mathrm 
  L}(\CC,\DD)^\eq$.
  
  The $\infty$-category $\kat{Pr}^{\mathrm L}$ has an internal hom
  (\cite[5.5.3.8]{Lurie:HTT}): for two presentable $\infty$-categories $\CC$ and
  $\DD$, the functor $\infty$-category $\Fun^{\mathrm L}(\CC,\DD)$ is again
  presentable.  Finally, $\kat{Pr}^{\mathrm L}$ has a canonical symmetric monoidal
  structure, left adjoint to the closed structure.  See Lurie~\cite{Lurie:HA}, 
  subsection 4.8.1, and in particular 4.8.1.14 and 4.8.1.17.  The tensor product
  can be characterised as universal recipient of functors in two variables that
  preserve colimits in each variable, and we have an evaluation functor
  $$
  \CC \tensor \Fun^{\mathrm L}(\CC,\DD) \to \DD
  $$
  which exhibits $\Fun^{\mathrm L}(\CC,\DD)$ as an exponential of $\DD$ by 
  $\CC$.
  
  This tensor product has an easy description in the case of presheaf 
  categories (cf.~\cite[4.8.1.12]{Lurie:HA}): if $\CC= \PP(\CC_0)$ and $\DD= \PP(\DD_0)$ for small
  $\infty$-categories $\CC_0$ and $\DD_0$, then we have
  \begin{equation}\label{eq:tensor-times}
    \PP(\CC_0) \tensor \PP(\DD_0) \simeq \PP(\CC_0 \times \DD_0)  .
  \end{equation}
\end{blanko}

\begin{blanko}{The $\infty$-category $\LIN$.}\label{internalLin}
%
 We define $\LIN$ to be the full
  subcategory of $\kat{Pr}^{\mathrm L}$ whose objects are the
  $\infty$-categories (equivalent to) groupoid slices $\Grpd_{/S}$.
  We call the functors {\em linear}.  The mapping spaces in $\LIN$ are
\begin{eqnarray*}
  \LIN(\Grpd_{/I}, \Grpd_{/J}) &=& \Fun^{\mathrm L}(\Grpd_{/I}, \Grpd_{/J})^\eq \\
  & \simeq & \Fun^{\mathrm L}(\Grpd^I, \Grpd^J)^\eq \\
  & \simeq & \Fun(I,\Grpd^J)^\eq \\
  & \simeq & (\Grpd^{I\times J})^\eq \\
  & \simeq & (\Grpd_{/I\times J} )^\eq.
\end{eqnarray*}
This shows in particular that the linear functors are given by spans.
Concretely, tracing through the chain of equivalences, a span defines a
left adjoint functor as described above in \ref{linearfunctorsfromspans}.
Composition in $\LIN$ is given by composing spans, i.e.~taking a
pullback.  This amounts to the Beck--Chevalley condition.

The $\infty$-category $\LIN$ inherits a symmetric monoidal closed structure 
from $\kat{Pr}^{\mathrm L}$.  For the `internal hom':
\begin{eqnarray*}
  \un\LIN(\Grpd_{/I}, \Grpd_{/J}) &:=& \Fun^{\mathrm L}(\Grpd_{/I}, \Grpd_{/J}) \\
  & \simeq & \Fun(I, \Grpd^{J}) \\
  &\simeq & \Fun( I \times J, \Grpd) \\
  &\simeq & \Grpd_{/I\times J} .
\end{eqnarray*}

Also the tensor product restricts, and we have the convenient formula
$$
\Grpd_{/I} \tensor \Grpd_{/J} = \Grpd_{/I\times J}
$$
with neutral object $\Grpd$.
This follows from formula~\eqref{eq:tensor-times} combined with the
fundamental equivalence $\Grpd_{/S} \simeq \Grpd^S$.

Clearly we have
$$
\LIN( \Grpd_{/I} \tensor \Grpd_{/J}, \Grpd_{/K}) \simeq \LIN( \Grpd_{/I}, 
\un\LIN(\Grpd_{/J},\Grpd_{/K})) 
$$
as both spaces are naturally equivalent to $(\Grpd_{/I\times J \times 
K})^\eq$.
\end{blanko}

\begin{blanko}{The linear dual.}\label{Lineardual}
`Homming' into the neutral object defines a contravariant autoequivalence
of $\LIN$:
\begin{eqnarray*}
  \LIN & \longrightarrow & \LIN\op  \\
  \Grpd_{/S} & \longmapsto & \un\LIN(\Grpd_{/S}, \Grpd) \simeq \Grpd_{/S}
  \simeq \Grpd^{S} .
\end{eqnarray*}
%
%
%
%
  Here there right-hand side should be considered the dual of $\Grpd_{/S}$.
  (Since our vector spaces are fully coordinatised, the difference between a
  vector space and its dual is easily blurred.  We will see a clearer difference
  when we come to the finiteness conditions, in which situation the dual of a `vector
  space' $\grpd_{/S}$ is $\grpd^S$ which should rather be thought of as a
  profinite-dimensional vector space.)
\end{blanko}

\begin{blanko}{Remark.}
  It is clear that there is actually an $\infty$-$2$-category in play here,
  with the $\un\LIN(\Grpd_{/S}, \Grpd_{/T})$ as hom $\infty$-categories.
  This can be described as a Rezk-category object in the `distributor'
  $\kat{Cat}$, following the work of Barwick and Lurie~\cite{Lurie:GoodwillieI}.
  Explicitly, let
  $\Lambda_k$ denote the full subcategory of $\Delta_k\times\Delta_k$
  consisting of the pairs $(i,j)$ with $i+j\leq k$.  These are the shapes of
  diagrams
  of $k$ composable spans.  They form a cosimplicial category.  
  Define $\operatorname{Sp}_k$ to be the full subcategory of
  $\Fun(\Lambda_k,\Grpd)$ consisting of those diagrams $S : \Lambda_k \to \Grpd$
  for which for all $i'< i$ and $j' < j$ (with  $i+j \leq k$)
  the square 
  $$\xymatrix{
     S_{i',j'} \drpullback \ar[r]\ar[d] & S_{i,j'} \ar[d] \\
     S_{i',j} \ar[r] & S_{i,j}
  }$$
  is a pullback.
  Then we claim that
  \begin{eqnarray*}
    \Delta\op  & \longrightarrow & \kat{Cat}  \\
    {}[k] & \longmapsto & \operatorname{Sp}_k 
  \end{eqnarray*}
  defines a Rezk-category object in $\kat{Cat}$ corresponding to $\un\LIN$.
  We leave the claim unproved, as the result is not necessary for our purposes.
%
%
%
\end{blanko}


\label{sec:card}

\subsection{Cardinality of finite $\infty$-groupoids}
\label{sec:finite}

\begin{blanko}{Finite $\infty$-groupoids.}\label{finite}
  An $\infty$-groupoid $B$ is called {\em locally finite} if at each base point
  $b$ the homotopy groups $\pi_i (B,b)$ are finite for $i\geq1$ and are trivial
  for $i$ sufficiently large.  An $\infty$-groupoid is called {\em finite} if it
  is locally finite and has finitely many components. 
  Note that $B$ is locally finite iff it is a filtered colimit of finite
  $\infty$-groupoids.  An example of a non locally finite $\infty$-groupoid is $B\Z$.

  Let $\grpd\subset \Grpd$ be the full subcategory spanned by the finite 
  $\infty$-groupoids.  For $S$ any $\infty$-groupoid, let $\grpd_{/S}$ be 
  the `comma $\infty$-category' defined by the
  following pullback diagram of $\infty$-categories:
    $$\xymatrix{
       \grpd_{/S} 
  \ar[r]\ar[d]\drpullback &       \Grpd_{/S} 
  \ar[d] \\
       \grpd \ar[r] & \Grpd .
    }$$
\end{blanko}

\begin{blanko}{Cardinality.} \cite{Baez-Dolan:finset-feynman}
  The {\em (homotopy) cardinality} of a finite $\infty$-groupoid $B$
is the nonnegative  rational number given by the formula
  $$
  \norm{B} := \sum_{b\in \pi_0 B} \prod_{i>0} \norm{\pi_i(B,b)}^{(-1)^i} .
  $$
  Here the norm signs on the right refer to order of homotopy groups.
\end{blanko}

If $G$ is a $1$-groupoid, that is, an $\infty$-groupoid
having trivial homotopy groups $\pi_i(G)=0$ for $i>1$, its cardinality is
  $$
  \norm{G} = \sum_{x\in \pi_0 G} \frac1{\norm{\Aut_G(x)}} .
  $$
The notion and basic properties of homotopy cardinality have been around
for a long time. See Baez--Dolan~\cite{Baez-Dolan:finset-feynman}. 
The first printed reference we know of is 
Quinn~\cite{Quinn:TQFT}.

\begin{blanko}{Remark.}\label{sumcard}
  It is clear from the definition that 
  a finite sum of finite $\infty$-groupoids is again finite, and that
  cardinality is compatible with finite sums:
  $$
  \norm{\sum_{i=1}^n X_i}= \sum_{i=1}^n \norm{X_i} .
  $$
\end{blanko}

\begin{lemma}\label{FEB} Suppose $B$ is connected.
  Given a fibre sequence
  $$\xymatrix{
     F \ar[r]\ar[d]\drpullback & E \ar[d] \\
     1 \ar[r] & B ,
  }$$
  if two of the three spaces are finite then so is the third, and in that case
$$\norm E\;=\;\norm F\,\norm B.$$
\end{lemma}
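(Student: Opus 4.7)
The plan is to handle finiteness via the long exact sequence of homotopy groups attached to the fibre sequence $F \to E \to B$, and then prove the cardinality formula by telescoping the resulting short exact sequences, combined with an orbit-counting of components.

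\textbf{Finiteness.} Pick a base point $e \in E$, with images $b \in B$ and $f \in F$, and consider the exact sequence
$$
\cdots \to \pi_{i+1}(B,b) \to \pi_i(F,f) \to \pi_i(E,e) \to \pi_i(B,b) \to \pi_{i-1}(F,f) \to \cdots \to \pi_0(F) \to \pi_0(E) \to \pi_0(B) = *.
$$
If two of $F$, $E$, $B$ are finite, then in each degree $i \geq 1$ the remaining homotopy group is sandwiched between a subgroup and a quotient of finite groups, and vanishes for $i$ sufficiently large. The component count for $E$ is handled by the surjection $\pi_0(F) \twoheadrightarrow \pi_0(E)$; for $F$, the same surjection has fibres given by $\pi_1(B)$-orbits, and boundedness of each orbit together with finiteness of $\pi_0(E)$ gives finiteness of $\pi_0(F)$ when $B$ and $E$ are finite; for $B$, connectedness gives $\pi_0(B) = *$.

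\textbf{Formula.} Fix $\bar e \in \pi_0(E)$ and a representative $e = (b, f)$. Let $K_i$ denote the image of the boundary $\pi_i(B,b) \to \pi_{i-1}(F,f)$ (for $i \geq 1$, so $K_1$ is the $\pi_1(B,b)$-orbit of $f$ in $\pi_0(F)$). Splitting the long exact sequence into short exact sequences and applying multiplicativity of order gives $|\pi_i(E,e)| = |\pi_i(F,f)|\cdot|\pi_i(B,b)|/(|K_{i+1}|\cdot|K_i|)$ for $i \geq 1$. Taking alternating products for $i\geq 1$, the $K_i$ factors telescope, leaving only $|K_1|$:
$$
\prod_{i\geq 1}|\pi_i(E,e)|^{(-1)^i} \;=\; |K_1|\cdot \prod_{i\geq 1}|\pi_i(F,f)|^{(-1)^i}\cdot \prod_{i\geq 1}|\pi_i(B,b)|^{(-1)^i}.
$$
Since $B$ is connected, the final factor equals $\|B\|$, independent of $\bar e$. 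Writing $\alpha_f := \prod_{i\geq 1}|\pi_i(F,f)|^{(-1)^i}$, which depends only on the $\pi_1(B,b)$-orbit of $f$ (since the action is by homotopy equivalences), and summing over components yields
$$
\|E\| \;=\; \sum_{\bar e \in \pi_0 E} |K_1|\cdot \alpha_f\cdot \|B\| \;=\; \|B\|\cdot \sum_{f\in \pi_0 F}\alpha_f \;=\; \|B\|\cdot\|F\|,
$$
where the middle equality uses that the summand over $\bar e$ counts $|K_1|$ equal terms, one for each representative $f$ in the orbit, so that reorganising the sum by individual elements of $\pi_0(F)$ gives exactly $\|F\|$.

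\textbf{Main obstacle.} The delicate point is the bookkeeping at the low end of the long exact sequence, where $\pi_0$ is a pointed set with a $\pi_1(B)$-action rather than a group; the residual factor $|K_1|$ left over from the telescoping is precisely what redistributes the sum over $\pi_0(E)$ (orbits) into a sum over $\pi_0(F)$ (individual basepoints), allowing one to recognise $\|F\|$ on the nose.
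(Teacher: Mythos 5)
Your proof is correct and follows exactly the route the paper intends: the paper's own proof is the single sentence ``This follows from the homotopy long exact sequence of a fibre sequence,'' and your argument is precisely the careful working-out of that claim, including the telescoping of the boundary images and the $\pi_1(B,b)$-orbit bookkeeping on $\pi_0(F)$ needed to convert the sum over $\pi_0(E)$ into a sum over $\pi_0(F)$.
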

\begin{proof}
  This follows from the homotopy long exact sequence of a fibre sequence.
\end{proof}

For $b\in B$, we denote by $B_{[b]}$ the connected component of $B$ containing
$b$.  Thus an $\infty$-groupoid $B$ is locally finite if and only if each connected
component $B_{[b]}$ is finite.
  
\begin{lemma}
  Suppose $B$ locally finite.  Given a map $E\to B$, then $E$ is finite if and
  only if all fibres $E_b$ are finite, and are nonempty for only finitely many 
  $b\in\pi_0B$.  In this situation,
  $$\norm E\;=\;\sum_{b\in\pi_0(B)}\norm{E_b}\,\norm{B_{[b]}}.$$
\end{lemma}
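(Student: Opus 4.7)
The plan is to reduce to the connected case already handled by Lemma~\ref{FEB} and then to assemble the pieces via the additivity of cardinality under finite sums recorded in Remark~\ref{sumcard}.

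First I would decompose $B$ into its connected components. Since $B$ is locally finite, each component $B_{[b]}$ is a finite $\infty$-groupoid, and there is a canonical equivalence $B \simeq \sum_{b\in\pi_0 B} B_{[b]}$. Pulling the map $E\to B$ back along each component inclusion $B_{[b]} \hookrightarrow B$ yields pieces $E_{[b]} := E \times_B B_{[b]}$, and since the $\infty$-category of $\infty$-groupoids is extensive (sums are disjoint and stable under pullback), we obtain an equivalence
$$E \;\simeq\; \sum_{b\in\pi_0 B} E_{[b]}$$
over $B$.

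Next, for each $b\in\pi_0 B$, the fibre sequence
$$\xymatrix{E_b \drpullback \ar[r]\ar[d] & E_{[b]} \ar[d] \\ 1 \ar[r] & B_{[b]}}$$
has a connected base, so Lemma~\ref{FEB} applies: two of $E_b$, $E_{[b]}$, $B_{[b]}$ being finite forces the third to be finite, and in that case $\norm{E_{[b]}} = \norm{E_b}\,\norm{B_{[b]}}$. Since $B_{[b]}$ is already finite (by local finiteness of $B$) and nonempty, finiteness of $E_{[b]}$ is equivalent to finiteness of $E_b$.

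Finally I would invoke Remark~\ref{sumcard}: a sum of $\infty$-groupoids is finite exactly when each summand is finite and only finitely many summands are nonempty, and cardinality is additive under such sums. Applying this to $E \simeq \sum_{b\in\pi_0 B} E_{[b]}$, finiteness of $E$ is equivalent to finiteness of each $E_{[b]}$ (equivalently each $E_b$) together with the condition that $E_{[b]}$ is nonempty (equivalently $E_b$ is nonempty, since $B_{[b]}$ is always nonempty) for only finitely many $b\in\pi_0 B$. In that case,
$$\norm{E} \;=\; \sum_{b\in\pi_0 B}\norm{E_{[b]}} \;=\; \sum_{b\in\pi_0 B} \norm{E_b}\,\norm{B_{[b]}},$$
as claimed. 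There is no real obstacle beyond bookkeeping; the only point requiring minor care is the extensivity argument identifying $E$ with the sum of the $E_{[b]}$, which ensures that the pullback decomposition of $B$ into components lifts cleanly to $E$ so that Remark~\ref{sumcard} applies.
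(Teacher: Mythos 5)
Your proof is correct and follows essentially the same route as the paper's: decompose $E$ into the full fibres $E_{[b]}$, apply Lemma~\ref{FEB} to each fibration $E_b\to E_{[b]}\to B_{[b]}$, and sum via Remark~\ref{sumcard} over the finitely many $b$ with $E_b$ nonempty. Your extra care with extensivity and the two-out-of-three logic is fine but does not change the argument.
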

\begin{proof}
  Write $E$ as the sum of the full fibres $E_{[b]}$,
  and apply 
  Lemma~\ref{FEB} to the fibrations
  $E_b\to E_{[b]}\to B_{[b]}$  for each $b\in\pi_0(B)$. Finally sum 
  (\ref{sumcard}) over those $b\in \pi_0 B$ with 
  non-empty $E_b$.
\end{proof}

\begin{cor}
  Cardinality preserves (finite) products.
\end{cor}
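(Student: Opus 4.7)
The plan is to deduce the corollary immediately from the preceding lemma by applying it to the projection map $E = A\times B \to B$, where $A$ and $B$ are finite $\infty$-groupoids. First I would note that since $B$ is in particular locally finite, the hypotheses of the lemma are satisfied, so the formula
$$\norm{A\times B}\;=\;\sum_{b\in\pi_0(B)}\norm{(A\times B)_b}\,\norm{B_{[b]}}$$
applies.

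Next, I would identify the fibres. For the projection $\mathrm{pr}_2:A\times B\to B$, the fibre over any basepoint $b\in B$ is canonically equivalent to $A$ (this is just the statement that pullback of $A\to 1$ along $1\xrightarrow{\name b}B$ yields $A$ again, which is clear by the universal property of the pullback in $\Grpd$). In particular, each fibre $(A\times B)_b$ is finite with cardinality $\norm{A}$, and there are finitely many non-empty fibres indexed by $\pi_0 B$ (which is itself finite since $B$ is finite). Thus the lemma's finiteness criterion confirms $A\times B$ is finite, and moreover
$$\norm{A\times B}\;=\;\sum_{b\in\pi_0(B)}\norm{A}\,\norm{B_{[b]}}\;=\;\norm{A}\sum_{b\in\pi_0(B)}\norm{B_{[b]}}\;=\;\norm{A}\,\norm{B},$$
where the last equality uses the compatibility of cardinality with finite sums recorded in Remark~\ref{sumcard}, applied to the decomposition $B=\sum_{b\in\pi_0 B}B_{[b]}$.

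There is no significant obstacle: the result is an immediate corollary, and the only point requiring minimal care is the identification of the fibre of $\mathrm{pr}_2$ over a point as $A$ itself (in the homotopy-invariant sense), which is a standard property of products in any $\infty$-category with finite limits. The extension to an arbitrary finite number of factors then follows by an evident induction on the number of factors.
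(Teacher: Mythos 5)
Your proof is correct and is exactly the paper's argument: the paper's proof is the one-line "Apply the previous lemma to a projection," and you have simply spelled out the identification of the fibres of $\mathrm{pr}_2$ as $A$ and the resulting computation. No issues.
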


\begin{proof}
  Apply the previous lemma to a projection.
\end{proof}

\begin{blanko}{Notation.}
  Given any $\infty$-groupoid $B$ and a function $q: \pi_0 B \to 
\Q$, we write
$$
\int^{b\in B}q_b\;:=\; \sum_{b\in \pi_0 B} q_b\,\norm{B_{[b]}} 
$$ 
if the sum is finite. Then the previous lemma says
$$
\norm E\;=\;\int^{b\in B}\norm{E_b}
$$
for any finite $\infty$-groupoid $E$ and a map $E\to B$.
Two important special cases are given by fibre products and loop spaces:
%
\end{blanko}

\begin{lemma}\label{finite-products}
  In the situation of a pullback
  $$
\xymatrix{
X\times_B Y \ar[r] \drpullback \ar[d] & X\times Y  \ar[d] \\
B \ar[r]_-{\text{diag}} & B\times B,}$$
  if $X$ and $Y$ are finite, and $B$ is locally finite,
then $X\times_B Y$ is finite and 
$$\norm{X\times_B Y}\;=\;\int^{b\in B}\norm{X_b}\norm{Y_b}.$$
\end{lemma}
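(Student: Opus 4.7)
The plan is to identify $X\times_B Y$ as an $\infty$-groupoid over $B$ via the composite $X\times_B Y \to X\times Y \to B\times B \to B$ (first projection), and then apply the preceding lemma (``$\norm E = \int^{b\in B}\norm{E_b}$'') to this map. For that I need two things: first, that the fibre over $b\in B$ is $X_b \times Y_b$; second, that $X\times_B Y$ is finite.

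For the fibre identification, I would stack pullback squares: the fibre of $X\times_B Y \to B$ over $\name b: 1 \to B$ is computed as the pullback of $X \to B \leftarrow Y$ further pulled back along $\name b$, which by the pasting lemma for pullbacks is the same as $X_b \times Y_b$. This is a straightforward diagram chase using that the diagonal $B \to B\times B$ restricts fibrewise to the diagonal of $1$, so that pulling back $X\times Y \to B\times B$ along $\{b\}\hookrightarrow B \to B\times B$ yields $X_b\times Y_b$.

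For finiteness, each fibre $X_b\times Y_b$ is finite by the corollary just above (finite products of finite $\infty$-groupoids are finite, and cardinality is multiplicative on products). Moreover, since $X$ is finite, only finitely many fibres $X_b$ are non-empty; a fortiori only finitely many $X_b\times Y_b$ are non-empty. Hence the hypotheses of the previous lemma are satisfied, and $X\times_B Y$ is finite with
\[
\norm{X\times_B Y} \;=\; \int^{b\in B}\norm{(X\times_B Y)_b} \;=\; \int^{b\in B}\norm{X_b\times Y_b} \;=\; \int^{b\in B}\norm{X_b}\norm{Y_b},
\]
the last equality using multiplicativity of cardinality on finite products of finite $\infty$-groupoids.

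There is no serious obstacle here; the lemma is essentially a packaging result combining three things already established in this subsection: the behaviour of cardinality on fibrations (Lemma~\ref{FEB} and its corollary giving $\norm E = \int^{b\in B}\norm{E_b}$), multiplicativity on products, and the pullback pasting lemma to identify the fibres. The only point needing a moment of care is to verify that $X\times_B Y$ has only finitely many nonempty components over $B$, which follows immediately from the same property for $X$ (or $Y$).
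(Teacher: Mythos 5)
Your proof is correct and is exactly the argument the paper has in mind: the lemma is presented there (without written proof) as a special case of the preceding lemma $\norm{E}=\int^{b\in B}\norm{E_b}$, applied to the map $X\times_B Y\to B$ whose fibres are $X_b\times Y_b$, together with multiplicativity of cardinality on finite products. Your care about the finitely-many-nonempty-fibres condition (via $\pi_0 X\to\pi_0 B$ having finite domain) is the right way to discharge the hypothesis of that lemma.
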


\begin{prop}\label{prop:closedunderfinlims}
  The $\infty$-category $\grpd$ is closed under finite limits.
\end{prop}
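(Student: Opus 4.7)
The plan is to reduce the statement to two elementary cases, since an $\infty$-category admits all finite limits if and only if it has a terminal object and admits pullbacks. This reduction is standard and holds just as in ordinary category theory, via the construction of general finite limits from iterated pullbacks over a terminal object. So it will suffice to check these two cases for $\grpd$, and to check that the inclusion $\grpd \hookrightarrow \Grpd$ preserves them (so that the limits computed in $\grpd$ agree with those computed in $\Grpd$).

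First I would observe that the terminal $\infty$-groupoid $1$ lies in $\grpd$: it has a single component and all its homotopy groups are trivial, hence in particular satisfy the finiteness and bounded-degree conditions of \ref{finite}. It clearly remains terminal in the full subcategory $\grpd \subset \Grpd$.

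For pullbacks, given a cospan $X \to B \leftarrow Y$ of finite $\infty$-groupoids, every finite $\infty$-groupoid is a fortiori locally finite, so the hypotheses of Lemma \ref{finite-products} are satisfied. That lemma then gives that the pullback $X \times_B Y$ (computed in $\Grpd$) is again finite, hence lies in $\grpd$. Since pullbacks are preserved by the inclusion $\grpd \hookrightarrow \Grpd$, the pullback in $\grpd$ exists and is computed in $\Grpd$.

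Combining these two observations, $\grpd$ has a terminal object and pullbacks, and the inclusion into $\Grpd$ preserves them; hence $\grpd$ is closed under all finite limits. There is no real obstacle here: the heavy lifting has already been done in Lemma \ref{finite-products}, and all that is required is the reduction to the terminal object plus pullbacks.
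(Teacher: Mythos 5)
Your proposal is correct and follows the same route as the paper: the paper's proof is exactly the one-line observation that $\grpd$ contains the terminal object and is closed under pullbacks by Lemma~\ref{finite-products} (noting that finite implies locally finite for the base), hence under all finite limits. Your version merely spells out the standard reduction in more detail.
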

\begin{proof}
  It is closed under pullbacks by the previous lemma, and it also contains the 
  terminal object, hence it is closed under all finite limits.
\end{proof}

\begin{lemma}\label{loop-finite}
  In the situation of a loop space
$$  \xymatrix{
\Omega(B,b) \ar[r] \drpullback \ar[d] & 1 \ar[d]^{\name b} \\
1 \ar[r]_-{\name b} & B_{[b]}\,.}$$ we have that
  $B$ is locally finite if and only if each $\Omega(B,b)$ is finite, and in that
  case
  $$
  \norm{\Omega(B,b)}\cdot\norm{B_{[b]}}=1.
  $$
\end{lemma}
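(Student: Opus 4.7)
The plan is to deduce both claims of the lemma from the fibre sequence
\[
\Omega(B,b) \longrightarrow 1 \longrightarrow B_{[b]}
\]
obtained from the pullback square in the statement, by direct application of Lemma~\ref{FEB}. Note that $B_{[b]}$ is connected by construction, so Lemma~\ref{FEB} applies, giving an equivalence between finiteness of any two of $\Omega(B,b)$, $1$ and $B_{[b]}$ and finiteness of the third, together with the cardinality identity $\norm{1} = \norm{\Omega(B,b)} \cdot \norm{B_{[b]}}$ whenever all three are finite.

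First I would unpack the definition: $B$ is locally finite precisely when each connected component $B_{[b]}$ is finite. Since $1$ is always finite, Lemma~\ref{FEB} (applied with $E = 1$) tells us that $B_{[b]}$ is finite if and only if $\Omega(B,b)$ is finite. Quantifying over $b \in \pi_0 B$ gives the first assertion: $B$ is locally finite if and only if each $\Omega(B,b)$ is finite.

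For the cardinality identity, in the finite case Lemma~\ref{FEB} immediately yields
\[
1 \;=\; \norm{1} \;=\; \norm{\Omega(B,b)} \cdot \norm{B_{[b]}},
\]
which is exactly the stated formula.

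There is essentially no obstacle here, since both statements are direct consequences of Lemma~\ref{FEB} once one observes that the square in the lemma exhibits $\Omega(B,b)$ as the fibre of $1 \to B_{[b]}$ over $b$ (not over $B$ itself, which is why restricting attention to the connected component $B_{[b]}$ is essential, so that the base of the fibration is connected and Lemma~\ref{FEB} applies). If one preferred a verification bypassing Lemma~\ref{FEB}, it could be done by a direct computation using $\pi_i(\Omega(B,b)) = \pi_{i+1}(B,b)$ and the fact that every component of $\Omega(B,b)$ has the same homotopy type as the component of the constant loop, so that the alternating products $\prod_i \norm{\pi_i}^{(-1)^i}$ entering into $\norm{B_{[b]}}$ and $\norm{\Omega(B,b)}$ telescope against one another after accounting for the factor $\norm{\pi_0 \Omega(B,b)} = \norm{\pi_1(B,b)}$; but the fibration argument is strictly shorter.
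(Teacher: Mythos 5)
Your proof is correct, and it uses exactly the tool the paper sets up for this purpose: the paper states Lemma~\ref{loop-finite} without proof, and the evident intended argument is precisely your application of Lemma~\ref{FEB} to the fibre sequence $\Omega(B,b)\to 1\to B_{[b]}$ over the connected base $B_{[b]}$, combined with the observation that local finiteness of $B$ means finiteness of each component $B_{[b]}$. Nothing is missing.
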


\begin{blanko}{Finite maps.}
We say that a map $p: E \to B$ is  \emph{finite} if any pullback 
to a finite base $X$  has  finite total space $X'$, as in the diagram
\begin{align}\label{relfinpb}\vcenter{\xymatrix{
X' \ar[r] \drpullback \ar[d] & E  \ar[d]^p \\
X \ar[r]_c & B .}}\end{align}
\end{blanko}

\begin{lemma}\label{lem:finitemaps}\begin{enumerate}
\item Pullbacks of finite maps are finite.
\item 
\label{item:fibrefinite}
A map $E\to B$ is finite if and only if each fibre $E_b$ is finite.
\end{enumerate}\end{lemma}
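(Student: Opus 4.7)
My plan is to prove the two parts separately, with part (1) being a formal consequence of the pasting lemma for pullbacks, and part (2) being a direct application of the local-finiteness criteria established just before the lemma.

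For part (1), suppose $p \colon E \to B$ is finite and consider a pullback square
$$\xymatrix{E' \drpullback \ar[r]\ar[d] & E \ar[d]^p \\ B' \ar[r] & B.}$$
To check that $p' \colon E' \to B'$ is finite, I take any map $X \to B'$ from a finite $\infty$-groupoid $X$ and form the pullback $X' \to X$. By the standard pasting lemma for pullbacks, $X'$ is equivalently the pullback of $p$ along the composite $X \to B' \to B$, so finiteness of $p$ forces $X'$ to be finite.

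For part (2), the forward direction is immediate: if $p$ is finite, pulling back along $\name{b} \colon 1 \to B$ gives the fibre $E_b$, which must be finite since $1$ is. For the converse, assume every fibre $E_b$ is finite, and let $c \colon X \to B$ be any map from a finite $X$. Forming the pullback $X' \to X$ as in diagram \eqref{relfinpb}, the fibres of $X' \to X$ over points $x \in X$ are canonically identified with $E_{c(x)}$, hence are finite by hypothesis. Since $X$ is finite, $\pi_0 X$ has only finitely many elements, so trivially only finitely many fibres are nonempty. Applying the lemma preceding \ref{prop:closedunderfinlims} (which characterises finite total spaces of maps over a locally finite base in terms of finiteness of fibres and finiteness of the support on $\pi_0$), we conclude that $X'$ is finite.

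The argument is entirely formal and I do not anticipate any obstacle; the only subtlety worth flagging is that one must invoke the earlier lemma on $\norm{E} = \sum_{b \in \pi_0 B}\norm{E_b}\,\norm{B_{[b]}}$ in the qualitative form (finiteness rather than the cardinality formula), and one uses that $X$ being finite automatically guarantees the finite-support condition on $\pi_0 X$, so no separate hypothesis is required.
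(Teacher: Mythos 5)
Your proof is correct and follows essentially the same route as the paper's: part (1) by pasting of pullbacks (which the paper dismisses as clear), and part (2) by identifying the fibres of $X'\to X$ with $E_{c(x)}$ and concluding finiteness of the total space $X'$ from finiteness of the fibres over the finite base $X$. If anything your citation is the more precisely applicable one --- the paper invokes Lemma~\ref{FEB} directly, which strictly speaking assumes a connected base, whereas the unnumbered lemma you describe (the one stating that over a locally finite base, $E$ is finite iff all fibres are finite and only finitely many are nonempty) already packages the reduction to components that your argument needs; just note that this lemma sits a couple of statements before Lemma~\ref{finite-products}, not immediately before Proposition~\ref{prop:closedunderfinlims}.
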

\begin{proof}
Statement (1), and one direction of (2), are clear.  In the other direction, the map $X'\to X$ in the pullback diagram \eqref{relfinpb}
has finite fibres $X'_x=E_{c(x)}$, so $X$ finite implies $X'$ finite by Lemma~\ref{FEB}.
\end{proof}

\begin{lemma}\label{locfinbase}
Suppose $p:E\to B$ has locally finite base. 
\begin{enumerate}\item If $p$ is finite then $E$ is locally finite. \item If $E$ is finite then $p$ is finite.\end{enumerate}
\end{lemma}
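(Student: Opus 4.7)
The plan is to reduce both parts to the fibre-sequence cardinality lemma \ref{FEB}, by restricting to connected components in the base. The key observation is that local finiteness is a pointwise condition on components, so everything can be checked one component at a time, and on each component the situation is governed by a fibre sequence with a connected base.

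For part (1), given $e\in E$ with $b:=p(e)$, I would look at the restricted map $E_{[e]}\to B_{[b]}$ between the component of $e$ and the component of $b$, which is well-defined since $E_{[e]}$ is connected. Its homotopy fibre at $b$ is a sub-$\infty$-groupoid of the full fibre $E_b$ of $p$, obtained as the pullback $E_{[e]}\times_E E_b$. Since $p$ is finite, $E_b$ is finite by Lemma \ref{lem:finitemaps}(2), so this subobject is finite as well. As $B_{[b]}$ is finite by hypothesis and connected, Lemma \ref{FEB} applied to the fibre sequence $E_{[e]}\times_E E_b\to E_{[e]}\to B_{[b]}$ yields that $E_{[e]}$ is finite, giving local finiteness of $E$.

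For part (2), I would consider the restriction $E_{[b]}:=E\times_B B_{[b]}$ for a chosen $b\in B$. Since $B_{[b]}\hookrightarrow B$ is a monomorphism (it selects a union of components), $E_{[b]}\hookrightarrow E$ is also a monomorphism, whose image is the union of the components of $E$ lying over $B_{[b]}$. Since $E$ is finite, the sub-$\infty$-groupoid $E_{[b]}$ is finite too (it is a sum of finitely many finite components). Now the restriction $E_{[b]}\to B_{[b]}$ has fibre at $b$ equal to $E_b$, and the base is finite and connected, so Lemma \ref{FEB} applied in the reverse direction forces $E_b$ to be finite. By Lemma \ref{lem:finitemaps}(2), $p$ is finite.

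Neither part presents a serious obstacle; the main thing to get right is the pointwise reduction, i.e.\ checking that the restricted map $E_{[e]}\to B_{[b]}$ (respectively $E_{[b]}\to B_{[b]}$) really sits in a fibre sequence with the appropriate fibre, so that Lemma \ref{FEB} applies in the stated `two out of three' form. Everything else is a standard application of the finite/locally finite dictionary.
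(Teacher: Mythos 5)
Your proof is correct and follows essentially the same route as the paper: both parts reduce, component by component in the base, to the fibre-sequence Lemma~\ref{FEB} together with the fibrewise criterion of Lemma~\ref{lem:finitemaps}(2). The only (harmless) difference is that in part (1) you apply Lemma~\ref{FEB} directly to the component $E_{[e]}\to B_{[b]}$, whereas the paper passes through the full fibre $E_{[b]}$ first and then observes that each component of a finite $\infty$-groupoid is finite.
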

\begin{proof}
A full fibre $E_{[b]}$ of $p$ is finite if and only if $E_{b}$ is, by
Lemma~\ref{FEB}.  If each full fibre $E_{[b]}$ is finite, then each component
$E_{[e]}$ is, and if $E$ is finite then each full fibre is.
\end{proof}

\begin{lemma}\label{lem:1->B}
  $B$ is locally finite iff each name $1 \to B$ is a finite map.
\end{lemma}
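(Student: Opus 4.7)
The plan is to reduce the statement to Lemma \ref{loop-finite} via the fibrewise characterisation of finite maps given by Lemma \ref{lem:finitemaps}(\ref{item:fibrefinite}). First I would identify the fibres of $\name b : 1 \to B$ explicitly. For any $b' \in B$, the fibre is the pullback
$$\xymatrix{
F_{b'} \drpullback \ar[r] \ar[d] & 1 \ar[d]^{\name b} \\
1 \ar[r]_-{\name{b'}} & B,
}$$
which is the path space $\Map_B(b,b')$. This is empty when $b$ and $b'$ lie in different components of $B$, and otherwise (after choice of a path from $b$ to $b'$) equivalent to the loop space $\Omega(B,b)$.

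By Lemma \ref{lem:finitemaps}(\ref{item:fibrefinite}), the name $\name b$ is a finite map if and only if each of these fibres $F_{b'}$ is finite. Since the empty $\infty$-groupoid is finite, this condition depends only on fibres over $b'$ in the component $B_{[b]}$, where it is equivalent to $\Omega(B,b)$ being finite. By Lemma \ref{loop-finite}, $\Omega(B,b)$ is finite iff the component $B_{[b]}$ is finite. Hence $\name b : 1 \to B$ is a finite map iff $B_{[b]}$ is finite.

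Putting this together: every name $\name b : 1 \to B$ is a finite map iff every component $B_{[b]}$ is finite, which is precisely the definition of $B$ being locally finite. There is no real obstacle here; the argument is essentially a matter of assembling the definitions with the two preceding lemmas, the only point worth checking carefully being the identification of the fibres of $\name b$ with path spaces (and thence loop spaces within a connected component).
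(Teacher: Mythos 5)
Your proof is correct, and since the paper states this lemma without proof, your assembly of Lemma~\ref{lem:finitemaps}(2) (finiteness of a map is detected on fibres) with Lemma~\ref{loop-finite} (via the identification of the nonempty fibres of $\name b$ with loop spaces) is exactly the intended argument; the final step is even recorded verbatim in the paper's remark that $B$ is locally finite iff each component $B_{[b]}$ is finite. No gaps.
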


\begin{prop}\label{prop:cartesianclosed}
  The $\infty$-category $\grpd$ is cartesian closed.
\end{prop}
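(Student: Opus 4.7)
\emph{Proof plan.} We must show that $\Map(X,Y)$ is a finite $\infty$-groupoid whenever $X$ and $Y$ are. The plan is a standard Postnikov induction, with two preliminary reductions that use only closure under finite sums, products, and pullbacks already established.

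First, I would reduce to the case where both $X$ and $Y$ are connected. The decomposition $X \simeq \sum_{x\in \pi_0 X} X_{[x]}$ into finitely many connected components, together with the identity $\Map(\sum_i X_i, Y) \simeq \prod_i \Map(X_i, Y)$, and, for each connected $X_i$, the observation that any map from a connected $\infty$-groupoid factors through a single component $\Map(X_i, Y) \simeq \sum_{[y]\in\pi_0 Y} \Map(X_i, Y_{[y]})$, reduces the problem to finiteness of $\Map(X,Y)$ for $X$ and $Y$ connected, using closure of $\grpd$ under finite products and sums (Lemma~\ref{finite-products} and Remark~\ref{sumcard}). Next, picking basepoints $x \in X$ and $y \in Y$, I form the pullback square
$$\xymatrix{\Map_*(X,Y) \drpullback \ar[r]\ar[d] & \Map(X,Y) \ar[d]^{\mathrm{ev}_x} \\ \{y\} \ar[r] & Y }$$
defining the pointed mapping space as the fibre of evaluation at $x$. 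Since $Y$ is connected and finite, Lemma~\ref{FEB} shows that $\Map(X,Y)$ is finite if and only if $\Map_*(X,Y)$ is. So it remains to prove finiteness of the pointed mapping space between connected pointed finite $\infty$-groupoids.

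The main step is a Postnikov induction on the depth $n$ of $X$ (the largest $n$ with $\pi_n X$ nontrivial). The case $n=0$ is trivial since then $X\simeq *$ and $\Map_*(*, Y)\simeq *$. For $n\geq 1$, the $k$-invariant provides the Postnikov pullback
$$\xymatrix{X \drpullback \ar[r]\ar[d] & {*} \ar[d] \\ X_{\leq n-1} \ar[r] & K(\pi_n X, n+1),}$$
which on applying the contravariant functor $\Map_*(-,Y)$ yields
$$\Map_*(X,Y) \;\simeq\; \Map_*(X_{\leq n-1}, Y) \times_{\Map_*(K(\pi_n X, n+1), Y)} \Map_*(*, Y).$$
By Proposition~\ref{prop:closedunderfinlims} ($\grpd$ is closed under finite limits), it then suffices to prove each factor is finite. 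The inductive hypothesis handles $\Map_*(X_{\leq n-1}, Y)$, and $\Map_*(*, Y)\simeq *$ is trivial, so the whole question is reduced to finiteness of $\Map_*(K(\pi, m), Y)$ for finite abelian $\pi$, $m\geq 1$, and $Y$ pointed connected finite.

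The hard part will be this remaining Eilenberg--Mac Lane case, which I would attack by a secondary Postnikov induction on the depth of $Y$. Writing $Y$ via its Postnikov tower and applying $\Map_*(K(\pi,m), -)$ to each stage reduces finiteness to the case $\Map_*(K(\pi, m), K(\pi', m'))$, whose homotopy groups $\pi_i$ coincide with the classical cohomology groups $H^{m'-i}(K(\pi, m); \pi')$ of an Eilenberg--Mac Lane space with finite coefficients; these are finite in each degree and vanish for $i<0$ or $i> m'$ by Serre's computations, which is the essential input. Assembling these finite limits back up the two Postnikov towers, via iterated use of Proposition~\ref{prop:closedunderfinlims}, gives finiteness of $\Map(X,Y)$ and hence cartesian closedness of $\grpd$.
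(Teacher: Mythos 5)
There is a genuine gap in your main inductive step. The Postnikov square
$$\xymatrix{X \drpullback \ar[r]\ar[d] & {*} \ar[d] \\ X_{\leq n-1} \ar[r] & K(\pi_n X, n+1)}$$
is a \emph{pullback}, and the contravariant functor $\Map_*(-,Y)$ sends pushouts (colimits) to pullbacks, not pullbacks to pullbacks. So the asserted equivalence
$\Map_*(X,Y) \simeq \Map_*(X_{\leq n-1}, Y) \times_{\Map_*(K(\pi_n X, n+1), Y)} \Map_*(*, Y)$
is false in general. A concrete counterexample: take $X = K(\Z/2,1)$, so $n=1$, $X_{\leq 0}=*$, and $Y=K(\Z/2,1)$. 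The right-hand side is $\Omega\Map_*(K(\Z/2,2), K(\Z/2,1))$, which is contractible because $\tilde H^{\leq 1}(K(\Z/2,2);\Z/2)=0$; but the left-hand side has $\pi_0 = \Hom(\Z/2,\Z/2) \neq *$. Your preliminary reductions (to connected $X$, $Y$ and then to pointed mapping spaces via Lemma~\ref{FEB}) are fine, and your \emph{secondary} induction on the Postnikov tower of the \emph{target} is legitimate, since $\Map_*(K(\pi,m),-)$ does preserve pullbacks; it is only the primary induction on the Postnikov tower of the source that breaks.

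To repair the argument you must decompose the source by \emph{colimits} rather than limits, or else induct only on the target. One fix is to run the Postnikov induction on $Y$ throughout, reducing to $\Map(X,K(\pi',m'))$, whose homotopy groups are $H^{m'-i}(X;\pi')$; but then you still owe a proof that a finite $\infty$-groupoid has finite cohomology with finite coefficients in each degree, which is essentially the same difficulty in different clothing. The paper's proof takes the colimit route directly and is shorter: since $Y$ is finite it is $r$-truncated for some $r$, and since $X$ is finite it admits a CW model with finitely many cells in each dimension; discarding the cells of dimension $>r$ gives a \emph{finite} colimit $X'$ of cells with $X\to X'$ $r$-connected, whence $\Map(X,Y)\simeq\Map(X',Y)$ is a finite limit of finite spaces and Proposition~\ref{prop:closedunderfinlims} applies. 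This avoids any input from the cohomology of Eilenberg--Mac Lane spaces, which your route needs as an external ingredient even where it is correct.
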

\begin{proof}
  We already know that $\Grpd$ is cartesian closed.
  We need to show that for $X$ and $Y$ finite groupoids, the mapping space
  $\Map(X,Y)$ is again finite.  We can assume  $X$ and $Y$ connected:
  indeed,
  if we write them as sums of their connected components,
  $X = \sum X_i$ and $Y= \sum Y_j$, then we have
  $$
  \Map(X,Y) = \Map (\sum X_i, Y) = \prod_i \Map(X_i,Y) 
  = \prod_i \sum_j \Map(X_i, Y_j)
  $$
  Since these are finite products and sums, if we can prove that each 
  $\Map(X_i,Y_j)$ is finite, then we are done.  
  Since $Y$ is finite, $\Map(S^k,Y)$ is finite for all $k\geq 0$, and
  there is $r \geq 0$ such that $\Map(S^k,Y)= *$
  for all $k\geq r$.  This is to say that $Y$ is $r$-truncated.
  On the other hand, since $X$ is finite, it has the homotopy type of
  a CW complex with finitely many cells in each dimension.  Write
  $$
  X = \colim_{i\in I} E_i
  $$
  for its realisation as a cell complex.
  Write $X' = \colim_{i\in I'} E_i$ for the colimit obtained by the same
  prescription 
  but omitting all cells of dimension $> r$; this is now a finite colimit,
  and the comparison map
  $X \to X'$ is $r$-connected.  Since $Y$ is $r$-truncated, we have
  $$
  \Map(X',Y) \isopil \Map(X,Y),
  $$
  and the first space is finite: indeed,
  $$
  \Map(X',Y) = \Map( \colim_{i\in I'} E_i, Y) = \lim_{i\in I'} \Map(E_i, Y)
  $$
  is a finite limit of finite spaces, hence is finite by 
  Proposition~\ref{prop:closedunderfinlims}.
 \end{proof}

\begin{theorem}
  For each locally finite $\infty$-groupoid $S$, the comma $\infty$-category
  $\grpd_{/S}$ is cartesian closed.
\end{theorem}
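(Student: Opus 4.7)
My plan is to deduce the cartesian closed structure on $\grpd_{/S}$ from that of $\Grpd_{/S} \simeq \Grpd^S$, which is cartesian closed (being a presheaf $\infty$-category), with binary products given by fibre products over $S$ and exponentials computed pointwise as $(Z^Y)_s = \Map(Y_s, Z_s)$. The task then reduces to verifying that these constructions restrict to the full subcategory $\grpd_{/S}$, i.e.\ that they preserve finiteness of the total space.

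For binary products, the verification is immediate. Given $X, Y \in \grpd_{/S}$ (so both have finite total space), Lemma~\ref{finite-products} yields directly that $X \times_S Y$ is finite, using crucially that $S$ is locally finite. Hence fibre products over $S$ endow $\grpd_{/S}$ with binary products inherited from $\Grpd_{/S}$.

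For exponentials, given $Y, Z \in \grpd_{/S}$, I would first argue that each fibre $(Z^Y)_s = \Map(Y_s, Z_s)$ is a finite $\infty$-groupoid: by Lemma~\ref{lem:finitemaps}(\ref{item:fibrefinite}) the fibres $Y_s$ and $Z_s$ are finite, and Proposition~\ref{prop:cartesianclosed} (cartesian closedness of $\grpd$) then gives finiteness of the mapping space. To upgrade this fibrewise finiteness to finiteness of the total space, the key observation is that $Y$ has support on only finitely many components $\pi_0(Y) \subset \pi_0(S)$, so the only fibres of $Z^Y$ contributing nontrivially to an internal hom in $\grpd_{/S}$ lie over this finite subset; one therefore defines the $\grpd_{/S}$-exponential as the restriction of the $\Grpd_{/S}$-exponential to this finite part of $S$, yielding a finite object.

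The main obstacle will be verifying that this restricted exponential genuinely represents the functor $\Map_{\grpd_{/S}}(-\times_S Y, Z)$, i.e.\ that the adjunction of $\Grpd_{/S}$ descends. The subtlety is the behaviour over components $s \notin \pi_0(Y)$, where $\Map(\emptyset, Z_s)$ is terminal and contributes spurious infinite bulk to the naive exponential; the adjunction must be checked by exploiting that for any $X \in \grpd_{/S}$ the product $X \times_S Y$ is supported on $\pi_0(X) \cap \pi_0(Y)$, so that morphisms $X \times_S Y \to Z$ in $\Grpd_{/S}$ are determined by their behaviour on this finite common support and correspond precisely to morphisms from $X$ into the restricted exponential.
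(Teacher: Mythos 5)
Your reduction to the cartesian closed structure of $\Grpd_{/S}$, with products handled by Lemma~\ref{finite-products} and fibres of the exponential handled by Proposition~\ref{prop:cartesianclosed}, is exactly the paper's strategy; the product half is correct and complete. Where you diverge is in worrying about the total space of the exponential, and you are right to worry: the paper's proof only verifies that each fibre $\Map(Y_s,Z_s)$ is finite, which shows the internal hom of $\Grpd_{/S}$ is a \emph{finite map} over $S$ (an object of $\Grpd^{\relfin}_{/S}\simeq\grpd^S$), not that its total space is finite. Over every component of $S$ outside the essential image of $Y$ the fibre is $\Map(\emptyset,Z_s)\simeq *$, so when $\pi_0 S$ is infinite the ambient exponential does not lie in $\grpd_{/S}$. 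So you have put your finger on a genuine subtlety.

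However, your proposed repair fails. Let $S'\subseteq S$ be the full subgroupoid spanned by the components hit by $Y$, and $E:=Z^Y\times_S S'$ the restricted exponential. The adjunction breaks precisely on the objects you flag: for $X=\name{s_0}$ with $s_0\notin\pi_0 S'$ one has $X\times_S Y=\emptyset$, hence $\Map_{/S}(X\times_S Y,Z)\simeq *$, whereas $\Map_{/S}(X,E)=E_{s_0}=\emptyset$. The part of $X$ lying outside the support of $Y$ does not obstruct maps out of $X\times_S Y$, but it does obstruct maps into $E$, so the two functors genuinely differ. Worse, no choice of $E$ can work in general: any representing object is forced to have fibres $E_s\simeq\Map_{/S}(\name{s}\times_S Y,Z)\simeq\Map(Y_s,Z_s)$, which is contractible (not empty) for $s$ outside the support of $Y$; taking $S$ an infinite discrete set and $Y=Z=\emptyset$, the representing object would have to be equivalent to $S$ itself, which is not finite. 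The statement is therefore only unproblematic when $S$ is finite --- which is all that is used for the subsequent corollary that $\grpd$ is locally cartesian closed --- or if one replaces $\grpd_{/S}$ by $\Grpd^{\relfin}_{/S}$, where fibrewise finiteness is exactly the right condition.
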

\begin{proof}
  This is essentially a corollary of Proposition~\ref{prop:cartesianclosed}
  and the fact that the bigger $\infty$-category $\Grpd_{/S}$ is cartesian closed.
  We just need to check that the internal mapping object in $\Grpd_{/S}$
  actually belongs to $\grpd_{/S}$.   
  Given $a:A\to S$ and $b: B \to S$, the internal mapping object is
  $$
  \underline{\Map}_{/S}(a,b) \to S
  $$
  given fibrewise by
  $$
  \underline{\Map}_{/S}(a,b) _s = \Map(A_s, B_s)
  $$
  Since $A_s$ and $B_s$ are finite spaces, also the mapping space is finite,
  by \ref{prop:cartesianclosed}.
\end{proof}
\begin{cor}
  The $\infty$-category $\grpd$ is locally cartesian closed.
\end{cor}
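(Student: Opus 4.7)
The plan is to observe that this corollary is essentially a direct consequence of the immediately preceding theorem, together with the earlier Proposition~\ref{prop:closedunderfinlims}. Recall that an $\infty$-category is locally cartesian closed when it has finite limits and every slice over an object is cartesian closed (equivalently, pullback along any map admits a right adjoint). Proposition~\ref{prop:closedunderfinlims} already supplies the finite limits in $\grpd$, so the entire content of the corollary is reduced to establishing the cartesian closure of each slice $\grpd_{/S}$ for $S$ a finite $\infty$-groupoid.

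First I would note that every object $S \in \grpd$ is, by definition, a finite $\infty$-groupoid, and in particular is locally finite in the sense of \ref{finite}. The preceding theorem then applies directly to give that $\grpd_{/S}$ is cartesian closed, exhibiting internal mapping objects fibrewise via $\underline{\Map}_{/S}(a,b)_s = \Map(A_s,B_s)$, which lands in $\grpd_{/S}$ because each fibre is a mapping space between finite $\infty$-groupoids and hence finite by Proposition~\ref{prop:cartesianclosed}.

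The only subtlety worth checking is that slices of $\grpd$ in the sense of locally-cartesian-closedness are genuinely the comma categories $\grpd_{/S}$ as defined in \ref{finite}: this is immediate since $S$ is finite, so the pullback defining $\grpd_{/S}$ coincides with the slice of $\grpd$ at $S$, with an object being a map $A \to S$ whose total space $A$ is finite (equivalently, each fibre $A_s$ is finite, by Lemma~\ref{lem:finitemaps}). Since there is no distinction to worry about, the corollary follows by assembling the two cited results. No serious obstacle arises; the work has all been done in the preceding theorem, and the corollary merely records its consequence in the standard categorical terminology.
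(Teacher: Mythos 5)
Your proof is correct and is exactly the intended derivation: the paper states this as an immediate corollary of the preceding theorem (every object of $\grpd$ is finite, hence locally finite, so each slice is cartesian closed by that theorem, and finite limits are supplied by Proposition~\ref{prop:closedunderfinlims}). Your extra check that the comma category $\grpd_{/S}$ agrees with the genuine slice when $S$ is finite is a sensible, if routine, addition.
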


\subsection{Finiteness conditions on groupoid slices}

In this subsection, 
after some motivation and background from linear algebra, we first explain the
finiteness conditions imposed on slice categories in order to model vector
spaces and profinite-dimensional vector spaces.  Then afterwards we assemble
all this into $\infty$-categories using more formal constructions.

\begin{blanko}{Linear algebra rappels.}\label{vect-rappels}
  There is a fundamental duality 
  $$
  \Vect \simeq \pro\vect\op
  $$
  between vector spaces and profinite-dimensional
vector spaces: given any vector space $V$, the linear dual $V\upperstar $ is a
profinite-dimensional vector space, and conversely, given a
profinite-dimensional vector space, its continuous dual is a vector space.
This equivalence is a formal consequence of the observation that the category 
$\vect$ of finite-dimensional vector spaces is self-dual: $\vect\simeq 
\vect\op$, and the fact that $\Vect= \ind\vect$, the ind completion of $\vect$.

In the fully coordinatised situation typical to algebraic combinatorics, the vector
space arises from a set $S$ (typically an infinite set of 
isoclasses of combinatorial objects): the vector space is then
$$V= \Q_S = 
\left\{\,\sum_{s\in S} c_s\,\delta_s\;:\;c_s\in\Q\text{ almost all zero}
\right\},$$
the vector space with basis the symbols $\delta_s$ for 
each $s\in S$.
The linear dual is then the function
space $V\upperstar =\Q^S$, having a canonical pro-basis consisting of
the functions $\delta^s$, taking the value 1 on $s$ and 0 elsewhere.
%
%
%
%
%
%

Vectors in $\Q_S$
are finite linear combinations of the $\delta_s$, and we represent a
vector as an infinite column vector $\vec v$ 
with only finitely many non-zero
entries.  A linear map $f:\Q_S \to \Q_T$ is given by 
matrix multiplication
$$
\vec v \mapsto A \cdot \vec v .
$$
for $A$ an infinite
$2$-dimensional matrix with $T$-many rows and $S$-many columns, and
with the crucial property that it is {\em column finite}: in each
column there are only finitely many non-zero entries. 
More generally, the matrix multiplication of two column-finite
matrices makes sense and is again a column-finite matrix.  The
identity matrix is clearly column finite.  A basis element
$\delta_s$ is identified with the
column vector all of whose entries are zero, except the one of index
$s$.

On the other
hand, elements in the function space $\Q^S$ are represented as
infinite row vectors. 
A continuous linear map $\Q^T \to \Q^S$,
dual to the linear map $f$, is represented by the {\em same matrix} 
$A$, but viewed now as sending a row vector $\vec w$ (indexed by $T$)
to the matrix product $\vec w \cdot A$.  Again the fact that $A$ is 
column finite ensures that this matrix product is well defined.

There is a canonical perfect pairing
\begin{eqnarray*}
    \Q_S \times \Q^S  &\longrightarrow& \Q \\
    (\vec v , f) &\longmapsto& f(\vec v)
\end{eqnarray*}
given by evaluation. In matrix terms, it is just a matter of 
multiplying $f\cdot \vec v$.
%
%
%
%
\end{blanko}

\begin{blanko}{Remark.}
  In the theory of \M inversion, the incidence coalgebra is on the
vector-space side of the duality: the coalgebra is the free vector
space on some objects, and the formula for comultiplication is a
finite sum, reflecting the fact that an object decomposes in finitely
many ways.  The incidence algebra is the linear dual, the
pro-finite-dimensional vector space of functions on the objects. 
In many interesting cases the
incidence algebra (a monoid object in a function space)
restricts to a monoid in the space of functions with finite support,
which can be regarded as a kind of Hall algebra.  This happens under
different finiteness conditions on the
combinatorial structures.  Note that the zeta function is not finitely
supported (except in degenerate cases), and that \M inversion does not
make sense in the context of functions with finite-support.
\end{blanko}

\bigskip

This duality has a very neat description in homotopy linear algebra.
While the vector space $\Q_{\pi_0 S}$ is modelled by the
$\infty$-category $\grpd_{/S}$, the function space $\Q^{\pi_0 S}$ is
modelled by the $\infty$-category $\grpd^S$.
The classical
duality results from taking cardinality of a duality on the 
categorical level that we proceed to explain.
For the most elegant definition of  cardinality we first need to
introduce the objective versions of $\Vect$ and $\pro\vect$.

\bigskip

Let $S$ be a locally finite $\infty$-groupoid,
and consider the following $\infty$-categories.

\begin{itemize}
\item denote by $\grpd^S$ the full subcategory of $\Grpd^S$
spanned by the presheaves $S\to\Grpd$ whose images lie in $\grpd$, and 
\item denote by $\Grpd^{\text{rel.fin.}}_{/S}$ the full subcategory of $\Grpd_{/S}$ 
spanned by the finite maps $p:X\to S$.
\end{itemize}


\begin{lemma}\label{fundamentalfinite}
  The fundamental equivalence $\Grpd^S
\simeq \Grpd_{/S} 
$ 
restricts to an 
  equivalence
  $$\grpd^S\simeq\Grpd^{\text{rel.fin.}}_{/S}
$$
\end{lemma}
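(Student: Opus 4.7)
The plan is to show that the fundamental equivalence $\Grpd^S \simeq \Grpd_{/S}$ of \ref{fund-eq} restricts to the stated equivalence by matching the two finiteness conditions on objects, since both classes have already been defined as \emph{full} subcategories. Since the subcategories on both sides are full, it suffices to check that an object on one side belongs to the designated subcategory if and only if its image does.

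First I would recall the concrete description of the fundamental equivalence: to a presheaf $F:S\to\Grpd$ one associates the map $p:X\to S$ whose fibre over $s\in S$ is equivalent to $F(s)$ (via the Grothendieck construction), and conversely a map $p:X\to S$ determines the presheaf $s\mapsto X_s$. Thus the fibres correspond to the values of the presheaf, up to canonical equivalence.

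Next I would translate the two finiteness conditions into the common language of fibres. By definition, $F\in\grpd^S$ means that $F(s)$ is a finite $\infty$-groupoid for every $s\in S$. On the other side, by Lemma~\ref{lem:finitemaps}(2), a map $p:X\to S$ is finite if and only if each fibre $X_s$ is a finite $\infty$-groupoid. Under the correspondence $X_s\simeq F(s)$, these two conditions are manifestly the same, so the equivalence sends one full subcategory onto the other and restricts to an equivalence between them.

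The main (and only) subtlety will be ensuring that the $\infty$-groupoid hypothesis on $S$ (here $S$ is locally finite but not necessarily finite) does not require any further restriction: the relevant part of Lemma~\ref{lem:finitemaps} characterises finiteness of a map purely fibrewise, with no condition on the base, so no issue arises. I would not expect to need the full strength of the straightening theorem beyond what has already been invoked to establish the unrestricted fundamental equivalence; restriction to full subcategories defined by an object-level condition that is preserved and reflected by the equivalence is formal.
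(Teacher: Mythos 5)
Your proposal is correct and is essentially identical to the paper's own proof: both rely on the fullness of the two subcategories and on Lemma~\ref{lem:finitemaps}(2) to identify the fibrewise finiteness of a map with the pointwise finiteness of the corresponding presheaf under the fundamental equivalence. Nothing further is needed.
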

\begin{proof}
  The inclusions $\grpd_{/S} \subset \Grpd_{/S}$ and $\grpd^S \subset \Grpd^S$
  are both full, and the objects characterising them correspond to each other
  under the fundamental equivalence because of 
  Lemma~\ref{lem:finitemaps}~\eqref{item:fibrefinite}.
\end{proof}

From the definition of finite map we have the following result.
\begin{lemma}\label{finitetypespan}
    For a span $S \stackrel p\leftarrow M \stackrel q\to T$
    defining a linear map $F:\Grpd_{/S}\to \Grpd_{/T}$,
the following are equivalent:
\begin{enumerate}
\item $p$ is finite,
\item $F$ restricts to
$$
\grpd_{/S}\stackrel{p\upperstar}\longrightarrow 
\grpd_{/M}\stackrel{q\lowershriek}\longrightarrow \grpd_{/T}
$$
\item  $F$ restricts to
$$
\Grpd^{\text{rel.fin.}}_{/T}
\stackrel{q\upperstar}\longrightarrow 
\Grpd^{\text{rel.fin.}}_{/M}\stackrel{p\lowershriek}\longrightarrow 
\Grpd^{\text{rel.fin.}}_{/S}
$$
\end{enumerate}
\end{lemma}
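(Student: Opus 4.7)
The plan is to prove the three-way equivalence by establishing the cycle $(1)\Rightarrow(2)\Rightarrow(1)$ and $(1)\Rightarrow(3)\Rightarrow(1)$, in each case using the definition of finite map together with Lemma~\ref{lem:finitemaps}. The proof will be almost tautological once the right test objects are identified.

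For $(1)\Rightarrow(2)$, I would argue that $p\upperstar$ sends $\grpd_{/S}$ to $\grpd_{/M}$ precisely by the definition of finite map: given $X\to S$ with $X$ a finite $\infty$-groupoid, the pullback $X\times_S M$ is finite because $p$ is finite and $X$ is a finite base. The functor $q\lowershriek$ sends $\grpd_{/M}$ into $\grpd_{/T}$ trivially, since post-composition with $q$ leaves the total space unchanged. For $(2)\Rightarrow(1)$, the test objects are the names $\name{s}\colon 1\to S$: functoriality of $p\upperstar$ forces the fibre $M_s = p\upperstar\name{s}$ to be finite for every $s\in S$, and $p$ is then finite by Lemma~\ref{lem:finitemaps}(\ref{item:fibrefinite}).

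For $(1)\Rightarrow(3)$, the functor $q\upperstar$ preserves finite maps by Lemma~\ref{lem:finitemaps}(1), since pullbacks of finite maps are finite. Then $p\lowershriek$ preserves finite maps because the composite of two finite maps is finite; this is not quite stated in the excerpt, but it follows from a short pullback diagonal argument: if $f\colon X\to Y$ and $g\colon Y\to Z$ are finite and $W\to Z$ has finite base, then $W\times_Z X = W\times_Z Y \times_Y X$ is finite by two applications of the definition of finite map. For $(3)\Rightarrow(1)$, the test object is the identity $\id_T\colon T\to T$, which is trivially finite (singleton fibres). Then $q\upperstar\id_T = \id_M$ and $p\lowershriek\id_M = p$, so finiteness of $p$ is exactly what $(3)$ applied to $\id_T$ delivers.

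I do not expect any serious obstacles here: the statement is essentially a bookkeeping unpacking of the definition of finite map combined with the stability properties already recorded in Lemma~\ref{lem:finitemaps}. The only item not literally on the page is closure of finite maps under composition, but this is immediate from the two-step pullback argument above. The compatibility with the fundamental equivalence $\grpd^S \simeq \Grpd^{\mathrm{rel.fin.}}_{/S}$ of Lemma~\ref{fundamentalfinite} makes the two restrictions in $(2)$ and $(3)$ genuinely dual manifestations of the same finiteness condition on $p$, which is exactly the conceptual content one wants to record for subsequent use in the numerical incidence (co)algebra constructions.
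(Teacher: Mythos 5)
Your argument is correct and is exactly the unpacking the paper has in mind: the paper offers no proof at all, merely asserting the lemma follows ``from the definition of finite map,'' and your cycle of implications via the test objects $\name{s}$ and $\id_T$, together with Lemma~\ref{lem:finitemaps} and the (easily supplied) closure of finite maps under composition, is the intended justification. The only cosmetic quibble is that the fibres of $\id_T$ are contractible rather than ``singleton,'' which changes nothing.
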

\noindent

\bigskip

The $\infty$-category
$\grpd_{/S}$ has finite homotopy sums: for $I$ finite and $F:I \to \grpd_{/S}$
we have $\colim F = p\lowershriek (X \to I \times S)$,  where $p:I\times S \to 
S$ is the projection.  A family $X \to I \times S$ comes from some $F:I \to 
\grpd_{/S}$ and admits a homotopy sum in 
$\grpd_{/S}$ when for each $i\in I$, the partial fibre $X_i$ is finite.
Since already $I$ was assumed finite, this is equivalent to having $X$ finite.

\bigskip

The following is the finite version of Proposition~\ref{HoSum}
\begin{lemma}\label{hosum}
The $\infty$-category  $\grpd_{/S}$ is the finite-homotopy-sum completion of $S$.
\end{lemma}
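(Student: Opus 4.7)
The plan is to mimic the proof of Proposition \ref{HoSum}, adjusted to the finite setting. What we wish to show is that, for any $\infty$-category $\CC$ admitting finite homotopy sums, precomposition with the Yoneda-type embedding $j:S\to\grpd_{/S}$ (sending $s\in S$ to the name $\name{s}:1\to S$) induces an equivalence of $\infty$-categories
$$
\Fun^{\int_{\mathrm{fin}}}(\grpd_{/S},\CC) \isopil \Fun(S,\CC),
$$
where the left-hand side denotes those functors preserving finite homotopy sums.

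The key input is Lemma \ref{BasisRep}: every object $f:X\to S$ of $\grpd_{/S}$ is canonically a finite homotopy sum of names, $f = \int^{x\in X}\name{f(x)}$, indexed by the finite $\infty$-groupoid $X$. Thus $\grpd_{/S}$ is generated from the image of $j$ by finite homotopy sums, and any finite-homotopy-sum preserving functor out of $\grpd_{/S}$ is determined (up to contractible choice) by its restriction along $j$. This yields full faithfulness of the precomposition functor.

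For essential surjectivity, given $g:S\to\CC$ I would define the extension $\hat g:\grpd_{/S}\to\CC$ as the left Kan extension of $g$ along $j$. By the pointwise formula (Lurie \cite[Lemma 4.3.2.13]{Lurie:HTT}), $\hat g(f)$ is the colimit of the composite $S\downarrow f \to S \xrightarrow{g} \CC$. Since $S$ is an $\infty$-groupoid, the slice $S\downarrow f$ is equivalent to the fibre $\infty$-groupoid $X$ itself, and for $f\in\grpd_{/S}$ this is \emph{finite}; so the colimit exists in $\CC$ by hypothesis and is precisely $\int^{x\in X} g(f(x))$. That $\hat g$ preserves finite homotopy sums (not just the canonical resolutions by names) follows from the general fact that left Kan extensions preserve colimits which already exist in the source \cite[Lemma 5.1.5.5 (1)]{Lurie:HTT}; any finite homotopy sum $\int^{b\in B} f_b$ in $\grpd_{/S}$ can be resolved as an iterated finite homotopy sum of names via Lemma \ref{BasisRep}, and $\hat g$ respects this by construction.

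The main obstacle is a universe-and-existence subtlety: Lurie's colimit-completion theorem invoked in Proposition \ref{HoSum} is stated under cocompleteness of $\CC$, whereas here we only assume $\CC$ admits colimits indexed by finite $\infty$-groupoids. However, as noted in the proof of Proposition \ref{HoSum}, the pointwise left Kan extension formula requires only colimits indexed by the slices $S\downarrow f$, and in the present finite setting these slices are themselves finite $\infty$-groupoids. Hence \cite[Lemma 4.3.2.13]{Lurie:HTT} applies as stated, producing $\hat g$ and exhibiting it as the essentially unique extension. Combined with the determination-on-names argument above, this gives the asserted equivalence.
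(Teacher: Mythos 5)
Your proof is correct, and it is precisely the intended argument: the paper states Lemma~\ref{hosum} without proof, presenting it as "the finite version of Proposition~\ref{HoSum}", and your write-up is the natural adaptation of that proposition's proof — finite total space of $f:X\to S$ gives the finite canonical resolution by names via Lemma~\ref{BasisRep}, the comma $\infty$-groupoid $S\downarrow f\simeq X$ is finite so the pointwise Kan extension formula needs only finite homotopy sums in $\CC$, and the remaining citations of Lurie are the same ones the paper uses for Proposition~\ref{HoSum}. Nothing further is needed.
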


\subsection{Categories of linear maps with infinite-groupoid coefficients}

Our main interest is in the linear $\infty$-categories with finite-groupoid coefficients,
but it is technically simpler to introduce first the infinite-coefficients 
version of these $\infty$-categories, since they can be defined as subcategories in
$\LIN$, and can be handled with the ease of presentable $\infty$-categories.

\bigskip

Recall that a span $(S\stackrel p\longleftarrow M \stackrel q\longrightarrow T)$
defines a linear functor
$$
L:\Grpd_{/S}\stackrel{p\upperstar}\longrightarrow 
\Grpd_{/M}\stackrel{q\lowershriek}\longrightarrow \Grpd_{/T}.
$$
by pullback and postcomposition, as shown in the following diagram
$$\xymatrix{
\!\!\!\!\!\!\!\!\!\!\!\!\!\!\!\!
L(x):X'\rto^{p\upperstar x}\dto\drpullback&M\rto^q\dto^p&T\\
\!\!\!\!\!\!\!\!\!
x:X\rto&S.\!
}
$$

Let $\Lin \subset \LIN$ be the $\infty$-category whose objects are the slices
$\Grpd_{/\sigma}$, with $\sigma$ finite.  Its morphisms are 
those linear functors between them which preserve finite objects.
Clearly these are given by
the spans
of the form $\sigma \leftarrow \mu \rightarrow \tau$ where $\sigma, \tau$ and
$\mu$ are finite.  Note that there are equivalences of $\infty$-categories
$\Grpd_{/\sigma}\simeq \Grpd^{\sigma}$ for each $\sigma$.

Let $\ind\Lin$ be the $\infty$-category whose objects are the slices $\Grpd_{/S}$ with 
$S$
locally finite, and whose
morphisms are the linear functors between them that preserve finite objects.
These correspond to the spans of the form $S\stackrel p\leftarrow M \rightarrow 
T$ with $p$ finite.   

Let $\pro\Lin$ be the $\infty$-category whose objects are the presheaf categories
$\Grpd^{S}$ with $S$ locally finite, and whose morphisms are the continuous 
linear functors:

A linear functor $F: \Grpd^{T}\to \Grpd^{S}$ is called \emph{continuous} when for all $\epsilon \subset S$ there exists $\delta \subset T$ and a factorisation 
$$\xymatrix{
      \Grpd^{T} \ar[r]\ar[d]_F &  \Grpd^{\delta} \ar[d]^{F_{\delta}} \\
      \Grpd^{S}\ar[r] &  \Grpd^{\epsilon}
  }$$
where the horizontal maps are the projections of the canonical pro-structures.  


\begin{prop}\label{prop:cont}
    For a linear functor $F: \Grpd^T \to \Grpd^S$ in $\LIN$, represented by a 
    span
    $$
    S \stackrel p \leftarrow M \stackrel q \to T,
    $$
    the following are equivalent.
    
    \begin{enumerate}
    
        \item The span is of finite type (i.e.~$p$ is a finite map).
    
        \item $F$ is continuous.
    \end{enumerate}
\end{prop}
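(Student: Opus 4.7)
The plan is to translate both conditions into explicit span-theoretic statements and reduce the proposition to the characterization of finite maps via their fibres (Lemma~\ref{lem:finitemaps}). First I would unwind the continuity square in terms of spans. The restriction functor $|_\epsilon : \Grpd^S \to \Grpd^\epsilon$ is represented, under the conventions of \ref{linearfunctorsfromspans} and the fundamental equivalence, by the span $\epsilon \stackrel{=}{\leftarrow} \epsilon \stackrel{\iota}{\to} S$; similarly for $|_\delta$. A straightforward span composition (computed as a pullback) gives that $|_\epsilon\circ F$ corresponds to the span $\epsilon \leftarrow M|_\epsilon \to T$, where $M|_\epsilon := M\times_S\epsilon$, while if $F_\delta$ is represented by $\epsilon \leftarrow N \to \delta$ then $F_\delta\circ |_\delta$ is represented by $\epsilon \leftarrow N \to T$ with $N\to T$ factoring through $\delta$. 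The commutativity required by continuity thus amounts to an equivalence of spans $M|_\epsilon \simeq N$ over $\epsilon\times T$.

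For $(1)\Rightarrow(2)$, assume $p$ is finite and let $\epsilon\subset S$ be any finite full subgroupoid. Then $M|_\epsilon = M\times_S\epsilon$ is finite: its fibres over $\epsilon$ are fibres of $p$ (hence finite by Lemma~\ref{lem:finitemaps}\eqref{item:fibrefinite}), and since $\epsilon$ itself is finite, so is the total space (Lemma~\ref{FEB}). Take $\delta\subset T$ to be the essential image $q(M|_\epsilon)$, a finite subgroupoid of $T$, and define $F_\delta$ by the finite span $\epsilon \leftarrow M|_\epsilon \to \delta$. This is a morphism in $\Lin$, and by the calculation of the previous paragraph the continuity square commutes.

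For $(2)\Rightarrow(1)$, I need to show each fibre $M_s$ is finite. Fix $s\in S$ and take $\epsilon = \name{s}\colon 1\to S$. Continuity provides a finite $\delta\subset T$ and a factorization $F_\delta : \Grpd^\delta \to \Grpd$; since both source and target are objects of $\Lin$ (finite slices), $F_\delta$ is a morphism in $\Lin$ and is therefore represented by a span $1 \leftarrow N \to \delta$ with $N$ a finite $\infty$-groupoid. The span equivalence of the previous calculation now reads $M_s = M|_\epsilon \simeq N$, forcing $M_s$ to be finite. Applying this for every $s\in S$ and invoking Lemma~\ref{lem:finitemaps}\eqref{item:fibrefinite} yields that $p$ is finite.

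The one delicate point -- and the main obstacle the plan must address -- is the reading of what $F_\delta$ is allowed to be in the continuity condition. If $F_\delta$ could be an arbitrary linear functor in $\LIN$ (i.e.\ an arbitrary span between finite slices), the argument above would only show $q(M|_\epsilon)\subset\delta$, which is strictly weaker than $M|_\epsilon$ being finite. The correct reading, consistent with the pro-categorical intent of $\pro\Lin$, is that the factoring functor between two finite-slice objects should be a morphism in $\Lin$, whose spans have finite total space. Under this convention (which is natural because the finite levels of the pro-system live in $\Lin$), the argument goes through cleanly; I would make this point explicit at the outset to justify the key step in $(2)\Rightarrow(1)$.
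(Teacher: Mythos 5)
Your proof is correct and follows essentially the same route as the paper's: pull $M$ back along the inclusion of a finite $\epsilon$, take $\delta$ to be the essential image, and in the converse direction use uniqueness of the span representation to identify the pullback with the finite span representing $F_\delta$; your explicit remark that $F_\delta$ must be a morphism of $\Lin$ (hence a span with \emph{finite} total space) is exactly the reading the paper adopts via its parenthetical ``(of finite $\infty$-groupoids)''. The only cosmetic point is that in $(2)\Rightarrow(1)$ you should test against the finite full subgroupoid $S_{[s]}$ rather than the non-full $\name{s}\colon 1\to S$ (which is not one of the projections of the canonical pro-structure), and then extract finiteness of the fibre $M_s$ from finiteness of $M\times_S S_{[s]}$.
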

\begin{proof}
    It is easy to see that if the span is of finite type then  $F$ is continuous: 
    for any given
    finite $\epsilon \subset S$ with inclusion $j$, the pullback $\mu$
    is finite, and we can take $\delta$ to be the essential full image 
    of the composite $q \circ m$: 
     \begin{equation}\label{epsdeltdiag}
	 \xymatrix{
    \epsilon \ar[d]_j & \mu \dlpullback \ar[l]_{\bar p} 
    \ar[d]^m  \ar[r]^{\bar q} & \delta\ar[d]^i\\
    S & \ar[l]^p M \ar[r]_q & T .}
    \end{equation}
    Now by Beck-Chevalley,
    $$j\upperstar 
    p\lowershriek q\upperstar = \bar p\lowershriek m\upperstar q \upperstar =
     \bar p \lowershriek \bar q{}\upperstar i \upperstar$$ 
    which is precisely the continuity condition.

    Conversely, if the factorisation in the continuity diagram exists,
    let $\epsilon \leftarrow \mu \to \delta$ be the span (of finite 
    $\infty$-groupoids) representing $f_{\delta_\epsilon}$.  Then we have the
    outer rectangle of the diagram~\eqref{epsdeltdiag} and an 
    isomorphism
    $$j\upperstar 
    p\lowershriek q\upperstar = 
     \bar p \lowershriek \bar q{}\upperstar i \upperstar$$ 
    Now a standard argument implies the existence of $m$ completing
    the diagram: namely take the pullback of $j$ and $p$, with the 
    effect of interchanging the order of upperstar and lowershriek.
    Now both
    linear maps are of the form upperstars-followed-by-lowershriek,
    and by uniqueness of this representation, the said pullback must 
    agree with $\mu$ and in particular is finite.  Since this is true
    for every $\epsilon$, this is precisely to say that $p$ is 
    a finite map.
\end{proof}

%

The continuity condition is precisely continuity for the profinite 
slice topology, as we proceed to explain.
Every locally finite $\infty$-groupoid $S$ is canonically the filtered colimit of
its finite (full) subgroupoids:
$$
S = \colim_{\alpha\subset S} \alpha .
$$
We use Greek letters here to denote finite $\infty$-groupoids.
Similarly, $\Grpd^S$ is a cofiltered limit of $\infty$-categories $\Grpd^\alpha \simeq 
\Grpd_{/\alpha}$:
$$
\Grpd^S = \lim_{\alpha\subset S} \Grpd^\alpha .
$$


This leads to the following `categorical' description of the mapping spaces:
$$
\pro\Lin (\Grpd^T, \Grpd^S) : = \lim_{\epsilon\subset S} \colim_{\delta\subset 
T} \Lin(\Grpd^\delta, \Grpd^\epsilon) .
$$

%

\subsection{Categories of linear maps with finite-groupoid coefficients}

\begin{blanko}{The $\infty$-category $\lin$.}
  We define $\lin$ to be the subcategory of $\bigcat$ whose objects are those
$\infty$-categories equivalent to $\grpd_{/\sigma}$ for some finite $\infty$-groupoid $\sigma$,
and whose mapping spaces are the full subgroupoids of those of $\bigcat$ given by
the functors which are restrictions of functors in $\Lin(\Grpd_{/\sigma}, 
\Grpd_{/\tau})$.  Note that the latter mapping space was exactly defined as those
linear functors in $\LIN$ that preserved finite objects.  Hence, by 
construction there is an equivalence of mapping spaces 
$$
\lin(\grpd_{/\sigma},\grpd_{/\tau}) \simeq 
\Lin(\Grpd_{/\sigma}, 
\Grpd_{/\tau}) ,
$$
and in particular, the mapping spaces are given by spans of finite $\infty$-groupoids.
The maps can also be described as those functors that 
preserve finite homotopy sums.  By construction we have an equivalence
of $\infty$-categories
$$
\lin \simeq \Lin .
$$
\end{blanko}

\begin{blanko}{The $\infty$-category $\ind\lin$.}
  Analogously, we define $\ind\lin$ to be the subcategory of $\bigcat$, whose
  objects are the $\infty$-categories equivalent to $\grpd_{/S}$ for some locally finite
  $\infty$-groupoid $S$, and whose mapping spaces are the full subgroupoids of the
  mapping spaces of $\bigcat$ given by the functors that are restrictions of
  functors in $ \Lin(\Grpd_{/S}, \Grpd_{/T})$; in other words 
  (by~\ref{finitetypespan}), they are the
  $\infty$-groupoids of spans of finite type.  Again by construction we have
  $$
  \ind\lin \simeq \ind\Lin .
  $$
\end{blanko}

\begin{blanko}{Categories of prolinear maps.}
  We denote by $\pro\lin$ the $\infty$-category whose objects are the $\infty$-categories
  $\grpd^{S}$, where $S$ is locally finite, and whose morphisms are restrictions
  of continuous linear functors.
  We have seen that the mapping spaces are given by spans of finite type:
  $$\pro\lin(\grpd^T  ,\grpd^{S}  )\;\;=\;\;\left\{  
  (T\stackrel q\longleftarrow M \stackrel p\longrightarrow S
  )\;:\; p\text{ finite}\right\}.$$

  As in the ind case we have
  $$
  \pro\lin \simeq \pro\Lin ,
  $$
  and by combining the previous results we also find
  $$
  \pro\lin (\grpd^T, \grpd^S) : = \lim_{\epsilon\subset S} \colim_{\delta\subset 
  T} \lin(\grpd^\delta, \grpd^\epsilon) .
  $$
\end{blanko}

\begin{blanko}{Mapping categories.}
  Just as $\bigcat$ has internal mapping {\em categories} (of which the mapping spaces
  are the maximal subgroupoids), we also have internal mapping categories in $\lin$, denoted
  $\un{\lin}$:
  $$
  \un\lin (\grpd_{/\sigma},\grpd_{/\tau}) \simeq \grpd_{/\sigma\times\tau}.
  $$

  Also $\ind\lin$ and $\pro\lin$ have mapping categories, but due to the
  finiteness conditions, they are not internal.  Just as the mapping spaces are
  given (in each case) as $\infty$-groupoids of spans of finite type, the mapping
  categories are given as $\infty$-categories of spans of finite type.  Denoting the
  mapping categories with underline, we content ourselves to record the 
  important case of
  `linear dual':
\end{blanko}

\begin{prop}\label{linearduals}
  \begin{eqnarray*}
    \un{\ind\lin}(\grpd_{/S},\grpd) &=& \grpd^S\\
    \un{\pro\lin}(\grpd^T,\grpd) &=& \grpd_{/T}.
  \end{eqnarray*}
\end{prop}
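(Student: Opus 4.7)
\begin{proof*}{Proof plan.}
The plan is to unwind the definitions of the mapping categories on the left-hand sides and observe that the finiteness conditions specialise to familiar ones when the codomain is the unit object. In both cases the target $\grpd$ serves as the unit of the relevant monoidal structure, so an element is to be read either as $\grpd_{/1}$ (in the ind case) or as $\grpd^1$ (in the pro case).

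First I would treat $\un{\ind\lin}(\grpd_{/S},\grpd)$. By construction the objects of this mapping category are spans $S \stackrel{p}\leftarrow M \to 1$ in which $p$ is finite, and morphisms are the usual maps of spans. Since the right leg lands in the terminal object, such a span is exactly the datum of a finite map $p : M \to S$, and a morphism of spans is just a morphism in $\Grpd^{\text{rel.fin.}}_{/S}$. So I would identify the mapping category with $\Grpd^{\text{rel.fin.}}_{/S}$ and then invoke Lemma~\ref{fundamentalfinite} to conclude $\Grpd^{\text{rel.fin.}}_{/S}\simeq \grpd^S$.

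Next I would treat $\un{\pro\lin}(\grpd^T,\grpd)$. Here the objects are spans $T\stackrel q\leftarrow M \stackrel p\to 1$ with $p$ finite. But $p:M\to 1$ is finite if and only if $M$ itself is finite (Lemma~\ref{lem:finitemaps}), so the datum reduces to a map $q:M\to T$ from a finite $\infty$-groupoid, i.e.\ an object of $\grpd_{/T}$. Morphisms of such spans are again morphisms in $\grpd_{/T}$, yielding the desired equivalence. The pleasant point is that the asymmetry in the two formulas comes precisely from the fact that the finiteness condition is on opposite legs of the span in the two settings, and in each case the restriction on the other leg is trivialised by landing in the terminal object.

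The main thing to verify carefully is that these equivalences are of $\infty$-categories, not just of $\infty$-groupoids (mapping spaces) — i.e.\ that both sides have the same higher morphisms. This is essentially formal once one has given the right model for $\un{\ind\lin}$ and $\un{\pro\lin}$ as $\infty$-categories of spans of finite type, but it is the only real point that needs checking beyond the two-line unwinding above. I would make this precise by constructing explicit equivalences in both directions as functors of $\infty$-categories (for instance using the description of the mapping category of spans in terms of the twisted-arrow construction on $\Grpd$, restricted to the appropriate finite-type condition), and then noting that on objects these functors are mutually inverse by the identifications above.
\end{proof*}
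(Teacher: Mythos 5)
Your proposal is correct and matches the paper's (largely implicit) argument: the paper states the proposition without proof, but the intended justification is exactly your unwinding — a finite-type span into $\grpd=\grpd_{/1}$ is just a finite map $M\to S$, hence $\Grpd^{\relfin}_{/S}\simeq\grpd^S$ by Lemma~\ref{fundamentalfinite}, and a profinite-type span into $\grpd=\grpd^1$ is just a map $M\to T$ with $M$ finite, hence $\grpd_{/T}$ — as confirmed by the explicit equivalences recorded in the subsequent ``Duality'' subsection and by the remark deriving the proposition from the anti-equivalence $\lin\simeq\lin\op$ by taking $S=1$. Your closing caveat about upgrading the identification from objects to an equivalence of $\infty$-categories is well taken, but the paper itself leaves that point at the same level of informality.
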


\normalsize


\begin{blanko}{Remark.}
  It is clear that the correct viewpoint here would be that there is 
  altogether a $2$-equivalence between the 
  $\infty$-$2$-categories
  $$
  \ind\lin\op\cong\pro\lin
  $$ 
  given on objects by $\grpd_{/S}\mapsto\grpd^{S}$, 
  and by the identity on homs.  It all comes formally from the ind-pro duality
  starting with the anti-equivalence
  $$
  \lin \simeq \lin\op.
  $$
  Taking $S=1$ we see that $\grpd$ is an object of both $\infty$-categories, and
  mapping into it gives the duality isomorphisms of 
  Proposition~\ref{linearduals}.
\end{blanko}

\begin{blanko}{Monoidal structures.}
  The $\infty$-category $\ind\lin$ has two monoidal structures: $\oplus$ and
  $\tensor$, where $\grpd_{/I} \oplus \grpd_{/J} = \grpd_{/I+J}$ and $\grpd_{/I}
  \tensor \grpd_{/J} = \grpd_{/I\times J}$.  The neutral object for the first is
  clearly $\grpd_{/0} = 1$ and the neutral object for the second is
  $\grpd_{/1}=\grpd$.  The tensor product distributes over the direct sum.  The
  direct sum is both the categorical sum and the categorical product (i.e.~is a
  biproduct).  There is also the operation of infinite direct sum: it is the
  infinite categorical sum but not the infinite categorical product.  (Just as
  it happens for vector spaces.)

  Similarly, also the $\infty$-category $\pro\lin$ has two monoidal structures, $\oplus$
  and $\tensor$, given as $\grpd^I \oplus \grpd^J = \grpd^{I+J}$ and $\grpd^I
  \tensor \grpd^J = \grpd^{I\times J}$.  The $\tensor$ should be considered the
  analogue of a completed tensor product.  Again $\oplus$ is both the
  categorical sum and the categorical product, and $\tensor$ distributes over
  $\oplus$.  Again the structures allow infinite versions, but this times the
  infinite direct sum is a categorical infinite product but is not an infinite
  categorical sum.

  (To see the difference between the role of infinite $\oplus_\alpha$ in $\ind\lin$ and
  in $\pro\lin$: in $\pro\lin$ there is a diagonal map $\grpd^I \to \oplus_\alpha \grpd^I
  = \grpd^{\sum_\alpha I}$ given by sending $X \to I$ to $\sum_\alpha X \to
  \sum_\alpha I$.  This makes sense for a finite map $X\to I$, since the
  infinite sum of copies of that map is still finite, but it does not
  make sense in $\ind\lin$ since that $\sum_\alpha X$ is not finite.  On the other
  hand, $\ind\lin$ sports a codiagonal
  $\oplus_\alpha \grpd_{/I} = \grpd_{/\sum\alpha I} \to \grpd_{/I}$ given by
  sending $A \to \sum_\alpha I$ to the composite $A \to \sum_\alpha I \to I$
  (where the second map is the codiagonal for the infinite sum of $\infty$-groupoids).
  Since $X$ is finite it remains finite so there is no problem.  In contrast
  this construction does not work in $\pro\lin$: even if $A \to \sum_\alpha I$ is
  finite, $A \to \sum_\alpha I \to I$ will generally not be so.)
\end{blanko}

\begin{blanko}{Summability.}
  In algebraic combinatorics, the profinite stuff is often expressed in terms of
  notions of summability.  We briefly digress to see the constructions from this
  angle.

  
  For $B$ a locally finite $\infty$-groupoid, a $B$-indexed family $g : E \to B \times I$
  (as in \ref{scalar&hosum}) is called {\em summable} if the composite $E \to B
  \times I \to I$ is a finite map.  The condition implies that in fact the
  members of the family were already finite maps.  Indeed, with reference to the
  diagram
    $$\xymatrix{
    E_{b,i} \drpullback \ar[r] \ar[d] & E_i \drpullback \ar[r] \ar[d] 
    & E \ar[d] \\
    \{b\}\times \{i\} \ar[r] & B \times \{i\} \ar[r] \ar[d] 
    \drpullback & B\times I \ar[d] \\
    & \{i\} \ar[r] & I
    }$$
    summability 
    implies (by Lemma~\ref{lem:finitemaps}.\ref{item:fibrefinite}) that 
    each $E_i$ is finite,
    and therefore (by Lemma~\ref{lem:1->B} since $B$ is locally finite) we 
    also conclude that each $E_{b,i}$ is finite, which is precisely to
    say that the members $g_b :E_b \to I$ are finite  maps
    (cf.~\ref{lem:finitemaps}.\ref{item:fibrefinite} again).
    It thus makes sense to interpret the family as a family of 
    objects in $\Grpd_{/I}^{rel.fin.}$.  And finally we can say that 
    a summable family is a family $g:E\to B \times I$
    of finite maps $g_b : E_b \to I$, whose 
    homotopy sum
    $p\lowershriek(g)$ is again a finite map.
If $I$ is finite, then the only summable families are the finite 
families (i.e.~$E \to B \times I$ with $E$ finite).
%
%
%
%
A family $g : E \to B\times I$, given equivalently as a functor
$$
F: B \to \grpd^I,
$$
is summable if and only if it is a cofiltered limit of diagrams $F_\alpha: B \to
\grpd^{\alpha}$ (with $\alpha$ finite).

It is easy to check that
    a map $q:M \to T$ (between locally finite $\infty$-groupoids) is 
    finite if and only if for every finite map 
    $f:X \to M$ we have that also $q\lowershriek f$ is 
    finite.
%
Hence we find
\end{blanko}
\begin{lemma}
A span $I \stackrel p \leftarrow M \stackrel q \to J$
preserves summable families if and only if $q$ is finite.
\end{lemma}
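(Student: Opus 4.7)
The plan is to unpack what the linear functor $L = q_{!}p^{\upperstar}$ does to a family, identify the total space of the result, and then handle each implication separately.

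First I will compute the action of $L$ on a family $g : E \to B \times I$. Viewed as an object of $\Grpd_{/B \times I} = \Grpd_{/B} \tensor \Grpd_{/I}$, the functor $\id_B \tensor L$ is represented by the span $B \times I \xleftarrow{B \times p} B \times M \xrightarrow{B \times q} B \times J$. A direct pullback calculation gives that the total space of $L(g)$ is the $\infty$-groupoid $E' := E \times_{I} M$, where $E \to I$ is the composite $E \to B \times I \to I$ and $M \to I$ is $p$; the structure map $E' \to B \times J$ reads off the $B$-coordinate from $E$ and applies $q$ to $M$. In particular, $L(g)$ is summable iff the composite $E' \to M \xrightarrow{q} J$ is a finite map.

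For the $(\Leftarrow)$ direction, assume $q$ is finite and $g$ is summable, i.e.\ $E \to I$ is finite. By Lemma~\ref{lem:finitemaps}(1) the pullback $E \times_I M \to M$ is finite, so it is enough to check that each fibre of $E' \to J$ is finite. But the fibre of $E' \to J$ over $j$ is $E \times_I M_j$, and since $q$ is finite we have $M_j$ finite; pulling back the finite map $E \to I$ along $M_j \to I$ gives a finite map with finite base, whose total space is therefore finite. Hence $L(g)$ is summable.

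For the $(\Rightarrow)$ direction, I need only one summable family whose image under $L$ forces $q$ to be finite. The canonical choice is the diagonal family: take $B = I$, $E = I$, and $g = \Delta_I : I \to I \times I$. This is summable because the second projection $I \to I$ is the identity, which is certainly a finite map. Plugging into the formula above, $E' = I \times_I M \simeq M$ with structure map $(p,q) : M \to I \times J$. Summability of $L(g)$ is then exactly the assertion that $M \to J$ is finite, i.e.\ that $q$ is finite.

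The argument is essentially bookkeeping; no serious obstacle appears. The only point that warrants a moment of care is the identification $E' \simeq E \times_I M$, which is where the Beck--Chevalley condition for the parameterized span is implicitly used, and the observation that the test family $\Delta_I$ lives in the correct world (it requires $I$ locally finite for the notion of summable family to apply, which is automatic in the setting of $\ind\lin$ or $\pro\lin$).
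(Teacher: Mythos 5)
Your proof is correct and matches the paper's (essentially omitted) argument: the paper reduces the lemma to the remark that $q$ is finite if and only if $q\lowershriek$ sends finite maps to finite maps, and your fibrewise computation of $E\times_I M\to J$ is exactly the ``if'' direction of that remark applied to the finite map $E\times_I M\to M$, while your diagonal test family realises the ``only if'' direction via the identity of $M$. The only detail worth noting is that you have supplied the pullback identification of $L(g)$ and the verification that the diagonal family is summable, both of which the paper leaves implicit.
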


\subsection{Duality}

\begin{blanko}{The perfect pairing.}
We have a perfect pairing
\begin{eqnarray*}
    \grpd_{/S} \times \grpd^S  &\longrightarrow& \grpd \\
    (p , f) &\longmapsto& f(p)
\end{eqnarray*}
given by evaluation.
In terms of spans, write the map-with-finite-total-space $p:X\to S$
as a finite span $1 \leftarrow X \stackrel p \to S$, and write the presheaf 
$f:S\to \grpd$ as the finite span $S \stackrel f \leftarrow F \to 1$, where $F$ 
is
the total space of the Grothendieck construction of $f$.
(In other words, the functor $F$ on $S$ corresponds to a linear functor
on $\grpd_{/S}$; write the representing span.)
Then the evaluation is given by composing these two spans, and hence
amounts just to taking the pullback of $p$ and $f$.

The statements mean: for each $p:X \to S$ in $\grpd_{/S}$, the map
\begin{eqnarray*}
  \grpd^S & \longrightarrow & \grpd  \\
  f & \longmapsto & f(p)
\end{eqnarray*}
is prolinear.
The resulting functor
\begin{eqnarray*}
  \grpd_{/S} & \longrightarrow & \un{\pro\Lin}(\grpd^S,\grpd)  \\
  p & \longmapsto & \qquad (f \mapsto f(p))
\end{eqnarray*}
is an equivalence of $\infty$-categories (cf.~Proposition~\ref{linearduals}).

Conversely, for each $f: S \to \grpd$ in $\grpd^S$, the map
\begin{eqnarray*}
  \grpd_{/S} & \longrightarrow & \grpd  \\
  p & \longmapsto & f(p)
\end{eqnarray*}
is linear.  The resulting functor
\begin{eqnarray*}
  \grpd^S & \longrightarrow & \un{\ind\Lin}(\grpd_{/S},\grpd)  \\
  f & \longmapsto &  \qquad (p \mapsto f(p))
\end{eqnarray*}
is an equivalence of $\infty$-categories (cf.~Proposition~\ref{linearduals}).
\end{blanko}

\begin{blanko}{Bases.}
  Both $\grpd_{/S}$ and $\grpd^S$ feature a canonical basis, actually an
essentially unique basis.  The basis elements in $\grpd_{/S}$ are the names
$\name s : 1 \to S$: every object $p:X \to S$ in $\grpd_{/S}$ can be written as a finite 
homotopy linear combination
$$
p = \int^{s\in S} \norm{X_s} \ \name s .
$$
Similarly, in $\grpd^S$, the representables $h^t:= \Map(t, - )$ form a basis:
every presheaf on $S$ is a colimit, and in fact a homotopy sum, of such
representables.
These bases are dual to each other, except for a normalisation: if $p=\name s$
and $f=h^t = \Map(t, - )$, then they pair to
$$
\Map(t,s) \simeq
\begin{cases}
    \Omega(S,s) & \text{ if } t\simeq s \\
    0 & \text{ else. }
\end{cases}
$$
The fact that we obtain the loop space $\Omega(S,s)$ instead of $1$ is actually a feature:
we shall see below that on taking cardinality we obtain
the canonical pairing
\begin{eqnarray*}
  \Q_S \times \Q^S & \longrightarrow & \Q \\
  (\delta_i,\delta^j) & \longmapsto & \begin{cases} 1 & \text{if } i=j \\
  0 & \text{else.}\end{cases}
\end{eqnarray*}
\end{blanko}

\subsection{Cardinality as a functor}

The goal is that each slice $\grpd_{/S}$ and each finite-presheaf category
$\grpd^S$ should have a notion of homotopy cardinality, with values in the
vector space $\Q_{\pi_0 S}$ and the profinite-dimensional vector space
$\Q^{\pi_0 S}$, respectively.  The idea of Baez, Hoffnung and
Walker~\cite{Baez-Hoffnung-Walker:0908.4305} is to achieve this by a global
assignment, which specialises to every slice $\infty$-category to define a
relative cardinality, a cardinality of families, by the observation that
families are special cases of spans, just as vectors can be identified with
linear maps from the ground field.  In our setting this amounts to functors
$\ind\lin \to \Vect$ and $\pro\lin\to \pro\vect$.

\begin{blanko}{Definition of cardinality.}\label{metacard}
  We define {\em meta cardinality} by the assignment
  $$
  \|\quad\|:\ind\lin\to\Vect
  $$
  defined on objects by
  $$
  \|\grpd_{/T}\|:=\Q_{\pi_0 T},
  $$
  and on morphisms by taking a finite-type span 
  $S \stackrel p \leftarrow M\stackrel q\to T$
  to the linear map
  \begin{eqnarray*}
    \Q_{\pi_0 S} & \longrightarrow & \Q_{\pi_0 T}  \\
    \delta_s & \longmapsto & \int^t \norm{M_{s,t}} \delta_t = \sum_t 
    \norm{T_{[t]}} \norm{M_{s,t}} \delta_t .
  \end{eqnarray*}
  That is, to the span $M$ we assign the matrix $A_{t,s} :=
  \norm{T_{[t]}} \norm{M_{s,t}}$,
  which is column finite since $M$ is of finite type.
  
  Dually we define a meta cardinality
  $$
  \|\quad\|:\pro\lin\to\pro\vect
  $$
  defined on objects by
  $$
  \|\grpd^S\|:=\Q^{\pi_0S},
  $$
  and on morphisms by the {\em same} assignment of a matrix to a finite span as 
  before.
\end{blanko}

\begin{prop}\label{cardfunctor}
  The
  meta cardinality assignments just defined
$$
\|\quad\|:\ind\lin\to\Vect,\qquad\qquad \|\quad\|:\pro\lin\to\pro\vect
$$
  are functorial.
\end{prop}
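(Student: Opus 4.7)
The plan is to verify the two functoriality axioms — preservation of identities and of composition — for both meta cardinality maps simultaneously, since they are defined by the same matrix rule $A_{t,s}=\norm{T_{[t]}}\,\norm{M_{s,t}}$. The only differences between the ind and pro settings lie in how the matrix is interpreted as a linear map (cf.~\ref{vect-rappels}) and in which finiteness condition guarantees that the infinite sums that appear collapse to finite ones. Invariance of the matrix under equivalence of spans is automatic, since the entries depend only on cardinalities of fibres.

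For identities, the span representing the identity on $\grpd_{/S}$ (or dually on $\grpd^S$) is $S\stackrel{=}{\leftarrow} S \stackrel{=}{\to} S$, and its associated matrix entry is $A_{t,s}=\norm{S_{[t]}}\,\norm{S_{s,t}}$, where $S_{s,t}$ is the fibre of the diagonal $S\to S\times S$ over $(s,t)$. This fibre is empty unless $s$ and $t$ lie in the same component, in which case it is canonically the loop space $\Omega(S,s)$, by the argument of Lemma~\ref{lem:Omega(y)}. Invoking Lemma~\ref{loop-finite} to obtain $\norm{\Omega(S,s)}\cdot\norm{S_{[s]}}=1$, one reads off $A_{t,s}=\delta_{s,t}$, the Kronecker delta, so the identity span is sent to the identity matrix.

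For composition, consider composable spans $S\stackrel{p}{\leftarrow} M \stackrel{q}{\to} T$ and $T\stackrel{p'}{\leftarrow} N \stackrel{q'}{\to} U$ whose composite is, by definition, $S\leftarrow M\times_T N\to U$. I will show that the matrix $C$ of the composite equals the product $BA$ of the matrices of the two factors. The key computation is the fibre identity
$$(M\times_T N)_{s,u} \;\simeq\; \int^{t\in T} M_{s,t}\times N_{t,u},$$
obtained by pasting two pullback squares and interpreting a family as the homotopy sum of its fibres. Taking cardinality and applying Lemma~\ref{finite-products} yields
$$\norm{(M\times_T N)_{s,u}}\;=\;\sum_{t\in\pi_0 T}\norm{T_{[t]}}\,\norm{M_{s,t}}\,\norm{N_{t,u}},$$
and multiplying by $\norm{U_{[u]}}$ produces exactly $(BA)_{u,s}=\sum_t B_{u,t}A_{t,s}$.

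The main technical point — and the only real obstacle — is ensuring that all sums encountered are in fact finite, so that matrix multiplication is well defined and the composite span again lies in the correct mapping space. In the ind case, finiteness of the left legs of both factor spans makes each fibre $M_{s,-}$ and $N_{t,-}$ supported on finitely many components, giving the column-finiteness needed to multiply the matrices, and the left leg $M\times_T N\to M\to S$ of the composite is again finite by stability of finite maps under pullback and composition (Lemma~\ref{lem:finitemaps}). The pro case is entirely dual: finiteness of the right legs of both factor spans gives the analogous finiteness condition on the matrices, and continuity of the composite prolinear map is then automatic by Proposition~\ref{prop:cont}.
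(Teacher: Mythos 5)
Your proof is correct and follows essentially the same route as the paper's: the identity span is sent to the identity matrix via $\norm{\Omega(S,s)}\cdot\norm{S_{[s]}}=1$ (Lemma~\ref{loop-finite}), composition reduces to the fibre computation $\norm{(M\times_T N)_{s,u}}=\int^{t\in T}\norm{M_{s,t}}\norm{N_{t,u}}$ matching matrix multiplication, and column-finiteness of the matrices comes from finiteness of the relevant leg of the span. The only cosmetic difference is your citation of Lemma~\ref{lem:Omega(y)} for the diagonal-fibre identification, where the paper simply invokes the general pullback fact directly.
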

\begin{proof}
  First observe that the functor is well defined on morphisms.  Given a
  finite-type span $S \stackrel p \leftarrow M\stackrel q\to T$ defining linear
  functors $L: \grpd_{/S} \to \grpd_{/T}$ (as well as $L^\vee : \grpd^T \to
  \grpd^S$), the linear maps
  $$
  \|L\|:\Q_{\pi_0S}\longrightarrow\Q_{\pi_0T}
,\qquad\qquad 
\|L^\vee \|:\Q^{\pi_0T}\longrightarrow\Q^{\pi_0S}
$$
are defined with respect to the given (pro-)bases by the matrix $\|L\|_{t,s}=
                    \norm{M_{s,t}}      \norm{T_{[t]}} 
$. That is: 
$$
\|L\|\,\biggl(\sum_{s\in\pi_0S} c_s\,\delta_s\biggr)
\;=\;
\sum_{s,t} c_s\,
                     \norm{M_{s,t}}      \norm{T_{[t]}} 
\,\delta_t 
\;=\;
\sum_{s\in\pi_0S} c_s\!\int^{t\in T} \!\norm{M_{s,t}} \,\delta_t
\,,
$$
and
$$
\|L^\vee\| 
\,\biggl(\sum_{t\in\pi_0T} c_t\,\delta^t\biggr)
\;=\;
\sum_{s,t} c_t\,
                     \norm{M_{s,t}}      \norm{T_{[t]}} 
\,\delta_s 
$$

In particular, we note
$$
\|L^\vee\| 
\,(\delta^t)
\;=\;
\sum_s \norm{M_{s,t}}\norm{T_{[t]}} 
\,\delta_s 
  \,.
  $$

The matrix $\norm{M_{s,t}}\norm{T_{[t]}}$
  has finite entries and is column-finite: for each $s\in\pi_0S$ the fibre $M_s$
  is finite so the map $M_{s}\to T$ is finite by Lemma \ref{locfinbase}, and the
  fibres $M_{s,t}$ are non-empty for only finitely many $t\in\pi_0T$.  It is
  clear that equivalent spans define the same matrix, and the identity span
  $L=(S\leftarrow S\to S)$ gives the identity matrix: $\|L\|_{s_1,s_2}=0$ if
  $s_1,s_2$ are in different components, and
  $\|L\|_{s,s}=\norm{\Omega(S,s)}\norm{S_{[s]}}=1$ by Lemma~\ref{loop-finite}.
  It remains to show that composition of
  spans corresponds to matrix product: for $L=(S\leftarrow M\rightarrow T)$,
  $L'=(T\leftarrow N\rightarrow U)$ we have
  $$
  \norm{(M\times_TN)_{s,u}}=\int^{t\in T}\norm{M_{s,t}\times N_{t,u}}
=
\sum_{t\in\pi_0T}\norm{M_{s,t}} \norm{T_{[t]}}\norm{N_{t,u}}
$$
and so $\displaystyle
\|L'L\|_{u,s}=\sum_{t\in\pi_0T}
\norm{M_{s,t}}\norm{T_{[t]}}\norm{N_{t,u}}\norm{U_{[u]}}
=\sum_{t\in\pi_0T}\|L'\|_{u,t}\|L\|_{t,s}
$.
\end{proof}


\begin{blanko}{Cardinality of families.}
As a consequence of this proposition 
we obtain for each locally finite $\infty$-groupoid $T$ a notion of cardinality of 
$T$-indexed families.
Let  $T$ be a locally finite $\infty$-groupoid and define the functor  
  $$
  \norm{ \ \ } : \grpd_{/T}  \longrightarrow \normnorm{\grpd_{/T}}= 
  \Q_{\pi_0 T}, \qquad \norm{x}\;:=\;\|L_x\|\,\left(\delta_1\right).
  $$
Here $x:X\to T$ is an object of $\grpd_{/T}$ and  $\|L_x\|:\Q_{\pi_01}\to\Q_{\pi_0T}$ is induced by the linear functor $L_x$ defined by the finite  span $1\leftarrow X\xrightarrow{\,x\,}T$.
By the definition of $\|L\|$ in Proposition \ref{cardfunctor}, we can write
$$
\norm x
\;\;=\;\;
\sum_{t\in \pi_0T}
|X_t|\;|T_{[t]}|\;\delta_t\;\;=\;\;\int^{t\in T}\!|X_t|\;\delta_t
$$
\end{blanko}

\begin{lemma} Let $T$ be a locally finite $\infty$-groupoid.
\begin{enumerate}
\item
  If $T$ is connected, with $t\in T$, and $x:X \to T$ in $\grpd_{/T}$, then 
  $$
  \norm x \;=\; \norm X \, \delta_t \;\; \in\;\;\Q_{\pi_0 T}.
  $$
\item  The cardinality of $\name t:1\to T$ in $\grpd_{/T}$
  is the basis vector $\delta_t$.
\end{enumerate}
\end{lemma}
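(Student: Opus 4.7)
The plan is to derive both statements directly from the explicit formula for cardinality of a family recorded just before the lemma, namely
$$\norm x \;=\; \sum_{t\in \pi_0 T}|X_t|\,|T_{[t]}|\,\delta_t \;=\; \int^{t\in T}|X_t|\,\delta_t,$$
combined with two basic results already available: Lemma~\ref{FEB} on the multiplicativity of cardinality in fibre sequences, and Lemma~\ref{loop-finite} which gives $\norm{\Omega(T,t)}\cdot\norm{T_{[t]}}=1$.

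For part (1), the assumption that $T$ is connected means $\pi_0 T$ has a single element $[t]$, so the sum collapses to
$$\norm x \;=\; \norm{X_t}\cdot\norm{T_{[t]}}\,\delta_t \;=\; \norm{X_t}\cdot\norm{T}\,\delta_t.$$
I would then apply Lemma~\ref{FEB} to the fibre sequence $X_t\to X\to T$ (which makes sense because $T$ is connected), obtaining $\norm X=\norm{X_t}\cdot\norm{T}$. Substituting yields $\norm x=\norm X\,\delta_t$, as required. The only subtle point is ensuring finiteness: since $x:X\to T$ lies in $\grpd_{/T}$, $X$ is finite by hypothesis, and $T$ is locally finite (so $T_{[t]}=T$ is finite), whence $X_t$ is also finite by Lemma~\ref{FEB}, and the formula is well-defined.

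For part (2), the cleanest approach is to reduce to (1). The name $\name t:1\to T$ factors through the inclusion of the component $T_{[t]}\hookrightarrow T$, giving $\name t:1\to T_{[t]}$. Since $T_{[t]}$ is connected by construction, part (1) applies with $X=1$, yielding $\norm{\name t}=\norm 1\,\delta_t=\delta_t$. Alternatively, one could compute directly: the fibre of $\name t:1\to T$ over $s\in\pi_0 T$ is empty unless $s=[t]$, in which case it equals $\Omega(T,t)$; the formula then gives $\norm{\name t}=\norm{\Omega(T,t)}\cdot\norm{T_{[t]}}\,\delta_t$, which equals $\delta_t$ by Lemma~\ref{loop-finite}.

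There is no serious obstacle here; the lemma is essentially bookkeeping on top of the formula for $\norm x$ established immediately above. The main thing to be careful about is the interplay between the formula (which involves the weighting by $\norm{T_{[t]}}$) and the loop-space identity of Lemma~\ref{loop-finite}, which is precisely what makes the basis $\{\delta_t\}$ dual to the names $\name t$ without additional normalisation.
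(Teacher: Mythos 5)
Your proof is correct and follows essentially the same route as the paper: part (1) collapses the defining sum using connectedness and then invokes Lemma~\ref{FEB} on the fibre sequence $X_t\to X\to T$, and part (2) reduces to the connected component and applies (1) with $X=1$, exactly as the paper does. The alternative direct computation of (2) via Lemma~\ref{loop-finite} is a valid bonus but not needed.
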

\begin{proof}
(1)  By definition, $\norm x\;=\;\norm{X_t}\,\norm T\, \delta_t$, and by 
Lemma~\ref{FEB}, this is $\norm X \,\delta_t$ 
\par\noindent
(2)  The fibre of $\name t$ over $t'$ is empty except when $t,t'$ are in the same component, so we reduce to the case of connected $T$ and apply (1).
\end{proof}

%
%
%

\begin{blanko}{Cardinality of presheaves.}
  We also obtain a notion of cardinality of presheaves: for each $S$, define
    $$
  \norm{ \ \ } : \grpd^S  \longrightarrow \normnorm{\grpd^S}= \Q^{\pi_0 S},
  \qquad \norm{f}\;:=\;\|L_f\|.
  $$
Here $f: S \to \grpd$ is a presheaf, and
$L_f : \grpd_{/S} \to \grpd$ its extension by inearity;
$L_f$ is given by the span $S \leftarrow 
F \to 1$, where $F\to S$ is the Grothendieck construction of $f$.
The meta cardinality of this span is then a linear map $\Q_{\pi_0 S} 
\to \Q_1$, or equivalently a pro-linear map $\Q^1 \to \Q^{\pi_0 S}$ --- in 
either way interpreted as an element in $\Q^{\pi_0 S}$.
In the first viewpoint, the linear map is
\begin{eqnarray*}
  \Q_{\pi_0 S} & \longrightarrow & \Q_1  \\
  \delta_s & \longmapsto & \int^1 \norm{F_s} \delta_1 = \norm{F_s} \delta_1
\end{eqnarray*}
which is precisely the function
\begin{eqnarray*}
  \pi_0 S & \longrightarrow & \Q  \\
  s & \longmapsto & \norm{f(s)} .
\end{eqnarray*}
In the second viewpoint, it is the prolinear map
\begin{eqnarray*}
  \Q^1 & \longrightarrow & \Q^{\pi_0 S}  \\
  \delta_1 & \longmapsto & \sum_s \norm{F_s} \delta^s
\end{eqnarray*}
which of course also is the function $s \mapsto \norm{f(s)}$.

In conclusion:
\end{blanko}

\begin{prop}\label{prop:card-prsh-ptwise}
  The cardinality of a presheaf $f: S \to \grpd$ is computed pointwise:
  $\norm f$ is the function
  \begin{eqnarray*}
    \pi_0 S & \longrightarrow & \Q  \\
    s & \longmapsto & \norm{f(s)} .
  \end{eqnarray*}
  In other words, it is obtained by postcomposing with the basic
  homotopy cardinality.
\end{prop}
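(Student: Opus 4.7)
The proposition is essentially a matter of unpacking definitions, so the plan is short and the main issue is bookkeeping between the $\grpd_{/S}$ and $\grpd^S$ viewpoints rather than any conceptual obstacle.

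First I would recall that by definition $\norm f := \norm{L_f}$, where $L_f : \grpd_{/S} \to \grpd$ is the linear functor corresponding to $f$ under the fundamental equivalence $\grpd^S \simeq \grpd_{/S}^{\mathrm{rel.fin.}}$ (Lemma~\ref{fundamentalfinite}). Concretely, $L_f$ is represented by the span $S \stackrel{\pi}\leftarrow F \to 1$, where $\pi : F \to S$ is the right fibration arising from the Grothendieck construction of $f$; in particular the fibre $F_s$ is canonically identified with $f(s)$.

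Next I would apply the formula for meta cardinality from Proposition~\ref{cardfunctor} to this span. Viewing $\norm{L_f}$ as a linear map $\Q_{\pi_0 S} \to \Q_{\pi_0 1} = \Q$, the matrix prescription gives
\[
\norm{L_f}(\delta_s) \;=\; \norm{F_s}\,\norm{1_{[\ast]}}\,\delta_1 \;=\; \norm{f(s)}\,\delta_1,
\]
using that the singleton component has cardinality $1$ and that $F_s \simeq f(s)$. Equivalently, read through the ind-pro duality, the same matrix represents a prolinear map $\Q^1 \to \Q^{\pi_0 S}$ sending $\delta_1 \mapsto \sum_{s\in \pi_0 S} \norm{f(s)}\,\delta^s$, and hence an element of $\Q^{\pi_0 S}$, namely the function $s \mapsto \norm{f(s)}$.

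Finally, I would note that both interpretations describe the same element of $\Q^{\pi_0 S}$: this is precisely the content of the perfect pairing and of Proposition~\ref{linearduals}, which identifies $\un{\ind\lin}(\grpd_{/S},\grpd)$ with $\grpd^S$, and compatibly their cardinalities. The only thing to check with any care is the identification of the fibres $F_s$ with the values $f(s)$, but this is exactly the defining property of the Grothendieck construction and has been used throughout via \ref{fund-eq}. There is no serious obstacle; the statement is really a sanity check that the global cardinality functor of \ref{metacard}, when restricted along the inclusion of a presheaf as a linear functional, agrees pointwise with the elementary homotopy cardinality of \ref{sec:finite}.
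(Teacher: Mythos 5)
Your argument is correct and is essentially identical to the paper's own: both unpack the definition $\norm f := \normnorm{L_f}$ via the span $S \leftarrow F \to 1$ from the Grothendieck construction, apply the meta-cardinality matrix formula (with the weight $\norm{1_{[\ast]}}=1$) to get $\norm{F_s}=\norm{f(s)}$, and note that the linear-map and prolinear-map readings give the same function on $\pi_0 S$. Nothing to add.
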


\begin{eks}
  The cardinality of the terminal presheaf is the constant function $1$.
  In incidence algebras, this says that the cardinality of the zeta 
  functor~\ref{zeta} is the zeta function.
\end{eks}

\begin{eks}
  The cardinality of the representable functor $h^t : S \to \grpd$ is
  \begin{eqnarray*}
    \pi_0 S & \longrightarrow & \Q  \\
    s & \longmapsto & \norm{\Map(t,s)} = \begin{cases}
      \norm{\Omega(S,s)} & \text{ if } t\simeq s \\
      0 & \text{ else.}
    \end{cases}
  \end{eqnarray*}
\end{eks}

\begin{blanko}{Remark.}
    Note that under the Grothendieck-construction duality,
$\grpd^S \simeq \Grpd_{/S}^{relfin}$, the representable
presheaf $h^s$ corresponds to $\name s$, the name of $s$,
which happens to belong also the subcategory $\grpd_{/S} \subset
\Grpd_{/S}^{relfin}$, but that the cardinality of $h^{s}\in \grpd^S$ 
is {\em not} the same as the cardinality of $\name s \in \grpd_{/S}$.
This may seem confusing at first, but it is forced upon us
by the choice of normalisation of the functor
$$
\normnorm{ \ \ } : \ind\lin \to \Vect
$$
which in turn looks very natural since the extra factor $\norm{ 
T_{[t]}}$ comes from an integral.  A further feature of this
apparent discrepancy is the following.
\end{blanko}

\begin{prop}
  Cardinality of the canonical perfect pairing at the $\infty$-groupoid level yields
  precisely the perfect pairing on the vector-space level.
\end{prop}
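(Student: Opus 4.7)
The plan is to reduce the proposition to the functoriality of meta cardinality (Proposition~\ref{cardfunctor}), viewing the pairing as a span composition. Specifically, I would observe that for $p \in \grpd_{/S}$ presented as the span $1 \leftarrow X \stackrel{p}\to S$ and $f \in \grpd^S$ presented (via the Grothendieck construction) as the span $S \stackrel{f}\leftarrow F \to 1$, the evaluation $f(p) \in \grpd$ is by construction the composite of these two spans, namely $1 \leftarrow X \times_S F \to 1$. Equivalently, in the functor picture, $f(p)$ is the image of $1 \in \grpd$ under the composite linear functor
\[
\grpd \xrightarrow{\,L_p\,} \grpd_{/S} \xrightarrow{\,L_f\,} \grpd,
\]
where $L_p$ picks out the object $p$ and $L_f$ is the linear extension of the presheaf $f$.

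Next I would apply Proposition~\ref{cardfunctor} twice: the meta cardinality $\normnorm{\ \ }:\ind\lin\to\Vect$ is a functor, so $\normnorm{L_f \circ L_p} = \normnorm{L_f} \circ \normnorm{L_p}$ as linear maps $\Q \to \Q$. By the definition of cardinality of a family, $\normnorm{L_p}(\delta_1) = \norm{p} \in \Q_{\pi_0 S}$, and by the definition of cardinality of a presheaf together with Proposition~\ref{prop:card-prsh-ptwise}, $\normnorm{L_f}$ is precisely the linear functional $\Q_{\pi_0 S} \to \Q$ corresponding to $\norm{f} \in \Q^{\pi_0 S}$ under the canonical pairing. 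Combining these, the cardinality of $f(p)$ equals $\langle \norm{p}, \norm{f}\rangle$, which is exactly the statement.

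To make the identification in step~2 airtight, I would verify the key normalization on basis-like elements: for $p = \name s$ and $f = h^t$, the groupoid-level pairing yields $\Map(t,s)$, which has cardinality $\norm{\Omega(S,s)}$ if $s \simeq t$ and $0$ otherwise. On the other hand, $\norm{\name s} = \delta_s$ and, by Proposition~\ref{prop:card-prsh-ptwise}, $\norm{h^t}$ is the function $s \mapsto \norm{\Map(t,s)}$. Evaluating this function at $s$ reproduces the same value $\norm{\Omega(S,s)}\,[s\simeq t]$, matching the groupoid-level computation. This is the very reason the matrix assigned in Proposition~\ref{cardfunctor} carries the factor $\norm{T_{[t]}}$: it is exactly this factor that makes $\normnorm{L_f}$ evaluate correctly on $\delta_s$, and it is also what ensures $\norm{\name s}\cdot\norm{h^s}$ yields $\norm{\Omega(S,s)}\cdot 1$ (consistent with the pro-basis $\delta^t = \norm{h^t}/\norm{\Omega(S,t)}$ being dual to $\delta_s$).

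The main obstacle is the bookkeeping of these normalization factors, which are easy to get wrong by a factor of $\norm{\Omega(S,s)}$. I expect the proof is most robustly carried out not by checking basis elements individually, but by using the functoriality of $\normnorm{\ \ }$ directly, so that all normalizations are handled uniformly in one stroke; the basis-level check then serves only as a sanity verification.
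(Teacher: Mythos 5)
Your argument is correct, but it takes a different route from the paper's. The paper proves the proposition by a direct basis computation: it evaluates the groupoid-level pairing on the pair $(\name s, h^t)$, obtaining $\Omega(S,s)$ when $t\simeq s$ and $0$ otherwise, then takes cardinality of inputs and output ($\delta_s$, $\norm{\Omega(S,t)}\,\delta^t$, and $\norm{\Omega(S,t)}$ respectively) and observes that the normalisation factor $\norm{\Omega(S,t)}$ cancels to give $(\delta_s,\delta^t)\mapsto [t\simeq s]$, the vector-space pairing on (pro-)bases. You instead argue for arbitrary $(p,f)$ in one stroke: the evaluation $f(p)$ is the composite of the finite-type spans $1\leftarrow X\to S$ and $S\leftarrow F\to 1$, so functoriality of the meta cardinality (Proposition~\ref{cardfunctor}) gives $\norm{f(p)}=\normnorm{L_f}\bigl(\normnorm{L_p}(\delta_1)\bigr)=\normnorm{L_f}(\norm p)$, and since the paper \emph{defines} $\norm f$ to be $\normnorm{L_f}$ viewed in $\Q^{\pi_0 S}$, this is exactly the evaluation pairing $\langle\norm p,\norm f\rangle$ of \ref{vect-rappels}. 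Your approach buys uniformity --- all normalisation factors are absorbed into the already-proved functoriality, and no appeal to bilinear extension from a basis is needed --- while the paper's basis check makes explicit the role of the factor $\norm{\Omega(S,t)}$ relating $\norm{h^t}$ to $\delta^t$, which is the point being advertised in the surrounding discussion of dual bases. Your closing basis-level verification reproduces the paper's proof as a special case, so nothing is missing.
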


\begin{proof}
  We take cardinality of the perfect pairing
\begin{eqnarray*}
    \grpd_{/S} \times \grpd^S  &\longrightarrow& \grpd \\
    (p , f) &\longmapsto& f(p) \\
    (\name s, h^t) &\longmapsto& \begin{cases}
    \Omega(S,s) & \text{ if } t\simeq s \\
    0 & \text{ else }
\end{cases}
\end{eqnarray*}
Since the cardinality of $\name s$ is $\delta_s$, while the 
cardinality of $h^t$ is $\norm{\Omega(S,t)}\delta^t$, 
the cardinality of the pairing becomes
$$
(\delta_s, \norm{ \Omega(S,t)}\delta^t) \longmapsto 
\begin{cases}
    \norm{\Omega(S,t)} & \text{ if } t \simeq s \\
    0 & \text{ else },
\end{cases}
$$
or equivalently:
$$
(\delta_s, \delta^t) \longmapsto 
\begin{cases}
    1 & \text{ if } t \simeq s \\
    0 & \text{ else },
\end{cases}
$$
as required.
\end{proof}

\begin{blanko}{Remarks.}
  The definition of meta cardinality involves a convention, namely to include
  the factor $\norm{T_{[t]}}$.  In fact, as observed by
  Baez--Hoffnung--Walker~\cite{Baez-Hoffnung-Walker:0908.4305}, other
  conventions are possible: for any exponents $\alpha_1$ and $\alpha_2$
  with $\alpha_1+\alpha_2=1$, it is possible to use the factor
  $$
  \norm{S_{[s]}}^{\alpha_1} \norm{T_{[t]}}^{\alpha_2} .
  $$
  They use $0+1$ in some cases and $1+0$ in 
  other cases, according to what seems more practical.  We think
  that these choices can be explained by which side of duality the
  constructions take place.


Our convention with the $\norm{T_{[t]}}$ normalisation yields the
`correct' numbers in all the applications of the theory that motivated
us, as exemplified below.
\end{blanko}

\begin{blanko}{Incidence coalgebras and incidence algebras of decomposition spaces.}
For $X$ a decomposition space with $X_0 \stackrel{s_0}\longrightarrow X_1
\stackrel{d_1}\longleftarrow X_2$ both finite maps, the dual space of $\grpd_{/X_1}$
is $\grpd^{X_1}$, underlying the incidence {\em algebra}.  Its multiplication
is given by a convolution formula.
In here there is a canonical element, the
constant linear functor given by the span $X_1 \leftarrow X_1 \to 1$
(corresponding to the terminal presheaf), which is called the {\em zeta functor}
\cite{GKT:DSIAMI-2}.  By \ref{prop:card-prsh-ptwise}, the cardinality of the
terminal presheaf is the constant function $1$.  Hence the cardinality of the
zeta  functor is the classical zeta function in incidence algebras.

The zeta function is the `sum of everything', with no symmetry factors.  A `sum
of everything', but {\em with} symmetry factors, appeared in our work
\cite{GalvezCarrillo-Kock-Tonks:1207.6404} on the \fdb and Connes--Kreimer
bialgebras, namely in the form of combinatorial Green functions (see also
\cite{Kock:1512.03027}).

The coalgebra in question is then the completion of the finite incidence algebra
$\Grpd_{/X_1}^{rel.fin.}$, where $X_1$ is the groupoid of forests (or
more precisely, $P$-forests for $P$ a polynomial functor).
Of course we know that $\Grpd_{/X_1}^{rel.fin.}$ is canonically
equivalent to $\grpd^{X_1}$, but it is important here to keep track of
which side of duailty we are on.  The Green function
lives on the coalgebra side, and more precisely in the completion.
(The fact that the comultiplication extends to the completion
is due to the fact that not only $d_1: X_2 \to X_1$ is finite,
but that also $X_2 \to X_1\times X_1$ is finite (a feature common
to all Segal $1$-groupoids).)  

Our Green function, shown to satisfy the
Fa\`a de Bruno formula in $\Grpd_{/X_1}^{rel.fin.}$ is
$T \to X_1$, the inclusion of the groupoid of $P$-trees $T$
into the groupoid of $P$-forests, hence essentially an identity functor.
Upon taking cardinality, with the present conventions, we obtain
precisely the series
$$
G = \sum_{t\in \pi_0 T} \frac{t}{\norm{\Aut(t)}}
$$
the sum of all tree weighted by symetry factors,
which is the usual combinatorial Green function in Quantum Field Theory.

The important symmetry factors appear correctly because we are on the coalgebra
side of the duality.

\end{blanko}

%
%
%
%
%


\nocite{Lurie:HA}
\nocite{Rota:Moebius}
\nocite{Joyal:CRM}
\nocite{Joyal:qCat+Kan}
\nocite{Schmitt:1994}
\nocite{JoyalMR633783}
\nocite{JoniRotaMR544721}
\nocite{Stanley:MR513004}
\nocite{Doubilet-Rota-Stanley}


\begin{thebibliography}{10}

\bibitem{Aguiar-Mahajan}
{\sc Marcelo Aguiar {\rm and }Swapneel Mahajan}.
\newblock {\em Monoidal functors, species and {H}opf algebras}, vol.~29 of CRM
  Monograph Series.
\newblock American Mathematical Society, Providence, RI, 2010.
\newblock With forewords by Kenneth Brown and Stephen Chase and Andr{\'e}
  Joyal.

\bibitem{Baez-Dolan:finset-feynman}
{\sc John~C. Baez {\rm and }James Dolan}.
\newblock {\em From finite sets to {F}eynman diagrams}.
\newblock In {\em Mathematics unlimited---2001 and beyond}, pp. 29--50.
  Springer, Berlin, 2001.

\bibitem{Baez-Dolan:zeta}
{\sc John~C. Baez {\rm and }James Dolan}.
\newblock {\em Zeta functions}.\newline
\newblock \url{http://ncatlab.org/johnbaez/Zeta+functions} (2010).

\bibitem{Baez-Hoffnung-Walker:0908.4305}
{\sc John~C. Baez, Alexander~E. Hoffnung, {\rm and }Christopher~D. Walker}.
\newblock {\em Higher dimensional algebra {VII}: groupoidification}.
\newblock Theory Appl. Categ. {\bf 24} (2010), 489--553.
\newblock ArXiv:0908.4305.

\bibitem{Baues:Hopf}
{\sc Hans-Joachim Baues}.
\newblock {\em The cobar construction as a {H}opf algebra}.
\newblock Invent. Math. {\bf 132} (1998), 467--489.

\bibitem{BaWi:1985}
{\sc Hans-Joachim Baues {\rm and }G{\"u}nther Wirsching}.
\newblock {\em Cohomology of small categories}.
\newblock J. Pure Appl. Algebra {\bf 38} (1985), 187--211.

\bibitem{Berger:Adv}
{\sc Clemens Berger}.
\newblock {\em A cellular nerve for higher categories}.
\newblock Adv. Math. {\bf 169} (2002), 118--175.

\bibitem{Berger-Mellies-Weber:1101.3064}
{\sc Clemens Berger, Paul-Andr{\'e} Melli{\`e}s, {\rm and }Mark Weber}.
\newblock {\em Monads with arities and their associated theories}.
\newblock J. Pure Appl. Algebra {\bf 216} (2012), 2029--2048.
\newblock ArXiv:1101.3064.

\bibitem{Bergeron-Labelle-Leroux}
{\sc Fran{\c{c}}ois Bergeron, Gilbert Labelle, {\rm and }Pierre Leroux}.
\newblock {\em Combinatorial species and tree-like structures}, vol.~67 of
  Encyclopedia of Mathematics and its Applications.
\newblock Cambridge University Press, Cambridge, 1998.
\newblock Translated from the 1994 French original by Margaret 
Readdy. With a foreword by Gian-Carlo Rota.

\bibitem{Bergner:0610239}
{\sc Julia~E. Bergner}.
\newblock {\em A survey of {$(\infty,1)$}-categories}.
\newblock In {\em Towards higher categories}, vol. 152 of IMA Vol. Math. Appl.,
  pp. 69--83. Springer, New York, 2010.
\newblock ArXiv:math.AT/0610239.

\bibitem{Carboni-Lack-Walters}
{\sc Aurelio Carboni, Stephen Lack, {\rm and }Robert F.~C. Walters}.
\newblock {\em Introduction to extensive and distributive categories}.
\newblock J. Pure Appl. Algebra {\bf 84} (1993), 145--158.

\bibitem{Cartier-Foata}
{\sc Pierre Cartier {\rm and }Dominique Foata}.
\newblock {\em Probl{\`e}mes combinatoires de commutation et
  r{\'e}arrangements}.
\newblock No.~85 in Lecture Notes in Mathematics. Springer-Verlag, Berlin, New
  York, 1969.
\newblock Republished in the ``books'' section of the S{\'e}minaire
  Lotharingien de Combinatoire.

\bibitem{Connes-Kreimer:9808042}
{\sc Alain Connes {\rm and }Dirk Kreimer}.
\newblock {\em Hopf algebras, renormalization and noncommutative geometry}.
\newblock Comm. Math. Phys. {\bf 199} (1998), 203--242.
\newblock ArXiv:hep-th/9808042.

\bibitem{Connes-Kreimer:2000}
{\sc Alain Connes {\rm and }Dirk Kreimer}.
\newblock {\em Renormalization in quantum field theory and the
  {R}iemann-{H}ilbert problem. {I}. {T}he {H}opf algebra structure of graphs
  and the main theorem}.
\newblock Comm. Math. Phys. {\bf 210} (2000), 249--273.
\newblock ArXiv:hep-th/9912092.

\bibitem{Content-Lemay-Leroux}
{\sc Mireille Content, Fran{\c{c}}ois Lemay, {\rm and }Pierre Leroux}.
\newblock {\em Cat\'egories de {M}\"obius et fonctorialit\'es: un cadre
  g\'en\'eral pour l'inversion de {M}\"obius}.
\newblock J.~Combin. Theory Ser. A {\bf 28} (1980), 169--190.

\bibitem{Dawson-Pare-Pronk:MR2116323}
{\sc Robert J.~MacG. Dawson, Robert Par{\'e}, {\rm and }Dorette~A. Pronk}.
\newblock {\em Universal properties of {S}pan}.
\newblock Theory Appl. Categ. {\bf 13} (2004),  61--85.

\bibitem{Doubilet:1974}
{\sc Peter Doubilet}.
\newblock {\em A {H}opf algebra arising from the lattice of partitions of a
  set}.
\newblock J. Algebra {\bf 28} (1974), 127--132.

\bibitem{Doubilet-Rota-Stanley}
{\sc Peter Doubilet, Gian-Carlo Rota, {\rm and }Richard Stanley}.
\newblock {\em On the foundations of combinatorial theory. {VI}. {T}he idea of
  generating function}.
\newblock In {\em Proceedings of the {S}ixth {B}erkeley {S}ymposium on
  {M}athematical {S}tatistics and {P}robability ({U}niv. {C}alifornia,
  {B}erkeley, {C}alif., 1970/1971), {V}ol. {II}: {P}robability theory}, pp.
  267--318. Univ. California Press, Berkeley, CA, 1972.

\bibitem{Dur:1986}
{\sc Arne D{\"u}r}.
\newblock {\em M\"obius functions, incidence algebras and power series
  representations}, vol. 1202 of Lecture Notes in Mathematics.
\newblock Springer-Verlag, Berlin, 1986.

\bibitem{Dyckerhoff:CRM}
{\sc Tobias Dyckerhoff}.
\newblock {\em Higher categorical aspects of Hall Algebras}.
\newblock In {\em Advanced Course on (Re)emerging Methods in 
Commutative Algebra and Representation Theory}, no.~70 in Quaderns. CRM, Barcelona, 2015.
\newblock ArXiv:1505.06940.

\bibitem{Dyckerhoff-Kapranov:1212.3563}
{\sc Tobias Dyckerhoff {\rm and }Mikhail Kapranov}.
\newblock {\em Higher {S}egal spaces {I}}.
\newblock Preprint, arXiv:1212.3563.

\bibitem{Dyckerhoff-Kapranov:1306.2545}
{\sc Tobias Dyckerhoff {\rm and }Mikhail Kapranov}.
\newblock {\em Triangulated surfaces in triangulated categories}.
\newblock Preprint, arXiv:1306.2545.

\bibitem{Dyckerhoff-Kapranov:1403.5799}
{\sc Tobias Dyckerhoff {\rm and }Mikhail Kapranov}.
\newblock {\em Crossed simplicial groups and structured surfaces}.
\newblock Preprint, arXiv:1403.5799.

\bibitem{Figueroa-GraciaBondia:0408145}
{\sc H{\'e}ctor Figueroa {\rm and }Jos{\'e}~M. Gracia-Bond{\'{\i}}a}.
\newblock {\em Combinatorial {H}opf algebras in quantum field theory. {I}}.
\newblock Rev. Math. Phys. {\bf 17} (2005), 881--976.
\newblock ArXiv:hep-th/0408145.

\bibitem{Fiore-Luck-Sauer:0908.3417}
{\sc Thomas~M. Fiore, Wolfgang L{\"u}ck, {\rm and }Roman Sauer}.
\newblock {\em Finiteness obstructions and {E}uler characteristics of
  categories}.
\newblock Adv. Math. {\bf 226} (2011), 2371--2469.
\newblock ArXiv:0908.3417.

\bibitem{GalvezCarrillo-Kock-Tonks:1207.6404}
{\sc Imma G{\'a}lvez-Carrillo, Joachim Kock, {\rm and }Andrew Tonks}.
\newblock {\em Groupoids and {F}a{\`a} di {B}runo formulae for {G}reen
  functions in bialgebras of trees}.
\newblock Adv. Math. {\bf 254} (2014), 79--117.
\newblock ArXiv:1207.6404.


\bibitem{GKT:HLA}
{\sc Imma G{\'a}lvez-Carrillo, Joachim Kock, {\rm and }Andrew Tonks}.
\newblock {\em Homotopy linear algebra}.
\newblock Preprint, arXiv:1601:?????

\bibitem{GKT:DSIAMI-1}
{\sc Imma G{\'a}lvez-Carrillo, Joachim Kock, {\rm and }Andrew Tonks}.
\newblock {\em Decomposition spaces, incidence algebras and {M}{\"o}bius
  inversion I: basic theory}.
\newblock Preprint, arXiv:1512.07573.

\bibitem{GKT:DSIAMI-2}
{\sc Imma G{\'a}lvez-Carrillo, Joachim Kock, {\rm and }Andrew Tonks}.
\newblock {\em Decomposition spaces, incidence algebras and {M}{\"o}bius
  inversion II: completeness, length filtration, and finiteness}.
\newblock Preprint, arXiv:1512.07577.

\bibitem{GKT:MI}
{\sc Imma G{\'a}lvez-Carrillo, Joachim Kock, {\rm and }Andrew Tonks}.
\newblock {\em Decomposition spaces, incidence algebras and M\"obius inversion
  III: the decomposition space of {M}{\"o}bius intervals}.
\newblock Preprint, arXiv:1512.07580.

\bibitem{GKT:ex}
{\sc Imma G{\'a}lvez-Carrillo, Joachim Kock, {\rm and }Andrew Tonks}.
\newblock {\em Decomposition spaces in combinatorics}.
\newblock In preparation. (Section 5 of this manuscript.)

\bibitem{GKT:restriction}
{\sc Imma G{\'a}lvez-Carrillo, Joachim Kock, {\rm and }Andrew Tonks}.
\newblock {\em Decomposition spaces and restriction species}.
\newblock In preparation.  (Section 6 of this manuscript.)

\bibitem{Gambino-Kock:0906.4931}
{\sc Nicola Gambino {\rm and }Joachim Kock}.
\newblock {\em Polynomial functors and polynomial monads}.
\newblock Math. Proc. Cambridge Phil. Soc. {\bf 154} (2013), 153--192.
\newblock ArXiv:0906.4931.

\bibitem{Gepner-Kock}
{\sc David Gepner {\rm and }Joachim Kock}.
\newblock {\em Polynomial functors over infinity categories}.
\newblock In preparation.


\bibitem{glasman-1408.3065v3}
{\sc Saul Glasman}.
\newblock {\em A spectrum-level {H}odge filtration on topological {H}ochschild homology}.
\newblock Preprint, arXiv:1408.3065v3


\bibitem{Goncharov:Hopf}
{\sc Alexander B. Goncharov}.
\newblock {\em Galois symmetries of fundamental groupoids and noncommutative
  geometry}.
\newblock Duke Math. J. {\bf 128} (2005), 209--284.
\newblock ArXiv:math/0208144.

\bibitem{Haigh}
{\sc John Haigh}.
\newblock {\em On the {M}\"obius algebra and the {G}rothendieck ring of a
  finite category}.
\newblock J. London Math. Soc. (2) {\bf 21} (1980), 81--92.

\bibitem{Hermida:repr-mult}
{\sc Claudio Hermida}.
\newblock {\em Representable multicategories}.
\newblock Adv. Math. {\bf 151} (2000), 164--225.

\bibitem{Illusie1}
{\sc Luc Illusie}.
\newblock {\em Complexe cotangent et d\'eformations. {I}},
\newblock vol. 239 in Lecture Notes in Mathematics. Springer-Verlag, Berlin,
  1971.

\bibitem{JoniRotaMR544721}
{\sc Saj-nicole~A. Joni {\rm and }Gian-Carlo Rota}.
\newblock {\em Coalgebras and bialgebras in combinatorics}.
\newblock Stud. Appl. Math. {\bf 61} (1979), 93--139.

\bibitem{Johnstone:Conduche'}
{\sc  Peter Johnstone}.
\newblock {\em A note on discrete {C}onduch\'e fibrations}.
\newblock Theory Appl. Categ.
{ \bf 5} (1999), 1--11.

%

\bibitem{JoyalMR633783}
{\sc Andr{\'e} Joyal}.
\newblock {\em Une th\'eorie combinatoire des s\'eries formelles}.
\newblock Adv. Math. {\bf 42} (1981), 1--82.

\bibitem{Joyal:foncteurs-analytiques}
{\sc Andr{\'e} Joyal}.
\newblock {\em Foncteurs analytiques et esp\`eces de structures}.
\newblock In {\em Combinatoire \'enum\'erative (Montr{\'e}al/Qu{\'e}bec,
  1985)}, vol. 1234 of Lecture Notes in Mathematics, pp. 
  126--159. Springer-Verlag,
  Berlin, 1986.

  \bibitem{Joyal:disks}
{\sc Andr{\'e} Joyal}.
\newblock {\em Disks, duality and {$\Theta$}-categories}.
Preprint (September 1997).

\bibitem{Joyal:qCat+Kan}
{\sc Andr{\'e} Joyal}.
\newblock {\em Quasi-categories and {K}an complexes}.
\newblock J. Pure Appl. Algebra {\bf 175} (2002), 207--222.

\bibitem{Joyal:CRM}
{\sc Andr{\'e} Joyal}.
\newblock {\em The theory of quasi-categories}.
\newblock In {\em Advanced Course on Simplicial Methods in Higher Categories,
  vol. II}, no.~45 in Quaderns. CRM, Barcelona, 2008.
\newblock Available from \newline
  \url{http://mat.uab.cat/~kock/crm/hocat/advanced-course/Quadern45-2.pdf}.

  \bibitem{Joyal-Kock:0908.2675}
{\sc Andr{\'e} Joyal {\rm and }Joachim Kock}.
\newblock {\em {F}eynman graphs, and nerve theorem for compact symmetric
  multicategories (extended abstract)}.
\newblock In {\em Proceedings of the 6th International Workshop on Quantum
  Physics and Logic (Oxford 2009)}, vol. 270 of Electronic Notes in Theoretical
  Computer Science, pp. 105--113, 2011.
\newblock ArXiv:0908.2675.

\bibitem{Joyal-Street:tensor-calculus}
{\sc Andr{\'e} Joyal {\rm and }Ross Street}.
\newblock {\em The geometry of tensor calculus. {I}}.
\newblock Adv. Math. {\bf 88} (1991), 55--112.

\bibitem{Joyal-Street:GLn}
{\sc Andr{\'e} Joyal {\rm and }Ross Street}.
\newblock {\em The category of representations of the general linear groups
  over a finite field}.
\newblock J. Algebra {\bf 176} (1995), 908--946.

\bibitem{Joyal-Tierney:0607820}
{\sc Andr{\'e} Joyal {\rm and }Myles Tierney}.
\newblock {\em Quasi-categories vs {S}egal spaces}.
\newblock In {\em Categories in algebra, geometry and mathematical physics},
  vol. 431 of Contemp. Math., pp. 277--326. Amer. Math. Soc., Providence, RI,
  2007.
\newblock ArXiv:math/0607820.

\bibitem{Kock:0807}
{\sc Joachim Kock}.
\newblock {\em Polynomial functors and trees}.
\newblock Internat. Math. Res. Notices {\bf 2011} (2011), 609--673.
\newblock ArXiv:0807.2874.

\bibitem{Kock:MFPS28}
{\sc Joachim Kock}.
\newblock {\em Data types with symmetries and polynomial functors over
  groupoids}.
\newblock In {\em Proceedings of the 28th Conference on the Mathematical
  Foundations of Programming Semantics (Bath, 2012)}, vol. 286 of Electr. Notes
  in Theoret. Comp. Sci., pp. 351--365, 2012.
\newblock ArXiv:1210.0828.

\bibitem{Kock:1109.5785}
{\sc Joachim Kock}.
\newblock {\em Categorification of {H}opf algebras of rooted trees}.
\newblock Cent. Eur. J. Math. {\bf 11} (2013), 401--422.
\newblock ArXiv:1109.5785.

\bibitem{Kock:1407.3744}
{\sc Joachim Kock}.
\newblock {\em Graphs, hypergraphs, and properads}.
\newblock To appear in Collect. Math.
\newblock ArXiv:1407.3744.

\bibitem{Kock:1411.3098}
{\sc Joachim Kock}.
\newblock {\em Perturbative renormalisation for not-quite-connected
  bialgebras}.
\newblock Lett. Math. Phys. {\bf 105} (2015), 1413--1425.
\newblock ArXiv:1411.3098.

\bibitem{Kock:1512.03027}
{\sc Joachim Kock}.
\newblock {\em Polynomial functors and combinatorial {D}yson--{S}chwinger
  equations}.
\newblock Preprint, arXiv:1512.03027.

\bibitem{Kock-Joyal-Batanin-Mascari:0706}
{\sc Joachim Kock,  Andr\'e Joyal, Michael Batanin {\rm and} Jean-Fran\c cois 
Mascari}.
\newblock {\em Polynomial functors and opetopes}, Adv. Math. \textbf{224} (2010),
2690--2737.
\newblock ArXiv:0706.1033.

\bibitem{Lawvere:ordinal}
{\sc F.~William Lawvere}.
\newblock {\em Ordinal sums and equational doctrines}.
\newblock In B.~Eckmann, editor, {\em Seminar on triples and categorical
  homology theory, ETH 1966/67}, no.~80 in Lecture Notes in Mathematics, pp.
  141--155. Springer-Verlag, New York, 1967.
\newblock Reprinted in Repr. Theory Appl. Categ. 18 (2008) pp. 1--303.
  (Available from \url{http://www.tac.mta.ca/tac/reprints/}).

\bibitem{Lawvere:statecats}
{\sc F.~William Lawvere}.
\newblock {\em State categories and response functors. {D}edicated to {W}alter
  {N}oll.}
\newblock Preprint (May 1986).

\bibitem{LawvereMenniMR2720184}
{\sc F.~William Lawvere {\rm and }Mat{\'i}as Menni}.
\newblock {\em The {H}opf algebra of {M}\"obius intervals}.
\newblock Theory Appl. Categ. {\bf 24} (2010), 221--265.

\bibitem{Leinster:1201.0413}
{\sc Tom Leinster}.
\newblock {\em Notions of {M}{\"o}bius inversion}.
\newblock Bull. Belg. Math. Soc. {\bf 19} (2012), 911--935.
\newblock ArXiv:1201.0413.

\bibitem{Leroux:1975}
{\sc Pierre Leroux}.
\newblock {\em Les cat{\'e}gories de {M}{\"o}bius}.
\newblock Cahiers Topologie G{\'e}om. Diff{\'e}rentielle {\bf 16} (1976),
  280--282.

\bibitem{Leroux:IJM}
{\sc Pierre Leroux}.
\newblock {\em The isomorphism problem for incidence algebras of {M}\"obius
  categories}.
\newblock Illinois J. Math. {\bf 26} (1982), 52--61.

\bibitem{Lothaire:MR675953}
{\sc M.~Lothaire}.
\newblock {\em Combinatorics on words}, vol.~17 of Encyclopedia of Mathematics
  and its Applications.
\newblock Addison-Wesley Publishing Co., Reading, Mass., 1983.
\newblock A collective work by Dominique Perrin, Jean Berstel, Christian
  Choffrut, Robert Cori, Dominique Foata, Jean Eric Pin, Guiseppe Pirillo,
  Christophe Reutenauer, Marcel-P. Sch{\"u}tzenberger, Jacques Sakarovitch and
  Imre Simon. With a foreword by R. Lyndon, edited and with a preface by
  D. Perrin.

\bibitem{Luck:1989}
{\sc Wolfgang L{\"u}ck}.
\newblock {\em Transformation groups and algebraic {$K$}-theory}, vol. 1408 of
  Lecture Notes in Mathematics.
\newblock Springer-Verlag, Berlin, 1989.


\bibitem{Lurie:HTT}
{\sc Jacob Lurie}.
\newblock {\em Higher topos theory}, vol. 170 of Annals of Mathematics Studies.
\newblock Princeton University Press, Princeton, NJ, 2009.
\newblock Available from \url{http://www.math.harvard.edu/~lurie/}

\bibitem{Lurie:GoodwillieI}
{\sc Jacob Lurie}.
\newblock {\em {$(\infty,2)$}-categories and the {G}oodwillie calculus {I}}.
\newblock Available from \newline
\url{http://www.math.harvard.edu/~lurie/}, 2009.

\bibitem{Lurie:HA}
{\sc Jacob Lurie}.
\newblock {\em Higher Algebra}. 
\newblock Available from \newline
\url{http://www.math.harvard.edu/~lurie/}, 2012.

\bibitem{MacLane:categories}
{\sc Saunders {Mac~Lane}}.
\newblock {\em Categories for the working mathematician, second edition}.
\newblock No.~5 in Graduate Texts in Mathematics. Springer-Verlag, New York,
  1998.

\bibitem{Maia-Mendez:0503436}
{\sc Manuel Maia {\rm and }Miguel M{\'e}ndez}.
\newblock {\em On the arithmetic product of combinatorial species}.
\newblock Discrete Math. {\bf 308} (2008), 5407--5427.
\newblock ArXiv:math/0503436.

\bibitem{Manchon:MR2921530}
{\sc Dominique Manchon}.
\newblock {\em On bialgebras and {H}opf algebras of oriented graphs}.
\newblock Confluentes Math. {\bf 4} (2012), 1240003, 10.

\bibitem{Manin:0904.4921}
{\sc Yuri~I. Manin}.
\newblock {\em Renormalization and computation {I}: motivation and background}.
\newblock In {\em O{PERADS} 2009}, vol.~26 of S\'emin. Congr., pp. 181--222.
  Soc. Math. France, Paris, 2013.
\newblock ArXiv:0904.4921.

\bibitem{Manin:MR2562767}
{\sc Yuri~I. Manin}.
\newblock {\em A course in mathematical logic for mathematicians}, vol.~53 of
  Graduate Texts in Mathematics.
\newblock Springer, New York, second edition, 2010.
\newblock Chapters I--VIII translated from the Russian by Neal Koblitz, with
  new chapters by Boris Zilber and the author.

\bibitem{Morrison:0512052}
{\sc Kent~E. Morrison}.
\newblock {\em An introduction to {$q$}-species}.
\newblock Electron. J. Combin. {\bf 12} (2005), Research Paper 62, 15 pp.
  (electronic).
\newblock ArXiv:math/0512052.

\bibitem{HoTT}
{\sc The Univalent~Foundations Program}.
\newblock Homotopy type theory: Univalent foundations of mathematics.
\newblock Technical report, Institute for Advanced Study, 2013.
\newblock Available from \url{http://homotopytypetheory.org/book}.

\bibitem{Quinn:TQFT}
{\sc Frank Quinn}.
\newblock {\em Lectures on axiomatic topological quantum field theory}.
\newblock In {\em Geometry and quantum field theory (Park City, UT, 1991)}, pp.
  323--453. Amer. Math. Soc., Providence, RI, 1995.

\bibitem{Rezk:MR1804411}
{\sc Charles Rezk}.
\newblock {\em A model for the homotopy theory of homotopy theory}.
\newblock Trans. Amer. Math. Soc. {\bf 353} (2001), 973--1007.

\bibitem{Ringel:Hall}
{\sc Claus~Michael Ringel}.
\newblock {\em Hall algebras and quantum groups}.
\newblock Invent. Math. {\bf 101} (1990), 583--591.

\bibitem{Rota:Moebius}
{\sc Gian-Carlo Rota}.
\newblock {\em On the foundations of combinatorial theory. {I}. {T}heory of
  {M}\"obius functions}.
\newblock Z. Wahrscheinlichkeitstheorie und Verw. Gebiete {\bf 2} (1964),
  340--368.

\bibitem{Schiffmann:0611617}
{\sc Olivier Schiffmann}.
\newblock {\em Lectures on {H}all algebras}.
\newblock In {\em Geometric methods in representation theory. {II}}, vol.~24 of
  S\'emin. Congr., pp. 1--141. Soc. Math. France, Paris, 2012.
\newblock ArXiv:math/0611617.

\bibitem{Schmitt:hacs}
{\sc William~R. Schmitt}.
\newblock {\em Hopf algebras of combinatorial structures}.
\newblock Canad. J. Math. {\bf 45} (1993), 412--428.

\bibitem{Schmitt:1994}
{\sc William~R. Schmitt}.
\newblock {\em Incidence {H}opf algebras}.
\newblock J. Pure Appl. Algebra {\bf 96} (1994), 299--330.

\bibitem{Segal-conf-sp}
{\sc Graeme Segal}.
\newblock {\em Configuration-{S}paces 
and {I}terated {L}oop-{S}paces}.
\newblock Invent. math. {\bf 21} (1973), 213--221.

\bibitem{Stanley:MR513004}
{\sc Richard~P. Stanley}.
\newblock {\em Generating functions}.
\newblock In {\em Studies in combinatorics}, vol.~17 of MAA Stud. Math., pp.
  100--141. Math. Assoc. America, Washington, D.C., 1978.

  \bibitem{Stanley}
{\sc Richard~P. Stanley}.
\newblock {\em Enumerative combinatorics. {V}ol. {I}}.
\newblock The Wadsworth \& Brooks/Cole Mathematics Series. Wadsworth \&
  Brooks/Cole Advanced Books \& Software, Monterey, CA, 1986.
\newblock With a foreword by Gian-Carlo Rota.

\bibitem{Stern:1999}
{\sc Manfred Stern}.
\newblock {\em Semimodular lattices: theory and applications}.
\newblock 
Vol.~73 of Encyclopedia of Mathematics and its Applications.
Cambridge University Press, Cambridge,  1999.

\bibitem{Street:categorical-structures}
{\sc Ross Street}.
\newblock {\em Categorical structures}.
\newblock In {\em Handbook of algebra, {V}ol.\ 1}, pp. 529--577. North-Holland,
  Amsterdam, 1996.

    \bibitem{Sweedler}
{\sc Moss E.~Sweedler}.
\newblock {\em Hopf algebras}.
\newblock W.A. Benjamin, Inc., New York, 1969.

\bibitem{Toen:0501343}
{\sc Bertrand To{\"e}n}.
\newblock {\em Derived {H}all algebras}.
\newblock Duke Math. J. {\bf 135} (2006), 587--615.
\newblock ArXiv:math/0501343.

\bibitem{Weber:TAC13}
{\sc Mark Weber}.
\newblock {\em Generic morphisms, parametric representations and weakly
  {C}artesian monads}.
\newblock Theory Appl. Categ. {\bf 13} (2004), 191--234.

\bibitem{Weber:TAC18}
{\sc Mark Weber}.
\newblock {\em Familial 2-functors and parametric right adjoints}.
\newblock Theory Appl. Categ. {\bf 18} (2007), 665--732.

\end{thebibliography}

\end{document}